\title{\texorpdfstring{$q$}{q}-de Rham cohomology and topological Hochschild homology over \texorpdfstring{$\ku$}{ku}}
\author{Ferdinand Wagner}
\newcommand{\BMSev}{\mathrm{BMS}\hspace{-0.1ex}\mhyph\hspace{-0.1ex}\mathrm{ev}}
\newcommand{\HRWev}{\mathrm{HRW}\hspace{-0.2ex}\mhyph\hspace{-0.1ex}\mathrm{ev}}
\newcommand{\Pev}{\mathrm{P}\hspace{-0.1ex}\mhyph\hspace{-0.1ex}\mathrm{ev}}
\newcommand{\ISPhat}{\IS_{\smash{\widehat{P}_p}\vphantom{_p}}}
\newcommand{\ISPhatbullet}{\IS_{\smash{\widehat{P}_p^\bullet}\vphantom{_p}}}
\newcommand{\Cyclonic}{\cat{CycnSp}}
\newcommand{\TCn}[1]{\TC^{-(#1)}}
\begin{document}
	\maketitle
	
	\begin{abstract}
		\textbf{Abstract. --- }
		Hodge-filtered derived de Rham cohomology of a ring $R$ can be described (up to completion and shift) as the graded pieces of the even filtration on $\HC^-(R)$. In this paper we show a deformation of this result: If $R$ admits a spherical $\IE_2$-lift, then the graded pieces of the even filtration on $\TC^-(\ku\otimes\IS_R/\ku)$ form a certain filtration on the $q$-de Rham cohomology of $R$, which $q$-deforms the Hodge filtration.
		
		We also explain how the associated \emph{Habiro--Hodge complex} from \cite{qWittHabiro} can be described in terms of the genuine equivariant structure on $\THH(\KU\otimes\IS_R/\KU)$. As a special case, we'll obtain homotopy-theoretic construction of the \emph{Habiro ring of a number field} from \cite{HabiroRingOfNumberField}.
	\end{abstract}

	\tableofcontents
	\renewcommand{\SectionPrefix}{\textrm{\S}}
	\renewcommand{\SubsectionPrefix}{\textrm{\S}}
	
	\newpage

	\section{Introduction}\label{sec:Intro}
	
	Let $\ku$ denote the connective complex $K$-theory spectrum. The ring $\pi_0(\ku^{\h S^1})\cong \ku^0(\B S^1)$ contains a canonical element $q$, which corresponds to the standard representation of $S^1$ acting via rotations on $\IC$. In this paper we explain that this is the \enquote{same $q$} as in $q$-de Rham cohomology.
	
	\subsection{\texorpdfstring{$q$}{q}-Hodge filtrations from \texorpdfstring{$\THH$}{THH} over \texorpdfstring{$\ku$}{ku}}
	
	Many interesting cohomology theories in arithmetic geometry can be obtained as graded pieces of motivic filtrations on localising invariants. In the case of de Rham cohomology, this is particularly well understood: The corresponding localising invariant is given by \emph{Hochschild homology} (and its cousins, negative cyclic and periodic homology), the motivic filtration is given by the \emph{even filtration} of Hahn--Raksit--Wilson \cite{EvenFiltration}.
	
	More precisely, combining \cite[Theorem~\chref{1.1}]{AntieauHC-} and \cite[Theorem~\chref{5.0.2}]{EvenFiltration}, one has the following result:
	
	\begin{thm}[Antieau--Hahn--Raksit--Wilson]
		Let $R$ be a quasi-syntomic ring. Then completion of the Hodge-filtrered derived de Rham complex of $R$ agrees \embrace{up to shift} with the the graded pieces of the $S^1$-equivariant even filtration on $\HC^-(R/\IZ)$:
		\begin{equation*}
			\fil_{\Hodge}^\star\hatdeRham_{R/\IZ}\simeq\Sigma^{-2*}\gr_{\ev,\h S^1}^*\HC^-(R/\IZ)\,.
		\end{equation*}
	\end{thm}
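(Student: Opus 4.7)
The plan is to combine two ingredients: the work of \cite{AntieauHC-} provides an HKR-style filtration on $\HC^-$ for quasi-syntomic rings whose graded pieces are shifts of Hodge-completed derived de Rham cohomology, while \cite{EvenFiltration} identifies this filtration with the even filtration in the appropriate generality. The technical device in both inputs is quasi-syntomic descent, which reduces everything to quasi-regular semiperfectoid (QRSP) test rings.

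First I would observe that both functors $R\mapsto \fil_{\Hodge}^\star\hatdeRham_{R/\IZ}$ and $R\mapsto \gr_{\ev,\h S^1}^\star\HC^-(R/\IZ)$ are quasi-syntomic sheaves of filtered objects of $D(\IZ)$. The former is such a sheaf by construction, being defined by left Kan extension from polynomial algebras and then Hodge-completed. The latter inherits descent from the Hahn--Raksit--Wilson construction of the even filtration as a right Kan extension from the full subcategory of $S^1$-equivariantly even $\IE_\infty$-algebras, combined with fpqc descent for $\HC^-$ itself. Hence it suffices to prove the equivalence on QRSP test rings.

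For a QRSP ring $S$, the cotangent complex $L_{S/\IZ}$ is $p$-adically concentrated in homological degree $1$, so the HKR graded pieces $\Gamma^i L_{S/\IZ}[i]$ of $\HH(S/\IZ)$ sit in homological degree $2i$ and $\HH(S/\IZ)$ is even. Evenness together with $S^1$-equivariance forces the homotopy fixed points spectral sequence for $\HC^-(S/\IZ)$ to degenerate, so that $\HC^-(S/\IZ)$ is even as well. By the Hahn--Raksit--Wilson characterisation, the $S^1$-equivariant even filtration on any such even spectrum coincides with its double-speed Whitehead tower, yielding
\[
\gr_{\ev,\h S^1}^n\HC^-(S/\IZ)\simeq \pi_{2n}\HC^-(S/\IZ)[2n].
\]
Antieau's explicit computation in \cite{AntieauHC-} identifies $\pi_{2n}\HC^-(S/\IZ)$ with $\fil^n_{\Hodge}\hatdeRham_{S/\IZ}$, giving the equivalence on QRSP rings; quasi-syntomic descent then finishes the proof.

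The main obstacle I anticipate is to match not only the graded pieces but the filtrations themselves along QRSP covers, and to handle carefully the interplay between the Hodge completion on the left and the automatic completeness of the even filtration on the right. Once the QRSP case is upgraded from an equivalence of graded objects to an equivalence of filtered objects with the correct shift $\Sigma^{-2\star}$, the remaining descent is formal.
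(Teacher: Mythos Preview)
The paper does not give its own proof of this statement: it is stated in the introduction as a known result, with the proof deferred entirely to the cited references \cite[Theorem~1.1]{AntieauHC-} and \cite[Theorem~5.0.2]{EvenFiltration}. Your proposal is a reasonable sketch of how those two inputs combine---quasi-syntomic descent to reduce to QRSP rings, evenness of $\HH$ and $\HC^-$ there, and identification of the even filtration with the double-speed Whitehead tower---and this is indeed the shape of the argument in the cited papers.

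One small correction: Antieau's identification of $\pi_{2n}\HC^-(S/\IZ)$ with $\fil_{\Hodge}^n\hatdeRham_{S/\IZ}$ for QRSP $S$ is not quite an ``explicit computation'' but rather goes through the Beilinson $t$-structure construction of the motivic filtration; and the obstacle you flag about matching filtrations rather than just graded pieces is handled automatically once both sides are identified with the double-speed Whitehead filtration on the same even spectrum. But since the paper itself treats this as a black-box citation, there is nothing further to compare against.
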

	The goal of this paper is to show a deformation of this theorem, in which $\IZ$ gets deformed to $\ku$ and de Rham cohomology gets deformed to $q$-de Rham cohomology.
	
	\begin{thm}[see \cref{thm:qdeRhamkuGlobal}]\label{thm:qdeRhamkuIntro}
		Let $R$ be a quasi-syntomic ring such that $2\in R^\times$. Assume that $R$ admits a lift to a connective $\IE_2$-ring spectrum $\IS_R$ such that $\IS_R\otimes\IZ\simeq R$. Then the derived $q$-de Rham complex $\qdeRham_{R/\IZ}$ can be equipped with a $q$-deformation of the Hodge filtration $\fil_{\qHodge}^\star\qdeRham_{R/\IZ}$, and the completion of this filtration agrees \embrace{up to shift} with the graded pieces of the $S^1$-equivariant even filtration on $\TC^-(\ku\otimes\IS_R/\ku)$:
		\begin{equation*}
			\fil_{\qHodge}^\star\qhatdeRham_{R/\IZ}\simeq\Sigma^{-2*}\gr_{\ev,\h S^1}^*\TC^-(\ku\otimes\IS_R/\ku)\,.
		\end{equation*}
	\end{thm}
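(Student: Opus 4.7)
The plan is to prove the theorem by quasi-syntomic descent from an explicit computation on smooth polynomial algebras. The even filtration on localising invariants satisfies quasi-syntomic descent by Hahn--Raksit--Wilson, and the $q$-de Rham complex together with its $q$-Hodge filtration satisfies analogous descent in the framework of \cite{qWittHabiro}; so it suffices to produce a natural equivalence on a quasi-syntomic generating family. The natural choice of generators is the collection of polynomial rings $R = \IZ[x_1,\dots,x_n]$ with spherical $\IE_2$-lift $\IS_R = \IS[x_1,\dots,x_n]$, on which the $q$-de Rham complex is the classical Aomoto--Jackson $q$-de Rham complex with its standard $q$-Hodge filtration.

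For such polynomial inputs, I would compute $\TC^-(\ku\otimes\IS_R/\ku)$ directly from the formula $\THH(A[x]/A)\simeq A[x]\otimes\Sigma^\infty_+ S^1$, valid for $\IE_\infty$-rings $A$ (here $A=\ku$). Taking $S^1$-homotopy fixed points then introduces the canonical class $q\in\pi_0(\ku^{\h S^1})$ arising from the standard circle representation, and assembling the contributions from multiple variables should recover the (completed) Aomoto--Jackson complex as the underlying complex---matching $\qhatdeRham_{R/\IZ}$ up to the shift $\Sigma^{-2*}$---with the even filtration producing precisely a $q$-deformation of the Hodge filtration whose graded pieces reduce modulo $q-1$ to the classical Hodge graded pieces. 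With the polynomial case established, the filtration $\fil_{\qHodge}^\star\qhatdeRham_{R/\IZ}$ for arbitrary quasi-syntomic $R$ is defined by descent from this computation, and independence of the $\IE_2$-lift is automatic on the right-hand side, since $\TC^-(\ku\otimes\IS_R/\ku)$ depends only on the underlying $\IE_1$-$\ku$-algebra structure.

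The main difficulty will be establishing that the even-filtration machinery of \cite{EvenFiltration} applies cleanly to $\ku\otimes\IS_R$ when $\IS_R$ carries only $\IE_2$-structure: one must exhibit enough even covers of $\ku\otimes\IS_R$ so that the even-descent construction yields a canonical filtration which is itself quasi-syntomically local in $R$, as this is the step that reduces the global statement to the polynomial base case. The hypothesis $2\in R^\times$ will likely enter here to control $2$-torsion in $\pi_*(\ku^{\h S^1})$ and in the passage between $\IE_2$-ring spectra and their underlying chain complexes, as happens in related comparisons between even filtrations and classical motivic filtrations.
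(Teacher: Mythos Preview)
Your proposal has the right instinct—the polynomial case is indeed Raksit's theorem, treated in \S\ref{subsec:Raksit}—but it inverts the logical order and misses the central ingredient. The main gap is that you never produce a \emph{natural comparison map} $\qdeRham_{R/\IZ}\to\gr^0_{\ev,\h S^1}\TC^-(\ku\otimes\IS_R/\ku)$. The $q$-de Rham complex is already defined via prismatic cohomology, so the theorem is an identification of two independently constructed objects, not a definition by descent; matching values on polynomials does not yield a functorial equivalence for general $R$. The paper supplies the comparison via Devalapurkar's equivalence $\THH(\IZ_p[\zeta_p]/\IS_p\qpower)_p^\complete\simeq\tau_{\geqslant 0}(\ku^{\t C_p})$, which combined with the cyclotomic Frobenius on $\THH(\IS_R)$ gives an $S^1$-equivariant map $\THH(R^{(p)}[\zeta_p]/\IS\qpower)\to\THH(\ku\otimes\IS_R/\ku)^{\t C_p}$; on even-filtered $\TC^-$ this becomes the map $\psi_R^0$ along which the $q$-Hodge filtration is pulled back. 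This is also where $2\in R^\times$ enters—Devalapurkar's result is only known for $p>2$—not from torsion in $\pi_*(\ku^{\h S^1})\cong\IZ[\beta]\llbracket t\rrbracket$, which has none.

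There are secondary issues. The Hahn--Raksit--Wilson even filtration requires $\IE_\infty$-input and cannot be applied when $\IS_R$ is only $\IE_2$; the paper uses Pstr\k{a}gowski's perfect even filtration (in a solid-condensed variant to handle $p$-completions cleanly), and the descent actually used is faithfully-even-flat descent along $\THH(\IS_P)\to\IS_P$ followed by an adelic gluing of the $p$-complete and rational pieces, not quasi-syntomic descent in $R$. Your remark that the right-hand side depends only on the $\IE_1$-$\ku$-algebra $\ku\otimes\IS_R$ is literally true but does not buy independence of the lift: the paper gives a counterexample (\S\ref{par:Liftstoku}) showing that an arbitrary $\IE_1$-$\ku$-lift of $R$ need not produce a $q$-deformation of the Hodge filtration, so the spherical origin of the lift is essential.
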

	
	Before we discuss \cref{thm:qdeRhamkuIntro}, let us comment on the origins of the result and highlight some of the preceding work of Arpon Raksit and Sanath Devalapurkar that this theorem crucially relies on.
	
	\begin{numpar}[Relation to work of Raksit.]
		In the case where $\IS_R$ is the flat spherical polynomial ring $\IS[x]$, \cref{thm:Raksit} was first shown in unpublished work of Raksit, who also gave an explicit description of the $q$-deformed Hodge filtration in this case:
		\begin{thm}[Raksit, unpublished; see \cref{thm:Raksit}]\label{thm:RaksitIntro}
			Let $\qOmega_{\IZ[x]/\IZ, \square}^*$ be the coordinate-dependent $q$-de Rham complex, where the choice of coordinates is the identity $\square\colon \IZ[x]\rightarrow \IZ[x]$. Equip $\qOmega_{\IZ[x]/\IZ, \square}^*$ with the filtration by subcomplexes $\fil_{\qHodge,\square}^0\qOmega_{\IZ[x]/\IZ, \square}^*\coloneqq \qOmega_{\IZ[x]/\IZ, \square}^*$ and
			\begin{equation*}
				\fil_{\qHodge,\square}^i\qOmega_{\IZ[x]/\IZ, \square}^*\coloneqq \Bigl((q-1)^i\IZ[x]\qpower\overset{\q\nabla}{\longrightarrow}(q-1)^{i-1}\IZ[x]\qpower\d x\Bigr)
			\end{equation*}
			for all $i\geqslant 1$. Then
			\begin{equation*}
				\fil_{\qHodge,\square}^i\qOmega_{\IZ[x]/\IZ, \square}^*\simeq \Sigma^{-2i}\gr_{\ev,\h S^1}^i\TC^-\bigl(\ku[x]/\ku\bigr)\,.
			\end{equation*}
		\end{thm}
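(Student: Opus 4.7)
The plan is to compute $\TC^-(\ku[x]/\ku)$ as a filtered spectrum over $\ku^{\h S^1}$ and to identify its even filtration with the claimed $q$-Hodge subcomplexes. First, since $\ku[x]=\ku\otimes_\IS\IS[x]$ is a flat polynomial $\ku$-algebra, the Koszul resolution of $\ku[x]$ over $\ku[x]\otimes_\ku\ku[x]\simeq\ku[x,y]$ yields
\begin{equation*}
	\THH(\ku[x]/\ku)\simeq \ku[x]\oplus\Sigma\ku[x]\cdot\d x
\end{equation*}
as $\ku[x]$-modules, carrying the $S^1$-rotation action whose Connes operator $B$ induces $f(x)\mapsto f'(x)\,\d x$ on $\pi_*$.

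Passing to $S^1$-homotopy fixed points and using $\pi_0\ku^{\h S^1}=\IZ\qpower$ (with $q$ the class of the standard representation and $q-1=\beta t$, where $\beta$ is the Bott class and $t$ the Tate generator), the homotopy fixed point spectral sequence produces a two-term complex with underlying complex $\IZ[x]\qpower\to\IZ[x]\qpower\cdot\d x$ over $\ku^{\h S^1}$. The crux is to identify the differential: on the $E_2$-page one only sees $\partial_x$, and since $\nabla_q\equiv\partial_x\pmod{q-1}$, the $q$-deformation must come from higher extension data in the $S^1$-equivariant $\ku$-module structure on $\THH(\ku[x]/\ku)$. Concretely, I would compute Connes' $B$-operator on the generators $x^n$ inside the cyclic bar construction $B^{\mathrm{cy}}(\IS[x])$ and exploit that, after linearisation over $\ku^{\h S^1}$, the cyclic permutation pairs with the standard $S^1$-representation (the thing that represents $q$) to force $B(x^n)=[n]_q\, x^{n-1}\,\d x$, thereby pinning down $\nabla_q$.

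Finally, the even filtration on $\ku^{\h S^1}$ agrees with the $(q-1)$-adic filtration, placing $q-1$ in filtration degree $1$. Propagating this through the two-term description of $\TC^-(\ku[x]/\ku)$ and inserting the Bott-shift $\Sigma^{-2i}$ gives the subcomplex
\begin{equation*}
	\bigl((q-1)^i\IZ[x]\qpower\overset{\nabla_q}{\longrightarrow}(q-1)^{i-1}\IZ[x]\qpower\cdot\d x\bigr)
\end{equation*}
as the $i$-th graded piece, where the drop from $(q-1)^i$ to $(q-1)^{i-1}$ between $0$-forms and $1$-forms reflects the fact that $\d x$ is an odd-degree class and hence enters the even filtration one level lower than its $0$-form counterpart, a consequence of Bott periodicity in the equivariant picture.

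The main obstacle is the identification of the differential as $\nabla_q$. Everything else is essentially spectral-sequence bookkeeping and filtration combinatorics; the genuinely non-trivial input is to translate the $S^1$-equivariant structure on $\THH(\ku[x]/\ku)$ over $\ku$ into the explicit formula $B(x^n)=[n]_q\, x^{n-1}\,\d x$. This demands either a careful explicit cyclic bar construction calculation or, more elegantly, a categorical argument exploiting the interpretation of $q$ as the standard representation together with the universal property of $\THH$; this is the technical heart of Raksit's unpublished argument.
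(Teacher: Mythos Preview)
Your approach is genuinely different from the paper's, and the gap you yourself flag is the real one.

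The paper does \emph{not} compute $\THH(\ku[x]/\ku)$ and its $S^1$-action directly. Instead, it leverages the main theorem of the paper (\cref{thm:qdeRhamkuGlobal}), which identifies $\Sigma^{-2*}\gr_{\ev,\h S^1}^*\TC^-(\ku_{S,\square}/\ku_A)$ with the completion of an abstractly defined $q$-Hodge filtration $\fil_{\qHodge}^\star\qdeRham_{S/A}$. That identification in turn rests on Devalapurkar's equivalence $\THH(\IZ_p[\zeta_p]/\IS_p\qpower)_p^\complete\simeq\tau_{\geqslant 0}(\ku^{\t C_p})$, which is where the $q$-deformation enters---no direct computation of the Connes operator over $\ku$ is ever performed. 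The paper then computes the $q$-Hodge filtration explicitly via a cosimplicial resolution: one takes the \v{C}ech nerve of $A\to P=A[x_1,\dotsc,x_n]$, identifies $\pi_{2*}\TC^-(\ku_{S,\square}/\ku_{P^\bullet})$ with an explicit filtered ring $\fil_{\qHodge}^\star\q\widehat{D}^\bullet$ built from $q$-divided powers (\cref{lem:qPDrank1}, \cref{lem:ExplicitqHodgeFiltration}), and finally runs a bicomplex/Poincar\'e lemma argument \`a la \cite[Theorem~\chref{16.22}]{Prismatic} to show the totalisation agrees with $\fil_{\qHodge,\square}^\star\qOmega_{S/A,\square}^*$.

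Your direct route---compute $\THH(\ku[x]/\ku)$ as a two-term complex and identify the $S^1$-differential as $\nabla_q$---is closer in spirit to what the paper calls Raksit's original ``geometric'' argument, which it explicitly replaces by the ``more algebraic'' proof above. The step you single out as the obstacle, namely showing that the $B$-operator over $\ku^{\h S^1}$ produces $[n]_q$ rather than $n$, is indeed the entire content; your proposal does not supply it, and neither the spectral-sequence bookkeeping nor the vague appeal to ``cyclic permutation pairing with the standard representation'' gets you there. The paper's method avoids this computation entirely by importing the $q$-structure from prismatic cohomology via $\ku^{\t C_p}$.
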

		To the author's knowledge, Raksit's result marks the discovery of the relation between $q$-de Rham cohomology and $\TC^-(-/\ku)$.
		
		Moreover, Raksit shows a generalisation of \cref{thm:RaksitIntro}, in which $\ku$ can be replaced by the connecitve cover $\tau_{\geqslant 0}E$ of any $2$-periodic $\IE_\infty$-ring spectrum $E$. The result is a version of the $q$-de Rham complex, in which the differentials send $x^m\mapsto \langle m\rangle_E x^{m-1}\d x$, where $\langle m\rangle_E$ denotes the reduced $m$-series of the formal group of $E$. It would be very interesting to see whether such a variant of $q$-de Rham cohomology can be made coordinate-independent. Some speculation about the (im-)possibility of this can be found in \cite[\S{\chpageref[4.3]{32}}]{DevalapurkarMisterka}.
	\end{numpar}

	\begin{numpar}[Relation to work of Devalapurkar.]
		The crucial input in our proof of \cref{thm:qdeRhamkuIntro} is the following theorem that was conjectured by Lurie and Nikolaus (for all~$p$) and finally proved (for $p>2$) in Devalapurkar's thesis:
		\begin{thm}[Devalapurkar {\cite[Theorem~\chref{6.4.1}]{DevalapurkarSpherochromatism}}]\label{thm:kutCpIntro}
			For primes $p > 2$, there exists an $S^1\times\IZ_p^\times$-equivariant equivalence of $\IE_\infty$-ring spectra
			\begin{equation*}
				\THH\bigl(\IZ_p[\zeta_p]/\IS_p\qpower\bigr)_p^\complete\overset{\simeq}{\longrightarrow} \tau_{\geqslant 0}\left(\ku^{\t C_p}\right)\,.
			\end{equation*}
		\end{thm}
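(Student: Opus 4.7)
The plan is to construct an explicit $S^1\times\IZ_p^\times$-equivariant comparison map and then verify it is an equivalence by computing homotopy groups on both sides. First I would use the canonical $\IE_\infty$-algebra map $\IS_p\qpower\to\ku$ (sending the power-series variable to the Bott character $q\in\pi_0\ku^{hS^1}$) followed by $\ku\to\tau_{\geqslant 0}(\ku^{\t C_p})$. The key observation is that this composite should annihilate $[p]_q$, because the standard character of $S^1$ restricts to a primitive $p$-th root of unity on $C_p$ and thus satisfies the cyclotomic relation $\Phi_p(q)=0$ in $\pi_0(\ku^{\t C_p})$. Modding out yields an $\IE_\infty$-map $\IZ_p[\zeta_p]\simeq\IS_p\qpower/([p]_q)\to \tau_{\geqslant 0}(\ku^{\t C_p})$, and the universal property of relative $\THH$ extends this to an $S^1$-equivariant $\IE_\infty$-map $\THH(\IZ_p[\zeta_p]/\IS_p\qpower)_p^\complete\to\tau_{\geqslant 0}(\ku^{\t C_p})$. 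The $\IZ_p^\times$-equivariance should then follow from matching the $q$-Adams operations $q\mapsto q^a$ on the source with the classical Adams operations on $\ku$.

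Second, I would identify both sides on homotopy groups. For the target, the Tate spectral sequence $\widehat H^*(C_p;\pi_*\ku)\Rightarrow \pi_{-*}(\ku^{\t C_p})$ together with the complex orientation of $\ku$ expresses $\pi_*\ku^{\t C_p}$ as a polynomial algebra over $\IZ_p[\zeta_p]$ on a single Bott-type generator in degree $2$. For the source, since $[p]_q$ is a non-zero-divisor in $\pi_0\IS_p\qpower=\IZ_p\qpower$, the algebra $\IZ_p[\zeta_p]$ is a regular quotient of $\IS_p\qpower$, so the relative cotangent complex is free of rank one in degree $1$ and hence $\pi_*\THH(\IZ_p[\zeta_p]/\IS_p\qpower)$ is also a polynomial algebra over $\IZ_p[\zeta_p]$ on a degree-$2$ generator. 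Matching these descriptions and checking that the comparison map carries generator to generator gives the underlying equivalence of $\IE_\infty$-ring spectra.

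The hardest part will be promoting this underlying equivalence to a genuinely $S^1\times\IZ_p^\times$-equivariant one. This requires Segal-conjecture-style input controlling the $S^1$-action on $\ku^{\t C_p}$ and matching it against the canonical $S^1$-action on relative $\THH$; equivalently, one needs to verify that the comparison map respects the cyclotomic structures on both sides, which is essentially the content of the Lurie--Nikolaus conjecture. The restriction to $p>2$ presumably reflects the subtlety of Bott periodicity and the $C_2$-equivariant Tate construction at the prime $2$, where the naive analogues of the above arguments either fail outright or require non-trivial correction terms beyond the odd-primary case.
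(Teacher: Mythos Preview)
The paper does not itself prove this theorem; it is cited from Devalapurkar's work. The paper does, however, prove the weaker $S^1$-equivariant $\IE_1$-version (\cref{thm:kutCpE1}, attributed to Nikolaus), and your outline is essentially the argument given there---but it does not reach the $\IE_\infty$-statement.

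The gap is the step ``Modding out yields an $\IE_\infty$-map $\IZ_p[\zeta_p]\simeq\IS_p\qpower/([p]_q)\to \tau_{\geqslant 0}(\ku^{\t C_p})$.'' First, the spectral cofibre $\IS_p\qpower/[p]_q$ is not $\IZ_p[\zeta_p]$; it is not even $\IZ$-linear. Second, and more to the point, to produce an $\IE_\infty$-$\IS_p\qpower$-algebra map out of $\IZ_p[\zeta_p]$ from the single relation $[p]_q=0$ you would need $\IZ_p[\zeta_p]$ to be the \emph{free} $\IE_\infty$-$\IS_p\qpower$-algebra with $[p]_q=0$, and it is not. What is true---and what the paper exploits---is that $\IZ_p[\zeta_p]$ is the free $(q-1)$-complete $\IE_2$-$\IS_p\qpower$-algebra with $[p]_q=0$ (reducing modulo $q-1$ to the fact that $\IF_p$ is the free $\IE_2$-ring with $p=0$). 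This yields only an $\IE_2$-map $\IZ_p[\zeta_p]\to\ku^{\t C_p}$, and the universal property of $\THH$ then produces only an $S^1$-equivariant $\IE_1$-map; your reduction modulo $(q-1)$ to $\THH(\IF_p)\simeq\tau_{\geqslant 0}(\IZ_p^{\t C_p})$ then shows it is an equivalence.

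You have also mislocated the difficulty. The $S^1$-equivariance comes for free from the universal property of $\THH$; the genuine obstruction is the upgrade from $\IE_1$ to $\IE_\infty$. That upgrade is precisely Devalapurkar's contribution and uses substantially different methods. The paper notes explicitly that the $\IE_\infty$-case at $p=2$ remains open.
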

		This theorem allows us to construct a comparison between the $p$-completions of $q$-de Rham cohomology and $\TC^-(-/\ku)$, as was observed both by Devalapurkar and the author (see \cite[Theorem~\chref{6.4.2}]{DevalapurkarSpherochromatism} and the discussion afterwards as well as \cite{qHodge}, arXiv versions $\leqslant3$). The idea is to construct an $S^1$-equivariant map $\THH(R[\zeta_p]/\IS\qpower)\rightarrow \THH(\ku\otimes\IS_R/\ku)^{\t C_p}$ as follows:
		\begin{equation*}
			\begin{tikzcd}
				\THH\bigl(R[\zeta_p]/\IS\qpower\bigr)\ar[rr,dashed] &[-0.5em] &[-0.5em] \THH(\ku\otimes\IS_R/\ku)^{\t C_p}\\
				\uar["\simeq"]\THH(\IS_R)\otimes \THH\bigl(\IZ_p[\zeta_p]/\IS\qpower\bigr)\rar & \THH(\IS_R)^{\t C_p}\otimes\ku^{\t C_p}\rar & \bigl(\THH(\IS_R)\otimes\ku\bigr)^{\t C_p}\uar["\simeq"']
			\end{tikzcd}
		\end{equation*}
		Here the bottom left arrow is given by the cyclotomic Frobenius on $\THH(\IS_R)$ and the map from \cref{thm:kutCpIntro}. Now the $S^1$-equivariant even filtration on $\THH(R[\zeta_p]/\IS\qpower)$ yields $p$-completed $q$-de Rham cohomology by a variant of the arguments in \cite[\S{\chref[section]{11}}]{BMS2}, as we'll explain in \cref{appendix:BMS2}. On the other hand, $(\THH(\ku\otimes\IS_R/\ku)^{\t C_p})^{\h S^1}\simeq \TP(\ku\otimes\IS_R/\ku)_p^\complete$ is already close to the $p$-completion of $\TC^-(\ku\otimes\IS_R/\ku)$. After some massaging, this will yield \cref{thm:qdeRhamkuIntro}.
		%
	\end{numpar}
	
	Let us now give several remarks on \cref{thm:qdeRhamkuIntro}. We begin with the notions of \emph{even filtrations} that we use.
	
	\begin{numpar}[Even filtrations.]
		Since $\IS_R$ is only assumed to be $\IE_2$, we cannot use the even filtration from \cite{EvenFiltration} on $\TC^-(\ku\otimes\IS_R/\ku)$. Instead we'll work with Pstr\k{a}gowski's \emph{perfect even filtration} \cite{PerfectEvenFiltration}, which is already defined for $\IE_1$-ring spectra.
		
		We'll also need an $S^1$-equivariant version of Pstr\k{a}gowski's construction, which first appears in \cite[Definition~\chref{2.55}]{CyclotomicSynthetic} but is originally due to Raksit: We let $\IS_{\ev}\coloneqq \fil_{\ev}^\star \IS$ and $\IT_{\ev}\coloneqq \fil_{\ev}^\star\IS[S^1]$ denote the even filtrations of $\IS$ and $\IS[S^1]$, respectively, and then define a filtered version of $S^1$-fixed points via $(-)^{\h\IT_{\ev}}\coloneqq \Hhom_{\IT_{\ev}}^\star(\IS_{\ev},-)$, where $\Hhom_{\IT_{\ev}}^\star$ denotes the internal $\Hom$ in filtered $\IT_{\ev}$-modules. Finally, we let
		\begin{equation*}
			\fil_{\ev,\h S^1}^\star\TC^-(\ku\otimes\IS_R/\ku)\coloneqq \bigl(\fil_{\ev}^\star \THH(\ku\otimes\IS_R/\ku)\bigr)^{\h\IT_{\ev}}\,,
		\end{equation*}
		where $\fil_{\ev}^\star \THH(\ku\otimes\IS_R/\ku)$ denotes the non-equivariant even filtration of $\THH(\ku\otimes\IS_R/\ku)$, regarded as a left module over itself.
	\end{numpar}
	\begin{numpar}[The solid even filtration.]
		We'll first show a $p$-complete version of \cref{thm:qdeRhamkuIntro} (see \cref{thm:qdeRhamkupComplete}) and then deduce the global theorem via an adelic gluing argument. For the $p$-complete version, it will be very convenient for us to work in the setting of \emph{solid condensed spectra} in the sense of Clausen--Scholze (see \cref{par:CondensedRecollections}--\cref{par:SolidpComplete} for a brief recap).
		
		To this end, we'll develop a variant of Pstr\k{a}gowski's perfect even filtration in the solid setting in \cref{sec:SolidEvenFiltration}. The idea is simple: For an $\IE_1$-algebra $R$ in solid condensed spectra, the $\infty$-category of left $R$-modules has a compact generator given by $\Null_R\coloneqq \cofib(R[\{\infty\}]\rightarrow R[\IN\cup\{\infty\}])$, the free left $R$-module on a nullsequence. We then call a left $R$-module \emph{solid perfect even} if it is contained in the sub-$\infty$-category generated by $\Null_R$ under even shifts, extensions, and retracts. With this definition, we'll adapt the constructions from \cite{PerfectEvenFiltration} in a straightforward way.
		
		As we'll see in \cref{sec:SolidEvenFiltration}, not all of the nice properties of the perfect even filtration carry over to the solid case, since Pstr\k{a}wski frequently exploits the fact that perfect even modules are dualisable, which fails in the solid setting. However, under certain additional \emph{nuclearity} assumptions, everything works as expected. These assumptions are satisfied in the cases we're interested in. In particular, they are satisfied in the discrete case (see \cref{cor:SolidvsDiscreteEvenFiltration}), and so our construction will recover Pstr\k{a}gowski's.
	\end{numpar}
	
	Let us now discuss to what extent the assumptions in \cref{thm:qdeRhamkuIntro} are optimal.
	
	\begin{numpar}[The theorem for $\IE_1$-lifts.]
		For the perfect even filtration of $\THH(\ku\otimes\IS_R/\ku)$ to be defined, it needs to be an $\IE_1$-algebra, which requires $\IS_R$ to be $\IE_2$-algebra. However, in the case where $R$ only admits an $\IE_1$-lift $\IS_R$, it can still happen that $\THH(\ku\otimes\IS_R/\ku)_p^\complete$ is concentrated in even degrees for some primes~$p$. More generally, it can happen that $\IS_R$ admits a cosimplicial resolution by $\IE_1$-algebras $\IS_R\rightarrow \IS_{R^\bullet}$ for which $\THH(\ku\otimes\IS_{R^\bullet}/\ku)_p^\complete$ is even. In this case, we can define an ad-hoc even filtration by
		\begin{equation*}
			\fil_{\ev}^\star\THH(\ku\otimes\IS_R/\ku)_p^\complete\coloneqq \limit_{\IDelta}\tau_{\geqslant 2\star}\THH(\ku\otimes\IS_{R^\bullet}/\ku)_p^\complete\,,
		\end{equation*}
		and then one can put $\fil_{\ev,\h S^1}^\star\TC^-(\ku\otimes\IS_R/\ku)_p^\complete\coloneqq (\fil_{\ev}^\star\THH(\ku\otimes\IS_R/\ku)_p^\complete)^{\h\IT_{\ev}}$ again. This agrees with $\limit_{\IDelta}\tau_{\geqslant 2\star}\TC^-(\ku\otimes\IS_{R^\bullet}/\ku)_p^\complete$.
		
		It turns out that the $p$-complete version of \cref{thm:qdeRhamkuIntro} is still true for these ad-hoc even filtrations. Moreover, we'll see in \cref{thm:qdeRhamkuRegularQuotient} that the filtration $\fil_{\qHodge}^\star(\qdeRham_{R/\IZ})_p^\complete$ admits a very simple explicit description in this case. We'll use this in \cref{subsec:Raksit} to prove \cref{thm:RaksitIntro}, and in joint work with Meyer \cite{qHodge} to compute $\pi_*\TC^-((\ku\otimes\IS/p^\alpha)/\ku)$.
	\end{numpar}
	\begin{numpar}[The theorem at $p=2$.]
		We expect that the assumption $2\in R^\times$ in \cref{thm:qdeRhamkuIntro} can be removed once \cref{thm:kutCpIntro} is proved for $p=2$ as well. In any case, the $\IE_1$-version of the theorem discussed above can be proved unconditionally for $p=2$.
	\end{numpar}
	\begin{numpar}[Are lifts to $\ku$ enough?]\label{par:Liftstoku}
		It's natural to ask if the $\IE_2$-lift $\IS_R$ in \cref{thm:qdeRhamkuIntro} can be replaced by the weaker datum of an $\IE_2$-$\ku$-algebra $\ku_R$ satisfying $\ku_R\otimes_{\ku}\IZ\simeq R$. Although we don't know any counterexample, we consider this unlikely. At the very least, the $\IE_1$-version of the theorem is provably wrong if only an $\IE_1$-lift $\ku_R$ is assumed. Here's a counterexample: Let $\IZ_p\{x\}_\infty$ denote the free $p$-complete perfect $\delta$-ring on a generator $x$ and let $R\coloneqq \IZ_p\{x\}_\infty/x$. Since $\IZ_p\{x\}_\infty$ is a perfect $\delta$-ring, it lifts uniquely to $\ku$ (even to the sphere spectrum), and so $\ku_R\coloneqq \ku_{\IZ_p\{x\}_\infty}/x$ can be equipped with an $\IE_1$-$\ku$-algebra structure via \cite[Corollary~\chref{3.2}]{AngeltveitEvenQuotients}. However, it can be shown that the filtration $\fil_{\qHodge}^\star(\qdeRham_{R/\IZ})_p^\complete$ from \cref{thm:qdeRhamkuRegularQuotient} is \emph{not} a $q$-deformation of the Hodge filtration in this case (see \cite[Example~\chref{4.24}]{qWittHabiro}), and so the $p$-complete version of \cref{thm:qdeRhamkuIntro} cannot hold in this case.
		
		This explains why we don't expect a lift to $\ku$ to be enough. We do expect, however, that it's enough to have a lift $j_R$ to the \emph{image of $J$-spectrum} $j\coloneqq \tau_{\geqslant 0}(\IS_{K(1)})$. Indeed, if the diagram
		\begin{equation*}
			\begin{tikzcd}
				& \THH(j)_p^\complete\dlar\dar\\
				j\rar & \THH(\IZ_p)_p^\complete
			\end{tikzcd}
		\end{equation*}
		were $S^1$-equivariantly commutative, we could use the construction discussed below \cref{thm:kutCpIntro} to get an $S^1$-equivariant map $\THH(R[\zeta_p]/\IS\qpower)\rightarrow \THH(\ku\otimes_j j_R/\ku)^{\t C_p}$, which could then be used to show the $p$-complete version of \cref{thm:qdeRhamkuIntro} with $\IS_R$ replaced by $j_R$.
		
		Unfortunately, the diagram above is \emph{not} $S^1$-equivariantly commutative, similar to what happens for $\THH(\IZ_p)\rightarrow \IZ_p\rightarrow \THH(\IF_p)$. But the issue doesn't seem to be too serious. For example, a chromatic height-$2$ analogue of \cite[Theorem~\chref{0.3.6}]{DevalapurkarRaksitTHH} would likely fix this, in the same way as the cited result fixes the problems for $\THH(\IZ_p)\rightarrow \IZ_p\rightarrow \THH(\IF_p)$.
	\end{numpar}
	
	\subsection{Habiro descent, homotopically}
	
	Let us now discuss the $q$-deformed Hodge filtration $\fil_{\qHodge}^\star\qdeRham_{R/\IZ}$ from \cref{thm:qdeRhamkuIntro}. It's natural to ask whether the $q$-de Rham complex in general can be equipped with a $q$-deformation of the Hodge filtration. This question is studied in the companion paper \cite{qWittHabiro}. It turns out that $\fil_{\qHodge}^\star\qdeRham_{R/\IZ}$ is a \emph{$q$-Hodge filtration} in the sense of \cite[Definition~\chref{3.2}]{qWittHabiro}; this roughly means that the rationalisation and the rationalised $p$-completions of $\fil_{\qHodge}^\star\qdeRham_{R/\IZ}$ behave as expected. It is \emph{not} always possible to find a $q$-Hodge filtration (the ring from \cref{par:Liftstoku} is a counterexample), so \cref{thm:qdeRhamkuIntro} provides a large source of examples where it works.
	
	To any $q$-Hodge filtration on $\qdeRham_{R/\IZ}$ we associate a \emph{$q$-Hodge complex} in \cite[\chref{3.5}]{qWittHabiro}. It is defined as
	\begin{equation*}
		\qHodge_{R/\IZ}\coloneqq \colimit\Bigl( \fil_{\qHodge}^0\qdeRham_{R/\IZ}\xrightarrow{(q-1)} \fil_{\qHodge}^1\qdeRham_{R/\IZ}\xrightarrow{(q-1)}\dotsb\Bigr)_{(q-1)}^\complete
	\end{equation*}
	(we've abusingly suppressed the choice of $q$-Hodge filtration in the notation $\qHodge_{R/\IZ}$, but it will always be clear from the context). As a straightforward corollary of \cref{thm:qdeRhamkuIntro}, one finds that
	\begin{equation*}
		\qHodge_{R/\IZ}[\beta^{\pm 1}]\simeq \Sigma^{-2*}\gr_{\ev,\h S^1}^*\TC^-(\KU\otimes\IS_R/\KU)\,.
	\end{equation*}
	
	In \cite[Theorem~\chref{3.11}]{qWittHabiro} we show that the $q$-Hodge complex $\qHodge_{R/\IZ}$ is, in a non-trivial way, the $(q-1)$-completion of an object $\qHhodge_{R/\IZ}$, which lives over the \emph{Habiro ring}
	\begin{equation*}
		\Hh\coloneqq \limit_{m\in\IN}\IZ[q]_{(q^m-1)}^\complete\,.
	\end{equation*}
	We call $\qHhodge_{R/\IZ}$ the \emph{Habiro--Hodge complex}. Our second main goal in this paper is to show that in the situation where the $q$-Hodge filtration $\fil_{\qHodge}^\star\qdeRham_{R/\IZ}$ comes from homotopy theory by means of \cref{thm:qdeRhamkuIntro}, the Habiro--Hodge complex $\qHhodge_{R/\IZ}$ can also be described homotopically.
	
	\begin{numpar}[Cyclonic spectra.]
		This description will involve some \emph{genuine} equivariant homotopy theory. Since this is not standard in arithmetic geometry, we'll give a review of the necessary parts of the theory in \cref{subsec:GenuineRecollections}. We'll then work with in the $\infty$-category of \emph{cyclonic spectra} $\Cyclonic$, introduced by Barwick--Glasman \cite{BarwickGlasmanCyclonic}; see \cref{par:Cyclonic}. Roughly, a cyclonic spectrum is a spectrum $X$ with an $S^1$-action, equipped with compatible genuine enhancements of the action of the finite cyclic subgroups $C_m\subseteq S^1$ for all $m\in\IN$.
		
		The spectrum $\THH(\IS_R)$ admits a natural cyclotomic structure, which induces a cyclonic structure. Similarly, $\ku$ admits a natural genuine $S^1$-equivariant structure, which again induces a cyclonic structure. Hence $\THH(\IS_R)\otimes\ku\simeq \THH(\ku\otimes\IS_R/\ku)$ can be equipped with a cyclonic structure too. We define the \emph{$m$\textsuperscript{th} topological cyclonic homology} \cref{def:ThisShouldBeTR} as
		\begin{equation*}
			\TCn{m}(\ku\otimes\IS_R/\ku)\coloneqq \bigl(\THH(\ku\otimes\IS_R/\ku)^{C_m}\bigr)^{\h (S^1/C_m)}\,.
		\end{equation*}
		Here $(-)^{C_m}$ denotes the genuine $C_m$-fixed points and $(-)^{\h (S^1/C_m)}$ the homotopy fixed points of the residual action. We note that $\TCn{m}(-/\ku)$ is similar to the construction of topological restriction homology $\TR(-)$; however, the restriction maps along which $\TR(-)$ is the limit do not exist for $\TCn{m}(-/\ku)$, and $\TR(-)$ doesn't take the residual actions into account.
		
		The same constructions work for $\KU$, and so we can define $\TCn{m}(\KU\otimes\IS_R/\KU)$ analogously. To obtain the Habiro--Hodge complex, we'll construct an appropriate even filtration on $\TCn{m}(\KU\otimes\IS_R/\KU)$.
	\end{numpar}
	\begin{numpar}[Genuine equivariant even filtrations.]
		For a cyclonic $\IE_1$-algebra~$X$ whose geometric fixed points $X^{\Phi C_m}$ are bounded below for all~$m\in\IN$, the genuine fixed points can be recovered from the geometric fixed points via the formula
		\begin{equation*}
			X^{C_m}\overset{\simeq}{\longrightarrow}\eq\Biggl(\prod_{d\mid m}(X^{\Phi C_d})^{\h C_{m/d}}\overset{\operatorname{can}}{\underset{\phi}{\doublemorphism}}\prod_{p\vphantom{|}}\prod_{pd\mid m}\bigl((X^{\Phi C_d})^{\t C_p}\bigr)^{\h C_{m/pd}}\Biggr)
		\end{equation*}
		(see \cref{lem:GenuineFromGeometric}). In the case where $X$ is an $\IE_1$-structure in $\Cyclonic$, this allows us to define an even filtration $\fil_{\ev,C_m}^\star X^{C_m}$ as follows: Equip each $X^{\Phi C_d}$ with the non-equivariant perfect even filtration, apply the filtered versions of $(-)^{\h C_{m/d}}$ and $(-)^{\t C_p}$ from \cite[\S{\chref[subsection]{2.3}}]{CyclotomicSynthetic}, then finally take the equaliser in filtered objects.
		
		In this way we can construct $\fil_{\ev,C_m}^\star\THH(\ku\otimes\IS_R/\ku)^{C_m}$. The same construction cannot be used for $\fil_{\ev,C_m}^\star\THH(\KU\otimes\IS_R/\KU)^{C_m}$, as $\THH(\KU\otimes\IS_R/\KU)$ is not bounded below, but instead we can simply use the filtered localisation of $\fil_{\ev,C_m}^\star\THH(\ku\otimes\IS_R/\ku)^{C_m}$ at the Bott element $\beta$. Finally, we put
		\begin{equation*}
			\fil_{\ev,S^1}^\star\TCn{m}(\KU\otimes\IS_R/\KU)\coloneqq \bigl(\fil_{\ev,C_m}^\star\THH(\KU\otimes\IS_R/\KU)^{C_m}\bigr)^{\h(\IT/C_m)_{\ev}}
		\end{equation*}
		(see \cref{par:CyclonicEvenFiltrationTHHKU} for the details).
		
		While these even filtrations are defined in a rather ad-hoc way, we expect that there exists a more canonical construction. An intrinsically defined genuine equivariant even filtration is currently under development (the author has been informed of independent work in progress by Jeremy Hahn and Lucas Piessevaux) and we hope that it will agree with our constructions in the cases at hand.
	\end{numpar}
	With these even filtrations, we can finally formulate the second main result of this paper.
	\begin{thm}[see \cref{thm:qdeRhamKUGenuine}]\label{thm:HabiroIntro}
		The Habiro--Hodge complex $\qHhodge_{R/\IZ}$ associated to the $q$-Hodge filtration $\fil_{\qHodge}^\star\qdeRham_{R/\IZ}$ from \cref{thm:qdeRhamkuIntro} satisfies
		\begin{equation*}
			\qHhodge_{R/\IZ}[\beta^{\pm 1}]\simeq \Sigma^{-2*}\gr^*\Bigl(\limit_{m\in\IN}\fil_{\ev,C_m}^\star\TCn{m}(\KU\otimes\IS_R/\KU)\Bigr)\,.
		\end{equation*}
	\end{thm}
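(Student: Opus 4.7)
The plan is to establish, for each $m \in \IN$ (ordered by divisibility), a compatible identification of $\Sigma^{-2*}\gr^*\fil_{\ev,C_m}^\star\TCn{m}(\KU\otimes\IS_R/\KU)$ with a $(q^m-1)$-adically completed variant of the filtered $q$-de Rham complex, and then to pass to the limit. The case $m=1$ already coincides with the Bott-inverted form of \cref{thm:qdeRhamkuIntro}, since $\TCn{1}(-/\KU)=\TC^-(-/\KU)$ and the even filtration constructions agree by design. The task is therefore to produce the analogous statement at each higher level and to match the transition maps in $m$ with the structural maps built into the definition of $\qHhodge_{R/\IZ}$ in \cite{qWittHabiro}.

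First, I would unfold $\THH(\KU\otimes\IS_R/\KU)^{C_m}$ via the formula from \cref{lem:GenuineFromGeometric}, writing it as an equaliser assembled from the pieces $(\THH(\KU\otimes\IS_R/\KU)^{\Phi C_d})^{\h C_{m/d}}$ for $d\mid m$ and the corresponding Tate quotients $((-)^{\t C_p})^{\h C_{m/pd}}$ for $pd\mid m$. The cyclotomic Frobenius yields $\THH(\IS_R)^{\Phi C_d}\simeq\THH(\IS_R)$ for every $d$, while $\ku^{\Phi C_d}$, and after inverting the Bott element $\KU^{\Phi C_d}$, is controlled by iterating \cref{thm:kutCpIntro}. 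Equipping each geometric fixed point with its non-equivariant perfect even filtration and applying the filtered $(-)^{\h C_{m/d}}$ and $(-)^{\t C_p}$ as in \cite[\chref[subsection]{2.3}]{CyclotomicSynthetic} turns the equaliser into a filtered object whose entries are already accessible to the arguments used to prove \cref{thm:qdeRhamkuIntro}.

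Second, one must account for the residual $S^1/C_m$-action, which only enters after the equaliser is formed. Under the $m$-th power isogeny $S^1\to S^1/C_m$ the canonical generator of $\pi_0(\ku^{\h(S^1/C_m)})$ corresponds to $q^m\in\pi_0(\ku^{\h S^1})$, so taking the filtered $(-)^{\h(\IT/C_m)_{\ev}}$ of the equaliser replaces the $(q-1)$-adic behaviour of the $m=1$ case by a $(q^m-1)$-adic one. Running the argument sketched below \cref{thm:kutCpIntro} at this level — now with the map $\THH(R[\zeta_p]/\IS\qpower)\to\THH(\KU\otimes\IS_R/\KU)^{\t C_p}$ replaced by its analogue built from the cyclotomic Frobenius on $\THH(\IS_R)^{\Phi C_d}$ and the $\KU$-version of Devalapurkar's theorem — one obtains at each level $m$ a filtered identification whose underlying object is a $(q^m-1)$-completion of $\qdeRham_{R/\IZ}[\beta^{\pm 1}]$ together with an analogue of the $q$-Hodge filtration.

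Finally, passing to the limit over $m$ along divisibility, the rings $\IZ[q]_{(q^m-1)}^\complete$ assemble into the Habiro ring $\Hh$ by definition, and the limit of the filtered objects is a candidate for $\qHhodge_{R/\IZ}[\beta^{\pm 1}]$. The main obstacle, and the step requiring the most care, is to verify that the transition maps between levels $m\mid n$ — coming on the cyclonic side from the inclusion $C_m\subseteq C_n$ and the induced restriction $\TCn{n}\to\TCn{m}$ — match the structural maps of the inverse system defining $\qHhodge_{R/\IZ}$ in \cite[\chref{3.11}]{qWittHabiro}. This amounts to identifying cyclonic restriction with the $q$-Witt-vector-like operators of \emph{loc.\ cit.}, and will likely proceed by first checking compatibility in the universal case $\IS_R=\IS[x]$, where Raksit's explicit model from \cref{thm:RaksitIntro} makes both sides computable, and then bootstrapping to the general case via the functoriality already used in the proof of \cref{thm:qdeRhamkuIntro}.
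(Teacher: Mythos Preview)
Your outline has the right skeleton --- decompose genuine fixed points via the equaliser formula, identify each geometric piece using Devalapurkar's theorem, take residual $(S^1/C_m)$-fixed points, then pass to the limit --- and this is indeed the backbone of the paper's argument. There are, however, two places where your plan either fails as stated or glosses over the actual work.

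\textbf{Bounded-below hypothesis.} You propose to apply \cref{lem:GenuineFromGeometric} directly to $\THH(\KU\otimes\IS_R/\KU)$. That lemma requires the geometric fixed points to be bounded below, which fails for $\KU$ since it is $2$-periodic. The paper handles this by first carrying out the entire analysis for $\ku$ (where everything is connective), obtaining the filtration $\fil_{\ev,C_m}^\star\THH(\ku_R/\ku)^{C_m}$ via the equaliser, and only then defining the $\KU$-version as the filtered localisation at~$\beta$ (see \cref{par:CyclonicEvenFiltrationTHHKU}). This is not merely a technicality: the intermediate $\ku$-statement, \cref{thm:qdeRhamkuGenuine}, is where all the content lies, and the $\KU$-version follows by a short base-change argument.

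\textbf{The identification at level~$m$.} Your sentence ``running the argument sketched below \cref{thm:kutCpIntro} at this level \dots\ one obtains at each level~$m$ a filtered identification'' hides the main difficulty. The argument below \cref{thm:kutCpIntro} is a $p$-complete comparison for a single prime; for general~$m$ one must separate the contributions of the various primes dividing~$m$. The paper does this by an arithmetic fracture square (\cref{par:EvenFiltrationFractureSquares}): after inverting~$m$ all Tate constructions vanish and the equaliser collapses to a product of $\Phi_d(q)$-completions (\cref{lem:qdeRhamkuGeometricBaseChange}); the $p$-complete part for each $p\mid m$ reduces via \cref{lem:qdeRhamkuGeometricpAdicBaseChange} to the case $m=p^\alpha$, which is then handled by induction on~$\alpha$ using the iterated pullback description of $(-)^{C_{p^\alpha}}$ (\cref{lem:qdeRhamkuGenuinepAdic}). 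The output is not just ``a $(q^m-1)$-completion together with an analogue of the $q$-Hodge filtration'' but precisely the twisted $q$-Hodge filtration $\fil_{\qHhodge_m}^\star\qdeRham_{R/\IZ}^{(m)}$ from \cite[\chref{3.38}]{qWittHabiro}, and matching the three corners of the fracture square with the three ingredients of that construction is the substance of the proof.

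On the transition maps: the paper checks directly that the restriction $\TCn{m}\to\TCn{n}$ for $n\mid m$ induces the structural map of \cite[Construction~\chref{3.41}]{qWittHabiro}, by tracing through the fracture and inductive pullbacks. Your proposal to verify this first for $\IS_R=\IS[x]$ and bootstrap by functoriality is a different route; it is not obviously wrong, but it is unclear what ``functoriality'' would reduce the general case to, since the objects $\qHhodge_{R/\IZ,m}$ are not left Kan extended from polynomial rings.
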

	As a consequence, we obtain a homotopical construction of the \emph{Habiro ring of a number field} $\Hh_{\Oo_F[1/\Delta]}$ from \cite{HabiroRingOfNumberField}:
	\begin{cor}[see \cref{cor:HabiroRingOfNumberFieldKU}]
		Let $F$ be a number field and let $\Delta$ be divisible by~$6$ and by the discriminant of $F$. Let $\IS_{\Oo_F[1/\Delta]}$ denote the unique lift of $\Oo_F[1/\Delta]$ to an étale extension of $\IS$. Then
		\begin{equation*}
			\Hh_{\Oo_F[1/\Delta]}\cong \pi_0\Bigl(\limit_{m\in\IN}\bigl(\THH(\KU\otimes\IS_{\Oo_F[1/\Delta]}/\KU)^{C_m}\bigr)^{\h (S^1/C_m)}\Bigr)\,.
		\end{equation*}
	\end{cor}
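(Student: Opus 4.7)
The strategy is to apply \cref{thm:HabiroIntro} directly to $R=\Oo_F[1/\Delta]$ and extract $\pi_0$, combining this with an identification of the Habiro--Hodge complex that is established in \cite{qWittHabiro} for étale bases.

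First I would verify that the hypotheses of \cref{thm:qdeRhamkuIntro}, and hence of \cref{thm:HabiroIntro}, are met. Because $\Delta$ is divisible by the discriminant of $F$, the ring $\Oo_F[1/\Delta]$ is étale over $\IZ[1/\Delta]$, so it is smooth over $\IZ[1/\Delta]$ and in particular quasi-syntomic over $\IZ$; the divisibility $2\mid 6\mid\Delta$ ensures $2\in\Oo_F[1/\Delta]^\times$. Since étale extensions of $\IS$ are canonically $\IE_\infty$, the unique étale lift $\IS_{\Oo_F[1/\Delta]}$ provides an $\IE_2$-lift with $\IS_{\Oo_F[1/\Delta]}\otimes\IZ\simeq \Oo_F[1/\Delta]$.

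Next, I would invoke \cref{thm:HabiroIntro} to obtain
\begin{equation*}
	\qHhodge_{\Oo_F[1/\Delta]/\IZ}[\beta^{\pm 1}]\simeq \Sigma^{-2*}\gr^*\Bigl(\limit_{m\in\IN}\fil_{\ev,C_m}^\star\TCn{m}(\KU\otimes\IS_{\Oo_F[1/\Delta]}/\KU)\Bigr)
\end{equation*}
and then take $\pi_0$ in weight zero on both sides. On the left, the companion paper \cite{qWittHabiro} identifies $\pi_0(\qHhodge_{\Oo_F[1/\Delta]/\IZ})$ with the Habiro ring of the number field $\Hh_{\Oo_F[1/\Delta]}$ precisely in the étale setting, which is the input that makes the corollary work. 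On the right, since $\TCn{m}(\KU\otimes\IS_{\Oo_F[1/\Delta]}/\KU)$ is $\KU$-linear and therefore $2$-periodic, the degree-$0$ part of the weight-$0$ graded piece computes $\pi_0$ of the underlying spectrum. Provided the resulting Milnor $\lim^1$-term over $m$ vanishes, $\pi_0$ commutes with $\limit_{m\in\IN}$, and unfolding the definition $\TCn{m}(\KU\otimes\IS_R/\KU)=(\THH(\KU\otimes\IS_R/\KU)^{C_m})^{\h(S^1/C_m)}$ yields the stated formula.

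The chief obstacle is the identification of $\pi_0(\qHhodge_{\Oo_F[1/\Delta]/\IZ})$ with the Habiro ring of the number field; this is a nontrivial input from \cite{qWittHabiro} special to étale extensions of $\IZ[1/\Delta]$, and without it the corollary would not follow from \cref{thm:HabiroIntro} alone. A secondary technicality is to verify the vanishing of the $\lim^1$-term needed to commute $\pi_0$ with the limit over $m$; in the étale case the $q$-Hodge filtration on $\qdeRham_{\Oo_F[1/\Delta]/\IZ}$ is particularly simple, so the relevant homotopy groups of $\TCn{m}$ should be controlled uniformly in $m$, and the Milnor sequence should degenerate.
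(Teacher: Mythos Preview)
Your overall strategy matches the paper's: invoke \cref{thm:HabiroIntro} (equivalently \cref{thm:qdeRhamKUGenuine}) for $R=\Oo_F[1/\Delta]$ and combine with the identification of the Habiro--Hodge complex from \cite{qWittHabiro}. The hypothesis check is fine.

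The one place where your argument is muddier than the paper's is the passage from $\gr^0$ of the limit filtration to $\pi_0$ of the limit spectrum. You argue via $2$-periodicity of $\TCn{m}(\KU_R/\KU)$ and then worry about a $\lim^1$-term; but $2$-periodicity of the underlying spectrum by itself does not tell you that $\gr^0$ of an arbitrary complete filtration equals $\pi_0$, and the $\lim^1$ issue is a red herring since the theorem already hands you $\gr^*$ of the \emph{limit} filtration rather than a limit of graded pieces. The paper's route is cleaner: the input from \cite[Corollary~3.12]{qWittHabiro} is that $\qHhodge_{\Oo_F[1/\Delta]/\IZ}\simeq \Hh_{\Oo_F[1/\Delta]}$ as a \emph{complex}, i.e.\ the Habiro--Hodge complex is static. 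Then $\Sigma^{-2*}\gr^*$ is concentrated in degree~$0$, so each $\gr^n$ sits in homotopical degree~$2n$; a complete exhaustive filtration with this property is forced to be the double-speed Whitehead filtration $\tau_{\geqslant 2\star}$, whence $\gr^0=\pi_0$ on the nose. So the key input you should cite is the staticness of $\qHhodge_{\Oo_F[1/\Delta]/\IZ}$, not merely an identification of its $\pi_0$.
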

	
	\subsection{Organisation of this paper}
	
	In \cref{sec:SolidEvenFiltration}, we introduce a version of Pstr\k{a}gowski's perfect even filtration in the solid setting. In \cref{sec:SolidEvenFiltrationku}, we'll study the solid even filtration on $\THH$. In particular, we'll show that it can often be computed by even cosimplicial resolutions, and satisfies the expected base change properties.
	
	In \cref{sec:qdeRhamku}, we'll show \cref{thm:qdeRhamkuIntro}. Subsections \crefrange{subsec:qdeRhamkupComplete}{subsec:QuasiRegular} are devoted to the $p$-completed version of the theorem; the global version will be deduced in \cref{subsec:qdeRhamkuGlobal} via an adelic gluing argument. In \cref{sec:Genuine}, we show \cref{thm:HabiroIntro}. In subsections \crefrange{subsec:GenuineRecollections}{subsec:Genuineku}, we set up the formalism of cyclonic spectra and explain how $\ku$ fits into this. In \cref{subsec:GenuineHabiroDescent} we'll show the Habiro descent theorem.
	
	In~\cref{sec:Examples} we'll discuss several examples. It is a priori not clear how to find rings $R$ to which \cref{thm:qdeRhamkuIntro} can be applied. In \cref{subsec:CyclotomicBases} we'll construct a large supply of such rings~$R$. In \cref{subsec:Raksit} we prove \cref{thm:RaksitIntro} and in \cref{subsec:HabiroRingOfNumberField} we show how the Habiro ring of a number field can be recovered from our formalism.
	
	Finally, there will be three appendices. In \cref{appendix:BMS2}, we discuss a \cite{BMS2}-style construction of the $q$-de Rham complex. In \cref{appendix:E2}, we give a proof of the folklore theorem that flat polynomial rings over $\IS$ in any number of variables admit an even $\IE_2$-cell structure. In \cref{appendix:EquivariantSnaith}, we explain how the equivariant Snaith theorem from \cite{EquivariantSnaith} can be made $\IE_\infty$, which will be needed in our discussion of the genuine $S^1$-equivariant structure on $\ku$.
	
	\begin{numpar}[Notation and conventions.]\label{par:Notation}
		Throughout the article, we freely use the language of $\infty$-categories and we'll adopt the following conventions:
		\begin{alphanumerate}
			\item \textbf{Stable $\boldsymbol\infty$-categories.} We let $\Sp$ denote the $\infty$-category of spectra. For an ordinary ring $R$, we let $\Dd(R)$ denote the derived $\infty$-category of $R$. We often implicitly regard objects of $\Dd(R)$ as spectra via the Eilenberg--MacLane functor $\H$, but we'll always suppress this functor in our notation. For a stable $\infty$-category $\Cc$, we let $\Hom_\Cc(-,-)$ denote the mapping spectra in $\Cc$. The shift functor and its inverse will always be denoted by $\Sigma$ and $\Sigma^{-1}$ (even for $\Dd(R)$), to avoid confusion with shifts in graded or filtered objects.
			
			\item \textbf{Symmetric monoidal $\infty$-categories.} If no confusion can occur, we denote the tensor unit by $\IUnit$ and the tensor product by $\otimes$. Whenever we consider a symmetric monoidal $\infty$-category $\Cc$ which is stable or presentable, we always implicitly assume that the tensor product commutes with finite colimits or arbitrary colimits, respectively. In the presentable case, we let $\Hhom_\Cc(-,-)$ denote the \emph{internal Hom} in $\Cc$ and $X^\vee\coloneqq \Hhom_\Cc(X,\IUnit)$ the \emph{dual} of an object $X \in \Cc$.
			
			\item \textbf{Graded and filtered objects.} For a stable $\infty$-category $\Cc$, we let $\Gr(\Cc)$ and $\Fil(\Sp)$ denote the $\infty$-categories of \emph{graded} and \emph{\embrace{descendingly} filtered objects in $\Cc$}. The shift in graded or filtered objects will be denoted $(-)(1)$. An object with a descending filtration is typically denoted
			\begin{equation*}
				\fil^\star X=\Bigl(\dotsb\leftarrow \fil^nX\leftarrow \fil^{n+1}X\leftarrow\dotsb\Bigl)
			\end{equation*}
			and we let $\gr^*X$ denote the \emph{associated graded}, given by $\gr^nX\coloneqq \cofib(\fil^{n+1}X\rightarrow \fil^nX)$. We mostly work with filtrations that are constant in degrees $\leqslant 0$ (such as the Hodge filtration). In this case we'll abusingly write $\fil^\star X=(\fil^0X\leftarrow \fil^1 X\leftarrow\dotsb)$; this should be interpreted as the constant $\fil^0X$-valued filtration in degrees $\leqslant 0$.
			
			If $\Cc$ is symmetric monoidal and the tensor product $-\otimes-$ commutes with colimits in both variables, we equip $\Gr(\Cc)$ and $\Fil(\Cc)$ with their canonical symmetric monoidal structures given by Day convolution. We'll use the fact that $\Fil(\Cc)\simeq \Mod_{\IUnit_{\Gr}[t]}\Gr(\Cc)$, where $\IUnit_{\Gr}$ denotes the tensor unit in $\Gr(\Cc)$ and $t$ sits in graded degree~$-1$; see e.g.\ \cite[Proposition~\chref{3.2.9}]{RaksitFilteredCircle}. Under this equivalence, passing to the associated graded corresponds to \enquote{modding out~$t$}, i.e.\ the base change $\IUnit_{\Gr}\otimes_{\IUnit_{\Gr}[t]}-$.
			
			We say that $\fil^\star X$ is an \emph{exhaustive filtration on $X$} if $X\simeq \colimit_{n\rightarrow -\infty}\fil^nX$. We say that a filtered object $\fil^\star X$ is \emph{complete} if $0\simeq \limit_{n\rightarrow \infty}\fil^nX$. We define the \emph{completion} $\fil^\star \widehat{X}\coloneqq \limit_{n\rightarrow\infty}\cofib(\fil^{\star+n} X\rightarrow\fil^{\star}X)$. By construction, there's a pullback square
			\begin{equation*}
				\begin{tikzcd}
					\fil^\star X\rar\dar\drar[pullback] & \fil^\star \widehat{X}\dar\\
					X\rar & \widehat{X}
				\end{tikzcd}
			\end{equation*}
			We'll often refer to this by saying that \emph{every filtration is the pullback of its completion}.
			
			\item \textbf{Condensed mathematics.} Whenever we use condensed mathematics, we work in the light condensed setting. We'll distinguish between the words \emph{static} (\enquote{un-animated}) for a spectrum concentrated in degree~$0$, and \emph{discrete} (\enquote{un-condensed}) for a condensed spectrum with the discrete topology.
			
			\item \textbf{Homotopy classes of $\boldsymbol{\ku^{\h S^1}}$.} We denote by $\beta\in\pi_2(\ku)$ the Bott element and by $q\in\pi_0(\ku^{\h S^1})$ the class corresponding to the standard representation of $S^1$ on $\IC$. There's a unique complex orientation $t\in\pi_{-2}(\ku^{\h S^1})$ satisfying $q-1=\beta t$; then $\pi_*(\ku^{\h S^1})\cong \IZ[\beta]\llbracket t\rrbracket$. Here we purposely use the same symbol as for $\IUnit_{\Gr}[t]$ above: We'll often regard graded $\pi_*(\ku^{\h S^1})\cong \IZ[\beta]\llbracket t\rrbracket$-modules as filtered objects using the apparent graded $\IZ[t]$-module structure.
			
			\item \textbf{Arithmetic fracture squares and gluing.} We'll often use that for any spectrum $X$ and any positive integer~$N$, there are canonical pullback squares
			\begin{equation*}
				\begin{tikzcd}
					X\rar\dar\drar[pullback] & \prod_{p\vphantom{\mid N}}\widehat{X}_p\dar\\
					X\otimes\IQ\rar & \prod_{p\vphantom{\mid N}}\widehat{X}_p\otimes\IQ
				\end{tikzcd}\quad\text{and}\quad \begin{tikzcd}
				X\rar\dar\drar[pullback] & \prod_{p\mid N}\widehat{X}_p\dar\\
				X\bigl[\localise{N}\bigr]\rar & \prod_{p\mid N}\widehat{X}_p\bigl[\localise{p}\bigr]
				\end{tikzcd}
			\end{equation*}
			and we'll call these \emph{arithmetic fracture squares}.
			
			\item \textbf{Completed ($\boldsymbol q$-)de Rham complexes.} To avoid excessive use of completions, we adopt the convention that all ($q$-)de Rham or cotangent complexes relative to a $p$-complete ring will be implicitly $p$-completed. So for example, while $\deRham_{R/\IZ}$ would denote the usual derived de Rham complex of $R$, $\qdeRham_{R/\IZ_p}$ would denote the $p$-completed derived $q$-de Rham complex of $R$.
		\end{alphanumerate}
		
		
	\end{numpar}

	\begin{numpar}[Acknowledgements.]
		First and foremost I'd like to thank Sanath Devalapurkar and Arpon Raksit for sharing and discussing their (by then) unpublished results with me. It will undoubtedly become apparent to the reader how much of an intellectual debt this paper owes to their work. Moreover, I would like to thank Peter Scholze, for his suggestion to compute $\TCref(\ku\otimes\IQ/\ku)$, which has led to the discovery of the results in \cref{sec:qdeRhamku} of this paper, and for many fruitful discussions. I would also like to thank Johannes Anschütz, who originally raised the question whether there's a $\TR$-style description of $q$-de Rham--Witt complexes at my master's thesis defense. This question can now partially be answered by the results of \cref{sec:Genuine}. Finally, I'm grateful to Gabriel Angelini-Knoll, Ben Antieau, Omer Bojann, Robert Burklund, Bastiaan Cnossen, Jeremy Hahn, Kaif Hilman, Dominik Kirstein, Christian Kremer, Akhil Mathew, and Lucas Piessevaux for helpful conversations.
		
		This work was carried out while I was a Ph.D.\ student at the MPIM/University Bonn, and I would like to thank these institutions for their hospitality. I'm grateful for the financial support provided by the DFG through Peter Scholze's Leibniz prize.
	\end{numpar}
	
	\newpage

	\section{The solid even filtration}\label{sec:SolidEvenFiltration}
	
	In this section we'll sketch how to adapt Pstr\k{a}gowski's perfect even filtration \cite{PerfectEvenFiltration} to $\IE_1$-algebras in solid condensed spectra. This facilitates many $p$-completion arguments later on. However, as we'll see, not all of the nice properties of the perfect even filtration carry over to the solid condensed case. But in the cases we need---and probably most cases of interest in general---it works as expected. It would be desirable to develop a more complete (and perhaps less naive) theory of the perfect even filtration in the condensed setting.
	
	Before we begin, let us briefly recall the solid condensed setting. There are no properly published sources yet, so we have to refer the reader to the recordings of \cite{AnalyticStacks} and the unfinished notes \cite{SolidNotes}.

	\begin{numpar}[Solid condensed recollections.]\label{par:CondensedRecollections}
		Let $\Cond(\Sp)$ denote the $\infty$-category of \emph{\embrace{light} condensed spectra}, that is, hypersheaves of spectra on the site of light profinite sets as defined by Clausen and Scholze \cite{AnalyticStacks}. The evaluation at the point $(-)(*)\colon \Cond(\Sp)\rightarrow \Sp$ admits a fully faithful symmetric monoidal left adjoint $(\underline{-})\colon \Sp\rightarrow\Cond(\Sp)$, sending a spectrum $X$ to the \emph{discrete} condensed spectrum $\underline{X}$.
		
		One can develop a theory of \emph{solid condensed spectra} along the lines of \cite[Lectures~\href{https://www.youtube.com/watch?v=bdQ-_CZ5tl8&list=PLx5f8IelFRgGmu6gmL-Kf_Rl_6Mm7juZO}{5}--\href{https://www.youtube.com/watch?v=KKzt6C9ggWA&list=PLx5f8IelFRgGmu6gmL-Kf_Rl_6Mm7juZO}{6}]{AnalyticStacks}. Let $\Null\coloneqq \cofib(\IS[\{\infty\}]\rightarrow\IS[\IN\cup\{\infty\}])$ be the free condensed spectrum on a null sequence. Let $\sigma\colon \Null\rightarrow \Null$ be the endomorphism induced by the shift map $(-)+1\colon \IN\cup\{\infty\}\rightarrow \IN\cup\{\infty\}$. Recall that a condensed spectrum $M$ is called \emph{solid} if
		\begin{equation*}
			1-\sigma^*\colon \Hhom_\IS(\Null,M)\overset{\simeq}{\longrightarrow}\Hhom_\IS(\Null,M)
		\end{equation*}
		is an equivalence, where $\Hhom_\IS$ denotes the internal Hom in $\Cond(\Sp)$. We let $\Sp_\solid\subseteq \Cond(\Sp)$ denote the full sub-$\infty$-category of solid condensed spectra. Then $\Sp_\solid$ is closed under all limits and colimits. This implies that the inclusion $\Sp_\solid\subseteq \Cond(\Sp)$ admits a left adjoint $(-)^\solid\colon \Cond(\Sp)\rightarrow \Sp_\solid$. It satisfies $(M\otimes N)^\solid\simeq (M^\solid\otimes N)^\solid$, which allows us to endow $\Sp_\solid$ with a symmetric monoidal structure, called the \emph{solid tensor product}, via $M\soltimes N\coloneqq (M\otimes N)^\solid$.
	\end{numpar}
	\begin{numpar}[Solid condensed spectra and $p$-completions.]\label{par:SolidpComplete}
		If $X$ is a $p$-complete spectrum, then $\underline{X}$ is usually \emph{not} $p$-complete in $\Cond(\Sp)$ because $(\underline{-})$ doesn't commute with limits. After passing to $p$-completions, we still get an adjunction on $p$-complete objects $(\underline{-})_p^\complete\colon \Sp_p^\complete\shortdoublelrmorphism\Cond(\Sp)_p^\complete\noloc(-)(*)$ and the left adjoint is still fully faithful because the unit is still an equivalence.
		
		It's straightforward to check that any discrete condensed spectrum is solid. By closure under limits it follows that $(\underline{-})_p^\complete\colon \Sp_p^\complete\rightarrow\Cond(\Sp)_p^\complete$ takes values in $\Sp_\solid$.  The solid tensor product has the magical property that if $M$ and $N$ are $p$-complete and bounded below solid condensed spectra, then $M\soltimes N$ is again $p$-complete; see \cite[Lecture~\href{https://youtu.be/KKzt6C9ggWA?list=PLx5f8IelFRgGmu6gmL-Kf_Rl_6Mm7juZO&t=3288}{6}]{AnalyticStacks} or \cite[Proposition~\chref{A.3}]{BoscoRationalHodge}. In particular, the fully faithful embedding $(\underline{-})_p^\complete\colon \Sp_p^\complete\rightarrow \Sp_\solid$ is symmetric monoidal when restricted to bounded below objects.
	\end{numpar}
	
	\subsection{Definitions and basic properties}
	
	In the following we let $R$ be an $\IE_1$-algebra in the symmetric monoidal $\infty$-category of solid condensed spectra $\Sp_\solid$ and we let
	\begin{equation*}
		-\soltimes_R-\colon \RMod_R(\Sp_\solid)\times \LMod_R(\Sp_\solid)\longrightarrow \Sp_\solid
	\end{equation*}
	denote the relative tensor product over $R$. We start setting up the theory in a completely analogous way to \cite[\S\S{\chref[section]{2}}--{\chref[section]{3}}]{PerfectEvenFiltration}.
	
	\begin{numpar}[Solid perfect even modules.]\label{par:SolidPerfectEven}
		We let $\Null_R\coloneqq R\soltimes \Null^\solid$, where we define $\Null\coloneqq \cofib(\IS[\{\infty\}]\rightarrow \IS[\IN\cup\{\infty\}])$ to be the free condensed spectrum on a null sequence as in \cref{par:CondensedRecollections}. It can be shown that the solidification $\Null^\solid$ agrees with $\prod_\IN\IS$ and defines a compact generator of $\Sp_\solid$, so that $\Null_R$ is a compact generator of $\LMod_R(\Sp_\solid)$.
		
		We say that an $R$-module $Q$ is \emph{solid perfect even} if it is contained in the smallest sub-$\infty$-category
		\begin{equation*}
			\Perf_{\ev}(R_\solid)\subseteq \LMod_R(\Sp_\solid)
		\end{equation*}
		which contains $\Sigma^{2n}\Null_R$ for all $n\in\IZ$ and is closed under extensions and retracts.
		
		Note that $R[S]^\solid$ is solid perfect even for all light condensed sets $S$. Also note that in contrast to the uncondensed situation, it is no longer true that $\Perf_{\ev}(R_\solid)$ is closed under duals. Already for $R=\IS$ we have $\Hom_\IS(\Null_\IS,\IS)\simeq \bigoplus_\IN\IS$, which is not solid perfect even. This is ultimately the reason why the solid theory is not quite as nice.
	\end{numpar}
	
	\begin{numpar}[The solid even filtration.]\label{par:SolidEvenFiltration}
		Equip $\Perf_{\ev}(R_\solid)$ with a Grothendieck topology in which covers are maps $P\rightarrow Q$ whose fibre is again solid perfect even. Every left $R$-module $M$ defines a $\Sp_\solid$-valued sheaf on the additive site $\Perf_{\ev}(R_\solid)$ via
		\begin{equation*}
			\Hhom_R(-,M)\colon \Perf_{\ev}(R_\solid)^\op\longrightarrow \Sp_\solid\,.
		\end{equation*}
		We can form its truncations $\tau_{\geqslant 2n}\Hhom_R(-,M)$ in the sheaf $\infty$-category $\Sh(\Perf_{\ev}(R_\solid),\Sp_\solid)$ and then define the \emph{solid even filtration of $M$} as the sections
		\begin{equation*}
			\fil_{\ev/R}^\star M\coloneqq \Gamma_{\Perf_{\ev}(R_\solid)}\bigl(R,\tau_{\geqslant 2\star}\Hhom_R(-,M)\bigr)\,.
		\end{equation*}
		If $R$ is clear from the context, we'll often just write $\fil_{\ev}^\star M$. In particular, if we write $\fil_{\ev}^\star R$, it is understood that we take the solid even filtration of $R$ over itself.
		
		For any half-integer weight $w$, we also define the \emph{even sheaf of weight $w$}, denoted $\Ff_M(w)$, as the sheafification of the presheaf of solid abelian groups $\pi_{2w}\Hhom_R(-,M)\colon \Perf_{\ev}(R_\solid)^\op\rightarrow \cat{Ab}_\solid$. For $w=0$ we just write $\Ff_M\coloneqq \Ff_M(0)$. We call $M$ \emph{solid homologically even} if $\Ff_M(w)\cong 0$ for all proper half-integers $w\in\frac12+\IZ$.
		
		The results from  \cite[\S{\chref[section]{2}}]{PerfectEvenFiltration} can be carried over verbatim to the solid setting. In particular, it's still true that an $R$-module $E$, whose condensed homotopy groups $\pi_*(E)$ are concentrated in even degrees, will be homogically even and its solid even filtration will be the double-speed Whitehead filtration $\fil_{\ev/R}^\star E\simeq \tau_{\geqslant 2\star}(E)$.
	\end{numpar}
	\begin{numpar}[Monoidality of the solid even filtration.]
		The arguments from \cite[\S{\chref[section]{3}}]{PerfectEvenFiltration} can mostly be adapted to the solid situation, but we need some enriched $\infty$-category to do so.
		
		Let us first set up the enriched setting. We use the formalism from \cite{HeineEnriched}. The $\infty$-category $\LMod_R(\Sp_\solid)$ is naturally a module over $\Sp_{\solid}$ in $\Pr^\L$ and so it will be enriched in the sense of \cite{HeineEnriched}. Explicitly, for left $R$-modules $M$ and $N$, the mapping spectrum $\Hom_R(M,N)$ comes with a natural condensed structure $\Hhom_R(M,N)$ which will be solid if $N$ is (we've already used this in \cref{par:SolidEvenFiltration}). Restricting the module structure, we see that $\LMod_R(\Sp_\solid)$ is also a module over the connective part $\Sp_{\solid,\geqslant 0}$ in $\Pr^\L$, which yields an enrichment given by $\tau_{\geqslant 0}\Hhom_R(M,N)$. The full sub-$\infty$-category $\Perf_{\ev}(R_\solid)\subseteq \LMod_R(\Sp_\solid)$ inherits an enrichment over $\Sp_{\solid,\geqslant 0}$. There is an established notion of an \emph{enriched presheaf $\infty$-category} $\PSh^{\Sp_{\solid,\geqslant 0}}(\Perf_{\ev}(R_\solid),\Sp_{\solid},\geqslant 0)$ with an enriched Yoneda embedding; see \cite{HinichYoneda,HeineTwoModels}. By considering enriched presheaves which are additive and local with respect to all covering sieves, we can also define an enriched version of additive sheaves. To avoid cumbersome notation, we'll drop the superscript and just write $\Sh_\Sigma(\Perf_{\ev}(R_\solid),\Sp_{\solid,\geqslant 0})$ and $\Sh_\Sigma(\Perf_{\ev}(R_\solid),\Sp_\solid)$ in the following, implicitly assuming that all sheaves are enriched over $\Sp_{\solid,\geqslant 0}$.
		
		Let us now explain how to adapt \cite[\S{\chref[section]{3}}]{PerfectEvenFiltration} to turn the solid even filtration into a lax symmetric monoidal functor
		\begin{equation*}
			\fil_{\ev/-}^\star(-)\colon \LMod(\Sp_\solid)\longrightarrow \LMod(\Fil\Sp_\solid)\,.
		\end{equation*}
		Let $\Uu^{\geqslant 0}$ and $\Uu$ denote the cocartesian unstraightenings of the functors lax symmetric monoidal functors $R\mapsto \Sh_\Sigma(\Perf_{\ev}(R_\solid),\Sp_{\solid,\geqslant 0})$ and $\Sh_\Sigma(\Perf_{\ev}(R_\solid),\Sp_\solid)$. The $\infty$-category of enriched (pre)sheaves satisfies a similar universal property as usual; see \cite[Theorem~{\chpageref[5.1]{61}}]{HeineEnriched}. As in \cite[Construction~\chref{3.8}]{PerfectEvenFiltration}, we obtain a symmetric monoidal natural transformation between the lax symmetric monoidal functors $R\mapsto \Sh_\Sigma(\Perf_{\ev}(R_\solid),\Sp_{\solid,\geqslant 0})$ and $R\mapsto \LMod_R(\Sp_\solid)$. Applying unstraightening, we obtain a diagram
		\begin{equation*}
			\begin{tikzcd}
				\Uu^{\geqslant 0}\ar[rr,"F"]\drar & & \LMod(\Sp_\solid)\dlar\\
				& \Alg_{\IE_1}(\Sp_\solid) &
			\end{tikzcd}
		\end{equation*}
		where the vertical arrows are cocartesian fibrations and the top horizontal arrow $F$ is symmetric monoidal.
		
		The functor $F$ admits a fibre-wise right adjoint: In the fibre over $R$, the right adjoint is given by the restricted enriched Yoneda embedding $\LMod_R(\Sp_\solid)\rightarrow\Sh_\Sigma(\Perf_{\ev}(R_\solid),\Sp_{\solid,\geqslant 0})$ sending $M\mapsto \tau_{\geqslant 0}\Hhom_R(-,M)$. Since our sheaves take values in $\Sp_{\solid,\geqslant 0}$, the truncation can be performed section-wise and no sheafification is necessary. By \cite[Corollary~\chref{7.3.2.7}]{HA}, the fibre-wise right adjoints assemble into a lax symmetric monoidal right adjoint $G\colon \LMod(\Sp_{\solid})\rightarrow\Uu^{\geqslant 0}$. We'll now study the composition
		\begin{equation*}
			\LMod(\Sp_\solid)\overset{G}{\longrightarrow}\Uu^{\geqslant 0}\longrightarrow \Uu\,.
		\end{equation*}
		In the fibre over $R$, this composition is given by sending $M\mapsto \nu_R(M)\coloneqq \tau_{\geqslant 0}\Hhom_R(-,M)$, where now the truncation is performed in  $\Sh_\Sigma(\Perf_{\ev}(R_\solid),\Sp_{\solid})$.
		
		Another application of the universal property \cite[Theorem~{\chpageref[5.1]{61}}]{HeineEnriched} allows us to extend the lax symmetric monoidal functor $\tau_{\geqslant -2\star}\Hhom_\IS(-,\IS)\colon \IZ\rightarrow \Sh_\Sigma(\Perf_{\ev}(\IS_\solid),\Sp_{\solid})$ to a lax symmetric monoidal functor
		\begin{equation*}
			\Fil\Sp_\solid\longrightarrow \Sh_\Sigma\bigl(\Perf_{\ev}(\IS_\solid),\Sp_{\solid}\bigr)
		\end{equation*}
		As in \cite[Construction~\chref{3.20}]{PerfectEvenFiltration}, for any $R\in\Alg_{\IE_1}(\Sp_{\solid})$, $\Sh_\Sigma(\Perf_{\ev}(R_\solid),\Sp_{\solid})$ is a module over $\Sh_\Sigma(\Perf_{\ev}(\IS_\solid),\Sp_{\solid})$ and thus over $\Fil\Sp_\solid$. Therefore, if $X$ and $Y$ are $\Sp_\solid$-valued sheaves on $\Perf_{\ev}(R_\solid)$, we can define a filtered solid condensed mapping spectrum $\Hhom^\star(X,Y)$. Using the enriched Yoneda lemma of \cite{HinichYoneda}, we can argue as in \cite[Lemma~\chref{3.23}]{PerfectEvenFiltration} to show
		\begin{equation*}
			\Hhom^\star\bigl(\nu_R(R),\nu_R(M)\bigr)\simeq \fil_{\ev/R}^\star M\,.
		\end{equation*}
		Now consider the functor $R\mapsto \Sh_\Sigma(\Perf_{\ev}(R_\solid),\Sp_{\solid})$. As in \cite[Construction~\chref{3.27}]{PerfectEvenFiltration} we can refine it to a lax symmetric monoidal functor $\Alg_{\IE_1}(\Sp_\solid)\rightarrow \Alg_{\IE_0}(\Mod_{\Fil\Sp_\solid}(\Pr^\L))$.
		
		We don't know if this functor factors through the image of the fully faithful embedding $\Alg_{\IE_1}(\Fil\Sp_\solid)\hookrightarrow \Alg_{\IE_0}(\Mod_{\Fil\Sp_\solid}(\Pr^\L))$, as it does in the uncondensed setting.%
		\footnote{In particular, we don't know if the analogue of \cite[Proposition~\chref{3.26}]{PerfectEvenFiltration} is true, i.e.\ whether
			\begin{equation*}
				\Hhom^\star\bigl(\nu_R(R),-\bigr)\colon \Sh_\Sigma\bigl(\Perf_{\ev}(R_\solid),\Sp_{\solid}\bigr)\longrightarrow \LMod_{\underline{\operatorname{End}}^\star(\nu_R(R))}(\Fil\Sp_\solid)
			\end{equation*}
			is an equivalence. The problem is that the even filtration $\fil_{\ev/R}^\star(M)$ only knows about the values of the sheaf $\tau_{\geqslant 2\star}\Hom_R(-,M)$ on $R$ (plus even shifts, extensions, and retracts thereof), but not about the value on $\Null_R$.}
		But this fully faithful embedding has a right adjoint by \cite[Theorem~\chref{4.8.5.11}]{HA}, which sends a $\Fil\Sp_\solid$-module $\Mm$ with a distinguished object $X\in\Mm$ to $\underline{\operatorname{End}}^\star(X)\in\Alg_{\IE_1}(\Fil\Sp_{\solid})$. Composing with this right adjoint allows us to turn $R\mapsto \fil_{\ev/R}^\star R$ into a lax symmetric monoidal functor
		\begin{equation*}
			\fil_{\ev/-}^\star(-)\colon \Alg_{\IE_1}(\Sp_{\solid})\longrightarrow\Alg_{\IE_1}(\Fil\Sp_\solid)
		\end{equation*}
		and provides a symmetric monoidal natural transformation from $R\mapsto \Sh_\Sigma(\Perf_{\ev}(R_\solid),\Sp_{\solid})$ to $R\mapsto \Mod_{\fil_{\ev/R}^\star R}(\Fil\Sp_\solid)$. The unstraightening of the latter functor is the the pullback of $\LMod(\Fil\Sp_{\solid})\rightarrow \Alg_{\IE_1}(\Fil\Sp_{\solid})$ along $\fil_{\ev/-}(-)^\star$. We obtain a diagram
		\begin{equation*}
			\begin{tikzcd}[column sep=large]
				\Uu\rar\dar & \LMod(\Fil\Sp_{\solid})\dar\\
				\Alg_{\IE_1}(\Sp_{\solid})\rar["\fil_{\ev/-}^\star(-)"] & \Alg_{\IE_1}(\Fil\Sp_{\solid})
			\end{tikzcd}
		\end{equation*}
		with lax symmetric monoidal horizontal arrows. We can now finally define a functorial lax symmetric monoidal solid even filtration as the composite
		\begin{equation*}
			\fil_{\ev/-}(-)\colon \LMod(\Sp_\solid)\overset{G}{\longrightarrow}\Uu^{\geqslant 0}\longrightarrow \Uu\longrightarrow \LMod(\Fil\Sp_\solid)\,.
		\end{equation*}
	\end{numpar}
	\begin{numpar}[Calculus of solid evenness.]\label{par:CalculusOfSolidEvenness}
		Deviating from \cite[Definition~\chref{4.4}]{PerfectEvenFiltration}, let us call a left $R$-module $M$ \emph{solid ind-perfect even} if it can be written as a filtered colimit of solid perfect evens, and \emph{solid even flat} if $\pi_*(E\soltimes_RM)$ is concentrated in even degrees for any right $R$-module $E$ such that $\pi_*(E)$ is concentrated in even degrees. In the uncondensed setting these notions are equivalent by the \enquote{even Lazard theorem} \cite[Theorem~\chref{4.14}]{PerfectEvenFiltration}. In the solid setting it is still true that solid ind-perfect even modules are solid even flat (as we'll see). However, we don't know if the converse is true. Similarly, we don't know if \cite[Theorem~\chref{4.16}]{PerfectEvenFiltration} still works. In \cref{subsec:Nuclear}, we'll discuss what the problem is, and in \cref{subsec:SolidEvenFlat} we'll see how to fix this, at least under certain additional assumptions.
		
		Despite these problems, the formalism of \emph{$\pi_*$-even envelopes} can entirely be carried over to the solid setting: Any left $R$-module $M$ admits a map $M\rightarrow E$ such that:
		\begin{alphanumerate}
			\item $\cofib(M\rightarrow E)$ is ind-solid perfect even.
			\item $\pi_*(E)$ is concentrated in even degrees.
			\item for any other map $M\rightarrow F$ into a left $R$-module $F$ such that $\pi_*(F)$ is even, a dashed arrow can be found to make the following diagram commutative:
			\begin{equation*}
				\begin{tikzcd}
					& M\dlar\drar & \\
					E\ar[rr,dashed] & & F
				\end{tikzcd}
			\end{equation*}
		\end{alphanumerate}
		The proof is the same as in the uncondensed setting, except that we have to consider maps $\Sigma^n\Null_R\rightarrow M$ from odd suspensions of $\Null_R$.
	\end{numpar}
	\begin{numpar}[Comparison with the uncondensed theory.]\label{par:ComparisonWithUsualEven}
		Let $R$ be a discrete solid condensed ring and let $M$ be a discrete left $R$-module. Let $\fil_{\Pev}^\star M$ be Pstr\k{a}gowski's perfect even filtation, regarded as a filtered discrete solid spectrum. Since Pstr\k{a}gowski's category $\Perf_{\ev}(R)$ is a full sub-$\infty$-category of $\Perf_{\ev}(R_\solid)$, we get a canonical comparison map
		\begin{equation*}
			\fil_{\Pev}^\star M\longrightarrow \fil_{\ev}^\star M\,.
		\end{equation*}
		As a consequence of the fact that $\pi_*$-even envelopes still work, we'll be able to show in \cref{cor:SolidvsDiscreteEvenFiltration} that this comparison map is an equivalence whenever $M$ is homologically even!
	\end{numpar}
	
	\subsection{Recollections on trace-class morphisms and nuclear objects}\label{subsec:Nuclear}
	
	In contrast to the mostly smooth sailing of \cref{par:SolidPerfectEven}--\labelcref{cor:SolidvsDiscreteEvenFiltration}, it's not so clear how to transport Pstr\k{a}gowski's discussion of even flatness---in particular, the powerful results \cite[Theorems~\chref{4.14} and~\chref{4.16}]{PerfectEvenFiltration}---to the solid setting. The main problem is the following: In the proofs, Pstr\k{a}gowski repeatedly uses the trick that a map $P\rightarrow Q$ of perfect even $R$-modules can be equivalently described by a map $\IS\rightarrow P^\vee\otimes_RQ$. This doesn't work anymore in the solid setting, since most solid perfect even $R$-modules are \emph{not} dualisable, the quintessential example being $\Null_R$.
	
	This is not the first time that such a problem occurs in solid condensed mathematics. The usual way to deal with these issues (which will also work in our case) is to replace dualisable objects by the weaker notions of \emph{trace-class morphisms} and \emph{nuclear objects} that we'll review in this subsection.
	
	\begin{numpar}[Trace-class morphisms.]\label{par:TraceClass}
		Let $\Cc$ be a presentable symmetric monoidal%
		\footnote{By convention, this includes the assumption that $-\otimes-$ commutes with colimits in both variables, so the adjoint functor theorem is applicable.}
		$\infty$-category.
		Let $R$ be an $\IE_1$-algebra in $\Cc$. By Lurie's adjoint functor theorem, for all left $R$-modules $M$ and $N$ there exists an object $\Hhom_R(M,N)\in \Cc$ characterised by
		\begin{equation*}
			\Hom_\Cc\bigl(-,\Hhom_R(M,N)\bigr)\simeq \Hom_R(M\otimes -,N)\,.
		\end{equation*}
		We remark that $\Hhom_R(M,R)$ is naturally a right $R$-module. A morphism $\varphi\colon M\rightarrow N$ of left $R$-modules is called \emph{trace-class} if there exists a morphism $\eta\colon \IUnit_\Cc\rightarrow \Hhom_R(M,R)\otimes_RN$, such that $\varphi$ is the composition
		\begin{equation*}
			M\simeq M\otimes\IUnit_\Cc\overset{\eta}{\longrightarrow} M\otimes\Hhom_R(M,R)\otimes_RN\xrightarrow{\ev_M}R\otimes_RN\simeq N\,.
		\end{equation*}
		We often call $\eta$ the \emph{classifier of $\varphi$}.
	\end{numpar}
	
	Trace-class morphism have a number of nice properties. We'll often use the properties from  \cite[Lemma~\chref{8.2}]{Complex} as well as the following lemma.
	\begin{lem}\label{lem:TraceClassAbstractNonsense}
		Let $F\colon \Cc\rightarrow \Dd$ be a symmetric monoidal functor between presentable symmetric monoidal $\infty$-categories. Let $R\in\Alg_{\IE_1}(\Cc)$. By abuse of notation, we'll denote both $\Hhom_R(-,R)$ and $\Hhom_{F(R)}(-,F(R))$ by $(-)^\vee$.
		\begin{alphanumerate}
			\item There exists a natural transformation $F((-)^\vee)\Rightarrow F(-)^\vee$.\label{enum:DualTransformation}
			\item If $M\rightarrow N$ is a trace-class morphism in $\LMod_R(\Cc)$, then $N^\vee\rightarrow M^\vee$ is trace-class in $\RMod_R(\Cc)$ and $F(M)\rightarrow F(N)$ is trace-class in $\LMod_{F(R)}(\Dd)$.\label{enum:DualTraceClass}
			\item The commutative square in $\RMod_{F(R)}(\Dd)$ formed by the morphisms from \cref{enum:DualTransformation} and \cref{enum:DualTraceClass}\label{enum:DualTraceClassLift}
			\begin{equation*}
				\begin{tikzcd}
					F(N^\vee)\rar\dar & F(M^\vee)\dar\\
					F(N)^\vee\rar\urar[dashed] & F(M)^\vee
				\end{tikzcd}
			\end{equation*}
			admits a canonical diagonal map $F(N)^\vee\rightarrow F(M^\vee)$ that makes both triangles commute.
		\end{alphanumerate}
	\end{lem}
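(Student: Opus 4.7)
The plan is to prove each part by direct manipulation of the universal property of internal Homs and the definition of trace-class morphisms; the proof is essentially formal and the only real work is bookkeeping. For \cref{enum:DualTransformation}, I would apply $F$ to the evaluation $M\otimes M^\vee\to R$ in $\LMod_R(\Cc)$; by symmetric monoidality of $F$ this yields a morphism $F(M)\otimes F(M^\vee)\to F(R)$ in $\LMod_{F(R)}(\Dd)$, which by the universal property of $F(M)^\vee=\Hhom_{F(R)}(F(M),F(R))$ is the same data as a morphism $F(M^\vee)\to F(M)^\vee$; naturality in $M$ is immediate from naturality of the evaluation.

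For \cref{enum:DualTraceClass}, the classifier $\eta\colon\IUnit_{\Cc}\to M^\vee\otimes_R N$ of $\varphi$, reinterpreted via the symmetry of the tensor product and the double-duality map $N\to N^{\vee\vee}$, witnesses $\varphi^\vee$ as trace-class in $\RMod_R(\Cc)$. For the image under $F$, apply $F$ to $\eta$ to obtain $F(\eta)\colon\IUnit_{\Dd}\to F(M^\vee)\otimes_{F(R)}F(N)$, then post-compose with \cref{enum:DualTransformation} in the first factor to get a morphism $\IUnit_{\Dd}\to F(M)^\vee\otimes_{F(R)}F(N)$. A routine expansion---using symmetric monoidality of $F$ together with the defining property of \cref{enum:DualTransformation}, namely that the composite $F(M)\otimes F(M^\vee)\to F(M)\otimes F(M)^\vee\xrightarrow{\ev_{F(M)}}F(R)$ equals the $F$-image of $\ev_M$---shows that this classifies $F(\varphi)$.

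For \cref{enum:DualTraceClassLift}, I would define the diagonal $F(N)^\vee\to F(M^\vee)$ as the composite
\[
F(N)^\vee\xrightarrow{F(\eta)}F(N)^\vee\otimes F(M^\vee)\otimes_{F(R)}F(N)\simeq F(M^\vee)\otimes_{F(R)}\bigl(F(N)\otimes F(N)^\vee\bigr)\xrightarrow{\ev_{F(N)}}F(M^\vee),
\]
where the first arrow tensors $F(N)^\vee$ with $F(\eta)$ (and the unitor). Commutativity of the two triangles then reduces to direct diagram chases. The top triangle reduces, via the defining property of \cref{enum:DualTransformation} applied to $N$, to the explicit expression of $F(\varphi^\vee)$ in terms of $F(\eta)$ and $F(\ev_N)$ (obtained by applying $F$ to the trace-class composition defining $\varphi^\vee$). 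The bottom triangle reduces, via naturality of \cref{enum:DualTransformation} in $M$, to the fact---established in \cref{enum:DualTraceClass}---that the classifier of $(F(\varphi))^\vee$ is $F(\eta)$ post-composed with \cref{enum:DualTransformation}. The main subtlety I anticipate is the careful bookkeeping of left versus right $R$-module structures and of the coherent identifications $F(X\otimes_R Y)\simeq F(X)\otimes_{F(R)}F(Y)$ at the full $\infty$-categorical level, but no serious mathematical difficulty arises.
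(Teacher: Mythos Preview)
Your proposal is correct and follows essentially the same approach as the paper: the construction of the natural transformation in \cref{enum:DualTransformation} via $F$ applied to evaluation, the two classifiers in \cref{enum:DualTraceClass} built from $F(\eta)$ composed with \cref{enum:DualTransformation} and from $\eta$ composed with $N\to N^{\vee\vee}$, and the diagonal in \cref{enum:DualTraceClassLift} as $F(N)^\vee$ tensored with $F(\eta)$ followed by evaluation, all match the paper's argument. You in fact go slightly further than the paper by sketching the verification that the two triangles commute, which the paper leaves implicit.
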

	\begin{proof}
		The natural transformation from \cref{enum:DualTransformation} is adjoint to $F((-)^\vee)\otimes_{F(R)} F(-)\Rightarrow F(R)$, which is in turn given by applying $F$ to the evaluation $(-)^\vee\otimes_R(-)\Rightarrow R$.
		
		Now let $M\rightarrow N$ be trace-class in $\LMod_R(\Cc)$ with classifier $\IUnit_\Cc\rightarrow M^\vee\otimes_R N$. If we apply $F$ to the classifier and compose with the morphism $F(M^\vee)\rightarrow F(M)^\vee$ from \cref{enum:DualTransformation}, we obtain a morphism $\IUnit_\Dd\rightarrow F(M^\vee)\otimes_{F(R)} F(N)\rightarrow F(M)^\vee\otimes_{F(R)} F(N)$, which serves as a classifier for $F(M)\rightarrow F(N)$. If we compose instead with $N\rightarrow N^{\vee\vee}$, we obtain $\IUnit_\Cc\rightarrow M^\vee\otimes_R N\rightarrow M^\vee\otimes_R N^{\vee\vee}$, which serves as a classifier for $N^\vee\rightarrow M^\vee$ being trace-class. This shows \cref{enum:DualTraceClass}. To show \cref{enum:DualTraceClassLift}, we construct the diagonal map $F(N)^\vee\rightarrow F(M^\vee)$ as follows:
		\begin{equation*}
			F(N)^\vee\longrightarrow F(M^\vee\otimes_RN)\otimes_\Dd F(N)^\vee\simeq F(M^\vee)\otimes_{F(R)}F(N)
			\otimes_\Dd F(N)^\vee\longrightarrow F(M^\vee)\,.
		\end{equation*}
		Here we use the classifier $\IUnit_\Cc\rightarrow M^\vee\otimes_RN$ and the evaluation map for $F(N)$.
	\end{proof}
	
	\begin{numpar}[Nuclear objects]\label{par:Nuclear}
		In addition to the assumptions from \cref{par:TraceClass}, let us now assume that $\Cc$ is stable, compactly generated, and $\IUnit_\Cc$ is compact.
		\begin{alphanumerate}
			\item A left $R$-module $M$ is called \emph{nuclear} if every morphism $P\rightarrow M$ from a compact left $R$-module $P$ is trace-class.
			\item We call a left $R$-module $M$ \emph{basic nuclear} if $M$ can be written as a sequential colimit $M\simeq \colimit (M_0\rightarrow M_1\rightarrow \dotsb)$ such that each transition map $M_n\rightarrow M_{n+1}$ is trace-class.
		\end{alphanumerate}
		We let $\Nuc(\LMod_R(\Cc))\subseteq \LMod_R(\Cc)$ denote the full sub-$\infty$-category spanned by the nuclear left $R$-modules.
	\end{numpar}
	\begin{thm}\label{thm:Nuclear}
		Let $\Cc$ be a presentable stable symmetric monoidal $\infty$-category such that $\Cc$ is compactly generated and the tensor unit $\IUnit_\Cc\in\Cc$ is compact. Let $R\in \Alg_{\IE_1}(\Cc)$
		\begin{alphanumerate}
			\item $\Nuc(\LMod_R(\Cc))\subseteq \LMod_R(\Cc)$ closed under shifts and colimits. Moreover, if $M$ is a nuclear left $R$-module and $X\in \Nuc(\Cc)$, then $M\otimes X\in \Nuc(\LMod_R(\Cc))$.\label{enum:Nuc}
			\item $\Nuc(\LMod_R(\Cc))$ is $\omega_1$-compactly generated and the $\omega_1$-compact objects are precisely the basic nuclears.\label{enum:Nucw1Generated}
			\item If $R\rightarrow S$ is a map of $\IE_1$-algebras in $\Cc$, then $S\otimes_R-\colon \LMod_R(\Cc)\rightarrow \LMod_S(\Cc)$ preserves the full sub-$\infty$-categories of nuclear objects.\label{enum:NucBaseChange}
			\item Suppose that for all compact left $R$-modules $P$ and all compact $C\in \Cc$ the tensor product $P\otimes C$ is still compact as a left $R$-module. If $P$ is compact and $M$ is nuclear, the natural map\label{enum:NucForgetfulFunctor}
			\begin{equation*}
				\Hhom_R(P,R)\otimes_R M\overset{\simeq}{\longrightarrow}\Hhom_R(P,M)
			\end{equation*}
			is an equivalence. Furthermore, if $R\rightarrow S$ is a map of $\IE_1$-algebras in $\Cc$ such that $S$ is nuclear as a left $R$-module, then the forgetful functor $\LMod_S(\Cc)\rightarrow \LMod_R(\Cc)$ preserves the full sub-$\infty$-categories of nuclear objects.
		\end{alphanumerate}
	\end{thm}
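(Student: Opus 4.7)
The plan is to adapt the Clausen--Scholze-style arguments for nuclear modules over analytic rings, using \cref{lem:TraceClassAbstractNonsense} as the main technical tool and carefully avoiding any appeal to dualizability of basic nuclears, since this is precisely what fails in the solid setting.

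I would prove \cref{enum:Nuc} and \cref{enum:Nucw1Generated} in tandem. Let $\Nuc'(\LMod_R(\Cc))$ denote the subcategory generated under filtered colimits by basic nuclears whose initial term $M_0$ is compact; such basic nuclears are $\omega_1$-compact as countable sequential colimits of compacts, and $\Nuc'$ is $\omega_1$-compactly generated, hence closed under all colimits, essentially by construction. A direct argument shows $\Nuc'$ is also closed under shifts and under tensoring with nuclears in $\Cc$: the key point is that the tensor of two trace-class morphisms is trace-class, with classifier the tensor of the classifiers. To identify $\Nuc'$ with the trace-class-defined $\Nuc$, I would check $\Nuc' \subseteq \Nuc$ by noting that any map from a compact $P$ into a basic nuclear factors through some finite stage (by compactness), from which point the trace-class property propagates under post-composition, and $\Nuc \subseteq \Nuc'$ by enumerating classifiers of all maps from compacts $P_\alpha$ into a nuclear $M$ and inductively absorbing them into an ascending chain of basic nuclears.

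For \cref{enum:NucBaseChange}, the natural morphism $S \otimes_R \Hhom_R(M, R) \to \Hhom_S(S \otimes_R M, S)$ supplied by \cref{lem:TraceClassAbstractNonsense}\cref{enum:DualTransformation} transports a classifier of $M \to N$ in $\LMod_R(\Cc)$ to a classifier of $S \otimes_R M \to S \otimes_R N$ in $\LMod_S(\Cc)$; since $S \otimes_R -$ preserves compacts and colimits, basic nuclears with compact start are sent to basic nuclears with compact start, and the claim follows from \cref{enum:Nucw1Generated}. For the first assertion of \cref{enum:NucForgetfulFunctor}, the hypothesis on compact tensors gives
\begin{equation*}
\Hom_\Cc\bigl(C, \Hhom_R(P, -)\bigr) \simeq \Hom_R(P \otimes C, -),
\end{equation*}
so $\Hhom_R(P, -)$ preserves filtered, hence all, colimits. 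The comparison map $\Hhom_R(P, R) \otimes_R M \to \Hhom_R(P, M)$ therefore commutes with colimits in $M$ on both sides, reducing by \cref{enum:Nucw1Generated} to the basic nuclear case $M = \colim(M_0 \to M_1 \to \cdots)$, where the classifier of each trace-class transition $\varphi_n$ builds a map $\Hhom_R(P, M_n) \to \Hhom_R(P, R) \otimes_R M_{n+1}$ whose colimit serves as the desired inverse.

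For the second half of \cref{enum:NucForgetfulFunctor}, let $N$ be nuclear in $\LMod_S(\Cc)$ and take $P \to N$ with $P$ compact in $\LMod_R(\Cc)$. The forgetful functor preserves filtered colimits, so $S \otimes_R P$ is compact in $\LMod_S(\Cc)$, and the adjoint $S \otimes_R P \to N$ is $S$-trace-class with classifier living in $\Hhom_S(S \otimes_R P, S) \otimes_S N \simeq \Hhom_R(P, S) \otimes_S N$. Applying the first half of \cref{enum:NucForgetfulFunctor} to the nuclear $R$-module $S$ gives $\Hhom_R(P, S) \simeq \Hhom_R(P, R) \otimes_R S$, whence
\begin{equation*}
\Hhom_R(P, S) \otimes_S N \simeq \Hhom_R(P, R) \otimes_R S \otimes_S N \simeq \Hhom_R(P, R) \otimes_R N,
\end{equation*}
producing the desired $R$-level classifier. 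The main obstacles I anticipate are (i) the inductive step $\Nuc \subseteq \Nuc'$ in the tandem proof of \cref{enum:Nuc} and \cref{enum:Nucw1Generated}, which requires careful enumeration to keep the construction $\omega_1$-bounded at each stage, and (ii) verifying via diagram chase that the classifier extracted at the very end is in fact a classifier for the original morphism $P \to N$, tracking compatibility between the $S$-action on $N$ and the equivalence $\Hhom_R(P, S) \simeq \Hhom_R(P, R) \otimes_R S$.
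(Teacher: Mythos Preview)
Your proposal is correct and follows essentially the same route as the paper. The paper defers \cref{enum:Nuc} and \cref{enum:Nucw1Generated} to \cite[Theorem~\chref{8.6}]{Complex}, remarking that the arguments there carry over verbatim to general $R$, whereas you sketch those arguments directly; your treatment of \cref{enum:NucBaseChange} and \cref{enum:NucForgetfulFunctor} matches the paper's almost line for line, including the reduction to basic nuclears via internal compactness of $P$, the construction of the inverse from the classifiers, and the chain of identifications $\Hhom_S(S\otimes_RP,S)\otimes_SN\simeq\Hhom_R(P,R)\otimes_RN$ for the forgetful functor.
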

	\begin{proof}[Proof sketch]
		For parts~\cref{enum:Nuc} and~\cref{enum:Nucw1Generated}, the case $R\simeq \IUnit_\Cc$ is covered in \cite[Theorem~\chref{8.6}]{Complex}; the arguments given therein apply verbatim for general $R$ as well. For~\cref{enum:NucBaseChange}, it's straightforward to check that $S\otimes_R-$ preserves trace-class maps, hence basic nuclear objects and thus all nuclear objects by \cref{enum:Nucw1Generated}.
		
		For \cref{enum:NucForgetfulFunctor}, the assumption implies that every compact left $R$-module is also \emph{internally compact} in the sense that $\Hhom_R(P,-)$ preserves filtered colimits. We may thus reduce to the case where $M$ is basic nuclear. Write $M$ as a sequential colimit $M\simeq \colimit(M_0\rightarrow M_1\rightarrow \dotsb)$ with trace-class transition maps. If $\eta\colon \IUnit_\Cc\rightarrow \Hhom_R(M_n,R)\otimes_R M_{n+1}$ is a classifier for $M_n\rightarrow M_{n+1}$ and $c\colon {\Hhom_R(P,M_n)}\otimes{\Hhom_R(M_n,R)}\rightarrow \Hhom_R(P,R)$ is the canonical composition map, we get a commutative diagram
		\begin{equation*}
			\begin{tikzcd}
				\Hhom_R(P,M_n)\rar\dar["\eta"'] & \Hhom_R(P,M_{n+1})\\
				\Hhom_R(P,M_n)\otimes\Hhom_R(M_n,R)\otimes_R M_{n+1}\rar["c"] & \Hhom_R(P,R)\otimes_R M_{n+1}\uar
			\end{tikzcd}
		\end{equation*}
		Using these diagrams for all $n$ we see that $\colimit \Hhom_R(P,R)\otimes_R M_n\rightarrow \colimit\Hhom_R(P,M_n)$ has an inverse. It follows that $\Hhom_R(P,R)\otimes_RM\simeq \Hhom_R(P,M)$, as desired.
		
		Now let $N$ be a nuclear left $S$-module and let $P\rightarrow N$ be a map from a compact left $R$-module. Then $S\otimes_RP\rightarrow N$ is trace-class, because it factors through $S\otimes_RP\rightarrow S\otimes_RN$ and $S\otimes_R-$ preserves trace-class morphisms. If $\eta\colon \IUnit_\Cc\rightarrow \Hhom_S(S\otimes_RP,S)\otimes_SN$ is a classifier, we note $\Hhom_S(S\otimes_RP,S)\simeq \Hhom_R(P,S)\simeq \Hhom_R(P,R)\otimes_RS$ by our assumption that $S$ is nuclear. Thus $\Hhom_S(S\otimes_RP,S)\otimes_SN\simeq \Hhom_R(P,R)\otimes_RN$ and so $\eta$ is also a classifier witnessing $P\rightarrow N$ being trace-class. This shows that the forgetful functor $\LMod_S(\Cc)\rightarrow \LMod_R(\Cc)$ preserves the full sub-$\infty$-categories of nuclear objects.
	\end{proof}
	\begin{rem}\label{rem:NucInd}
		If $\Cc_0$ is a small stable symmetric monoidal $\infty$-category, then \cref{thm:Nuclear} can be applied to $\Ind(\Cc_0)$. Since every trace-class map in $\Ind(\Cc_0)$ factors through a compact object by \cite[Lemma~\chref{8.4}]{Complex}, we see that the basic nuclear objects in $\Ind(\Cc_0)$ are of the form $\indcolim(X_1\rightarrow X_2\rightarrow \dotsb)$, where each $X_n\rightarrow X_{n+1}$ is trace-class in $\Cc_0$.
		
		If $\Cc$ is a presentable stable symmetric monoidal $\infty$-category (hence $\Cc$ is large unless $\Cc\simeq 0$), one can still make sense of $\Nuc\Ind(\Cc)$ without running into set-theoretic problems. Indeed, if $\kappa$ is a sufficiently large regular cardinal such that $\Cc$ is $\kappa$-compactly generated and $\IUnit$ is $\kappa$-compact, the same argument as in \cite[Lemma~\chref{8.4}]{Complex} shows that every trace-class morphism in $\Cc$ factors through a $\kappa$-compact object. Then every basic nuclear object is equivalent to one in which each $X_n$ is $\kappa$-compact and so the basic nuclear objects in form an essentially small $\infty$-category. We may then define $\Nuc\Ind(\Cc)$ as $\Ind_{\omega_1}(-)$ of the $\infty$-category of basic nuclear objects.
	\end{rem}
	
	\subsection{Solid even flatness in the nuclear case}\label{subsec:SolidEvenFlat}
	In this subsection we explain that the analogues of \cite[Theorems~\chref{4.14} and~\chref{4.16}]{PerfectEvenFiltration} are still true under certain additional nuclearity assumptions.

	\begin{numpar}[Assumptions on~$R$.]\label{par:SolidAssumptions}
		From now on let us assume that our solid $\IE_1$-algebra $R$ satisfies the following condition:
		\begin{alphanumerate}\itshape
			\item[R] $\Hhom_R(\Null_R,R)$ is nuclear and solid ind-perfect even both as a left $R$-module and as a right $R$-module.\label{enum:SolidAssumptions}
		\end{alphanumerate}	
		Here we use that $\Null_R\simeq \prod_\IN\IS\soltimes R$ is naturally a bimodule over $R$. Also note that Assumption~\cref{enum:SolidAssumptions} implies that that $\Hhom_R(P,R)$ is nuclear and solid ind-perfect even for any solid perfect even left  or right $R$-module $P$.
	\end{numpar}
	\begin{lem}\label{lem:NuclearityForDiscretepComplete}
		Let $R^\circ$ be a discrete condensed $\IE_1$-ring spectrum and let $M^\circ$ be any discrete condensed left $R^\circ$-module.
		\begin{alphanumerate}
			\item Assumption~\cref{par:SolidAssumptions}\cref{enum:SolidAssumptions} is satisfied for $R=R^\circ$. Moreover, $M^\circ$ is nuclear as a left $R^\circ$-module.\label{enum:Rdiscrete}
			\item Assumption~\cref{par:SolidAssumptions}\cref{enum:SolidAssumptions} is satisfied for $R=(R^\circ)_p^\complete$. Moreover, if $R^\circ$ is connective, then $(M^\circ)_p^\complete$ is nuclear over $(R^\circ)_p^\complete$.\label{enum:Rpcomplete}
			\item  Assumption~\cref{par:SolidAssumptions}\cref{enum:SolidAssumptions} is satisfied for $R=(R^\circ)_p^\complete[1/p]$. Moreover, if $R^\circ$ is connective, then $(M^\circ)_p^\complete[1/p]$ is nuclear over $(R^\circ)_p^\complete[1/p]$.\label{enum:Ranalytic}
		\end{alphanumerate}
	\end{lem}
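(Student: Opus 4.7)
All three parts will follow a common template. The central step is to compute $\Hhom_R(\Null_R,R)$ explicitly, for which I will use the adjunction $\Hhom_R(R\soltimes \Null^\solid,R)\simeq \Hhom_\IS(\Null^\solid,R)$ together with the key solid condensed fact that $\IS[\IN\cup\{\infty\}]^\solid\simeq \prod_{\IN\cup\{\infty\}}\IS$, whose solid dual is $\bigoplus_{\IN\cup\{\infty\}}\IS$. Combined with the cofiber sequence $\IS\to \IS[\IN\cup\{\infty\}]^\solid\to\Null^\solid$, this will identify $\Hhom_\IS(\Null^\solid,Y)$ with $\bigoplus_\IN Y$ for any solid $Y$. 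In case~(a) this directly yields $\Hhom_{R^\circ}(\Null_{R^\circ},R^\circ)\simeq \bigoplus_\IN R^\circ$; in case~(b), since $\Hhom$ commutes with limits in the second variable, passing to the $p$-completion gives $\limit_n\bigoplus_\IN (R^\circ/p^n)\simeq (\bigoplus_\IN R^\circ)_p^\complete$; and case~(c) follows from~(b) by a further filtered colimit inverting $p$.

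From these explicit descriptions, solid ind-perfect evenness will follow because each finite direct sum $\bigoplus_{n\leqslant N}R$ is a retract of $R[\{1,\ldots,N\}]^\solid$ and hence solid perfect even, and $\Hhom_R(\Null_R,R)$ sits in $\LMod_R(\Sp_\solid)$ as a filtered colimit of such finite sums. In case~(b) one uses that every object of $\LMod_R(\Sp_\solid)$ is automatically $p$-complete, so $p$-completed colimits are colimits in $\LMod_R(\Sp_\solid)$; case~(c) is similar after inverting $p$. For nuclearity over $R$, I will argue that both the mapping spectrum $\Hhom_R(\Null_R,M)$ and the classifier spectrum $\Hhom_R(\Null_R,R)\soltimes_R M$ compute to $\bigoplus_\IN M$, with the same completion/localisation decorations; the canonical comparison map between them will be an equivalence, so that every morphism $\Sigma^k\Null_R\to M$ admits a classifier and $M$ is nuclear. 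Taking $M=\Hhom_R(\Null_R,R)$ as a left (and symmetrically right) $R$-module establishes Assumption~\cref{par:SolidAssumptions}\cref{enum:SolidAssumptions}, while taking $M=M^\circ$, $(M^\circ)_p^\complete$, or $(M^\circ)_p^\complete[1/p]$ yields the respective \emph{moreover} statements.

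The hardest point will be the classifier comparison $\Hhom_R(\Null_R,R)\soltimes_R M\simeq \bigoplus_\IN M$ with the correct $p$-complete or $p$-inverted decoration. In case~(b), this is where the connectivity hypothesis on $R^\circ$ is used: it ensures the bounded-below setup under which the solid tensor product of $p$-complete objects remains $p$-complete by \cref{par:SolidpComplete}, allowing $(\bigoplus_\IN R^\circ)_p^\complete\soltimes_R(M^\circ)_p^\complete$ to be identified with $(\bigoplus_\IN M^\circ)_p^\complete$. Case~(c) then reduces to~(b) by inverting $p$, since $[1/p]$ is a filtered colimit compatible with $\soltimes$. Once these compatibility checks are in place, the classifier construction becomes formal and nuclearity drops out.
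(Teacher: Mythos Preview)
Your computation of $\Hhom_R(\Null_R,R)$ in all three cases is correct and matches the paper. The serious problem is your ind-perfect evenness argument in case~(b). The assertion that ``every object of $\LMod_R(\Sp_\solid)$ is automatically $p$-complete'' for $R=(R^\circ)_p^\complete$ is false: for instance $R[1/p]$ is an $R$-module that is not $p$-complete. Consequently the $p$-completed direct sum $(\bigoplus_\IN R^\circ)_p^\complete$ is \emph{not} the filtered colimit of its finite partial sums in $\LMod_R(\Sp_\solid)$, and your deduction of solid ind-perfect evenness collapses. This is not a minor patch: showing that $(\bigoplus_\IN R^\circ)_p^\complete$ is solid ind-perfect even is the crux of the lemma, and it requires a genuinely different presentation as a filtered colimit. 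The paper writes
\[
\biggl(\bigoplus_{\IN} R^\circ\biggr)_p^\complete \simeq \colimit_{f\colon \IN\to\IN,\ f(n)\to\infty}\ \prod_{\IN} p^{f(n)} R
\]
as an honest (non-completed) filtered colimit, and then constructs an explicit classifier $\sum p^{f(n)}(e_n\otimes e_n)$ to show that each transition map is trace-class and factors through the perfect even $\Null_R$. This simultaneously yields nuclearity and ind-perfect evenness via the criterion that every map from a compact factors through a perfect even.

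A secondary gap concerns your nuclearity argument for $(M^\circ)_p^\complete$. Your plan to identify $\Hhom_R(\Null_R,R)\soltimes_R (M^\circ)_p^\complete$ with $(\bigoplus_\IN M^\circ)_p^\complete$ via the bounded-below solid tensor trick requires $(M^\circ)_p^\complete$ itself to be bounded below, which is not guaranteed by the hypothesis that only $R^\circ$ is connective. The paper instead proves nuclearity of $p$-completed countable (and then arbitrary) direct sums of $R^\circ$ first, then builds $(M^\circ)_p^\complete$ for bounded-below $M^\circ$ via an explicit cell structure, and finally reduces the general case to the bounded-below one using that $(M^\circ)_p^\complete$ and $(\tau_{\geqslant -n}M^\circ)_p^\complete$ agree in degrees $\geqslant -n+1$.
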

	\begin{proof}
		In the following, we won't specify whether we're working with left  or right $R$-modules, since the arguments will be valid in either case. For arbitrary solid $\IE_1$-algebras~$R$, we have $\Hhom_R(\Null_R,R)\simeq \Hhom_\IS(\prod_\IN\IS,R)$. If $R=R^\circ$ is discrete, then $\Hhom_\IS(\prod_\IN\IS,R)\simeq \bigoplus_\IN R^\circ$, which is solid ind-perfect even. Since $R$ is nuclear over itself and nuclear objects are closed under shifts and colimits, it follows that every discrete $R$-module is nuclear. This shows \cref{enum:Rdiscrete}.
		
		If $R=(R^\circ)_p^\complete$, then the same argument shows $\Hhom_\IS(\prod_\IN\IS,R)\simeq (\bigoplus_\IN R^\circ)_p^\complete$. To show the solid ind-perfect evenness condition, write
		\begin{equation*}
			\biggl(\bigoplus_{\IN} R^\circ\biggr)_p^\complete\simeq \colimit_{\substack{f\colon \IN\rightarrow \IN,\\ f(n)\rightarrow \infty}}\prod_{\IN}p^{f(n)}R\,,
		\end{equation*}
		where the colimit is taken over all functions $f\colon \IN\rightarrow \IN$ such that $f(n)\rightarrow \infty$ as $n\rightarrow\infty$. We claim that whenever $g\leqslant f$ is growing so slowly that $f(n)-g(n)\rightarrow \infty$, the transition map $\prod_\IN p^{f(n)}R\rightarrow \prod_\IN p^{g(n)}R$ is trace-class and factors through $\Null_R$. This will show that every map from a compact left $R$-module to $(\bigoplus_\IN R^\circ)_p^\complete$ is trace-class and factors through $\Null_R$, so that $(\bigoplus_\IN R^\circ)_p^\complete$ is nuclear and solid ind-perfect even by the solid analogue of \cite[Proposition~\chref{4.3}]{PerfectEvenFiltration}.
		
		To show the claim, we may as well assume $g=0$ and show that $(p^{f(n)})_{n\in\IN}\colon \prod_\IN R\rightarrow \prod_\IN R$ is trace-class and factors through $\Null_R$. Let $e_n$ denote the $n$\textsuperscript{th} basis vector in the standard basis of $\bigoplus_{\IN}R^\circ$. Then $\sum p^{f(n)}(e_n\otimes e_n)$ is a well-defined $\pi_0$-class in $(\bigoplus_\IN \tau_{\geqslant 0}(R^\circ))_p^\complete\soltimes_{\tau_{\geqslant 0}(R)}\prod_\IN \tau_{\geqslant 0}(R)$, since the solid tensor product of connective $p$-complete objects will be $p$-complete again. The image of this $\pi_0$-class in $(\bigoplus_\IN R^\circ)_p^\complete \soltimes_R\prod_\IN R$ defines a morphism
		\begin{equation*}
			\IS\longrightarrow \Hhom_R(\Null_R,R)\soltimes_R\prod_\IN R\,,
		\end{equation*}
		which classifies a trace-class map $\Null_R\rightarrow \prod_\IN R$. By inspection, this is a factorisation of $(p^{f(n)})_{n\in\IN}\colon \prod_\IN R\rightarrow \prod_\IN R$, as desired.
		
		This argument shows, in particular, that the $p$-completion of any countable direct sum of copies of $R^\circ$ is nuclear over $R$. We deduce the same for arbitrary direct sums, as $p$-completion commutes with $\omega_1$-filtered colimits. Now suppose $R^\circ$ is connective. First consider the case where $M^\circ$ is bounded below.  Let $M$ be the $p$-completion of $M^\circ$. Define a sequence of left $R$-modules $M_0,M_1,\dotsc$ as follows: $M_0\coloneqq M$; for $n\geqslant 0$, we choose a map $\bigoplus \Sigma^nR^\circ\rightarrow M_n$ that is surjective on $\pi_n$ and then define $M_{n+1}\coloneqq \cofib(\bigoplus \Sigma^n R^\circ\rightarrow M_n)_p^\complete$. Then $M\simeq \colimit \fib(M\rightarrow M_n)$; note that the colimit doesn't need to be $p$-completed, since each term is $p$-complete and in each homotopical degree the colimit stabilises after finitely many steps. Thus, it will be enough to check that each $\fib(M\rightarrow M_n)$ is nuclear, which follows from our observation that $p$-completions of arbitrary direct sums of copies of $R^\circ$ are nuclear. This shows that $(M^\circ)_p^\complete$ is nuclear in the bounded below case. For general $M^\circ$, note that $(M^\circ)_p^\complete$ and $(\tau_{\geqslant -n} M^\circ)_p^\complete$ agree in homotopical degrees $\geqslant -n+1$. It follows that $(M^\circ)_p^\complete\simeq \colimit_{n\geqslant 0} (\tau_{\geqslant -n}M^\circ)_p^\complete$. By the bounded below case, this is a (non-$p$-completed) colimit of nuclear objects and so $(M^\circ)_p^\complete$ must be nuclear too. This finishes the proof of \cref{enum:Rpcomplete}.
		
		If $R=(R^\circ)_p^\complete[1/p]$, then $\Hhom_\IS(\prod_\IN\IS,R)\simeq (\bigoplus_\IN R^\circ)_p^\complete[1/p]$ by compactness of $\prod_\IN\IS$. The desired assertions then follow from \cref{enum:Rpcomplete} using base change for nuclear modules (\cref{thm:Nuclear}\cref{enum:NucBaseChange}). This shows \cref{enum:Ranalytic}.
	\end{proof}	
	
	Under Assumption~\cref{par:SolidAssumptions}\cref{enum:SolidAssumptions}, we can show the following weaker analogue of the \enquote{even Lazard theorem} \cite[Theorem~\chref{4.14}]{PerfectEvenFiltration}.
	\begin{lem}\label{lem:LazardsTheorem}
		Let $R$ be a solid condensed $\IE_1$-ring spectrum and let $M$ be a left $R$-module.
		\begin{alphanumerate}
			\item If $M$ is solid ind-perfect even, then $M$ is solid even flat.\label{enum:IndPerfectEven}
			\item Let $M$ be solid even flat. If $R$  satisfies Assumption~\cref{par:SolidAssumptions}\cref{enum:SolidAssumptions} and $M$ is nuclear, then is solid ind-perfect even.\label{enum:EvenFlat}
		\end{alphanumerate}
	\end{lem}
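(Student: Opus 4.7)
I would adapt the easier half of Pstr\k{a}gowski's even Lazard argument. If $M \simeq \colim_i M_i$ with each $M_i$ solid perfect even and $E$ is a right $R$-module with $\pi_*(E)$ concentrated in even degrees, then $E \soltimes_R M \simeq \colim_i E \soltimes_R M_i$, and filtered colimits of solid condensed spectra with even homotopy remain evenly concentrated. Hence I may assume $M$ is solid perfect even, and since the condition of even concentration is closed under extensions and retracts, the definition of $\Perf_{\ev}(R_\solid)$ reduces us further to $M \simeq \Sigma^{2n}\Null_R$. In that case $E \soltimes_R \Sigma^{2n}\Null_R \simeq \Sigma^{2n}(E \soltimes \Null^\solid) \simeq \Sigma^{2n}\prod_\IN E$ via $\Null^\solid \simeq \prod_\IN\IS$, and the homotopy groups $\prod_\IN \pi_*(E)$ are even.

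\textbf{Part (b).} The plan is to replace Pstr\k{a}gowski's duality arguments in the proof of \cite[Theorem~\chref{4.14}]{PerfectEvenFiltration} by trace-class factorisations enabled by Assumption~\cref{par:SolidAssumptions}\cref{enum:SolidAssumptions}. By nuclearity and \cref{thm:Nuclear}\cref{enum:Nucw1Generated}, I would write $M \simeq \indcolim_{\omega_1} M_\alpha$ with each $M_\alpha$ basic nuclear, so that $M_\alpha \simeq \colim_n M_{\alpha,n}$ with $M_{\alpha,n}$ compact (hence in the stable sub-$\infty$-category generated by $\Null_R$) and transitions $\varphi_n \colon M_{\alpha,n} \to M_{\alpha,n+1}$ trace-class. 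By Assumption~(R) and \cref{lem:TraceClassAbstractNonsense}, $\Hhom_R(M_{\alpha,n},R)$ is solid ind-perfect even, say $\indcolim_j Q_j$ with $Q_j \in \Perf_{\ev}(R_\solid)$. Since $\IS$ is compact, any classifier $\eta_n \colon \IS \to \Hhom_R(M_{\alpha,n},R) \soltimes_R M_{\alpha,n+1}$ factors through some $Q_j \soltimes_R M_{\alpha,n+1}$, producing a factorisation $M_{\alpha,n} \to P_{\alpha,n} \to M_{\alpha,n+1}$ of $\varphi_n$ through a solid perfect even object $P_{\alpha,n}$.

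The last step is to use solid even flatness of $M$ to guarantee that the approximants $P_{\alpha,n}$ can be chosen in $\Perf_{\ev}(R_\solid)$ rather than merely in the stable sub-$\infty$-category generated by all shifts of $\Null_R$; combined with the $\pi_*$-even envelope formalism from \cref{par:CalculusOfSolidEvenness}, this allows one to cancel odd-parity contributions after passing to a cofinal subsystem. Assembling everything exhibits $M$ as a filtered colimit of solid perfect evens. The main obstacle will be this parity-control step---threading the trace-class factorisations compatibly across the sequential colimit in $n$ and the $\omega_1$-filtered colimit in $\alpha$ while substituting solid even flatness for Pstr\k{a}gowski's duality shortcut. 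This is precisely where Assumption~(R) and the nuclearity machinery of \cref{subsec:Nuclear} are essential.
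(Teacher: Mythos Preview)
Your reduction to $\Null_R$ is fine, but the last step is wrong: $E\soltimes\Null^\solid\simeq E\soltimes\prod_\IN\IS$ is \emph{not} $\prod_\IN E$ in general. For instance, if $E=\bigoplus_\IN\IZ$ (discrete), then $E\soltimes\prod_\IN\IZ\simeq\bigoplus_\IN\prod_\IN\IZ$, whereas $\prod_\IN E=\prod_\IN\bigoplus_\IN\IZ$; these differ already as solid abelian groups. The solid tensor product does not commute with infinite products, and $\prod_\IN\IS$ is not dualisable (its dual is $\bigoplus_\IN\IS$). The paper instead invokes the nontrivial fact from Clausen--Scholze that $\prod_\IN\IZ$ is \emph{flat} for the solid tensor product on $\cat{Ab}_\solid$; this gives $t$-exactness of $-\soltimes\prod_\IN\IS$ and hence preservation of evenness.

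\textbf{Part~(b).} Your outline does not close. First, Assumption~(R) only gives that $\Hhom_R(P,R)$ is nuclear and ind-perfect even for \emph{solid perfect even} $P$; for an arbitrary compact $M_{\alpha,n}$ (built from odd shifts of $\Null_R$ as well) this fails, so the factorisation you describe does not produce a solid perfect even $P_{\alpha,n}$. Second, even when it applies, factoring a classifier $\eta_n$ through $Q_j\soltimes_R M_{\alpha,n+1}$ does not by itself factor $\varphi_n$ through a perfect even object---you would still need to dualise $Q_j$ and invoke Assumption~(R) again. Third, the ``parity-control step'' you defer is precisely the heart of the argument, and you give no mechanism for it.

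The paper's proof is structurally different and avoids decomposing $M$ altogether. One tests a single map $\varphi\colon P\to M$ from a compact and shows it factors through a solid perfect even (this is the solid analogue of \cite[Proposition~\chref{4.3}]{PerfectEvenFiltration}). Nuclearity of $M$ gives a classifier $\eta\colon\IS\to\Hhom_R(P,R)\soltimes_R M$. The key move---which you are missing---is to take a $\pi_*$-even envelope of the \emph{dual} $\Hhom_R(P,R)$ (shifted so that the envelope $E$ has odd homotopy). Even flatness of $M$ then forces $\IS\to E\soltimes_R M$ to vanish, lifting $\eta$ into $\Sigma^{-1}C\soltimes_R M$ with $\Sigma^{-1}C$ ind-perfect even. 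A second trace-class step (using nuclearity of $\Hhom_R(P,R)$, which is where Assumption~(R) enters) then factors $\varphi$ through $\Hhom_R(Q,R)$ for a solid perfect even $Q$, and Assumption~(R) makes this ind-perfect even. This is how even flatness is actually used: it kills a single obstruction class in $\pi_0(E\soltimes_R M)$, rather than controlling parity across a tower of approximants.
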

	\begin{proof}
		For~\cref{enum:IndPerfectEven}, we only need to check that $\Null_R$ is solid even flat. This follows from the fact that $\Null_\IZ\simeq \prod_\IN\IZ$ is flat for the solid tensor product on $\cat{Ab}_\solid$ by \cite[Lecture~\href{https://youtu.be/KKzt6C9ggWA?list=PLx5f8IelFRgGmu6gmL-Kf_Rl_6Mm7juZO&t=3027}{6}]{AnalyticStacks}.
		
		For~\cref{enum:EvenFlat}, let $\varphi\colon P\rightarrow M$ be a map from a compact left $R$-module. By the solid analogue of \cite[Proposition~\chref{4.3}]{PerfectEvenFiltration}, it will be enough to show that $\varphi$ factors through a solid perfect even. Since $M$ is nuclear, $\varphi$ will be trace-class, with classifier $\eta\colon \IS\rightarrow \Hhom_R(P,R)\soltimes_R M$. As in the proof of \cite[Theorem~\chref{4.14}]{PerfectEvenFiltration}, let us choose a map $\Hhom_R(P,R)\rightarrow E$ whose suspension is a $\pi_*$-even envelope in right $R$-modules. Then $\pi_*(E\soltimes_R M)$ is concentrated in odd degrees, hence the composite
		\begin{equation*}
			\IS\longrightarrow \Hhom_R(P,R)\soltimes_R M\longrightarrow E\soltimes_R M
		\end{equation*}
		must vanish.%
		\footnote{This argument still works with condensed homotopy groups since any cover of the one-point set $*$ in the site of light profinite sets is split.}
		It follows that the classifier $\eta$ lifts to a map $\eta'\colon\IS\rightarrow \Sigma^{-1}C\soltimes_R M$, where $C\simeq \cofib(\Hhom_R(P,R)\rightarrow E)$. By definition of $\pi_*$-even envelopes, $\Sigma^{-1}C$ is solid ind-perfect even as a right $R$-module. Writing $\Sigma^{-1}C$ as a filtered colimit of solid perfect evens and using that $\IS$ is compact, we obtain a further factorisation
		\begin{equation*}
			\begin{tikzcd}
				\IS\rar["\eta''"]\drar["\eta"'] & Q\soltimes_R M \dar\\
				& \Hhom_R(P,R)\soltimes_RM
			\end{tikzcd}
		\end{equation*}
		where $Q$ is solid perfect even. Assumption~\cref{par:SolidAssumptions}\cref{enum:SolidAssumptions} guarantees that $\Hhom_R(P,R)$ is nuclear, hence the composition $Q\rightarrow \Sigma^{-1}C\rightarrow \Hhom_R(P,R)$ is trace-class as a map of right $R$-modules. Choose a classifier $\vartheta\colon \IS\rightarrow \Hhom_R(P,R)\soltimes_R\Hhom_R(Q,R)$. We see that the original map $\varphi\colon P\rightarrow M$ is given by tensoring $P$ with $\eta''$ and $\vartheta$ and then applying the evaluation maps $\ev_Q\colon \Hhom_R(Q,R)\soltimes Q\rightarrow R$ and  $\ev_P\colon P\soltimes\Hhom_R(P,R)\rightarrow R$. This can be done in any order, hence $\varphi$ also agrees with the composition
		\begin{equation*}
			P\overset{\vartheta}{\longrightarrow}P\soltimes P^\vee\soltimes_R\Hhom_R(Q,R)\overset{\ev_P}{\longrightarrow}\Hhom_R(Q,R)\overset{\eta''}{\longrightarrow}\Hhom_R(Q,R)\soltimes Q\soltimes_RM\xrightarrow{\ev_M}M\,,
		\end{equation*}
		where we wrote $P^\vee\coloneqq\Hhom_R(P,R)$ for short. We conclude that $\varphi$ factors through $\Hhom_R(Q,R)$. Again by Assumption~\cref{par:SolidAssumptions}\cref{enum:SolidAssumptions}, $\Hhom_R(Q,R)$ is a filtered colimit of solid perfect even left $R$-modules. Since $P$ is compact, we conclude that $\varphi\colon P\rightarrow M$ factors through a solid perfect even left $R$-module, as desired.
	\end{proof}
	We can also show the following weaker analogue of \cite[Theorem~\chref{4.16}]{PerfectEvenFiltration}.
	\begin{lem}\label{lem:HomologicallyEven}
		Let $R$ be a solid condensed $\IE_1$-ring spectrum and let $M$ be a left $R$-module.
		\begin{alphanumerate}
			\item $M$ is solid homologically even if and only if every map $P\rightarrow \Sigma M$, where $P$ is solid perfect even, factors through a map $P\rightarrow \Sigma Q$, where $Q$ is solid perfect even.\label{enum:HomologicallyEvenSuspension}
			\item Suppose $M$ is solid homologically even. If $E$ is a solid even flat right $R$-module such that $\pi_*(E)$ is even, then any map $\IS\rightarrow E\soltimes_R\Sigma M$ vanishes.\label{enum:HomologicallyEven}
			\item Suppose $R$ satisfies Assumption~\cref{par:SolidAssumptions}\cref{enum:SolidAssumptions} and $M$ is nuclear. Suppose furthermore that for any solid ind-perfect even right $R$-module $E$ such that $\pi_*(E)$ is even, any morphism $\IS\rightarrow E\soltimes_R\Sigma M$ vanishes. Then $M$ is solid homologically even. In particular, this applies if $M$ is nuclear and solid even flat.\label{enum:TensorProductWithEvenFlat}
		\end{alphanumerate}
	\end{lem}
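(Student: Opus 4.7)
For part (a), the key is the cover criterion for sheafification on $\Perf_{\ev}(R_\solid)$: a class $\alpha \colon P \to \Sigma M$ represents an element of $\pi_{-1}\Hhom_R(P,M)$ that dies in $\Ff_M(-1/2)$ precisely when there is a cover $P' \to P$ (fibre $F$ solid perfect even) on which $\alpha$ becomes null. Such a vanishing forces $\alpha$ to factor through $\cofib(P'\to P) \simeq \Sigma F$, giving the desired factorization through a $\Sigma$-shift of a perfect even. Conversely, given $P \to \Sigma Q \to \Sigma M$ with $Q$ perfect even, set $P' \coloneqq \operatorname{fib}(P \to \Sigma Q)$; the fibre sequence $Q \to P' \to P$ simultaneously exhibits $P' \to P$ as a cover and $P'$ as perfect even (an extension of two perfect evens), so $\alpha$ vanishes in the sheafification. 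Vanishing of $\Ff_M(w)$ at all proper half-integer weights then follows by applying this to the even shifts $\Sigma^{-2k}M$, which reduces to the same criterion for $M$ by closure of perfect evens under even shifts.

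For part (b), take a $\pi_*$-even envelope $M \to F$ with cofibre $C$ solid ind-perfect even, giving a fibre sequence $C \to \Sigma M \to \Sigma F$. Since $\pi_0(E \soltimes_R \Sigma F) = \pi_{-1}(E \soltimes_R F) = 0$ by even flatness of $E$ paired with evenness of $\pi_*(F)$, any class $\alpha \colon \IS \to E \soltimes_R \Sigma M$ lifts to $\IS \to E \soltimes_R C$. Writing $C$ as a filtered colimit of perfect evens $C_i$ and using compactness of $\IS$, this refines to $\IS \to E \soltimes_R C_i$ for some $C_i$, with connecting map $C_i \to \Sigma M$ inducing the further composition to $E \soltimes_R \Sigma M$. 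Part (a) lets us factor $C_i \to \Sigma M$ through $\Sigma Q \to \Sigma M$ with $Q$ perfect even, so $\alpha$ factors through $\IS \to E \soltimes_R \Sigma Q$. But $\pi_0(E \soltimes_R \Sigma Q) = \pi_{-1}(E \soltimes_R Q) = 0$, since $Q$ is perfect even hence solid even flat by Lemma~\ref{lem:LazardsTheorem}(a), and $\pi_*(E)$ is even. Hence $\alpha = 0$.

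For part (c), we verify the factorization criterion of (a). Given $\psi \colon P \to \Sigma M$ with $P$ solid perfect even, nuclearity of $M$ and compactness of $P$ make $\psi$ trace-class with classifier $\eta \colon \IS \to P^\vee \soltimes_R \Sigma M$. By Assumption~(R), $P^\vee = \Hhom_R(P,R)$ is solid ind-perfect even, so compactness of $\IS$ reduces us to $\eta$ factoring through $\IS \to E_j \soltimes_R \Sigma M$ for some perfect even $E_j$. We then construct a $\pi_*$-even envelope $E_j \to E_j'$ cellularly, iteratively attaching cones on classes $\Sigma^{2k+1}\Null_R \to E_j$ to kill odd homotopy: each intermediate cofibre remains perfect even (being an extension of perfect evens by $\Sigma^{2k+2}\Null_R$), so the resulting $E_j'$ lies among the solid ind-perfect evens with $\pi_*$ concentrated in even degrees, and $\cofib(E_j \to E_j')$ is again solid ind-perfect even. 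The hypothesis of (c) applied to $E_j'$ forces $\IS \to E_j' \soltimes_R \Sigma M$ to vanish, so $\eta$ lifts to $\IS \to \operatorname{fib}(E_j \to E_j') \soltimes_R \Sigma M$. Since $\operatorname{fib}(E_j \to E_j') = \Sigma^{-1}\cofib(E_j \to E_j')$ is a single $\Sigma$-shift of a solid ind-perfect even, compactness of $\IS$ yields a further factorization through $\IS \to \Sigma Q' \soltimes_R \Sigma M$ with $Q'$ perfect even. Unwinding trace-class classifiers exactly as in the proof of Lemma~\ref{lem:LazardsTheorem}(b)---using that $\Hhom_R(Q',R)$ is solid ind-perfect even by Assumption~(R), so that $\psi$ factors through $\Hhom_R(\Sigma Q',R) = \Sigma^{-1}\Hhom_R(Q',R)$, a $\Sigma$-shift of a solid ind-perfect even---and then invoking compactness of $P$, we obtain the desired factorization $\psi \colon P \to \Sigma Q'' \to \Sigma M$ with $Q''$ perfect even. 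The final clause is immediate: if $M$ is solid even flat, then $\pi_*(E \soltimes_R M)$ is even for every $E$ with $\pi_*(E)$ even, so $\pi_0(E \soltimes_R \Sigma M) = 0$ directly.

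The main obstacle is the cellular construction of $E_j'$ within the solid ind-perfect even modules in part (c); in the uncondensed setting this is automatic by the even Lazard theorem \cite[Theorem~4.16]{PerfectEvenFiltration}, but solidly it must be done by hand so that the witnessing filtered-colimit structure survives the attachment of odd cells.
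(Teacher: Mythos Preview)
Your argument follows the paper's proof essentially verbatim in parts~(a) and~(b). In part~(c) the strategy is also the same, but two points deserve comment.

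First, the shift bookkeeping in~(c) is off. After lifting to $\operatorname{fib}(E_j \to E_j')\soltimes_R\Sigma M$ and using compactness, you land in $\Sigma^{-1}Q'\soltimes_R\Sigma M$, not $\Sigma Q'\soltimes_R\Sigma M$; consequently the Lazard-style unwinding produces a factorisation of $\psi$ through $\Hhom_R(\Sigma^{-1}Q',R)\simeq\Sigma\Hhom_R(Q',R)$, not $\Sigma^{-1}\Hhom_R(Q',R)$. Your two sign slips happen to cancel and the conclusion $P\to\Sigma Q''\to\Sigma M$ is correct, but the intermediate statements are wrong as written.

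Second, the detour through a perfect even $E_j$ before taking an envelope is unnecessary, and your closing worry about the ``main obstacle'' is unfounded. The paper takes the $\pi_*$-even envelope of $P^\vee=\Hhom_R(P,R)$ directly: since $P^\vee$ is already solid ind-perfect even by Assumption~(R), and any $\pi_*$-even envelope has solid ind-perfect even cofibre \emph{by construction} (this is property~(a) of envelopes in \S\ref{par:CalculusOfSolidEvenness}, which carries over verbatim to the solid setting), the envelope $E$ of $P^\vee$ is automatically an extension of ind-perfect evens and hence ind-perfect even. No cellular argument by hand is needed.
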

	\begin{proof}
		For part~\cref{enum:HomologicallyEvenSuspension}, the proof of \cite[Theorem~\chref{4.16}(2)]{PerfectEvenFiltration} can be copied verbatim. For~\cref{enum:HomologicallyEven}, let $\eta\colon \IS\rightarrow E\soltimes_R \Sigma M$ be any map. Let $M\rightarrow F$ be a $\pi_*$-even envelope and let $C\coloneqq \cofib(M\rightarrow F)$. Since $E$ is solid even flat, $\pi_*(E\soltimes_R\Sigma F)$ is concentrated in odd degrees and so the composite
		\begin{equation*}
			\IS\longrightarrow E\soltimes_R\Sigma M\longrightarrow E\soltimes_R\Sigma F
		\end{equation*}
		must vanish. Choosing a null-homotopy, we see that $\eta$ factors through a map $\eta'\colon \IS\rightarrow E\soltimes_R C$. By assumption, $C$ is solid ind-perfect even. Since $\IS$ is compact, $\eta'$ factors through another map $\eta''\colon \IS\rightarrow E\soltimes_RP$, where $P$ is solid perfect even. Since $M$ is solid homologically even, \cref{enum:HomologicallyEvenSuspension} shows that the composite $P\rightarrow C\rightarrow \Sigma M$ factors through $\Sigma Q$, where $Q$ is solid perfect even. Now $Q$ is solid even flat by \cref{lem:LazardsTheorem}\cref{enum:IndPerfectEven} and so $\pi_*(E\soltimes_R\Sigma Q)$ is concentrated in odd degrees. Thus any map $\IS\rightarrow E\soltimes_R\Sigma Q$ vanishes. Composing with $\Sigma Q\rightarrow \Sigma M$, we find that our original map $\IS\rightarrow E\soltimes_R\Sigma M$ must vanish as well, as desired.
		
		Let us now show \cref{enum:TensorProductWithEvenFlat}. Let $P\rightarrow \Sigma M$ be any map from a solid perfect even. Since $M$ is assumed to be nuclear, any such map is trace-class. Choose a classifier $\eta\colon \IS\rightarrow \Hhom_R(P,R)\soltimes_R \Sigma M$ as well as a $\pi_*$-even envelope $\Hhom_R(P,R)\rightarrow E$ in right $R$-modules. By Assumption~\cref{par:SolidAssumptions}\cref{enum:SolidAssumptions}, $\Hhom_P(P,R)$ is solid ind-perfect even, hence the same is true for any $\pi_*$-even envelope. Our assumption then implies that any map $\IS\rightarrow E\soltimes_R \Sigma M$ vanishes. It follows that $\eta$ factors through a map $\eta'\colon \IS\rightarrow \Sigma^{-1}C\soltimes_R \Sigma M$, where $C\coloneqq \cofib(\Hhom_R(P,R)\rightarrow E)$. By assumption, $C$ is solid ind-perfect even; since $\IS$ is compact, we find a solid perfect even right $R$-module $Q$ and a commutative diagram
		\begin{equation*}
			\begin{tikzcd}
				\IS\rar["\eta''"]\drar["\eta"'] & \Sigma^{-1}Q\soltimes_R \Sigma M \dar\\
				& \Hhom_R(P,R)\soltimes_R\Sigma M
			\end{tikzcd}
		\end{equation*}
		By Assumption~\cref{par:SolidAssumptions}\cref{enum:SolidAssumptions}, $\Hhom_R(P,R)$ is nuclear as a right $R$-module and so the composition $\Sigma^{-1}Q\rightarrow \Sigma^{-1}C\rightarrow \Hhom_R(P,R)$ is trace-class. Arguing as in the proof of \cref{lem:LazardsTheorem}\cref{enum:EvenFlat}, we find that our original map $P\rightarrow \Sigma M$ factors through $\Hhom_R(\Sigma^{-1}Q,R)$. By Assumption~\cref{par:SolidAssumptions}\cref{enum:SolidAssumptions} again, $\Hhom_R(Q,R)$ is solid ind-perfect even. Writing $\Hhom_R(\Sigma^{-1}Q,R)\simeq \Sigma \Hhom_R(Q,R)$ as a filtered colimit of suspensions of solid perfect even left $R$-modules and using that $P$ is compact, we deduce that $P\rightarrow \Sigma M$ factors through the suspension of a solid perfect even left $R$-module, as desired.
		
		For the \enquote{in particular}, just observe that $M$ being solid even flat implies that $\pi_*(E\soltimes_R \Sigma M)$ is concentrated in odd degrees and so indeed any map $\IS\rightarrow E\soltimes_R\Sigma M$ vanishes.
	\end{proof}
	
	We can also show that the solid even filtration recovers Pstr\k{a}gowski's perfect even filtration in the homologically even case.
	
	\begin{cor}\label{cor:SolidvsDiscreteEvenFiltration}
		Let $R^\circ$ be a discrete condensed $\IE_1$-ring spectrum and let $M^\circ$ be a discrete condensed homologically even left $R^\circ$-module.
		\begin{alphanumerate}
			\item $M^\circ$ is solid homologically even as well.\label{enum:DiscreteImpliesSolidHomologicallyEven}
			\item The comparison map from Pstr\k{a}gowski's perfect even filtration to the solid even filtration \embrace{see \cref{par:ComparisonWithUsualEven}} is an equivalence\label{enum:DiscretevsSolidEvenFiltration}
			\begin{equation*}
				\fil_{\Pev}^\star M^\circ\overset{\simeq}{\longrightarrow} \fil_{\ev}^\star M^\circ
			\end{equation*}
		\end{alphanumerate}
		In particular, this applies in the case $M^\circ=R^\circ$.
	\end{cor}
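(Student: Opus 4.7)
My plan is to reduce both parts of the corollary to the observation that $\pi_*(M^\circ)$ is concentrated in even degrees. For this, I would use Pstr\k{a}gowski's even Lazard theorem \cite[Theorems~\chref{4.14} and~\chref{4.16}]{PerfectEvenFiltration}, which together imply that being Pstr\k{a}gowski-homologically even is equivalent to being Pstr\k{a}gowski-ind-perfect even. First I would apply this to the ``in particular'' case $M^\circ=R^\circ$, where $R^\circ$ is tautologically homologically even over itself: this shows that $R^\circ$ is Pstr\k{a}gowski-ind-perfect even, and since every Pstr\k{a}gowski-perfect-even $R^\circ$-module is built from $\Sigma^{2n}R^\circ$ by extensions and retracts, and filtered colimits preserve even homotopy, one deduces that $\pi_*(R^\circ)$ is concentrated in even degrees. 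Applying the same characterisation to $M^\circ$, now knowing $\pi_*(R^\circ)$ is even, I would conclude that $\pi_*(M^\circ)$ is concentrated in even degrees as well.

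Part~(a) would then follow immediately by invoking \cref{par:SolidEvenFiltration}: any solid $R^\circ$-module whose condensed homotopy is concentrated in even degrees is automatically solid homologically even, with its solid even filtration agreeing with the double-speed Whitehead filtration.

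For part~(b), I would use the description of \cref{par:SolidEvenFiltration} identifying $\fil_{\ev}^\star M^\circ$ with $\tau_{\geqslant 2\star}(M^\circ)$, together with Pstr\k{a}gowski's analogous statement identifying $\fil_{\Pev}^\star M^\circ$ with the same double-speed Whitehead filtration. Since the comparison map from \cref{par:ComparisonWithUsualEven} is constructed through the restriction of Pstr\k{a}gowski's site into the solid site and is compatible with truncations, it must then be an equivalence.

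The hardest part will be verifying that Pstr\k{a}gowski's characterisations of homological evenness in terms of ind-perfect evenness, stated in ordinary spectra, carry over to discrete condensed spectra. This should go through verbatim since the embedding $(\underline{-})\colon\Sp\rightarrow\Cond(\Sp)$ is fully faithful and symmetric monoidal on bounded-below objects, and since Pstr\k{a}gowski's category $\Perf_{\ev}(R^\circ)$ is defined intrinsically in terms of $R^\circ$-modules. Nevertheless, one should keep a careful eye on the place of sheafification, as the topology on $\Perf_{\ev}(R^\circ)$ involves covers valued in condensed rather than ordinary spectra.
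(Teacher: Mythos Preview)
Your approach has a fundamental gap: the claim that a homologically even module has even homotopy groups is false. The notion of ``perfect even'' is relative to $R^\circ$ --- the generators are the even shifts $\Sigma^{2n}R^\circ$, which need not have even homotopy themselves. Concretely, take $R^\circ=\IS$: the sphere spectrum is perfect even (hence ind-perfect even and homologically even) over itself, yet $\pi_*(\IS)$ is famously not concentrated in even degrees. So your first step, deducing that $\pi_*(R^\circ)$ is even from $R^\circ$ being homologically even, already fails, and with it the reduction of both parts to the double-speed Whitehead filtration. The cited Theorems~4.14 and~4.16 of \cite{PerfectEvenFiltration} do not say that homologically even is equivalent to ind-perfect even; Theorem~4.14 identifies ind-perfect even with even flat, and even flat implies homologically even, but the converse does not hold.

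The paper's proof avoids this entirely. For \cref{enum:DiscreteImpliesSolidHomologicallyEven} it verifies the criterion of \cref{lem:HomologicallyEven}\cref{enum:HomologicallyEvenSuspension} directly: given a map $P\to\Sigma M^\circ$ from a solid perfect even, nuclearity of $M^\circ$ (\cref{lem:NuclearityForDiscretepComplete}\cref{enum:Rdiscrete}) makes it trace-class, and one shows that $\Hhom_{R^\circ}(P,R^\circ)$ is a filtered colimit of \emph{discrete} perfect evens; compactness of $\IS$ then lets one factor through a discrete perfect even and invoke the discrete homological evenness hypothesis. For \cref{enum:DiscretevsSolidEvenFiltration} the paper observes that the explicit construction of a discrete $\pi_*$-even envelope in \cite[Proposition~4.11]{PerfectEvenFiltration} is simultaneously a solid $\pi_*$-even envelope; since in the homologically even case both $\gr_{\Pev}^*$ and $\gr_{\ev}^*$ are computed by iterated $\pi_*$-even envelopes, the comparison map is an equivalence on associated gradeds, and exhaustiveness finishes the argument.
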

	\begin{proof}
		For \cref{enum:DiscreteImpliesSolidHomologicallyEven} we'll verify the criterion from \cref{lem:HomologicallyEven}\cref{enum:HomologicallyEvenSuspension}. Let $\varphi\colon P\rightarrow \Sigma M^\circ$ be any map, where $P$ is solid perfect even. Since $M^\circ$ is nuclear by \cref{lem:NuclearityForDiscretepComplete}\cref{enum:Rdiscrete}, this map must be trace-class, with witness $\eta\colon \IS\rightarrow \Hhom_{R^\circ}(P,R^\circ)\otimes_R\Sigma M^\circ$. Now $\Hhom_{R^\circ}(P,R^\circ)$ is solid ind-perfect even. In fact, it is discrete and a filtered colimit of discrete left $R^\circ$-modules which are perfect even in Pstr\k{a}gowski's sense (which we'll call \emph{discrete} perfect even in the following). Indeed, this is clearly true for $\Hhom_{R^\circ}(\Null_{R^\circ},R^\circ)\simeq \bigoplus_\IN R^\circ$ and then it follows in general.
		
		Since $\IS$ is compact, $\eta$ must factor through a map $\eta^\circ\colon \IS\rightarrow
		P^\circ\otimes_R\Sigma M^\circ$, where $P^\circ$ is discrete perfect even. Then $P^\circ$ is dualisable and so $\eta^\circ$ corresponds to a map $\varphi^\circ\colon \Hhom_{R^\circ}(P^\circ,R^\circ)\rightarrow \Sigma M^\circ$ through which our original map $\varphi$ factors. Since $\Hhom_{R^\circ}(P^\circ,R^\circ)$ is still discrete perfect even, the assumption that $M^\circ$ is discrete homologically even ensures that $\varphi^\circ$ factors through a map $\Hhom_{R^\circ}(P^\circ,R^\circ)\rightarrow \Sigma Q^\circ$, with $Q^\circ$ discrete perfect even and so we're done.
		
		To show \cref{enum:DiscretevsSolidEvenFiltration}, it's straightforward to check that the construction of a $\pi_*$-even envelopes of $M$ as a discrete left $R$-module in \cite[Proposition~\chref{4.11}]{PerfectEvenFiltration} also yields a $\pi_*$-even envelope as a solid condensed left $R$-module.%
		\footnote{Implicitly, we use that discrete condensed abelian groups have vanishing higher cohomology on any light profinite set; see \cite[Lecture~\href{https://youtu.be/EW39K0J7Hqo?list=PLx5f8IelFRgGmu6gmL-Kf_Rl_6Mm7juZO&t=2142}{4}]{AnalyticStacks}.}
		Assuming homological evenness, both $\gr_{\Pev}^*M$ and $\gr_{\ev}^*M$ can be computed by repeatedly taking $\pi_*$-even envelopes, as explained in \cite[\S{\chref[section]{5}}]{PerfectEvenFiltration}. It follows that $\fil_{\Pev}^\star M\rightarrow\fil_{\ev}^\star M$ is an equivalence on associated gradeds. Since both filtrations are exhaustive, we conclude.
	\end{proof}
	
	\subsection{Solid faithfully flat descent in the nuclear case}\label{subsec:FlatDescent}
	
	In this subsection we'll show a flat descent result for the solid even filtration. We start with the definition of faithful flatness; it is slightly more restrictive than \cite[Definition~\chref{6.15}]{PerfectEvenFiltration}, but we expect that this doesn't cause any problems in practice.
	\begin{defi}\label{def:SolidFaithfullyEvenFlat}
		A map $R\rightarrow S$ of solid condensed $\IE_1$-algebras is called \emph{solid faithfully even flat} if $S$ and $\cofib(R\rightarrow S)$ are solid even flat both as left  and as right $R$-modules.
	\end{defi}
	
	\begin{thm}\label{thm:SolidEvenFlatDescent}
		Let $R\rightarrow S$ be a solid faithfully even flat map of solid condensed $\IE_1$-algebras such that $R$ satisfies Assumption~\cref{par:SolidAssumptions}\cref{enum:SolidAssumptions} and $S$ is nuclear as a left $R$-module. We denote the \v Cech nerve of $R\rightarrow S$ by $R\rightarrow S^\bullet$. Then for every nuclear solid homologically even left $R$-module $M$, the canonical map
		\begin{equation*}
			\fil_{\ev/R}^\star M\longrightarrow \limit_{\IDelta}\fil_{\ev/R}^\star (S^\bullet\soltimes_R M)
		\end{equation*}
		is an equivalence up to completing the filtrations on either side.
	\end{thm}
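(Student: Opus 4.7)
The plan is to reduce the statement about filtrations to a statement about associated gradeds, where it becomes a question of faithfully flat descent for solid abelian groups.

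First, since the conclusion only asks for an equivalence after completion, it suffices to show that the canonical map induces an equivalence on each associated graded piece $\gr_{\ev/R}^n$: a morphism of filtered objects becomes an equivalence on completions if and only if it is an equivalence on gradeds, and $\gr^*$ commutes with the cosimplicial limit on the right-hand side (in each fixed graded weight the limit can be computed degreewise).

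Second, I need to verify that the cosimplicial terms $S^{\bullet} \soltimes_R M$ behave well enough for the graded computation to make sense. By iterating \cref{thm:Nuclear}\cref{enum:NucBaseChange}, each $S^k$ is nuclear as a left $R$-module, and then $S^k \soltimes_R M$ is nuclear over $S^k$, hence over $R$ (using \cref{thm:Nuclear}\cref{enum:NucForgetfulFunctor} and that $S$ is nuclear over $R$). To see that each $S^k \soltimes_R M$ is solid homologically even, apply \cref{lem:HomologicallyEven}\cref{enum:TensorProductWithEvenFlat}: if $E$ is a solid ind-perfect even right $R$-module with $\pi_*(E)$ even, then
\begin{equation*}
E \soltimes_R \Sigma(S^k \soltimes_R M) \simeq (E \soltimes_R S^k) \soltimes_R \Sigma M,
\end{equation*}
and the solid even flatness of $S$ (and hence of $S^k$) as a right $R$-module ensures that $E \soltimes_R S^k$ is again solid even flat with even $\pi_*$, so that any map from $\IS$ vanishes by \cref{lem:HomologicallyEven}\cref{enum:HomologicallyEven}.

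Third, once homological evenness is established throughout, the associated gradeds can be computed inductively by iterating $\pi_*$-even envelopes, exactly as in the uncondensed setting. In each step this reduces the descent problem to faithfully flat descent for a single even sheaf $\Ff_{S^\bullet \soltimes_R M}$ of solid abelian groups. Using that both $S$ and $\cofib(R \to S)$ are solid even flat on both sides, the Amitsur-type totalisation of the even sheaf is exact, which is the solid analogue of the classical fact that faithful flatness implies descent. The cofibre condition in \cref{def:SolidFaithfullyEvenFlat} is precisely what is needed to propagate exactness of the Amitsur complex through the iterated $\pi_*$-even envelope construction.

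The main obstacle will be carefully organising the inductive argument on $\pi_*$-even envelopes in the cosimplicial setting: one must check that the envelope construction is sufficiently natural to produce a cosimplicial object of envelopes, and that the nuclearity hypothesis is preserved at each step so that \cref{lem:HomologicallyEven} and \cref{lem:LazardsTheorem} remain applicable. A secondary technical point is the interaction between the solid tensor product $S^k \soltimes_R(-)$ and filtered colimits of solid perfect evens, which is where the nuclearity of $S$ over $R$ is essential.
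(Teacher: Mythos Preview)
Your proposal has the right ingredients in step~2 (nuclearity and homological evenness of each $S^k\soltimes_R M$, argued via \cref{lem:HomologicallyEven}), and this matches the paper. But there are two problems with the surrounding architecture.

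First, the reduction in step~1 is not valid as stated: $\gr^n$ is a cofibre, and cofibres do not commute with cosimplicial limits. So you cannot identify $\gr^n\bigl(\limit_\IDelta\fil_{\ev/R}^\star(S^\bullet\soltimes_R M)\bigr)$ with $\limit_\IDelta\gr_{\ev/R}^n(S^\bullet\soltimes_R M)$ directly. Second, the obstacle you flag in step~3 is real: $\pi_*$-even envelopes involve choices and there is no canonical way to make them cosimplicially functorial, so organising the argument as an induction on envelopes is genuinely awkward.

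The paper sidesteps both issues by working one level down, with the even sheaves $\Ff_{(-)}(w)$ on $\Perf_{\ev}(R_\solid)$ rather than with envelopes. Set $C\coloneqq\cofib(R\rightarrow S)$. Once you know that $M$, $S\soltimes_R M$, and $C\soltimes_R M$ are all nuclear and solid homologically even (which is exactly your step~2, plus the same argument applied to $C$), the cofibre sequence gives a short exact sequence of abelian sheaves
\begin{equation*}
0\longrightarrow\Ff_M\longrightarrow\Ff_{S\soltimes_R M}\longrightarrow\Ff_{C\soltimes_R M}\longrightarrow 0\,.
\end{equation*}
Iterating (as in the proof of \cite[Theorem~\chref{6.26}]{PerfectEvenFiltration}) shows that the Moore complex $0\rightarrow\Ff_M\rightarrow\Ff_{S\soltimes_R M}\rightarrow\Ff_{S\soltimes_R S\soltimes_R M}\rightarrow\dotsb$ is exact, and likewise for all weights $w$ (integral by suspending $M$, half-integral trivially). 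Then the solid analogue of \cite[Proposition~\chref{5.5}]{PerfectEvenFiltration} converts exactness of these Moore complexes directly into the descent statement up to completion. No envelopes need to be chosen, and no commutation of $\gr$ with limits is invoked.
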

	\begin{proof}
		Put $C\coloneqq \cofib(R\rightarrow S)$ for short. First observe that $S\otimes_RM$ and $C\otimes_RM$ are again nuclear by \cref{thm:Nuclear}\cref{enum:NucBaseChange} and~\cref{enum:NucForgetfulFunctor}. If $E$ is any $\pi_*$-even and solid even flat right $R$-module, then $E\soltimes_RS$ is $\pi_*$-even and solid even flat since $S$ is solid even flat both as as a left  and as a right $R$-module. Using that $M$ is solid homologically even, we find that any map $\IS\rightarrow E\soltimes_RS\soltimes_R\Sigma M$ vanishes by \cref{lem:HomologicallyEven}\cref{enum:HomologicallyEven}. Since $S\soltimes_RM$ is nuclear, we conclude that it must be solid homologically even by \cref{lem:HomologicallyEven}\cref{enum:TensorProductWithEvenFlat}. The same argument applies to $C\soltimes_RM$.
		
		Therefore we get a short exact sequence $0\rightarrow \Ff_M\rightarrow \Ff_{S\soltimes_RM}\rightarrow \Ff_{C\soltimes_RM}\rightarrow 0$. Arguing as in the proof of \cite[Theorem~\chref{6.26}]{PerfectEvenFiltration}, we conclude that the Moore complex
		\begin{equation*}
			0\longrightarrow \Ff_M\longrightarrow \Ff_{S\soltimes_RM}\longrightarrow \Ff_{S\soltimes_RS\soltimes_RM}\longrightarrow \dotsb
		\end{equation*}
		is exact. Replacing $M$ by an even suspension, we deduce the same for $\Ff_{(-)}(w)$ for every integral weight $w\in\IZ$. For proper half-integral weights $w\in\frac12+\IZ$ this is true as well for trivial reasons, since our argument above shows that all terms in $S^\bullet\soltimes_R M$ are homologically even. We can thus apply the solid analogue of \cite[Proposition~\chref{5.5}]{PerfectEvenFiltration}.
	\end{proof}
	We also need the following variant of faithfully flat descent.
	\begin{thm}\label{thm:SolidEvenFlatDescentVariant}
		Let $R_0$ be a solid condensed $\IE_\infty$-algebra and let $S_0$ be an $\IE_1$-algebra in $R_0$-modules such that $R_0\rightarrow S_0$ is solid faithfully even flat and $S_0$ is nuclear over $R_0$. We denote the \v Cech nerve of $R_0\rightarrow S_0$ by $R_0\rightarrow S_0^\bullet$. Let $R_0\rightarrow R$ be another map of solid condensed $\IE_1$-algebras such that $R$ satisfies Assumption~\cref{par:SolidAssumptions}\cref{enum:SolidAssumptions}. Then for every solid homologically flat $R_0$-module $M_0$, the canonical map
		\begin{equation*}
			\fil_{\ev/R}^\star (R\soltimes_{R_0}M_0)\longrightarrow \limit_{\IDelta}\fil_{\ev/R}^\star (R\soltimes_{R_0}M_0\soltimes_{R_0}S_0^\bullet)
		\end{equation*}
		is an equivalence up to completing the filtrations on both sides.
	\end{thm}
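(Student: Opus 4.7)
The plan is to reduce the statement to \cref{thm:SolidEvenFlatDescent} by a base-change argument. Set $S\coloneqq R\soltimes_{R_0}S_0$, $M\coloneqq R\soltimes_{R_0}M_0$, and $C\coloneqq \cofib(R\to S)\simeq R\soltimes_{R_0}\cofib(R_0\to S_0)$. Since base change commutes with solid tensor products, the \v Cech nerve of $R\to S$ in $R$-modules identifies with $R\soltimes_{R_0}S_0^\bullet$, and the cosimplicial diagram appearing in the statement becomes $\fil_{\ev/R}^\star(M\soltimes_R S^\bullet)$. This reformulates the problem as a descent statement for $M$ along $R\to S$, only that $M$ does not a priori satisfy the hypotheses of \cref{thm:SolidEvenFlatDescent}.

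First I would verify that $R\to S$ is itself solid faithfully even flat and that $S$ is nuclear as a left (and right) $R$-module. For any even right $R$-module $E$, restriction along $R_0\to R$ yields an even right $R_0$-module, and $E\soltimes_R S\simeq E\soltimes_{R_0}S_0$ has even condensed homotopy groups by solid even flatness of $S_0$ over $R_0$; the same reasoning applies to $C$ and on the left. Nuclearity of $S$ as an $R$-module follows from \cref{thm:Nuclear}\cref{enum:NucBaseChange} applied to the nuclear $R_0$-module $S_0$.

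The core remaining task is to verify that each $M\soltimes_R S^n$ is solid homologically even as an $R$-module, after which the \v Cech argument from the proof of \cref{thm:SolidEvenFlatDescent} applies verbatim: the short exact sequences $0\to\Ff_{M\soltimes_R S^n}\to\Ff_{M\soltimes_R S^{n+1}}\to\Ff_{M\soltimes_R(C\soltimes_R S^n)}\to 0$, combined with the argument from \cite[Theorem~\chref{6.26}]{PerfectEvenFiltration}, yield exactness of the full Moore complex for every integral weight $w$, while exactness for proper half-integral $w$ is immediate from homological evenness of all terms. The strategy for establishing homological evenness is to transport it from the underlying $R_0$-module $N_n\coloneqq M_0\soltimes_{R_0}S_0^n$, which is solid even flat (even flatness is stable under solid tensor products over the $\IE_\infty$-algebra $R_0$, using $S_0$ even flat on both sides) and solid homologically even (tensoring the solid homologically even $M_0$ with the even flat $S_0^n$ preserves homological evenness via \cref{lem:HomologicallyEven}\cref{enum:HomologicallyEven,enum:TensorProductWithEvenFlat}). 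To transfer this to the $R$-module $R\soltimes_{R_0}N_n$, I would use the criterion \cref{lem:HomologicallyEven}\cref{enum:HomologicallyEvenSuspension}: a map $P\to\Sigma(R\soltimes_{R_0}N_n)$ from a solid perfect even $R$-module $P$ reduces, using that $\Null_R\simeq R\soltimes_{R_0}\Null_{R_0}$ and that base change takes solid ind-perfect evens to solid ind-perfect evens, to a factorisation problem for $N_n$ over $R_0$ via a $\pi_*$-even envelope, whose cofibre is solid ind-perfect even and therefore base-changes to a solid ind-perfect even $R$-module through which the original map then factors.

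An application of the solid analogue of \cite[Proposition~\chref{5.5}]{PerfectEvenFiltration} would then yield the claimed equivalence on completed filtrations. I expect the main obstacle to be precisely the homological-evenness transfer in the case $n=0$: here $M$ itself need not be nuclear over $R$, so \cref{thm:SolidEvenFlatDescent} does not apply directly, and one must genuinely use that $M_0$ is solid homologically flat over the $\IE_\infty$-algebra $R_0$ (rather than nuclear over $R$) to substitute for the nuclearity hypothesis of the earlier theorem.
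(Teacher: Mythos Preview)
Your reduction to \cref{thm:SolidEvenFlatDescent} does not work, and the paper's proof opens by pointing out exactly why: the object $S\coloneqq R\soltimes_{R_0}S_0$ has no $\IE_1$-algebra structure. The hypothesis only gives $R$ as an $\IE_1$-algebra with an $\IE_1$-map from the $\IE_\infty$-ring $R_0$; this does \emph{not} make $R$ an $\IE_1$-algebra in $R_0$-modules (that would require $R_0$ to land in the centre of $R$), so the base change $R\soltimes_{R_0}(-)$ is not monoidal and $S$ inherits only a left $R$-module structure together with a right $R_0$-module structure. Consequently the expressions $S^\bullet$, $M\soltimes_RS^n$, $C\soltimes_RS^n$ that you write down are not defined, and there is no \v Cech nerve of $R\to S$ to compare with $R\soltimes_{R_0}S_0^\bullet$. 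Your diagnosis that the obstacle lies in nuclearity of $M$ over $R$ misses the point.

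The paper instead adapts the proof of \cref{thm:SolidEvenFlatDescent} directly to the cosimplicial left $R$-module $R\soltimes_{R_0}M_0\soltimes_{R_0}S_0^\bullet$, never forming any algebra over $R$. The needed inputs are that $R\soltimes_{R_0}M_0\soltimes_{R_0}S_0$ and $R\soltimes_{R_0}M_0\soltimes_{R_0}C_0$ are nuclear over $R$ (via \cref{thm:Nuclear}\cref{enum:NucBaseChange} and \cref{enum:NucForgetfulFunctor}) and solid even flat as left $R$-modules (restricting an even $R$-module to $R_0$ and using even flatness of $M_0\soltimes_{R_0}S_0$ over $R_0$), hence solid homologically even by \cref{lem:HomologicallyEven}\cref{enum:TensorProductWithEvenFlat}. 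This gives the short exact sequence on $\Ff$-sheaves directly. For the inductive Moore-complex argument one cannot use an $R$-algebra structure on $S$, but one does not need to: the cosimplicial $R_0$-module $M_0\soltimes_{R_0}S_0\soltimes_{R_0}S_0^\bullet$ is split (it has an extra degeneracy coming from the $R_0$-algebra structure on $S_0$), and this splitting survives after applying $R\soltimes_{R_0}(-)$, which is all that the argument of \cite[Theorem~\chref{6.26}]{PerfectEvenFiltration} actually uses. Your proposed transfer of homological evenness from $R_0$ to $R$ via \cref{lem:HomologicallyEven}\cref{enum:HomologicallyEvenSuspension} is more circuitous and, as you set it up, would also require $R_0$ to satisfy Assumption~\cref{par:SolidAssumptions}\cref{enum:SolidAssumptions}, which is not among the hypotheses.
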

	\begin{proof}
		This doesn't follow from \cref{thm:SolidEvenFlatDescent} since we can't produce an $\IE_1$-structure on $R\soltimes_{R_0}S_0$. But the argument can be adapted in a straightforward way.
		
		Let $C_0\coloneqq \cofib(R_0\rightarrow S_0)$. A combination of \cref{thm:Nuclear}\cref{enum:NucBaseChange} and~\cref{enum:NucForgetfulFunctor} shows again that $R\soltimes_{R_0}M_0\soltimes_{R_0}S_0$ and $R\soltimes_{R_0}M_0\soltimes_{R_0}C_0$ are nuclear over $R$. Moreover, both are solid even flat as left $R$-modules, hence solid homologically even by \cref{lem:HomologicallyEven}\cref{enum:TensorProductWithEvenFlat}. It follows that
		\begin{equation*}
			0\rightarrow \Ff_{R\soltimes_{R_0}M_0}\longrightarrow\Ff_{R\soltimes_{R_0}M_0\soltimes_{R_0}S_0}\longrightarrow \Ff_{R\soltimes_{R_0}M_0\soltimes_{R_0}C_0}\longrightarrow 0
		\end{equation*}
		is a short exact sequence. Since the cosimplicial $R_0$-module $M_0\soltimes_{R_0}S_0\soltimes_{R_0}S_0^\bullet$ is split, we can still use an analogous argument as in the proof of \cite[Theorem~\chref{6.26}]{PerfectEvenFiltration} to conclude that the Moore complex
		\begin{equation*}
			0\longrightarrow \Ff_{R\soltimes_{R_0}M_0}\longrightarrow \Ff_{R\soltimes_{R_0}M_0\soltimes_{R_0}S_0}\longrightarrow \Ff_{R\soltimes_{R_0}M_0\soltimes_{R_0}S_0\soltimes_{R_0}S_0}\longrightarrow \dotsb
		\end{equation*}
		is exact. The same follows for $\Ff_{(-)}(w)$ for every half-integral weight $w$: If $w\in\IZ$, replace $M_0$ by an even suspension, otherwise exactness holds for trivial reasons as the whole complex vanishes by solid homological evenness. We can thus apply the solid analogue of \cite[Proposition~\chref{5.5}]{PerfectEvenFiltration} again to finish the proof.
	\end{proof}
	\begin{rem}
		Note that $M=R$ satisfies the nuclearity and homological evenness assumption in \cref{thm:SolidEvenFlatDescent}. Similarly, $M_0=R$ satisfies the assumptions in \cref{thm:SolidEvenFlatDescentVariant}. So in either case we get a way of computing $\fil_{\ev/R}^\star R$ via descent, provided $R$ satisfies Assumption~\cref{par:SolidAssumptions}\cref{enum:SolidAssumptions}. 
	\end{rem}
	
	\newpage

	\section{The solid even filtration for \texorpdfstring{$\THH$}{THH}}\label{sec:SolidEvenFiltrationku}
	
	The purpose of this section is to construct and study an appropriate even filtration on $\TC^-(\ku_R/\ku_A)$, where $\ku_A$ and $\ku_R$ denote certain lifts to $\ku$ of rings $A$ and $R$ (subject to strong additional assumptions to be specified below). In the subsequent section \cref{sec:qdeRhamku} we'll show that the associated graded of this even filtration is closely related to the $q$-de Rham complex $\qdeRham_{R/A}$.
	
	Throughout \cref{sec:SolidEvenFiltrationku} and \crefrange{subsec:qdeRhamkupComplete}{subsec:QuasiRegular}, we fix a prime~$p$ as well as rings $A$ and $R$ satisfying the following assumptions: 
	
	\begin{numpar}[Assumptions on $A$.]\label{par:AssumptionsOnA}
		We let $A$ be a $p$-complete and \emph{$p$-completely perfectly covered} $\delta$-ring. That is, the Frobenius $\phi\colon A\rightarrow A$ is $p$-completely faithfully flat; equivalently, $A$ admits a $p$-completely faithfully flat $\delta$-ring map $A\rightarrow A_\infty$ into a perfect $\delta$-ring. We assume that $A$ is equipped with the following additional structure:
		\begin{alphanumerate}\itshape
			\item[^{\t C_p\!}] $A$ has a lift to a $p$-complete connective $\IE_\infty$-ring spectrum $\IS_A$ such that $\IS_A\otimes_{\IS_p}\IZ_p\simeq A$ and such that the {Tate-valued Frobenius}\label{enum:CyclotomicLift}
			\begin{equation*}
				\phi_{\t C_p}\colon \IS_A\longrightarrow \IS_A^{\t C_p}
			\end{equation*}
			agrees with the $\delta$-ring Frobenius $\phi\colon A\rightarrow A$ on $\pi_0$. Furthermore, $\phi_{\t C_p}$ must be equipped with an $S^1$-equivariant structure as a map of $\IE_\infty$-ring spectra, where $\IS_A$ receives the trivial $S^1$-action and $\IS_A^{\t C_p}$ the induced $S^1\simeq S^1/C_p$-action.
		\end{alphanumerate}
		The $S^1$-equivariant structure in \cref{enum:CyclotomicLift} ensures that $\IS_A$ is a \emph{$p$-cyclotomic base}: By the universal property of $\THH$, the augmentation $\THH(\IS_A)\rightarrow \IS_A$ becomes a map of $\IE_\infty$-algebras in cyclotomic spectra in a unique way, where the $p$-cyclotomic Frobenius on $\IS_A$ is $\phi_{\t C_p}$ with its chosen $S^1$-equivariant structure. In particular, $\THH(-/\IS_A)\simeq \THH(-)\otimes_{\THH(\IS_A)}\IS_A$ carries a $p$-cyclotomic structure. We also put $\ku_A\coloneqq (\ku\otimes\IS_A)_p^\complete$.
	\end{numpar}
	\begin{numpar}[Assumptions on $R$.]\label{par:AssumptionsOnR}
		We let $R$ be a $p$-complete $A$-algebra of bounded $p^\infty$-torsion. We assume that $R$ is $p$-\emph{quasi-lci} over $A$ in the sense that the cotangent complex $\L_{R/A}$ has $p$-complete $\Tor$-amplitude in homological degrees $[0,1]$ over $R$. In addition, one of the following two conditions must be satisfied:
		\begin{alphanumerate}\itshape
			\item[\IE_2] $R$ has a lift to a $p$-complete connective $\IE_2$-algebra $\IS_R\in\Alg_{\IE_2}(\Mod_{\IS_A}(\Sp))$ such that  $\IS_R\otimes_{\IS_p}\IZ_p\simeq R$.\label{enum:E2Lift}
			\item[\IE_1] $R$ is $p$-torsion free and has a $p$-quasi-syntomic cover $R\rightarrow R_\infty$ such that:\label{enum:E1Lift}
			\begin{alphanumerate}
				\item $R_\infty/p$ is {relatively semiperfect over $A$} in the sense that its relative Frobenius over the $\delta$-ring $A$ is a surjection $R_\infty/p\otimes_{A,\phi}A\twoheadrightarrow R_\infty/p$.
				\item If $R_\infty^\bullet$ denotes the $p$-completed \v Cech nerve of $R\rightarrow R_\infty$, then the augmented cosimplicial diagram $R\rightarrow R_\infty^\bullet$ has a lift to an augmented cosimplicial diagram $\IS_R\rightarrow \IS_{R_\infty^\bullet}$ in $\Alg_{\IE_1}(\Mod_{\IS_A}(\Sp))$, which is $p$-complete and connective in every degree.
			\end{alphanumerate}
		\end{alphanumerate}
		We put $\ku_R\coloneqq (\ku\otimes \IS_R)_p^\complete$ and, in case~\cref{enum:E1Lift}, $\ku_{R_\infty^\bullet}\coloneqq (\ku\otimes\IS_{R_\infty^\bullet})_p^\complete$.
	\end{numpar}
	\begin{rem}
		Even though the assumptions in~\cref{par:AssumptionsOnA} and~\cref{par:AssumptionsOnR} seem quite restrictive, they allow for many interesting examples, as we'll see in \cref{subsec:CyclotomicBases}.
	\end{rem}
	\begin{rem}\label{rem:AdHocEvenFiltration}
		Let us motivate the rather artificial condition \cref{par:AssumptionsOnR}\cref{enum:E1Lift}. If our lifts are only $\IE_1$, there's no even filtration on $\TC^-(\ku_R/\ku_A)_p^\complete$. However, if $\TC^-(\ku_R/\ku_A)_p^\complete$ happens to be an even spectrum, then we can still consider its double-speed Whitehead filtration $\tau_{\geqslant 2\star}\TC^-(\ku_R/\ku_A)_p^\complete$. This case turns out to be quite interesting: As we'll see in \cref{subsec:QuasiRegular}, the $q$-deformation of the Hodge filtration that we get in this case is independent of the choice of the $\IE_1$-lift $\IS_R$! This is the reason why we don't content ourselves with the $\IE_2$-case.
		
		More generally, given a resolution $\IS_R\rightarrow \IS_{R_\infty^\bullet}$ as in \cref{par:AssumptionsOnR}\cref{enum:E1Lift}, then $\TC^-(\ku_{R_\infty^\bullet}/\ku_A)_p^\complete$ is even in every cosimplicial degree, so we can use it to define an ad-hoc replacement of the even filtration. Indeed, evenness can be checked modulo~$\beta$, so we only need to check that $\HC^-(R_\infty^\bullet/A)_p^\complete$ is even. By assumption, $R_\infty/p$ is relatively semiperfect over $A$, hence the same is true for $R_\infty^\bullet/p$ in every cosimplicial degree. Then the desired evenness follows from \cite[Lemma~\chref{4.18}({\chref[Item]{43}[$a$]})]{qWittHabiro} and \cite[Theorem~\chref{1.17}]{BMS2}.
	\end{rem}
	\begin{rem}
		Throughout \cref{sec:SolidEvenFiltrationku}, we won't use that the lifts $\ku_A$ and $\ku_R$ come from spherical lifts $\IS_A$ and $\IS_R$, nor will we use the structure of a $p$-cyclotomic base on $\IS_A$. But for the comparison with $q$-de Rham cohomology in \cref{sec:qdeRhamku}, these assumptions will become relevant.
	\end{rem}
	\subsection{Solid \texorpdfstring{$\THH$}{THH}}
	Throughout \crefrange{sec:SolidEvenFiltrationku}{sec:qdeRhamku}, we'll work in the world of \emph{solid condensed spectra} (see \cref{par:CondensedRecollections}). In many cases, it makes no difference whether we work solidly or $p$-completely; for the most part, the reader not familiar with the solid theory may safely replace each \enquote{$\solid$} by a $p$-completion. But working solidly has the advantage that that $\THH$ will often automatically be $p$-complete (\cref{lem:SolidTHH}). This simplifies the $p$-completed descent for the even filtration (\cref{lem:eff}) and it makes it much easier to deal with rationalisations, as not having to $p$-complete allows us to appeal directly to the fact that $\ku_p^\complete\otimes\IQ\simeq \IQ_p[\beta]$.
	
	\begin{numpar}[Convention.]\label{conv:SolidConventions}
		For readability we'll adopt the following abusive convention: If $X$ is a $p$-complete spectrum, we'll identify $X$ with the solid condensed spectrum $\underline{X}_p^\complete$, otherwise we identify $X$ with the discrete solid condensed spectrum $\underline{X}$. In particular, we'll regard $\ku$ as a discrete condensed spectrum, but $\ku_R$ and $\ku_A$ as a $p$-complete ones.
	\end{numpar}
	For any $\IE_\infty$-algebra $k$ in $\Sp_\solid$, the module $\infty$-category $\Mod_k(\Sp_\solid)$ is symmetric monoidal for the solid tensor product $-\soltimes_{k}-$. We can then consider topological Hochschild homology inside $\Mod_{k}(\Sp_\solid)$. This yields a functor
	\begin{equation*}
		\THH_\solid(-/k)\colon \Alg_{\IE_1}\bigl(\Mod_{k}(\Sp_\solid)\bigr)\longrightarrow \Mod_{k}(\Sp_\solid)^{\B S^1}\,.
	\end{equation*}
	We also let $\TC_\solid^-(-/k)\coloneqq \THH_\solid(-/k)^{\h S^1}$ and $\TP_\solid(-/k)\coloneqq \THH_\solid(-/k)^{\t S^1}$, where the fixed points and Tate construction are taken inside $\Mod_k(\Sp_\solid)^{\B S^1}$.
	
	\begin{lem}\label{lem:SolidTHH}
		Let $k^\circ$ be a discrete connective $\IE_\infty$-ring spectrum and let $T^\circ$ be a discrete connective $\IE_1$-algebra in $k^\circ$-modules. Let $k\coloneqq (k^\circ)_p^\complete$ and $T\coloneqq (T^\circ)_p^\complete$. Then solid condensed spectrum $\THH_\solid(T/k)$ is the $p$-completion of the discrete spectrum $\THH(T^\circ/k^\circ)$.
	\end{lem}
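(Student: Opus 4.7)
Both $\THH_\solid(T/k)$ and $\THH(T^\circ/k^\circ)$ are, by construction, geometric realisations of their respective cyclic bar complexes:
\begin{equation*}
    \THH_\solid(T/k)\simeq \colim_{[n]\in\IDelta^{\op}}T^{\soltimes_k(n+1)}\quad\text{and}\quad \THH(T^\circ/k^\circ)\simeq \colim_{[n]\in\IDelta^{\op}}(T^\circ)^{\otimes_{k^\circ}(n+1)}\,,
\end{equation*}
the first realisation taken in $\Mod_k(\Sp_\solid)$ and the second in $\Mod_{k^\circ}(\Sp)$. My plan is to compare these termwise via the symmetric monoidal embedding $(\underline{-})_p^\complete\colon \Sp_{p,\geqslant 0}^\complete\hookrightarrow \Sp_\solid$ from \cref{par:SolidpComplete}, and then commute the outer realisation with $p$-completion.

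For the termwise comparison, the connectivity assumptions on $k^\circ$ and $T^\circ$ ensure that each iterated tensor power $(T^\circ)^{\otimes_{k^\circ}(n+1)}$ is connective. Symmetric monoidality of the embedding on bounded-below objects, applied inductively through the two-sided bar resolutions that compute the relative tensor products $\soltimes_k$, then yields a functorial simplicial equivalence
\begin{equation*}
    T^{\soltimes_k(\bullet+1)}\simeq \bigl((T^\circ)^{\otimes_{k^\circ}(\bullet+1)}\bigr)_p^\complete
\end{equation*}
of simplicial solid condensed spectra.

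To compare the two geometric realisations, I would use that $(\underline{-})_p^\complete\colon \Sp_p^\complete\to \Cond(\Sp)_p^\complete$ is a left adjoint (\cref{par:SolidpComplete}), hence preserves colimits. This identifies $(\THH(T^\circ/k^\circ))_p^\complete$ with the colimit in $\Cond(\Sp)_p^\complete$ of the simplicial diagram above. What remains is to check that the solid geometric realisation $\THH_\solid(T/k)$ is already $p$-complete (equivalently, lies in $\Cond(\Sp)_p^\complete$), so that the two colimits coincide.

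The main obstacle is precisely this last step, since $p$-completeness is not in general preserved under colimits of $p$-complete objects. The key inputs are the uniform connectivity of the cyclic bar complex and the \enquote{magical} property recalled in \cref{par:SolidpComplete}, that the solid tensor product of bounded-below $p$-complete objects is again $p$-complete; together, these should allow one to verify $p$-completeness of $\THH_\solid(T/k)$ directly from the bar description, thereby closing the argument.
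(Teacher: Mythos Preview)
Your plan is viable, but the obstacle you isolate at the end already bites in your termwise step. The symmetric monoidality of $(\underline{-})_p^\complete$ recorded in \cref{par:SolidpComplete} concerns the \emph{absolute} tensor over~$\IS$; extending it to the relative tensor $\soltimes_k$ means unwinding a two-sided bar construction and showing that its solid realisation is already $p$-complete --- precisely the lemma you defer. So the ``inductive'' passage through the bar resolutions presupposes what you later call the main obstacle. The lemma is true (for a simplicial object with connective $p$-complete terms, the skeletal filtration has $n$-connective layers, so each $\pi_m$ stabilises at a finite skeleton and inherits derived $p$-completeness), but you would have to invoke it at every nested level, not just once at the end.

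The paper takes a different and shorter route once it has $p$-completeness of $\THH_\solid(T/k)$ (which it also asserts via the magical property, applied to $T\soltimes_{T^\op\soltimes_k T}T$). Rather than compare cyclic bar complexes termwise, it checks the resulting map $\THH(T^\circ/k^\circ)_p^\complete\to\THH_\solid(T/k)$ modulo~$p^5$. Burklund's theorem makes $k/p^5\simeq k^\circ\otimes\IS/p^5$ into an $\IE_2$-$k$-algebra, so $T/p^5$ becomes an $\IE_1$-algebra in $\RMod_{k/p^5}(\Sp_\solid)$ and both sides mod~$p^5$ identify with the same relative bar construction over $k/p^5$. But $k/p^5$ and $T/p^5$ are \emph{discrete}, and the inclusion of discrete spectra into $\Sp_\solid$ is symmetric monoidal and colimit-preserving with no boundedness or completeness caveats --- so the solid and discrete bar constructions agree on the nose. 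Your approach trades the Burklund input for a connectivity argument that must be rerun through nested bars; the paper's trick collapses the comparison to a single line once one is mod~$p^5$.
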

	\begin{proof}
		By the magical property of the solid tensor product, 
		\begin{equation*}
			\THH_\solid(T/k)\simeq T\soltimes_{T^\op\soltimes_{k}T}T
		\end{equation*}
		is again $p$-complete. Hence we get a map $\THH(T^\circ/k^\circ)_p^\complete\rightarrow \THH_\solid(T/k)$. Whether this map is an equivalence can be checked modulo $p^5$. By Burklund's result \cite[Theorem~\chref{1.2}]{BurklundMooreSpectra}, the quotient $k/p^5\simeq k\soltimes\IS/p^5$ admits an $\IE_2$-$k$-algebra structure, and so we may regard $T/p^5\simeq T\soltimes_{k}k/p^5$ as an $\IE_1$-algebra in the $\IE_1$-monoidal $\infty$-category $\RMod_{k/p^5}(\Sp_\solid)$. Since $k/p^5\simeq k^\circ\otimes\IS/p^5$ and $T/p^5\simeq T^\circ\otimes\IS/p^5$ are discrete and the inclusion of discrete objects into all solid condensed spectra preserves tensor products, we obtain
		\begin{equation*}
			\THH(T^\circ/k^\circ)_p^\complete/p^5\simeq (T/p^5)\soltimes_{(T/p^5)^\op\soltimes_{k/p^5}(T/p^5)}(T/p^5)\simeq \THH_\solid(T/k)/p^5\,.\qedhere
		\end{equation*}
	\end{proof}
	
	\subsection{The solid even filtration via even resolutions}\label{subsec:EvenResolution}
	
	Let us now construct the desired even filtrations. We'll use the adaptation of Pstr\k{a}gowski's \emph{perfect even filtration} to the solid setting that we've sketched in \cref{sec:SolidEvenFiltration}.
	
	Throughout this subsection, we'll fix a connective even $\IE_\infty$-ring spectrum~$k$ such that $\pi_{2*}(k)$ is $p$-torsion free. The example of interest is of course $k= \ku$, but we'll later apply the same results in other cases as well (e.g.\ for $\ku\otimes\IQ$ or the geometric fixed points $\ku^{\Phi C_m}$), so the additional generality will be worthwhile. We put $k_A\coloneqq k\soltimes\IS_A$, $k_R\coloneqq k\soltimes\IS_R$, and in case~\cref{par:AssumptionsOnR}\cref{enum:E1Lift} also $k_{R_\infty^\bullet}\coloneqq k\soltimes\IS_{R_\infty^\bullet}$, where we regard $k$, $\IS_A$, and $\IS_R$ as solid condensed spectra per Convention~\cref{conv:SolidConventions}. Note that these are all even by our assumptions on $k$, $A$, and $R$, but they are not necessarily $p$-complete; in the case $k=\ku$ however, $p$-completeness is satisfied. 
	
	\begin{numpar}[Even filtrations.]\label{par:EvenFiltration}
		If we are in situation~\cref{par:AssumptionsOnR}\cref{enum:E2Lift}, then $\THH_\solid(k_R/k_A)$ is an $\IE_1$-algebra and so we can define 
		\begin{equation*}
			\fil_{\ev}^\star \THH_\solid(k_R/k_A)
		\end{equation*}
		to be its solid even filtration as a module over itself. For $k=\ku$, we'll see in \cref{cor:SolidvsPstragowski} below that $\fil_{\ev}^\star\THH_\solid(\ku_R/\ku_A)$ is the $p$-completion of Pstr\k{a}gowski's perfect even filtration on the discrete $\IE_1$-ring spectrum $\THH(\ku_R/\ku_A)$. For $k=\IZ$, we'll see in \cref{cor:HRWvsPstragowski}, that $\fil_{\ev}^\star \HH_\solid(R/A)$ agrees with the Hahn--Raksit--Wilson/HKR filtration on $\HH(R/A)_p^\complete$.
		
		In situation~\cref{par:AssumptionsOnR}\cref{enum:E1Lift}, $\THH_\solid(k_R/k_A)$ doesn't have any multiplicative structure; instead, we use the following ad-hoc definition as discussed in \cref{rem:AdHocEvenFiltration}:
		\begin{equation*}
			\fil_{\ev}^\star\THH_\solid(k_R/k_A)\coloneqq \limit_{\IDelta}\tau_{\geqslant 2\star}\THH_\solid (k_{R_\infty^\bullet}/k_A)\,.
		\end{equation*}
		To define filtrations on $\TC_\solid^-(k_R/k_A)$ and $\TP_\solid(k_R/k_A)$ in either situation, we use a construction due to Pstr\k{a}gowski and Raksit that will appear in forthcoming work \cite{PstragowskiRaksit} and has already been used in \cite{CyclotomicSynthetic}. Let $\IS_{\ev}\coloneqq\fil_{\ev}^\star \IS$ and $\IT_{\ev}\coloneqq \fil_{\ev}^\star\IS[S^1]$ denote the even filtrations of $\IS$ and $\IS[S^1]$, respectively.%
		\footnote{It doesn't matter whether they are defined in à la Hahn--Raksit--Wilson or à la Pstr\k{a}gowski or in the solid setting. Indeed, by \cite[Theorem~\chref{7.5}]{PerfectEvenFiltration}, the Hahn--Raksit--Wilson filtration is the completion of Pstr\k{a}gowski's filtration in either case (to apply this result, we use that $\IS[S^1]\rightarrow \IS$ and $\IS\rightarrow \MU$ are eff by \cite[Corollary~\chref{2.36}]{CyclotomicSynthetic} and \cite[Proposition~\chref{2.2.20}]{EvenFiltration}). But the filtrations are also exhaustive: For  Pstr\k{a}gowski's, this is always the case, for the Hahn--Raksit--Wilson filtration of connective $\IE_\infty$-rings it is an unpublished result of Burklund and Krause. Finally, the comparison with the solid version is \cref{cor:SolidvsDiscreteEvenFiltration}.}
		Following \cite[Definition~\chref{2.11}]{CyclotomicSynthetic}, we define the $\infty$-category of \emph{synthetic solid condensed spectra} to be $\cat{SynSp}_\solid\coloneqq\Mod_{\IS_{\ev}}(\Fil\Sp_\solid)$. Then $\IT_{\ev}$ is a bicommutative bialgebra in $\cat{SynSp}_\solid$ and we can equip $\Mod_{\IT_{\ev}}(\cat{SynSp}_\solid)$ with the symmetric monoidal structure coming from the coalgebra structure on $\IT_{\ev}$. By monoidality of the even filtration, $\fil_{\ev}^\star\THH_\solid(k_R/k_A)$ is an object in $\Mod_{\IT_{\ev}}(\cat{SynSp}_\solid)$ (in case~\cref{par:AssumptionsOnR}\cref{enum:E2Lift} it is even an $\IE_1$-algebra). We can then finally define the desired filtrations as
		\begin{align*}
			\fil_{\ev,\h S^1}^\star\TC_\solid^-(k_R/k_A)&\coloneqq \bigl(\fil_{\ev}^\star\THH_\solid(k_R/k_A)\bigr)^{\h \IT_{\ev}}\,,\\
			\fil_{\ev,\t S^1}^\star\TP_\solid(k_R/k_A)&\coloneqq \bigl(\fil_{\ev}^\star\THH_\solid(k_R/k_A)\bigr)^{\t \IT_{\ev}}\,,
		\end{align*}
		where the fixed points and Tate constructions $(-)^{\h \IT_{\ev}}$ and $(-)^{\t \IT_{\ev}}$ with respect to $\IT_{\ev}$ are defined as in \cite[\S{\chref[subsection]{2.3}}]{CyclotomicSynthetic}.%
		\footnote{To avoid confusion with the \emph{genuine} fixed points that will appear later, we deviate from the notation in \cite{CyclotomicSynthetic} and write $(-)^{\h \IT_{\ev}}$ instead of $(-)^{\IT_{\ev}}$.}
	\end{numpar}

	In situation~\cref{par:AssumptionsOnR}\cref{enum:E1Lift}, the ad-hoc even filtration being given as a cosimplicial limit gives us good control over it. We'll now show a similar description in situation~\cref{par:AssumptionsOnR}\cref{enum:E2Lift}.
	
	\begin{numpar}[Even resolutions.]\label{par:EvenResolution}
		Assume we're in situation~\cref{par:AssumptionsOnR}\cref{enum:E2Lift}. Let $P\coloneqq \IZ[x_i\ |\ i\in I]$ be a polynomial ring with a surjection $P\twoheadrightarrow R$. Since $\IS_P\coloneqq \IS[x_i\ |\ i\in I]$ is the free $\IE_1$-ring on commuting generators $x_i$, we get an $\IE_1$-map $\IS_P\rightarrow \ku_R$. It is a folklore result that $\IS_P$ admits an even cell decomposition as an $\IE_2$-ring; see \cref{lem:E2CellStructure} for a proof. Since $k_R$ is even, the map $\IS_P\rightarrow k_R$ can be upgraded to an $\IE_2$-map.
		
		Now let $\IZ\rightarrow P^\bullet$ denote the \v Cech nerve of $\IZ\rightarrow P$ and define $\IS\rightarrow \IS_{P^\bullet}$ similarly. We also let $\IZ_p\rightarrow \widehat{P}_p^\bullet$ and $\IS_p\rightarrow \ISPhat$ denote the $p$-completed \v Cech nerves. The \v Cech nerve of the augmentation $\THH_\solid(\ISPhat)\rightarrow \ISPhat$ is the cosimplicial diagram $\THH_\solid(\ISPhat/\ISPhatbullet)$. If we base change this diagram along the $\IE_1$-map $\THH_\solid(\ISPhat)\rightarrow \THH_\solid(\ku_R/\ku_A)$, we get an augmented cosimplicial diagram of left $\THH_\solid(k_R/k_A)$-modules
		\begin{equation*}
			\THH_\solid(k_R/k_A)\longrightarrow \THH_\solid\bigl(k_R/k_A\soltimes \ISPhatbullet\bigr)\,.
		\end{equation*}
		In the case $k=\IZ$, this becomes the descent diagram $\HH_\solid(R/A)\rightarrow \HH_\solid(R/A\soltimes_{\IZ_p} \widehat{P}_p^\bullet)$.
	\end{numpar}
	\begin{rem}
		Instead of the resolution from \cref{par:EvenResolution}, we could also use the following: Let $\IS_{P_\infty}\coloneqq \IS[x_i^{1/p^\infty}\ |\ i\in I]$, let $\IS_P\rightarrow \IS_{P_\infty^\bullet}$ be the \v Cech nerve of $\IS_P\rightarrow \IS_{P_\infty}$ and define 
		\begin{equation*}
			k_{R_\infty^\bullet}\coloneqq \left(k_R\otimes_{\IS_P}\IS_{P_\infty^\bullet}\right)_p^\complete\,.
		\end{equation*}
		In this way we get resolutions of the same form in both cases~\cref{par:AssumptionsOnR}\cref{enum:E1Lift} and~\cref{enum:E2Lift}. Most arguments below would work for this resolution as well, but the one from \cref{par:EvenResolution} is more convenient for \cref{cor:SolidvsPstragowski} and for the global case in \cref{subsec:qdeRhamkuGlobal}.
	\end{rem}
	
	\begin{prop}\label{prop:EvenResolution}
		Assume we are in situation~\cref{par:AssumptionsOnR}\cref{enum:E2Lift}. Then the cosimplicial resolution from~\cref{par:EvenResolution} induces a canonical equivalence
		\begin{equation*}
			\fil_{\ev}^\star\THH_\solid(k_R/k_A)\overset{\simeq}{\longrightarrow} \limit_{\IDelta}\tau_{\geqslant 2\star}\THH_\solid\bigl(k_R/k_A\soltimes \ISPhatbullet\bigr)\,.
		\end{equation*}
	\end{prop}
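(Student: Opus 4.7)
The plan is to apply the solid analogue of the ``even resolution'' criterion from \cite[Proposition~\chref{5.5}]{PerfectEvenFiltration} to the augmented cosimplicial diagram $\THH_\solid(k_R/k_A)\to\THH_\solid(k_R/k_A\soltimes\ISPhatbullet)$. This criterion requires two inputs: that each cosimplicial level be solid homologically even, and that the induced augmented Moore complex of $\Ff$-sheaves be exact in every half-integer weight. If both hold, then the solid even filtration of $\THH_\solid(k_R/k_A)$ is computed (up to completion) by the cosimplicial limit of the solid even filtrations of the levels; combined with the fact that an even module has solid even filtration equal to its double-speed Whitehead filtration \embrace{by \cref{par:SolidEvenFiltration}}, this yields the claimed equivalence.

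To verify evenness of each level, I would argue by HKR. On $\pi_0$, the structure map $k_A\soltimes\ISPhatbullet_n\twoheadrightarrow k_R$ reads as the surjection $A[x_i^{(0)},\ldots,x_i^{(n)}]\twoheadrightarrow R$ sending each $x_i^{(j)}$ to the common image of $x_i$ in $R$. Its kernel is generated by the $p$-quasi-lci ideal from the original surjection $A[x_i]\twoheadrightarrow R$ together with the diagonal relations $x_i^{(j)}-x_i^{(0)}$; combined with the $p$-quasi-lci hypothesis on $R/A$ from \cref{par:AssumptionsOnR}, this forces the relative cotangent complex $L_{k_R/k_A\soltimes\ISPhatbullet_n}$ to sit in homological degree~$1$. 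The HKR filtration on $\THH$ then has graded pieces $\Lambda^n L[n]\simeq\Sym^n(L[-1])[2n]$, which are all concentrated in even degrees, so each $\THH_\solid(k_R/k_A\soltimes\ISPhatbullet_n)$ is solid homologically even.

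For the exactness of the Moore complex, half-integer weights vanish for trivial reasons by the first step, and for integer weight $w$ I would combine the base change identification $\THH_\solid(k_R/k_A\soltimes\ISPhatbullet_n)\simeq\THH_\solid(k_R/k_A)\soltimes_{\THH_\solid(\ISPhat)}\THH_\solid(\ISPhat/\ISPhatbullet_n)$ from \cref{par:EvenResolution} with solid $p$-complete faithfully flat descent along the cover $\IS_p\to\ISPhatbullet$. Concretely, $\pi_{2w}$ of each level is expressible via the HKR graded pieces from the previous step, and the resulting cosimplicial diagram of $\pi_{2w}$-groups is a Čech resolution for the $p$-complete flat cover $A\to A\soltimes\widehat{P}_p$, so exactness reduces to standard fpqc descent once one checks there are no odd-degree $\Tor$-contributions. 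Completeness of both filtrations---the LHS by the evident Pstr\k{a}gowski-style argument in the solid setting, the RHS as a limit of connective truncation filtrations---upgrades the ``up to completion'' output of the descent criterion to an honest equivalence. The main obstacle is this last step: the naive candidate cover $\THH_\solid(\ISPhat)\to\ISPhat$ is not solid faithfully even flat once $|I|\geqslant 2$, since its cofiber has odd-degree homotopy coming from the wedge products of suspended polynomial generators, so the descent cannot be extracted as a black box from \cref{thm:SolidEvenFlatDescentVariant} and must be verified directly at the level of $\Ff$-sheaves via the base change formula.
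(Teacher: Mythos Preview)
Your strategy is broadly the paper's, but two of your claims are wrong and they matter.

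First, $\THH_\solid(\ISPhat)\to\ISPhat$ \emph{is} solid faithfully even flat, for any index set $I$; this is precisely \cref{lem:eff}. You are conflating ``even'' with ``even flat'': the cofiber does have odd homotopy as a spectrum, but even flatness only asks that tensoring with any $\pi_*$-even right module $E$ stays even. The spectral-sequence argument in \cref{lem:eff} shows $\pi_*(E)\lsoltimes_{\pi_*\THH_\solid(\ISPhat)}\pi_*(\ISPhat)\simeq\pi_*(E)\lsoltimes_{\IZ_p}\Gamma^*_{\IZ_p}(\sigma^2 x_i\ |\ i\in I)_p^\complete$ with all $\sigma^2 x_i$ in bidegree $(2,0)$, regardless of $|I|$; the cofiber is handled because the unit is a direct summand of the divided-power algebra. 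So \cref{thm:SolidEvenFlatDescentVariant} applies directly (with nuclearity supplied by \cref{lem:NuclearityForDiscretepComplete}), and no by-hand $\Ff$-sheaf verification is needed.

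Second, you have exhaustiveness and completeness reversed. The solid even filtration on the left is always exhaustive, but there is no evident argument that it is complete---indeed, that is a \emph{consequence} of the proposition (see \cref{cor:FilEvExhaustiveComplete}). The right-hand side is automatically complete as a limit of Whitehead filtrations, so after descent yields an equivalence up to completion, the remaining work is to show the right-hand side is \emph{exhaustive}, i.e.\ that $\THH_\solid(k_R/k_A)\simeq\limit_\IDelta\THH_\solid(k_R/k_A\soltimes\ISPhatbullet)$. The paper does this by a \cite{BMS2}-style argument: reduce along $-\soltimes_k\tau_{\leqslant 2s}k$, pass to associated gradeds in $s$ to land in $\HH_\solid(R/A)\lsoltimes_\IZ\pi_{2s}(k)$, then to HKR graded pieces, where it becomes descent for the cotangent complex along $A\to A\otimes_\IZ P^\bullet$. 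The step requiring care is commuting $-\lsoltimes_\IZ\pi_{2s}(k)$ past the cosimplicial limit, which works because each $\bigwedge^n\L_{R/A\otimes_\IZ P^i}$ sits in a single homological degree (surjectivity of $P\twoheadrightarrow R$ plus the $p$-quasi-lci hypothesis) and $\pi_{2s}(k)$ is $p$-torsion free.
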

	
	To prove \cref{prop:EvenResolution}, we'll send two technical lemmas in advance.
	
	\begin{lem}\label{lem:eff}
		The augmentation maps $\THH_\solid(\IS_P)\rightarrow \IS_P$ and $\THH_\solid(\ISPhat)\rightarrow \ISPhat $ are solid faithfully even flat in the sense of \cref{def:SolidFaithfullyEvenFlat}. Moreover, $\IS_P$ is nuclear as a $\THH_\solid(\IS_P)$-module and $\ISPhat $ is nuclear as a $\THH_\solid(\ISPhat )$-module.
	\end{lem}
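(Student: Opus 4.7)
The plan is to reduce both claims to the eff-ness of $\IS[S^1]\to \IS$ (already invoked in the paper via \cite[Corollary~2.36]{CyclotomicSynthetic}) and then transport to the $p$-complete setting. For a single variable, the cyclic bar construction identifies $\THH_\solid(\IS[x])\simeq \IS[x]\soltimes \IS[S^1]$, and the augmentation becomes the base change of $\IS[S^1]\to\IS$ along $\IS\to \IS[x]$, hence eff. For $\IS_P\simeq\bigotimes_{i\in I}\IS[x_i]$, symmetric monoidality of $\THH_\solid$ gives $\THH_\solid(\IS_P)\simeq \IS_P\otimes\bigotimes_{i\in I}\IS[S^1]$, and the augmentation is the tensor of the single-variable augmentations. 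For finite $I$, closure of eff under tensor products gives the claim; for infinite $I$, write $\IS_P$ as the filtered colimit over finite subsets $I_0 \subseteq I$ and use stability of eff under filtered colimits, which follows because solid even flatness is stable under filtered colimits and cofibres commute with colimits.

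For nuclearity of $\IS_P$ over $\THH_\solid(\IS_P)$, I would appeal directly to \cref{lem:NuclearityForDiscretepComplete}\cref{enum:Rdiscrete}: both $\THH_\solid(\IS_P)$ and $\IS_P$ are discrete condensed objects, so $\IS_P$ is automatically nuclear over $\THH_\solid(\IS_P)$.

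For the $p$-completed assertion, use \cref{lem:SolidTHH} to identify $\THH_\solid(\ISPhat)\simeq \THH_\solid(\IS_P)_p^\complete$, so the $p$-completed augmentation is the solid $p$-completion of the uncompleted one. Eff is preserved under this $p$-completion by the ``magical property'' of the solid tensor product recalled in \cref{par:SolidpComplete}, which ensures the relevant even-flat tensor products remain $p$-complete and concentrated in even degrees. Nuclearity of $\ISPhat$ over $\THH_\solid(\ISPhat)$ is immediate from \cref{lem:NuclearityForDiscretepComplete}\cref{enum:Rpcomplete}, since $\THH(\IS_P)$ is a discrete connective $\IE_\infty$-ring spectrum. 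The main obstacle is handling the infinite-variable case cleanly: one must verify the identification of $\THH_\solid(\IS_P)$ with $\IS_P \otimes \bigotimes_{i\in I}\IS[S^1]$ via the filtered-colimit presentation, and confirm the stability of eff under filtered colimits and under solid $p$-completion in the detail required; everything else is either a direct invocation of earlier lemmas or a formal consequence of symmetric monoidality.
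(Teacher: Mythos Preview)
Your approach differs substantially from the paper's. The paper argues directly via a spectral sequence: it computes $\pi_*\THH_\solid(\ISPhat)$ as a $p$-completed exterior algebra over $\pi_*(\ISPhat)$ on degree-$1$ generators $\d x_i$, observes that these generators act trivially on any $\pi_*$-even module $E$, and thereby reduces the Tor computation to $\pi_*(E)\lsoltimes_{\IZ_p}\Gamma_{\IZ_p}^*(\sigma^2 x_i)_p^\complete$. The remaining point---that $p$-completed direct sums $(\bigoplus_J\IZ_p)_p^\complete$ are solid even flat over $\IZ_p$---is then checked by hand. By contrast, you reduce to the known eff-ness of $\IS[S^1]\to\IS$ via the decomposition $\THH(\IS_P)\simeq\IS_P\otimes\bigotimes_{i\in I}\IS[S^1]$. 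For the discrete statement and for nuclearity, your argument is correct and pleasantly conceptual: eff-ness is stable under base change, composition, and filtered colimits, and nuclearity is immediate from \cref{lem:NuclearityForDiscretepComplete}.

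The $p$-complete case, however, has a real gap. Your claim that ``eff is preserved under this $p$-completion by the magical property'' is not justified: the magical property in \cref{par:SolidpComplete} only guarantees that solid tensor products of bounded-below $p$-complete spectra remain $p$-complete; it says nothing about preservation of evenness. Concretely, to show that $\ISPhat$ is solid even flat over $\THH_\solid(\ISPhat)$ you must verify that $E\soltimes_{\THH_\solid(\ISPhat)}\ISPhat$ is $\pi_*$-even for \emph{every} $\pi_*$-even solid module $E$, and such $E$ need be neither $p$-complete nor bounded below, so there is no formal reduction to the discrete statement. Even granting such a reduction, passing from the discrete infinite tensor $\bigotimes_I\IS[S^1]$ to its $p$-completion runs into exactly the subtlety the paper isolates: one still has to verify that $p$-completed infinite direct sums are solid even flat over $\IZ_p$. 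Your outline does not sidestep this, and making it rigorous would amount to reproducing the paper's spectral-sequence argument.
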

	\begin{proof}
		The nuclearity assumptions follow from \cref{lem:NuclearityForDiscretepComplete}. We only show solid faithful even flatness for $\THH_\solid(\ISPhat )\rightarrow \ISPhat $; the argument for $\THH_\solid(\IS_P)\rightarrow\IS_P$ is similar (but easier). Let $E$ be a $\pi_*$-even module over $\THH_\solid(\ISPhat )$. We have a convergent spectral sequence
		\begin{equation*}
			\mathrm E^2=\H_*\left(\pi_*(E)\lsoltimes_{\pi_*\THH_\solid(\ISPhat )}\pi_*(\ISPhat )\right)\Longrightarrow \pi_*\left(E\soltimes_{\THH_\solid(\ISPhat )}\ISPhat \right)\,.
		\end{equation*}
		To show that the right-hand side is even, so that $\ISPhat $ will be solid even flat as a $\THH_\solid(\ISPhat )$-module, it will be enough to show that the $\mathrm E^2$-page is concentrated in even bidegrees. The calculation in the proof of \cite[Proposition~\chref{4.2.4}]{EvenFiltration} shows that 
		\begin{equation*}
			\pi_*\THH_\solid(\ISPhat )\cong \pi_*(\ISPhat )\soltimes_{\IZ_p}\Lambda_{\IZ_p}^*(\d x_i\ |\ i\in I)_p^\complete
		\end{equation*}
		is a graded $p$-completed exterior algebra over $\pi_*(\ISPhat )$ on generators $\d x_i$ in bidegree~$(1,0)$. Since $\pi_*(E)$ is concentrated in even degrees, each $\d x_i$ must act by $0$, and so
		\begin{equation*}
			\pi_*(E)\lsoltimes_{\pi_*\THH_\solid(\ISPhat )}\pi_*(\ISPhat )\simeq \pi_*(E)\lsoltimes_{\IZ_p}\Gamma_{\IZ_p}^*(\sigma^2 x_i\ |\ i\in I)_p^\complete\,,
		\end{equation*}
		where $\Gamma_{\IZ_p}^*(\sigma^2 x_i\ |\ i\in I)_p^\complete$ denotes a $p$-completed divided power algebra on generators in bidegree~$(2,0)$. Thus, to show that the $\mathrm E^2$-page is concentrated in even bidegrees, we only need to check that any $p$-completed direct sum $(\bigoplus_J \IZ_p)_p^\complete$ is solid even flat over $\IZ_p$. For finite direct sums this is obvious, for countable direct sums we can use the argument from the proof of \cref{lem:NuclearityForDiscretepComplete}, and for uncountable direct sums we can reduce to the countable case since $p$-completion commutes with $\omega_1$-filtered colimits. This finishes the proof of evenness of the $\mathrm E^2$-page, so that $\ISPhat $ is indeed solid even flat over $\THH_\solid(\ISPhat )$.
		
		Since the unit component $\IZ_p\rightarrow \Gamma_{\IZ_p}^*(\sigma^2 x_i\ |\ i\in I)_p^\complete$ is a direct summand, we see that the condensed homotopy groups
		\begin{equation*}
			\pi_*\left(E\soltimes_{\THH_\solid(\ISPhat)}\cofib\bigl(\THH_\solid(\ISPhat \bigr)\rightarrow\ISPhat )\right)
		\end{equation*}
		are also computed by a spectral sequence with $\mathrm E^2$-page concentrated in even bidegrees. This shows that $\cofib(\THH_\solid(\ISPhat )\rightarrow\ISPhat )$ is also solid even flat over $\THH_\solid(\ISPhat )$ and we're done.
	\end{proof}
	
	\begin{lem}\label{lem:THHSpectralSequence}
		There exists a natural convergent spectral sequence
		\begin{equation*}
			\mathrm{E}_{r,s}^2=\H_r\bigl(\HH_\solid(R/A)\lsoltimes_\IZ\pi_{2s}(k)\bigr)\Longrightarrow \pi_{r+s}\THH_\solid(k_R/k_A)\,.
		\end{equation*} 
	\end{lem}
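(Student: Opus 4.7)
The plan is to combine a base-change identification for $\THH$ with the Postnikov filtration on $k$.  First, observe that the symmetric monoidal functor $k\soltimes(-)\colon \Sp_\solid \rightarrow \Mod_k(\Sp_\solid)$ sends $\IS_A\mapsto k_A$ and $\IS_R\mapsto k_R$, and being a symmetric monoidal left adjoint it preserves the (geometric realisation of the) cyclic bar construction. Hence
\begin{equation*}
\THH_\solid(k_R/k_A)\simeq k\soltimes\THH_\solid(\IS_R/\IS_A)\,.
\end{equation*}
Analogously, the symmetric monoidal functor $-\soltimes_{\IS}\IZ$ sends $\IS_A\mapsto A$ and $\IS_R\mapsto R$, whence
\begin{equation*}
\THH_\solid(\IS_R/\IS_A)\soltimes_{\IS}\IZ\simeq \HH_\solid(R/A)\,.
\end{equation*}

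Next, I would equip $k$ with its double-speed Postnikov filtration $\tau_{\geqslant 2\star}(k)$, whose $s$\textsuperscript{th} graded piece is $\Sigma^{2s}\H\pi_{2s}(k)$ by evenness of $k$.  Tensoring with $\THH_\solid(\IS_R/\IS_A)$ in $\Sp_\solid$ produces a descending filtration on $\THH_\solid(k_R/k_A)$ (using the first base-change identity) whose $s$\textsuperscript{th} graded piece is
\begin{equation*}
\Sigma^{2s}\bigl(\THH_\solid(\IS_R/\IS_A)\soltimes_{\IS}\H\pi_{2s}(k)\bigr)\simeq\Sigma^{2s}\bigl(\HH_\solid(R/A)\soltimes_{\IZ}\H\pi_{2s}(k)\bigr)\,,
\end{equation*}
where the second equivalence uses the second base-change identity together with $\H\pi_{2s}(k)\simeq \H\IZ\soltimes_{\IZ}\H\pi_{2s}(k)$. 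Passing to homotopy, the spectral sequence associated with this filtration takes the desired form
\begin{equation*}
\mathrm{E}^2_{r,s}=\H_r\bigl(\HH_\solid(R/A)\lsoltimes_{\IZ}\pi_{2s}(k)\bigr)\Longrightarrow \pi_{r+s}\THH_\solid(k_R/k_A)\,,
\end{equation*}
with the stated indexing. Strong convergence is automatic because $k$ is connective, so the Postnikov filtration is bounded in each homotopical degree; naturality in all data is manifest from the construction.

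There is no real obstacle here; the argument is conceptually routine and amounts to bookkeeping.  The only mild subtlety is to justify that the two base-change identifications and the tensoring with a filtered object all interact correctly in the solid setting, and this reduces entirely to the facts that $k\soltimes(-)$ and $-\soltimes_{\IS}\IZ$ are symmetric monoidal left adjoints and therefore preserve geometric realisations and relative tensor products. In case~\cref{par:AssumptionsOnR}\cref{enum:E2Lift} we have more multiplicative structure available, but the only structure actually used for the spectral sequence is that $\IS_R$ is $\IE_1$ over $\IS_A$, which holds in both cases.
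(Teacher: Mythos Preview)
Your proof is correct and uses essentially the same filtration as the paper: both construct the spectral sequence from the Postnikov filtration on $k$ (the paper phrases it as filtering $k_R$ and $k_A$ simultaneously via $\THH_\solid(\tau_{\geqslant\star}(k_R)/\tau_{\geqslant\star}(k_A))$, but since $\tau_{\geqslant\star}(k_R)\simeq\tau_{\geqslant\star}(k)\soltimes\IS_R$ this is the same object). The only difference is in identifying the associated graded: the paper passes through $\THH_\solid(\Sigma^*\pi_*(k_R)/\Sigma^*\pi_*(k_A))$ and invokes the symmetric monoidality of shearing on even $\IZ$-linear graded objects, whereas you use the base-change identities $\THH_\solid(k_R/k_A)\simeq k\soltimes\THH_\solid(\IS_R/\IS_A)$ and $\THH_\solid(\IS_R/\IS_A)\soltimes\IZ\simeq\HH_\solid(R/A)$ directly---these are two packagings of the same computation.
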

	\begin{proof}
		The argument is the same as in \cite[Proposition~\chref{4.2.4}]{EvenFiltration} except for different grading conventions. Consider the filtered spectrum $\THH_\solid(\tau_{\geqslant \star}(k_R)/\tau_{\geqslant \star}(k_A))$. This is an exhaustive and complete (due to increasing connectivity) filtration on $\THH_\solid(k_R/k_A)$ and so it determines a convergent spectral sequence. 
		
		It remains to check that the $\mathrm{E}^2$-page has the desired form. The associated graded of the filtered spectrum above is $\THH_\solid(\Sigma^*\pi_*(k_R)/\Sigma^*\pi_*(k_A))$. Since $\pi_*(k_A)$ and $\pi_*(k_R)$ are concentrated in even graded degrees and $\IZ$-linear, the shearing functor $\Sigma^*$ is symmetric monoidal and commutes with $\THH$. The associated graded can thus be rewritten as $\Sigma^*\HH_\solid(\pi_*(k_R)/\pi_*(k_A))\simeq \Sigma^*\HH_\solid(R/A)\lsoltimes_\IZ\pi_*(k)$. This yields the desired $\mathrm{E}^2$-page.
	\end{proof}
	
	\begin{proof}[Proof of \cref{prop:EvenResolution}]
		Using the spectral sequence from \cref{lem:THHSpectralSequence} (applied to $\IS_A\soltimes \ISPhatbullet$ instead of $\IS_A$) and our asssumption that $A\soltimes_\IZ \widehat{P}_p\twoheadrightarrow R$ is $p$-quasi-lci and surjective, we see that $\THH_\solid(k_R/k_A\soltimes \ISPhatbullet)$ is even. It follows by the solid analogue of \cite[Lemma~\chref{2.36}]{PerfectEvenFiltration} that the solid even filtration (taken in left modules over $\THH_\solid(k_R/k_A)$) is the double speed Whitehead filtration
		\begin{equation*}
			\fil_{\ev}^\star\THH_\solid\bigl(k_R/k_A\soltimes \ISPhatbullet\bigr)\simeq \tau_{\geqslant 2\star}\THH_\solid\bigl(k_R/k_A\soltimes \ISPhatbullet\bigr)\,.
		\end{equation*}
		Using the flat descent result from \cref{thm:SolidEvenFlatDescentVariant}, which applies thanks to \cref{lem:eff,lem:NuclearityForDiscretepComplete}\cref{enum:Rpcomplete}, we find that
		\begin{equation*}		
			\fil_{\ev}^\star\THH_\solid(k_R/k_A)\longrightarrow \limit_{\IDelta}\tau_{\geqslant 2\star}\THH_\solid\bigl(k_R/k_A\soltimes \ISPhatbullet\bigr)
		\end{equation*}
		becomes an equivalence upon completion of the filtrations. Since the left-hand side is exhaustive whereas the right-hand side is complete, to finish the proof of the $\THH$ case, it will be enough to check that the right-hand side is also exhaustive.
		
		In other words, we must show $\THH_\solid(k_R/k_A)\simeq \limit_{\IDelta}\THH_\solid(k_R/k_A\soltimes \ISPhatbullet)$. By the same argument as in \cite[Corollary~\chref{3.4}(2)]{BMS2}, it's enough to show instead
		\begin{equation*}
			\THH_\solid(k_R/k_A)\soltimes_k\tau_{\leqslant 2s}k\overset{\simeq}{\longrightarrow}\limit_{\IDelta}\Bigl(\THH_\solid\bigl(k_R/k_A\soltimes \ISPhatbullet\bigr)\soltimes_k\tau_{\leqslant 2s}k\Bigr)
		\end{equation*}
		for all $s\geqslant 0$. This can be checked on associated gradeds in~$s$. So we must show that $\HH_\solid(R/A)\soltimes_{\IZ}\pi_{2s}(k)\simeq \limit_{\IDelta}(\HH_\solid(R/A\soltimes_\IZ\widehat{P}_p^\bullet)\soltimes_\IZ\pi_{2s}(k))$ for all $s\geqslant 0$. By our assumptions on $R$ and $A$, the HKR filtrations $\fil_\mathrm{HKR}^\star\HH_\solid(R/A)$ and $\fil_\mathrm{HKR}^\star\HH_\solid(R/A\soltimes_\IZ\widehat{P}_p^\bullet)$ increase in connectivity as $\star\rightarrow\infty$. They are therefore still complete after $-\soltimes_\IZ\pi_{2s}(k)$. So we may also pass to the associated graded of the HKR filtration. It remains to show that
		\begin{equation*}
			\bigwedge^n\L_{R/A}\lsoltimes_\IZ\pi_{2s}(k)\longrightarrow \limit_{\IDelta} \left(\bigwedge^n\L_{R/A\otimes_{\IZ} P^\bullet}\lsoltimes_\IZ\pi_{2s}(k)\right)
		\end{equation*}
		is an equivalence for all $n,s\geqslant 0$ (here the cotangent complexes are implicitly $p$-completed). By descent for the cotangent complex, this would be true without $-\lsoltimes_\IZ\pi_{2s}(k)$ on either side, so we must check that $-\lsoltimes_\IZ\pi_{2s}(k)$ commutes with the cosimplicial limit. Since $R$ is $p$-quasi-lci over $A$ and $P\twoheadrightarrow R$ is surjective, each $\bigwedge^n\L_{R/A\otimes_\IZ P^\bullet}$ is concentrated in homological degree~$n$. Writing $\bigwedge^n \L_{R/A\otimes_\IZ P^i}\simeq \Sigma^n L_i$, it follows that the cosimplicial limit $\limit_{\IDelta}\bigwedge^n \L_{R/A\otimes_\IZ P^\bullet}$ is given by the unnormalised Moore complex $L_*\simeq (\dotsb\leftarrow L_1\leftarrow L_0)$, sitting in homological degrees $(-\infty,n]$. Now since $\pi_{2s}(k)$ is $p$-torsion free and discrete by our assumptions on $k$, we see that $L_i\lsoltimes_\IZ\pi_{2s}(k)\simeq L_i\soltimes_\IZ\pi_{2s}(k)$ is static. It follows that 
		\begin{equation*}
			L_*\lsoltimes_\IZ\pi_{2s}(k)\simeq \Bigl(\dotsb \leftarrow\bigl(L_1\soltimes_\IZ\pi_{2s}(k)\bigr)\leftarrow \bigl(L_0\soltimes_\IZ\pi_{2s}(k)\bigr)\Bigr)\,.
		\end{equation*}
		So in this case it is indeed true that $-\lsoltimes_\IZ\pi_{2s}(k)$ commutes with the cosimplicial limit. This finishes the proof.
	\end{proof}
	
	\begin{cor}\label{cor:FilEvExhaustiveComplete}
		In both situations~\cref{par:AssumptionsOnR}\cref{enum:E1Lift} and~\cref{par:AssumptionsOnR}\cref{enum:E2Lift}, $\fil_{\ev}^\star\THH_\solid(k_R/k_A)$ is an exhaustive complete filtration on $\THH_\solid(k_R/k_A)$.
	\end{cor}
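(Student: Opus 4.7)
The plan is to unify both situations by observing that in case \cref{par:AssumptionsOnR}\cref{enum:E2Lift} the preceding \cref{prop:EvenResolution} has just identified the solid even filtration with a cosimplicial limit of double-speed Whitehead filtrations, while in case \cref{par:AssumptionsOnR}\cref{enum:E1Lift} such an expression is the \emph{definition}. So in either case we have a canonical equivalence
\begin{equation*}
	\fil_{\ev}^\star\THH_\solid(k_R/k_A)\simeq \limit_{\IDelta}\tau_{\geqslant 2\star}X^\bullet\,,
\end{equation*}
where $X^\bullet$ is either $\THH_\solid(k_R/k_A\soltimes \ISPhatbullet)$ or $\THH_\solid(k_{R_\infty^\bullet}/k_A)$, respectively. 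Crucially, in both cases every $X^n$ is connective: $k$, $\IS_A$, and the spherical lifts $\IS_P$, $\ISPhat$, $\IS_{R_\infty^\bullet}$ are all connective by assumption, so their solid tensor products and $\THH$s are too.

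For completeness, I would argue that the double-speed Whitehead filtration $\tau_{\geqslant 2\star}X^n$ is complete in each cosimplicial degree, i.e.\ $\limit_{\star\to\infty}\tau_{\geqslant 2\star}X^n\simeq 0$, and then commute the two limits to conclude $\limit_{\star\to\infty}\fil_{\ev}^\star\THH_\solid(k_R/k_A)\simeq\limit_\IDelta 0\simeq 0$.

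For exhaustiveness, I would exploit connectivity: since each $X^n$ sits in homotopical degrees $\geqslant 0$, the inclusion $\tau_{\geqslant 2\star}X^n\hookrightarrow X^n$ is an equivalence as soon as $\star\leqslant 0$. Hence the colimit $\colimit_{\star\to-\infty}\fil_{\ev}^\star$ is degree-wise constant at $\star\leqslant 0$, and commuting the (now essentially trivial) colimit past the cosimplicial limit gives
\begin{equation*}
	\colimit_{\star\to-\infty}\fil_{\ev}^\star\THH_\solid(k_R/k_A)\simeq \limit_{\IDelta}X^\bullet\,.
\end{equation*}
It then remains to identify $\limit_\IDelta X^\bullet$ with $\THH_\solid(k_R/k_A)$. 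In case~\cref{par:AssumptionsOnR}\cref{enum:E2Lift} this is exactly the descent statement $\THH_\solid(k_R/k_A)\simeq \limit_\IDelta \THH_\solid(k_R/k_A\soltimes\ISPhatbullet)$ that was established during the proof of \cref{prop:EvenResolution}. In case~\cref{par:AssumptionsOnR}\cref{enum:E1Lift}, I would argue analogously, using the $p$-quasi-syntomic cover $R\rightarrow R_\infty$ and its spherical lift $\IS_R\rightarrow \IS_{R_\infty^\bullet}$: reduce via the connectivity of the HKR filtration to descent on the cotangent complex $\L_{R/A}\simeq \limit_\IDelta \L_{R_\infty^\bullet/A}$, which holds because $R\rightarrow R_\infty$ is $p$-completely faithfully flat.

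The main obstacle I anticipate is the descent statement in case~\cref{enum:E1Lift}, as we lack a ring structure on $\IS_{R_\infty^\bullet}$ that would let us invoke \cref{thm:SolidEvenFlatDescent} or \cref{thm:SolidEvenFlatDescentVariant} directly; but the HKR spectral sequence argument from the proof of \cref{prop:EvenResolution} adapts readily, since the required evenness of $\HH_\solid(R_\infty^\bullet/A)_p^\complete$ in each cosimplicial degree is precisely what is recorded in \cref{rem:AdHocEvenFiltration}. Everything else is formal manipulation of (co)limits which is made possible by uniform connectivity.
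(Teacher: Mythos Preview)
Your proof is correct and follows essentially the same approach as the paper. The only difference is organizational: the paper exploits the asymmetry between the two cases---in~\cref{par:AssumptionsOnR}\cref{enum:E2Lift} exhaustiveness of the solid even filtration is automatic from Pstr\k{a}gowski's construction, so only completeness requires \cref{prop:EvenResolution}; in~\cref{par:AssumptionsOnR}\cref{enum:E1Lift} completeness is immediate from the cosimplicial-limit definition, so only exhaustiveness requires the descent argument---whereas you treat both cases uniformly via the resolution, which makes your exhaustiveness argument in case~\cref{enum:E2Lift} redundant but not wrong.
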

	\begin{proof}
		In case~\cref{par:AssumptionsOnR}\cref{enum:E1Lift} completeness is clear and exhaustiveness follows from the same argument as in the proof of \cref{prop:EvenResolution} above. In case~\cref{par:AssumptionsOnR}\cref{enum:E2Lift} exhaustiveness is automatic and completeness follows from \cref{prop:EvenResolution}.
	\end{proof}
	
	\begin{cor}\label{cor:Bifiltration}
		Put $(\tau_{\leqslant 2s}k)_A\coloneqq (\IS_A\otimes \tau_{\leqslant 2s}k)_p^\complete$ and $(\tau_{\leqslant 2s}k)_R\coloneqq (\IS_R\otimes \tau_{\leqslant 2s}k)_p^\complete$ for all $s\geqslant0$. In both situations~\cref{par:AssumptionsOnR}\cref{enum:E1Lift} and~\cref{par:AssumptionsOnR}\cref{enum:E2Lift}, consider the bifiltered object given by
		\begin{equation*}
			\fil^{s}\fil_{\ev}^\star \THH_\solid(k_R/k_A)\coloneqq \fil_{\ev}^\star\THH_\solid\bigl((\tau_{\leqslant 2*}k)_R/(\tau_{\leqslant 2s}k)_A\bigr)\,.
		\end{equation*}
		\begin{alphanumerate}
			\item We have $\fil_{\ev}^\star \THH_\solid(k_R/k_A)\simeq \limit_{s\geqslant 0}\fil^s\fil_{\ev}^\star \THH_\solid(k_R/k_A)$.\label{enum:BifiltrationComplete}
			\item If $\fil_{\mathrm{HKR}}^\star$ denotes the usual HKR filtration, then for all $s\geqslant 0$,\label{enum:BifiltrationGraded}
			\begin{equation*}
				\gr^s\fil_{\ev}^\star\THH_\solid(k_R/k_A)\simeq \bigl(\fil_{\mathrm{HKR}}^{\star-s}\HH_\solid(R/A)\bigr)\lsoltimes_\IZ\Sigma^{2s+1}\pi_{2s}(k)\,.
			\end{equation*}
		\end{alphanumerate}
	\end{cor}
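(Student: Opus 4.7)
The plan is to reduce both parts to a Postnikov analysis of $k$ combined with the cosimplicial even-resolution results already established for $\THH_\solid$.

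For part~\cref{enum:BifiltrationComplete}, since $k$ is connective and even, the Postnikov tower $k \simeq \lim_s \tau_{\leqslant 2s}k$ converges, with each fiber $\fib(\tau_{\leqslant 2s}k \to \tau_{\leqslant 2(s-1)}k) \simeq \Sigma^{2s}\pi_{2s}(k)$ of increasing connectivity. Smashing with $\IS_R$ or $\IS_A$ and $p$-completing preserves this, giving $k_R \simeq \lim_s (\tau_{\leqslant 2s}k)_R$ and $k_A \simeq \lim_s (\tau_{\leqslant 2s}k)_A$. Using the cosimplicial resolution from \cref{prop:EvenResolution} (in case~\cref{par:AssumptionsOnR}\cref{enum:E2Lift}) or the defining formula of \cref{par:EvenFiltration} (in case~\cref{par:AssumptionsOnR}\cref{enum:E1Lift}), one expresses $\fil_{\ev}^\star \THH_\solid$ as a cosimplicial limit of double-speed Whitehead truncations. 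Both cosimplicial limits and the truncations $\tau_{\geqslant 2\star}$ commute with sequential limits along towers of increasingly connective fibers, so the two limits may be swapped, yielding the claimed identification.

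For part~\cref{enum:BifiltrationGraded}, compute $\gr^s$ as the fibre of $\fil^s \to \fil^{s-1}$. At the level of coefficient rings the input is $\Sigma^{2s}\pi_{2s}(k)$. Mimicking the spectral-sequence construction in the proof of \cref{lem:THHSpectralSequence}, which extracts $\HH_\solid(R/A) \lsoltimes_\IZ \pi_*(k)$-type associated gradeds from the connectivity filtration on $k_R$ and $k_A$, the induced fibre on $\THH_\solid(k_R/k_A)$ is $\HH_\solid(R/A) \lsoltimes_\IZ \Sigma^{2s+1}\pi_{2s}(k)$. The extra suspension is the first Goodwillie derivative of $\THH$ along a square-zero extension: a square-zero $B$-module $M$ contributes $\HH(B) \otimes_B \Sigma M$ to the $\THH$ difference. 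The filtration shift $\fil_{\mathrm{HKR}}^{\star-s}$ reflects the fact that a class of $\pi_{2s}(k)$ sits in weight $s$ of the even filtration of $k$, so its contribution to the $\star$-direction of the bifiltration is shifted by $s$.

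The main technical obstacle is bookkeeping these two shifts: the extra homological suspension $\Sigma^{2s+1}$ (versus the bare $\Sigma^{2s}\pi_{2s}(k)$ at the coefficient level) and the filtration shift $\star \mapsto \star - s$. The cleanest approach is to lift the analysis into Day-convolution filtered spectra: regard $k$ as a filtered $\IE_\infty$-algebra via its double-speed Whitehead filtration $\tau_{\geqslant 2\star}k$, so that $\pi_{2s}(k)$ naturally sits in weight $s$; then form $\THH_\solid(-/-)$ relatively in filtered $\IE_\infty$-algebras and read off the associated graded via the Day-convolution structure. This automatically produces the $\fil_{\mathrm{HKR}}^{\star-s}$ shift, and the $\Sigma^{2s+1}$ then comes from the standard Goodwillie calculation.
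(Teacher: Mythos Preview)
Your plan for part~\cref{enum:BifiltrationComplete} is correct and matches the paper: pass to the cosimplicial resolution, use that the tower $\tau_{\leqslant 2s}k$ converges with increasingly connective fibres, and swap the limits.

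For part~\cref{enum:BifiltrationGraded} your explanation of the extra suspension is wrong, and this hides the actual content of the argument. There is no Goodwillie derivative here: because the algebra and the base are truncated \emph{simultaneously}, one has $\THH_\solid((\tau_{\leqslant 2s}k)_R/(\tau_{\leqslant 2s}k)_A)\simeq\THH_\solid(\IS_R/\IS_A)\soltimes\tau_{\leqslant 2s}k$, so the \emph{fibre} of $\fil^s\to\fil^{s-1}$ on $\THH$ is $\HH_\solid(-/A)\lsoltimes_\IZ\Sigma^{2s}\pi_{2s}(k)$, without the extra shift. The $\Sigma^{2s+1}$ in the statement appears only because $\gr^s$ is defined as a \emph{cofibre}; the paper says this explicitly in a footnote.

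The step you have genuinely skipped is the interaction with the even filtration. On each term of the resolution, $\fil_{\ev}^\star$ is $\tau_{\geqslant 2\star}$, and truncation is not exact, so it is not automatic that $\tau_{\geqslant 2\star}$ preserves the cofibre sequence
\begin{equation*}
\HH_\solid(R_\infty^\bullet/A)\lsoltimes_\IZ\Sigma^{2s}\pi_{2s}(k)\longrightarrow\THH_\solid(-)\soltimes_k\tau_{\leqslant 2s}k\longrightarrow\THH_\solid(-)\soltimes_k\tau_{\leqslant 2(s-1)}k\,.
\end{equation*}
The paper establishes this by running the spectral sequence of \cref{lem:THHSpectralSequence} for both $\tau_{\leqslant 2s}k$ and $\tau_{\leqslant 2(s-1)}k$: since $R_\infty^\bullet/p$ is relatively semiperfect, the $\mathrm{E}^2$-pages are concentrated in even bidegrees, the spectral sequences collapse, and comparing them shows that the long exact sequence in homotopy breaks into short exact sequences. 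Only then do you get $\gr^s\fil_{\ev}^\star\simeq\tau_{\geqslant 2(\star-s)}\HH_\solid(R_\infty^\bullet/A)\lsoltimes_\IZ\Sigma^{2s+1}\pi_{2s}(k)$, with the $(\star-s)$-shift falling out of the degree shift $\Sigma^{2s}$ and the $p$-torsion freeness of $\pi_{2s}(k)$; no Day-convolution machinery is needed. A final point you did not mention is that $-\lsoltimes_\IZ\pi_{2s}(k)$ must commute with the cosimplicial limit computing $\fil_{\mathrm{HKR}}^\star\HH_\solid(R/A)$; the paper refers back to the proof of \cref{prop:EvenResolution} for this.
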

	\begin{proof}
		We explain the argument in the context of~\cref{par:AssumptionsOnR}\cref{enum:E1Lift}. The other case is analogous, using the cosimplicial resolution from \cref{prop:EvenResolution} instead. Put $(\tau_{\leqslant 2s}k)_{R_\infty^\bullet}\coloneqq (\IS_{R_\infty^\bullet}\otimes\tau_{\leqslant 2s}k)_p^\complete$ and consider the cosimplicial bifiltered object
		\begin{equation*}
			\fil^s\tau_{\geqslant 2\star}\THH_\solid(k_{R_\infty^\bullet}/k_A)\coloneqq \tau_{\geqslant 2\star}\THH_\solid\bigl((\tau_{\leqslant 2s}k)_{R_\infty^\bullet}/(\tau_{\leqslant 2s}k)_{A}\bigr)\,.
		\end{equation*}
		Then clearly $\tau_{\geqslant 2\star}\THH_\solid(k_{R_\infty^\bullet}/k_A)\simeq \limit_{s\geqslant 0}\fil^s\tau_{\geqslant 2\star}\THH_\solid(k_{R_\infty^\bullet}/k_A)$. Applying $\limit_{\IDelta}$ on both sides already shows~\cref{enum:BifiltrationComplete}. To prove~\cref{enum:BifiltrationGraded}, observe that the functor $\tau_{\geqslant 2\star}(-)$ is non-exact in general, but nevertheless it preserves the cofibre sequence
		\begin{equation*}
			\HH_\solid(R_\infty^\bullet/A)\soltimes_\IZ\Sigma^{2s}\pi_{2s}(k)\longrightarrow \THH_\solid(k_{R_\infty^\bullet}/k_A)\soltimes_k\tau_{\leqslant 2s}k\longrightarrow \THH_\solid(k_{R_\infty^\bullet}/k_A)\soltimes_k\tau_{\leqslant 2(s-1)}k\,.
		\end{equation*}
		Indeed, consider the spectral sequence%
		\footnote{In the construction of the spectral sequence in \cref{lem:THHSpectralSequence} we used the Postnikov filtration $\tau_{\geqslant \star}k$, while here we're working with the double speed Whitehead filtration $\tau_{\leqslant 2\star}k$. We could have used the Postnikov filtration as well to construct a similar spectral sequence as in \cref{lem:THHSpectralSequence}. But we still use the one from \cref{lem:THHSpectralSequence}.}
		from \cref{lem:THHSpectralSequence} with $k$ replaced by $\tau_{\leqslant 2s}(k)$ or $\tau_{\leqslant 2(s-1)}(k)$. Our assumptions on $R_\infty^\bullet$ guarantee that both $\mathrm E^2$-pages are concentrated in even bidegrees and so the spectral sequences collapse. A closer examination of the induced map on $\mathrm E^2$-pages then shows that $\tau_{\geqslant 2\star}(-)$ indeed preserves the cofibre sequence above.
		
		Using this observation, we conclude that the graded pieces of $\fil^s\tau_{\geqslant 2\star}\THH_\solid(k_{R_\infty^\bullet}/k_A)$ are given by%
		\footnote{Note that $\gr^s$ is defined as a cofibre, not a fibre. Hence the extra $\Sigma$.}
		\begin{equation*}
			\gr^s\tau_{\geqslant 2\star}\THH_\solid(k_{R_\infty^\bullet}/k_A)\simeq \Sigma\tau_{\geqslant 2\star}\Bigl(\HH_\solid(R_\infty^\bullet/A)\lsoltimes_\IZ\Sigma^{2s}\pi_{2s}(k)\Bigr)\,.
		\end{equation*}
		The right-hand side agrees with $\tau_{\geqslant 2(\star-s)}\HH_\solid(R_\infty^\bullet/A)\lsoltimes_\IZ\Sigma^{2s+1}\pi_{2s}(k)$ since $\pi_{2s}(k)$ was assumed to be discrete and $p$-torsion free. Now the HKR filtration can be computed as the cosimplicial limit $\fil_{\mathrm{HKR}}^\star\HH_\solid(R/A)\simeq \limit_{\IDelta}\tau_{\geqslant 2\star}\HH_\solid(R_\infty^\bullet/A)$. Thus, to prove \cref{enum:BifiltrationGraded}, it remains to check that $-\lsoltimes_\IZ\pi_{2s}(k)$ commutes with the cosimplicial limit. Since the HKR filtration stays complete after $-\lsoltimes_\IZ\pi_{2s}(k)$ (due to increasing connectivity), we may pass to the associated graded. This reduces us to an assertion that was checked in the proof of \cref{prop:EvenResolution} above.
	\end{proof}

	\begin{cor}\label{cor:TC-TPEvenResolution}
		In situation~\cref{par:AssumptionsOnR}\cref{enum:E1Lift}, the given cosimplicial resolution induces equivalences
		\begin{align*}
			\fil_{\ev,\h S^1}^\star\TC_\solid^-(k_R/k_A)&\overset{\simeq}{\longrightarrow} \limit_{\IDelta}\tau_{\geqslant 2\star}\TC_\solid^-(k_{R_\infty^\bullet}/k_A)\,,\\
			\fil_{\ev,\t S^1}^\star\TP_\solid(k_R/k_A)&\overset{\simeq}{\longrightarrow}\limit_{\IDelta}\tau_{\geqslant 2\star}\TP_\solid(k_{R_\infty^\bullet}/k_A)\,.
		\end{align*}
		If we are in situation~\cref{par:AssumptionsOnR}\cref{enum:E2Lift}, the cosimplicial resolution from~\cref{par:EvenResolution} induces equivalences
		\begin{align*}
			\fil_{\ev,\h S^1}^\star\TC_\solid^-(k_R/k_A)&\overset{\simeq}{\longrightarrow} \limit_{\IDelta}\tau_{\geqslant 2\star}\TC_\solid^-\bigl(k_R/k_A\soltimes \ISPhatbullet\bigr)\,,\\
			\fil_{\ev,\t S^1}^\star\TP_\solid(k_R/k_A)&\overset{\simeq}{\longrightarrow} \limit_{\IDelta}\tau_{\geqslant 2\star}\TP_\solid\bigl(k_R/k_A\soltimes \ISPhatbullet\bigr)\,.
		\end{align*}
	\end{cor}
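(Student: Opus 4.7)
The plan is to apply the filtered fixed-point functor $(-)^{\h \IT_{\ev}}$ and the filtered Tate functor $(-)^{\t \IT_{\ev}}$ to the cosimplicial identifications for $\fil_{\ev}^\star \THH_\solid(k_R/k_A)$ that we already have: namely, the defining formula
\begin{equation*}
\fil_{\ev}^\star \THH_\solid(k_R/k_A) \simeq \limit_{\IDelta} \tau_{\geqslant 2\star}\THH_\solid(k_{R_\infty^\bullet}/k_A)
\end{equation*}
in case~\cref{par:AssumptionsOnR}\cref{enum:E1Lift}, and the equivalence of \cref{prop:EvenResolution} in case~\cref{par:AssumptionsOnR}\cref{enum:E2Lift}. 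In both cases the argument splits into two tasks: (i) show that the relevant functor commutes with the cosimplicial limit, and (ii) identify its value on each cosimplicial term with the asserted double-speed Whitehead filtration of $\TC_\solid^-$ (respectively $\TP_\solid$).

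For the $\TC^-$ statement, task (i) is immediate: by definition $(-)^{\h \IT_{\ev}} = \Hhom_{\IT_{\ev}}^\star(\IS_{\ev}, -)$ is an internal Hom, hence a right adjoint, and therefore preserves arbitrary limits. For task (ii), I would use that in every cosimplicial degree the relevant $\THH$ is already even (by \cref{rem:AdHocEvenFiltration} in case~\cref{par:AssumptionsOnR}\cref{enum:E1Lift}, and by the argument in the proof of \cref{prop:EvenResolution} in case~\cref{par:AssumptionsOnR}\cref{enum:E2Lift}), so its double-speed Whitehead filtration is already an even filtration. For such an even $\IE_1$-algebra $Y$ with $S^1$-action, the identification $(\tau_{\geqslant 2\star}Y)^{\h \IT_{\ev}} \simeq \tau_{\geqslant 2\star}(Y^{\h S^1})$ then follows directly from the fact that $\IT_{\ev}$ is the even filtration of $\IS[S^1]$, by comparison of associated gradeds through the homotopy fixed-point spectral sequence.

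The $\TP$ case proceeds along the same lines but is more delicate. Task (ii) goes through by precisely the same argument applied to the Tate construction, yielding $(\tau_{\geqslant 2\star}Y)^{\t \IT_{\ev}} \simeq \tau_{\geqslant 2\star}(Y^{\t S^1})$ for even $Y$. The main obstacle is task (i): since $(-)^{\t \IT_{\ev}}$ is defined via a cofibre, it does not formally preserve cosimplicial limits. To handle this, I would invoke the norm cofibre sequence $\Sigma X_{\h \IT_{\ev}} \to X^{\h \IT_{\ev}} \to X^{\t \IT_{\ev}}$ in filtered $\IT_{\ev}$-modules and reduce to the commutation of filtered orbits $(-)_{\h \IT_{\ev}}$ with the specific cosimplicial limit at hand. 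The key input here is the bifiltration of \cref{cor:Bifiltration}, combined with the increasing connectivity of the HKR filtration on $\HH_\solid(R/A)$: together these stratify the cosimplicial limit into pieces of controlled Postnikov height, allowing the commutation with filtered orbits to be verified stage by stage in a regime where finite-stage arguments apply. Establishing this commutation carefully is where the bulk of the technical work lies.
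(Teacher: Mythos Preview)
Your proposal is correct and follows essentially the same route as the paper. The only point worth flagging is the endgame of the $\TP$ argument: after using the bifiltration from \cref{cor:Bifiltration} and connectivity to reduce to the $s$-graded pieces, the paper does not stop at a generic ``finite-stage'' argument but passes to the $\IZ$-linear setting, identifies $(-)_{\h\IT_{\ev}}$ with orbits for Raksit's filtered circle, and then appeals to \cite[Corollary~\chref{3.4}(1)]{BMS2} together with \cite[Proposition~\chref{6.3.3}]{RaksitFilteredCircle} to verify the commutation on associated gradeds. Your plan is the right one; this is just where the ``bulk of the technical work'' you allude to actually lands.
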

	\begin{proof}
		To see the assertion for $\TC^-$ in both cases, just observe that $(-)^{\h \IT_{\ev}}$ commutes with the cosimplicial limit and that $(\tau_{\geqslant 2\star}\THH_\solid(-))^{\h \IT_{\ev}}\simeq \tau_{\geqslant 2\star}\TC_\solid^-(-)$ holds in this case by \cite[Lemma~\chref{2.75}(vi)]{CyclotomicSynthetic}. To show the same for $\TP$, we need to commute $(-)_{\h \IT_{\ev}}\simeq \IS_{\ev}\soltimes_{\IT_{\ev}}-$ past the cosimplicial limit.
		
		Let us explain how to do this in case~\cref{par:AssumptionsOnR}\cref{enum:E1Lift}; the other case is analogous. We use the bifiltration from \cref{cor:Bifiltration}. By \cref{cor:Bifiltration}\cref{enum:BifiltrationGraded}, $\cofib(\fil^s\fil_{\ev}^\star\rightarrow\fil_{\ev}^\star)$ is $\star+s$-connective. Using \cref{cor:Bifiltration}\cref{enum:BifiltrationComplete} follows that $(\fil_{\ev}^\star)_{\h \IT_{\ev}}\simeq (\limit_{s\geqslant 0}\fil^s\fil_{\ev}^\star)_{\h \IT_{\ev}}\simeq \limit_{s\geqslant 0}(\fil^s\fil_{\ev}^\star)_{\h \IT_{\ev}}$. So we may pass to the associated graded in $s$-direction and thus, using \cref{cor:Bifiltration}\cref{enum:BifiltrationGraded} again, it will be enough to check
		\begin{equation*}
			\Bigl(\fil_\mathrm{HKR}^\star\HH_\solid(R/A)\lsoltimes_\IZ\pi_{2s}(k)\Bigr)_{\h \IT_{\ev}}\overset{\simeq}{\longrightarrow}\limit_{\IDelta}\Bigl(\bigl(\tau_{\geqslant 2\star}\HH_\solid(R_\infty^\bullet/A)\lsoltimes_\IZ\pi_{2s}(k)\bigr)_{\h \IT_{\ev}}\Bigr)\,.
		\end{equation*}
		Now both sides are $\IZ$-linear. By \cite[Proposition~\chref{2.54}]{CyclotomicSynthetic}, the construction $(-)_{\h\IT_{\ev}}$ agrees with the orbits with respect to Raksit's filtered circle \cite[Notation~\chref{6.3.2}]{RaksitFilteredCircle}. Combining this observation with \cite[Corollary~\chref{3.4}(1)]{BMS2} (plus an easy argument  as in the proof of \cref{prop:EvenResolution} to deal with the extra $-\lsoltimes_\IZ\pi_{2s}(k)$), we conclude that both sides are exhaustive filtrations on $(\HH_\solid(R/A)\lsoltimes_\IZ\pi_{2s}(k))_{\h S^1}$.
		
		The equivalence can now be checked on associated gradeds. By \cite[Proposition~\chref{6.3.3}]{RaksitFilteredCircle}, the $n$\textsuperscript{th} graded piece of $(\fil_{\mathrm{HKR}}^\star\HH_\solid(R/A)\lsoltimes_\IZ\pi_{2s}(k))_{\h \IT_{\ev}}$ will be an iterated extension of $\gr_\mathrm{HKR}^i\HH_\solid(R/A)\lsoltimes_\IZ\pi_{2s}(k)$ for $i=0,1,\dotsc,n$. A similar argument applies on the right-hand side. So we can finally deduce the desired equivalence from \cref{prop:EvenResolution}.
	\end{proof}
	
	\subsection{Base change}\label{subsec:BaseChange}
	
	We continue to fix a $k$ as specified at the beginning of \cref{subsec:EvenResolution}. As a consequence of \cref{prop:EvenResolution}, we show that the even filtrations constructed in \cref{par:EvenFiltration} satisfy all expected base change properties.
	
	\begin{cor}\label{cor:EvenFiltrationBaseChange}
		Let $k\rightarrow l$ be any map of $\IE_\infty$-ring spectra where $l$ is also connective, even, and $p$-torsion free in every homotopical degree. Let $l_A\coloneqq l\soltimes \IS_A$ and $l_R\coloneqq l\soltimes \IS_R$. Let furthermore $k_{\ev}\coloneqq \tau_{\geqslant 2\star}k$ and $l_{\ev}\coloneqq\tau_{\geqslant 2\star}l$. Then the canonical base change morphism is an equivalence
		\begin{equation*}
			\fil_{\ev}^\star \THH_\solid(k_R/k_A)\soltimes_{k_{\ev}}l_{\ev}\overset{\simeq}{\longrightarrow} \fil_{\ev}^\star \THH_\solid(l_R/l_A)\,.
		\end{equation*}
	\end{cor}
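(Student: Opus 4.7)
The plan is to apply the cosimplicial even resolution from \cref{prop:EvenResolution} (in case~\cref{par:AssumptionsOnR}\cref{enum:E2Lift}) or the defining formula (in case~\cref{par:AssumptionsOnR}\cref{enum:E1Lift}) on both sides, then reduce the base change statement to a termwise identity of double-speed Whitehead filtrations. Concretely, in either case we can write
\begin{equation*}
\fil_{\ev}^\star\THH_\solid(k_R/k_A)\simeq \limit_{\IDelta}\tau_{\geqslant 2\star}\THH_\solid\bigl(k_R/k_A\soltimes S^\bullet\bigr)\,,
\end{equation*}
where $S^\bullet$ stands for $\ISPhatbullet$ or $\IS_{R_\infty^\bullet}$ depending on the case, and analogously for $l$.

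I would then reduce to the levelwise equivalence
\begin{equation*}
\tau_{\geqslant 2\star}\THH_\solid\bigl(k_R/k_A\soltimes S^\bullet\bigr)\soltimes_{k_{\ev}} l_{\ev}\overset{\simeq}{\longrightarrow}\tau_{\geqslant 2\star}\THH_\solid\bigl(l_R/l_A\soltimes S^\bullet\bigr)
\end{equation*}
in the cosimplicial direction. The base change identity $\THH_\solid(l_R/l_A\soltimes S^\bullet)\simeq \THH_\solid(k_R/k_A\soltimes S^\bullet)\soltimes_k l$ is formal from $l_A\simeq l\soltimes_k k_A$ and $l_R\simeq l\soltimes_k k_R$. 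Both sides are even by the spectral sequence of \cref{lem:THHSpectralSequence} together with the $p$-quasi-lci hypothesis on $R$. Granted this, for any even $k$-module $X$ the lax monoidal structure of $\tau_{\geqslant 2\star}(-)$ yields a canonical comparison map $\tau_{\geqslant 2\star}(X)\soltimes_{k_{\ev}} l_{\ev}\to \tau_{\geqslant 2\star}(X\soltimes_k l)$; since $\pi_*(k)$, $\pi_*(l)$, and $\pi_*(X)$ are concentrated in even degrees and $\pi_*(k)$, $\pi_*(l)$ are $p$-torsion free, the relevant Künneth-type spectral sequence for $\pi_*(X\soltimes_k l)$ collapses, so the comparison is an equivalence on associated gradeds and hence an equivalence.

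The main obstacle is justifying the interchange of $-\soltimes_{k_{\ev}}l_{\ev}$ with the cosimplicial limit, since filtered tensor products do not preserve limits in general. To handle this, I would employ the bifiltration $\fil^s\fil_{\ev}^\star\THH_\solid(k_R/k_A)$ from \cref{cor:Bifiltration}, which depends only on the even truncation $\tau_{\leqslant 2s}k$ and whose cofibre $\cofib(\fil^s\fil_{\ev}^\star\to\fil_{\ev}^\star)$ is $(\star+s)$-connective by the HKR-controlled description in \cref{cor:Bifiltration}\cref{enum:BifiltrationGraded}. This lets one commute $-\soltimes_{k_{\ev}} l_{\ev}$ with the limit in $s$; at each fixed $s$, only finitely many homotopical degrees of $k$ and $l$ contribute, and the cosimplicial terms are uniformly bounded below in that range, so the tensor product also commutes with the cosimplicial limit at each bifiltration level (exactly as in the proof of \cref{cor:TC-TPEvenResolution}). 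Combining these two commutations with the levelwise equivalence above gives the desired base change.
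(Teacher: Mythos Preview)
Your levelwise step contains a genuine gap. Knowing that $X$, $k$, $l$, and $X\soltimes_k l$ are all even is \emph{not} enough to conclude that
\[
\tau_{\geqslant 2\star}(X)\soltimes_{k_{\ev}} l_{\ev}\longrightarrow \tau_{\geqslant 2\star}(X\soltimes_k l)
\]
is an equivalence. On associated gradeds the left-hand side is the \emph{derived} graded tensor product $\Sigma^{2*}\pi_{2*}(X)\soltimes^{\L}_{\Sigma^{2*}\pi_{2*}(k)}\Sigma^{2*}\pi_{2*}(l)$, and this need not be concentrated in the correct homotopical degrees unless $\pi_{2*}(X)$ is flat over $\pi_{2*}(k)$. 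The ``K\"unneth-type spectral sequence collapses'' claim does not follow from evenness and $p$-torsion freeness alone: there is no flatness hypothesis on $k\to l$. What actually makes the levelwise identity true here is the specific structure of $X=\THH_\solid(k_R/k_A\soltimes S^i)$ coming from \cref{lem:THHSpectralSequence}, which gives a finite secondary filtration on $\pi_{2*}(X)$ with graded pieces of the form $\pi_{2r}\HH_\solid(R/\ldots)\lsoltimes_\IZ\pi_{2s}(k)$. On these $\IZ$-linear pieces the base change $-\soltimes_{\pi_{2*}(k)}\pi_{2*}(l)$ is trivially correct. You do not invoke this filtration for the levelwise equivalence, only for the limit/tensor commutation, so as written the levelwise step is unjustified.

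The paper's proof avoids this difficulty and is structurally simpler. It first notes (via \cref{cor:FilEvExhaustiveComplete}) that both sides are exhaustive filtrations on $\THH_\solid(l_R/l_A)$, so it suffices to check on $\gr_{\ev}^*$. This immediately bypasses the tensor/limit commutation you spend the last paragraph on. Then, on $\gr_{\ev}^*$, the paper uses the degree-wise finite secondary filtration from \cref{lem:THHSpectralSequence} (compatibly with the corresponding filtration on $\gr^*k_{\ev}\simeq \Sigma^{2*}\pi_{2*}(k)$) to reduce to the trivial $\IZ$-linear identity
\[
\bigl(\gr_\mathrm{HKR}^\star\HH_\solid(R/A)\soltimes_\IZ\Sigma^{2*}\pi_{2*}(k)\bigr)\soltimes_{\Sigma^{2*}\pi_{2*}(k)}\Sigma^{2*}\pi_{2*}(l)\simeq \gr_\mathrm{HKR}^\star\HH_\solid(R/A)\soltimes_\IZ\Sigma^{2*}\pi_{2*}(l)\,.
\]
Your approach can be repaired by inserting this same secondary-filtration reduction into the levelwise step, but then the cosimplicial machinery and the bifiltration argument for commuting the tensor with the limit become unnecessary detours.
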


	\begin{proof}
		Using \cref{cor:FilEvExhaustiveComplete}, we see that both sides are exhaustive filtrations on $\THH_\solid(l_R/l_A)$. It is thus enough to check the equivalence on associated gradeds. Let us now assume we're in case~\cref{par:AssumptionsOnR}\cref{enum:E2Lift}; the ~\cref{par:AssumptionsOnR}\cref{enum:E2Lift} is analogous using the resolution from \cref{prop:EvenResolution}. Using the spectral sequence from \cref{lem:THHSpectralSequence}, we see that the cosimplicial graded object $\pi_{2*}\THH_\solid(k_{R_\infty^\bullet}/k_A)$ has a finite filtration%
		\footnote{This is \emph{not} the filtration from \cref{cor:Bifiltration}.}
		in every graded degree-wise finite filtration whose associated graded satisfies
		\begin{equation*}
			\gr^*\pi_{2(\star+*)}\THH_\solid(k_{R_\infty^\bullet}/k_A)\simeq \pi_{2\star}\HH_\solid(R_\infty^\bullet/A)\lsoltimes_\IZ\pi_{2*}(k)
		\end{equation*}
		as cosimplicial bigraded objects. Applying $\limit_{\IDelta}$ (which commutes with $-\lsoltimes_\IZ\pi_{2*}(k)$ by the argument in the proof of \cref{prop:EvenResolution}), we find that $\gr_{\ev}^*\THH_\solid(k_R/k_A)$ has a finite filtration in every graded degree in such a way that the associated graded satisfies
		\begin{equation*}
			\gr^*\gr_{\ev}^{\star+*}\THH_\solid(k_R/k_A)\simeq \gr_\mathrm{HKR}^\star\HH_\solid(R/A)\soltimes_\IZ\Sigma^{2*}\pi_{2*}(k)
		\end{equation*}
		as bigraded objects. This equivalence is compatible with $\gr^*k_{\ev}\simeq \Sigma^{2*}\pi_{2*}(k)$, since the latter can be obtained from the spectral sequence for $\THH_\solid(k/k)$. Using the same for $l$, the desired equivalence now follows from the trivial observation
		\begin{equation*}
			\left(\gr_\mathrm{HKR}^\star\HH_\solid(R/A)\soltimes_\IZ\Sigma^{2*}\pi_{2*}(k)\right)\soltimes_{\Sigma^{2*}\pi_{2*}(k)}\Sigma^{2*}\pi_{2*}(l)\simeq \gr_\mathrm{HKR}^\star\HH_\solid(R/A)\soltimes_\IZ\Sigma^{2*}\pi_{2*}(l)\,,
		\end{equation*}
		so we're done.
	\end{proof}
	\begin{cor}\label{cor:EvenFiltrationTC-BaseChange}
		Let $k\rightarrow l$ be as in \cref{cor:EvenFiltrationBaseChange} and put $k^{\h S^1}_{\ev}\coloneqq \tau_{\geqslant 2\star}(k^{\h S^1})$ as well as $l^{\h S^1}_{\ev}\coloneqq \tau_{\geqslant 2\star}(l^{\h S^1})$. Let also $t\in\pi_{-2}(k^{\h S^1})$ be a complex orientation of~$k$. We regard~$t$ as sitting in homotopical degree~$-2$ and filtration degree~$-1$ of $k_{\ev}^{\h S^1}$. Then the canonical base change morphism is an equivalence
		\begin{equation*}
			\left(\fil_{\ev,\h S^1}^\star \TC_\solid^-(k_R/k_A)\soltimes_{k_{\ev}^{\h S^1}}l_{\ev}^{\h S^1}\right)_t^\complete\overset{\simeq}{\longrightarrow} \fil_{\ev,\h S^1}^\star \TC_\solid^-(l_R/l_A)\,.
		\end{equation*}
	\end{cor}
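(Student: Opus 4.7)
The plan is to deduce this from Corollary \ref{cor:EvenFiltrationBaseChange} by applying $(-)^{\h\IT_{\ev}}$, being careful about how the base change interacts with the filtered fixed-points functor. The $t$-completion is forced by the well-known failure of $(-)^{\h S^1}$ to commute with base change against non-dualisable modules, and this is the conceptual reason it appears in the statement.

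First, I would apply $(-)^{\h\IT_{\ev}}=\Hhom_{\IT_{\ev}}(\IS_{\ev},-)$ to the equivalence $\fil_{\ev}^\star\THH_\solid(k_R/k_A)\soltimes_{k_{\ev}}l_{\ev}\xrightarrow{\simeq}\fil_{\ev}^\star\THH_\solid(l_R/l_A)$ of Corollary \ref{cor:EvenFiltrationBaseChange}. Since $l_{\ev}$ carries the trivial $\IT_{\ev}$-action (it is imported only through the coefficient ring), $l_{\ev}^{\h\IT_{\ev}}$ is identified with $l_{\ev}^{\h S^1}$; via \cite[Lemma~\chref{2.75}(vi)]{CyclotomicSynthetic} this is the double-speed Whitehead filtration $\tau_{\geqslant 2\star}(l^{\h S^1})$, and likewise for $k$. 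There is then a natural comparison map
\begin{equation*}
    \fil_{\ev,\h S^1}^\star\TC_\solid^-(k_R/k_A)\soltimes_{k_{\ev}^{\h S^1}}l_{\ev}^{\h S^1}\longrightarrow \fil_{\ev,\h S^1}^\star\TC_\solid^-(l_R/l_A)\,,
\end{equation*}
and $t$-completing the source is the natural operation at the level of filtered modules over $k_{\ev}^{\h S^1}$, since $t$ is a filtration-weight $-1$ generator of the augmentation ideal of $k^{\h S^1}$.

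Next, I would use the cosimplicial resolution from Corollary \ref{cor:TC-TPEvenResolution} to express both sides as limits of $\tau_{\geqslant 2\star}\TC_\solid^-(k_{R_\infty^\bullet}/k_A)$ (or the analogue in case \cref{par:AssumptionsOnR}\cref{enum:E2Lift}), and then pass to the associated graded for the $t$-adic filtration on $k_{\ev}^{\h S^1}$. The key observation is that $k_{\ev}^{\h S^1}/t\simeq k_{\ev}$ (up to a shift coming from the filtration weight of $t$), so the $t$-adic graded of the left-hand side is built from $\fil_{\ev}^\star\THH_\solid(k_R/k_A)\soltimes_{k_{\ev}}l_{\ev}$, while the $t$-adic graded of the right-hand side is built from $\fil_{\ev}^\star\THH_\solid(l_R/l_A)$, after applying the appropriate Raksit filtered-circle construction $(-)^{\h\IT_{\ev}}$ in each cosimplicial degree (as in the proof of Corollary \ref{cor:TC-TPEvenResolution}). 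Both become identified via Corollary \ref{cor:EvenFiltrationBaseChange}, which handles the $\THH$-level base change termwise. Once the equivalence is established on $t$-adic graded pieces, the $t$-completion upgrades it to an equivalence of $t$-completed filtered modules.

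The main obstacle will be verifying that the cosimplicial limit commutes with the $t$-completed base change. This is the analogue of the argument used at the end of the proof of Corollary \ref{cor:TC-TPEvenResolution}: applying the bifiltration of Corollary \ref{cor:Bifiltration} reduces everything, on suitable associated gradeds, to a statement about $\fil_{\mathrm{HKR}}^\star\HH_\solid(R/A)\lsoltimes_\IZ\pi_{2s}(k)$ against its $l$-analogue, where no equivariant complications remain and the required commutation follows from $p$-torsion-freeness of $\pi_{2*}(k)$ and $\pi_{2*}(l)$ together with the fact that the HKR filtration stays complete after such tensoring.
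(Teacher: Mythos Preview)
Your proposal contains the right core idea---reduce to the $\THH$-level base change of \cref{cor:EvenFiltrationBaseChange} via $t$-completeness---but you significantly overcomplicate the execution. The paper's proof is a two-liner: \cref{cor:TC-TPEvenResolution} exhibits the right-hand side as a limit of double-speed Whitehead filtrations on $\TC_\solid^-$ of even things, each of which is $t$-complete (since $t$ is a complex orientation, reduction modulo~$t$ recovers $\THH$ and the $t$-adic filtration increases in connectivity); the left-hand side is $t$-complete by construction. Hence it suffices to check the map modulo~$t$, where it becomes precisely the equivalence $\fil_{\ev}^\star\THH_\solid(k_R/k_A)\soltimes_{k_{\ev}}l_{\ev}\simeq\fil_{\ev}^\star\THH_\solid(l_R/l_A)$ of \cref{cor:EvenFiltrationBaseChange}. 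No further cosimplicial bookkeeping is required.

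In particular, your ``main obstacle''---commuting the cosimplicial limit with the $t$-completed base change---is a phantom. You only need the cosimplicial resolution to verify $t$-completeness of the target, not to carry out the base change term by term. Once $t$-completeness is established on both sides, the entire comparison collapses to a single reduction modulo~$t$, and there is nothing left to commute. Your detour through the bifiltration of \cref{cor:Bifiltration} and the termwise HKR analysis would work, but it reproves machinery already packaged in \cref{cor:EvenFiltrationBaseChange}.
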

	\begin{proof}
		Using \cref{cor:TC-TPEvenResolution}, we see that both sides are $t$-complete. Upon reduction modulo~$t$, we get the equivalence from \cref{cor:EvenFiltrationBaseChange}.
	\end{proof}
	
	A similar base change equivalence exists for $\fil_{\ev,\t S^1}\TP_\solid(k_R/k_A)$, but one has to be a little careful about completions. One way to formulate the result would be via \cref{cor:EvenFiltrationTC-BaseChange} combined with the following:
	\begin{cor}\label{cor:TC-toTPbaseChange}
		Let $k^{\t S^1}_{\ev}\coloneqq \tau_{\geqslant 2\star}(k^{\t S^1})$. We have a canonical equivalence
		\begin{equation*}
			\fil_{\ev,\h S^1}^\star\TC_\solid^-(k_R/k_A)\soltimes_{k_{\ev}^{\h S^1}}k_{\ev}^{\t S^1}\overset{\simeq}{\longrightarrow}\fil_{\ev,\t S^1}^\star \TP_\solid(k_R/k_A)\,.
		\end{equation*}
	\end{cor}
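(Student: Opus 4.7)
The plan is to reduce to a cosimplicial-degree-wise statement via \cref{cor:TC-TPEvenResolution} and to use the standard identification $\TP_\solid(T/k_A) \simeq \TC_\solid^-(T/k_A)[t^{-1}]$ in the even connective case.

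The canonical map from LHS to RHS is induced by the lax monoidal natural transformation $(-)^{\h \IT_{\ev}} \to (-)^{\t \IT_{\ev}}$ applied to $\fil_{\ev}^\star \THH_\solid(k_R/k_A)$, combined with the universal property of the base change. Applying \cref{cor:TC-TPEvenResolution}, both sides are expressed as cosimplicial limits over $\IDelta$ of the Whitehead-filtered $\TC_\solid^-(T/k_A)$ and $\TP_\solid(T/k_A)$, where $T$ runs over the cosimplicial terms of the resolution (either $\IS_{R_\infty^\bullet}$ in case~\cref{par:AssumptionsOnR}\cref{enum:E1Lift} or $\IS_R \soltimes \ISPhatbullet$ in case~\cref{par:AssumptionsOnR}\cref{enum:E2Lift}). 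Each such $T$ yields an even connective $\THH_\solid$, so the Tate spectral sequence collapses. The filtered analogue---namely, that the base change $\tau_{\geqslant 2\star}\TC_\solid^-(T/k_A) \soltimes_{k_{\ev}^{\h S^1}} k_{\ev}^{\t S^1} \simeq \tau_{\geqslant 2\star}\TP_\solid(T/k_A)$---can be verified by passing to associated gradeds via the spectral sequence from \cref{lem:THHSpectralSequence}, reducing to the classical analogue for $\HH_\solid(R/A)^{\h S^1}$ and $\HH_\solid(R/A)^{\t S^1}$ together with their HKR filtrations.

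To conclude, I need to commute the base change with the cosimplicial limit, which is not automatic. I plan to handle this by the same bifiltration argument used in the proof of \cref{cor:TC-TPEvenResolution}: the bifiltration from \cref{cor:Bifiltration} reduces the problem to associated graded pieces where connectivity estimates make the cosimplicial diagram uniformly bounded in each filtration degree, so that the relevant cosimplicial limit commutes with base change.

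The main obstacle is the cosimplicial-degree-wise verification on associated gradeds. The delicate point is that $k_{\ev}^{\h S^1} \to k_{\ev}^{\t S^1}$ is not simply a $t$-localization of filtered spectra---since the Whitehead filtration $\tau_{\geqslant 2\star}(k^{\t S^1})$ sees the unboundedness of $k^{\t S^1}$ from below in a different way than the $t$-inversion of $\tau_{\geqslant 2\star}(k^{\h S^1})$---so one cannot formally reduce to a universal $t$-inversion statement. The identification must instead be made by explicit bookkeeping on the $\mathrm E^2$-pages of the spectral sequence from \cref{lem:THHSpectralSequence}, which in the even connective case are sufficiently rigid to force the claimed equivalence.
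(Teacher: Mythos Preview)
Your plan can likely be made to work, but it is substantially more involved than necessary, and the obstacles you flag are artifacts of the route you chose rather than intrinsic difficulties. The paper's argument avoids both the commutation of base change with the cosimplicial limit and the ``delicate point'' about $k_{\ev}^{\t S^1}$ not being a filtered $t$-localisation of $k_{\ev}^{\h S^1}$, by reducing to associated gradeds at the very first step.

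Concretely, the paper observes that both sides are exhaustive filtrations on $\TP_\solid(k_R/k_A)$: the right-hand side by \cref{cor:TC-TPEvenResolution}, and the left-hand side because its underlying object is $\TC_\solid^-(k_R/k_A)\soltimes_{k^{\h S^1}}k^{\t S^1}\simeq \TC_\solid^-(k_R/k_A)[t^{-1}]\simeq \TP_\solid(k_R/k_A)$ (the fibre $\Sigma\,\THH_{\solid}(k_R/k_A)_{\h S^1}$ is bounded below, hence $t$-acyclic). With exhaustiveness and agreement on underlying objects in hand, it suffices to check the map on associated gradeds. There the worry you raise disappears: for connective even $k$ one has $\gr^*k_{\ev}^{\t S^1}\simeq(\gr^*k_{\ev}^{\h S^1})[t^{-1}]$ as graded rings, so the base change on gradeds is exactly $t$-inversion, i.e.\ periodisation. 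Now \cref{cor:TC-TPEvenResolution} shows that $\gr_{\ev,\t S^1}^*\TP_\solid(k_R/k_A)$ is already $t$-periodic and that $\gr_{\ev,\h S^1}^*\TC_\solid^-(k_R/k_A)\to\gr_{\ev,\t S^1}^*\TP_\solid(k_R/k_A)$ is an equivalence in nonpositive graded degrees (coming from the norm cofibre sequence on each even cosimplicial term). Periodising the source therefore matches the target.

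So the bifiltration argument and the cosimplicial-degree-wise bookkeeping you propose are not needed: passing to gradeds first linearises the problem and turns your ``delicate point'' into a one-line observation.
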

	\begin{proof}
		Using \cref{cor:TC-TPEvenResolution}, we see that both sides are exhaustive filtrations on $\TP_\solid(k_R/k_A)$. It is thus enough to check the equivalence on associated gradeds. Using \cref{cor:TC-TPEvenResolution}, we find that
		\begin{equation*}
			\gr_{\ev,\h S^1}^*\TC_\solid^-(k_R/k_A)\rightarrow \gr_{\ev,\t S^1}^*\TP_\solid(k_R/k_A)
		\end{equation*}
		is an equivalence in negative graded degrees and that the right-hand side is periodic. Since $-\soltimes_{\gr^*k_{\ev}^{\smash{\h S^1}}}\gr^*k_{\ev}^{\t S^1}$ will also make the left-hand side periodic, we're done.
	\end{proof}
	
	\subsection{Comparison of even filtrations}\label{subsec:ComparisonOfEvenFiltrations}
	
	As another consequence of \cref{prop:EvenResolution}, we can show that the even filtrations from \cref{par:EvenFiltration} agree with the those defined by \cite{BMS2,EvenFiltration,PerfectEvenFiltration}.
	
	\begin{numpar}[Even filtrations on ordinary Hochschild homology.]\label{par:EvenFiltrationII}
		In the case $k=\IZ$, the constructions in \cref{par:EvenFiltration} yield filtrations
		\begin{equation*}
			\fil_{\ev}^\star\HH_\solid(R/A)\,,\quad\fil_{\ev,\h S^1}^\star\HC_\solid^-(R/A)\,,\quad\text{and}\quad\fil_{\ev,\t S^1}^\star\HP_\solid(R/A)\,.
		\end{equation*}
		But $\HH_\solid(R/A)\simeq \HH(R/A)_p^\complete$ is a $p$-complete $\IE_\infty$-ring spectrum and so we can also consider the Hahn--Raksit--Wilson even filtrations
		\begin{equation*}
			\fil_{\HRWev}^\star\HH(R/A)_p^\complete\,,\quad\fil_{\HRWev,\h S^1}^\star\HC^-(R/A)_p^\complete\,,\quad\text{and}\quad\fil_{\HRWev,\t S^1}^\star\HP(R/A)_p^\complete\,.
		\end{equation*}
		These can be regarded as filtrations on $\HH_\solid(R/A)$, $\HC_\solid^-(R/A)$, and $\HP_\solid(R/A)$ in a natural way. For $\HH$, we simply regard $p$-complete spectra as solid condensed spectra per Convention~\cref{conv:SolidConventions} and use \cref{lem:SolidTHH}. For $\HC^-$ and $\HP$, we must be a little more careful: If $\HH(R/A)\rightarrow E$ is an $S^1$-equivariant $\IE_\infty$-map into an even $p$-complete ring spectrum with bounded $p^\infty$-torsion, we regard $E^{\h S^1}$ as a solid condensed spectrum by performing both the $p$-completion and the homotopy fixed points $(-)^{\h S^1}$ in $\Sp_\solid$. We then regard
		\begin{equation*}
			\fil_{\HRWev,\h S^1}^\star\HC^-(R/A)_p^\complete\simeq \limit_{\HH(R/A)\rightarrow E}\tau_{\geqslant 2\star}\bigl(E^{\h S^1}\bigr)\,;
		\end{equation*}
		as a solid condensed spectrum by also performing the limit in $\Sp_\solid$. In the same way we can regard $\fil_{\HRWev,\t S^1}^\star\HP(R/A)_p^\complete$ as a filtered solid condensed spectrum.
		
		If $E$ is even, then the perfect even filtration of $E$ is the double-speed Whitehead filtration $\tau_{\geqslant 2\star}(E)$ by \cite[Lemma~\chref{2.36}]{PerfectEvenFiltration} and its solid analogue. Moreover, $(\tau_{\geqslant 2\star}(E))^{\h \IT_{\ev}}\simeq \tau_{\geqslant 2\star}(E^{\h S^1})$ by \cite[Lemma~\chref{2.75}(vi)]{CyclotomicSynthetic} and similarly $(\tau_{\geqslant 2\star}(E))^{\t \IT_{\ev}}\simeq \tau_{\geqslant 2\star}(E^{\t S^1})$. It follows that there's a canonical map $\fil_{\ev}^\star\rightarrow \fil_{\HRWev}^\star$ in each case.
	\end{numpar}
	
	\begin{cor}\label{cor:HRWvsPstragowski}
		Via the comparison maps constructed in~\cref{par:EvenFiltrationII} above, the filtrations
		\begin{equation*}
			\fil_{\ev}^\star\HH_\solid(R/A)\,,\quad \fil_{\ev,\h S^1}^\star\HC_\solid^-(R/A)\,,\quad\text{and}\quad \fil_{\ev,\t S^1}^\star\HP_\solid(R/A)\,,
		\end{equation*}
		agree with the Hahn--Raksit--Wilson/HKR even filtrations
		\begin{equation*}
			\fil_{\HRWev}\HH(R/A)_p^\complete\,,\quad \fil_{\HRWev,\h S^1}\HC^-(R/A)_p^\complete\,,\quad\text{and}\quad \fil_{\HRWev,\t S^1}\HP(R/A)_p^\complete\,.
		\end{equation*}
	\end{cor}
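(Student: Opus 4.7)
I will treat the $\HH$ case in detail; the $\HC^-$ and $\HP$ statements follow by applying $(-)^{\h\IT_{\ev}}$ and $(-)^{\t\IT_{\ev}}$ respectively, using that these operations commute with the cosimplicial limits appearing below exactly as in the proof of \cref{cor:TC-TPEvenResolution}.

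The strategy is to identify both sides with a common cosimplicial limit. Fix a polynomial cover $P\twoheadrightarrow R$ as in \cref{par:EvenResolution} and consider the cosimplicial diagram $\HH_\solid(R/A\soltimes_{\IZ_p}\widehat{P}_p^\bullet)$. On the solid side, \cref{prop:EvenResolution} specialised to $k=\IZ$ already yields
$$\fil_{\ev}^\star\HH_\solid(R/A)\simeq\limit_{\IDelta}\tau_{\geqslant 2\star}\HH_\solid\bigl(R/A\soltimes_{\IZ_p}\widehat{P}_p^\bullet\bigr).$$
On the HRW side I plan to establish the analogous formula $\fil_{\HRWev}^\star\HH(R/A)_p^\complete\simeq\limit_{\IDelta}\tau_{\geqslant 2\star}\HH(R/A\widehat{\otimes}_{\IZ_p}\widehat{P}_p^\bullet)_p^\complete$. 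The key point is that each cosimplicial term is an even spectrum: this follows from the HKR spectral sequence (specialisation of \cref{lem:THHSpectralSequence}) together with the $p$-quasi-lci assumption on $R$ over $A$, since the surjections $A\widehat{\otimes}_{\IZ_p}\widehat{P}_p^n\twoheadrightarrow R$ have cotangent complex concentrated in homological degree $1$. On such even $\IE_\infty$-rings, the HRW filtration is by construction (see \cref{par:EvenFiltrationII}) the double-speed Whitehead filtration $\tau_{\geqslant 2\star}$.

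It remains to establish the HRW-side descent formula. I plan to obtain this by translating the eff-statement of \cref{lem:eff} into a faithfully-even-flatness statement in the sense of \cite{EvenFiltration} and invoking HRW descent, or equivalently by running the reduction-to-associated-gradeds argument from the tail of the proof of \cref{prop:EvenResolution} verbatim, using HKR descent for the cotangent complex in place of the cosimplicial-colimit manipulation for $\THH$. With both cosimplicial formulas in hand, the desired equivalence of filtrations is immediate. Compatibility with the canonical comparison map of \cref{par:EvenFiltrationII} follows because both comparison maps restrict to the identity on $\tau_{\geqslant 2\star}$ in each cosimplicial degree, hence agree on the cosimplicial limit by naturality.

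The main technical obstacle is the HRW-side cosimplicial formula; once established, the rest is formal bookkeeping. A minor subtlety, which the proof must address, is that the HRW and solid perfect even constructions a priori live in different categorical setups (spectra vs.\ solid condensed spectra). One passes back and forth using Convention~\cref{conv:SolidConventions} and \cref{lem:SolidTHH} to identify the inner $\HH$'s as condensed objects; thanks to \cref{lem:NuclearityForDiscretepComplete}\cref{enum:Rpcomplete}, the required nuclearity and homological-evenness assumptions are free, so no further solid-analytic issues intervene.
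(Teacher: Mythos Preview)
Your approach is essentially the same as the paper's: both filtrations are identified with the common cosimplicial limit $\limit_{\IDelta}\tau_{\geqslant 2\star}$ coming from the resolution of \cref{prop:EvenResolution} (or, in case~\cref{par:AssumptionsOnR}\cref{enum:E1Lift}, the defining resolution), and the $\HC^-$/$\HP$ cases follow via \cref{cor:TC-TPEvenResolution}. The paper's proof is terser and simply asserts that the same resolution computes the HRW filtration, whereas you spell out why (evenness of each cosimplicial term plus HRW descent); you should also briefly note that in case~\cref{par:AssumptionsOnR}\cref{enum:E1Lift} the solid side is \emph{defined} by the resolution $R\rightarrow R_\infty^\bullet$, so no appeal to \cref{prop:EvenResolution} is needed there.
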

	\begin{proof}
		The solid even filtration $\fil_{\ev}^\star \HH_\solid(R/A)$ can be computed by a certain cosimplicial resolution (in case~\cref{par:AssumptionsOnR}\cref{enum:E1Lift} by definition, in case~\cref{par:AssumptionsOnR}\cref{enum:E2Lift} by \cref{prop:EvenResolution}). The same resolutions also compute the even filtration of Hahn--Raksit--Wilson. The same argument also works for $\HC_\solid^-$ and $\HP_\solid$ thanks to \cref{cor:TC-TPEvenResolution}.
	\end{proof}
	\begin{rem}\label{rem:EvenFiltrationkuRationalisation}
		For later use, let us point out the following consequence: Using \cref{cor:EvenFiltrationTC-BaseChange} for $\ku\rightarrow \ku\otimes\IQ\simeq \IQ[\beta]$ and $\IZ\rightarrow \IQ[\beta]$, we deduce that
		\begin{equation*}
			\left(\fil_{\ev,\h S^1}^\star\TC_\solid^-(\ku_R/\ku_A)\soltimes\IQ\right)_t^\complete\simeq \left(\fil_{\ev,\h S^1}^\star\HC_\solid^-(R/A)\soltimes_{\IZ_{\ev}^{\h S^1}}\IQ[\beta]_{\ev}^{\h S^1}\right)_t^\complete\,.
		\end{equation*}
		Moreover, the filtration on the right-hand side is the usual Hahn--Raksit--Wilson/HKR even filtration. This will give us good control over the constructions in \cref{sec:qdeRhamku} after rationalisation.
	\end{rem}

	The filtration on $\TC^-(S/\IS_A\qpower)[1/u]_{(p,q-1)}^\complete$ from \cref{prop:qdeRhamTC-}, whose associated graded computes prismatic/$q$-de Rham cohomology, is also recovered by the solid even filtration.
	
	\begin{cor}\label{cor:BMSvsPstragowski}
		If $S$ is any $p$-complete $p$-quasi-lci $A[\zeta_p]$-algebra of bounded $p^\infty$-torsion, then there's a canonical filtered $\IE_\infty$-equivalence
		\begin{equation*}
			\left(\fil_{\ev}^\star\THH_\solid\bigl(S/\IS_A\qpower\bigr)\bigl[\localise{u}\bigr]_p^\complete\right)^{\h \IT_{\ev}}\overset{\simeq}{\longrightarrow}\fil_{\HRWev,\h S^1}^\star\left(\TC^-\bigl(S/\IS_A\qpower\bigr)\bigl[\localise{u}\bigr]_{(p,q-1)}^\complete\right)
		\end{equation*}
		\embrace{where the right-hand side is regarded as a filtered solid condensed spectrum in the way described in~\cref{par:EvenFiltrationII} above}.
	\end{cor}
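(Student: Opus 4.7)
The plan is to adapt the strategy of \cref{cor:HRWvsPstragowski}: both filtrations should be computed by a common cosimplicial resolution, on each term of which they reduce to the double-speed Whitehead filtration.

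First, I would choose a $p$-quasi-syntomic cover $S\rightarrow S_\infty$ such that $S_\infty/p$ is relatively semiperfect over $A[\zeta_p]$, which exists by our assumption that $S$ is $p$-quasi-lci over $A[\zeta_p]$ with bounded $p^\infty$-torsion, and form the $p$-completed \v Cech nerve $S\rightarrow S_\infty^\bullet$. The key evenness input, which is established by the BMS2-style computation in \cref{appendix:BMS2} that also underlies \cref{prop:qdeRhamTC-}, says that $\pi_*\THH_\solid(S_\infty^n/\IS_A\qpower)[1/u]_p^\complete$ is concentrated in even degrees for every cosimplicial level~$n$. By the solid analogue of \cite[Lemma~\chref{2.36}]{PerfectEvenFiltration}, the solid even filtration on each such term then coincides with its double-speed Whitehead filtration; the same is true on the $\IE_\infty$-side for the HRW filtration.

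Second, I would invoke the solid faithfully even flat descent result \cref{thm:SolidEvenFlatDescent}, whose nuclearity hypothesis is verified via \cref{lem:NuclearityForDiscretepComplete}, to obtain
\begin{equation*}
    \fil_{\ev}^\star\THH_\solid\bigl(S/\IS_A\qpower\bigr)\bigl[\localise{u}\bigr]_p^\complete \overset{\simeq}{\longrightarrow} \limit_\IDelta \tau_{\geqslant 2\star}\THH_\solid\bigl(S_\infty^\bullet/\IS_A\qpower\bigr)\bigl[\localise{u}\bigr]_p^\complete\,.
\end{equation*}
The HRW even filtration on the $p$-complete $\IE_\infty$-ring $\THH(S/\IS_A\qpower)[1/u]_{(p,q-1)}^\complete$ admits the analogous description by its construction, as recalled in~\cref{par:EvenFiltrationII}. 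Applying $(-)^{\h\IT_{\ev}}$ and using \cite[Lemma~\chref{2.75}(vi)]{CyclotomicSynthetic} to identify $(\tau_{\geqslant 2\star}X)^{\h\IT_{\ev}}\simeq \tau_{\geqslant 2\star}(X^{\h S^1})$ in each even cosimplicial degree --- and noting that $(-)^{\h\IT_{\ev}}$, being a filtered internal Hom, commutes with cosimplicial limits --- both sides of the claimed equivalence become identified with $\limit_\IDelta \tau_{\geqslant 2\star}\TC_\solid^-(S_\infty^\bullet/\IS_A\qpower)[1/u]_{(p,q-1)}^\complete$. The $(q-1)$-completion that appears on the right-hand side corresponds to the $t$-completion naturally present after $(-)^{\h\IT_{\ev}}$ in view of the relation $q-1=\beta t$ from~\cref{par:Notation}\,(e).

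The main obstacle is the evenness input in the first step: although a variant of classical BMS2 computations, verifying it uniformly over the whole resolution, in the solid setting, and after inverting $u$ requires care in tracking how the localisation at~$u$ interacts with the $p$-completion and with the graded structure of $\pi_*\THH(-/\IS_A\qpower)$. This is precisely the content of \cref{appendix:BMS2}, on which the present corollary rests. Once that input is granted, the remainder of the argument is a formal descent calculation within the solid evenness formalism developed in \cref{sec:SolidEvenFiltration}, completely parallel to the proof of \cref{cor:HRWvsPstragowski}.
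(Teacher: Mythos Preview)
Your overall strategy is correct and matches the paper's: construct a comparison map, then identify both filtrations with the same cosimplicial double-speed Whitehead tower. The difference is in the choice of resolution. The paper does \emph{not} pass to a quasi-syntomic cover of $S$; instead it chooses a polynomial ring $P=\IZ[x_i\,|\,i\in I]$ with a surjection $P\twoheadrightarrow S$ and shows both sides are computed by
\[
\limit_{\IDelta}\tau_{\geqslant 2\star}\TC_\solid^-\bigl(S/(\IS_A\soltimes\ISPhatbullet)\qpower\bigr)\bigl[\localise{u}\bigr]_{(p,q-1)}^\complete\,,
\]
i.e.\ the resolution of \cref{par:EvenResolution} adapted to the present base. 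Evenness of each term follows because $A\otimes_\IZ P^\bullet\twoheadrightarrow S$ is surjective and $p$-quasi-lci, and descent uses \cref{thm:SolidEvenFlatDescentVariant} together with the already-established fact that $\THH_\solid(\ISPhat)\rightarrow\ISPhat$ is solid faithfully even flat (\cref{lem:eff}).

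Your route via a semiperfect cover $S\rightarrow S_\infty$ is the one used in \cref{appendix:BMS2} to build the BMS filtration, and it would work, but it has a verification gap as written: to apply \cref{thm:SolidEvenFlatDescent} you must check that $\THH_\solid(S/\IS_A\qpower)\rightarrow\THH_\solid(S_\infty/\IS_A\qpower)$ is \emph{solid faithfully even flat} in the sense of \cref{def:SolidFaithfullyEvenFlat}, not just that the target is even. You only cite \cref{lem:NuclearityForDiscretepComplete} for nuclearity and leave the flatness unaddressed. This is the solid analogue of a BMS2 computation, so it is plausible, but it is not proved in the paper and is not a formality. The paper's choice of resolution sidesteps this entirely by reusing \cref{lem:eff}. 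Also, your appeal to \cref{par:EvenFiltrationII} for the HRW side is slightly off: that paragraph treats the case $k=\IZ$, whereas here one argues directly from the definition of the HRW filtration as a limit over even targets, which is how the paper first constructs the comparison map.
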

	
	\begin{proof}
		Let us first construct the canonical map in question. For every $S^1$-equivariant $\IE_\infty$-map $\THH(S/\IS_A\qpower)[1/u]\rightarrow E$ into a $p$-complete even ring spectrum, we get a canonical filtered $\IE_\infty$-map
		\begin{equation*}
			\left(\fil_{\ev}^\star\THH_\solid\bigl(S/\IS_A\qpower\bigr)\bigl[\localise{u}\bigr]_p^\complete\right)^{\h \IT_{\ev}}\longrightarrow (\tau_{\geqslant 2\star}E)^{\h \IT_{\ev}}\simeq \tau_{\geqslant 2\star}\bigl(E^{\h S^1}\bigr)
		\end{equation*}
		using \cite[Lemma~\chref{2.75}(vi)]{CyclotomicSynthetic}. This induces the desired comparison map. To prove that we get an equivalence, we can use the same arguments as before: Choose a polynomial ring $P=\IZ[x_i\ |\ i\in I]$ with a surjection $P\twoheadrightarrow S$ and then show that both sides are computed by the cosimplicial resolution $\tau_{\geqslant 2\star}\TC_\solid^-(S/(\IS_A\soltimes\ISPhatbullet)\qpower)[1/u]_{(p,q-1)}^\complete$.
	\end{proof}
	
	Finally, we show that in the case $k=\ku$ our solid even filtration on $\THH_\solid(\ku_R/\ku_A)$ agrees with the $p$-completion of Pstr\k{a}gowski's perfect even filtration $\fil_{\Pev}^\star \THH(\ku_R/\ku_A)$. This won't be needed in the rest of the text, but it is perhaps a nice sanity check.
	
	\begin{cor}\label{cor:SolidvsPstragowski}
		The canonical map induced by~\cref{par:ComparisonWithUsualEven} is an equivalence
		\begin{equation*}
			\bigl(\fil_{\Pev}^\star\THH(\ku_R/\ku_A)\bigr)_p^\complete\overset{\simeq}{\longrightarrow}\fil_{\ev}^\star\THH_\solid(\ku_R/\ku_A)\,.
		\end{equation*}
	\end{cor}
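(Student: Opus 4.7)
The plan is to identify both filtrations with the same cosimplicial limit, using the resolution from \cref{par:EvenResolution}. Since $\fil_{\Pev}^\star$ requires an $\IE_1$-algebra structure on $\THH(\ku_R/\ku_A)$, we are necessarily in situation~\cref{par:AssumptionsOnR}\cref{enum:E2Lift}; let $\IS_P\to \ku_R$ be the $\IE_2$-resolution from \cref{par:EvenResolution} with \v Cech nerve $\IS\to \IS_{P^\bullet}$.

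On the right-hand (solid) side, \cref{prop:EvenResolution} already gives
\begin{equation*}
\fil_{\ev}^\star\THH_\solid(\ku_R/\ku_A) \simeq \limit_\IDelta \tau_{\geqslant 2\star}\THH_\solid\bigl(\ku_R/\ku_A\soltimes\ISPhatbullet\bigr)\,.
\end{equation*}
For the left-hand side, I would first check that $\THH(\IS_P)\rightarrow \IS_P$ is Pstr\k{a}gowski-faithfully even flat. This is the non-condensed analogue of \cref{lem:eff}: the spectral sequence argument relies only on the computation of $\pi_*\THH(\IS_P)$ from \cite[Proposition~\chref{4.2.4}]{EvenFiltration}, which needs no condensed structure, and becomes simpler without it. Applying Pstr\k{a}gowski's faithfully flat descent \cite[Theorem~\chref{6.26}]{PerfectEvenFiltration} then yields
\begin{equation*}
\fil_{\Pev}^\star\THH(\ku_R/\ku_A) \simeq \limit_\IDelta \fil_{\Pev}^\star \THH(\ku_R/\ku_A\otimes \IS_{P^\bullet})\,,
\end{equation*}
with exhaustiveness of both sides verified by the same connectivity/HKR argument as in the proof of \cref{prop:EvenResolution}.

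Next I would observe that each cosimplicial term $\THH(\ku_R/\ku_A\otimes\IS_{P^n})_p^\complete \simeq \THH_\solid(\ku_R/\ku_A\soltimes\ISPhat^{\,n})$ (via \cref{lem:SolidTHH}) is even, by the spectral sequence of \cref{lem:THHSpectralSequence} combined with the $p$-quasi-lci hypothesis on $R/A$. For an even $\IE_1$-algebra, Pstr\k{a}gowski's perfect even filtration agrees with the double-speed Whitehead filtration by \cite[Lemma~\chref{2.36}]{PerfectEvenFiltration}, and the same is true for the solid even filtration by its solid analogue. Hence the two cosimplicial diagrams agree term by term after $p$-completion, namely both equal $\tau_{\geqslant 2\star}\THH_\solid(\ku_R/\ku_A\soltimes\ISPhatbullet)$.

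The main obstacle will be commuting $p$-completion past the cosimplicial limit on the Pstr\k{a}gowski side. In each fixed filtration degree $\star$, the cosimplicial object $\tau_{\geqslant 2\star}\THH(\ku_R/\ku_A\otimes\IS_{P^\bullet})$ is uniformly bounded below, and its $p$-completion is controlled by the bifiltration of \cref{cor:Bifiltration}, whose graded pieces have uniformly bounded $p^\infty$-torsion. A standard Postnikov/Milnor-sequence argument in the style of \cite[Corollary~\chref{3.4}(2)]{BMS2} then shows that $p$-completion commutes with the totalisation in each filtration degree. Stitching everything together yields the claimed equivalence.
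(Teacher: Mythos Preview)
Your overall strategy matches the paper's: compute both sides via the resolution of \cref{par:EvenResolution} and identify the cosimplicial terms. The paper does exactly this, citing the uncondensed analogue of \cref{thm:SolidEvenFlatDescentVariant} on the Pstr\k{a}gowski side. However, you have misidentified where the real difficulty lies, and the step you gloss over is precisely the one that occupies almost the entire proof in the paper.

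The issue is the sentence ``Hence the two cosimplicial diagrams agree term by term after $p$-completion''. You justify this by noting that $\widehat{M}_p$ is even (where $M=\THH(\ku_R/\ku_A\otimes\IS_{P^i})$), so its Pstr\k{a}gowski filtration is $\tau_{\geqslant 2\star}$. But the descent on the left-hand side produces $\fil_{\Pev/T}^\star M$, not $\fil_{\Pev/T}^\star \widehat{M}_p$, and the perfect even filtration does not commute with $p$-completion in any formal way. What you actually need is $(\fil_{\Pev/T}^\star M)_p^\complete\simeq \fil_{\Pev/T}^\star(\widehat{M}_p)$ at each cosimplicial level; this is a statement about the interaction of $\fil_{\Pev}$ with limits, not about commuting $p$-completion past a totalisation. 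Your final paragraph addresses only the latter, which is comparatively easy.

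The paper's argument for this step is substantial. Since $T$ is only $\IE_1$, there is no obvious $R$-module structure on $\pi_*\Hom_T(Q,\widehat{M}_p)$ for perfect even $Q$, yet one needs such a structure to transport the bounded $p^\infty$-torsion of $R$ to a \emph{uniform} torsion bound on these groups. The paper manufactures a right homotopy $R$-action via the extra $\IS_P$-factors in $M$, deduces evenness and $p$-complete flatness of $\pi_*\Hom_T(Q,\widehat{M}_p)$, extracts a uniform torsion bound $N$, and then analyses the presheaves $\tau_{\geqslant 2\star-1}\Hom_T(-,M)/p^\alpha$ and $\pi_{2*}\Hom_T(-,\widehat{M}_p)/p^\alpha$ on $\Perf_{\ev}(T)$ directly, using that the spurious odd contributions die when passing from $\alpha+N$ to $\alpha$. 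Completeness of both sides then finishes via \cite[Theorem~\chref{8.3}(2)]{PerfectEvenFiltration}. None of this is captured by a BMS2-style connectivity argument.
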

	\begin{proof}
		Let $T\coloneqq \THH(\ku_R/\ku_A)$ for short. Since $\THH(\IS_P)\rightarrow \IS_P$ is eff, we can compute $\fil_{\Pev}^\star T$ using descent; more precisely, using the uncondensed version of \cref{thm:SolidEvenFlatDescentVariant}. We find that
		\begin{equation*}
			\fil_{\Pev/T}^\star T\longrightarrow \limit_{\IDelta} \fil_{\Pev/T}^\star\bigl(T\otimes_{\THH(\IS_P)}\THH(\IS_P/\IS_{P^\bullet})\bigr)
		\end{equation*}
		is an equivalence up to completing the filtrations on both sides. Let us now study the right-hand side. Fix some cosimplicial degree~$i$ and put $M\coloneqq \THH(\ku_R/\ku_A\otimes\IS_{P^i})$ for short. We claim that there is a canonical equivalence
		\begin{equation*}
			(\fil_{\Pev}^\star M)_p^\complete\overset{\simeq}{\longrightarrow} \fil_{\Pev}^\star\widehat{M}_p\simeq \tau_{\geqslant 2\star}(\widehat{M}_p)\,.
		\end{equation*}
		If we can show this, we're done. Indeed, by comparison with the resolution from \cref{prop:EvenResolution}, we find that $\bigl(\fil_{\Pev}^\star\THH(\ku_R/\ku_A)\bigr)_p^\complete\rightarrow\fil_{\ev}^\star\THH_\solid(\ku_R/\ku_A)$ is an equivalence up to completion. But the filtrations on both sides are exhaustive and the right-hand side is complete by \cref{prop:EvenResolution} again, and so the map must be an equivalence.
		
		To show the claim, first observe that the homotopy groups of $\widehat{M}_p/\beta\simeq \HH(R/A\otimes_\IZ P^i)_p^\complete$ are concentrated in even degrees and $p$-completely flat over $R$, where the $R$-module structure on $\pi_*(\widehat{M}_p/\beta)$ comes from the left $T$-module structure on $M$. We would like to show that the same conclusion is true for $\pi_*(\Hom_T(Q,\widehat{M}_p)/\beta)$ for any perfect even $T$-module $Q$; however, the seemingly obvious argument doesn't quite work, since $T$ is only $\IE_1$ and so there's no left $T$-module structure on $\Hom_T(-,-)$.
		
		To fix this, observe that $T\otimes_{\THH(\IS_P)}\IS_P$ has a right $\IS_P$-module structure commuting with the left $T$-module structure. Restricting to $\pi_0(\IS_P)\cong P$, we get a right homotopy action of $P$ on $T\otimes_{\THH(\IS_P)}\IS_P$. Since $\pi_0\THH(\IS_P)\cong P$ as well, this action agrees with the right action of $P$ on $T$ via $P\twoheadrightarrow R\cong \pi_0(T)$. In particular, the right homotopy action by $P$ factors through $R$. An analogous right homotopy action of $R$ can be constructed on $M\simeq T\otimes_{\THH(\IS_P)}\IS_P^{\otimes_{\smash{\THH(\IS_P)}}(i+1)}$, by picking our favourite tensor factor.
		
		This explains how $\pi_*\Hom_T(-,\widehat{M}_p)$ can be equipped with an $R$-module structure. With this $R$-module structure, it is still true that the homotopy groups $\pi_*(\widehat{M}_p/\beta)$ are concentrated in even degrees and are $p$-completely flat $R$-modules, because $\HH(R/A\otimes_\IZ P^i)$ is commutative. This allows us to deduce that the homotopy groups $\pi_*(\Hom_T(Q,\widehat{M}_p)/\beta)$ are also concetrated in even degrees and $p$-completely flat over $R$ for any perfect even left $T$-module~$Q$. Since $M$ is bounded below, we deduce that also $\Hom_T(Q,\widehat{M}_p)$ is even and its homotopy groups are $p$-completely flat $R$-modules. In particular, this is true for $\widehat{M}_p$ itself. By \cite[Lemma~\chref{4.7}]{BMS2}, the $p^\infty$-torsion in $\pi_{2*}\Hom_T(Q,\widehat{M}_p)$ is therefore bounded. In fact, there's a uniform bound $N$ that works for all $Q$, since we can use the same bound as for~$R$. 
		
		Let us use this to analyse the canonical map
		\begin{equation*}
			\gr_{\Pev}^* \widehat{M}_p\longrightarrow \limit_{\alpha\geqslant 0} \gr_{\Pev}^*(\widehat{M}_p/p^\alpha)\,.
		\end{equation*}
		By definition, $(\gr_{\Pev}^* \widehat{M}_p)/p^\alpha$ is given by the sections over $T$ of the sheafification of the spectra-valued presheaf $\Sigma^{2*}(\pi_{2*}\Hom_T(-,\widehat{M}_p))/p^\alpha$ on the perfect even site $\Perf_{\ev}(T)$. In homotopical degree~$2*$, this presheaf agrees with $\Sigma^{2*}\pi_{2*}\Hom_T(-,\widehat{M}_p/p^\alpha)$, but in homotopical degree~${2*}+1$ it has an extra torsion component. However, if we go from $\alpha+N$ to $\alpha$, then the transition map will vanish on the torsion component, because $N$ is a uniform bound for the $p^\infty$-torsion. Thus, in the limit we get an equivalence $\limit_{\alpha\geqslant 0}(\gr_{\Pev}^*\widehat{M}_p)/p^\alpha\simeq \limit_{\alpha\geqslant 0}\gr_{\Pev}^*(\widehat{M}_p/p^\alpha)$. The left-hand side agrees with $\pi_{2*}(\widehat{M}_p)$ since $\widehat{M}_p$ is already even and $p$-complete. We conclude that
		\begin{equation*}
			\tau_{\geqslant 2\star}(\widehat{M}_p)\simeq \fil_{\Pev}^\star\widehat{M}_p\longrightarrow \limit_{\alpha\geqslant 0}\fil_{\Pev}^\star(\widehat{M}_p/p^\alpha)
		\end{equation*}
		is an equivalence up to completion of the filtration on the right-hand side.
		
		Since $\THH(\IS_P)\rightarrow \IS_P$ is eff, $M$ will be even flat, hence homologically even over $T$. Thus \cite[Remark~\chref{2.35}]{PerfectEvenFiltration} shows $\fil_{\Pev}^\star M\simeq \fil_{\Pev}^{\star-1/2}M$. By definition, $(\fil_{\Pev}^{\star-1/2}M)/p^\alpha$ is given by the sections over $T$ of the sheafification of the spectra-valued presheaf
		\begin{equation*}
			\cofib\bigl(p^\alpha\colon \tau_{\geqslant 2\star-1}\Hom_T(-,M)\longrightarrow \tau_{\geqslant 2\star-1}\Hom_T(-,M)\bigr)
		\end{equation*} 
		on $\Perf_{\ev}(T)$. In homotopical degrees $\geqslant 2\star$, this presheaf agrees with $\tau_{\geqslant 2\star}\Hom_T(-,M/p^\alpha)$, but in homotopical degree~${2\star}-1$ there might be an additional component that injects into $\Sigma^{2\star-1}\pi_{2\star-1}\Hom_T(-,M/p^\alpha)$. However, the transition maps from $\alpha+N$ to $\alpha$ will vanish on this additional component by our uniform $p^\infty$-torsion bound, so in the limit we get an equivalence
		\begin{equation*}
			\left(\fil_{\Pev}^\star M\right)_p^\complete\simeq \limit_{\alpha\geqslant 0}\left(\fil_{\Pev}^\star M\right)/p^\alpha\overset{\simeq}{\longrightarrow}\limit_{\alpha\geqslant 0}\fil_{\Pev}^\star(M/p^\alpha)\,.
		\end{equation*} 
		At this point we've shown that $\left(\fil_{\Pev}^\star M\right)_p^\complete\rightarrow \tau_{\geqslant 2\star}(\widehat{M}_p)$ is an equivalence up to completion. But both sides are already complete: The right-hand side by inspection, the left-hand side by \cite[Theorem~\chref{8.3}(2)]{PerfectEvenFiltration}. So we're done.
	\end{proof}
	\begin{rem}
		The argument can be adapted to any even ring spectrum~$k$ such that $\pi_*(k)$ is a graded polynomial ring over $\IZ$ with finitely many generators in each given degree. In particular, it works for $k=\MU$. We don't know to what extent \cref{cor:SolidvsPstragowski} is true in complete generality. At the very least, one would need some finiteness assumption on $k$; otherwise $k_A$ and $k_R$ won't be $p$-complete in general. 
	\end{rem}

	\newpage
	
	\section{\texorpdfstring{$q$}{q}-de Rham cohomology and \texorpdfstring{$\TC^-$}{TC-} over \texorpdfstring{$\ku$}{ku}}\label{sec:qdeRhamku}
	
	In this section we'll finally formulate and prove the precise relationship between the even filtration on $\TC^-(\ku_R/\ku_A)$ and the $q$-de Rham complex $\qdeRham_{R/A}$.
	
	Before we begin, we remind the reader of our convention from \cref{par:Notation} to regard all ($q$-)de Rham complexes or cotangent complexes relative to a $p$-complete ring (such as $\qdeRham_{R/A}$) as implicitly $p$-completed.
	
	\subsection{The \texorpdfstring{$p$}{p}-complete comparison (case \texorpdfstring{$p>2$}{p>2})}\label{subsec:qdeRhamkupComplete}
	
	We fix a prime~$p>2$. We'll also continue to fix rings $A$ and $R$ satisfying the assumptions from~\cref{par:AssumptionsOnA} and~\cref{par:AssumptionsOnR}.
	
	Our main tool will be a striking result of Devalapurkar. To formulate this result, let us regard $\IZ_p[\zeta_p]$ as a $\IS_p\qpower$-algebra via $q\mapsto \zeta_p$. We let $S^1$ act on $\THH(\IZ_p[\zeta_p]/\IS_p\qpower)_p^\complete$ in the usual way and let $\IZ_p^\times$ act via~\cref{par:AdamsAction}. We let $S^1$ act on $\ku^{\t C_p}$ via the residual $S^1\simeq S^1/C_p$-action and let $\IZ_p^\times$ act via the Adams operations on $\ku_p^\complete$.
	
	\begin{thm}[Devalapurkar {\cite[Theorem~\chref{6.4.1}]{DevalapurkarSpherochromatism}}]\label{thm:kutCp}
		For primes $p > 2$, there exists an $S^1\times\IZ_p^\times$-equivariant equivalence of $\IE_\infty$-ring spectra
		\begin{equation*}
			\THH\bigl(\IZ_p[\zeta_p]/\IS_p\qpower\bigr)_p^\complete\overset{\simeq}{\longrightarrow} \tau_{\geqslant 0}\left(\ku^{\t C_p}\right)\,.
		\end{equation*}
		Moreover, this equivalence fits into a commutative diagram of $S^1$-equivariant $\IE_\infty$-algebras
		\begin{equation*}
			\begin{tikzcd}
				\THH\bigl(\IZ_p[\zeta_p]/\IS_p\qpower\bigr)_p^\complete\rar["\simeq"]\dar &\tau_{\geqslant 0}(\ku^{\t C_p})\dar\\
				\THH(\IF_p)\rar["\simeq"] & \tau_{\geqslant 0}(\IZ_p^{\t C_p})
			\end{tikzcd}
		\end{equation*}
		where the bottom row is the equivalence from \cite[Corollary~\textup{\chref{4.4.13}[IV.4.13]}]{NikolausScholze}.
	\end{thm}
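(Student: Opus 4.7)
The plan is to construct an $S^1 \times \IZ_p^\times$-equivariant map of $\IE_\infty$-ring spectra in one direction and then check it is an equivalence by a comparison of homotopy groups, reducing the $S^1$-equivariance and the $\IE_\infty$-structure to obstruction-theoretic arguments that become tractable thanks to the prior structure theorems on $\ku^{\t C_p}$.

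First I would analyse both sides. Since $\IZ_p[\zeta_p]\simeq \IS_p\qpower/\Phi_p(q)$ is a regular quotient, the cotangent complex $\L_{\IZ_p[\zeta_p]/\IS_p\qpower}$ sits in homological degree~$1$, freely generated by $\Phi_p(q)$, and the $p$-completed HKR filtration on $\THH(\IZ_p[\zeta_p]/\IS_p\qpower)$ collapses. Thus the homotopy groups are a divided power algebra on a class in degree~$2$, with the $S^1$-action determined (up to the usual twist) by this class. On the other side, the Tate spectral sequence for $\ku^{\t S^1}$ together with the residual $S^1$-action on $\ku^{\t C_p}$ identifies $\pi_*(\ku^{\t C_p})$ as the appropriate $(q-1)$-localization of $\ku^{\h S^1}$ modulo the $q$-analogue $[p]_q=\Phi_p(q)$; truncating produces a matching homotopy type. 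The two descriptions agree on $\pi_0$ via $\zeta_p\mapsto q$, which is the content of identifying $\IZ_p[\zeta_p]$ with $\IZ_p\qpower/\Phi_p(q)$ inside $\pi_0(\ku^{\t C_p})$.

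Next, I would construct the comparison map. The cyclotomic Frobenius $\phi_{\t C_p}\colon \ku\rightarrow \ku^{\t C_p}$ (established earlier in Devalapurkar's thesis as an $S^1$-equivariant map of $\IE_\infty$-rings) provides a canonical $\IE_\infty$-$\IS_p\qpower$-algebra structure on $\tau_{\geqslant 0}(\ku^{\t C_p})$, with $q\in \pi_0$ corresponding to the Bott class twisted by the residual action. Since $\pi_0(\ku^{\t C_p})\cong \IZ_p\qpower/\Phi_p(q)$, the element $\Phi_p(q)$ is null on~$\pi_0$, so by the Hopkins--Mahowald style obstruction theory for regular quotients (Angeltveit's method in the $\IE_1$-case, refined to $\IE_\infty$ via the $2$-periodicity and the good convergence of the Tate spectral sequence), the map $\IS_p\qpower\rightarrow \tau_{\geqslant 0}(\ku^{\t C_p})$ factors through an $S^1$-equivariant $\IE_\infty$-algebra map $\IZ_p[\zeta_p]\rightarrow \tau_{\geqslant 0}(\ku^{\t C_p})$. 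By the universal property of $\THH$, this yields the desired $S^1$-equivariant $\IE_\infty$-map
\begin{equation*}
	\THH\bigl(\IZ_p[\zeta_p]/\IS_p\qpower\bigr)_p^\complete\longrightarrow \tau_{\geqslant 0}(\ku^{\t C_p})\,.
\end{equation*}

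To verify this is an equivalence, I would compare both sides modulo the generator $\Phi_p(q)$ of the cotangent complex; the Tate spectral sequence shows that modding out the analogous class on the right recovers $\THH(\IF_p)\simeq \tau_{\geqslant 0}(\IZ_p^{\t C_p})$, which reduces the claim to the Nikolaus--Scholze comparison. Devissage along the HKR filtration (equivalently, along the $[p]_q$-adic filtration on the Tate side) then finishes the equivalence. The compatibility with the diagram involving $\THH(\IF_p)$ and $\IZ_p^{\t C_p}$ is immediate from functoriality in the base and the construction of the map. Finally, the $\IZ_p^\times$-equivariance is obtained by checking that the Adams operations $\psi^a$ on $\ku$ restrict to the natural $\IZ_p^\times$-action on $\IZ_p[\zeta_p]\subseteq \pi_0(\ku^{\t C_p})$ via $\zeta_p\mapsto \zeta_p^a$; this is a direct consequence of $\psi^a$ acting on the standard representation, and can be transported through the construction using that all intermediate objects are functorial in Adams operations.

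The main obstacle is the $\IE_\infty$-lift of the map $\IZ_p[\zeta_p]\rightarrow \tau_{\geqslant 0}(\ku^{\t C_p})$ together with its $S^1$-equivariance: on underlying spectra and in the homotopy category this is essentially a computation, but promoting to an $\IE_\infty$-algebra in $S^1$-spectra requires a non-trivial obstruction theory argument that exploits the even/periodic nature of $\ku^{\t C_p}$ to kill the relevant Goerss--Hopkins obstructions. This is precisely the point where the restriction $p>2$ enters, as the $p=2$ case has additional nontrivial obstructions due to the failure of certain~$\pi_*(\ku)$-modules to be projective at~$2$.
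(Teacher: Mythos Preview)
The paper does not prove this theorem; it is quoted as a black box from Devalapurkar's thesis. What the paper \emph{does} prove is the $\IE_1$-analogue (\cref{thm:kutCpE1}, due to Nikolaus), and your outline matches that argument closely: construct an $S^1$-equivariant $\IE_\infty$-map $\IS\qpower\rightarrow \ku^{\t C_p}$ via the strict element $q$, observe that $\IZ_p[\zeta_p]$ is the free $(q-1)$-complete $\IE_2$-$\IS_p\qpower$-algebra with $[p]_q=0$ (using evenness of $\ku^{\t C_p}$ to get a unique nullhomotopy), obtain an $\IE_2$-map $\IZ_p[\zeta_p]\rightarrow \ku^{\t C_p}$, and then use the universal property of $\THH$ together with the reduction modulo $(q-1)$ to $\THH(\IF_p)\simeq \tau_{\geqslant 0}(\IZ_p^{\t C_p})$ to conclude. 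This yields an $S^1$-equivariant $\IE_1$-equivalence for all primes~$p$.

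The gap in your proposal is exactly the step you flag as the ``main obstacle'': upgrading the map $\IZ_p[\zeta_p]\rightarrow \tau_{\geqslant 0}(\ku^{\t C_p})$ from $\IE_2$ to $\IE_\infty$ in an $S^1$-equivariant way. The free-$\IE_2$-algebra argument cannot be promoted naively, since $\IZ_p[\zeta_p]$ is \emph{not} the free $\IE_\infty$-$\IS_p\qpower$-algebra with $[p]_q=0$. Your appeal to ``Hopkins--Mahowald style obstruction theory'' and Goerss--Hopkins obstructions is a gesture in the right direction, but it is not a proof: the relevant obstruction groups are not obviously zero just from evenness or $2$-periodicity, and making this precise is the substance of Devalapurkar's argument. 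Your diagnosis of why $p=2$ fails is also off---it is not about projectivity of $\pi_*(\ku)$-modules. In short, your sketch reproduces the $\IE_1$-argument the paper already contains, correctly identifies where the difficulty lies, but does not supply the missing $\IE_\infty$-input that Devalapurkar's theorem provides.
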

	\begin{rem}
		\cref{thm:kutCp} was conjectured for all~$p$ by Lurie and Nikolaus. By an unpublished result of Nikolaus, \cref{thm:kutCp} is true as an $S^1$-equivariant $\IE_1$-equivalence for all~$p$ (see \cref{thm:kutCpE1} below). As far as the author is aware, constructing an $S^1$-equivariant $\IE_\infty$-equivalence case~$p=2$ is still open.
	\end{rem}
	\begin{rem}\label{rem:kutCpNotation}
		If we also let $q\in\pi_0(\ku^{\h S^1})\cong \ku^0(\B S^1)$ denote the class corresponding to the standard representation of $S^1$ on $\IC$, then the map from \cref{thm:kutCp} sends $q\mapsto q$. 
		
		Moreover, there's a unique complex orientation $t\in\pi_{-2}(\ku^{\h S^1})$ satisfying $q-1=\beta t$. In the following, we'll frequently use $\pi_*(\ku^{\h S^1})\cong \IZ[\beta]\llbracket t\rrbracket$, and we'll identify this graded $\IZ[t]$-algebra with the filtered ring $(q-1)^\star \IZ\qpower$, where $(q-1)$ in degree~$1$ corresponds to $\beta$.
	\end{rem}

	
	\begin{numpar}[The comparison map I.]\label{par:kuComparisonI}
		We import the equivalence from \cref{thm:kutCp} into the solid world via \cref{conv:SolidConventions}. Using this equivalence, we can construct an $S^1$-equivariant map of solid condensed spectra as follows:
		\begin{equation*}
			\begin{tikzcd}
				\bigl(\THH_\solid(\IS_R/\IS_A)\soltimes_{\IS_A,\phi_{\t C_p}}\IS_A\bigr)\soltimes\THH_\solid\bigl(\IZ_p[\zeta_p]/\IS_p\qpower\bigr)\dar["\simeq"']\rar& \THH_\solid(\IS_R/\IS_A)^{\t C_p}\soltimes \ku^{\t C_p}\dar\\
				\THH_\solid\left((R\lotimes_{A,\phi}A)_p^\complete[\zeta_p]/\IS_A\qpower\right)\rar[dashed] & \THH_\solid(\ku_R/\ku_A)^{\t C_p}
			\end{tikzcd}
		\end{equation*}
		The map in the top row is given by $\phi_{p/\IS_A}\soltimes(\labelcref{thm:kutCp})$, where $\phi_{p/\IS_A}$ denotes the relative cyclotomic Frobenius on $\THH(-/\IS_A)$. The right vertical arrow comes from lax symmetric monoidality of $(-)^{\t C_p}$. The left vertical arrow is an equivalence since $\THH$ is symmetric monoidal. So the dashed bottom horizontal arrow exists.
		
		Now $\THH_\solid(\IZ_p[\zeta_p]/\IS_p\qpower)\rightarrow \ku^{\t C_p}$ sends the generator $u\in\pi_2$ to a unit. Indeed, this can be checked modulo~$(q-1)=\beta t$, so we reduce to the same question for $\THH(\IF_p)\rightarrow \IZ_p^{\t C_p}$. Under the equivalence $\IZ_p^{\t C_p}\simeq \THH(\IF_p)^{\t C_p}$, this map becomes the cyclotomic Frobenius for $\THH(\IF_p)$, which is well-known to send $u$ to a unit. The diagram above thus induces an $S^1$-equivariant map
		\begin{equation*}
			\psi_R\colon \THH_\solid\bigl(R^{(p)}[\zeta_p]/\IS_A\qpower\bigr)\bigl[\localise{u}\bigr]\longrightarrow \THH_\solid(\ku_R/\ku_A)^{\t C_p}\,,
		\end{equation*}
		where $R^{(p)}\coloneqq (R\lotimes_{A,\phi}A)_p^\complete$ as in \cref{par:qdeRhamViaTC-}. From $\psi_R$, we can now construct a filtered map
		\begin{equation*}
			\psi_R^\star\colon \fil_{\ev}^\star\TC_\solid^-\bigl(R^{(p)}[\zeta_p]/\IS_A\qpower\bigr)\bigl[\localise{u}\bigr]_{(p,q-1)}^\complete\longrightarrow \fil_{\ev}^\star \TP_\solid(\ku_R/\ku_A)\,,
		\end{equation*}
		where the filtration on the left-hand side agrees with the Bhatt--Morrow--Scholze filtration, the Hahn--Raksit--Wilson, and the Pstr\k{a}gowski--Raksit even filtration. To construct $\psi_R^\star$, we have to distinguish the two cases:
		\begin{alphanumerate}
			\item[\IE_1] In situation~\cref{par:AssumptionsOnR}\cref{enum:E1Lift}, we construct $\psi_R^\star$ as the limit
			\begin{equation*}
				\limit_{\IDelta}\tau_{\geqslant 2\star}\TC_\solid^-\bigl((R_\infty^\bullet)^{(p)}[\zeta_p]/\IS_A\qpower\bigr)\bigl[\localise{u}\bigr]_{(p,q-1)}^\complete\xrightarrow{(\labelcref{par:kuComparisonI})} \limit_{\IDelta}\tau_{\geqslant 2\star}\TP_\solid(\ku_{R_\infty^\bullet}/\ku_A)
			\end{equation*}
			The left-hand side is $\fil_{\ev}^\star\TC_\solid^-(R^{(p)}[\zeta_p]/\IS_A\qpower)[1/u]_{(p,q-1)}^\complete$ by quasi-syntomic descent for the Bhatt--Morrow--Scholze even filtration and the right-hand side is $\fil_{\ev}\TP_\solid(\ku_R/\ku_A)$ by definition.\label{enum:psiRE1}
			\item[\IE_2] In situation~\cref{par:AssumptionsOnR}\cref{enum:E2Lift}, we construct $\psi_R^\star$ by applying $(\fil_{\ev}^\star(-))^{\h (\IT/C_p)_{\ev}}$ to the map from \cref{par:kuComparisonI} and composing with a certain canonical map\label{enum:psiRE2}
			\begin{equation*}
				\left(\fil_{\ev}^\star\THH_\solid(\ku_R/\ku_A)^{\t C_p}\right)^{\h (\IT/C_p)_{\ev}}\longrightarrow \fil_{\ev,\t S^1}^\star\TP_\solid(\ku_R/\ku_A)\,,
			\end{equation*}
			that will be constructed in \cref{par:kuComparisonII} below.
		\end{alphanumerate}		
	\end{numpar}
	\begin{numpar}[Even filtrations and the Tate construction.]\label{par:EvenFiltrationTate}
		To construct such a map, let more generally $T$ be a complex orientable solid $\IE_1$-ring spectrum and let $M$ be an $S^1$-equivariant left $T$-module such that $M^{\h C_p}$ is solid homologically even over $T^{\h C_p}$. Let $T_{\ev}^{\h S^1}\coloneqq \fil_{\ev}^\star T^{\h S^1}$ and $T_{\ev}^{\t S^1}\coloneqq \fil_{\ev}^\star T^{\t S^1}$. First observe that we have an equivalence
		\begin{equation*}
			T_{\ev}^{\t S^1}\soltimes_{T_{\ev}^{\h S^1}}\fil_{\ev/\smash{T^{\h C_p}}}^\star M^{\h C_p}\overset{\simeq}{\longrightarrow}\fil_{\ev/\smash{T^{\t C_p}}}^\star M^{\t C_p}
		\end{equation*}
		Indeed, choose a complex orientation $t\in\pi_{-2}(T^{\h S^1})$. It's well-known that $T^{\t S^1}\simeq T^{\h S^1}[t^{-1}]$ and $M^{\t C_p}\simeq M^{\h C_p}[t^{-1}]$. In particular, we see that both sides above are exhaustive filtrations on $M^{\t C_p}$, and so it's enough to check the equivalence on graded pieces. Since $t$ sits in even degree~$-2$, if we take any $\pi_*$-even envelope over $T^{\h S^1}$ or $T^{\h C_p}$ and invert~$t$, we get a $\pi_*$-even envelope over $T^{\t S^1}$ or $T^{\t C_p}$, respectively. Since the associated graded of the even filtration can be computed by successively taking $\pi_*$-even envelopes (see \cite[\S{\chref[section]{5}}]{PerfectEvenFiltration}; the solid analogue is discussed in \cref{par:CalculusOfSolidEvenness}), the claimed equivalence follows.
		
		Now let $(-)^{\h C_{p,\ev}}$ and $(-)^{\t C_{p,\ev}}$ denote the synthetic fixed point and Tate constructions from \cite[Definition~\chref{2.61}]{CyclotomicSynthetic}. We have canonical maps
		\begin{align*}
			\fil_{\ev/\smash{T^{\h C_p}}}^\star M^{\h C_p}&\longrightarrow \bigl(\fil_{\ev/T}^\star M\bigr)^{\h C_{p,\ev}}\,,\\
			T_{\ev}^{\t S^1}\soltimes_{T_{\ev}^{\h S^1}}\bigl(\fil_{\ev/T}^\star M\bigr)^{\h C_{p,\ev}}&\longrightarrow \bigl(\fil_{\ev/T}^\star M\bigr)^{\t C_{p,\ev}}\,.
		\end{align*}
		Composing these with the equivalence above, we get a canonical map
		\begin{equation*}
			\fil_{\ev/\smash{T^{\t C_p}}}^\star M^{\t C_p}\longrightarrow \bigl(\fil_{\ev/T}^\star M\bigr)^{\t C_{p,\ev}}\,.
		\end{equation*}
	\end{numpar}
	\begin{numpar}[The comparison map II.]\label{par:kuComparisonII}
		To construct the map that we need in \cref{par:kuComparisonI}\cref{enum:psiRE2}, we apply $(-)^{\h (\IT/C_p)_{\ev}}$ to the general construction from \cref{par:EvenFiltrationTate}, where $(-)^{\h (\IT/C_p)_{\ev}}$ denotes fixed points in the sense of \cite[\S{\chref[subsection]{2.3}}]{CyclotomicSynthetic} with respect to the even filtration on $\IS[S^1/C_p]$. It then remains to check that the canonical map
		\begin{equation*}
			\fil_{\ev}^\star\TP_\solid(\ku_R/\ku_A)\overset{\simeq}{\longrightarrow}\Bigl(\bigl(\fil_{\ev}^\star\THH_\solid(\ku_R/\ku_A)\bigr)^{\t C_{p,\ev}}\Bigr)^{\h (\IT/C_p)_{\ev}}
		\end{equation*}
		is an equivalence. To see this, we'll use the cosimplicial resolution from \cref{prop:EvenResolution}. A similar argument as in the proof of \cref{cor:TC-TPEvenResolution} can be used to verify that $(-)^{\t C_{p,\ev}}$ commutes with the cosimplicial limit. We can thus reduce to the case where $\THH_\solid(\ku_R/\ku_A)$ is already even. The desired result then follows from \cite[Lemma~\chref{2.75}(vi)]{CyclotomicSynthetic}, its analogue for $(-)_{\h C_{p,\ev}}$, and the classical fact that $(-)^{\t S^1}\simeq ((-)^{\t C_p})^{\h (S^1/C_p)}$ holds on bounded below $p$-complete spectra by \cite[Lemma~{\chref{2.4.2}[II.4.2]}]{NikolausScholze}.
	\end{numpar}
	
	\begin{numpar}[The $q$-Hodge filtration.]\label{par:qHodgeFiltrationTC-ku}
		We can pass to the $0$\textsuperscript{th} graded piece of our filtered comparison map $\psi_R^\star$ and use \cref{prop:qdeRhamTC-} to obtain a map
		\begin{equation*}
			\psi_R^0\colon \qdeRham_{R/A}\longrightarrow \gr_{\ev,\t S^1}^0\TP_\solid(\ku_R/\ku_A)\simeq \gr_{\ev,\t S^1}^0\TC_\solid^-(\ku_R/\ku_A)\,.
		\end{equation*}
		Now $\gr_{\ev,\h S^1}^*\TC_\solid^-(\ku_R/\ku_A)$ is a graded module over $\gr_{\ev,\h S^1}^*(\ku^{\h S^1})\simeq \Sigma^{2*}\pi_{2*}(\ku^{\h S^1})$. Hence the double shearing $\Sigma^{-2*}\gr_{\ev,\h S^1}^*\TC_\solid^-(\ku_R/\ku_A)$ is a graded module over $\IZ_p[\beta]\llbracket t\rrbracket$, with $\abs{\beta}=2$, $\abs{t}=-2$.%
		\footnote{Also note that since everything is $\IZ$-linear, the double shearing functor $\Sigma^{2*}$ is symmetric monoidal.}
		We can regard $t$ as a filtration parameter, so that the graded $\IZ_p[\beta]\llbracket t\rrbracket$-module $\Sigma^{-2*}\gr_{\ev,\h S^1}^*\TC_\solid^-(\ku_R/\ku_A)$ defines a filtration on $\gr_{\ev,\h S^1}^0\TC_\solid^-(\ku_R/\ku_A)$. We define the \emph{$q$-Hodge filtration} as the pullback
		\begin{equation*}
			\begin{tikzcd}
				\fil_{\qHodge}^\star\qdeRham_{R/A}\rar\dar\drar[pullback] & \Sigma^{-2*}\gr_{\ev,\h S^1}^*\TC_\solid^-(\ku_R/\ku_A)\dar\\
				\qdeRham_{R/A}\rar["\psi_R^0"] & \gr_{\ev,\h S^1}^0\TC_\solid^-(\ku_R/\ku_A)
			\end{tikzcd}
		\end{equation*}
		The name \emph{$q$-Hodge filtration} is justified by the fact that $\fil_{\qHodge}^\star\qdeRham_{R/A}$ is indeed a $q$-deformation of the Hodge filtration on $\deRham_{R/A}$. This is part of the main result of this subsection, which we can now formulate and prove. Here we identify the graded $\IZ[t]$-algebra $\IZ_p[\beta]\tpower$ with the $(q-1)$-adic filtration $(q-1)^\star \IZ_p\qpower$ as explained in \cref{rem:kutCpNotation}.
	\end{numpar}

	\begin{thm}\label{thm:qdeRhamkupComplete}
		Let $p>2$ be a prime and let $A$ and $R$ satisfy the assumptions from~\cref{par:AssumptionsOnA} and~\cref{par:AssumptionsOnR}. Then the map $\psi_R^0$ from \cref{par:kuComparisonII} induces an equivalence of graded $\IZ_p[\beta]\llbracket t\rrbracket$-modules
		\begin{equation*}
			\fil_{\qHodge}^\star \qhatdeRham_{R/A}\overset{\simeq}{\longrightarrow}\Sigma^{-2*}\gr_{\ev,\h S^1}^*\TC_\solid^-(\ku_R/\ku_A)\,,
		\end{equation*}
		where the left-hand side denotes the completion of the $q$-Hodge filtration $\fil_{\qHodge}^*\qdeRham_{R/A}$ from \cref{par:qHodgeFiltrationTC-ku}. Moreover, modulo $\beta$ and after rationalisation, we get equivalences
		\begin{align*}
			\fil_{\qHodge}^\star\qdeRham_{R/A}\lotimes_{\IZ_p[\beta]\tpower}\IZ_p\tpower&\overset{\simeq}{\longrightarrow} \fil_{\Hodge}^\star \deRham_{R/A}\,,\\
			\fil_{\qHodge}^\star\qdeRham_{R/A}\bigl[\localise{p}\bigr]_{(q-1)}^\complete&\overset{\simeq}{\longrightarrow}\fil_{(\Hodge,q-1)}^\star\deRham_{R/A}\bigl[\localise{p}\bigr]\qpower
		\end{align*}
		with the usual Hodge filtration and the combined Hodge and $(q-1)$-adic filtration, respectively.
	\end{thm}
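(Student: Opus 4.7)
The plan is to first establish the main filtered equivalence, and then deduce the mod-$\beta$ and rational statements as direct corollaries via the base-change results of \cref{subsec:BaseChange}.

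Since $\fil_{\qHodge}^\star\qdeRham_{R/A}$ is defined in \cref{par:qHodgeFiltrationTC-ku} as the pullback of $\Sigma^{-2*}\gr_{\ev,\h S^1}^*\TC_\solid^-(\ku_R/\ku_A)$ along $\psi_R^0$, and completion preserves pullbacks in the stable setting, the main equivalence reduces to two sub-claims: (a) that $\Sigma^{-2*}\gr_{\ev,\h S^1}^*\TC_\solid^-(\ku_R/\ku_A)$ is already complete when viewed as a filtered object via the $t$-action, and (b) that the underlying object $\gr_{\ev,\h S^1}^0\TC_\solid^-(\ku_R/\ku_A)$ is canonically identified with the completion $\qhatdeRham_{R/A}$, so that $\psi_R^0$ factors as $\qdeRham_{R/A}\to\qhatdeRham_{R/A}\overset{\simeq}{\to}\gr_{\ev,\h S^1}^0\TC_\solid^-(\ku_R/\ku_A)$. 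Sub-claim~(a) follows from the $t$-completeness of $\TC_\solid^-(\ku_R/\ku_A)$ as a $\ku^{\h S^1}$-module, propagated through the cosimplicial resolution of \cref{cor:TC-TPEvenResolution}. For sub-claim~(b), the source of $\psi_R^\star$ is by \cref{cor:BMSvsPstragowski} the Bhatt--Morrow--Scholze/Hahn--Raksit--Wilson even filtration on $\TC^-(R^{(p)}[\zeta_p]/\IS_A\qpower)[1/u]_{(p,q-1)}^\complete$, whose $0$th graded piece is $\qhatdeRham_{R/A}$ by \cref{prop:qdeRhamTC-}; hence the required factorisation is built in by construction, and the content of (b) is the equivalence $\qhatdeRham_{R/A}\overset{\simeq}{\to}\gr_{\ev,\h S^1}^0\TC_\solid^-(\ku_R/\ku_A)$.

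To verify this last equivalence I would check it modulo $\beta$ and after inverting $p$, concluding by a $(\beta,p)$-fracture argument using that both sides are $p$-complete and $(q-1)$-complete (and $(q-1)=\beta t$). The mod-$\beta$ reduction uses \cref{cor:EvenFiltrationTC-BaseChange} for $\ku\to\IZ$ to identify $\gr_{\ev,\h S^1}^0\TC_\solid^-(\ku_R/\ku_A)/\beta$ with $\gr_{\ev,\h S^1}^0\HC_\solid^-(R/A)$; by \cref{cor:HRWvsPstragowski} this is the Hahn--Raksit--Wilson graded piece, which by the Antieau--Hahn--Raksit--Wilson theorem cited in the introduction computes $\hatdeRham_{R/A}$. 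The rational reduction uses \cref{rem:EvenFiltrationkuRationalisation} together with the rational HKR theorem to identify the target with the $(q-1)$-completion of $\deRham_{R/A}[1/p]\qpower$, matching the rational Hodge-completion of $\qdeRham_{R/A}$. Once this is in place, the mod-$\beta$ and rational statements of the theorem follow by applying \cref{cor:EvenFiltrationTC-BaseChange} at the level of $\fil_{\ev,\h S^1}^\star$ and combining with these same classical inputs.

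The main obstacle is sub-claim~(b). A fracture argument on $\gr^0$ alone controls only the underlying object, so one must verify the compatibility of the BMS-style completion on the source of $\psi_R^\star$ with the newly defined $q$-Hodge completion at the level of filtrations, not merely at $\gr^0$. This is ultimately a statement about the interaction of the cyclotomic Frobenius with the Devalapurkar equivalence of \cref{thm:kutCp}, and the careful cosimplicial bookkeeping of \cref{sec:SolidEvenFiltrationku}---in particular \cref{prop:EvenResolution} and \cref{cor:Bifiltration}---is what propagates the identification from cosimplicial degree zero to the full filtered statement.
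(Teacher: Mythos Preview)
Your order of operations is inverted relative to the paper, and the inversion hides the actual content. The paper does \emph{not} establish the main filtered equivalence first and then read off the mod-$\beta$ and rational statements; it proceeds in the opposite order. The substantive work is identifying what the map $\psi_R^0$ becomes modulo $(q-1)=\beta t$ and after rationalisation---not just identifying source and target, but the map itself---and this is precisely the content of \cref{lem:psiRmodq-1} and \cref{lem:psiRrational}. Once those lemmas are in hand, the mod-$\beta$ and rational filtered equivalences drop out immediately from the pullback definition of $\fil_{\qHodge}^\star\qdeRham_{R/A}$ together with the general principle that every filtration is the pullback of its completion. The main equivalence is then checked last, and only modulo~$\beta$, where it reduces to the already-established $\fil_{\Hodge}^\star\hatdeRham_{R/A}\simeq\Sigma^{-2*}\gr_{\ev}^*\HC_\solid^-(R/A)$.

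Your fracture sketch for sub-claim~(b) only verifies that source and target of $\psi_R^0$ agree in each regime; you correctly flag that this does not control the map, but you do not indicate how to close the gap. The identification of $\ov\psi_R^0$ with the Hodge completion map $\deRham_{R/A}\to\hatdeRham_{R/A}$ is not a matter of bookkeeping with \cref{prop:EvenResolution} or \cref{cor:Bifiltration}: it requires the Devalapurkar--Raksit equivalence $\THH(\IZ_p)_p^\complete\simeq\tau_{\geqslant 0}(j^{\t C_p})$ of \cref{thm:jtCp}, which allows the map $\ov\psi_R^{\h S^1}$ to be rebuilt without reference to the lift $\IS_R$, followed by a reduction to the semiperfect case where the Hodge completion map is pinned down by $\A_\inf$-linearity and continuity. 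The rational identification in \cref{lem:psiRrational} uses an entirely different mechanism: the $\IZ_p^\times$-equivariance of the Devalapurkar equivalence \cref{thm:kutCp} combined with a fixed-point argument for the Adams action. Finally, note that even granting the main (completed) equivalence, you cannot deduce the mod-$\beta$ and rational statements from it by base change: those concern the \emph{uncompleted} $\fil_{\qHodge}^\star\qdeRham_{R/A}$, and passing to the completion first loses exactly the information you need.
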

	\begin{rem}
		In case~\cref{par:AssumptionsOnR}\cref{enum:E2Lift}, all equivalences in \cref{thm:qdeRhamkupComplete} are canonically $\IE_1$-monoidal. In fact, if $\IS_R$ can be equipped with an $\IE_n$-algebra structure in $\IS_A$-modules for any $2\leqslant n\leqslant \infty$, then all equivalences will be canonically $\IE_{n-1}$-monoidal. To see this, observe that for any $T\in \Alg_{\IE_2}(\Mod_{\IS_A}(\Sp_\solid))$, we can use the same construction as in \cref{par:kuComparisonI} to produce an $S^1$-equivariant map
		\begin{equation*}
			\THH_\solid \bigl((T\soltimes_{\IS_A,\phi_{\t C_p}}\IS_A)\soltimes\IZ_p[\zeta_p]/\IS_A\qpower\bigr)\bigl[\localise{u}\bigr]\longrightarrow\THH_\solid (\ku\soltimes T/\ku_A)^{\t C_p}\,;
		\end{equation*}
		these maps assemble into a symmetric monoidal transformation of symmetric monoidal functors $\Alg_{\IE_2}(\Mod_{\IS_A}(\Sp_\solid))\rightarrow \Alg_{\IE_1}(\Sp_\solid^{\B S^1})$. If $\IS_R$ admits an $\IE_n$-algebra structure in $\IS_A$-modules, then $\IS_R\in \Alg_{\IE_{n-2}}(\Alg_{\IE_2}(\Mod_{\IS_A}(\Sp_\solid)))$ and so $\psi_R$ is $S^1$-equivariantly $\IE_{n-2}$ as a map in $\Alg_{\IE_1}(\Sp_\solid)$, hence $S^1$-equivariantly $\IE_{n-1}$ as a map in $\Sp_\solid$. The other parts of the construction clearly preserve $\IE_{n-1}$-monoidality.
		
		If we are in case~\cref{par:AssumptionsOnR}\cref{enum:E1Lift}, then a priori we only get $\IE_0$-monoidal structures. However,  we can a posteriori upgrade everything from $\IE_0$ to $\IE_\infty$ by applying \cref{thm:qdeRhamkuRegularQuotient} below to the given resolution $R\rightarrow R_\infty^\bullet$.		
	\end{rem}
	
	The main step in the proof of \cref{thm:qdeRhamkupComplete} is to describe $\psi_R^0$ modulo~$(q-1)$.
	
	\begin{lem}\label{lem:psiRmodq-1}
		The reduction modulo $(q-1)=\beta t$ of the map $\psi_R^0$ from \cref{par:kuComparisonII} agrees with the canonical Hodge completion map
		\begin{equation*}
			\deRham_{R/A}\longrightarrow \hatdeRham_{R/A}\simeq \gr_{\ev,\t S^1}^0\HP_\solid(R/A)\,.
		\end{equation*}
	\end{lem}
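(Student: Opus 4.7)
The plan is to use the commutative diagram in \cref{thm:kutCp} to identify $\psi_R^0\bmod(q-1)$ with a \enquote{classical} cyclotomic Frobenius construction, and then to invoke the Antieau--Hahn--Raksit--Wilson theorem cited in the introduction.

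The first step is to reduce the construction of $\psi_R$ in \cref{par:kuComparisonI} modulo $(q-1)$. By the commutative square in \cref{thm:kutCp}, setting $\zeta_p = 1$ on the $\THH$-side (which corresponds to $q-1=0$ under Devalapurkar's equivalence) is intertwined with the map $\ku^{\t C_p}\rightarrow \IZ_p^{\t C_p}$ induced by $\ku/\beta\simeq\IZ_p$, yielding the Nikolaus--Scholze equivalence $\THH(\IF_p)\simeq\tau_{\geqslant 0}(\IZ_p^{\t C_p})$. Substituting this into the definition of $\psi_R$ exhibits $\psi_R\bmod(q-1)$ as the classical Frobenius comparison map
\begin{equation*}
	\HH_\solid\bigl(R^{(p)}\!/p\,/\,A\bigr)\bigl[\localise{u}\bigr]\longrightarrow \HH_\solid(R/A)^{\t C_p}
\end{equation*}
arising from the cyclotomic Frobenius on $\HH(R/A)$ and from Nikolaus--Scholze.

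The second step is to propagate this identification through the filtered constructions of \cref{par:kuComparisonII}. Here I would use the base change results \cref{cor:EvenFiltrationTC-BaseChange} and \cref{cor:TC-toTPbaseChange} together with the compatibility of the filtered Tate $(-)^{\t C_{p,\ev}}$ and filtered homotopy fixed points $(-)^{\h(\IT/C_p)_{\ev}}$ with the base change $\ku\rightarrow\IZ$ on bounded below filtered objects. The outcome is that the mod-$(q-1)$ reduction of $\psi_R^\star$ agrees, at the $0$-th graded piece, with the $0$-th graded piece of the filtered classical Frobenius comparison, whose target is $\gr^0_{\ev,\t S^1}\HP_\solid(R/A)\simeq\hatdeRham_{R/A}$ by the HKR identification \cref{cor:HRWvsPstragowski}.

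Finally, the Antieau--Hahn--Raksit--Wilson theorem from the introduction implies that this $0$-th graded classical Frobenius comparison map is precisely the Hodge completion map $\deRham_{R/A}\rightarrow\hatdeRham_{R/A}$, which finishes the proof.

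The main obstacle is the compatibility required in the second step: one must check that the filtered Tate $(-)^{\t C_{p,\ev}}$ and the filtered homotopy fixed points $(-)^{\h(\IT/C_p)_{\ev}}$ commute with mod-$(q-1)$ reduction. Since $t\in\pi_{-2}(\ku^{\t S^1})$ is a unit, modding out by $(q-1)=\beta t$ on the Tate side agrees with the base change modulo $\beta$, i.e.\ with the passage from $\ku$ to $\IZ$; the required compatibility then follows from standard descent arguments analogous to those in the proof of \cref{cor:TC-TPEvenResolution}.
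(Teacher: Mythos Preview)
Your step~1 is correct and matches the paper's initial reduction: the commutative square in \cref{thm:kutCp} reduces $\psi_R\bmod(q-1)$ to a map $\ov\psi_R^{\h S^1}\colon\TC_\solid^-(\ov R^{(p)}/\IS_A)[1/u]_p^\complete\rightarrow\HP_\solid(R/A)$ built from the relative cyclotomic Frobenius $\phi_{p/\IS_A}$ on $\THH_\solid(\IS_R/\IS_A)$ and the Nikolaus--Scholze equivalence $\THH(\IF_p)\simeq\tau_{\geqslant 0}(\IZ_p^{\t C_p})$. (Note that this is the cyclotomic Frobenius on the \emph{spherical} $\THH(\IS_R/\IS_A)$, not on $\HH(R/A)$; this distinction matters below.) Step~2 is also fine.

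The gap is step~3. The Antieau--Hahn--Raksit--Wilson theorem from the introduction only identifies the \emph{objects}: it says $\gr_{\ev,\h S^1}^0\HC^-(R/A)\simeq\hatdeRham_{R/A}$. It asserts nothing about which map $\deRham_{R/A}\rightarrow\hatdeRham_{R/A}$ the construction $\ov\psi_R^0$ produces. That this particular map, built from a spherical cyclotomic Frobenius and the Nikolaus--Scholze identification, agrees with the canonical Hodge completion (rather than, say, some Frobenius-twisted variant) is exactly the content of the lemma and is not formal.

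The paper's argument is substantially heavier. First, it uses the Devalapurkar--Raksit theorem $\THH(\IZ_p)_p^\complete\simeq\tau_{\geqslant 0}(j^{\t C_p})$ together with $S^1$-nilpotence of $j\rightarrow\THH(\IZ_p)_p^\complete$ to show that $\ov\psi_R^{\h S^1}$ can be constructed for \emph{any} $p$-quasi-lci $A$-algebra $S$, without choosing a spherical lift. This makes the construction amenable to quasi-syntomic descent. Second, one descends to $S$ with $S/p$ relatively semiperfect, and after a further base change to perfect $A$ one has $\deRham_{S/A}\simeq\A_\crys$ explicitly. There the Hodge completion map $\A_\crys\rightarrow\widehat{\A}_\crys$ is \emph{uniquely} characterised by being a continuous $\A_\inf$-module map, and these two properties are verified for $\ov\psi_S^0$ directly from the construction.
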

	\begin{proof}[Proof \embrace{initial reduction}]
		In the following, we'll assume we're in case~\cref{par:AssumptionsOnR}\cref{enum:E2Lift}. In case~\cref{par:AssumptionsOnR}\cref{enum:E1Lift}, we repeat the arguments below instead for each term in the cosimplicial resolution $R_\infty^\bullet$, with the even filtration replaced by $\tau_{\geqslant 2\star}$.
		
		Put $\ov R\coloneqq R\lotimes_{\IZ_p}\IF_p$ and $\ov R^{(p)}\coloneqq \ov R\lotimes_{A,\phi}A$ for short. If we reduce the diagram from \cref{par:kuComparisonI} modulo $(q-1)=\beta t$, we obtain the following commutative diagram:
		\begin{equation*}
			\begin{tikzcd}
				\bigl(\THH_\solid(\IS_R/\IS_A)\soltimes_{\IS_A,\phi_{\t C_p}}\IS_A\bigr)\soltimes\THH(\IF_p)\dar["\simeq"']\rar& \THH_\solid(\IS_R/\IS_A)^{\t C_p}\soltimes \IZ_p^{\t C_p}\dar\\
				\THH_\solid\bigl(\ov R^{(p)}/\IS_A\bigr)\rar[dashed] & \HH_\solid(R/A)^{\t C_p}
			\end{tikzcd}
		\end{equation*}
		The top row is induced by the equivalence $\THH(\IF_p)\simeq \tau_{\geqslant 0}(\IZ_p^{\t C_p})$ from \cite[Corollary~\textup{\chref{4.4.13}[IV.4.13]}]{NikolausScholze} and the relative cyclotomic Frobenius $\phi_{p/\IS_A}$ for $\THH(-/\IS_A)$. After passing to homotopy $S^1$-fixed points, the bottom row of this diagram factors induces a map
		\begin{equation*}
			\ov\psi_R^{\h S^1}\colon \TC_\solid^-\bigl(\ov R^{(p)}/\IS_A\bigr)\bigl[\localise{u}\bigr]_p^\complete\longrightarrow \HP_\solid(R/A)\,.
		\end{equation*}
		The key observation is now that the map $\ov\psi_R^{\h S^1}$ can be constructed without the choice of a spherical lift $\IS_R$. Let us interrupt the proof for the moment and discuss how this works.
	\end{proof}
	\begin{numpar}[Constructing $\ov\psi_R^{\h S^1}$ without a spherical lift.]\label{par:ConstructionWithoutSphericalLift}
		Let us first assume that $A\cong \W(k)$ is the ring of Witt vectors over a perfect field of characteristic~$p$. In this case, Petrov and Vologodsky \cite{PetrovVologodsky} construct an equivalence $\TP_\solid(\ov R/\IS_A)\simeq \HP_\solid (R/A)$ without choosing any spherical lift $\IS_R$. We claim that this equivalence holds, in fact, for arbitrary~$A$, and that the composition with the relative cyclotomic Frobenius
		\begin{equation*}
			\phi_{p/\IS_A}^{\h S^1}\colon \TC_\solid^-\bigl(\ov R^{(p)}/\IS_A\bigr)\bigl[\localise{u}\bigr]_p^\complete\longrightarrow \TP(\ov R/\IS_A)
		\end{equation*}
		agrees with the map $\ov\psi_R^{\h S^1}$. Both of these claims follow from work of Devalapurkar and Raksit \cite{DevalapurkarRaksitTHH}: They give a new proof of the equivalence $\TP_\solid(\ov R/\IS_A)\simeq \HP_\solid (R/A)$, which works for arbitrary $A$, and from their proof it will be apparent that the maps indeed coincide. The new proof is based on the following result:
	\end{numpar}
	\begin{thm}[Devalapurkar--Raksit \cite{DevalapurkarRaksitTHH}]\label{thm:jtCp}
		Let $j\coloneqq \tau_{\geqslant 0}(\IS_{K(1)})$ be the connective cover of the $K(1)$-local sphere.
		\begin{alphanumerate}
			\item There is an equivalence $\THH(\IZ_p)_p^\complete\simeq \tau_{\geqslant 0}(j^{\t C_p})$ as well as a commutative diagram\label{enum:jtCp}
			\begin{equation*}
				\begin{tikzcd}
					j\rar\dar\drar[commutes,pos=0.33] & \THH(\IZ_p)_p^\complete\dar\dlar[dashed]\\
					\IZ_p\rar & \THH(\IF_p)
				\end{tikzcd}
			\end{equation*}
			of $S^1$-equivariant \embrace{in fact, cyclotomic} $\IE_\infty$-rings. Moreover, there exists a dashed diagonal arrow that makes the upper left but not the lower right triangle commute $S^1$-equivariantly.
			\item The horizontal maps $j\rightarrow \THH(\IZ_p)_p^\complete$ and $\IZ_p\rightarrow \THH(\IF_p)$ are $S^1$-nilpotent, that is, for any spectrum $X$ with $S^1$-action the maps $X\otimes j\rightarrow X\otimes\THH(\IZ_p)_p^\complete$ and $X\otimes \IZ_p\rightarrow X\otimes\THH(\IF_p)$ become equivalences upon $(-)^{\t S^1}$.\label{enum:S1nilpotent}
		\end{alphanumerate}
	\end{thm}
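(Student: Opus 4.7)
The plan has three prongs. For the equivalence in (a), the idea is to exploit the universal property of $\THH(\IZ_p)$ as the initial $\IE_\infty$-cyclotomic base over $\IZ_p$, together with known low-degree structure of $j^{\t C_p}$. Specifically, since $j$ is $p$-complete and connective with $\pi_0 j = \IZ_p$, the $K(1)$-local Tate fixed-point spectral sequence (combined with the classical height-$1$ Segal conjecture identifying $\IS_{K(1)}^{\t C_p}$ with something like $L_{K(1)}\IS$) allows one to compute that $\pi_0(j^{\t C_p}) \cong \IZ_p$ and that $\tau_{\geqslant 0}(j^{\t C_p})$ is a $p$-complete connective $\IE_\infty$-algebra with a canonical $S^1/C_p \simeq S^1$-action. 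The unit $\IZ_p \to \tau_{\geqslant 0}(j^{\t C_p})$ thus exists, and by the universal property of $\THH$ this extends uniquely to an $S^1$-equivariant $\IE_\infty$-map $\THH(\IZ_p)_p^\complete \to \tau_{\geqslant 0}(j^{\t C_p})$. To verify this is an equivalence, I would reduce mod $p$ via the fiber sequence $j \xrightarrow{p} j \to \IZ_p/p$ (up to Bott-style corrections) and invoke the Nikolaus--Scholze equivalence $\THH(\IF_p) \simeq \tau_{\geqslant 0}(\IZ_p^{\t C_p})$ as the base case, checking compatibility on homotopy groups using Bökstedt's calculation $\pi_* \THH(\IZ_p)_p^\complete \cong \IZ_p\langle u\rangle$ (divided-power algebra on a class in degree~$2$).

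For the dashed diagonal and the asymmetry of the triangles, the arrow $\THH(\IZ_p)_p^\complete \to \IZ_p$ is simply the augmentation of $\THH$ as an ordinary (non-cyclotomic) $\IE_\infty$-map. Commutativity of the upper-left triangle reduces to showing that $j \to \THH(\IZ_p)_p^\complete \to \IZ_p$ agrees with the canonical $j \to \IZ_p$; since both are $\IE_\infty$-maps from a connective $p$-complete ring to $\IZ_p$, they are determined by their effect on $\pi_0$, and both visibly induce the identity. The failure of the lower-right triangle to commute $S^1$-equivariantly should be pinned down via the Bökstedt class $u \in \pi_2\THH(\IF_p)$: the composite $\THH(\IZ_p)_p^\complete \to \IZ_p \to \THH(\IF_p)$ kills $u$ (since the augmentation factors through a discrete ring), but the direct map $\THH(\IZ_p)_p^\complete \to \THH(\IF_p)$ sends the generator of $\pi_2 \THH(\IZ_p)_p^\complete$ to a nonzero multiple of $u$, as one sees from the Bökstedt spectral sequence.

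For part (b), I would show that the cofiber $C$ of each map, regarded as an $S^1$-spectrum, lies in the thick subcategory of $S^1$-spectra generated by $\IS[S^1]$-modules; equivalently, $(X \otimes C)^{\t S^1}$ vanishes for every $X$. For $\IZ_p \to \THH(\IF_p)$, the cofiber has homotopy $\IF_p[u]/\IF_p$ concentrated in positive even degrees, and the Bökstedt element $u$ has a natural interpretation as the fundamental class of $BS^1$ (via the identification $\THH(\IF_p) \simeq \tau_{\geqslant 0}(\IZ_p^{\t C_p})$ and the $C_p$-transfer); this should produce an explicit $S^1$-equivariant filtration whose associated graded pieces are free $\IS[S^1]$-modules. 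For $j \to \THH(\IZ_p)_p^\complete$, after reducing modulo $p$ one lands in the previous case, and rationally both sides become $\IQ_p$, giving the desired vanishing after arithmetic fracture.

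The hard part will be making the lower-right triangle's non-commutativity and the $S^1$-nilpotence argument quantitative: both require identifying specific obstruction classes in $S^1$-equivariant mapping spectra and showing they are nonzero (respectively, showing that certain quotient $S^1$-spectra are $S^1$-induced). Concretely, the $S^1$-nilpotence of $\IZ_p \to \THH(\IF_p)$ is equivalent to the statement $\IZ_p^{\t S^1} \simeq \TP(\IF_p)$, and while this feels right numerically ($\pi_* \TP(\IF_p) \cong \IZ_p((t))$ matches $\pi_*\IZ_p^{\t S^1}$ up to subtleties with the trivial action), producing a genuine $S^1$-equivariant equivalence requires either a direct computation with the cyclotomic trace or, more elegantly, the $j$-linear lift in (a) combined with the vanishing of $(-)^{\t S^1}$ on $K(1)$-local spectra — which is precisely what the theorem is being used to establish.
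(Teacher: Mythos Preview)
The paper does not prove this theorem; it is quoted from Devalapurkar--Raksit \cite{DevalapurkarRaksitTHH} and used as a black box. There is therefore no proof in the paper to compare your proposal against.

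That said, your sketch contains a genuine error that would derail the argument. You assert $\pi_*\THH(\IZ_p)_p^\complete\cong \IZ_p\langle u\rangle$, a divided-power algebra on a degree-$2$ class. This is false: Bökstedt's computation gives $\pi_{2i-1}\THH(\IZ)\cong \IZ/i$ and $\pi_{2i}\THH(\IZ)=0$ for $i\geqslant 1$, so after $p$-completion $\THH(\IZ_p)_p^\complete$ has $p$-torsion in odd degrees and is certainly not even. Your plan to check the equivalence $\THH(\IZ_p)_p^\complete\simeq \tau_{\geqslant 0}(j^{\t C_p})$ by matching homotopy groups against a divided-power algebra therefore cannot work as stated; the actual homotopy of $\tau_{\geqslant 0}(j^{\t C_p})$ is substantially more intricate and involves the image-of-$J$ pattern.

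Separately, your argument for $S^1$-equivariant commutativity of the upper-left triangle (``both maps are $\IE_\infty$ and agree on $\pi_0$'') only shows non-equivariant agreement. The dashed arrow $\THH(\IZ_p)_p^\complete\rightarrow \IZ_p$ is the augmentation, which is \emph{not} $S^1$-equivariant for the trivial action on $\IZ_p$ versus the natural action on $\THH$; the theorem only claims the upper-left triangle commutes $S^1$-equivariantly, and establishing this requires controlling the $S^1$-action on the composite, which your $\pi_0$ argument does not address. Finally, your approach to (b) is essentially a restatement of what needs to be proved rather than a strategy: showing that the cofibre is built from induced $S^1$-cells is exactly the content of $S^1$-nilpotence, and the actual proof in \cite{DevalapurkarRaksitTHH} proceeds via explicit identifications with free loop spaces and Hopkins--Mahowald-type arguments rather than a direct filtration.
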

	The new proof of the equivalence $\TP_\solid(\ov R/\IS_A)\simeq \HP_\solid (R/A)$ in \cite[\S{\chref[subsection]{5}}]{DevalapurkarRaksitTHH} then proceeds as follows: By \cref{thm:jtCp}\cref{enum:jtCp} we have an $S^1$-equivariant commutative diagram
	\begin{equation*}
		\begin{tikzcd}
			\THH_\solid(R/\IS_A)\soltimes_j\IZ_p\rar["(\simeq)^{\t S^1}"]\dar["(\simeq)^{\t S^1}"'] & \THH_\solid(R/\IS_A)\soltimes_{\THH_\solid(\IZ_p)}\IZ_p\dar[dashed]\\
			\THH_\solid(R/\IS_A)\soltimes_{j}\THH(\IF_p)\rar["(\simeq)^{\t S^1}"] & \THH_\solid(R/\IS_A)\soltimes_{\THH_\solid(\IZ_p)}\THH(\IF_p)
		\end{tikzcd}
	\end{equation*}
	By \cref{thm:jtCp}\cref{enum:S1nilpotent}, the horizontal arrows and the left vertical arrow become equivalences after applying $(-)^{\t S^1}$.%
	\footnote{The functor $(-)^{\t S^1}$ factors through a certain category, denoted $\widehat{\Mod}\vphantom{d}_{W[S^1]}^{\t}$ by \cite{PetrovVologodsky} and $(\Mod_j^{\t S^1})_{(p,v_1)}^\complete$ by \cite{DevalapurkarSpherochromatism}; the $S^1$-nilpotence property from \cref{thm:jtCp}\cref{enum:S1nilpotent} ensures that $j\rightarrow \THH(\IZ_p)_p^\complete$ and $\IZ_p\rightarrow \THH(\IF_p)$ become equivalences in that category.}
	Hence after $(-)^{\t S^1}$ the dashed vertical arrow exists and it induces the desired equivalence $\HP_\solid(R/A)\simeq \TP_\solid(\ov R/\IS_A)$.
	
	Using $\THH_\solid(R/\IS_A)\simeq \THH_\solid(\IS_R/\IS_A)\soltimes \THH_\solid(\IZ_p)$, it is also apparent that the composition of this equivalence with the relative cyclotomic Frobenius $\phi_{p/\IS_A}^{\h S^1}$ agrees with the map $\ov\psi_R^{\h S^1}$, as we've claimed above.

	\begin{proof}[Proof of \cref{lem:psiRmodq-1} \embrace{end of proof}]
		The proof can now be finished as follows: Let $S$ be a $p$-torsion free $p$-complete $p$-quasi-lci $A$-algebra, put $\ov S\coloneqq S/p$ and $\ov S^{(p)}\coloneqq \ov S\lotimes_{A,\phi}A$. Via quasi-syntomic descent as in the proof of \cref{prop:qdeRhamTC-}, we can define a Bhatt--Morrow--Scholze-style even filtration $\fil_{\BMSev,\h S^1}^\star \TC_\solid^-(\ov S^{(p)}/\IS_A)[1/u]_p^\complete$ together with a map
		\begin{equation*}
			\ov\psi_S^\star\colon \fil_{\BMSev,\h S^1}^\star\TC_\solid^-\bigl(\ov S^{(p)}/\IS_A\bigr)\bigl[\localise{u}\bigr]_p^\complete\longrightarrow \fil_{\BMSev,\t S^1}^\star\HP_\solid(S/A)\,;
		\end{equation*}
		to construct this map, we use \cref{par:ConstructionWithoutSphericalLift} above. By passing to animations, we can also cover the case $S=R$.%
		\footnote{Observe that $\ov R^{(p)}$ might only be an animated ring.}
		A comparison with prismatic cohomology as in the proof of \cref{prop:qdeRhamTC-} shows that the $0$\textsuperscript{th} graded piece of $\ov\psi_S^\star$ has the form
		\begin{equation*}
			\ov\psi_S^0\colon \Prism_{\ov S^{(p)}/A}\simeq \deRham_{S/A}\longrightarrow \hatdeRham_{S/A}\,;
		\end{equation*}
		here we also use the crystalline comparison for prismatic cohomology \cite[Theorem~\chref{5.2}]{Prismatic} and the fact that the de Rham cohomology of $S$ agrees with the crystalline cohomology of its reduction $\ov S$. If we can show that $\ov\psi_S^0$ is the canonical Hodge completion map, then we'll be done, because from the comparison results in \cref{cor:HRWvsPstragowski,cor:BMSvsPstragowski} it's clear that in the case $S=R$ the map $\ov\psi_R^\star$ agrees with the reduction of $\psi_R^0$ modulo $(q-1)$.
		
		To show that $\ov\psi_S^0$ has the desired form, we can now use quasi-syntomic descent. In particular, we may reduce to a situation where $S/p$ is relatively semiperfect over $A$ (i.e.\ the relative Frobenius $S/p\otimes_{A,\phi}A\twoheadrightarrow S/p$ is surjective). Then everything is even, hence both sides of $\ov\psi_S^\star$ are double speed Whitehead filtrations on even spectra and $\ov\psi_S^0$ is a map between two static condensed rings. Whether this map is the correct one can be checked on the level of sets and hence after any $p$-completely faithfully flat base change. Let $A_\infty$ denote the $p$-completed colimit perfection of $A$. By our assumption~\cref{par:AssumptionsOnA}, $A\rightarrow A_\infty$ is $p$-completely faithfully flat, and it can be lifted to an $\IE_\infty$-map $\IS_A\rightarrow \IS_{A_\infty}$ (see \cref{lem:S_Ainfty} for example). Via base change along this map, we may reduce to the case where $A$ is perfect. Then $S/p$ is semiperfect on the nose and so $\A_\inf\coloneqq \W(S^\flat)\twoheadrightarrow S$ is surjective.
		
		Now everything becomes rather explicit: Let $J\coloneqq \ker(\A_\inf\rightarrow R)$ and let $\A_\crys\coloneqq D_{\A_\inf}(J)$ denote the $p$-completed PD-envelope of $J$. It's well-known%
		\footnote{Indeed, the first equivalence follows from the fact that $A$ and $\A_\inf$ being are perfect $\delta$-rings. For the second, note that $\deRham_{R/\A_\inf}$ is $p$-torsion free and contains divided powers for all $x\in J$, as can be seen from $\deRham_{\IZ/\IZ[x]}\rightarrow \deRham_{R/\A_\inf}$. Hence there's a map $\A_\crys\rightarrow \deRham_{R/\A_\inf}$, and this map is an equivalence modulo~$p$ by \cite[Proposition~\chref{8.12}]{BMS2}.}
		that
		\begin{equation*}
			\deRham_{S/A}\simeq \deRham_{R/\A_\inf}\simeq \A_\crys\,.
		\end{equation*}
		Since the un-$p$-completed PD-envelope $\A_\crys^\circ$ of $J\subseteq \A_\inf$ is contained in $\A_\inf[1/p]$, the Hodge completion map $\A_\crys\rightarrow\widehat{\A}_\crys$ is uniquely characterised by the following two properties:
		\begin{alphanumerate}\itshape
			\item It is a map of $\A_\inf$-modules.\label{enum:AinfModuleMap}
			\item It is continuous with respect to the natural topologies on either side.\label{enum:Continuous}
		\end{alphanumerate}
		It's clear from the construction that $\ov\psi_S^0$ satisfies~\cref{enum:Continuous} since it is a map of condensed rings. To see \cref{enum:AinfModuleMap}, just observe that in the construction of $\ov\psi_S^0$, instead of working with $\THH_\solid(-/\IS_A)$, we could have worked with $\THH_\solid(-/\IS_{\A_\inf})$, where $\IS_{\A_\inf}$ denotes the unique lift of the perfect $\delta$-ring $\A_\inf$ to a $p$-complete connective $\IE_\infty$-ring spectrum.
	\end{proof}
	Next let us describe $\psi_R^0$ after rationalisation.
	\begin{lem}\label{lem:psiRrational}
		The rationalisation of the map $\psi_R^0$ from \cref{par:kuComparisonII} fits into a commutative diagram
		\begin{equation*}
			\begin{tikzcd}
				\qdeRham_{R/A}\bigl[\localise{p}\bigr]_{(q-1)}^\complete\rar["\psi_{R,\IQ_p}^0"]\dar["\simeq"'] & \gr_{\ev,\h S^1}^0\TC_\solid^-(\ku_R/\ku_A)\bigl[\localise{p}\bigr]_{(q-1)}^\complete\dar["\simeq"]\\
				\deRham_{R/A}\bigl[\localise{p}\bigr]\qpower\rar & \deRham_{R/A}\bigl[\localise{p}\bigr]_{\Hodge}^\complete\qpower
			\end{tikzcd}
		\end{equation*}
		where the left vertical arrow is the usual equivalence for rationalised $q$-de Rham cohomology, the right vertical arrow is obtained via \cref{rem:EvenFiltrationkuRationalisation}, and the bottom arrow is the natural Hodge completion map.
	\end{lem}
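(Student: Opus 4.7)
The plan is to reduce the commutativity of the diagram to the computation of $\psi_R^0$ modulo $(q-1)$ that was carried out in \cref{lem:psiRmodq-1}. The key point is that after rationalisation and $(q-1)$-completion, both sides become $\IQ_p\qpower$-linear, hence the map is determined by its mod $(q-1)$ reduction.

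First I would unwind the two vertical equivalences. For the left-hand side, the usual interpretation of $\qdeRham_{R/A}[1/p]_{(q-1)}^\complete$ as $\deRham_{R/A}[1/p]\qpower$ follows from the fact that $q$-de Rham cohomology admits no non-trivial deformation rationally; in particular, the identification is $\IQ_p\qpower$-linear and its reduction modulo $(q-1)$ is the identity on $\deRham_{R/A}[1/p]$. For the right-hand side, \cref{rem:EvenFiltrationkuRationalisation} identifies $\bigl(\fil_{\ev,\h S^1}^\star\TC_\solid^-(\ku_R/\ku_A)\soltimes\IQ\bigr)_t^\complete$ with $\bigl(\fil_{\HRWev,\h S^1}^\star\HC_\solid^-(R/A)\soltimes_{\IZ_{\ev}^{\h S^1}}\IQ[\beta]_{\ev}^{\h S^1}\bigr)_t^\complete$. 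Taking the zeroth graded piece and using the Antieau--Hahn--Raksit--Wilson theorem for the rationalised HKR filtration on $\HC^-(R/A)[1/p]$ produces $\hatdeRham_{R/A}[1/p]\qpower$; the $\qpower$-factor comes precisely from $t$-completing after tensoring with $\IQ[\beta]_{\ev}^{\h S^1}$, under the identification of the graded $\IZ[t]$-algebra $\IZ[\beta]\tpower$ with $(q-1)^\star\IZ\qpower$ from \cref{rem:kutCpNotation}.

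Next I would observe that $\psi_{R,\IQ_p}^0$ is a map of $\IQ_p\qpower$-modules. This is because the construction of $\psi_R^\star$ in \cref{par:kuComparisonI} is $S^1$-equivariantly functorial in filtered modules over the even filtration of $\ku^{\h S^1}$, and after rationalisation and $(q-1)$-completion this filtered ring becomes $\IQ_p\qpower$ (graded as $(q-1)^\star\IQ_p\qpower$). Since both the domain and codomain are complete in the $(q-1)$-adic topology, the map $\psi_{R,\IQ_p}^0$ is uniquely determined by its reduction modulo $(q-1)=\beta t$.

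Finally, by \cref{lem:psiRmodq-1}, the reduction of $\psi_R^0$ modulo $(q-1)$ is the canonical Hodge completion map $\deRham_{R/A}\rightarrow\hatdeRham_{R/A}$. Rationalising and tensoring up to $\IQ_p\qpower$ via $\IQ_p\qpower$-linearity, $\psi_{R,\IQ_p}^0$ must therefore agree with the Hodge completion map $\deRham_{R/A}[1/p]\qpower\rightarrow \deRham_{R/A}[1/p]_{\Hodge}^\complete\qpower$, as required. The main obstacle is verifying the $\IQ_p\qpower$-linearity of $\psi_{R,\IQ_p}^0$ together with the compatibility of the two vertical identifications with the module structure; this is a bookkeeping exercise using that the comparison maps of \cref{par:kuComparisonI,par:kuComparisonII} are $S^1$-equivariant and monoidal with respect to the module structure over $\ku^{\h S^1}$, but it must be traced carefully through the base-change arguments of \cref{subsec:BaseChange}.
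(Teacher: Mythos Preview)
There is a genuine gap in your argument. The step ``since both the domain and codomain are complete in the $(q-1)$-adic topology, the map $\psi_{R,\IQ_p}^0$ is uniquely determined by its reduction modulo $(q-1)$'' is false as stated. A $\IQ_p\qpower$-linear map between $(q-1)$-complete modules is \emph{not} determined by its reduction modulo $(q-1)$: already for $M=N=\IQ_p$, the $\IQ_p\qpower$-linear endomorphisms of $\IQ_p\qpower$ are given by multiplication by arbitrary power series $\sum_{n\geqslant 0} a_n(q-1)^n$, all of which reduce to $a_0$ modulo $(q-1)$. What you would need is not merely $\IQ_p\qpower$-linearity but the stronger statement that $\psi_{R,\IQ_p}^0$ arises as the completed base change along $\IQ_p\rightarrow\IQ_p\qpower$ of some map $\deRham_{R/A}[1/p]\rightarrow\hatdeRham_{R/A}[1/p]$; that is precisely the content of the lemma, and it cannot be deduced from module-theoretic bookkeeping alone.

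The paper supplies the missing idea: $\IZ_p^\times$-equivariance. Both vertical equivalences and the map $\psi_{R,\IQ_p}^0$ are equivariant for the Adams action of $\IZ_p^\times$ (where $u\in\IZ_p^\times$ acts via $q\mapsto q^u$ and trivially on the de Rham complexes), using the $\IZ_p^\times$-equivariance built into \cref{thm:kutCp} and into the rational trivialisation of $q$-de Rham cohomology. One then observes that for any $M\in\Dd(\IQ_p)$ with trivial $\IZ_p^\times$-action there is a functorial equivalence $M\simeq M\qpower^{\h\IZ_p^\times}\lotimes_{\IZ_p^{\h\IZ_p^\times}}\IZ_p$. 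Applying this to the given $\IZ_p^\times$-equivariant map extracts an underlying map $\deRham_{R/A}[1/p]\rightarrow\hatdeRham_{R/A}[1/p]$, which by comparison with the mod-$(q-1)$ reduction and \cref{lem:psiRmodq-1} must be the Hodge completion; re-extending to $\IQ_p\qpower$ then yields the claim. The $\IZ_p^\times$-equivariance is exactly the extra rigidity that singles out, among all $\IQ_p\qpower$-linear maps with the correct mod-$(q-1)$ reduction, the one that is a base change from $\IQ_p$.
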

	\begin{proof}
		The following argument was suggested by Peter Scholze (any errors are due to the author). Observe that the usual rationalisation equivalence $\qdeRham_{R/A}[1/p]_{(q-1)}^\complete\simeq \deRham_{R/A}[1/p]\qpower$ is $\IZ_p^\times$-equivariant, where the action on the left-hand side is the one discussed in~\cref{par:AdamsAction} and on the right-hand side $u\in\IZ_p^\times$ acts via $q\mapsto q^u$. Since the equivalence from \cref{thm:kutCp} is also $\IZ_p^\times$-equivariant, we obtain a $\IZ_p^\times$-equivariant map
		\begin{equation*}
			\deRham_{R/A}\bigl[\localise{p}\bigr]\qpower\longrightarrow  \deRham_{R/A}\bigl[\localise{p}\bigr]_{\Hodge}^\complete\qpower\,,
		\end{equation*}
		which we must show to agree with the natural Hodge completion map. In general, if $M\in\Dd(\IQ_p)$ is equipped with the trivial action of $\IZ_p^\times$, there's a functorial equivalence
		\begin{equation*}
			M\overset{\simeq }{\longrightarrow}M\qpower^{\h \IZ_p^\times}\lotimes_{\IZ_{\smash{p}}^{\smash{\h \IZ_{p}^\times}\vphantom{h_p}}}\IZ_p\,.
		\end{equation*}
		Indeed, the fixed points $M\qpower^{\h\IZ_p^\times}$ would be $M\oplus \Sigma^{-1}M$; to kill the shifted copy of $M$, we take the tensor product along $\IZ_{\smash{p}}^{\smash{\h \IZ_{p}^\times}\vphantom{h_p}}\rightarrow\IZ_p$.
		
		Applying this in the situation at hand, we get a map $\deRham_{R/A}[1/p]\rightarrow \deRham_{R/A}[1/p]_{\Hodge}^\complete$. By comparison with the reduction modulo $(q-1)$ and using \cref{lem:psiRmodq-1}, we see that this map must be the canonical Hodge completion map. By applying $(-\lotimes_{\IQ_p}\IQ_p\qpower)_{(q-1)}^\complete$ to this map, we deduce that the original map must have been the natural Hodge completion as well.
	\end{proof}
	
	\begin{proof}[Proof of \cref{thm:qdeRhamkupComplete}]
		By definition of the filtration $\fil_{\qHodge}^\star\qdeRham_{R/A}$ (see \cref{par:qHodgeFiltrationTC-ku}), the base change $\fil_{\qHodge}^\star\qdeRham_{R/A}\lotimes_{\IZ_p[\beta]\tpower}\IZ_p\tpower$ is the pullback of the filtered module $\Sigma^{-2\star}\gr_{\ev,\h S^1}^\star\HC_\solid^-(R/A)$ along $\ov\psi_R^0\colon \deRham_{R/A}\rightarrow \gr_{\ev,\h S^1}^0\HC_\solid^-(R/A)$. The rationalisation $\fil_{\qHodge}^\star\qdeRham_{R/A}[1/p]_{(q-1)}^\complete$ can be described analogously. Using \cref{lem:psiRmodq-1,lem:psiRrational} as well as the fact that any filtration is the pullback of its completion (see \cref{par:Notation}), we deduce that
		\begin{align*}
			\fil_{\qHodge}^\star\qdeRham_{R/A} \lotimes_{\IZ_p[\beta]\tpower}\IZ_p\tpower&\overset{\simeq}{\longrightarrow} \fil_{\Hodge}^\star \deRham_{R/A}\,,\\
			\fil_{\qHodge}^\star \qdeRham_{R/A}\bigl[\localise{p}\bigr]_{(q-1)}^\complete&\overset{\simeq}{\longrightarrow}\fil_{(\Hodge,q-1)}^\star \deRham_{R/A}\bigl[\localise{p}\bigr]\qpower
		\end{align*}
		are indeed equivalences. Finally, whether
		\begin{equation*}
			\fil_{\qHodge}^\star\qhatdeRham_{R/A}\overset{\simeq}{\longrightarrow}\Sigma^{-2*}\gr_{\ev}^*\TC_\solid^-(\ku_R/\ku_A)
		\end{equation*}
		is an equivalence can be checked modulo~$\beta$. By the base change result that we've already shown, this follows from $\fil_{\Hodge}^\star \hatdeRham_{R/A} \simeq \Sigma^{-2*}\gr_{\ev}^*\HC_\solid^-(R/A)$.
	\end{proof}
	
	\subsection{The \texorpdfstring{$p$}{p}-complete comparison (case \texorpdfstring{$p=2$}{p=2})}
	
	In this subsection, we'll discuss how much of \cref{subsec:qdeRhamkupComplete} can be salvaged in the case $p=2$. We expect that \cref{thm:qdeRhamkupComplete} is still true for $p=2$, but our proof fails at several places. Here are the two main issues:
	\begin{alphanumerate}\itshape
		\item[\,!\,] The $S^1$-equivariant $\IE_\infty$-equivalence $\THH(\IZ_p[\zeta_p]/\IS_p\qpower)\simeq \tau_{\geqslant 0}(\ku^{\t C_p})$ from \cref{thm:kutCp} is still conjectural for $p=2$.\label{enum:Problemp=2A}
		\item[\,!!\,] \cref{thm:jtCp} is provably false for $p=2$.\label{enum:Problemp=2B}
	\end{alphanumerate}
	The objection in the second issue is essentially the discrepancy between Nygaard and divided power completion at $p=2$; see \cite[Remark~\chref{0.5.3}]{DevalapurkarRaksitTHH} for example. The goal of this subsection is to show that both issues only affect the case~\cref{par:AssumptionsOnR}\cref{enum:E2Lift}. 
	
	\begin{thm}\label{thm:qdeRhamku2Complete}
		If $R$ satisfies the assumptions from~\cref{par:AssumptionsOnR}\cref{enum:E1Lift}, then the conclusions of~\cref{thm:qdeRhamkupComplete} are true in the case $p=2$ as well.
	\end{thm}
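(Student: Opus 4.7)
The plan is to rerun the proof of \cref{thm:qdeRhamkupComplete} at $p=2$ under hypothesis~\cref{par:AssumptionsOnR}\cref{enum:E1Lift}, verifying that each step can be arranged to avoid both obstructions~\cref{enum:Problemp=2A} and~\cref{enum:Problemp=2B}.

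First, observe that the construction of $\psi_R^\star$ in~\cref{par:kuComparisonI}\cref{enum:psiRE1} proceeds termwise along the cosimplicial resolution $\IS_R \to \IS_{R_\infty^\bullet}$ and involves only $S^1$-equivariant maps of $\IE_1$-algebras (tensor products, the relative cyclotomic Frobenius on $\THH(-/\IS_A)$, and the $\ku^{\t C_p}$-identification). Thus, to circumvent~\cref{enum:Problemp=2A}, we only need the $\IE_1$-version of \cref{thm:kutCp}, which by Nikolaus's unpublished result (\cref{thm:kutCpE1} below) exists as an $S^1\times\IZ_p^\times$-equivariant $\IE_1$-equivalence at all primes. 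Once $\psi_R^\star$ is constructed, the definition of the $q$-Hodge filtration in \cref{par:qHodgeFiltrationTC-ku} and the base-change results of~\cref{subsec:BaseChange} carry over verbatim.

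The substantive step is the $p=2$ analog of \cref{lem:psiRmodq-1}: we must identify the reduction of $\ov\psi_R^{\h S^1}$ modulo $(q-1)$ with the Hodge completion map $\deRham_{R/A} \to \hatdeRham_{R/A}$. In the original argument, \cref{thm:jtCp} was invoked only to give an \emph{intrinsic}, lift-free description of $\ov\psi_S^{\h S^1}$ for \emph{arbitrary} $p$-complete $p$-quasi-lci $S$, which in turn fed a quasi-syntomic descent step reducing to the relatively semiperfect case. Under hypothesis~\cref{enum:E1Lift}, however, each term $R_\infty^i$ of the cosimplicial resolution already has $R_\infty^i/p$ relatively semiperfect over~$A$ by assumption, so we may skip the descent step entirely and work termwise: each $\ov\psi_{R_\infty^i}^{\h S^1}$ is defined directly from the spherical lift $\IS_{R_\infty^i}$, and neither \cref{thm:jtCp} nor the lift-free construction of \cref{par:ConstructionWithoutSphericalLift} is needed. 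I would then follow the end of the original proof of \cref{lem:psiRmodq-1}: reduce to the case where $A$ is a perfect $\delta$-ring by faithfully flat base change along an $\IE_\infty$-lift $\IS_A \to \IS_{A_\infty}$ (base-changing the $\IS_{R_\infty^i}$ along the same map to preserve spherical lifts), identify source and target with explicit subrings of $\A_\inf[1/p]$ and its Hodge completion respectively, and use that the Hodge completion map is uniquely characterized by $\A_\inf$-linearity together with continuity, both of which $\ov\psi_{R_\infty^i}^{\h S^1}$ satisfies by construction. Passing to the cosimplicial limit over~$\IDelta$ (using flat descent for Hodge-completed de Rham cohomology along $R \to R_\infty^\bullet$) then yields the identification for $R$.

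The rationalization lemma \cref{lem:psiRrational} only uses the $\IZ_p^\times$-equivariance of the comparison equivalence, which is present in the $\IE_1$-version, so it transfers without modification. Assembling these pieces, all three claimed equivalences of \cref{thm:qdeRhamkupComplete} follow at $p=2$ under hypothesis~\cref{enum:E1Lift}. The hard part will be the termwise semiperfect reduction: one must verify that the $\A_\inf$-linearity and continuity properties of $\ov\psi_{R_\infty^i}^{\h S^1}$ really are built into its construction from the spherical lift alone, and that the resulting termwise identifications assemble coherently into a map of cosimplicial objects---that is, that we do not secretly need the intrinsic description supplied by \cref{thm:jtCp} to glue the local computations together.
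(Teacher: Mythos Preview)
Your overall strategy matches the paper's: use the $\IE_1$-version of the $\ku^{\t C_p}$-identification (\cref{thm:kutCpE1}) to build $\psi_R^\star$ termwise along the given resolution, and exploit that each $R_\infty^i/p$ is already relatively semiperfect to bypass \cref{thm:jtCp} in the proof of \cref{lem:psiRmodq-1}. Those two points are exactly what the paper does.

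There is, however, a genuine gap in your treatment of the rationalisation step. You assert that \cref{lem:psiRrational} ``only uses the $\IZ_p^\times$-equivariance of the comparison equivalence, which is present in the $\IE_1$-version, so it transfers without modification.'' But \cref{thm:kutCpE1} only asserts an $S^1$-equivariant $\IE_1$-equivalence; it says nothing about $\IZ_p^\times$-equivariance, and the paper explicitly remarks that it does not know whether the $\IZ_p^\times$-equivariance argument still works at $p=2$. So this step does \emph{not} transfer without modification.

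The paper's fix is to replace the $\IZ_p^\times$-equivariance argument for \cref{lem:psiRrational} by a direct argument of the same flavour as the end of the proof of \cref{lem:psiRmodq-1}: since each $R_\infty^i/p$ is relatively semiperfect, $\TC_\solid^-(\ku_{R_\infty^\bullet}\soltimes\IQ_p/\ku_A\soltimes\IQ_p)$ is already even, so the question reduces to identifying a map of static condensed rings; after base changing to perfect $A$, one again characterises the Hodge completion map uniquely by $\A_\inf^\bullet\qpower$-linearity and continuity. You already have all the ingredients for this in your proposal for \cref{lem:psiRmodq-1}; you just need to rerun them for the rationalised version rather than appealing to the Adams action.
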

	\begin{rem}
		Note that a priori $\fil_{\qHodge}^\star\qdeRham_{R/A}$ will only be a graded $\IE_0$-algebra over $\IZ_p[\beta]\llbracket t\rrbracket$. A posteriori, we get an $\IE_\infty$-structure by applying \cref{thm:qdeRhamkuRegularQuotient} below to the given cosimplicial resolution $R\rightarrow R_\infty^\bullet$.
	\end{rem}
	
	To show \cref{thm:qdeRhamku2Complete}, let us first address the less serious issue \cref{enum:Problemp=2A} above.
	
	\begin{thm}[Nikolaus, unpublished]\label{thm:kutCpE1}
		For all primes~$p$ there exists an $S^1$-equivariant equivalence of $\IE_1$-ring spectra 
		\begin{equation*}
			\THH\bigl(\IZ_p[\zeta_p]/\IS_p\qpower\bigr)_p^\complete\overset{\simeq}{\longrightarrow} \tau_{\geqslant 0}\left(\ku^{\t C_p}\right)\,,
		\end{equation*}
		compatible with $\THH(\IF_p)\simeq\tau_{\geqslant 0}(\IZ_p^{\t C_p})$. For $p>2$, this equivalence agrees with the underlying $S^1$-equivariant $\IE_1$-equivalence of \cref{thm:kutCp}.
	\end{thm}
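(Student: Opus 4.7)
The plan is to construct the comparison map using the universal property of relative $\THH$, and then verify it is an equivalence by reducing modulo $(q-1)$, where both sides become identified via the classical Nikolaus--Scholze equivalence $\THH(\IF_p) \simeq \tau_{\geqslant 0}(\IZ_p^{\t C_p})$ from \cite[Corollary~\textup{\chref{4.4.13}[IV.4.13]}]{NikolausScholze}. For the construction, observe that the standard representation of $S^1$ on $\IC$ gives an $S^1$-equivariant $\IE_\infty$-ring map $\IS_p\qpower \to \ku^{\h S^1}$ sending $q$ to the class of this representation. Composing with the canonical map $\ku^{\h S^1} \to \ku^{\t C_p}$ produces an $S^1$-equivariant $\IE_\infty$-map $\IS_p\qpower \to \tau_{\geqslant 0}(\ku^{\t C_p})$ under which $q$ becomes a primitive $p$-th root of unity on $\pi_0$. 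Hence $[p]_q\coloneqq 1+q+\cdots+q^{p-1}$ acts as zero, and by Angeltveit's construction of $\IE_1$-quotients by regular elements (\cite[Corollary~\chref{3.2}]{AngeltveitEvenQuotients}, applied to $\IZ_p[\zeta_p]\simeq \IZ_p\qpower/[p]_q$) we obtain an $S^1$-equivariant $\IE_1$-$\IS_p\qpower$-algebra map
\begin{equation*}
g\colon \IZ_p[\zeta_p]\longrightarrow \tau_{\geqslant 0}\bigl(\ku^{\t C_p}\bigr)
\end{equation*}
with trivial $S^1$-action on the source. By the universal property of relative $\THH$ for $\IE_1$-algebras over an $\IE_2$-base, $g$ extends uniquely to an $S^1$-equivariant $\IE_1$-ring map
\begin{equation*}
f\colon \THH\bigl(\IZ_p[\zeta_p]/\IS_p\qpower\bigr)_p^\complete\longrightarrow \tau_{\geqslant 0}\bigl(\ku^{\t C_p}\bigr)\,.
\end{equation*}

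To see that $f$ is an equivalence, we reduce modulo $(q-1)$. Since relative $\THH$ commutes with base change along $\IS_p\qpower\to\IS_p$, the source becomes $\THH(\IZ_p[\zeta_p]/(\zeta_p-1)/\IS_p)\simeq \THH(\IF_p)$, using $\IZ_p[\zeta_p]/(\zeta_p-1)\cong \IF_p$. For the target, the $\IE_\infty$-map $\ku^{\t C_p}\to \IZ_p^{\t C_p}$ induced by $\ku\to \IZ_p$ sends $q$ to $1\in\pi_0(\IZ_p^{\h S^1})$, hence factors through an equivalence $\ku^{\t C_p}/(q-1)\simeq \IZ_p^{\t C_p}$ (as one checks by comparing homotopy groups, both being $\IF_p$-periodic of period~$2$). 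By construction, $f\bmod (q-1)$ is the Nikolaus--Scholze equivalence. Both sides of $f$ are $(q-1)$-complete: on the source because $(q-1)$ acts as the uniformizer $\zeta_p-1$ of $\IZ_p[\zeta_p]$, whose $(p-1)$-st power generates the same ideal as $p$; on the target by inspection of $\pi_*(\ku^{\t C_p})_p^\complete\cong \IZ_p[\zeta_p][\beta^{\pm 1}]$. Thus the mod-$(q-1)$ equivalence lifts to $f$.

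For the compatibility at $p>2$ with Devalapurkar's equivalence from \cref{thm:kutCp}, both constructions produce $f$ via the universal property of relative $\THH$ from the same $\pi_0$-datum $\IZ_p[\zeta_p]\hookrightarrow \pi_0(\ku^{\t C_p})$, so the underlying $\IE_1$-structures agree. The main obstacle is the construction of the $\IE_1$-map $g$ at $p=2$: Devalapurkar's $\IE_\infty$-obstruction-theoretic approach fails here, but Angeltveit's regular quotient construction yields the weaker $\IE_1$-structure directly at all primes.
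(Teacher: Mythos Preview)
Your construction of $g$ via Angeltveit does not work: Angeltveit's theorem equips the cofibre $R/x$ with \emph{some} $\IE_1$-$R$-algebra structure, but gives no universal property for mapping out of it. There is no reason the $\IS_p\qpower$-module map $\IZ_p[\zeta_p]\to\tau_{\geqslant 0}(\ku^{\t C_p})$ (coming from a nullhomotopy of $[p]_q$) should be an $\IE_1$-algebra map. More seriously, even granting an $\IE_1$-map $g$, the ``universal property of relative $\THH$ for $\IE_1$-algebras'' you invoke does not exist in the form you need: the universal property of $\THH(R/k)$ expresses it as the tensoring $R\otimes S^1$ in $\CAlg_k$ and therefore concerns only $\IE_\infty$-maps. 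An $\IE_1$-map $R\to B$ does not in general extend to an $S^1$-equivariant $\IE_1$-map $\THH(R/k)\to B$.

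The paper repairs both gaps with a single idea: $\IZ_p[\zeta_p]$ is the \emph{free} $(q-1)$-complete $\IE_2$-$\IS_p\qpower$-algebra with $[p]_q=0$ (checked modulo $(q-1)$, where it becomes the Hopkins--Mahowald theorem that $\IF_p$ is the free $\IE_2$-ring with $p=0$). Since $\ku^{\t C_p}$ is even, the nullhomotopy of $[p]_q$ is unique, yielding a canonical $\IE_2$-map $\IZ_p[\zeta_p]\to\ku^{\t C_p}$. Functoriality of $\THH(-/\IS_p\qpower)$ on $\IE_2$-algebras then gives an $S^1$-equivariant $\IE_1$-map $\THH(\IZ_p[\zeta_p]/\IS_p\qpower)\to\THH(\ku^{\t C_p}/\IS\qpower)$, and the genuine $\IE_\infty$ universal property of $\THH$ (applied to the $\IE_\infty$-$\IS\qpower$-algebra $\ku^{\t C_p}$ with its residual $S^1$-action) supplies the final augmentation to $\ku^{\t C_p}$. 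The uniqueness of this $\IE_2$-map, reduced modulo $(q-1)$, is also precisely what identifies the reduction with the Nikolaus--Scholze equivalence and what forces agreement with Devalapurkar's map for $p>2$; your ``by construction'' elides a real argument here.
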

	\begin{proof}
		We thank Sanath Devalapurkar for explaining the following argument to us; any errors are our own responsibility. Let us first construct an $S^1$-equivariant $\IE_\infty$-map $\IS\qpower\rightarrow \ku^{\t C_p}$, where the left-hand side receives the trivial $S^1$ action and the right-hand side the residual $S^1\simeq S^1/C_p$-action. It's enough to construct an $S^1$-equivariant $\IE_\infty$-map $\IS\qpower\rightarrow \ku^{\h C_p}$, or equivalently, an $\IE_\infty$-map $\IS\qpower\rightarrow (\ku^{\h C_p})^{\h(S^1/C_p)}\simeq \ku^{\h S^1}$. But the element $q\in \pi_0(\ku^{\h S^1})$ is is detected by an $\IE_\infty$-map $\IS[q]\rightarrow \ku^{\h S^1}$; see \cref{cor:qStrictElement}. This factors over the $(q-1)$-completion $\IS[q]\rightarrow \IS\qpower$ and so we obtain the desired map.
		
		Now let us construct an $\IE_2$-$\IS_p\qpower$-algebra map $\IZ_p[\zeta_p]\rightarrow \ku^{\t C_p}$. To this end, observe that $\IZ_p[\zeta_p]$ is the free $(q-1)$-complete $\IE_2$-$\IS_p\qpower$-algebra satisfying $[p]_q=0$. Indeed, since $[p]_q=0$ holds in $\IZ_p[\zeta_p]$, it certainly receives an $\IE_2$-$\IS_p\qpower$-map from the free guy. Whether this map is an equivalence can be checked modulo $(q-1)$, where it reduces to the classical fact that $\IF_p$ is the free $\IE_2$-algebra satisfying $p=0$. Since $[p]_q=0$ holds in $\pi_*(\ku^{\t C_p})\cong \pi_*(\ku^{\t S^1})/[p]_q$ and any nullhomotopy witnessing this must be unique by evenness, we get our desired $\IE_2$-$\IS_p\qpower$-algebra map $\IZ_p[\zeta_p]\rightarrow \ku^{\t C_p}$. It induces $S^1$-equivariant $\IE_1$-$\IS_p\qpower$-algebra maps
		\begin{equation*}
			\THH\bigl(\IZ_p[\zeta_p]/\IS_p\qpower\bigr)_p^\complete\longrightarrow \THH\bigl(\ku^{\t C_p}/\IS\qpower\bigr)_p^\complete\longrightarrow \ku^{\t C_p}\,,
		\end{equation*}
		where the arrow on the right comes from the universal property of $\THH(-/\IS\qpower)$ on $\IE_\infty$-$\IS\qpower$-algebras.%
		\footnote{In particular, this map $\THH(\ku^{\t C_p}/\IS_p\qpower)_p^\complete\rightarrow \ku^{\t C_p}$ is \emph{not} the usual augmentation, as the augmentation would only be $S^1$-equivariant for the trivial $S^1$-action on $\ku^{\t C_p}$.}
		Since the left-hand side is connective, the above composition factors through an $S^1$-equivariant $\IE_1$-$\IS_p\qpower$-algebra map $\THH(\IZ_p[\zeta_p]/\IS_p\qpower)_p^\complete\rightarrow \tau_{\geqslant 0}(\ku^{\t C_p})$.
		
		We wish to show that this map is an equivalence. This can be checked modulo $(q-1)$, so it will be enough to prove that modulo $(q-1)$ we obtain the equivalence $\THH(\IF_p)\simeq \tau_{\geqslant 0}(\IZ_p^{\t C_p})$ from \cite[Corollary~\textup{\chref{4.4.13}[IV.4.13]}]{NikolausScholze}. To this end, observe that by the universal properties of $\IZ_p[\zeta_p]$ and $\IF_p$ as free $\IE_2$-algebras, the $\IE_\infty$-map $\ku^{\t C_p}\rightarrow \IZ_p^{\t C_p}$ fits into a commutative diagram of $\IE_2$-algebras
		\begin{equation*}
			\begin{tikzcd}
				\IZ_p[\zeta_p]\rar\dar & \ku^{\t C_p}\dar\\
				\IF_p\rar & \IZ_p^{\t C_p}
			\end{tikzcd}
		\end{equation*}
		which on the level of underlying spectra exhibits the bottom row as the mod-$(q-1)$-reduction of the top row. Using the same recipe as above, the bottom row induces an $S^1$-equivariant maps of $\IE_1$-algebras
		\begin{equation*}
			\THH(\IF_p)\longrightarrow \THH\bigl(\IZ_p^{\t C_p}\bigr)\longrightarrow \IZ_p^{\t C_p}
		\end{equation*}
		After passing to connective covers, we get an $S^1$-equivariant $\IE_1$-map $\THH(\IF_p)\rightarrow \tau_{\geqslant 0}(\IZ_p^{\t C_p})$. We claim that this map necessarily agrees with the underlying $\IE_1$-map of the $S^1$-equivariant $\IE_\infty$-equivalence $\THH(\IF_p)\simeq \tau_{\geqslant 0}(\IZ^{\t C_p})$ from \cite[Corollary~\textup{\chref{4.4.13}[IV.4.13]}]{NikolausScholze}. Indeed, by the universal property of $\THH$ for $\IE_\infty$-ring spectra, this equivalence must also be given by a composition as above, where the first arrow is given by the non-equivariant $\IE_\infty$-map $\IF_p\rightarrow \IZ_p^{\t C_p}$ induced by the equivalence. But $\IF_p$ is the free $\IE_2$-algebra with $p=0$. Since $\IZ_p^{\t C_p}$ is even, any nullhomotopy witnessing $p=0$ is unique, and so there's a unique $\IE_2$-map $\IF_p\rightarrow \IZ_p^{\t C_p}$. This shows that the $S^1$-equivariant $\IE_1$-map $\THH(\IF_p)\rightarrow \tau_{\geqslant 0}(\IZ_p^{\t C_p})$ agrees with the equivalence $\THH(\IF_p)\simeq \tau_{\geqslant 0}(\IZ_p^{\t C_p})$ and concludes the proof that $\THH(\IZ_p[\zeta_p]/\IS_p\qpower)_p^\complete\rightarrow \tau_{\geqslant 0}(\ku^{\t C_p})$ is an equivalence.
		
		To show that for $p>2$ this equivalence agrees with the underlying $S^1$-equivariant $\IE_1$-equivalence of \cref{thm:kutCp}, we can use the same argument as above, noting that the $\IE_2$-$\IS\qpower$-algebra map $\IZ_p[\zeta_p]\rightarrow \ku^{\t C_p}$ is unique.
	\end{proof}
	
	We can now show \cref{thm:qdeRhamku2Complete}.
	
	\begin{proof}[Proof sketch of \cref{thm:qdeRhamku2Complete}]
		Let us indicate how to modify the arguments in order to avoid those that don't work for $p=2$. To construct the comparison map $\psi_R^0$ as an $\IE_0$-map, we don't need the full strength of \cref{thm:kutCp}, so \cref{thm:kutCpE1} will suffice. In the proof of \cref{lem:psiRmodq-1}, we don't need quasi-syntomic descent (and in particular, we don't need \cref{thm:jtCp}, so we circumvent the more serious issue~\cref{enum:Problemp=2B} above), since the given resolution $R\rightarrow R_\infty^\bullet$ places us already in a relatively semiperfect situation.
		
		It remains to explain how to adapt the proof of \cref{lem:psiRrational}. We don't know if the $\IZ_p^\times$-equivariance argument still works, but fortunately, we can replace it by a simple argument similar to the proof of \cref{lem:psiRmodq-1}. In the given resolution, $R_\infty^\bullet/p$ is already relatively semiperfect over~$A$ and so $\TC_\solid^-(\ku_{R_\infty^\bullet}\soltimes\IQ_p/\ku_A\soltimes\IQ_p)$ is already even. This reduces the question whether $\psi_{R,\IQ_p}^0$ is the correct map to a question that can be checked on underlying sets. In particular, we can base change again to a situation where $A$ is already perfect, so that $R_\infty^\bullet/p$ is semiperfect on the nose. If we put $\A_\inf^\bullet\coloneqq\W((R_\infty^\bullet)^\flat)$, $J^\bullet\coloneqq \ker(\A_\inf^\bullet\rightarrow R_\infty^\bullet)$, and let $\A_\crys^\bullet$ denote the $p$-completed PD-envelope of $J^\bullet$, then
		\begin{equation*}
			\qdeRham_{R_\infty^\bullet/A}\bigl[\localise{p}\bigr]_{(q-1)}^\complete\simeq \deRham_{R_\infty^\bullet/A}\bigl[\localise{p}\bigr]\qpower\simeq\A_\crys^\bullet\bigl[\localise{p}\bigr]\qpower\,.
		\end{equation*}
		So to prove \cref{lem:psiRrational} in this particular case, we must check whether a certain map $\A_\crys^\bullet[1/p]\qpower\rightarrow \A_\crys^\bullet[1/p]_{\Hodge}^\complete\qpower$ agrees with the canonical Hodge completion map. As in the proof of \cref{lem:psiRmodq-1}, the Hodge completion map is uniquely determined by:
		\begin{alphanumerate}\itshape
			\item It is a map of $\A_\inf^\bullet\qpower$-modules.\label{enum:Ainfq-1ModuleMap}
			\item It is continuous with respect to the natural topologies on either side.\label{enum:Continuousq-1}
		\end{alphanumerate}
		Condition~\cref{enum:Continuousq-1} is again clear from our condensed setup, whereas~\cref{enum:Ainfq-1ModuleMap} follows by working over $\IS_{\A_\inf^\bullet}$ rather than $\IS_A$. This finishes the proof.
	\end{proof}
	
	\subsection{The case of quasi-regular quotients}\label{subsec:QuasiRegular}
	
	Let us continue to fix a prime~$p$ (with $p=2$ allowed). Let $A$ be a $\delta$-ring as in \cref{par:AssumptionsOnA} and suppose that $R$ is an $A$-algebra satisfying \cref{par:AssumptionsOnR}\cref{enum:E1Lift} for the identical cover $\id\colon R\rightarrow R$. In other words, $R$ is a $p$-quasi-lci $A$-algebra with a lift to a $p$-complete connective $\IE_1$-algebra $\IS_R\in\Alg_{\IE_1}(\Mod_{\IS_A}(\Sp))$ such that $R/p$ is relatively semiperfect over~$A$. 
	
	These assumptions ensure that $\qdeRham_{R/A}$ and $\deRham_{R/A}$ are static rings and that the Hodge filtration $\fil_{\Hodge}^\star\deRham_{R/A}$ is a descending filtration by ideals (see \cite[Lemma~\chref{4.18}({\chref[Item]{44}[$b$]})]{qWittHabiro}). As it turns out, the $q$-Hodge filtration from \cref{par:qHodgeFiltrationTC-ku} has a very explicit description in this case.
	
	\begin{thm}\label{thm:qdeRhamkuRegularQuotient}
		Under the assumptions above, the $q$-Hodge filtration $\fil_{\qHodge}^\star\qdeRham_{R/A}$ is the descending filtration by ideals given by the \embrace{$1$-categorical} preimage  of the combined Hodge- and $(q-1)$-adic filtration under the rationalisation map $\qdeRham_{R/A}\rightarrow\deRham_{R/A}[1/p]\qpower$. In other words, there's a pullback
		\begin{equation*}
			\begin{tikzcd}
				\fil_{\qHodge}^\star \qdeRham_{R/A}\rar\dar\drar[pullback] & \fil_{(\Hodge,q-1)}^\star \deRham_{R/A}\bigl[\localise{p}\bigr]\qpower\dar\\
				\qdeRham_{R/A}\rar &  \deRham_{R/A}\bigl[\localise{p}\bigr]\qpower
			\end{tikzcd}
		\end{equation*}
		in the $1$-category of filtered $(q-1)^\star A\qpower$-modules. In particular, $\fil_{\qHodge}^\star \qdeRham_{R/A}$ is independent of the choice of the spherical $\IE_1$-lift $\IS_R$, and canonically a filtered $\IE_\infty$-algebra over the filtered ring $(q-1)^\star A\qpower$.
	\end{thm}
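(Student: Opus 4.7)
The strategy is to exploit that under the relatively semiperfect hypothesis every object in sight becomes static, so the defining $\infty$-categorical pullback of \cref{par:qHodgeFiltrationTC-ku} collapses to an ordinary pullback of abelian groups. The hypothesis that $R$ satisfies \cref{par:AssumptionsOnR}\cref{enum:E1Lift} for the identical cover means $R_\infty^\bullet$ is constantly $R$, so the ad-hoc even filtration on $\TC_\solid^-(\ku_R/\ku_A)$ of \cref{par:EvenFiltration} is simply the double-speed Whitehead filtration of an even spectrum, by \cref{rem:AdHocEvenFiltration}. Consequently each shifted graded piece $\Sigma^{-2i}\gr_{\ev,\h S^1}^i\TC_\solid^-(\ku_R/\ku_A)\simeq \pi_{2i}\TC_\solid^-(\ku_R/\ku_A)$ is a static $p$-complete abelian group, and via \cref{thm:qdeRhamkupComplete} for $p>2$ (or \cref{thm:qdeRhamku2Complete} at $p=2$) this identifies $\fil_{\qHodge}^\star\qhatdeRham_{R/A}$ with a filtration by static, $p$-torsion-free subgroups of $\qhatdeRham_{R/A}$, where $p$-torsion freeness is extracted by chasing the mod-$\beta$ identification with the classical Hodge filtration on $\deRham_{R/A}$ together with $p$-torsion freeness of $R$.

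Since $\qdeRham_{R/A}$ is itself static by \cite[Lemma~\chref{4.18}]{qWittHabiro}, the defining pullback
\[
\fil_{\qHodge}^i\qdeRham_{R/A} = \qdeRham_{R/A} \times_{\qhatdeRham_{R/A}} \fil_{\qHodge}^i\qhatdeRham_{R/A}
\]
is then computed $1$-categorically, realising $\fil_{\qHodge}^i\qdeRham_{R/A}$ as the ordinary preimage of the subgroup $\fil_{\qHodge}^i\qhatdeRham_{R/A}\subseteq \qhatdeRham_{R/A}$ under the completion map. To then identify this with the claimed pullback along the rationalisation, I would appeal to the rationalisation equivalence of \cref{thm:qdeRhamkupComplete}, which at $\fil^0$ yields $\qhatdeRham_{R/A}[1/p]\simeq \deRham_{R/A}[1/p]\qpower$ and on the filtration matches
\[
\fil_{\qHodge}^\star\qhatdeRham_{R/A}[1/p] \simeq \fil_{(\Hodge,q-1)}^\star\deRham_{R/A}[1/p]\qpower.
\]
Since $\qhatdeRham_{R/A}$ is $p$-torsion free, the map $\qhatdeRham_{R/A}\hookrightarrow \deRham_{R/A}[1/p]\qpower$ is injective, so $\fil_{\qHodge}^i\qhatdeRham_{R/A}$ is precisely the preimage of $\fil_{(\Hodge,q-1)}^i\deRham_{R/A}[1/p]\qpower$; composing with the completion $\qdeRham_{R/A}\to\qhatdeRham_{R/A}$ delivers the claimed pullback square in the $1$-category of filtered $(q-1)^\star A\qpower$-modules.

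The two "in particular" assertions then follow formally: the resulting preimage description does not reference $\IS_R$, proving independence of the spherical lift, and since $\fil_{(\Hodge,q-1)}^\star\deRham_{R/A}[1/p]\qpower$ is a multiplicative filtration over $(q-1)^\star A\qpower$, its preimage in the static commutative ring $\qdeRham_{R/A}$ is automatically a descending filtration by ideals and hence canonically a filtered $\IE_\infty$-algebra over $(q-1)^\star A\qpower$. The main technical obstacle I anticipate is the identification $\qhatdeRham_{R/A}[1/p]\simeq \deRham_{R/A}[1/p]\qpower$ used above: this requires knowing that the $q$-Hodge completion dominates the $(q-1)$-adic completion after rationalisation, which is essentially the statement $(q-1)\in \fil_{\qHodge}^1$. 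One can bootstrap this either directly from the image of $t\in\pi_{-2}(\ku^{\h S^1})$ (which lives in filtration degree~$1$ of the even filtration and under the identification of \cref{rem:kutCpNotation} corresponds to $(q-1)/\beta$), or by using the mod $\beta$ equivalence of \cref{thm:qdeRhamkupComplete} to reduce the claim to the vacuous statement that the degree-zero piece of the Hodge filtration contains~$0$.
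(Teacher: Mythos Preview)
Your overall strategy is right and very close to the paper's, but there is a genuine logical gap in the middle. You write: ``Since $\qhatdeRham_{R/A}$ is $p$-torsion free, the map $\qhatdeRham_{R/A}\hookrightarrow \deRham_{R/A}[1/p]\qpower$ is injective, so $\fil_{\qHodge}^i\qhatdeRham_{R/A}$ is precisely the preimage of $\fil_{(\Hodge,q-1)}^i\deRham_{R/A}[1/p]\qpower$.'' The ``so'' is not justified: injectivity of the ambient map does \emph{not} imply that a given sub-filtration is the preimage of its rationalisation. (Counterexample: $\IZ_p$ with $\fil^1=p\IZ_p$; after $[1/p]$ the filtration becomes trivial, but its preimage is all of $\IZ_p$.) What you actually need is that the \emph{associated gradeds} $\gr_{\qHodge}^i\qhatdeRham_{R/A}\cong\pi_{2i}\THH_\solid(\ku_R/\ku_A)$ are $p$-torsion free---equivalently, that the map on associated gradeds into the rationalisation is injective---together with completeness of both filtrations. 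Your earlier phrase ``filtration by static, $p$-torsion-free subgroups'' is the wrong emphasis: the subgroups are trivially $p$-torsion free once $\qhatdeRham_{R/A}$ is; the content lies in the quotients.

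The paper closes this gap and simultaneously sidesteps the technical obstacle you flag at the end, by never passing through the identification $\qhatdeRham_{R/A}[1/p]\simeq\deRham_{R/A}[1/p]\qpower$. Instead it works directly with the rationalisation map
\[
\pi_0\TC_\solid^-(\ku_R/\ku_A)\longrightarrow \pi_0\TC_\solid^-(\ku_R\soltimes\IQ_p/\ku_A\soltimes\IQ_p)\,,
\]
observes that both sides carry complete filtrations (the homotopy fixed point spectral sequence), and reduces the preimage statement to injectivity on associated gradeds, i.e.\ injectivity of $\pi_{2*}\THH_\solid(\ku_R/\ku_A)\to\pi_{2*}\THH_\solid(\ku_R\soltimes\IQ_p/\ku_A\soltimes\IQ_p)$. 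This is checked modulo~$\beta$, where it becomes injectivity of $\pi_{2*}\HH_\solid(R/A)\to\pi_{2*}\HH_\solid(R/A)\soltimes\IQ_p$, and HKR identifies this with $p$-torsion freeness of $\Sigma^{-n}\bigwedge^n\L_{R/A}$---exactly the input you gesture at. So the key arithmetic fact is the same; the difference is that the paper's route avoids both the unjustified implication and the completion-versus-rationalisation commutation you were worried about.
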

	\begin{proof}
		That $\qdeRham_{R/A}$ is static and $\fil_{\qHodge}^\star \qdeRham_{R/A}$ is a descending filtration by subgroups follows from the corresponding assertions for $\deRham_{R/A}$ and $\fil_{\Hodge}^\star\deRham_{R/A}$, using $\qdeRham_{R/A}/(q-1)\simeq \deRham_{R/A}$ and $\fil_{\qHodge}^\star \qdeRham_{R/A}/\beta\simeq \fil_{\Hodge}^\star\deRham_{R/A}$ by \cref{thm:qdeRhamkupComplete,thm:qdeRhamku2Complete}.
		
		To show the description as a preimage, we first note that $\fil_{\qHodge}^\star\qdeRham_{R/A}$ is the preimage of its completion under $\qdeRham_{R/A}\rightarrow\qhatdeRham_{R/A}$ and likewise for $\fil_{(\Hodge,q-1)}^\star \deRham_{R/A}[1/p]\qpower$. Thus, it remains to show that the filtration on $\pi_0\TC_\solid^-(\ku_R/\ku_A)$ induced by the homotopy fixed point spectral sequence is the preimage of the analogous filtration on $\pi_0\TC_\solid(\ku_R\soltimes\IQ_p/\ku_A\soltimes \IQ_p)$ under the rationalisation map
		\begin{equation*}
			\pi_0\TC_\solid^-(\ku_R/\ku_A)\longrightarrow \pi_0\TC_\solid(\ku_R\soltimes\IQ_p/\ku_A\soltimes \IQ_p)\,.
		\end{equation*}
		As both filtrations are complete, it will be enough to show that the map on associated gradeds is injective. That is, we must show $\pi_{2*}\THH_\solid(\ku_R/\ku_A)\rightarrow \pi_{2*}\THH_\solid(\ku_R\soltimes\IQ_p/\ku_A\soltimes \IQ_p)$ is injective. This can be checked modulo~$\beta$, so we've reduced the problem to checking injectivity of $\pi_{2*}\HH_\solid(R/A)\rightarrow \pi_{2*}\HH_\solid(R\soltimes\IQ_p/A\soltimes \IQ_p)$. By the HKR theorem, we must show that
		\begin{equation*}
			\Sigma^{-n}\bigwedge^n \L_{R/A}\longrightarrow \Sigma^{-n}\bigwedge^n \L_{R/A}\soltimes\IQ_p
		\end{equation*}
		is injective for all $n$. Our assumptions guarantee that $\Sigma^{-1}\L_{R/A}$ is a $p$-completely flat module over the $p$-torsion free ring $R$ and so each $\Sigma^{-n}\bigwedge^n\L_{R/A}$ will be a $p$-torsion free $R$-module.
	\end{proof}

	\subsection{The global case}\label{subsec:qdeRhamkuGlobal}
	
	In this subsection we'll sketch a global analogue of the $p$-complete comparison between $\qdeRham_{R/A}$ and $\TC^-(\ku_R/\ku_A)$ from \cref{subsec:qdeRhamkupComplete}. So let us no longer fix a prime~$p$ and update our assumptions on $A$ and $R$ accordingly.

	\begin{numpar}[New assumptions on $A$ and $R$.]\label{par:NewAssumptions}
		From now on, $A$ and $R$ must satisfy the following:
		\begin{alphanumerate}\itshape
			\item[A] We assume that $A$ is a {perfectly covered $\Lambda$-ring}. That is, the Adams operations $\psi^m\colon A\rightarrow A$ are faithfully flat; equivalently, $A$ admits a faithfully flat $\Lambda$-map $A\rightarrow A_\infty$ into a perfect $\Lambda$-ring. Moreover, we assume that for all primes~$p$ the $p$-completion $\widehat{A}_p$ satisfies \cref{par:AssumptionsOnA}\cref{enum:CyclotomicLift}, with $\IS_{\smash{\widehat{A}}_p}$ denoting the $p$-complete spherical lift.\label{enum:AssumptionsOnAglobal}
			\item[R] We assume that $R$ is a {quasi-lci} $A$-algebra in the sense that the cotangent complex $\L_{R/A}$ has $\Tor$-amplitude in homogical degrees $[0,1]$ over~$R$. In addition, for every prime~$p$, the ring~$R$ must have bounded $p^\infty$-torsion and its $p$-completion $\widehat{R}_p$ must satisfy one of the conditions \cref{par:AssumptionsOnR}\cref{enum:E2Lift} or~\cref{enum:E1Lift} \embrace{but not necessarily the same for every~$p$}.
			We let $\IS_{\smash{\widehat{R}}_p}$ denote the $p$-complete spherical lift of $\widehat{R}_p$.\label{enum:AssumptionsOnRglobal}
		\end{alphanumerate}
		We note that the $p$-complete lifts $\IS_{\smash{\widehat{A}}_p}$ and $\IS_{\smash{\widehat{R}}_p}$ for all primes~$p$ can be glued with $A\otimes\IQ$ and $R\otimes\IQ$ to a connective $\IE_\infty$-ring spectrum $\IS_A$ and a connective $\IE_1$-algebra $\IS_R\in\Alg_{\IE_1}(\Mod_{\IS_A}(\Sp))$ satisfying
		\begin{equation*}
			\IS_A\otimes\IZ\simeq A\quad\text{and}\quad \IS_R\otimes\IZ\simeq R\,.
		\end{equation*}
		By construction, $\IS_A$ acquires the structure of a cyclotomic base. If \cref{par:AssumptionsOnR}\cref{enum:E2Lift} was chosen for every~$p$, then $\IS_R$ will be an $\IE_2$-algebra in $\IS_A$-modules. We also let $\ku_{\smash{\widehat{A}}_p}\coloneqq (\ku\otimes\IS_{\smash{\widehat{A}}_p})_p^\complete$ and $\ku_A\coloneqq \ku\otimes\IS_A$ and define $\ku_{\smash{\widehat{R}}_p}$ and $\ku_R$ analogously.
	\end{numpar}
	\begin{rem}
		Despite the restrictive hypotheses, there are many examples of such $A$ and $R$, as we'll see in \cref{subsec:CyclotomicBases}.
	\end{rem}
	To carry out our global constructions, we'll proceed by gluing the $p$-complete constructions from \cref{subsec:qdeRhamkupComplete} with the rational case. For the gluing we'll the following notion:
	
	\begin{numpar}[Profinite completion.]
		A spectrum $X$ is called \emph{profinite complete} if the canonical map 
		\begin{equation*}
			X\rightarrow \limit_{m\in\IN}X/m\simeq \prod_p\widehat{X}_p
		\end{equation*}
		is an equivalence. The spectrum on the right-hand side will be called the \emph{profinite completion of $X$} and denoted $\widehat{X}$.
		
		Analogous notions can be defined for solid condensed spectra. The the solid tensor product of two bounded below profinite complete spectra will be profinite complete again. For a proof, see \cite[Lemma~\chref{B.8}]{qWittHabiro} and replace each $q$-factorial $(q;q)_n$ by an honest factorial $n!$.
	\end{numpar}
	
	\begin{numpar}[Profinite even filtrations.]\label{par:ProfiniteEvenFiltration}
		Let $\widehat{A}$ and $\widehat{R}$ denote the profinite completions of $A$ and $R$. Let $k$ be any connective even $\IE_\infty$-ring spectrum such that $\pi_*(k)$ is $p$-torsion free for all primes~$p$ (the most relevant case is of course $k=\ku$, but we'll also need $k=\ku\otimes\IQ$ and later $k=\ku^{\Phi C_m}$). Let $k_{\smash{\widehat{A}}}\coloneqq k\soltimes\prod_p\IS_{\smash{\widehat{A}}_p}$ and $k_{\smash{\widehat{R}}}\coloneqq k\soltimes\prod_p\IS_{\smash{\widehat{R}}_p}$. We wish to construct an appropriate even filtration
		\begin{equation*}
			\fil_{\ev}^\star\THH_\solid\bigl(k_{\smash{\widehat{R}}}/k_{\smash{\widehat{A}}}\bigr)\,.
		\end{equation*}
		Once we have this, we can also construct versions for $\TC_\solid^-$ and $\TP_\solid$ via
		\begin{align*}
			\fil_{\ev,\h S^1}^\star\TC_\solid^-\bigl(k_{\smash{\widehat{R}}}/k_{\smash{\widehat{A}}}\bigr)&\coloneqq \bigl(\fil_{\ev}^\star\THH_\solid(k_{\smash{\widehat{R}}}/k_{\smash{\widehat{A}}})\bigr)^{\h \IT_{\ev}}\,,\\
			\fil_{\ev,\t S^1}^\star\TP_\solid\bigl(k_{\smash{\widehat{R}}}/k_{\smash{\widehat{A}}}\bigr)&\coloneqq \bigl(\fil_{\ev}^\star\THH_\solid(k_{\smash{\widehat{R}}}/k_{\smash{\widehat{A}}})\bigr)^{\t \IT_{\ev}}
		\end{align*}
		Before we discuss the construction in general, let us start with two special cases:
		\begin{alphanumerate}
			\item[\IE_1] If we chose condition~\cref{par:AssumptionsOnR}\cref{enum:E1Lift} for all primes~$p$, and $\IS_{\smash{\widehat{R}}_p}\rightarrow \IS_{\smash{\widehat{R}}_{p,\infty}^\bullet}$ are the given cosimplicial resolutions, we put $k_{\smash{\widehat{R}}_\infty^\bullet}\coloneqq k\soltimes\prod_p\IS_{\smash{\widehat{R}}_{p,\infty}^\bullet}$ and define our filtration via\label{enum:ProfiniteEvenFiltrationE1}
			\begin{equation*}
				\fil_{\ev}^\star\THH_\solid\bigl(k_{\smash{\widehat{R}}}/k_{\smash{\widehat{A}}}\bigr)\coloneqq \limit_{\IDelta}\tau_{\geqslant 2\star}\THH_\solid\bigl(k_{\smash{\widehat{R}}_\infty^\bullet}/k_{\smash{\widehat{A}}}\bigr)\,.
			\end{equation*}
			\item[\IE_2] If instead \cref{par:AssumptionsOnR}\cref{enum:E2Lift} was chosen for all primes~$p$, so that $k_{\smash{\widehat{R}}}$ is an $\IE_2$-algebra in $k_{\smash{\widehat{A}}}$-modules, we simply define $\fil_{\ev}^\star\THH_\solid(k_{\smash{\widehat{R}}}/k_{\smash{\widehat{A}}})$ to be the solid even filtration of $\THH_\solid(k_{\smash{\widehat{R}}}/k_{\smash{\widehat{A}}})$ as a left module over itself.\label{enum:ProfiniteEvenFiltrationE2}
		\end{alphanumerate}
		In general, let $P_1$ and $P_2$ be the set of primes where we choose \cref{par:AssumptionsOnR}\cref{enum:E1Lift} and \cref{par:AssumptionsOnR}\cref{enum:E2Lift}, respectively. Let $k_{\smash{\widehat{R}},\IE_1}\coloneqq \prod_{p\in P_1}k_{\smash{\widehat{R}}_p}$ and $k_{\smash{\widehat{R}},\IE_2}\coloneqq \prod_{p\in P_2}k_{\smash{\widehat{R}}_p}$. Then
		\begin{equation*}
			\THH_\solid\bigl(k_{\smash{\widehat{R}}}/k_{\smash{\widehat{A}}}\bigr)\simeq \THH_\solid\bigl(k_{\smash{\widehat{R}},\IE_1}/k_{\smash{\widehat{A}}}\bigr)\times \THH_\solid\bigl(k_{\smash{\widehat{R}},\IE_2}/k_{\smash{\widehat{A}}}\bigr)
		\end{equation*}
		and we can apply the constructions from \cref{enum:ProfiniteEvenFiltrationE1} and \cref{enum:ProfiniteEvenFiltrationE2} to the two factors separately.
		
		The results from \crefrange{subsec:EvenResolution}{subsec:ComparisonOfEvenFiltrations} can all be adapted to the profinite case in a straightforward way and the proofs can be copied verbatim. For example, in case~\cref{enum:ProfiniteEvenFiltrationE2}, let $P\coloneqq \IZ[x_i\ |\ i\in I]$ be a polynomial ring with a surjection $P\twoheadrightarrow R$ and let $\widehat{P}$ be its profinite completion. Let $\IS_P\coloneqq \IS[x_i\ |\ i\in I]$ and let $\IS_{\smash{\widehat{P}}}$ be its profinite completion. Finally, let $\IS\rightarrow \IS_{\smash{\widehat{P}}^\bullet}$ denote the profinitely completed \v Cech nerve of $\IS\rightarrow \IS_{\smash{\widehat{P}}}$. Then 
		\begin{equation*}
			\fil_{\ev}^\star\THH_\solid\bigl(k_{\smash{\widehat{R}}}/k_{\smash{\widehat{A}}}\bigr)\overset{\simeq}{\longrightarrow}\limit_{\IDelta}\tau_{\geqslant 2\star}\THH_\solid\bigl(k_{\smash{\widehat{R}}}/k_{\smash{\widehat{A}}}\soltimes\IS_{\smash{\widehat{P}}^\bullet}\bigr)\,.
		\end{equation*}
		To show this, we can simply copy the proof of \cref{prop:EvenResolution}. The key points are that $\THH_\solid(\IS_{\smash{\widehat{P}}})\rightarrow\IS_{\smash{\widehat{P}}}$ is still solid faithfully even flat, which can be shown by the same argument as in \cref{lem:eff}, and that $\HH_\solid(\widehat{R}/\widehat{A}\soltimes_\IZ\widehat{P}^\bullet)$ is still even.
	\end{numpar}
	
	\begin{lem}
		For $k=\ku$, we have canonical equivalences
		\begin{align*}
			\fil_{\ev}^\star\THH_\solid\bigl(\ku_{\smash{\widehat{R}}}/\ku_{\smash{\widehat{A}}}\bigr)&\overset{\simeq}{\longrightarrow} \prod_p\fil_{\ev}^\star \THH_\solid\bigl(\ku_{\smash{\widehat{R}}_p}/\ku_{\smash{\widehat{A}}_p}\bigr)\,,\\
			\fil_{\ev,\h S^1}^\star\TC_\solid^-\bigl(\ku_{\smash{\widehat{R}}}/\ku_{\smash{\widehat{A}}}\bigr)&\overset{\simeq}{\longrightarrow} \prod_p\fil_{\ev,\h S^1}^\star \TC_\solid^-\bigl(\ku_{\smash{\widehat{R}}_p}/\ku_{\smash{\widehat{A}}_p}\bigr)\,,
			\\\fil_{\ev,\t S^1}^\star\TP_\solid\bigl(\ku_{\smash{\widehat{R}}}/\ku_{\smash{\widehat{A}}}\bigr)&\overset{\simeq}{\longrightarrow} \prod_p\fil_{\ev,\t S^1}^\star \TP_\solid\bigl(\ku_{\smash{\widehat{R}}_p}/\ku_{\smash{\widehat{A}}_p}\bigr)\,.
		\end{align*}
	\end{lem}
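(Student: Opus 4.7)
The plan is to establish the three statements in order: first for $\THH_\solid$, and then transfer to $\TC_\solid^-$ and $\TP_\solid$ by applying $(-)^{\h\IT_{\ev}}$ and $(-)^{\t\IT_{\ev}}$. At each stage, I will use a cosimplicial presentation of the even filtration to reduce the filtered assertion to an unfiltered one.

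The key unfiltered input is the splitting
\begin{equation*}
\ku \soltimes \prod_p \IS_{\smash{\widehat{R}}_p} \overset{\simeq}{\longrightarrow} \prod_p \ku_{\smash{\widehat{R}}_p}
\end{equation*}
(and its analogue for $\widehat{A}$). I would first note that each individual factor $\ku \soltimes \IS_{\smash{\widehat{R}}_p}$ is automatically $p$-complete and coincides with $\ku_{\smash{\widehat{R}}_p} \simeq \ku_p^\complete \soltimes \IS_{\smash{\widehat{R}}_p}$: the fibre $F$ of $\ku \to \ku_p^\complete$ has multiplication by $p$ invertible, so $F \soltimes \IS_{\smash{\widehat{R}}_p}$ is simultaneously $p$-divisible and $p$-complete (using the magical property from \cref{par:SolidpComplete} applied to the bounded-below $p$-complete factor), hence zero. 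The remaining task is to commute the solid tensor $\ku \soltimes (-)$ past the profinite product $\prod_p$. This will be the main technical obstacle. I expect to resolve it via the spectral sequence from \cref{lem:THHSpectralSequence} or a Tor-spectral-sequence argument: since $\ku$ is connective with $\pi_{2n}(\ku) \cong \IZ$ finitely generated in each degree, and since the factors $\ku \soltimes \IS_{\smash{\widehat{R}}_p}$ are orthogonal for distinct primes $p$ (each being $p$-complete), both sides have homotopy groups $\prod_p \pi_*(\ku_{\smash{\widehat{R}}_p})$ and the canonical map is compatible. In the bounded-below setting this upgrades to an equivalence.

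Granting this splitting, the $\THH_\solid$ statement follows. The cyclic bar construction for $\ku_{\smash{\widehat{R}}}/\ku_{\smash{\widehat{A}}}$ has bar terms of the form $\ku_{\smash{\widehat{R}}} \soltimes_{\ku_{\smash{\widehat{A}}}} \cdots \soltimes_{\ku_{\smash{\widehat{A}}}} \ku_{\smash{\widehat{R}}}$, which decompose into a product over $p$ by the splitting above and the orthogonality of $p$-complete bounded-below spectra for distinct primes. The colimit defining $\THH$ then commutes with the resulting product, yielding
\begin{equation*}
\THH_\solid(\ku_{\smash{\widehat{R}}}/\ku_{\smash{\widehat{A}}}) \overset{\simeq}{\longrightarrow} \prod_p \THH_\solid(\ku_{\smash{\widehat{R}}_p}/\ku_{\smash{\widehat{A}}_p})\,.
\end{equation*}

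To lift this to the filtered level, I would use the profinite cosimplicial presentations from \cref{par:ProfiniteEvenFiltration} (which mirror \cref{prop:EvenResolution} and \cref{cor:TC-TPEvenResolution}). The even filtration on the left is computed as $\limit_\IDelta \tau_{\geqslant 2\star}\THH_\solid(\ku_{\smash{\widehat{R}}}/\ku_{\smash{\widehat{A}}}\soltimes \IS_{\smash{\widehat{P}}^\bullet})$ (or the analogous $R_\infty^\bullet$-resolution in the $\IE_1$-case), and the same cosimplicial diagram, restricted to prime~$p$, computes $\fil_{\ev}^\star\THH_\solid(\ku_{\smash{\widehat{R}}_p}/\ku_{\smash{\widehat{A}}_p})$. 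Since both $\tau_{\geqslant 2\star}$ and $\limit_\IDelta$ commute with $\prod_p$, the filtered product decomposition follows from the unfiltered one. The statements for $\TC_\solid^-$ and $\TP_\solid$ are then obtained by applying $(-)^{\h\IT_{\ev}}$ and $(-)^{\t\IT_{\ev}}$, using that the former commutes with products as a limit, and that the latter commutes with products here because of the cosimplicial reduction above and the bounded-below hypotheses on the cosimplicial terms, exactly as in the proof of \cref{cor:TC-TPEvenResolution}.
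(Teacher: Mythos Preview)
Your overall strategy matches the paper's: split $\THH_\solid$ as a product over primes, lift to the filtered level via the cosimplicial resolutions, then apply $(-)^{\h\IT_{\ev}}$ and $(-)^{\t\IT_{\ev}}$. For the unfiltered $\THH_\solid$ splitting, the paper is more direct than your cyclic-bar argument: it simply invokes \cref{lem:SolidTHH} and its profinite analogue to write $\THH_\solid(\ku_{\smash{\widehat{R}}}/\ku_{\smash{\widehat{A}}}) \simeq \THH(\ku_R/\ku_A)^\complete \simeq \prod_p \THH_\solid(\ku_{\smash{\widehat{R}}_p}/\ku_{\smash{\widehat{A}}_p})$, sidestepping any colimit-versus-product issue. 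Your argument is fine for $\THH$ and $\TC^-$.

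The genuine gap is in your $\TP_\solid$ step. The construction $(-)^{\t\IT_{\ev}}$ involves $(-)_{\h\IT_{\ev}} \simeq \IS_{\ev}\soltimes_{\IT_{\ev}}(-)$, which is a \emph{colimit} and does not automatically commute with the infinite product $\prod_p$. Your appeal to \cref{cor:TC-TPEvenResolution} addresses a different commutation---of $(-)_{\h\IT_{\ev}}$ past a cosimplicial limit, via the connectivity bifiltration of \cref{cor:Bifiltration}---not past an infinite product. The vague phrase ``bounded-below hypotheses on the cosimplicial terms'' does not by itself explain why the orbit functor commutes with $\prod_p$. The paper confronts this head-on: it reduces modulo~$\beta$ to $\prod_p\fil_{\mathrm{HKR}}^\star\HH_\solid(\widehat{R}_p/\widehat{A}_p)$, passes to HKR associated gradeds (using that the HKR filtration increases in connectivity), and then uses the quasi-lci hypothesis on $R/A$ to ensure each $\gr_{\mathrm{HKR}}^n$ is concentrated in a finite range of homotopical degrees; hence in any fixed degree only finitely many cells of $\B S^1\simeq \IC\IP^\infty$ contribute to $(-)_{\h S^1}$, and the colimit commutes with the product. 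Your cosimplicial route can be repaired along the same lines---uniform connectivity of the even cosimplicial terms yields the same finitely-many-cells conclusion---but you must make this argument explicit rather than gesture at it.
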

	\begin{proof}
		Let us first show the assertion for $\fil_{\ev}^\star\THH_\solid$. Note that $\ku_{\smash{\widehat{A}}}\simeq \prod_p\ku_{\smash{\widehat{A}}_p}\simeq (\ku_A)^\complete$ is the profinite completion of $\ku_A$ and likewise for $\ku_{\smash{\widehat{R}}}$. Using \cref{lem:SolidTHH} and its profinite analogue, we see
		\begin{equation*}
			\THH_\solid\bigl(\ku_{\smash{\widehat{R}}}/\ku_{\smash{\widehat{A}}}\bigr)\simeq \THH(\ku_R/\ku_A)^\complete\simeq \prod_p \THH_\solid\bigl(\ku_{\smash{\widehat{R}}_p}/\ku_{\smash{\widehat{A}}_p}\bigr)\,.
		\end{equation*}
		Applying the same observation to the cosimplicial resolutions $\THH_\solid(\ku_{\smash{\widehat{R}}_\infty^\bullet}/\ku_{\smash{\widehat{A}}})$ (in the special case~\cref{par:ProfiniteEvenFiltration}\cref{enum:ProfiniteEvenFiltrationE1}) or $\THH_\solid(\ku_{\smash{\widehat{R}}}/\ku_{\smash{\widehat{A}}}\soltimes\IS_{\smash{\widehat{P}}^\bullet})$ (in the special case~\cref{par:ProfiniteEvenFiltration}\cref{enum:ProfiniteEvenFiltrationE1}) or a mixture thereof (in the general case), we get the desired equivalence for $\fil_{\ev}^\star\THH_\solid$.
		
		The equivalence for $\fil_{\ev,\h S^1}^\star\TC_\solid^-$ immediately follows. For $\fil_{\ev,\t S^1}^\star\TP_\solid$, we must explain why $(-)_{\h \IT_{\ev}}\simeq \IS_{\ev}\soltimes_{\IT_{\ev}}-$ commutes with the infinite product $\prod_p$. By arguing as in the proof of \cref{cor:TC-TPEvenResolution} (or just reduction modulo~$\beta$), we can reduce this to showing that $(-)_{\h S^1}$ commutes with the infinite product in $\prod_p\fil_{\mathrm{HKR}}^\star \HH_\solid(\widehat{R}_p/\widehat{A}_p)$. Since the HKR filtration increases in connectivity, it's enough to show the same for each graded piece $\prod_p\gr_{\mathrm{HKR}}^n \HH_\solid(\widehat{R}_p/\widehat{A}_p)$. Since $R$ was assumed to be quasi-lci over $A$, each graded piece is concentrated in a finite range of degrees. Thus, in any given homotopical degree, only finitely many cells of $\IC\IP^\infty\simeq \B S^1$ will contribute to $(-)_{\h S^1}$, so it commutes with the infinite product.
	\end{proof}
	
	Finally, we can put everything together.
	\begin{numpar}[Global even filtrations.]\label{par:GlobalEvenFiltration}
		Since $\ku_A$ and $\ku_R$ are discrete, $\THH_\solid$ agrees with the usual $\THH$. We can thus equip $\THH(\ku_R\otimes\IQ/\ku_A\otimes\IQ)$ with the solid even filtration, which agrees with Pstr\k{a}gowski's perfect even filtration by \cref{cor:SolidvsDiscreteEvenFiltration}, and with the Hahn--Raksit--Wilson filtration by \cite[Theorem~\chref{7.5}]{PerfectEvenFiltration} and our assumption that $R$ is quasi-lci over $A$. We can now define an even filtration on $\THH(\ku_R/\ku_A)$ via the pullback diagram
		\begin{equation*}
			\begin{tikzcd}
				\fil_{\ev}^\star \THH(\ku_R/\ku_A)\rar\dar\drar[pullback] & \fil_{\ev}^\star \THH_\solid\bigl(\ku_{\smash{\widehat{R}}}/\ku_{\smash{\widehat{A}}}\bigr)\dar\\
				\fil_{\ev}^\star \THH(\ku_R\otimes\IQ/\ku_A\otimes\IQ)\rar & \fil_{\ev}^\star \THH_\solid\bigl(\ku_{\smash{\widehat{R}}}\soltimes\IQ/\ku_{\smash{\widehat{A}}}\soltimes\IQ\bigr)
			\end{tikzcd}
		\end{equation*}
		where the right vertical map is given by~\cref{par:ProfiniteEvenFiltration} applied to $k=\ku$ and $k=\ku\otimes\IQ$.
		
		We must explain where the bottom horizontal map comes from. It's straightforward to check that $\fil_{\ev}^\star\THH(\ku_R\otimes\IQ/\ku_A\otimes\IQ)\simeq \fil_{\ev}^\star\HH(R/A)\otimes\IQ[\beta]_\mathrm{ev}$. Moreover, since the base change result from \cref{cor:EvenFiltrationBaseChange} is still true in the profinite situation (see the discussion in \cref{par:ProfiniteEvenFiltration}), we can use base change for $\IZ\rightarrow \IQ[\beta]\simeq \ku\otimes\IQ$ to get
		\begin{equation*}
			\fil_{\ev}^\star \THH_\solid\bigl(\ku_{\smash{\widehat{R}}}\soltimes\IQ/\ku_{\smash{\widehat{A}}}\soltimes\IQ\bigr)\simeq \fil_{\ev}^\star \HH_\solid(\widehat{R}/\widehat{A})\soltimes\IQ[\beta]_{\ev}\,.
		\end{equation*}
		Moreover, the profinite analogue of \cref{cor:HRWvsPstragowski} shows that $\fil_{\ev}^\star\HH_\solid(\widehat{R}/\widehat{A})$ agrees with $\prod_p\fil_{\HRWev}^\star\HH(R/A)_p^\complete$. We then have a canonical map $\fil_{\ev}^\star\HH(R/A)\rightarrow \fil_{\ev}^\star\HH_\solid(\widehat{R}/\widehat{A})$, which provides  us with the desired bottom horizontal map in the diagram above.
		
		Once we have constructed $\fil_{\ev}^\star\THH(\ku_R/\ku_A)$, we can also construct filtrations on $\TC^-$ and $\TP$ in the usual manner:
		\begin{align*}
			\fil_{\ev,\h S^1}^\star\TC^-(\ku_R/\ku_A)&\coloneqq \bigl(\fil_{\ev}^\star\THH(\ku_R/\ku_A)\bigr)^{\h \IT_{\ev}}\,,\\
			\fil_{\ev,\t S^1}^\star\TP(\ku_R/\ku_A)&\coloneqq \bigl(\fil_{\ev}^\star\THH(\ku_R/\ku_A)\bigr)^{\t\IT_{\ev}}\,.
		\end{align*}
	\end{numpar}
	Here's a sanity check:
	\begin{lem}\label{lem:GlobalEvenFiltration}
		Suppose we chose condition~\cref{par:AssumptionsOnR}\cref{enum:E2Lift} for all primes~$p$, so that $\ku_R$ is an $\IE_2$-algebra in $\ku_A$-modules. Then $\fil_{\ev}^\star\THH(\ku_R/\ku_A)$ agrees with the solid perfect even filtration on the solid $\IE_1$-ring $\THH_\solid(\ku_R/\ku_A)$, and also with Pstr\k{a}gowski's perfect even filtration $\fil_{\Pev}^\star\THH(\ku_R/\ku_A)$.
	\end{lem}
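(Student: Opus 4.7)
The plan is to establish the two equivalences separately. The identification of Pstr\k{a}gowski's filtration with the solid perfect even filtration on $\THH_\solid(\ku_R/\ku_A)$ is the easier of the two: since $\ku_R$ and $\ku_A$ are discrete, $\THH_\solid(\ku_R/\ku_A) \simeq \underline{\THH(\ku_R/\ku_A)}$, and the spectral sequence of \cref{lem:THHSpectralSequence} together with the HKR filtration on $\HH(R/A)$ and the quasi-lci hypothesis show that $\THH(\ku_R/\ku_A)$ is homologically even. Then \cref{cor:SolidvsDiscreteEvenFiltration}\cref{enum:DiscretevsSolidEvenFiltration} does the job.

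For the main identification $\fil_{\ev}^\star \THH(\ku_R/\ku_A) \simeq \fil_{\Pev}^\star \THH(\ku_R/\ku_A)$, I would use a global even cosimplicial resolution. Choose a polynomial ring $P = \IZ[x_i \mid i \in I]$ with a surjection $P \twoheadrightarrow R$, and lift $\IS_P \to \ku_R$ to an $\IE_2$-map as in \cref{par:EvenResolution}; let $\IS \to \IS_{P^\bullet}$ denote the \v Cech nerve. An uncompleted, discrete analogue of \cref{prop:EvenResolution}---provable by the same argument, with the effective evenness of $\THH(\IS_P) \to \IS_P$ supplied by the integral version of \cref{lem:eff} and descent by the uncondensed analogue of \cref{thm:SolidEvenFlatDescentVariant}---yields
\begin{equation*}
	\fil_{\Pev}^\star \THH(\ku_R/\ku_A) \overset{\simeq}{\longrightarrow} \limit_\IDelta \tau_{\geqslant 2\star} \THH\bigl(\ku_R/\ku_A \otimes \IS_{P^\bullet}\bigr)\,,
\end{equation*}
where each cosimplicial term is even by \cref{lem:THHSpectralSequence} and the quasi-lci hypothesis.

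It then remains to verify that the defining pullback of $\fil_{\ev}^\star \THH(\ku_R/\ku_A)$ from \cref{par:GlobalEvenFiltration} computes the same cosimplicial limit. The profinite-solid corner equals $\limit_\IDelta \tau_{\geqslant 2\star} \THH_\solid(\ku_{\smash{\widehat{R}}}/\ku_{\smash{\widehat{A}}} \soltimes \IS_{\smash{\widehat{P}^\bullet}})$ by the profinite analogue of \cref{prop:EvenResolution} sketched in \cref{par:ProfiniteEvenFiltration}; the rational corner is computed by the analogous even resolution over $\IQ$, using \cref{cor:HRWvsPstragowski} to match Hahn--Raksit--Wilson's even filtration with Pstr\k{a}gowski's in the rational case; the bottom-right corner agrees with both. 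Since each $\THH(\ku_R/\ku_A \otimes \IS_{P^\bullet})$ is bounded below, the arithmetic fracture pullback commutes termwise with the cosimplicial limit, producing the desired formula. The main obstacle will be checking that the rational corner agrees with Pstr\k{a}gowski's filtration on $\THH(\ku_R \otimes \IQ/\ku_A \otimes \IQ)$; after rationalisation all the relevant spectra are even, so both candidate filtrations reduce to the double-speed Whitehead filtration and agree, via base change along $\ku \to \ku \otimes \IQ$ as in \cref{cor:EvenFiltrationBaseChange}.
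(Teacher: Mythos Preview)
Your approach is essentially the paper's: compute every even filtration in sight---Pstr\k{a}gowski's, the solid one, and each corner of the pullback from \cref{par:GlobalEvenFiltration}---via the same cosimplicial resolution $\limit_\IDelta\tau_{\geqslant 2\star}\THH(\ku_R/\ku_A\otimes\IS_{P^\bullet})$, then identify the pullback of the resolutions with the resolution itself.

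Two minor corrections. First, your justification for homological evenness of $\THH(\ku_R/\ku_A)$ over itself does not work as stated: the spectral sequence of \cref{lem:THHSpectralSequence} only controls homotopy groups, and under a mere quasi-lci hypothesis $\HH(R/A)$ need not be $\pi_*$-even, so you cannot deduce $\pi_*$-evenness of $\THH(\ku_R/\ku_A)$, let alone homological evenness in Pstr\k{a}gowski's sense. But none of this is needed: the ``in particular'' in \cref{cor:SolidvsDiscreteEvenFiltration} already covers $M^\circ=R^\circ$, since any $\IE_1$-ring is trivially even flat and hence homologically even over itself. Second, \cref{cor:HRWvsPstragowski} is a $p$-complete statement and does not apply to the rational corner; there you should invoke \cref{cor:SolidvsDiscreteEvenFiltration} directly (as the discussion in \cref{par:GlobalEvenFiltration} already does) together with a rational analogue of \cref{prop:EvenResolution}.
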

	\begin{proof}[Proof sketch]
		The solid even filtration agrees with Pstr\k{a}gowski's construction by \cref{cor:SolidvsDiscreteEvenFiltration}. To show that both agree with the pullback $\fil_{\ev}^\star\THH(\ku_R/\ku_A)$ from \cref{par:GlobalEvenFiltration}, we verify that all even filtrations in sight can be computed by cosimplicial resolutions as in \cref{prop:EvenResolution}. To show this, the proof of said proposition can be adapted in a straightforward way. The key points are that $\THH_\solid(\IS_P)\rightarrow \IS_P$ is still solid faithfully even flat by \cref{lem:eff} and that $\HH(R/A\otimes_\IZ P^\bullet)$ is still even.
	\end{proof}
	We're now ready to construct the global comparison with $q$-de Rham cohomology. Due to the problems at $p=2$ that we've discussed at the end of \cref{subsec:qdeRhamkupComplete}, we need a small addendum to the assumptions from \cref{par:NewAssumptions}\cref{enum:AssumptionsOnRglobal}.
	
	\begin{unnumpar}[\cref*{par:NewAssumptions}$\boldsymbol{a}$. New assumptions on~$A$ and~$R$.]
		From now on we'll assume that $R$ satisfies not only \cref{par:NewAssumptions}\cref{enum:AssumptionsOnRglobal} but also:
		\begin{alphanumerate}\itshape 
			\item[R_2] The $2$-adic completion $\widehat{R}_2$ satisfies \cref{par:AssumptionsOnR}\cref{enum:E1Lift}.\label{enum:AssumptionOnR2}
		\end{alphanumerate}
		We note that this is true, in particular, if $2$ is invertible in $R$.
	\end{unnumpar}

	\begin{numpar}[The global comparison map.]\label{par:kuComparisonGlobal}
		Let us denote $\qdeRham_{\smash{\widehat{R}}/\smash{\widehat{A}}}\coloneqq \prod_p\qdeRham_{\smash{\widehat{R}}_p/\smash{\widehat{A}}_p}$ and $\deRham_{\smash{\widehat{R}}/\smash{\widehat{A}}}\coloneqq \prod_p\deRham_{\smash{\widehat{R}}_p/\smash{\widehat{A}}_p}$ for short. Then the global $q$-de Rham complex sits inside a pullback
		\begin{equation*}
			\begin{tikzcd}
				\qdeRham_{R/A}\rar\dar\drar[pullback] & \qdeRham_{\smash{\widehat{R}}/\smash{\widehat{A}}}\dar\\
				\bigl(\deRham_{R/A}\lotimes_{\IZ}\IQ\bigr)\qpower\rar & \bigl(\deRham_{\smash{\widehat{R}}/\smash{\widehat{A}}}\lotimes_\IZ\IQ\bigr)\qpower
			\end{tikzcd}
		\end{equation*}
		(see \cite[Construction~\chref{A.14}]{qWittHabiro}). We claim that this diagram maps canonically to the pullback
		\begin{equation*}
			\begin{tikzcd}
				\gr_{\ev,\h S^1}^0\TC^-(\ku_R/\ku_A)\rar\dar\drar[pullback] & \gr_{\ev,\h S^1}^0 \TC_\solid^-\bigl(\ku_{\smash{\widehat{R}}}/\ku_{\smash{\widehat{A}}}\bigr)\dar\\
				\gr_{\ev,\h S^1}^0\TC^-(\ku_R\otimes\IQ/\ku_A\otimes\IQ)\rar & \gr_{\ev,\h S^1}^0 \TC_\solid^-\bigl(\ku_{\smash{\widehat{R}}}\soltimes\IQ/\ku_{\smash{\widehat{A}}}\soltimes\IQ\bigr)
			\end{tikzcd}
		\end{equation*}
		coming from \cref{par:GlobalEvenFiltration}. To construct this map of pullback squares, we need:
		\begin{alphanumerate}
			\item A map  $\qdeRham_{\smash{\widehat{R}}/\smash{\widehat{A}}}\rightarrow \gr_{\ev,\h S^1}^0\TC_\solid^-(\ku_{\smash{\widehat{R}}}/\ku_{\smash{\widehat{A}}})$. This we get by taking the product of the maps $\psi_{\widehat{R}_p}^0$ from \cref{par:kuComparisonII} for all primes~$p$.\label{enum:ComparisonProfinite}
			\item A map $(\deRham_{R/A}\otimes_\IZ\IQ)\qpower\rightarrow \gr_{\ev}^0\TC^-(\ku_R\otimes\IQ/\ku_A\otimes\IQ)$. Since $\ku_A\otimes\IQ\simeq A\otimes\IQ[\beta]$ and $\ku_R\otimes\IQ\simeq R\otimes\IQ[\beta]$, we get\label{enum:ComparisonRational}
			\begin{equation*}
				\TC^-(\ku_R\otimes\IQ/\ku_A\otimes\IQ)\simeq \HC^-\bigl(R\otimes\IQ[\beta]/A\otimes\IQ[\beta]\bigr)\,.
			\end{equation*}
			A standard computation identifies $\gr_{\ev}^0$ with the Hodge completion $(\deRham_{R/A}\otimes\IQ)_{\Hodge}^\complete\qpower$, so we can choose our desired map to be the Hodge completion map.
			\item A map $(\deRham_{\smash{\widehat{R}}/\smash{\widehat{A}}}\otimes_\IZ\IQ)\qpower\rightarrow \gr_{\ev,\h S^1}^0\TC_\solid^-(\ku_{\smash{\widehat{R}}}\soltimes\IQ/\ku_{\smash{\widehat{A}}}\soltimes\IQ)$. This works as in \cref{enum:ComparisonRational} above.\label{enum:ComparisonProfiniteRational}
		\end{alphanumerate}
		Clearly~\cref{enum:ComparisonRational} and~\cref{enum:ComparisonProfiniteRational} are compatible; compatibility of~\cref{enum:ComparisonProfinite} and~\cref{enum:ComparisonProfiniteRational} will be checked in \cref{lem:ProfiniteRationalCompatibilityCheck} below. So we get our map of pullback squares and thus a map
		\begin{equation*}
			\psi_R^0\colon \qdeRham_{R/A}\longrightarrow \gr_{\ev,\h S^1}^0\TC^-(\ku_R/\ku_A)\,.
		\end{equation*}
	\end{numpar}
	\begin{numpar}[The global $q$-Hodge filtration.]\label{par:GlobalqHodgeFiltration}
		As in the $p$-complete case \cref{par:qHodgeFiltrationTC-ku}, we identify $\Sigma^{-2*}\gr_{\ev,\h S^1}^*(\ku^{\h S^1})\simeq\IZ[\beta]\llbracket t\rrbracket$ with the filtered ring $(q-1)^\star \IZ\qpower$, where $t$ is the filtration parameter and $\beta$ corresponds to $(q-1)$ in filtration degree~$1$. We then define the \emph{$q$-Hodge filtration} as the pullback
		\begin{equation*}
			\begin{tikzcd}
				\fil_{\qHodge}^\star\qdeRham_{R/A}\rar\dar\drar[pullback] & \Sigma^{-2*}\gr_{\ev,\h S^1}^*\TC^-(\ku_R/\ku_A)\dar\\
				\qdeRham_{R/A}\rar["\psi_R^0"] & \gr_{\ev,\h S^1}^0\TC^-(\ku_R/\ku_A)
			\end{tikzcd}
		\end{equation*}
		As the name suggests, $\fil_{\qHodge}^\star\qdeRham_{R/A}$ is indeed a $q$-Hodge filtration in the sense of \cite[Definition~\chref{3.2}]{qWittHabiro}.
	\end{numpar}
	\begin{thm}\label{thm:qdeRhamkuGlobal}
		Suppose $A$ and $R$ satisfy the assumptions from~\cref{par:NewAssumptions} along with the addendum~\cref{enum:AssumptionOnR2}. Then the map $\psi_R^0$ from \cref{par:kuComparisonGlobal} induces an equivalence of graded $\IZ[\beta]\llbracket t\rrbracket$-modules
		\begin{equation*}
			\fil_{\qHodge}^\star \qhatdeRham_{R/A}\overset{\simeq}{\longrightarrow}\Sigma^{-2*}\gr_{\ev,\h S^1}^*\TC^-(\ku_R/\ku_A)\,,
		\end{equation*}
		where the left-hand side denotes the completion of the $q$-Hodge filtration $\fil_{\qHodge}^*\qdeRham_{R/A}$ from \cref{par:GlobalqHodgeFiltration}. Moreover, modulo $\beta$ and after rationalisation, we get equivalences
		\begin{align*}
			\fil_{\qHodge}^\star\qdeRham_{R/A}\lotimes_{\IZ[\beta]\tpower}\IZ\tpower&\overset{\simeq}{\longrightarrow} \fil_{\Hodge}^\star \deRham_{R/A}\,,\\
			\fil_{\qHodge}^\star\bigl(\qdeRham_{R/A}\lotimes_\IZ\IQ\bigr)_{(q-1)}^\complete&\overset{\simeq}{\longrightarrow}\fil_{(\Hodge,q-1)}^\star\bigl(\deRham_{R/A}\lotimes_\IZ\IQ\bigr)\qpower
		\end{align*}
		with the usual Hodge filtration and the combined Hodge and $(q-1)$-adic filtration, respectively. Via these equivalences, $(R,\fil_{\qHodge}^\star\qdeRham_{R/A})$ becomes canonically an object in the $\infty$-category $\cat{AniAlg}_A^{\qHodge}$ from \cite[Definition~\chref{3.2}]{qWittHabiro}.
	\end{thm}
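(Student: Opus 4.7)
The strategy is to deduce the global statement from the already-established $p$-complete versions (\cref{thm:qdeRhamkupComplete} for $p>2$ and \cref{thm:qdeRhamku2Complete} for $p=2$, whose hypotheses are met thanks to the addendum~\cref{enum:AssumptionOnR2}) via an arithmetic fracture argument, with the rational piece handled by a direct HKR-type computation. By construction in \cref{par:kuComparisonGlobal}, the map $\psi_R^0$ is defined as the map between two pullback squares: the source is the arithmetic fracture square for $\qdeRham_{R/A}$ and the target is the analogous square for $\gr_{\ev,\h S^1}^0\TC^-(\ku_R/\ku_A)$. Both pullback squares extend in the natural way to filtered pullback squares after applying $\Sigma^{-2\star}\gr_{\ev,\h S^1}^\star$ on the target and taking $\fil_{\qHodge}^\star$ on the source (using that each $p$-complete $q$-Hodge filtration is itself defined by a pullback). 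So the global statement reduces to checking the desired equivalence on each corner of the fracture square separately.

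The profinite corner $\fil_{\qHodge}^\star\qhatdeRham_{\widehat R/\widehat A}\simeq \prod_p\fil_{\qHodge}^\star\qhatdeRham_{\widehat R_p/\widehat A_p}$ matches $\Sigma^{-2\star}\gr_{\ev,\h S^1}^\star\TC_\solid^-(\ku_{\widehat R}/\ku_{\widehat A})$ by taking the product of the $p$-complete comparison theorems over all primes~$p$ (using the addendum at $p=2$); here one also invokes the compatibility of $\gr_{\ev,\h S^1}^\star$ with the infinite product, which was checked in \cref{lem:GlobalEvenFiltration} and its proof. For the rational corner, the identifications $\ku_A\otimes\IQ\simeq A\otimes \IQ[\beta]$ and $\ku_R\otimes\IQ\simeq R\otimes \IQ[\beta]$ give $\TC^-(\ku_R\otimes\IQ/\ku_A\otimes\IQ)\simeq \HC^-(R\otimes\IQ[\beta]/A\otimes\IQ[\beta])$; the even filtration then agrees with the Hahn--Raksit--Wilson/HKR filtration (by \cref{cor:HRWvsPstragowski} and its rational analogue), and a standard computation of the HKR graded pieces identifies $\Sigma^{-2\star}\gr_{\ev,\h S^1}^\star$ with the Hodge-completed, $(q-1)$-adically filtered de Rham complex $\fil_{(\Hodge,q-1)}^\star(\widehat{\deRham}_{R/A}\otimes\IQ)\qpower$. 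The same argument applies to the profinite rational corner.

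The main obstacle is the compatibility check alluded to after \cref{par:kuComparisonGlobal}: the map~\cref{enum:ComparisonProfinite}, obtained from the $p$-complete constructions $\psi_{\widehat R_p}^0$, must agree after further rationalisation with the map~\cref{enum:ComparisonProfiniteRational}, obtained from the HKR identification. This is the content of the forthcoming \cref{lem:ProfiniteRationalCompatibilityCheck}. The key point is that $\psi_{\widehat R_p}^0$ rationalised was already analysed in \cref{lem:psiRrational} (and its $p=2$ variant in the proof of \cref{thm:qdeRhamku2Complete}) and shown to be the natural Hodge completion map on $\deRham_{\widehat R_p/\widehat A_p}[1/p]\qpower$; since both maps~\cref{enum:ComparisonProfinite} and \cref{enum:ComparisonProfiniteRational} are Hodge completion maps after this rationalisation, they coincide. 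With the compatibility in hand, the pullback of filtered modules yields the desired equivalence $\fil_{\qHodge}^\star\qhatdeRham_{R/A}\simeq \Sigma^{-2*}\gr_{\ev,\h S^1}^*\TC^-(\ku_R/\ku_A)$.

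Finally, the mod-$\beta$ and rationalised statements are extracted exactly as in the $p$-complete proof: mod-$\beta$, the identification $\psi_R^0\mod(q-1)$ with the Hodge completion map follows by assembling the analogous $p$-complete statements via fracture together with the rational one (which is the Hodge completion map by construction), and the resulting pullback yields $\fil_{\qHodge}^\star\qdeRham_{R/A}/\beta\simeq \fil_{\Hodge}^\star\deRham_{R/A}$; after rationalisation one obtains the combined Hodge- and $(q-1)$-adic filtration directly. The statement that $(R,\fil_{\qHodge}^\star\qdeRham_{R/A})$ defines an object of $\cat{AniAlg}_A^{\qHodge}$ then follows from \cite[Definition~\chref{3.2}]{qWittHabiro}, whose defining conditions are precisely the mod-$\beta$ and rationalised equivalences we have just established, together with the compatibility data built into the pullback construction.
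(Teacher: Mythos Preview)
Your proposal is essentially correct and uses the same ingredients as the paper's proof: the $p$-complete comparison theorems, the rational HKR identification, and \cref{lem:ProfiniteRationalCompatibilityCheck}. The organization differs slightly. You run an arithmetic fracture argument directly on the main equivalence, reducing it to the profinite, rational, and profinite-rational corners. The paper instead observes that once $\psi_R^0$ is constructed (with the compatibility already built in via \cref{lem:ProfiniteRationalCompatibilityCheck}), the global reduction $\ov\psi_R^0$ modulo $\beta$ is the Hodge completion map (from \cref{lem:psiRmodq-1} together with the gluing construction), and the rationalisation $\psi_{R,\IQ}^0$ is the Hodge completion map by construction; with these two facts in hand, the proof of \cref{thm:qdeRhamkupComplete} applies verbatim---in particular, the main equivalence is simply checked modulo~$\beta$, where it becomes the classical identification $\fil_{\Hodge}^\star\hatdeRham_{R/A}\simeq\Sigma^{-2*}\gr_{\ev,\h S^1}^*\HC^-(R/A)$. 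This is more streamlined than fracturing the completed $q$-Hodge filtration itself.

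Two small corrections: the compatibility of the even filtration with the infinite product over primes is not in \cref{lem:GlobalEvenFiltration} (that lemma concerns agreement with Pstr\k{a}gowski's filtration in the $\IE_2$-case) but in the unnamed lemma immediately preceding \cref{par:GlobalEvenFiltration}. Also, your sketch of \cref{lem:ProfiniteRationalCompatibilityCheck} via the product of \cref{lem:psiRrational} over all primes is plausible, but the paper actually argues via a single $\widehat{\IZ}^\times$-equivariance trick (extracting the constant-in-$q$ part using a specific unit $\psi\in\prod_p\IZ_p^\times$ and reducing to \cref{lem:psiRmodq-1}); your product argument would need a careful check that the vertical identifications in the diagrams of \cref{lem:psiRrational} are compatible across primes with the global ones from \cref{par:GlobalEvenFiltration}.
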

	\begin{rem}
		Fix $2\leqslant n\leqslant \infty$. If for every prime~$p$ either \cref{par:AssumptionsOnR}\cref{enum:E1Lift} was chosen or $\IS_{\smash{\widehat{R}}_p}$ admits an $\IE_n$-algebra structure in $\IS_{\smash{\widehat{A}}_p}$-modules, then all equivalences in \cref{thm:qdeRhamkuGlobal} are canonically $\IE_{n-1}$-monoidal. Indeed, for those primes where $\IS_{\smash{\widehat{R}}_p}$ is $\IE_n$, we get $\IE_{n-1}$-monoidality by carefully tracing through all constructions. For the other primes use \cref{thm:qdeRhamkuRegularQuotient}. It follows that the pair $(R,\fil_{\qHodge}^\star\qdeRham_{R/A})$ is canonically an $\IE_{n-1}$-algebra in $\cat{AniAlg}_A^{\qHodge}$ (compare \cite[\chref{3.50}]{qWittHabiro}).
	\end{rem}
	
	\begin{lem}\label{lem:ProfiniteRationalCompatibilityCheck}
		The maps from \cref{par:kuComparisonGlobal}\cref{enum:ComparisonProfinite} and~\cref{enum:ComparisonProfiniteRational} fit into a commutative diagram
		\begin{equation*}
			\begin{tikzcd}
				\bigl(\qdeRham_{\smash{\widehat{R}}/\smash{\widehat{A}}}\lotimes_\IZ\IQ\bigr)_{(q-1)}^\complete\rar["\cref{par:kuComparisonGlobal}\cref{enum:ComparisonProfinite}"]\dar["\simeq"'] & \Bigl(\gr_{\ev,\h S^1}^0\TC_\solid^-(\ku_{\smash{\widehat{R}}}/\ku_{\smash{\widehat{A}}})\soltimes\IQ\Bigr)_{(q-1)}^\complete\dar["\simeq"]\\
				\bigr(\deRham_{\smash{\widehat{R}}/\smash{\widehat{A}}}\lotimes_\IZ\IQ\bigr)\qpower\rar["\cref{par:kuComparisonGlobal}\cref{enum:ComparisonProfiniteRational}"] & \gr_{\ev,\h S^1}^0\HC_\solid^-\Bigl(\widehat{R}\soltimes\IQ[\beta]/\widehat{A}\soltimes\IQ[\beta]\Bigr)
			\end{tikzcd}
		\end{equation*}
		where the left vertical arrow is the usual equivalence for rationalised $q$-de Rham cohomology and the right vertical arrow is obtained as explained in \cref{par:GlobalEvenFiltration}.
	\end{lem}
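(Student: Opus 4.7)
My plan is to reduce the commutativity to the $p$-complete statement already established in \cref{lem:psiRrational} (for $p>2$) and the analogous step in the proof of \cref{thm:qdeRhamku2Complete} (for $p=2$, using the addendum \cref{enum:AssumptionOnR2}). First I would unwind both horizontal arrows in the square: by construction \cref{par:kuComparisonGlobal}\cref{enum:ComparisonProfinite}, the top map is the product $\prod_p \psi_{\widehat{R}_p}^0$, rationalised and $(q-1)$-completed; and by \cref{par:kuComparisonGlobal}\cref{enum:ComparisonProfiniteRational}, the bottom map is the canonical Hodge completion map, after identifying $\ku_{\smash{\widehat{A}}}\soltimes\IQ \simeq \widehat{A}\soltimes\IQ[\beta]$ and similarly for~$R$.

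Next, I would observe that both vertical equivalences are obtained by applying a single natural construction prime-by-prime: the left one is the product over $p$ of the standard rationalised $q$-de Rham equivalences, while the right one factors through the prime-by-prime base change $\IZ\to\IQ[\beta]$ used in \cref{par:GlobalEvenFiltration} (and traced through \cref{rem:EvenFiltrationkuRationalisation}). Granted this, checking commutativity of the outer square amounts to checking it after projection to each prime component. For $p>2$, the resulting $p$-adic square is precisely the one of \cref{lem:psiRrational}; for $p=2$, addendum \cref{enum:AssumptionOnR2} places us in the situation handled in the proof of \cref{thm:qdeRhamku2Complete}, where the reduction to a semiperfect cover $\widehat{R}_{2,\infty}^\bullet$ identifies both routes with the canonical continuous $\A_\inf^\bullet\qpower$-linear Hodge completion.

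The main obstacle I anticipate is the bookkeeping needed to justify the prime-by-prime reduction: the solid tensor product $-\soltimes\IQ$ does not commute with $\prod_p$ in general, so one needs to argue that, after $(q-1)$-completion, both the adelic Hodge completion map and the right vertical equivalence are \emph{determined} by their component maps. I would handle this via the solid base-change compatibilities from \cref{cor:EvenFiltrationBaseChange} and the fact that each prime-indexed factor is bounded below and $p$-complete, so the maps in question are obtained by applying a natural transformation defined on each $p$-complete piece and then passing to the product inside $\Sp_\solid$.
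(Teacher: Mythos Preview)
Your reduction to the $p$-local statements of \cref{lem:psiRrational} does not go through as stated, and the obstacle you flag is a genuine one rather than mere bookkeeping. The point is that $(\prod_p M_p)\soltimes\IQ$ is an adelic (restricted-product) object, not $\prod_p(M_p[1/p])$; in particular, the left vertical equivalence in the square --- the rationalised $q$-de~Rham trivialisation for the profinite object --- is \emph{not} obtained by taking a product of the $p$-local equivalences $\qdeRham_{\widehat{R}_p/\widehat{A}_p}[1/p]_{(q-1)}^\complete\simeq\deRham_{\widehat{R}_p/\widehat{A}_p}[1/p]\qpower$. So projecting to each prime component does not recover the diagram of \cref{lem:psiRrational}, and there is no jointly conservative family of projections that would let you check the (derived) commutativity datum componentwise. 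Your proposed fix via \cref{cor:EvenFiltrationBaseChange} and bounded-below $p$-completeness does not address this: those results control the even filtration, not the interaction of $-\soltimes\IQ$ with the infinite product.

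The paper takes a different route. It does split off the factor $p=2$ (exactly as you do, invoking the argument from the proof of \cref{thm:qdeRhamku2Complete}), but for the remaining primes it runs a \emph{global} equivariance argument modelled on the proof of \cref{lem:psiRrational}: all maps in the square are $\widehat{\IZ}^\times$-equivariant for the Adams action, so one must identify a single $\widehat{\IZ}^\times$-equivariant map $(\deRham_{\widehat{R}/\widehat{A}}\lotimes_\IZ\IQ)\qpower\to(\deRham_{\widehat{R}/\widehat{A}}\lotimes_\IZ\IQ)_{\Hodge}^\complete\qpower$ with the Hodge completion. Choosing the element $\psi=(\zeta_{p-1}(1+p))_p\in\widehat{\IZ}^\times$, the paper computes that $(-)^{\psi=1}$ on $(\widehat{M}\lotimes_\IZ\IQ)\qpower$ recovers $\widehat{M}\lotimes_\IZ\IQ$ up to a shifted copy that can be killed, and then appeals to \cref{lem:psiRmodq-1} (applied for all primes at once) to pin down the resulting map modulo $(q-1)$. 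This global fixed-point trick is precisely what replaces your attempted prime-by-prime decomposition.
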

	\begin{proof}
		In the following, we'll assume that $2$ is invertible in $R$. To treat the general case, we can just split off the factor $p=2$ from $\bigl(\prod_p\qdeRham_{\smash{\widehat{R}}_p/\smash{\widehat{A}}_p}\lotimes_\IZ\IQ\bigr)_{(q-1)}^\complete$ and use \cref{lem:psiRrational}.%
		\footnote{Recall that \cref{lem:psiRrational} still works for $p=2$ as long as \cref{par:AssumptionsOnR}\cref{enum:E1Lift} was chosen; see the argument in the proof of \cref{thm:qdeRhamku2Complete}.}

		We'll use an adaptation of the argument from the proof of \cref{lem:psiRrational}. Observe that all maps in question are equivariant with respect to the Adams action of $\widehat{\IZ}^\times\coloneqq \prod_p\IZ_p^\times$, so the problem boils down to checking that a certain $\widehat{\IZ}^\times$-equivariant map
		\begin{equation*}
			\bigr(\deRham_{\smash{\widehat{R}}/\smash{\widehat{A}}}\lotimes_\IZ\IQ\bigr)\qpower\longrightarrow \bigr(\deRham_{\smash{\widehat{R}}/\smash{\widehat{A}}}\lotimes_\IZ\IQ\bigr)_{\Hodge}^\complete\qpower
		\end{equation*}
		is the canonical Hodge completion map.
		
		To see this, consider the element $\psi\coloneqq (\zeta_{p-1}(1+p))_p\in\prod_p\IZ_p^\times$, where $\zeta_{p-1}\in\IZ_p^\times$ denotes any primitive $(p-1)$\textsuperscript{st} root of unity. We claim that for any $M\in\Dd(\IZ)$, equipped with the trivial action of $\widehat{\IZ}^\times$, one has a functorial equivalence
		\begin{equation*}
			\bigl(\widehat{M}\lotimes_\IZ\IQ\bigr)\qpower^{\psi=1}\simeq \bigl(\widehat{M}\lotimes_\IZ\IQ\bigr)\oplus\Sigma^{-1}\bigl(\widehat{M}\lotimes_\IZ\IQ\bigr)
		\end{equation*}
		To show the claim, it'll be enough to show $\H_{-1}(\widehat{\IZ}\qpower^{\psi=1}/(q-1)^n)\simeq \widehat{\IZ}\oplus(\text{torsion group})$ for every~$n$. This $\H_{-1}$ agrees with $\pi_{-1}$ of the spectrum
		\begin{equation*}
			\prod_p\bigl((\ku_p^\complete)^{\B S^1}\bigr)^{\psi=1}/t^n\simeq \prod_p \bigl((\ku_p^\complete)^{\psi=1}\bigr)^{\IC\IP^n}
		\end{equation*}
		The homotopy groups of $(\ku_p^\complete)^{\psi=1}$ are $\IZ_p$ in degrees $\{-1,0\}$ and torsion groups in degrees $\geqslant 2p-3$. Since $\IC\IP^n$ has a finite even cell decomposition, the torsion groups in positive degrees will only contribute to $\pi_{-1}\bigl(\prod_p((\ku_p^\complete)^{\psi=1})^{\IC\IP^n}\bigr)$ for finitely many primes, and so the result will indeed be of the form $\widehat{\IZ}\oplus (\text{torsion group})$. This proves the claim.
		
		To deduce that our map above must be the canonical Hodge completion, we apply $(-)^{\psi=1}\lotimes_{\IZ^{\psi=1}}\IZ$ to get a map $\deRham_{\smash{\widehat{R}}/\smash{\widehat{A}}}\lotimes_\IZ\IQ\rightarrow(\deRham_{\smash{\widehat{R}}/\smash{\widehat{A}}}\lotimes_\IZ\IQ)_{\Hodge}^\complete$. By comparison with the reduction modulo $(q-1)$ and \cref{lem:psiRmodq-1} (applied for all primes~$p$), we know that this map must be the canonical Hodge completion. By applying $(-\lotimes_{\IQ}\IQ\qpower)_{(q-1)}^\complete$ to this map, we deduce that our original map must be the Hodge completion as well.
	\end{proof}
	\begin{proof}[Proof sketch of \cref{thm:qdeRhamkuGlobal}]
		Using \cref{cor:HRWvsPstragowski}, we see that the base change of our even filtration $\fil_{\ev,\h S^1}^\star\TC^-(\ku_R/\ku_A)$ along $\ku_{\ev}^{\h S^1}\rightarrow \IZ_{\ev}^{\h S^1}$ is the Hahn--Raksit--Wilson even filtration on $\HC^-(R/A)$. Moreover, it's clear from the construction in \cref{par:kuComparisonGlobal} and \cref{lem:psiRmodq-1} that the induced map
		\begin{equation*}
			\ov\psi_{R}^0\colon \deRham_{R/A}\longrightarrow \hatdeRham_{R/A}\simeq \gr_{\HRWev,\h S^1}^0\HC^-(R/A)
		\end{equation*}
		is the canonical Hodge completion map. Similarly, by the construction in \cref{par:kuComparisonGlobal}\cref{enum:ComparisonRational}, the rationalisation
		\begin{equation*}
			\psi_{R,\IQ}^0\colon \bigl(\deRham_{R/A}\lotimes_\IZ\IQ\bigr)\qpower\longrightarrow \gr_{\ev,\h S^1}^0\TC^-(\ku_R\otimes\IQ/\ku_A\otimes\IQ)
		\end{equation*}
		gets identified with the canonical Hodge completion map. With these two observations, the proof of \cref{thm:qdeRhamkupComplete} can be copied verbatim to show everything but the last claim.
		
		It remains to give $(R,\fil_{\qHodge}^\star\qdeRham_{R/A})$ the structure of an object in $\cat{AniAlg}_A^{\qHodge}$. The equivalences from conditions~(\href{https://guests.mpim-bonn.mpg.de/ferdinand/q-Habiro.pdf#Item.24}{$b$}) and~(\href{https://guests.mpim-bonn.mpg.de/ferdinand/q-Habiro.pdf#Item.25}{$c$}) of \cite[Definition~\chref{3.2}]{qWittHabiro} have already been constructed; the compatibility between them follows directly by comparing the even filtrations on $\TC^-(\ku_R\otimes\IQ/\ku_A\otimes\IQ)\simeq \HC^-(R\otimes\IQ[\beta]/A\otimes\IQ[\beta])$ and $\HC^-(R\otimes\IQ/A\otimes\IQ)$. For condition~(\href{https://guests.mpim-bonn.mpg.de/ferdinand/q-Habiro.pdf#section*.4}{$c_p$}) of \cite[Definition~\chref{3.2}]{qWittHabiro}, we use \cref{thm:qdeRhamkupComplete}; the compatibilities come for free via the adelic gluing constructions in~\cref{par:GlobalEvenFiltration} and~\cref{par:kuComparisonGlobal}.
	\end{proof}

		%
		%
		%

	\newpage
	
	\section{Habiro descent via genuine equivariant homotopy theory}\label{sec:Genuine}
	
	We've seen in \cref{thm:qdeRhamkuGlobal} that the even filtration on $\TC^-(\ku_R/\ku_A)$ gives rise to a $q$-Hodge filtration $\fil_{\qHodge}^\star\qdeRham_{R/A}$ in the sense of \cite[Definition~\chref{3.2}]{qWittHabiro}. In particular, this provides many examples to which \cite[Theorem~\chref{3.11}]{qWittHabiro} can be applied.
	
	The goal of this section is to show that, in the situation at hand, the Habiro descent from that theorem can also be obtained homotopically. As a straightforward corollary of \cref{thm:qdeRhamkuGlobal}, one checks that the $q$-Hodge complex associated to $\fil_{\qHodge}^\star\qdeRham_{R/A}$ agrees with
	\begin{equation*}
		\qHodge_{R/A}\simeq \gr_{\ev,\h S^1}^0\TC^-(\KU_R/\KU_A)\,,
	\end{equation*} 
	where we put $\KU_A\coloneqq \KU\otimes \IS_A$ and $\KU_R\coloneqq \KU\otimes\IS_R$. To get the Habiro descent, we'll show that for every $m\in\IN$ the action of the cyclic subgroup $C_m\subseteq S^1$ on $\THH(\KU_R/\KU_A)$ can be made \emph{genuine}. We'll then construct an even filtration on $(\THH(\KU_R/\KU_A)^{C_m})^{\h (S^1/C_m)}$. The Habiro--Hodge complex $\qHhodge_{R/A}$ will finally be recovered as the $0$\textsuperscript{th} graded piece
	\begin{equation*}
		\qHhodge_{R/A}\simeq \limit_{m\in\IN}\gr_{\ev,S^1}^0\bigl(\THH(\KU_R/\KU_A)^{C_m}\bigr)^{\h (S^1/C_m)}
	\end{equation*}
	
	This section is organised as follows: In \crefrange{subsec:GenuineRecollections}{subsec:Genuineku} we review genuine equivariant homotopy theory, its special case of \emph{cyclonic spectra}, and the genuine equivariant structure on $\ku$. In \cref{subsec:GenuineHabiroDescent}, we finally construct the desired even filtrations in the cyclonic setting and prove that they give rise to the Habiro--Hodge complex from \cite[Theorem~\chref{3.11}]{qWittHabiro}.
	
	\subsection{Recollections on genuine equivariant homotopy theory}\label{subsec:GenuineRecollections}
	
	In this subsection, we briefly review theory of genuine equivariant spectra. We'll follow the model-independent treatment of \cite[Appendix~{\chref[appendix]{C}}]{EquivariantTMF} and the lecture notes \cite{HausmannEAST}.
	
	\begin{numpar}[Genuine equivariant anima.]
		Let $G$ be a compact Lie group (of relevance to us will only be the case of $S^1$ and its finite cyclic subgroups $C_m\subseteq S^1$). We let $\cat{Orb}_G$ denote the category whose objects are quotient spaces $G/H$, where $H\subseteq G$ is a closed subgroup, and whose morphisms are $G$-equivariant maps. $\cat{Orb}_G$ is canonically topologically enriched; through this enrichment we view it as an $\infty$-category.
		
		We define the $\infty$-category of \emph{$G$-anima} (or \emph{$G$-spaces}) as well as its pointed variant as
		\begin{equation*}
			\An^G\coloneqq\PSh(\cat{Orb}_G)\quad\text{and}\quad \An_*^G\coloneqq\PSh(\cat{Orb}_G)_*\,,
		\end{equation*}
		where $\PSh(-)\coloneqq \Fun((-)^\op,\An)$ and $\PSh(-)_*\coloneqq \Fun((-)^\op,\An_*)$ denote the presheaf $\infty$-category and its pointed variant. The pointwise product or smash product induces symmetric monoidal structures on $\An^G$ and $\An_*^G$ and thus turns them into objects in $\CAlg(\Pr^\L)$. We denote the evaluation at $G/H$ by $(-)^H\colon \An^G\rightarrow \An$ and likewise for $\An_*^G$. By construction, these functors are symmetric monoidal.
	\end{numpar}
	\begin{numpar}[Genuine equivariant spectra.]\label{par:GenuineSpectra}
		For every finite-dimensional real $G$-representation $V$, we have a topologically enriched functor $\cat{Orb}_G^\op\rightarrow \cat{Top}_*$ sending $G/H\mapsto S^{V^H}$, where $S^{V^H}$ denotes the $1$-point compactification of the vector space $V^H$. This functor defines a pointed $G$-anima $S^V\in \An_*^G$, which we call the \emph{representation sphere of $V$}. We finally define the $\infty$-category of \emph{genuine $G$-equivariant spectra}
		\begin{equation*}
			\Sp^G\coloneqq \An_*^G\Bigl[\bigl\{(S^V)^{\otimes-1}\bigr\}_V\Bigr]
		\end{equation*}
		to be the initial $\An_*^G$-algebra in $\Pr^\L$ in which all representation spheres $S^V$ become $\otimes$-invertible. Explicitly, $\Sp^G$ can be written as a colimit in $\Pr^\L$ of a diagram whose objects are copies of $\An_*^G$ and whose transition maps are of the form $S^V\wedge -\colon \An_*^G\rightarrow \An_*^G$, where $V$ ranges through finite-dimensional $G$-representations; see \cite[\S{\chref[subsection]{C.1}}]{EquivariantTMF}. By construction, $\Sp^G$ comes with a symmetric monoidal functor
		\begin{equation*}
			\Sigma_G^\infty\colon \An_*^G\longrightarrow \Sp^G
		\end{equation*}
		in $\Pr^\L$, which thus admits a lax monoidal right adjoint $\Omega_G^\infty\colon \Sp^G\rightarrow \An_*^G$.
		
		We let $\Sigma^{V}\colon\Sp^G\rightarrow \Sp^G$ denote the functor $\Sigma_G^\infty S^V\otimes -$. By construction, this functor is an equivalence, and we let $\Sigma^{-V}$ denote its inverse. If $(-)_+\colon \An^G\rightarrow \An_*^G$ denotes the left adjoint of the forgetful functor, we also define
		\begin{equation*}
			\IS_G[-]\colon\An^G\xrightarrow{(-)_+}\An_*^G\xrightarrow{\Sigma_G^\infty} \Sp^G
		\end{equation*}
		and we let $\IS_G\coloneqq \IS_G[*]$ be the \emph{genuine $G$-equivariant sphere spectrum}.
		
		The $\infty$-category $\An_*^G$ is compactly generated, with a set of compact generators given by $(G/H)_+$ for all closed subgroups $H\subseteq G$. The transition maps $S^V\wedge-$ preserve compact objects and $\Pr_\omega^\L\rightarrow \Pr^\L$ preserves colimits. It follows that $\Sp^G$ is compactly generated, with a set of compact generators given by $\Sigma^{-V}\IS_G[G/H]$ for all representation spheres and all closed subgroups $H\subseteq G$. In fact, we can do slightly better; see \cref{lem:SpGCompactGenerators} below.
	\end{numpar}
	
	\begin{numpar}[Pullback functors.]
		Given any morphism $\varphi\colon G\rightarrow K$ of compact Lie groups, we can define a functor $\cat{Orb}_G\rightarrow \cat{Orb}_K$ by sending $G/H\mapsto K/\varphi(H)$. By precomposition, we obtain a symmetric monoidal functor $\varphi^*\colon \An_*^K\rightarrow \An_*^G$ in $\Pr^\L$, which sends representation spheres to representation spheres and therefore determines a unique symmetric monoidal colimit-preserving functor
		\begin{equation*}
			\varphi^*\colon \Sp^K\longrightarrow \Sp^G\,.
		\end{equation*}
	\end{numpar}
	\begin{lem}\label{lem:PullbackCommutesWithOmega}
		For every morphism $\varphi\colon G\rightarrow K$ of compact Lie groups, the following diagrams commute:
		\begin{equation*}
			\begin{tikzcd}
				\An_*^K\rar["\varphi^*"]\dar["\Sigma_K^\infty"'] & \An_*^G\dar["\Sigma_G^\infty"]\\
				\Sp^K\rar["\varphi^*"] & \Sp^G
			\end{tikzcd}\quad\text{and}\quad \begin{tikzcd}
				\An_*^K\rar["\varphi^*"] & \An_*^G\\
				\Sp^K\uar["\Omega_K^\infty"]\rar["\varphi^*"] & \Sp^G\uar["\Omega_G^\infty"']
			\end{tikzcd}
		\end{equation*}
	\end{lem}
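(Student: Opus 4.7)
The plan is to handle the two diagrams separately, using the universal property of $\Sp^K$ for the first and the duality $\Pr^\L \simeq (\Pr^\R)^\op$ for the second. The common thread is that $\varphi^*$ on anima, being precomposition on presheaves, preserves both limits and colimits, and is moreover symmetric monoidal and sends $K$-representation spheres $S^V$ to $G$-representation spheres $S^{\varphi^* V}$ (both evaluate to $S^{V^{\varphi(H)}}$ on $G/H$).

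For the first diagram, I would consider the composite $\Sigma^\infty_G \circ \varphi^*\colon \An_*^K \to \Sp^G$. This is a symmetric monoidal colimit-preserving functor in $\Pr^\L$ that inverts every $K$-representation sphere $S^V$, since $\Sigma^\infty_G(\varphi^* S^V) \simeq \Sigma^\infty_G(S^{\varphi^* V})$ is invertible. By the universal property of $\Sp^K$ as the initial such localisation of $\An_*^K$, this composite factors uniquely through a symmetric monoidal colimit-preserving functor $\Sp^K \to \Sp^G$, and by the definition of $\varphi^*$ on spectra in the preceding paragraph, this factorisation is $\varphi^*$ itself.

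For the second diagram, I would use the dual description of $\Sp^G$ as a cofiltered limit in $\Pr^\R$ of copies of $\An_*^G$ with transition maps $\Omega^V$ (the right adjoints of $S^V \wedge -$), obtained by applying $\Pr^\L \simeq (\Pr^\R)^\op$ to the defining colimit. Under this description, $\Omega^\infty_G$ is the projection to the $V = 0$ component. The key point is that $\varphi^*\colon \An_*^K \to \An_*^G$ commutes with the transition maps: an adjunction argument using the projection formula for the left adjoint $\varphi_! \dashv \varphi^*$ on anima, together with the identity $\varphi_!(G/H)_+ \simeq (K/\varphi(H))_+$ (which itself follows from the adjunction via Yoneda), yields $\varphi^* \Omega^V \simeq \Omega^{\varphi^* V} \varphi^*$. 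Hence $\varphi^*$ extends to a morphism of $\Pr^\R$-limit diagrams, and the induced limit-preserving functor $\Sp^K \to \Sp^G$ coincides with $\varphi^*$ on spectra by the first diagram and uniqueness. The second diagram then follows by reading off the $V = 0$ component.

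The main obstacle is to reconcile the two descriptions of $\varphi^*$ on spectra—the one coming from the $\Pr^\L$-colimit universal property and the one from the $\Pr^\R$-limit description—so that projections on each side translate to $\Sigma^\infty$ and $\Omega^\infty$ respectively. Once this is done, both diagrams are essentially projections onto the $V=0$ component of their respective colimit and limit descriptions, and the formal adjunction-mate considerations become unnecessary.
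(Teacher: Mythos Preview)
Your proposal is correct and follows essentially the same route as the paper: the first diagram commutes by the defining universal property of $\varphi^*$ on spectra, and the second is obtained by rewriting $\Sp^G$ and $\Sp^K$ as limits in $\Pr^\R$ and checking that $\varphi^*$ on pointed anima intertwines the transition maps $\Omega^V$ via the projection formula for $\varphi_!$. Your closing remark about reconciling the $\Pr^\L$- and $\Pr^\R$-descriptions of $\varphi^*$ is a worthwhile sanity check that the paper leaves implicit, but it is not an additional obstacle: the equivalence $\Pr^\L\simeq(\Pr^\R)^\op$ transports the defining colimit to the limit, and the induced functor is the same.
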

	\begin{proof}[Proof sketch]
		The diagram on the left commutes by construction. To see that the diagram on the right commutes as well, rewrite the colimits defining $\Sp^G$ and $\Sp^K$ as limits in $\Pr^\R$. It's then enough to check that $\varphi^*\colon \An_*^K\rightarrow \An_*^G$ intertwines the right adjoints of $S^{V}\wedge-$ and $S^{\varphi^*(V)}\wedge -$ for any finite-dimensional $K$-representation $V$. Since $\varphi^*\colon \An_*^K\rightarrow \An_*^G$ has a left adjoint $\varphi_!$, given by left Kan extension, we may pass to left adjoints and show the equivalent assertion $\varphi_!(S^{\varphi^*(V)}\wedge-)\simeq S^V\wedge\varphi_!(-)$. Now in general, for any functor $\varphi\colon \Cc\rightarrow \Dd$ of small $\infty$-categories, the adjunction $\varphi_!\colon \PSh(\Cc)_*\shortdoublelrmorphism \PSh(\Dd)_*\noloc \varphi^*$ satisfies the \enquote{projection formula} $\varphi_!(\varphi^*(Y)\wedge X)\simeq Y\wedge\varphi_!(X)$ by abstract nonsense.
	\end{proof}
	\begin{lem}\label{lem:InducedGSpectra}
		Let $i\colon H\hookrightarrow G$ be the inclusion of a closed subgroup. Then $i^*\colon \Sp^G\rightarrow \Sp^H$ preserves all limits and  and thus admits a left adjoint $i_!\colon \Sp^H\rightarrow \Sp^G$.%
		\footnote{The functor $i_!$ is usually denoted $\Ind_H^G$ and called \emph{induction}.}
		If we also let $i_!\colon \An_*^H\rightarrow \An_*^G$ denote the left Kan extension functor, then the following diagram commutes:
		\begin{equation*}
			\begin{tikzcd}
				\An_*^H\rar["i_!"]\dar["\Sigma_H^\infty"'] & \An_*^G\dar["\Sigma_G^\infty"]\\
				\Sp^H\rar["i_!"] & \Sp^G
			\end{tikzcd}
		\end{equation*}
		In particular, $i_!\IS_H\simeq \IS_G[G/H]$.
	\end{lem}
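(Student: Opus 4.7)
The plan is to establish the three assertions in turn, exploiting the limit presentation of $\Sp^G$ used already in the proof of \cref{lem:PullbackCommutesWithOmega}.

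For the first claim, I would show that $i^*\colon \Sp^G \to \Sp^H$ preserves all limits; the existence of the left adjoint $i_!$ then follows from the adjoint functor theorem in the presentable setting. To this end, rewrite $\Sp^G \simeq \lim_V \An_*^G$ and $\Sp^H \simeq \lim_V \An_*^H$ as limits in $\Pr^\R$ along the functors $\Omega^V = \Hhom(S^V,-)$, indexed by finite-dimensional representations. Since limits in $\Pr^\R$ are computed levelwise, it suffices to show that each composition $\Sp^G \to \Sp^H \to \An_*^H$ (with the last map being evaluation at some $H$-representation $V$) preserves limits. Using that every $H$-representation embeds into the restriction of a $G$-representation (by Peter--Weyl applied to an induced representation), this composition can be identified with $\Sp^G \to \An_*^G \to \An_*^H$, where the second arrow is precomposition along $\cat{Orb}_H \to \cat{Orb}_G$. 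The evaluation $\Sp^G \to \An_*^G$ preserves limits by the $\Pr^\R$-description, and the precomposition $i^*\colon \An_*^G \to \An_*^H$ preserves all limits (indeed, it is both a left and a right adjoint, with left and right Kan extensions as its two-sided adjoints), so the composition preserves limits, and we are done.

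For the commutative square, both $i_!$'s and both $\Sigma^\infty_{(-)}$'s are left adjoints, so it suffices to verify commutativity of the diagram of right adjoints:
\begin{equation*}
	\begin{tikzcd}
		\An_*^H\dar["\Omega_H^\infty"'] & \An_*^G\lar["i^*"']\dar["\Omega_G^\infty"]\\
		\Sp^H & \Sp^G\lar["i^*"]
	\end{tikzcd}
\end{equation*}
But this is precisely the second diagram in \cref{lem:PullbackCommutesWithOmega}, so there is nothing more to check.

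For the last assertion, applying the commutative square to the terminal pointed anima $*_+ = (H/H)_+ \in \An_*^H$ yields $i_!\IS_H \simeq \Sigma_G^\infty (i_!(*_+))$, and it remains to identify $i_!(*_+) \simeq (G/H)_+$ in $\An_*^G$. The pointed and unpointed left Kan extensions are compatible via $(-)_+$ (both sides being left adjoints whose right adjoints, the respective forgetful functors, commute with $i^*$), so this reduces to the Yoneda fact that the unpointed left Kan extension $\An^H \to \An^G$ sends the representable presheaf $H/H \in \cat{Orb}_H$ to the representable $G/H \in \cat{Orb}_G$, which follows from the definition of $\cat{Orb}_H \to \cat{Orb}_G$ as $H/H' \mapsto G/H'$.

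The only real subtlety lies in the first step: the indexing sets of $G$- and $H$-representations are distinct, and one needs the fact that every $H$-representation embeds into the restriction of some $G$-representation to make the cofinal comparison work. Once that is in place, the remaining steps are essentially bookkeeping around universal properties.
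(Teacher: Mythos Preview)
Your proposal is correct and follows essentially the same approach as the paper: both arguments use that $H$-representations embed into restricted $G$-representations to present $\Sp^G$ and $\Sp^H$ as $\Pr^\R$-limits of the same shape, then invoke that $i^*$ on $\An_*$ intertwines the transition maps (the content of the proof of \cref{lem:PullbackCommutesWithOmega}) to conclude limit preservation, and both deduce the commutative square by passing to right adjoints and citing \cref{lem:PullbackCommutesWithOmega}. Your phrasing in terms of ``checking on projections'' is equivalent to the paper's ``map of $\Pr^\R$-diagrams'' formulation, and you spell out the final ``in particular'' assertion more explicitly than the paper does.
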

	\begin{proof}[Proof sketch]
		To form $\Sp^H$, it's enough to invert all representation spheres of the form $S^{i^*(V)}$ in $\An_*^H$, where $V$ is a finite-dimensional $G$-representation. Thus, we can obtain $\Sp^G$ and $\Sp^H$ by colimit diagrams of the same shape in $\Pr^\L$. Treating them as limit diagrams in $\Pr^\R$ and noting that the transition maps still commute with $i^*\colon \An_*^G\rightarrow \An_*^H$ (see the argument in the proof of \cref{lem:PullbackCommutesWithOmega}) shows that $i^*$ indeed preserves limits. Commutativity of the diagram follows from the right diagram in \cref{lem:PullbackCommutesWithOmega} by passing to left adjoints.
	\end{proof}
	\begin{numpar}[Borel-complete spectra.]\label{par:Borel}
		The full sub-$\infty$-category spanned by $G/\{1\}\in\cat{Orb}_G^\op$ defines a functor $\B G\rightarrow \cat{Orb}_G^\op$. Via precomposition we get a symmetric monoidal functor $\An_*^G\rightarrow \An_*^{\B G}$. Since all representation spheres $S^V\in \An_*^G$ become $\otimes$-invertible under $\Sigma^\infty\colon \An_*^{\B G}\rightarrow \Sp^{\B G}$, we can use the universal property of $\Sp^G$ to obtain a commutative diagram
		\begin{equation*}
			\begin{tikzcd}
				\An_*^G\rar\dar["\Sigma_G^\infty"'] & \An_*^{\B G}\dar["\Sigma^\infty"]\\
				\Sp^G\rar["U_G"] & \Sp^{\B G}
			\end{tikzcd}
		\end{equation*}
		of symmetric monoidal functors in $\Pr^\L$. For a genuine $G$-equivariant spectrum $X$, we think of $U_G(X)$ as the underlying spectrum with its non-genuine $G$-action, and we'll often suppress $U_G$ in the notation. Genuine $G$-equivariant spectra in the image of the right adjoint
		\begin{equation*}
			B_G\colon \Sp^{\B G}\longrightarrow \Sp^{G}
		\end{equation*}
		will be called \emph{Borel-complete} and we call the functor $B_G\circ U_G$ \emph{Borel completion.}
	\end{numpar}
	\begin{lem}
		The functor $B_G\colon \Sp^{\B G}\rightarrow \Sp^{G}$ is fully faithful.
	\end{lem}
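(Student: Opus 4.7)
The plan is to show that the counit $\varepsilon\colon U_GB_G\Rightarrow\id_{\Sp^{\B G}}$ of the adjunction $U_G\dashv B_G$ is an equivalence, which is equivalent to $B_G$ being fully faithful.

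The key observation is that the restriction functor $\varphi\colon\An_*^G\to\An_*^{\B G}$ inducing $U_G$ is restriction along the fully faithful inclusion $i\colon\B G\hookrightarrow\cat{Orb}_G^\op$ selecting the orbit $G/\{1\}$. This inclusion is fully faithful because $G\cong \operatorname{End}_{\cat{Orb}_G}(G/\{1\})$ by inspection. Therefore the right adjoint $\varphi_*$ (right Kan extension along $i$) is fully faithful, so the counit $\varphi\varphi_*\Rightarrow \id_{\An_*^{\B G}}$ is an equivalence.

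Taking right adjoints in the left-hand diagram of \cref{lem:PullbackCommutesWithOmega} yields $\Omega_G^\infty B_G\simeq \varphi_*\Omega^\infty$. Combining this with $\Omega^\infty U_G\simeq \varphi\Omega_G^\infty$ from the right-hand diagram, and using that both $U_G$ and $B_G$ commute with the suspension equivalences $\Sigma^n$ for $n\in\IZ$ (as $U_G$ preserves colimits and $B_G$ preserves limits), I obtain for any $X\in\Sp^{\B G}$ a natural chain of equivalences
\begin{equation*}
\Omega^\infty\Sigma^{-n}U_GB_GX\simeq \varphi\Omega_G^\infty B_G\Sigma^{-n}X\simeq \varphi\varphi_*\Omega^\infty\Sigma^{-n}X\simeq \Omega^\infty\Sigma^{-n}X\,,
\end{equation*}
where the final step uses the previous paragraph. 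A mate-correspondence argument identifies this composite with $\Omega^\infty\Sigma^{-n}\varepsilon_X$.

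Finally, the family $\{\Omega^\infty\Sigma^{-n}\}_{n\in\IZ}$ is jointly conservative on $\Sp^{\B G}\simeq \Fun(\B G,\Sp)$, since equivalences there are detected on underlying spectra and hence on all homotopy groups. It follows that $\varepsilon_X$ is an equivalence, completing the proof. The main nontrivial step will be the mate-correspondence identification above; this is a formal manipulation following from the uniqueness of adjoint pairs and the compatibility of counits under equivalences of left adjoints.
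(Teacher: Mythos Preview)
Your argument is essentially correct and takes a different route from the paper's. You attack the counit $U_GB_G\Rightarrow\id$ directly, reducing via the jointly conservative family $\{\Omega^\infty\Sigma^{-n}\}$ to the anima-level fact that right Kan extension along the fully faithful inclusion $\B G\hookrightarrow\cat{Orb}_G^\op$ is fully faithful. The paper instead produces a \emph{left} adjoint $L$ to $U_G$ (noting that $U_G$ preserves limits, by an argument as in \cref{lem:InducedGSpectra}) and checks the unit $\id\Rightarrow U_GL$ on the compact generator $\IS[G]\in\Sp^{\B G}$, invoking the standard adjoint-triple principle that $L$ is fully faithful iff $B_G$ is. Both proofs ultimately hinge on full faithfulness of $\B G\hookrightarrow\cat{Orb}_G^\op$, but via opposite Kan extensions; the paper's generator check trades your mate bookkeeping for a reduction to \cref{lem:InducedGSpectra}.

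One issue you should address: the citation of \cref{lem:PullbackCommutesWithOmega} is not valid as written. That lemma treats pullbacks $\varphi^*\colon\Sp^K\to\Sp^G$ along group homomorphisms $G\to K$, whereas $U_G\colon\Sp^G\to\Sp^{\B G}$ is not of that form. The compatibility $\Omega_G^\infty B_G\simeq\varphi_*\Omega^\infty$ you get by passing to right adjoints in the defining square of \cref{par:Borel}. For $\Omega^\infty U_G\simeq\varphi\Omega_G^\infty$, however, you must rerun the \emph{proof} of \cref{lem:PullbackCommutesWithOmega} in the Borel setting: express $\Sp^G$ and $\Sp^{\B G}$ as colimits in $\Pr^\L$ over the same diagram of $G$-representations (using that each $\varphi(S^V)$ becomes $\otimes$-invertible in $\Sp^{\B G}$), and invoke the projection formula for left Kan extension along $\B G\hookrightarrow\cat{Orb}_G^\op$, which holds by the abstract-nonsense remark at the end of that proof. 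The argument carries over verbatim, but the lemma as stated does not cover your case.
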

	\begin{proof}
		As in \cref{lem:InducedGSpectra}, one shows that $U_G$ also preserves limits and hence admits a left adjoint $L$. It will be enough to show that the unit $u\colon \id\Rightarrow U_G\circ L$ is an equivalence. Since both $U_G$ and $L$ preserve all colimits, we only need to check that $u$ is an equivalence on the generator $\IS[G]$ of $\Sp^{\B G}$. 
		
		To see this, note that the forgetful functor $\Sp^{\B G}\rightarrow \Sp$ is conservative. Moreover, it's clear from the construction that $\Sp^G\rightarrow \Sp^{\B G}\rightarrow \Sp$ equals $e^*\colon \Sp^G\rightarrow \Sp$, where $e\colon \{1\}\hookrightarrow G$ is the inclusion of the identity element. Since $\IS[G]$ is the image of $\IS$ under the left adjoint of $\Sp^{\B G}\rightarrow \Sp$, it will thus be enough to check that $\IS\rightarrow e^*e_!\IS$ is an equivalence. Using the commutative diagram of \cref{lem:InducedGSpectra}, this reduces to checking that $S^0\rightarrow e_!e^*S^0$ is an equivalence in $\An_*$, which is clear since Kan extension along a fully faithful functor is fully faithful.
	\end{proof}
	\begin{numpar}[Genuine fixed points.]\label{par:GenuineFixedPoints}
		For every morphism $\varphi\colon G\rightarrow K$ of compact Lie groups, the right adjoint $\varphi_*\colon \Sp^G\rightarrow \Sp^K$ of $\varphi^*$ is lax symmetric monoidal and still preserves colimits. Indeed, since $\varphi_*$ is an exact functor between compactly generated stable $\infty$-categories, it will be enough to check that $\varphi^*$ preserves compact objects, which is clear from the description of compact generators in \cref{par:GenuineSpectra}. In the case where $\varphi$ is the projection $\pi_G\colon G\rightarrow\{1\}$ to the trivial group, we also denote $\pi_{G,*}$ by
		\begin{equation*}
			(-)^G\colon \Sp^G\longrightarrow \Sp
		\end{equation*}
		and call this the \emph{genuine $G$-fixed points}. We have $(-)^G\simeq \Hom_{\Sp}(\IS,(-)^G)\simeq \Hom_{\Sp^G}(\IS_G,-)$ by adjunction, and so $(-)^G$ is represented by $\IS_G$.

		If $X\in \Sp^G$ and $i\colon H\hookrightarrow G$ is the inclusion of a closed subgroup, we'll usually write $X^H$ instead of $(i^*X)^H$ for brevity. It follows formally that $(-)^H\colon \Sp^G\rightarrow \Sp$ is represented by $i_!\IS_H\simeq \IS_G[G/H]$.
	\end{numpar}
	
	\begin{lem}\label{lem:SpGCompactGenerators}
		The $\infty$-category $\Sp^G$ is compactly generated, with a set of compact generators given by $\Sigma^{-n}\IS_G[G/H]$ for all $n\geqslant 0$ and all closed subgroups $H\subseteq G$.
	\end{lem}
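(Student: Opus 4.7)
The plan is to reduce to the compact generation by $\{\Sigma^{-V}\IS_G[G/H]\}_{V,H}$ established in \cref{par:GenuineSpectra}. Compactness of each $\Sigma^{-n}\IS_G[G/H]$ is immediate, since $\IS_G[G/H]$ is compact and shifts preserve compactness. To verify generation under colimits, it suffices to show that each $\Sigma^{-V}\IS_G[G/H]$ lies in the thick subcategory $\mathcal{T} \subseteq \Sp^G$ generated by $\{\Sigma^{-n}\IS_G[G/K]\}_{n \geq 0,\,K \subseteq G}$. Using the projection formula $\Sigma^{-V}\IS_G[G/H] \simeq i_!\Sigma^{-i^*V}\IS_H$ for $i\colon H\hookrightarrow G$, together with \cref{lem:InducedGSpectra} (which gives $i_!(\Sigma^{-n}\IS_H[H/K]) \simeq \Sigma^{-n}\IS_G[G/K]$) and the exactness of $i_!$, this further reduces to the case $H = G$: showing $\Sigma^{-V}\IS_G \in \mathcal{T}$ for every finite-dimensional $G$-representation $V$.

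The positive shift $\Sigma^V\IS_G \simeq \Sigma_G^\infty S^V$ lies in $\mathcal{T}$ by Illman's theorem: the smooth compact $G$-manifold $S^V$ admits a finite $G$-CW structure, and applying $\Sigma_G^\infty$ to the resulting filtration exhibits $\Sigma^V\IS_G$ as a finite iterated cofibre of cells $\Sigma^n\IS_G[G/K]$ with $n \geq 0$. To pass to the negative shift, I would invoke Spanier--Whitehead duality on $\Sp^G$. Since $\Sigma^V\IS_G$ is dualizable with dual $\Sigma^{-V}\IS_G$, and duality restricts to a contravariant equivalence on the subcategory of dualizable objects, $\Sigma^{-V}\IS_G$ lies in the thick subcategory generated by the duals of the cells $\Sigma^n\IS_G[G/K]$. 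Wirthmüller's theorem identifies each such dual as $\Sigma^{-n-L_K}\IS_G[G/K]$, where $L_K$ is the tangent $K$-representation on $T_{eK}(G/K)$. In the cases relevant for this paper ($G = S^1$ and its finite cyclic subgroups), $L_K$ is always trivial, so these duals lie in $\mathcal{T}$ and the argument concludes. For general compact Lie $G$, the additional $L_K$-shift can be absorbed by induction on $\dim G$, applied to the $K$-representation $L_K$.

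The main obstacle lies in the duality step: one must carefully identify the Spanier--Whitehead duals of the induced cells $\IS_G[G/K]$ via Wirthmüller's theorem, which is a classical but nontrivial ingredient of equivariant stable homotopy theory. Everything else---the projection formula, the exactness of $i_!$, and Illman's finite $G$-CW theorem---is either established in the preceding excerpt or entirely standard.
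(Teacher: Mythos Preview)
Your proof is correct but takes a genuinely different route. You establish the stronger fact that each $\Sigma^{-V}\IS_G[G/H]$ lies in the \emph{thick} subcategory generated by the $\Sigma^{-n}\IS_G[G/K]$, using the projection formula, Illman's equivariant triangulation of $S^V$, Spanier--Whitehead duality, and the Wirthm\"uller isomorphism. The paper instead argues contrapositively: assuming $X$ is orthogonal to all $\Sigma^{-n}\IS_G[G/H]$, induction on $(\dim G,\lvert\pi_0 G\rvert)$ gives $i^*X\simeq 0$ for every proper $i\colon H\hookrightarrow G$; since $\cofib(S^{V^G}\to S^V)$ lies in the image of $j_!$ from presheaves on $\cat{Orb}_{<G}$ (pure presheaf combinatorics, no triangulation needed), it is a colimit of proper orbits, so tensoring with $X$ kills it and $\Sigma^V X\simeq\Sigma^{V^G}X$; the remaining case $H=G$ then follows from the hypothesis with $n=\dim V^G$. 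The paper's argument is entirely self-contained within the framework of \cref{par:GenuineSpectra}--\cref{lem:InducedGSpectra}: no Illman, and no Wirthm\"uller beyond the finite case proved later in \cref{lem:SGGHselfDual}. Your argument gives a more structural ``cellular'' explanation for why the integer desuspensions suffice, but the Wirthm\"uller step you correctly flag as the main obstacle is nontrivial for positive-dimensional $G$ and is exactly what the paper's approach sidesteps.
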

	\begin{proof}
		The following argument is taken from \cite[Proposition~\chref{2.7}]{HausmannEAST}. We use induction on $(\dim G,\abs{\pi_0(G)})$, ordered lexicographically. Suppose a genuine $G$-equivariant spectrum $X$ satisfies $\Hom_{\Sp^G}(\Sigma^{-n}\IS_G[G/H],X)\simeq 0$ for all closed subgroups $H$ and all $n\geqslant 0$. If $i\colon H\hookrightarrow G$ is the inclusion of any such $H$, then for any closed subgroup $K\subseteq H$ we have
		\begin{equation*}
			0\simeq \Hom_{\Sp^G}\bigl(\IS_G[G/K],X\bigr)\simeq \Hom_{\Sp^G}\bigl(i_!\IS_H[H/K],X\bigr)\simeq \Hom_{\Sp^H}\bigl(\IS_H[H/K],i^*(X)\bigr)
		\end{equation*}
		and therefore $i^*(X)\simeq 0$ by the inductive hypothesis. As a consequence, we see that $\Hom_{\Sp^G}(\Sigma^{-V}\IS_G[G/H],X)\simeq 0$ for all proper closed subgroups $H\subsetneq G$ and all finite-dimensional $G$-representations $V$.
		
		It remains to show $\Hom_{\Sp^G}(\Sigma^{-V}\IS_G,X)\simeq 0$ for all $V$. Let $j\colon \cat{Orb}_{<G}\hookrightarrow \cat{Orb}_G$ denote the inclusion of the full sub-$\infty$-category spanned by all objects except the terminal object $G/G$. Let $\An^{<G}_*\coloneqq \PSh(\cat{Orb}_{<G})_*$. A straightforward application of the Kan extension formula shows that the left Kan extension functor $j_!\colon \An_*^{<G}\rightarrow \An_*^G$ is fully faithful, with essential image given by those pointed genuine $G$-equivariant anima $Y$ that satisfy $Y^G\simeq *$ (i.e.\ those presheaves that vanish on $G/G\in \cat{Orb}_G$). Since $\cofib(S^{V^G}\rightarrow S^V)$ is of this form, it can be written as a colimit of $(G/H)_+$ for proper closed subgroups $H\subsetneq G$. It follows that $\cofib(\Sigma^{V^G}X\rightarrow \Sigma^VX)\simeq 0$, since it can be written as a colimit of terms of the form
		\begin{equation*}
			\IS_G[G/H]\otimes X\simeq i_!\IS_H\otimes X\simeq i_!\bigl(\IS_H\otimes i^*(X)\bigr)\simeq 0\,.
		\end{equation*}
		By our assumption on $X$, we also have $\Hom_{\Sp^G}(\IS_G,\Sigma^{V^G}X)\simeq \Hom_{\Sp^G}(\Sigma^{-n}\IS_G,X)\simeq 0$, where $n\coloneqq \dim V^G$. We conclude $0\simeq \Hom_{\Sp^G}(\IS_G,\Sigma^VX)\simeq \Hom_{\Sp^G}(\Sigma^{-V}\IS_G,X)$, as desired.
	\end{proof}
	\begin{lem}\label{lem:SGGHselfDual}
		If $G$ is finite, then the compact objects $\IS_G[G/H]\in \Sp^G$ are self-dual for all subgroups $H\subseteq G$. In particular, $\Sp^G$ is a rigid symmetric monoidal $\infty$-category.
	\end{lem}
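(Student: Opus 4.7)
The plan is to deduce both assertions from the Wirthmüller isomorphism for finite groups. By \cref{lem:InducedGSpectra}, for any subgroup $H \subseteq G$ with inclusion $i\colon H \hookrightarrow G$ we have $\IS_G[G/H] \simeq i_!\IS_H$, so the question is really about understanding the duality of induced objects.

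First I would compute the dual via the internal-hom adjunction. Since $i^*$ is symmetric monoidal with left adjoint $i_!$, one has the standard formula
\begin{equation*}
    \Hhom_{\Sp^G}(i_!X, Y) \simeq i_*\Hhom_{\Sp^H}(X, i^*Y)
\end{equation*}
(this holds in any closed symmetric monoidal $\infty$-category for any symmetric monoidal functor with a left adjoint). Applying this with $X = \IS_H$ and $Y = \IS_G$, and using $i^*\IS_G \simeq \IS_H$ (which follows from $i^*$ being symmetric monoidal), we get
\begin{equation*}
    \bigl(\IS_G[G/H]\bigr)^\vee \simeq i_*\IS_H\,.
\end{equation*}
Thus self-duality of $\IS_G[G/H]$ reduces to the identification $i_*\IS_H \simeq i_!\IS_H$, which is exactly the Wirthmüller isomorphism. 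For finite $G$ (hence finite $H$), this is classical: one way to obtain it model-independently is to observe that $G/H$ is a finite $G$-set, so the functor $\IS_G[G/H]\otimes -\colon \Sp^G \to \Sp^G$ is a Kan extension along a finite cover of groupoids $BH \to BG$; such a Kan extension is ambidextrous in any stable presentable $\infty$-category (no representation-sphere twist appears because the adjoint representation of a finite group is $0$-dimensional). Concretely, one constructs a counit $i_!i^* \Rightarrow \id$ using the transfer (equivalently, the dual of the diagonal $G/H \to G/H \times G/H$), and checks that the resulting transformation is an equivalence after evaluation at the compact generators $\IS_G[G/K]$, where it reduces to the easy double-coset decomposition for finite sets.

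Granting this, $\IS_G[G/H]$ is self-dual with evaluation and coevaluation given by the unit/counit of the ambidextrous adjunction $i_! \dashv i^* \dashv i_*$. For the rigidity statement: by \cref{lem:SpGCompactGenerators}, the family $\{\Sigma^{-n}\IS_G[G/H]\}$ generates $\Sp^G$ under colimits. Each $\Sigma^{-n}\IS_G[G/H]$ is dualizable with dual $\Sigma^n\IS_G[G/H]$ (using that $\Sigma$ is $\otimes$-invertible and that self-duality is preserved by suspension shifts), and the tensor unit $\IS_G = \IS_G[G/G]$ is itself compact. It is a general fact about compactly generated symmetric monoidal stable $\infty$-categories with compact unit that if some compact-generating set consists of dualizable objects, then every compact object is dualizable and vice versa, so $\Sp^G$ is rigid.

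The main obstacle is of course the Wirthmüller isomorphism itself. If we wanted to avoid invoking it as a black box, the cleanest route would be to develop enough of the formalism of ambidextrous adjunctions along finite covers (in the style of Hopkins--Lurie) to apply it to the functor $i^*\colon \Sp^G \to \Sp^H$, and then read off $i_! \simeq i_*$ directly. All of this is standard for genuine equivariant spectra over finite groups and presumably is what the author intends to cite or sketch.
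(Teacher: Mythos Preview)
Your approach is correct and genuinely different from the paper's. You reduce self-duality to the Wirthm\"uller isomorphism $i_!\simeq i_*$ and then read off the duality data from the ambidextrous adjunction $i_!\dashv i^*\dashv i_!$. The paper instead constructs the evaluation and coevaluation by hand: the evaluation comes from the pointed map $(G/H\times G/H)_+\rightarrow S^0$ collapsing the off-diagonal, and the coevaluation is the composite of the transfer $\operatorname{tr}_H^G\colon \IS_G\rightarrow \IS_G[G/H]$ (built via a Pontryagin--Thom collapse on the permutation representation $\IR[G/H]$) with the diagonal; the triangle identities are then checked unstably. In effect the paper is proving just enough of Wirthm\"uller to get the duality datum, without ever naming it.

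One caution about your sketch: the sentence ``the functor $\IS_G[G/H]\otimes-$ is a Kan extension along a finite cover of groupoids $\B H\rightarrow \B G$; such a Kan extension is ambidextrous in any stable presentable $\infty$-category'' is not quite right as stated. That ambidexterity argument lives in the Borel world $\Sp^{\B G}$, whereas here you need $i_!\simeq i_*$ in $\Sp^G$. The genuine Wirthm\"uller isomorphism does hold for finite $G$, but its proof goes through essentially the same transfer construction the paper writes down (or a parameterised/Mackey-functor refinement thereof), not through the $\pi$-finite ambidexterity of Hopkins--Lurie applied to $\B H\rightarrow \B G$ directly. Also, the phrase ``a counit $i_!i^*\Rightarrow\id$ using the transfer'' has the roles reversed: the transfer gives the \emph{unit} $\id\Rightarrow i_!i^*$ exhibiting $i_!$ as a right adjoint. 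None of this breaks your argument, since Wirthm\"uller is a standard result you are entitled to cite, but your justification of it should either be tightened or replaced by a reference.
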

	\begin{proof}[Proof sketch]
		We need to construct a coevaluation $\eta\colon \IS_G\rightarrow \IS_G[G/H]\otimes \IS_G[G/H]$ and an evaluation $\epsilon\colon \IS_G[G/H]\otimes\IS_G[G/H]\rightarrow \IS_G$ satisfying the triangle identities. To construct $\epsilon$, we simply apply $\Sigma_G^\infty$ to the map $(G/H\times G/H)_+\rightarrow S^0$ that sends the diagonal to the non-basepoint and everything else to the basepoint.
		
		Let us now construct $\eta$. Let $V\coloneqq \IR[G/H]$. Equip $V$ with an inner product in such a way that $\{\sigma\}_{\sigma\in G/H}$ is an orthonormal basis. Consider the \enquote{diagonal map} $V\rightarrow V\times (G/H)$, whose $\sigma$\textsuperscript{th} component is given by $\IR[G/H]\rightarrow \IR\sigma\rightarrow \IR\sigma\times\{\sigma\}$, where the first map is the orthogonal projection. This map is $G$-equivariant and proper, so it induces a $G$-equivariant map of one-point compactifications, which takes the form $S^V\rightarrow S^V\wedge (G/H)_+$. Applying $\Sigma_G^{\infty-V}$, we obtain a map  $\operatorname{tr}_H^G\colon \IS_G\rightarrow \IS_G[G/H]$, called the \emph{transfer}. Let also $\Delta\colon G/H\rightarrow G/H\times G/H$ denote the diagonal. We finally define $\eta$ as the composite
		\begin{equation*}
			\eta\colon \IS_G\xrightarrow{\operatorname{tr}_H^G}\IS_G[G/H]\xrightarrow{\IS_G[\Delta]}\IS_G[G/H]\otimes\IS_G[G/H]\,.
		\end{equation*}
		The triangle identities can already be verified on the level of genuine $G$-equivariant anima.
	\end{proof}
	\begin{rem}
		For arbitrary compact Lie groups $G$, it is still true that $\IS_G[G/H]$ are dualisable, so that $\Sp^G$ is still rigid. See e.g.\ \cite[\S{\chref[subsection]{2.3}}]{HausmannEAST}.
	\end{rem}
	
	\begin{numpar}[Genuine vs.\ homotopy fixed points.]\label{par:GenuineVsHomotopyFixedPoints}
		By abuse of notation, let us denote the composition ofthe functor  $U_G\colon \Sp^G\rightarrow \Sp^{\B G}$ from \cref{par:Borel} with the homotopy fixed point functor $(-)^{\h G}\colon \Sp^{\B G}\rightarrow \Sp$ also by $(-)^{\h G}$. For any $X\in \Sp^G$ we have
		\begin{equation*}
			\bigl(B_GU_G(X)\bigr)^G\simeq \Hom_{\Sp^G}\bigl(\IS_G,B_GU_G(X)\bigr)\simeq \Hom_{\Sp^{\B G}}\bigl(\IS,U_G(X)\bigr)\simeq X^{\h G}\,.
		\end{equation*}
		Thus, the natural transformation $\id\Rightarrow U_G\circ B_G$ (Borel-completion) induces a symmetric monoidal transformation of lax symmetric monoidal functors
		\begin{equation*}
			(-)^G\Longrightarrow (-)^{\h G}
		\end{equation*}
		In general, this is far from being an equivalence; in fact, the goal of this whole section is to explain how the Habiro descent of the $q$-Hodge complex is accounted for by the failure of $\THH(\KU_R/\KU_A)^{C_m}\rightarrow \THH(\KU_R/\KU_A)^{\h C_m}$ to be an equivalence.
	\end{numpar}
	\begin{numpar}[Geometric fixed points.]\label{par:GeometricFixedPoints}
		The functor $\Sigma^\infty\circ (-)^G\colon \An_*^G\rightarrow \Sp$ is symmetric monoidal and inverts all representation spheres. Therefore it induces a symmetric monoidal functor
		\begin{equation*}
			(-)^{\Phi G}\colon \Sp^G\longrightarrow \Sp
		\end{equation*}
		in $\Pr^\L$, called \emph{geometric fixed points}.%
		\footnote{Geometric fixed points are usually denoted $\Phi^G$. We chose $(-)^{\Phi G}$ to be in line with $(-)^{G}$, $(-)^{\h G}$, and $(-)^{\t G}$.}
		%
		There always exists a natural transformation
		\begin{equation*}
			(-)^G\Longrightarrow (-)^{\Phi G}\,.
		\end{equation*}
		One way to construct this would be as the following composite (see \cite[\S{\chref[subsection]{2.2}}]{HausmannEAST}):
		\begin{equation*}
			X^G\simeq \Hom_{\Sp^G}(\IS_G,X)\longrightarrow\Hom_{\Sp}\bigl(\IS_G^{\Phi G},X^{\Phi G}\bigr)\simeq\Hom_{\Sp}\bigl(\IS,X^{\Phi G}\bigr)\simeq X^{\Phi G}\,.
		\end{equation*}
		Just as for genuine fixed points, for every closed subgroup $H\subseteq G$, we also consider the functor $(-)^{\Phi H}\colon \Sp^G\rightarrow \Sp$, suppressing the pullback $\Sp^G\rightarrow \Sp^H$ in the notation.
	\end{numpar}
	\begin{lem}\label{lem:GenuineGeometricJointlyConservative}
		The family of functors $\{(-)^H\}_{H\subseteq G}$ in $\Fun(\Sp^G,\Sp)$ is jointly conservative. The same is true for $\{(-)^{\Phi H}\}_{H\subseteq G}$.
	\end{lem}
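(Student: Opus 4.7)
The plan has two parts, corresponding to the two families of fixed-point functors.

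For the first claim, I will invoke \cref{lem:SpGCompactGenerators}, which tells us that the objects $\Sigma^{-n}\IS_G[G/H]$ for $n\geqslant 0$ and $H\subseteq G$ closed form a set of compact generators of $\Sp^G$. By the representability observed in \cref{par:GenuineFixedPoints}, we have
\begin{equation*}
	\Hom_{\Sp^G}\bigl(\Sigma^{-n}\IS_G[G/H],X\bigr)\simeq \Sigma^n(X^H)\,.
\end{equation*}
If $X^H\simeq 0$ for every closed subgroup $H$, all these mapping spectra vanish, so $X\simeq 0$. This settles the genuine case.

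For the geometric case, I would first reduce, via the genuine-case result already proved, to the statement: if $X^{\Phi K}\simeq 0$ for every closed $K\subseteq G$, then $X^G\simeq 0$. Indeed, for any closed $i\colon H\hookrightarrow G$, pullback commutes with geometric fixed points (this is immediate from the universal property defining $(-)^{\Phi H}$ in \cref{par:GeometricFixedPoints}), so the hypothesis on $X$ passes to $i^*X\in\Sp^H$, and the desired statement $X^H\simeq 0$ becomes the reduced claim applied to $H$.

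The reduced claim I would prove by induction on the group: for finite $G$ by induction on $|G|$, with base case the trivial group where $(-)^{\Phi\{1\}}=\id=(-)^{\{1\}}$. The inductive step uses the isotropy separation cofibre sequence associated with the family $\Pp$ of proper closed subgroups,
\begin{equation*}
	E\Pp_+\otimes X\longrightarrow X\longrightarrow \widetilde{E\Pp}\otimes X\,.
\end{equation*}
The defining property of $\widetilde{E\Pp}$ gives $(\widetilde{E\Pp}\otimes X)^G\simeq X^{\Phi G}\simeq 0$. On the other side, $E\Pp_+$ is a colimit of $(G/H)_+$ for $H\in\Pp$, and using $\IS_G[G/H]\simeq i_!\IS_H$ together with the projection formula one computes $((G/H)_+\otimes X)^G\simeq (i^*X)^H$. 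By the inductive hypothesis applied to $i^*X\in\Sp^H$ (whose geometric fixed points all vanish), each $(i^*X)^H$ vanishes, so $(E\Pp_+\otimes X)^G\simeq 0$ and hence $X^G\simeq 0$.

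The main subtlety I anticipate is the extension from finite groups to compact Lie groups: there one must induct along the poset of closed subgroups using a suitable stratification by conjugacy classes, together with a Lie-theoretic version of the isotropy separation sequence. For our applications only $G=S^1$ and $G=C_m$ arise, so the finite case suffices for the cyclic subgroups, and for $G=S^1$ one needs the additional observation that $S^1$ itself contributes through the family of all finite cyclic subgroups plus $S^1$, handled by a transfinite version of the same cofibre-sequence argument.
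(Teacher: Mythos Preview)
Your proposal is correct and follows essentially the same route as the paper: genuine conservativity from the compact generators of \cref{lem:SpGCompactGenerators}, then the geometric case by induction using the isotropy separation sequence for the family of proper subgroups. Two small remarks: the identification $(\widetilde{E\Pp}\otimes X)^G\simeq X^{\Phi G}$ that you invoke as a ``defining property'' is not immediate from the definitions in \cref{par:GeometricFixedPoints} and is proved separately in the paper as \cref{lem:EPGeometrixFixedPoints}; and the ``main subtlety'' you anticipate for compact Lie groups is handled in the paper not by a transfinite argument but simply by running the same induction on $(\dim G,\abs{\pi_0(G)})$ ordered lexicographically, which works uniformly.
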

	\begin{proof}
		Both assertions are classical; see e.g.\ \cite[Proposition~\chref{3.3.10}]{SchwedeGlobal} for the case of geometric fixed points. We'll give a proof by abstract nonsense, following \cite[\S\S{\chref[subsection]{2.2}}--{\chref[subsection]{2.3}}]{HausmannEAST}.
		
		For genuine fixed points, joint conservativity follows immediately from \cref{lem:SpGCompactGenerators}. For geometric fixed points, assume $X\in \Sp^G$ satisfies $X^{\Phi H}\simeq 0$ for all~$H$. We wish to show $X\simeq 0$. Arguing by induction over $(\dim G,\abs{\pi_0(G)})$, we may assume $i^*(X)\simeq 0$ for all inclusions $i\colon H\hookrightarrow G$ of proper closed subgroups. As in the proof of \cref{lem:SpGCompactGenerators}, this implies $\IS_G[G/H]\otimes X\simeq 0$ for all such $H$.
		
		As in the proof of \cref{lem:SpGCompactGenerators}, let now $j\colon \cat{Orb}_{<G}\hookrightarrow \cat{Orb}_G$ denote the inclusion of the full sub-$\infty$-category spanned by all objects except $G/G$ and put $\An^{<G}\coloneqq\PSh(\cat{Orb}_{<G})$. Let $s\colon \{G/G\}\hookrightarrow \cat{Orb}_G$ denote the complementary inclusion. Let $j_!\colon \An^{<G}\shortdoublelrmorphism \An^G\noloc j^*$ be the adjunction given by left Kan extension/restriction along $j$ and let $s^*\colon \An_*^G\shortdoublelrmorphism \An_*\noloc s_*$ be the adjunction given by restriction/right Kan extension along $s$. We denote $\mathrm{E}\Pp_G\coloneqq j_!j^*(*)$ and $\widetilde{\mathrm{E}}\Pp_G\coloneqq s_*s^*S^0$ (in the classical setup this has intrinsic meaning; for us it's just notation). Then the Kan extension formula shows that%
		\begin{equation*}
			(\mathrm{E}\Pp_G)^H\simeq \begin{cases*}
				\emptyset & if $H=G$\\
				* & if $H\subsetneq G$
			\end{cases*}\quad\text{and}\quad
			(\widetilde{\mathrm{E}}\Pp_G)^H\simeq \begin{cases*}
				S^0 & if $H=G$\\
				* & if $H\subsetneq G$
			\end{cases*}\,.
		\end{equation*}
		Thus the canonical sequence $(\mathrm{E}\Pp_G)_+\rightarrow S^0\rightarrow \widetilde{\mathrm{E}}\Pp_G$ induced by the universal property of Kan extension is a cofibre sequence in $\An_*^G$. It follows that $\IS_G[\mathrm{E}\Pp_G]\rightarrow \IS_G\rightarrow \Sigma_G^\infty(\widetilde{\mathrm{E}}\Pp_G)$ is a cofibre sequence in $\Sp^G$, respectively. We have $\IS_G[\mathrm{E}\Pp_G]\otimes X\simeq 0$ as $\IS_G[\mathrm{E}\Pp_G]$ is contained in the full sub-$\infty$-category generated under colimits by $\IS_G[G/H]$ for proper closed subgroups $H\subsetneq G$. It will thus be enough to show $\Sigma_G^\infty(\widetilde{\mathrm{E}}\Pp_G)\otimes X\simeq 0$. Since $(-)^{\Phi H}$ is symmetric monoidal, we still have $(\Sigma_G^\infty(\widetilde{\mathrm{E}}\Pp_G)\otimes X)^{\Phi H}\simeq 0$ and so the inductive hypothesis shows $(\Sigma_G^\infty(\widetilde{\mathrm{E}}\Pp_G)\otimes X)^{H}\simeq 0$ for all proper closed subgroups $H\subsetneq G$. It remains to show $(\Sigma_G^\infty(\widetilde{\mathrm{E}}\Pp_G)\otimes X)^{G}\simeq 0$, which follows from the assumption $X^{\Phi G}\simeq 0$ using \cref{lem:EPGeometrixFixedPoints} below.
	\end{proof}
	\begin{lem}\label{lem:EPGeometrixFixedPoints}
		With notation as above, for any $X\in \Sp^G$ there is a functorial equivalence
		\begin{equation*}
			\bigl(\Sigma_G^\infty(\widetilde{\mathrm{E}}\Pp_G)\otimes X\bigr)^G\overset{\simeq}{\longrightarrow} X^{\Phi G}\,.
		\end{equation*}
	\end{lem}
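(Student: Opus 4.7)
The plan is to construct a natural transformation $\alpha \colon (\Sigma_G^\infty \widetilde{\mathrm{E}}\Pp_G \otimes -)^G \Rightarrow (-)^{\Phi G}$ and then verify it is an equivalence on a generating set of $\Sp^G$. To build $\alpha_X$, first apply the natural transformation $(-)^G \Rightarrow (-)^{\Phi G}$ from \cref{par:GeometricFixedPoints} to $\Sigma_G^\infty \widetilde{\mathrm{E}}\Pp_G \otimes X$, and then use symmetric monoidality of $(-)^{\Phi G}$ together with the identification $(\Sigma_G^\infty \widetilde{\mathrm{E}}\Pp_G)^{\Phi G} \simeq \IS$. The latter is a special case of the universal property of $(-)^{\Phi G}$ extending $\Sigma^\infty \circ (-)^G$ on $\An_*^G$, applied to $\widetilde{\mathrm{E}}\Pp_G$, whose $G$-fixed points are $S^0$ by construction.

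Both the source and target of $\alpha$ preserve arbitrary colimits: the source is the composite of smashing with a fixed object and $(-)^G$, which preserves colimits by the argument recorded in \cref{par:GenuineFixedPoints}; the target preserves colimits by the $\Pr^\L$-construction of $(-)^{\Phi G}$ in \cref{par:GeometricFixedPoints}. Hence by \cref{lem:SpGCompactGenerators} it suffices to verify $\alpha_X$ on compact generators $X = \Sigma^{-V}\IS_G[G/H]$ for all closed $H \subseteq G$ and finite-dimensional $G$-representations $V$.

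For proper subgroups $H \subsetneq G$ with inclusion $i \colon H \hookrightarrow G$, both sides vanish. The source vanishes because $i^*\Sigma_G^\infty \widetilde{\mathrm{E}}\Pp_G \simeq 0$ in $\Sp^H$ (as $(\widetilde{\mathrm{E}}\Pp_G)^K$ is contractible for every $K \subseteq H$), combined with the projection formula for the adjunction $(i_!, i^*)$ from \cref{lem:InducedGSpectra}, yielding $\Sigma_G^\infty \widetilde{\mathrm{E}}\Pp_G \otimes \IS_G[G/H] \simeq i_!(i^*\Sigma_G^\infty \widetilde{\mathrm{E}}\Pp_G) \simeq 0$. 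The target vanishes because symmetric monoidality of $(-)^{\Phi G}$ and its universal property give $\IS_G[G/H]^{\Phi G} \simeq \Sigma^\infty((G/H)^G_+) \simeq 0$, since $(G/H)^G$ is empty.

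The remaining case $H = G$ reduces, after rewriting $\Sigma^{-V}\Sigma_G^\infty \widetilde{\mathrm{E}}\Pp_G \simeq \Sigma_G^\infty(S^{-V} \wedge \widetilde{\mathrm{E}}\Pp_G)$ and noting that $S^{-V} \wedge \widetilde{\mathrm{E}}\Pp_G \in \An_*^G$ has trivial $K$-fixed points for $K \subsetneq G$ and $G$-fixed points $S^{-V^G}$, to the key identity $(\Sigma_G^\infty Z)^G \simeq \Sigma^\infty(Z^G)$ for any $Z \in \An_*^G$ supported at the top stratum of $\cat{Orb}_G$; the target side $(\Sigma^{-V}\IS_G)^{\Phi G}$ then matches $\Sigma^{-V^G}\IS$ by symmetric monoidality of $(-)^{\Phi G}$ together with $(S^V)^{\Phi G} \simeq \Sigma^{V^G}\IS$. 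This identity is the main obstacle---essentially the tom Dieck splitting restricted to $Z$ with only the $G$-stratum contributing. I would prove it using the recollement $(j_!, j^*) \dashv (s^*, s_*)$ from the proof of \cref{lem:GenuineGeometricJointlyConservative}: such $Z$ lie in the essential image of $s_*$, and by cell-induction on $W \in \An_*$ together with the colimit-preservation of $\Sigma_G^\infty$ and $(-)^G$, one reduces to checking $(\Sigma_G^\infty s_*(S^n))^G \simeq \Sigma^\infty S^n$ on spheres, which can be established directly by adjunction together with the observation that $s_*$ is fully faithful onto the subcategory of pointed $G$-anima with trivial restriction to $\cat{Orb}_{<G}$.
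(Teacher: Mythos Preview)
Your overall strategy matches the paper's: construct the comparison map via the transformation $(-)^G\Rightarrow(-)^{\Phi G}$, use colimit-preservation of both sides, and verify on generators, treating proper $H\subsetneq G$ and $H=G$ separately. The construction of the map and the vanishing for proper $H$ are fine and essentially identical to the paper.

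There are, however, two problems in the $H=G$ case.

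First, a minor but confusing slip: you cite \cref{lem:SpGCompactGenerators} but then use generators $\Sigma^{-V}\IS_G[G/H]$. That lemma gives the generators $\Sigma^{-n}\IS_G[G/H]$ with $n\in\IZ$, so the $H=G$ case reduces to $X=\IS_G$ and integer shifts. Your attempted rewriting $\Sigma^{-V}\Sigma_G^\infty\widetilde{\mathrm{E}}\Pp_G\simeq\Sigma_G^\infty(S^{-V}\wedge\widetilde{\mathrm{E}}\Pp_G)$ with $S^{-V}\wedge\widetilde{\mathrm{E}}\Pp_G\in\An_*^G$ is not meaningful, as $S^{-V}$ does not exist in $\An_*^G$.

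Second, and more seriously, the final step is circular. You propose to prove $(\Sigma_G^\infty Z)^G\simeq\Sigma^\infty(Z^G)$ for $Z$ in the image of $s_*$ by cell-induction, reducing to $Z=s_*(S^n)$. But $s_*(S^n)\simeq\Sigma^n\widetilde{\mathrm{E}}\Pp_G$, so the case $n=0$ is precisely the statement $(\Sigma_G^\infty\widetilde{\mathrm{E}}\Pp_G)^G\simeq\IS$ that you are trying to prove; the induction never leaves the base case. Your phrase ``established directly by adjunction'' does not supply an argument: the adjunction $(s^*,s_*)$ lives in $\An_*^G$, while you need to compute $\Hom_{\Sp^G}(\IS_G,\Sigma_G^\infty s_*S^0)$, and there is no formal adjunction that collapses this.

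The paper fills this gap by an explicit computation. Using the colimit presentation of $\Sp^G$, one has
\begin{equation*}
\Omega^\infty\bigl(\Sigma_G^\infty\widetilde{\mathrm{E}}\Pp_G\bigr)^G\simeq\colimit_{V\subseteq\Uu}\Map_{\An_*^G}\bigl(S^V,S^V\wedge\widetilde{\mathrm{E}}\Pp_G\bigr)\,.
\end{equation*}
The key identity, checked via the Kan extension formula, is $S^V\wedge s_*S^0\simeq s_*S^{V^G}$: smashing with $S^V$ contributes $S^{V^K}$ on the $K$-stratum, but $\widetilde{\mathrm{E}}\Pp_G$ kills every stratum below $G$. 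Only then does the $(s^*,s_*)$-adjunction apply, giving $\Map_{\An_*^G}(S^V,s_*S^{V^G})\simeq\Map_{\An_*}(S^{V^G},S^{V^G})$, and the colimit over $V$ yields $\Omega^\infty\IS$. This is the computation your sketch is missing.
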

	\begin{proof}
		Let us first construct the functorial map. With notation as in the proof of \cref{lem:GenuineGeometricJointlyConservative} above, we have $\IS_G[\mathrm{E}\Pp_G]^{\Phi G}\simeq \IS[(\mathrm{E}\Pp_G)^G]\simeq 0$. Thus, if we apply the natural transformation $(-)^G\Rightarrow (-)^{\Phi G}$ to the cofibre sequence $\IS_G[\mathrm{E}\Pp_G]\otimes X\rightarrow \IS_G\otimes X\rightarrow \Sigma_G^\infty(\widetilde{\mathrm{E}}\Pp_G)\otimes X$, it will induce the desired map.
		
		Let us now verify that this map is an equivalence. Since $(-)^G$ and $(-)^{\Phi G}$ preserve colimits, it's enough to check the case $X\simeq \IS_G[G/H]$. For proper subgroups $H\subsetneq G$ we have $(G/H)^G\simeq *$ and so $\IS_G[G/H]^{\Phi G}\simeq 0$ as well as
		\begin{equation*}
			\Sigma_G^\infty(\widetilde{\mathrm{E}}\Pp_G)\otimes \IS_G[G/H]\simeq \Sigma_G^\infty \bigl(\widetilde{\mathrm{E}}\Pp_G\wedge (G/H)_+\bigr)\simeq \Sigma_G^\infty(*)\simeq 0\,.
		\end{equation*}
		It remains to show that $\Sigma_G^\infty(\widetilde{\mathrm{E}}\Pp_G)^G\rightarrow \IS_G^{\Phi G}\simeq \IS$ is an equivalence. This can be checked on underlying anima. Using the definition of $\Sp^G$ as a colimit, we see
		\begin{equation*}
			\Omega^\infty\bigl(\Sigma_G^\infty(\widetilde{\mathrm{E}}\Pp_G)^G\bigr)\simeq \Omega^\infty\Hom_{\Sp^G}\bigl(\IS_G,\Sigma_G^\infty(\widetilde{\mathrm{E}}\Pp_G)\bigr)\simeq \colimit_{V\subseteq \Uu}\Map_{\An_*^G}\left(S^V,S^V\wedge \widetilde{\mathrm{E}}\Pp_G\right)\,,
		\end{equation*}
		where $\Uu$ is a complete $G$-universe, that is, a direct sum of countably many copies of each irreducible $G$-representation, and $V$ ranges through all finite-dimensional subrepresentations of $\Uu$. Now recall that $\widetilde{\mathrm{E}}\Pp_G\simeq s_*s^*S^0$. Using the Kan extension formula, it's straightforward to check $S^V\wedge s_*s^*S^0\simeq s_*S^{V^G}$ and so the colimit above can be rewritten as desired:
		\begin{equation*}
			\colimit_{V\subseteq \Uu}\Map_{\An_*^G}\bigl(S^V,s_*S^{V^G}\bigr)\simeq \colimit_{V\subseteq \Uu}\Map_{\An_*}\bigl(S^{V^G},S^{V^G}\bigr)\simeq \Omega^\infty \IS\,.\qedhere
		\end{equation*}
	\end{proof}
	Using a similar argument, we can also show the following assertion:
	\begin{lem}\label{lem:GenuineGeometricBoundedBelow}
		Let $G$ be finite. For a genuine $G$-equivariant spectrum $X$, the following are equivalent: 
		\begin{alphanumerate}
			\item For all subgroups $H\subseteq G$, the genuine fixed points $X^H$ are bounded below.
			\item For all subgroups $H\subseteq G$, the geometric fixed points $X^{\Phi H}$ are bounded below.
		\end{alphanumerate}
	\end{lem}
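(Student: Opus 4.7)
The plan is to induct on $|G|$, with the key input being the cofibre sequence
\begin{equation*}
	(\IS_G[\mathrm{E}\Pp_G]\otimes X)^G\longrightarrow X^G\longrightarrow X^{\Phi G}
\end{equation*}
in $\Sp$ obtained by applying the exact functor $(-)^G$ to the isotropy separation sequence $\IS_G[\mathrm{E}\Pp_G]\otimes X\to X\to \Sigma_G^\infty(\widetilde{\mathrm{E}}\Pp_G)\otimes X$ constructed in the proof of \cref{lem:GenuineGeometricJointlyConservative}, and invoking \cref{lem:EPGeometrixFixedPoints} to identify the right-most term. The base case $G=\{1\}$ is trivial. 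For the inductive step, it will suffice to show that $(\IS_G[\mathrm{E}\Pp_G]\otimes X)^G$ is bounded below whenever either (a) or (b) restricted to proper subgroups $H\subsetneq G$ holds. By the inductive hypothesis applied to each proper $H$ (with $X$ replaced by $i^*X$), either of these assumptions implies the other, and in particular implies that $X^K$ is bounded below for all $K\subsetneq G$.

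To control the left-most term, recall that $\mathrm{E}\Pp_G$ admits a $G$-CW structure whose cells are of the form $G/H\times D^n$ for $H\subsetneq G$ and $n\geqslant 0$; hence $\IS_G[\mathrm{E}\Pp_G]$ is a (sequential) colimit in $\Sp^G$ of pushouts of coproducts of cells $\Sigma^n\IS_G[G/H]$ with proper isotropy. Using the self-duality of $\IS_G[G/H]$ from \cref{lem:SGGHselfDual} together with the induction/restriction adjunction $i_!\dashv i^*$, one computes
\begin{equation*}
	(\IS_G[G/H]\otimes X)^G\simeq \Hom_{\Sp^G}\bigl(\IS_G[G/H],X\bigr)\simeq X^H\,.
\end{equation*}
Since $\IS_G$ is compact in $\Sp^G$ by \cref{lem:SpGCompactGenerators}, the functor $(-)^G$ preserves filtered colimits; being exact, it preserves all colimits. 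Thus $(\IS_G[\mathrm{E}\Pp_G]\otimes X)^G$ is itself a colimit of cells of the form $\Sigma^n X^H$ with $H\subsetneq G$ and $n\geqslant 0$. Since $G$ has only finitely many subgroups, the boundedness-below bounds for the various $X^H$ can be taken uniform, and so the colimit is bounded below.

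Combining this with the cofibre sequence above, we conclude that $X^G$ is bounded below if and only if $X^{\Phi G}$ is, which together with the inductive hypothesis on proper subgroups yields the equivalence of (a) and (b) at the level of $G$ itself. The main bookkeeping point is the interaction of $(-)^G$ with the cellular colimit presenting $\IS_G[\mathrm{E}\Pp_G]$, which reduces to compactness of $\IS_G$ and exactness; everything else is formal manipulation of the isotropy separation sequence together with the duality and adjunction identities already available in the paper.
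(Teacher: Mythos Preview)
Your proof is correct and follows essentially the same approach as the paper: both induct on $|G|$, use the isotropy separation cofibre sequence $(\IS_G[\mathrm{E}\Pp_G]\otimes X)^G\to X^G\to X^{\Phi G}$, identify $(\IS_G[G/H]\otimes X)^G\simeq X^H$ via self-duality, and invoke finiteness of $G$ to obtain a uniform lower bound on the cells so that the colimit remains bounded below. Your write-up is slightly more explicit about the CW structure on $\mathrm{E}\Pp_G$ and about why $(-)^G$ commutes with the relevant colimit, but the argument is the same.
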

	\begin{proof}
		Via induction on $\abs{G}$, it will be enough to show under the hypothesis that $X^H$ is bounded below for all proper subgroups $H\subsetneq G$, the genuine fixed points $X^G$ are bounded below if and only if the geometric fixed points $X^{\Phi G}$ are bounded below. Using \cref{lem:EPGeometrixFixedPoints} and the proof of \cref{lem:GenuineGeometricJointlyConservative}, we find a cofibre sequence $(\IS_G[\mathrm{E}\Pp_G]\otimes X)^G\rightarrow X^G\rightarrow X^{\Phi G}$. Moreover, $\IS_G[\mathrm{E}\Pp_G]$ can be written as a colimit of $\IS_G[G/H]$ for proper subgroups $H\subsetneq G$. Thus, it will be enough to show that each $(\IS_G[G/H]\otimes X)^G$ is bounded below (here we use finiteness of $G$ to ensure that there are only finitely many $H$). This follows from
		\begin{equation*}
			\bigl(\IS_G[G/H]\otimes X\bigr)^G\simeq \Hom_{\Sp^G}\bigl(\IS_G,\IS_G[G/H]\otimes X\bigr)\simeq \Hom_{\Sp^G}\bigl(\IS_G[G/H],X\bigr)\simeq X^H\,,
		\end{equation*}
		where we use self-duality of $\IS_G[G/H]$ (\cref{lem:SGGHselfDual})
	\end{proof}
	\begin{numpar}[Inflation maps.]\label{par:Inflations}
		Given any morphism $\varphi\colon G\rightarrow K$ of compact Lie groups, one has a symmetric monoidal natural transformation of lax symmetric monoidal functors
		\begin{equation*}
			\inf_\varphi\colon (-)^K\Longrightarrow \bigl(\varphi^*(-)\bigr)^G
		\end{equation*}
		Indeed, from \cref{par:GenuineFixedPoints} we see that $(-)^G\simeq (-)^K\circ \varphi_*$ and then the desired natural transformation arises by postcomposing the unit transformation $\id\Rightarrow \varphi_*\circ \varphi^*$ with $(-)^K$.
		
		If $\varphi$ is injective, the transformation above is called \emph{restriction} and denoted $\res_G^K$. We're instead interested in the case where $\varphi$ is surjective, where it is customary to call these maps \emph{inflations}. In the surjective case, there's also a symmetric monoidal inflation
		\begin{equation*}
			\inf_\varphi\colon (-)^{\Phi K}\Longrightarrow \bigl(\varphi^*(-)\bigr)^{\Phi G}\,.
		\end{equation*}
		Indeed, on the level of genuine equivariant pointed anima, the pullback $\varphi^*\colon \An_*^K\rightarrow \An_*^G$ satisfies $(-)^K\simeq (\varphi^*(-))^G$ (this needs surjectivity, so that evaluation at $K/K\in \cat{Orb}_K^\op$ agrees with evaluation at $K/\varphi(G)$) and then the desired inflation transformation is induced by the universal property of $\Sp^K$ as an $\An_*^K$-algebra in $\Pr^\L$. It's straightforward to check that for all $X\in \Sp^K$ the diagram
		\begin{equation*}
			\begin{tikzcd}
				X^K\rar["\inf_\varphi"]\dar & (\varphi^*X)^G\dar\\
				X^{\Phi K}\rar["\inf_\varphi"] & (\varphi^*X)^{\Phi G}
			\end{tikzcd}
		\end{equation*}
		commutes functorially in $X$, where the horizontal maps are the inflations and the vertical maps are the ones from \cref{par:GeometricFixedPoints}.
	\end{numpar}
	
	\begin{numpar}[Residual actions.]\label{par:ResidualActions}
		Let $i\colon N\hookrightarrow G$ be the inclusion of a normal subgroup, let $\pi\colon G\rightarrow G/N$ denotes the canonical projection and let $e\colon \{1\}\hookrightarrow G/N$ the inclusion of the identity element. Then the diagram
		\begin{equation*}
			\begin{tikzcd}
				\Sp^{G}\rar["\pi_*"]\dar["i^*"'] & \Sp^{G/N}\dar["e^*"]\\
				\Sp^N\rar["(-)^N"] & \Sp^{\vphantom{N}}
			\end{tikzcd}
		\end{equation*}
		commutes. Indeed, commutativity can be checked after passing to left adjoints, and then it follows from $\pi^*e_!\IS\simeq \pi^*\IS_{G/N}[G/N]\simeq \IS_G[G/N]\simeq i_!\IS_N$, using the diagram from \cref{par:GenuineFixedPoints} to compute the values of $i_!$ and $e_!$.
		
		In particular, for any $X\in \Sp^G$, the genuine fixed points $X^N$ can be equipped with a \emph{residual genuine $G/N$-action}. In a similar way, one can equip $X^{\Phi N}$ with a residual genuine $G/N$-action, and it can be checked that $X^N\rightarrow X^{\Phi N}$ is genuine $G/N$-equivariant.
	\end{numpar}
	\begin{lem}\label{lem:FixedPointsCompose}
		With the residual actions from \cref{par:ResidualActions}, for all $X\in \Sp^G$ we have canonical equivalences
		\begin{equation*}
			X^G\simeq (X^N)^{G/N}\quad\text{and}\quad X^{\Phi G}\simeq (X^{\Phi N})^{\Phi(G/N)}\,.
		\end{equation*}
	\end{lem}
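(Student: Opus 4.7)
The plan is to prove the two equivalences by different methods: the first by functoriality of right adjoints, the second by the universal property of $\Sp^G$. In both cases the key is to identify the residual actions from \cref{par:ResidualActions} as coming from appropriate symmetric monoidal colimit-preserving functors $\Sp^G \rightarrow \Sp^{G/N}$.

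For the genuine fixed-point case, I would factor the projection $\pi_G\colon G \rightarrow \{1\}$ as the composite $G \overset{\pi}{\longrightarrow} G/N \overset{\pi_{G/N}}{\longrightarrow} \{1\}$. By the functoriality of $\varphi \mapsto \varphi_*$ recorded in \cref{par:GenuineFixedPoints}, there is a canonical equivalence $(-)^G = \pi_{G,*} \simeq \pi_{G/N,*} \circ \pi_* = (-)^{G/N} \circ \pi_*$ of functors $\Sp^G \rightarrow \Sp$. It remains to identify $\pi_*(X)$ with $X^N$ equipped with its residual $G/N$-action, but this is precisely the content of the commutative square in \cref{par:ResidualActions}: the underlying non-equivariant spectrum of $\pi_*(X)$ is $e^* \pi_*(X) \simeq (i^* X)^N = X^N$, and by definition the ``residual $G/N$-action'' on $X^N$ refers to the full object $\pi_*(X) \in \Sp^{G/N}$.

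For the geometric case, I would first upgrade the residual construction to a symmetric monoidal colimit-preserving functor $(-)^{\Phi N}\colon \Sp^G \rightarrow \Sp^{G/N}$. Precomposition with the functor $\cat{Orb}_{G/N} \rightarrow \cat{Orb}_G$ sending $(G/N)/(H/N) \mapsto G/H$ yields a symmetric monoidal colimit-preserving functor $(-)^N\colon \An_*^G \rightarrow \An_*^{G/N}$ on pointed anima. Since $(S^V)^N \simeq S^{V^N}$ is a representation sphere for the $G/N$-representation $V^N$, the composite $\Sigma_{G/N}^\infty \circ (-)^N\colon \An_*^G \rightarrow \Sp^{G/N}$ inverts every representation sphere of $G$, so by the universal property of $\Sp^G$ in \cref{par:GenuineSpectra} it extends uniquely to the desired $(-)^{\Phi N}$. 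Now both $(-)^{\Phi G}$ and $(-)^{\Phi(G/N)} \circ (-)^{\Phi N}$ are symmetric monoidal colimit-preserving functors $\Sp^G \rightarrow \Sp$. Precomposing each with $\Sigma_G^\infty$, the first gives $\Sigma^\infty \circ (-)^G$ by the definition of $(-)^{\Phi G}$ in \cref{par:GeometricFixedPoints}, whereas the second gives $\Sigma^\infty \circ (-)^{G/N} \circ (-)^N \simeq \Sigma^\infty \circ (-)^G$, the latter equivalence coming from the observation that evaluating the pulled-back presheaf at the terminal object $(G/N)/(G/N)$ just recovers evaluation at $G/G$. By the uniqueness part of the universal property, the two symmetric monoidal extensions must coincide.

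The main obstacle here is not conceptual but a bookkeeping check: one must verify that the residual $G/N$-action on $X^{\Phi N}$ claimed in \cref{par:ResidualActions} is the \emph{same} as the one produced by the universal-property construction above. This can be done by observing that both constructions are symmetric monoidal colimit-preserving functors $\Sp^G \rightarrow \Sp^{G/N}$ and agree on $\Sigma_G^\infty(\An_*^G)$, where the residual $G/N$-action is transparent from the $G/N$-action on the pointed anima $Y^N$; uniqueness of the extension then forces agreement everywhere. Once this identification is in place, the argument above goes through without further difficulty.
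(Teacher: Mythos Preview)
Your proposal is correct and follows essentially the same approach as the paper. For genuine fixed points, both you and the paper factor $\pi_G = \pi_{G/N}\circ\pi$ and use that adjoints compose. For geometric fixed points, the paper simply says ``it's enough to check the case $X\simeq \IS_G[Y]$ for $Y$ a genuine $G$-equivariant anima; this case follows from $Y^G\simeq (Y^N)^{G/N}$'', which is exactly your universal-property argument in condensed form: checking on suspension spectra suffices precisely because both sides are symmetric monoidal colimit-preserving extensions of the same functor on $\An_*^G$. Your version is more explicit about constructing $(-)^{\Phi N}\colon \Sp^G\to\Sp^{G/N}$ and about verifying that this matches the residual action alluded to in \cref{par:ResidualActions}, which the paper leaves implicit; this extra care is warranted but does not change the underlying strategy.
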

	\begin{proof}
		If $\pi_G\colon G\rightarrow \{1\}$ and $\pi_{G/N}\colon G/N\rightarrow \{1\}$ denote the canonical projections, then clearly $\pi_G^*\simeq \pi^*\circ \pi_{G/N}^*$. Since adjoints compose, the equivalence for genuine fixed points follows. To see the equivalence for geometric fixed points, it's enough to check the case $X\simeq \IS_G[Y]$ for $Y$ a genuine $G$-equivariant anima; this case follows from $Y^G\simeq (Y^N)^{G/N}$.
	\end{proof}
	
	\subsection{The \texorpdfstring{$\infty$}{infinity}-category of cyclonic spectra}\label{subsec:Cyclonic}
	
	After reviewing the general framework of genuine equivariant homotopy theory, from now on we'll restrict to the following special case:
	
	\begin{numpar}[Cyclonic spectra.]\label{par:Cyclonic}
		In the following, we'll consider spectra with an $S^1$-action that is genuine with respect to all finite cyclic subgroups $C_m\subseteq S^1$. These were introduced under the name \emph{cyclonic spectra} by Barwick and Glasman \cite{BarwickGlasmanCyclonic}.	
		
		While the original construction uses spectral Mackey functors, we'll follow \cite[Notation~\chref{2.3}(3)]{NaiveGenuine} and construct $\infty$-category of cyclonic spectra as the full stable sub-$\infty$-category $\Cyclonic\subseteq \Sp^{S^1}$ generated under colimits by $\Sigma^{-n}\IS_{S^1}[S^1/C_m]$ for all finite cyclic subgroups $C_m\subseteq S^1$ and all $n\geqslant 0$.
	\end{numpar}
	\begin{lem}\label{lem:CyclonicJointlyConservative}
		The family of functors $\{(-)^{C_m}\}_{m\in \IN}$ in $\Fun(\Cyclonic,\Sp)$ is jointly conservative. The same is true for $\{(-)^{\Phi C_m}\}_{m\in \IN}$.
	\end{lem}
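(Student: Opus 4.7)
The plan is to reduce both statements to the fact that the objects $\Sigma^{-n}\IS_{S^1}[S^1/C_m]$ are, by construction, a set of compact generators of $\Cyclonic$, together with the already-proved finite-group case \cref{lem:GenuineGeometricJointlyConservative}.

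For the genuine statement, I would argue directly. Since $\Cyclonic$ is the smallest stable sub-$\infty$-category of $\Sp^{S^1}$ closed under colimits and containing the $\Sigma^{-n}\IS_{S^1}[S^1/C_m]$, these form a set of compact generators; a standard Yoneda argument then says $X\simeq 0$ in $\Cyclonic$ iff the mapping spectra $\Hom_{\Sp^{S^1}}(\Sigma^{-n}\IS_{S^1}[S^1/C_m],X)$ vanish for all $m,n$. By the representability statement in \cref{par:GenuineFixedPoints}, this mapping spectrum is just $\Sigma^n X^{C_m}$, so vanishing of all $X^{C_m}$ is equivalent to $X\simeq 0$.

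For the geometric statement, the plan is to bootstrap off the genuine case via the finite-group joint conservativity from \cref{lem:GenuineGeometricJointlyConservative}. Fix $X\in\Cyclonic$ with $X^{\Phi C_m}\simeq 0$ for every $m\in\IN$, and fix some $m$. Restricting along $i_{C_m}\colon C_m\hookrightarrow S^1$ yields $i_{C_m}^*X\in \Sp^{C_m}$, whose subgroups are exactly the $C_k$ for $k\mid m$. The notational convention of \cref{par:GeometricFixedPoints} gives $X^{\Phi C_k}\simeq(i_{C_m}^*X)^{\Phi C_k}$ for every such $k$, so all geometric fixed points of $i_{C_m}^*X$ vanish by hypothesis. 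By \cref{lem:GenuineGeometricJointlyConservative} applied to the finite group $C_m$, this forces $i_{C_m}^*X\simeq 0$; in particular $X^{C_m}\simeq (i_{C_m}^*X)^{C_m}\simeq 0$. Since this holds for every $m$, the already-proved genuine case gives $X\simeq 0$.

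Neither step looks genuinely hard: the first is pure generator-and-Hom bookkeeping, and the second is a straight reduction to the finite-group lemma. The only point to handle carefully is the compatibility $X^{\Phi C_k}\simeq(i_{C_m}^*X)^{\Phi C_k}$ for $k\mid m$, which follows from the functoriality of $(-)^{\Phi C_k}$ along $C_k\hookrightarrow C_m\hookrightarrow S^1$ and is already built into the notational convention used in \cref{par:GeometricFixedPoints}. No additional input about the non-compact group $S^1$ is needed, precisely because $\Cyclonic$ was defined to eliminate the continuous isotropy.
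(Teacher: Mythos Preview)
Your proposal is correct and follows essentially the same approach as the paper: the genuine case is handled by the generators-and-$\Hom$ argument (the paper compresses this to one line), and the geometric case is reduced to the genuine one via \cref{lem:GenuineGeometricJointlyConservative} applied to the finite restrictions $i_{C_m}^*X$, exactly as you describe. The paper's proof is simply a terse version of yours.
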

	\begin{proof}
		For genuine fixed points this follows since $\{\Sigma^{-n}\IS_{S^1}[S^1/C_m]\}_{m\in\IN,n\geqslant 0}$ is a system of generators for $\Cyclonic$ by construction. The assertion about geometric fixed points then follows from \cref{lem:GenuineGeometricJointlyConservative}.
	\end{proof}
	
	\begin{lem}\label{lem:CyclonicSpectra}
		The fully faithful inclusion $j_!\colon \Cyclonic\hookrightarrow\Sp^{S^1}$ admits a right adjoint $j^*\colon \Sp^{S^1}\rightarrow \Cyclonic$ with the following properties:
		\begin{alphanumerate}
			\item $j^*$ still preserves all colimits.\label{enum:CyclonicLocalisation}
			\item The counit transformation $c\colon j_!\circ j^*\Rightarrow \id$ is an equivalence after applying $(-)^{C_m}$ or $(-)^{\Phi C_m}$ for any finite cyclic subgroup $C_m\subseteq S^1$.\label{enum:CyclonicGenuineGeometric}
			\item For all $X,Y\in\Sp^{S^1}$ the canonical map
			\begin{equation*}
				j^*(X\otimes j_!j^*Y)\overset{\simeq}{\longrightarrow} j^*(X\otimes Y)
			\end{equation*}
			is an equivalence. Thus, there's a canonical way to equip $\Cyclonic$ and $j^*\colon \Sp^{S^1}\rightarrow \Cyclonic$ with symmetric monoidal structures.\label{enum:CyclonicSymmetricMonoidal} 
		\end{alphanumerate}
	\end{lem}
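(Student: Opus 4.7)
The plan is to obtain $j^*$ from the adjoint functor theorem, verify part~(b) by a direct corepresentability computation on the generators of $\Cyclonic$, and then feed this into part~(c) via a projection-formula argument. I expect part~(c) to be the main point, since it requires understanding how tensoring with an arbitrary $S^1$-spectrum interacts with the inclusion $j_!$.

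For part~(a), $\Cyclonic$ is presentable (it is generated under colimits by a set of objects in the presentable $\infty$-category $\Sp^{S^1}$) and $j_!$ preserves colimits by construction, so the adjoint functor theorem supplies the right adjoint $j^*$. The generators $\Sigma^{-n}\IS_{S^1}[S^1/C_m]$ of $\Cyclonic$ remain compact in $\Sp^{S^1}$ by \cref{lem:SpGCompactGenerators}, hence $j_!$ preserves compactness and $j^*$ preserves filtered colimits; combined with exactness of $j^*$ (a right adjoint between stable $\infty$-categories), this yields preservation of arbitrary colimits.

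For part~(b), fix $Y \in \Sp^{S^1}$ and set $F \coloneqq \fib(j_!j^*Y \to Y)$. Since $(-)^{C_m}$ is corepresented by $\IS_{S^1}[S^1/C_m] \in \Cyclonic$ and $j_!$ is fully faithful, adjunction gives
\[(j_!j^*Y)^{C_m} \simeq \Hom_{\Cyclonic}\bigl(\IS_{S^1}[S^1/C_m],\, j^*Y\bigr) \simeq \Hom_{\Sp^{S^1}}\bigl(\IS_{S^1}[S^1/C_m],\, Y\bigr) \simeq Y^{C_m}\,,\]
so $F^{C_m} \simeq 0$ for every $m \in \IN$. For geometric fixed points, pull $F$ back along $i_m\colon C_m \hookrightarrow S^1$: since fixed points at subgroups $C_d \subseteq C_m$ factor through $i_m^*$, we have $(i_m^*F)^{C_d} \simeq F^{C_d} \simeq 0$ for all $d \mid m$. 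Joint conservativity of $\{(-)^{C_d}\}_{d \mid m}$ on $\Sp^{C_m}$ (\cref{lem:GenuineGeometricJointlyConservative}) then forces $i_m^*F \simeq 0$ in $\Sp^{C_m}$, whence $F^{\Phi C_m} \simeq 0$.

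For part~(c), the key observation is that $\IS_{S^1}[S^1/C_m] \simeq i_{m,!}\IS_{C_m}$ by \cref{lem:InducedGSpectra}, so the projection formula for the symmetric monoidal adjunction $i_{m,!} \dashv i_m^*$ yields $\IS_{S^1}[S^1/C_m] \otimes X \simeq i_{m,!}i_m^*X$ for every $X \in \Sp^{S^1}$. With $F$ as in~(b), this gives
\[\Hom_{\Sp^{S^1}}\bigl(\Sigma^{-n}\IS_{S^1}[S^1/C_m],\, X \otimes F\bigr) \simeq \Sigma^n \bigl(i_m^*X \otimes i_m^*F\bigr)^{C_m} \simeq 0\,,\]
using symmetric monoidality of $i_m^*$ together with $i_m^*F \simeq 0$ from~(b). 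As the objects $\Sigma^{-n}\IS_{S^1}[S^1/C_m]$ generate $\Cyclonic$ under colimits, this forces $j^*(X \otimes F) \simeq 0$, i.e.\ the canonical map $j^*(X \otimes j_!j^*Y) \to j^*(X \otimes Y)$ is an equivalence. By the standard (dual) localisation criterion, this is exactly what is required for the symmetric monoidal structure on $\Sp^{S^1}$ to descend to a symmetric monoidal structure on $\Cyclonic$ with $j^*$ symmetric monoidal.
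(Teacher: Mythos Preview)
Your proof is correct and follows essentially the same strategy as the paper's. Parts~(a) and~(b) are identical in substance. For part~(c), the paper instead checks the claimed equivalence directly on geometric fixed points $(-)^{\Phi C_m}$, using their symmetric monoidality together with~(b); your route via $i_m^*F \simeq 0$ and the adjunction $i_{m,!} \dashv i_m^*$ is a close variant, though note that the projection formula you invoke is not actually needed---the displayed computation only uses the adjunction and the symmetric monoidality of $i_m^*$.
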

	\begin{proof}
		The right adjoint $j^*$ exists since $j_!$ preserves all colimits. Since $\Cyclonic$ is compactly generated and $j_!$ preserves compact objects, $j^*$ preserves filtered colimits and thus all colimits by exactness, proving \cref{enum:CyclonicLocalisation}. By construction,
		\begin{equation*}
			\Hom_{\Sp^{S^1}}\bigl(\IS_{S^1}[S^1/C_n],j_!j^*X\bigr)\simeq \Hom_{\Sp^{S^1}}\bigl(\IS_{S^1}[S^1/C_m],X\bigr)
		\end{equation*}
		and so $(j_!j^*X)^{C_m}\rightarrow X^{C_m}$ is indeed an equivalence. Since this is true for all divisors $d\mid m$, \cref{lem:GenuineGeometricJointlyConservative} shows that $(j_!j^*X)^{\Phi C_m}\rightarrow X^{\Phi C_m}$ is an equivalence as well. This shows \cref{enum:CyclonicGenuineGeometric}.
		
		Whether $j^*(X\otimes j_!j^*Y)\rightarrow j^*(X\otimes Y)$ is an equivalence can be checked on geometric fixed points by \cref{lem:CyclonicJointlyConservative}. But after applying $(j_!(-))^{\Phi C_m}$, both sides become $X^{\Phi C_m}\otimes Y^{\Phi C_m}$ by \cref{enum:CyclonicGenuineGeometric} and symmetric monoidality of $(-)^{\Phi C_m}$. This shows the first claim in~\cref{enum:CyclonicSymmetricMonoidal}; the second claim is general abstract nonsense about localisations of symmetric monoidal $\infty$-categories (see \cite[Proposition~\chref{2.2.1.9}]{HA} for example). 
	\end{proof}
	In the following, we'll usually suppress $j_!$ and $j^*$ in the notation.
	\begin{lem}\label{lem:GenuineGeometricCommute}
		For $m,n\in\IN$, let us identify $C_{mn}/C_m\cong C_n$, $C_{mn}/C_n\cong C_m$. For all cyclonic spectra $X$, the residual actions from \cref{par:ResidualActions} satisfy the following functorial identites:
		\begin{alphanumerate}
			\item $(X^{C_m})^{C_n}\simeq X^{C_{mn}}$, $(X^{\Phi C_m})^{\Phi C_n}\simeq X^{\Phi C_{mn}}$, and $(X^{\h C_m})^{\h C_n}\simeq X^{\h C_{mn}}$.\label{enum:FixedPointsCompose}
			\item If $m$ and $n$ are coprime, then $(X^{C_m})^{\Phi C_n}\simeq (X^{\Phi C_n})^{C_m}$.\label{enum:GenuineGeometricCommuteCoprime}
		\end{alphanumerate}
	\end{lem}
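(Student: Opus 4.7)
Part~\cref{enum:FixedPointsCompose} for genuine and geometric fixed points is an immediate instance of \cref{lem:FixedPointsCompose}, applied to the normal subgroup inclusion $C_m\trianglelefteq C_{mn}$ with quotient canonically identified as $C_{mn}/C_m\cong C_n$. The statement for homotopy fixed points reduces to the classical iterated formula $(Y^{\h C_m})^{\h C_n}\simeq Y^{\h C_{mn}}$ for a spectrum $Y$ with $C_{mn}$-action, obtained from the factorisation of $\pi\colon \B C_{mn}\to *$ through $\B C_n$ and the resulting decomposition of the right adjoint computing $(-)^{\h C_{mn}}$.

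For part~\cref{enum:GenuineGeometricCommuteCoprime}, I plan to prove that both sides are canonically equivalent to $(\widetilde{\mathrm{E}}\mathcal{F}_n\otimes X)^{C_{mn}}$, where $\mathcal{F}_n$ denotes the family of subgroups of $C_{mn}$ that do not contain $C_n$. Let $\varphi\colon C_{mn}\twoheadrightarrow C_n$ be the quotient by $C_m$. The crucial input of the coprimality hypothesis is the identification $\widetilde{\mathrm{E}}\mathcal{F}_n\simeq \varphi^*(\widetilde{\mathrm{E}}\mathcal{P}_{C_n})$ in $\An_*^{C_{mn}}$: writing an arbitrary subgroup of $C_{mn}$ as $C_d$ with $d=d_1d_2$, $d_1\mid m$, $d_2\mid n$ (possible because $\gcd(m,n)=1$), a direct computation gives $\varphi(C_d)=C_{d_2}$, which equals $C_n$ precisely when $C_n\subseteq C_d$; comparing fixed points on all orbits then yields the identification, which passes to $\Sp^{C_{mn}}$ under $\Sigma^\infty_{C_{mn}}$.

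To compute $(X^{\Phi C_n})^{C_m}$, I would use isotropy separation for the normal subgroup $C_n\trianglelefteq C_{mn}$ to write $X^{\Phi C_n}\simeq(\widetilde{\mathrm{E}}\mathcal{F}_n\otimes X)^{C_n}$ as a genuine $C_m$-spectrum with residual action (the restriction $\res_{C_n}^{C_{mn}}(\widetilde{\mathrm{E}}\mathcal{F}_n)$ is precisely $\widetilde{\mathrm{E}}\mathcal{P}_{C_n}\in\An_*^{C_n}$); applying $(-)^{C_m}$ and invoking part~\cref{enum:FixedPointsCompose} then gives $(X^{\Phi C_n})^{C_m}\simeq(\widetilde{\mathrm{E}}\mathcal{F}_n\otimes X)^{C_{mn}}$. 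To compute $(X^{C_m})^{\Phi C_n}$, I would apply the projection formula for the symmetric monoidal adjunction $\varphi^*\colon\Sp^{C_n}\shortdoublelrmorphism \Sp^{C_{mn}}\noloc (-)^{C_m}$ to obtain $(\varphi^*(\widetilde{\mathrm{E}}\mathcal{P}_{C_n})\otimes X)^{C_m}\simeq \widetilde{\mathrm{E}}\mathcal{P}_{C_n}\otimes X^{C_m}$ in $\Sp^{C_n}$; taking $(-)^{C_n}$ and applying isotropy separation for the family $\mathcal{P}_{C_n}$ inside $\Sp^{C_n}$ then recovers $(X^{C_m})^{\Phi C_n}$. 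Chaining these identifications yields the desired equivalence.

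The main obstacle is justifying the projection formula for the inflation $\varphi^*$, which is not a formal consequence of $\varphi^*$ being symmetric monoidal. However, $\Sp^{C_{mn}}$ is rigid symmetric monoidal by \cref{lem:SGGHselfDual}, and both $\varphi^*$ and $(-)^{C_m}$ preserve all colimits (see \cref{par:GenuineFixedPoints}); in this setting the projection formula follows from general principles for rigid presentably symmetric monoidal $\infty$-categories, e.g.\ via the Grothendieck--Neeman duality framework of Balmer--Dell'Ambrogio--Sanders. Passing the argument from $\Sp^{C_{mn}}$ back to $\Cyclonic$ is then essentially formal, since the restriction $\Cyclonic\to\Sp^{C_{mn}}$ intertwines the genuine and geometric fixed-point functors and the tensor product.
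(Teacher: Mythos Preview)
Your argument for part~\cref{enum:FixedPointsCompose} matches the paper's. For part~\cref{enum:GenuineGeometricCommuteCoprime} your approach is correct but genuinely different from the paper's. The paper observes that coprimality gives $\cat{Orb}_{C_{mn}}\simeq\cat{Orb}_{C_m}\times\cat{Orb}_{C_n}$, hence $\Sp^{C_{mn}}\simeq\Sp^{C_m}\otimes\Sp^{C_n}$ for the Lurie tensor product in $\Pr^{\mathrm L}$; it then checks that $(-)^{\Phi C_n}$ and $(-)^{C_m}$ are obtained by applying the respective non-equivariant functors in separate tensor factors, so they commute automatically. Your route via isotropy separation and the projection formula is more hands-on: it avoids the tensor decomposition of the category but instead needs the equivariant description $X^{\Phi C_n}\simeq(\widetilde{\mathrm E}\mathcal F_n\otimes X)^{C_n}$ as a genuine $C_m$-spectrum (a standard fact, though not established in the paper beyond the absolute case of \cref{lem:EPGeometrixFixedPoints}) and the projection formula for $\varphi_*$, which you correctly justify via rigidity of $\Sp^{C_n}$ together with colimit-preservation of $\varphi_*$. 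The paper's argument is shorter and more structural; yours has the advantage of making the role of the family $\mathcal F_n$ and of coprimality explicit at the level of universal spaces.
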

	\begin{proof}
		The first two assertions from \cref{enum:FixedPointsCompose} are special cases of \cref{lem:FixedPointsCompose}, the third assertion is classical. For \cref{enum:GenuineGeometricCommuteCoprime}, let us first note that $\cat{Orb}_{C_{mn}}\simeq \cat{Orb}_{C_m}\times \cat{Orb}_{C_n}$, which easily implies $\Sp_{C_{mn}}\simeq \Sp_{C_m}\otimes \Sp_{C_n}$ for the Lurie tensor product. By construction of geometric fixed points it's clear that $(-)^{\Phi C_n}\colon \Sp_{C_m}\otimes \Sp_{C_n}\rightarrow \Sp_{C_m}$ is given by applying $(-)^{\Phi C_n}\colon \Sp_{C_n}\rightarrow \Sp$ in the second tensor factor. If we can show a similar assertion for $(-)^{C_m}$, we'll be done.
		
		To this end, let $\pi\colon C_{mn}\rightarrow C_n$ and $\pi_{C_m}\colon C_m\rightarrow \{1\}$ denote the canonical projections. It is again clear from the construction that $\pi^*\colon \Sp_{C_n}\rightarrow \Sp_{C_m}\otimes\Sp_{C_n}$ is given by applying $\pi_{C_m}^*\colon \Sp\rightarrow \Sp_{C_m}$ in the first tensor factor. Its right adjoint $\pi_*$ must then also be given by applying the right adjoint $\pi_{C_m,*}$ (which is also a functor in $\Pr^\L$) in the first tensor factor, because we can just apply $-\otimes {\Sp_{C_n}}$ to the unit, the counit, and the triangle identities.
	\end{proof}
	
	Nikolaus--Scholze \cite[Theorem~{\chref{2.6.9}[II.6.9]}]{NikolausScholze} showed that on bounded below objects, the structure of a \emph{cyclotomic spectrum} is equivalent to a \enquote{naive} notion, in which one only asks for $S^1$-equivariant maps $X\rightarrow X^{\t C_p}$. 
	We'll now show a similar result in the cyclonic case. This is based on the following well-known fact (see e.g.\ \cite{HesselholtMadsenKTheoryWittVectors} or \cite[Lemma~{\chref{2.4.5}[II.4.5]}]{NikolausScholze}):
	\begin{lem}
		There's a pullback square of symmetric monoidal transformations between lax symmetric monoidal functors in $\Fun(\Sp^{C_p},\Sp)$
		\begin{equation*}
			\begin{tikzcd}
				(-)^{C_p}\doublear["\textup{(\cref{par:GeometricFixedPoints})}"{black}]{r}\doublear["\textup{(\cref{par:GenuineVsHomotopyFixedPoints})}"{black,swap}]{d}\drar[pullback] & (-)^{\Phi C_p}\doublear{d}\\
				(-)^{\h C_p}\doublear{r} & (-)^{\t C_p}
			\end{tikzcd}
		\end{equation*}
	\end{lem}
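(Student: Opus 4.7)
The plan is to produce both horizontal arrows of the square as cofiber sequences sharing a common fiber, which forces the square to be Cartesian. To start, I would apply the isotropy separation cofiber sequence
$$\IS_{C_p}[\mathrm{E}\Pp_{C_p}]\longrightarrow \IS_{C_p}\longrightarrow \Sigma_{C_p}^\infty(\widetilde{\mathrm{E}}\Pp_{C_p})$$
(used in the proof of \cref{lem:GenuineGeometricJointlyConservative}) to $X\in\Sp^{C_p}$ by tensoring with $X$ and taking $(-)^{C_p}$. Since $(-)^{C_p}$ is exact, this produces a natural cofiber sequence
$$(\IS_{C_p}[\mathrm{E}\Pp_{C_p}]\otimes X)^{C_p}\longrightarrow X^{C_p}\longrightarrow X^{\Phi C_p},$$
the last identification being \cref{lem:EPGeometrixFixedPoints}. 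By definition, the Tate construction fits into a cofiber sequence $X_{\h C_p}\xrightarrow{\operatorname{Nm}} X^{\h C_p}\to X^{\t C_p}$.

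Next I would identify $(\IS_{C_p}[\mathrm{E}\Pp_{C_p}]\otimes X)^{C_p}$ with $X_{\h C_p}$, naturally in $X$. Since $\mathrm{E}\Pp_{C_p}$ is a free $C_p$-CW-complex (its only isotropy is trivial), $\IS_{C_p}[\mathrm{E}\Pp_{C_p}]$ lies in the colimit closure of the induced generator $\IS_{C_p}[C_p]\simeq i_!\IS$, where $i\colon\{1\}\hookrightarrow C_p$. On the generator, self-duality of $\IS_{C_p}[C_p]$ (\cref{lem:SGGHselfDual}) together with adjunction yields
$$(\IS_{C_p}[C_p]\otimes X)^{C_p}\simeq \Hom_{\Sp^{C_p}}(\IS_{C_p}[C_p],X)\simeq \Hom_{\Sp}(\IS,i^*X)\simeq i^*X,$$
the underlying spectrum of $X$. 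Assembling these identifications along the cell structure of $\mathrm{E}\Pp_{C_p}$ (whose orbit quotient is $\B C_p$) computes the homotopy orbits $(i^*X)_{\h C_p}=X_{\h C_p}$, naturally in $X$. This turns our first cofiber sequence into $X_{\h C_p}\to X^{C_p}\to X^{\Phi C_p}$.

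Now I would construct the map of cofiber sequences. The natural transformation $(-)^{C_p}\Rightarrow (-)^{\h C_p}$ from \cref{par:GenuineVsHomotopyFixedPoints} restricts on fibers to a natural map $X_{\h C_p}\to X_{\h C_p}$, which one verifies is the identity by observing that the composition $X_{\h C_p}\to X^{C_p}\to X^{\h C_p}$ constructed above is, by naturality and evaluation on $X=\IS_{C_p}[C_p]$, precisely the Tate norm map. Passing to cofibers yields a natural transformation $(-)^{\Phi C_p}\Rightarrow (-)^{\t C_p}$ together with a map of cofiber sequences whose leftmost arrow is an equivalence. Such a diagram is automatically a pullback square.

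The main obstacle I anticipate is promoting this pointwise pullback to a square of symmetric monoidal transformations between lax symmetric monoidal functors. For this, I would upgrade the isotropy separation sequence to a sequence of lax symmetric monoidal transformations — crucially, $\Sigma_{C_p}^\infty(\widetilde{\mathrm{E}}\Pp_{C_p})$ is an idempotent $\IE_\infty$-algebra in $\Sp^{C_p}$, so that $(-)\otimes\Sigma_{C_p}^\infty(\widetilde{\mathrm{E}}\Pp_{C_p})$ is a smashing symmetric monoidal localization — and verify that the identification of the fiber as $X_{\h C_p}$ is compatible with the symmetric monoidal structure. Once this is arranged, the induced map $(-)^{\Phi C_p}\Rightarrow (-)^{\t C_p}$ inherits a canonical symmetric monoidal enhancement, matching the standard description of $(-)^{\t C_p}$ as a symmetric monoidal localization of $\Sp^{C_p}$ modulo induced spectra in the style of \cite{NikolausScholze}.
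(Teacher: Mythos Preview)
Your proposal is correct and follows essentially the same route as the paper: both arguments use the isotropy separation sequence, identify the fibre $(\IS_{C_p}[\mathrm{E}\Pp_{C_p}]\otimes X)^{C_p}\simeq X_{\h C_p}$ via self-duality of $\IS_{C_p}[C_p]$ (the paper writes this as $\mathrm{E}\Pp_{C_p}\simeq (C_p/C_1)_{\h C_p}$, which is your ``cell structure'' argument phrased more compactly), and conclude that the square is a pullback because the row-wise fibres agree. The only notable difference is how the right vertical transformation $(-)^{\Phi C_p}\Rightarrow(-)^{\t C_p}$ is produced: you obtain it by passing to cofibres, whereas the paper composes the Borel-completion transformation $\id\Rightarrow B_{C_p}U_{C_p}$ with $(-)^{C_p}\Rightarrow(-)^{\Phi C_p}$ and uses that $X^{\Phi C_p}\simeq X^{\t C_p}$ for Borel-complete $X$; the paper's construction makes the lax symmetric monoidal structure immediate (``clear from the construction''), which is exactly the obstacle you flag at the end.
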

	\begin{proof}
		If we regard the orbit $C_p/C_1$ as a genuine $C_p$-equivariant anima via the Yoneda embedding, we find $(C_p/C_1)^{C_p}\simeq \emptyset$ and $(C_p/C_1)^{C_1}\simeq C_p$. By direct inspection, it follows that $\mathrm{E}\Pp_{C_p}\simeq (C_p/C_1)_{\h C_p}$, where $C_p$ acts on $C_p/C_1$ in the obvious way.
		
		For every genuine $C_p$-equivariant spectrum $X$, we've seen in \cref{lem:EPGeometrixFixedPoints} that the fibre of $X^{C_p}\rightarrow X^{\Phi C_p}$ is given by $(\IS_{C_p}[\mathrm{E}\Pp_{C_p}]\otimes X)^{C_p}$. Using that $(-)^{C_p}$ preserves all colimits and $\IS_{C_p}[C_p]$ is self-dual by \cref{lem:SGGHselfDual}, we find
		\begin{equation*}
			\bigl(\IS_{C_p}[\mathrm{E}\Pp_{C_p}]\otimes X\bigr)^{C_p}\simeq \bigl(\IS_{C_p}[C_p]_{\h C_p}\otimes X\bigr)^{C_p}\simeq \bigl((\IS_{C_p}[C_p]\otimes X)^{C_p}\bigr)_{\h C_p}\simeq X_{\h C_p}\,.
		\end{equation*}
		In the case where $X$ is Borel-complete, it's straightforward to check that the induced map $X_{\h C_p}\rightarrow X^{C_p}\simeq X^{\h C_p}$ is the norm map and so $X^{\Phi C_p}\simeq X^{\t C_p}$ for Borel-complete $X$. In general, composing the Borel completion transformation $\id \Rightarrow B_{C_p}U_{C_p}$ with the natural trasformation $(-)^{C_p}\Rightarrow (-)^{\Phi C_p}$, we obtain the desired commutative square. It is a pullback square since the row-wise fibres are given by $(-)_{\h C_p}$, as we've just verified. Symmetric monoidality is also clear from the construction.
	\end{proof}
	
	\begin{numpar}[Naive cyclonic spectra]
		Informally, a \emph{naive cyclonic spectrum} should consist of a collection of spectra $(Y_m)_{m\in \IN}$, each $Y_m$ equipped with an $(S^1/C_m)$-action, together with $(S^1/C_{pm})$-equivariant maps $\phi_{p,m}\colon Y_{pm}\rightarrow Y_m^{\t C_p}$ for all $m$ and all primes~$p$. The intuition is that $Y_m\simeq X^{\Phi C_m}$ records the geometric fixed points of some cyclonic spectrum~$X$. To see obtain the maps $\phi_{p,m}\colon X^{\Phi C_{pm}}\rightarrow (X^{\Phi C_m})^{\t C_p}$ in this case, we plug $X^{\Phi C_m}$ into the natural transformation $(-)^{\Phi C_p}\Rightarrow (-)^{\t C_p}$; by naturality, the map $\phi_{p,m}$ that we obtain is (non-genuinely) $(S^1/C_{pm})$-equivariant.
		%
		%
		
		Formally, we define the $\infty$-category of \emph{naive cyclonic spectra} to be the lax equaliser (in the sense of \cite[Definition~{\chref{2.1.4}[II.1.4]}]{NikolausScholze})
		\begin{equation*}
			\Cyclonic^\mathrm{naiv}\coloneqq \operatorname{LEq}\Biggl(\begin{tikzcd}[cramped,column sep=huge]
				\prod_{m\in\IN}\Sp^{\B(S^1/C_m)}\rar[shift left=0.2em,"\operatorname{can}"]\rar[shift right=0.2em,"((-)^{\t C_p})_{p,m}"'] & \prod_{p\vphantom{\IN}}\prod_{m\in\IN}\Sp^{\B(S^1/C_{pm})}
			\end{tikzcd}\biggr)\,,
		\end{equation*}
		where $p$ runs through all primes, the top functor is given by $(Y_m)_m\mapsto (Y_{pm})_{p,m}$, and the bottom functor is given by $(Y_m)_m\mapsto (Y_m^{\t C_p})_{p,m}$. By the universal property of lax equalisers there is a functor
		\begin{equation*}
			(-)^{\Phi C}\colon \Cyclonic \longrightarrow \Cyclonic^\mathrm{naiv}
		\end{equation*}
		which sends $X\mapsto (X^{\Phi C_m})_{m\in\IN}$, equipped with the canonical maps $\phi_{p,m}\colon X^{\Phi C_{pm}}\rightarrow (X^{\Phi C_m})^{\t C_p}$ described above. Using \cref{lem:LEqSymmetricMonoidal} below, we can also equip $\Cyclonic^\mathrm{naiv}$ with a symmetric monoidal structure in such a way that $(-)^{\Phi C}$ is symmetric monoidal.
		
		Let us also call a cyclonic spectrum $X$ \emph{bounded below} if each $X^{C_m}$ is bounded below (not necessarily with a uniform bound for all~$m$); equivalently by \cref{lem:GenuineGeometricBoundedBelow}, all $X^{\Phi C_m}$ are bounded below. Similarly, a naive cyclonic spectrum $Y=((Y_m)_m,(\phi_{p,m})_{p,m})$ will be called \emph{bounded below} if each $Y_m$ is bounded below (not necessarily with a uniform bound). We denote by $\Cyclonic_+$ and $\Cyclonic_+^\mathrm{naiv}$ the respective full sub-$\infty$-categories of bounded below objects.
	\end{numpar}
	
	\begin{prop}\label{prop:NaiveCyclonic}
		When restricted to the respective full sub-$\infty$-categories of bounded below objects, the functor $(-)^{\Phi C}$ becomes a symmetric monoidal equivalence
		\begin{equation*}
			(-)^{\Phi C}\colon \Cyclonic_+\overset{\simeq}{\longrightarrow} \Cyclonic_+^\mathrm{naiv}\,.
		\end{equation*}
	\end{prop}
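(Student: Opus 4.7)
My plan is to mimic the Nikolaus--Scholze proof that bounded below cyclotomic spectra are detected by their naive data \cite[Theorem~{\chref{2.6.9}[II.6.9]}]{NikolausScholze}, using the description of genuine fixed points in terms of geometric fixed points provided by \cref{lem:GenuineFromGeometric}.

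First I would construct a candidate inverse $R\colon \Cyclonic_+^{\mathrm{naiv}}\rightarrow \Cyclonic_+$. Given a bounded below naive cyclonic spectrum $Y=((Y_m)_m,(\phi_{p,m})_{p,m})$, I would define $R(Y)$ by specifying its genuine fixed points via
\begin{equation*}
    R(Y)^{C_m}\coloneqq \eq\Biggl(\prod_{d\mid m}(Y_d)^{\h C_{m/d}}\overset{\operatorname{can}}{\underset{\phi}{\doublemorphism}}\prod_{p\vphantom{|}}\prod_{pd\mid m}\bigl((Y_d)^{\t C_p}\bigr)^{\h C_{m/pd}}\Biggr)\,,
\end{equation*}
equipped with the residual $S^1/C_m$-action, where the bottom map uses the structure maps $\phi_{p,d}\colon Y_{pd}\rightarrow Y_d^{\t C_p}$. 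The key point is that the restriction maps $R(Y)^{C_{mn}}\rightarrow R(Y)^{C_m}$ assemble these into a genuine $S^1$-spectrum inside $\Cyclonic$; this should be done by exhibiting $R(Y)$ as a limit in $\Sp^{S^1}$ of a diagram built from the $Y_m$ along with Tate and fixed-point constructions. Boundedness below of each $R(Y)^{C_m}$ follows from the boundedness below of the $Y_m$ (the equaliser is a finite limit of bounded below spectra after using that $(-)^{\h C_k}$ preserves bounded below objects).

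Next I would show that the unit $\id \Rightarrow R\circ (-)^{\Phi C}$ is an equivalence on $\Cyclonic_+$. For a bounded below cyclonic spectrum $X$, this asks precisely for the natural map $X^{C_m}\rightarrow R((-)^{\Phi C}(X))^{C_m}$ to be an equivalence for every $m$, which is exactly the content of \cref{lem:GenuineFromGeometric} applied to $X$, using the pullback square relating $(-)^{C_p}$, $(-)^{\Phi C_p}$, $(-)^{\h C_p}$, and $(-)^{\t C_p}$ on bounded below objects together with \cref{lem:GenuineGeometricCommute} to iterate over primes.

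The main obstacle, and the part requiring real care, is showing the counit $(-)^{\Phi C}\circ R\Rightarrow \id$ is an equivalence, i.e.\ that for each $m$ one has $R(Y)^{\Phi C_m}\simeq Y_m$ compatibly with the $\phi_{p,m}$. I would proceed by induction on the number of prime divisors of $m$, exploiting \cref{lem:GenuineGeometricCommute}\cref{enum:FixedPointsCompose}--\cref{enum:GenuineGeometricCommuteCoprime} to reduce to the prime-power case, and then use the Tate-orbit lemma (or the standard dévissage for $(-)^{\Phi C_p}$ applied to a Tate construction) to collapse the equaliser defining $R(Y)^{C_{p^n}}$ upon applying $(-)^{\Phi C_{p^n}}$: all but the $d=m$ summand in the product $\prod_{d\mid m}(Y_d)^{\h C_{m/d}}$ die after $(-)^{\Phi C_m}$, and the structure maps $\phi_{p,d}$ match up so that the surviving piece is precisely $Y_m$. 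This is really the $p$-typical Nikolaus--Scholze computation carried out simultaneously for every prime, and the coprime-commutation \cref{lem:GenuineGeometricCommute}\cref{enum:GenuineGeometricCommuteCoprime} is what lets me interchange $(-)^{\Phi C_p}$ and $(-)^{C_q}$ for distinct primes $p,q$ so that the induction goes through.

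Finally, for symmetric monoidality, once I know $(-)^{\Phi C}$ is an equivalence on bounded below objects, I can transport the symmetric monoidal structure: since $(-)^{\Phi C}$ is a symmetric monoidal functor (via \cref{lem:CyclonicSpectra}\cref{enum:CyclonicSymmetricMonoidal} and symmetric monoidality of each $(-)^{\Phi C_m}$), and the tensor product of bounded below cyclonic spectra is again bounded below (so $\Cyclonic_+\subseteq \Cyclonic$ is a symmetric monoidal subcategory), the inverse is automatically symmetric monoidal by general nonsense (\cite[Remark~{\chref{2.1.3.8}}]{HA} applied to the corresponding symmetric monoidal $\infty$-categories). This completes the proof.
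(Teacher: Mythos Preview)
Your overall strategy is close in spirit to the paper's, since both hinge on \cref{lem:GenuineFromGeometric}, but the execution differs in a way that leaves a real gap in your proposal.

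The gap is in the construction of $R$. You propose to build $R(Y)$ by prescribing its genuine fixed points $R(Y)^{C_m}$ via the equaliser formula, and then say ``the restriction maps \ldots\ assemble these into a genuine $S^1$-spectrum inside $\Cyclonic$; this should be done by exhibiting $R(Y)$ as a limit in $\Sp^{S^1}$ of a diagram \ldots''. But a cyclonic spectrum is not determined by a compatible collection of fixed-point spectra, and you never say what diagram in $\Sp^{S^1}$ you take a limit of or why its genuine fixed points recover your prescribed equalisers. This is genuinely delicate, and the problem propagates: your counit verification $R(Y)^{\Phi C_m}\simeq Y_m$ presupposes that $R(Y)$ exists as a cyclonic spectrum with the specified $C_m$-fixed points, so the hard step is hidden rather than done.

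The paper sidesteps this entirely. It obtains $R$ for free as the right adjoint of $(-)^{\Phi C}$, using that $(-)^{\Phi C}$ is colimit-preserving between presentable $\infty$-categories. Then, rather than checking unit and counit, it proves (i) full faithfulness of $(-)^{\Phi C}$ on bounded below objects, by computing $\IS_{S^1}[S^1/C_m]^{\Phi C_d}$ and identifying $\Hom_{\Cyclonic^\mathrm{naiv}}\bigl(\IS_{S^1}[S^1/C_m]^{\Phi C},X^{\Phi C}\bigr)$ with the equaliser of \cref{lem:GenuineFromGeometric}, hence with $X^{C_m}$; and (ii) conservativity of $R$, since the adjunction formula gives $R(Y)^{C_m}$ as the same equaliser in the $Y_d$, from which one reads off inductively that $R(Y)\simeq R(Y')$ forces $Y_m\simeq Y_m'$ for all $m$. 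One also checks via \cref{lem:NaiveCyclonicBoundedBelow} that $R$ preserves bounded below objects. A fully faithful left adjoint with conservative right adjoint is an equivalence, and the argument never needs to compute $R(Y)^{\Phi C_m}$ or to exhibit $R(Y)$ explicitly. If you want to salvage your approach, the cleanest fix is exactly this: replace the by-hand construction of $R$ with the adjoint functor theorem, and replace the counit computation with conservativity of $R$.
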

	To prove \cref{prop:NaiveCyclonic}, let us first construct the desired symmetric monoidal structure.
	\begin{lem}\label{lem:LEqSymmetricMonoidal}
		Let $F\colon \Cc\rightarrow \Dd$ be a symmetric monoidal functor and let $G\colon \Cc\rightarrow \Dd$ be a lax symmetric monoidal functor of symmetric monoidal $\infty$-categories. Let $F^\otimes$ and $G^\otimes$ denote the corresponding functors between the $\infty$-operads $\Cc^\otimes\rightarrow \cat{Fin}_*$ and $\Dd^\otimes\rightarrow \cat{Fin}_*$ and define
		\begin{equation*}
			\operatorname{LEq}(F,G)^\otimes\coloneqq \operatorname{LEq}(F^\otimes,G^\otimes)\times_{\operatorname{LEq}(\id_{\cat{Fin}_*},\id_{\cat{Fin}_*})}\cat{Fin}_*\,.
		\end{equation*}
		\begin{alphanumerate}
			\item $\operatorname{LEq}(F,G)^\otimes\rightarrow \cat{Fin}_*$ is an $\infty$-operad associated to a symmetric monoidal structure on the $\infty$-category $\operatorname{LEq}(F,G)$ and $\operatorname{LEq}(F,G)\rightarrow \Cc$ is symmetric monoidal.\label{enum:LEqCocartesian}
			\item If $\Cc$ and $\Dd$ are presentably symmetric monoidal, $F$ preserves colimits, and $G$ is accessible, then $\operatorname{LEq}(F,G)$ is again presentably monoidal.\label{enum:LEqPresentable}
		\end{alphanumerate}
	\end{lem}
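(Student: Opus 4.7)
The plan is to unpack the pullback definition of $\operatorname{LEq}(F,G)^\otimes$ explicitly and then verify its structure as a symmetric monoidal $\infty$-category by hand. Recall that as an $\infty$-category, $\operatorname{LEq}(F,G) \simeq \Cc \times_{\Dd \times \Dd} \Fun(\Delta^1, \Dd)$, whose objects are pairs $(X, \phi\colon F(X)\rightarrow G(X))$. Unpacking the pullback defining $\operatorname{LEq}(F,G)^\otimes$ similarly shows that an object over $\langle m\rangle \in \cat{Fin}_*$ should be a tuple $(X_1,\dots,X_m)$ of objects of $\Cc$ together with morphisms $\phi_i\colon F(X_i) \rightarrow G(X_i)$ in $\Dd$: here we use the Segal decomposition of $\Cc^\otimes$ and $\Dd^\otimes$, together with the fact that the pullback with $\cat{Fin}_*$ forces the comparison morphism in $\Dd^\otimes$ to cover $\id_{\langle m\rangle}$.

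For \cref{enum:LEqCocartesian}, the key step will be to construct cocartesian lifts over morphisms $\alpha\colon \langle m\rangle \rightarrow \langle n\rangle$ in $\cat{Fin}_*$. Given such an $\alpha$ and an object $((X_i)_i, (\phi_i)_i)$ over $\langle m\rangle$, I would take $(Y_j)_{j=1}^n$ with $Y_j = \bigotimes_{\alpha(i)=j} X_i$ and define $\psi_j\colon F(Y_j) \rightarrow G(Y_j)$ as the composite
\begin{equation*}
F(Y_j) \simeq \bigotimes_{\alpha(i)=j} F(X_i) \xrightarrow{\bigotimes \phi_i} \bigotimes_{\alpha(i)=j} G(X_i) \longrightarrow G(Y_j)\,,
\end{equation*}
using strong monoidality of $F$ for the first equivalence and lax monoidality of $G$ for the final map. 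The resulting morphism in $\operatorname{LEq}(F,G)^\otimes$ will be cocartesian because both of its images under the projections to $\operatorname{LEq}(F^\otimes, G^\otimes)$ and to $\cat{Fin}_*$ are. The Segal condition for $\operatorname{LEq}(F,G)^\otimes$ then reduces to the one for $\Cc^\otimes$ together with the product decomposition of the $\phi_i$. Symmetric monoidality of $\operatorname{LEq}(F,G) \rightarrow \Cc$ will be built into the construction, since this projection is the restriction of the symmetric monoidal functor $\operatorname{LEq}(F^\otimes, G^\otimes) \rightarrow \Cc^\otimes$.

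For \cref{enum:LEqPresentable}, I would observe that $\operatorname{LEq}(F,G)$ sits in a pullback diagram
\begin{equation*}
\begin{tikzcd}
\operatorname{LEq}(F,G)\rar\dar & \Fun(\Delta^1, \Dd)\dar["(\mathrm{ev}_0,\mathrm{ev}_1)"]\\
\Cc\rar["(F,G)"'] & \Dd\times\Dd
\end{tikzcd}
\end{equation*}
of presentable $\infty$-categories along accessible functors; here $F$ is accessible because it preserves colimits, and $G$ is accessible by hypothesis. Presentability of $\operatorname{LEq}(F,G)$ will then follow from the standard closure of presentable $\infty$-categories under such limits. For the tensor product commuting with colimits, colimits in $\operatorname{LEq}(F,G)$ are computed on the underlying object in $\Cc$ (together with the induced structure map), so since the tensor product from \cref{enum:LEqCocartesian} is on underlying objects just the tensor product of $\Cc$, the required commutation reduces to the presentable monoidality of $\Cc$ together with the compatibility of the induced structure maps, which in turn uses colimit-preservation of $F$ and of $\otimes_\Dd$ in each variable.

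The main obstacle I anticipate is the cocartesian lifting verification in \cref{enum:LEqCocartesian}: one has to trace carefully through the pullback definition and check that the composite $\psi_j$ built from the $\phi_i$ via lax monoidality of $G$ really assembles into a cocartesian morphism of the pullback $\infty$-operad $\operatorname{LEq}(F,G)^\otimes$. This is essentially a diagram chase that is routine in principle but combinatorially involved, and it is the only genuinely new input; everything else follows from standard pullback arguments in $\Pr^\L$ or from formal properties of $\infty$-operads.
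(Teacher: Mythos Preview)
Your approach is essentially the same as the paper's. For part~\cref{enum:LEqCocartesian} you construct the cocartesian lifts by exactly the same formula (the paper only spells out the active morphism $\langle 2\rangle\to\langle 1\rangle$, but the recipe is identical), and for part~\cref{enum:LEqPresentable} the paper simply cites \cite[Proposition~{\chref{2.1.5}[II.1.5]}(iv)--(v)]{NikolausScholze}, whose proof is the pullback-of-presentable-categories argument you spell out.

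One point where your write-up is imprecise: the justification ``will be cocartesian because both of its images under the projections to $\operatorname{LEq}(F^\otimes,G^\otimes)$ and to $\cat{Fin}_*$ are'' is not a valid argument as stated. It is unclear over which base the image in $\operatorname{LEq}(F^\otimes,G^\otimes)$ is supposed to be cocartesian, and cocartesianness in a pullback does not in general reduce to cocartesianness of the two projections. The paper handles this step differently: it uses the explicit formula for mapping anima in lax equalisers \cite[Proposition~{\chref{2.1.5}[II.1.5]}(ii)]{NikolausScholze} to verify the locally-cocartesian criterion from (the dual of) \cite[Proposition~\chref{2.4.4.3}]{HTT} directly, and then invokes \cite[Proposition~\chref{2.4.2.8}]{HTT} to upgrade locally cocartesian to cocartesian. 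You correctly flag this verification as the main obstacle; the mapping-anima formula is the tool that makes it go through.
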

	\begin{proof}[Proof sketch]
		Let $\langle i\rangle \in \cat{Fin}_*$. Using $\operatorname{LEq}(\id_{\{\langle i\rangle\}},\id_{\{\langle i\rangle\}})\simeq *$, the fact that lax equalisers commute with pullbacks, and the fact that the fibres over $F^\otimes$ and $G^\otimes$ over $\langle i\rangle$ are $F^i\colon \Cc^i\rightarrow \Dd^i$ and $G^i\colon \Cc^i\rightarrow \Dd^i$ respectively, we find that the fibre of $\operatorname{LEq}(F,G)^\otimes\rightarrow \cat{Fin}_*$ over $\langle i\rangle\in\cat{Fin}_*$ is of the desired form: 
		\begin{equation*}
			\operatorname{LEq}(F^\otimes,G^\otimes)\times_{\operatorname{LEq}(\id_{\cat{Fin}_*},\id_{\cat{Fin}_*})}\operatorname{LEq}\bigl(\id_{\{\langle i\rangle\}},\id_{\{\langle i\rangle\}}\bigr)\simeq \operatorname{LEq}(F^i,G^i)\simeq \operatorname{LEq}(F,G)^i\,.
		\end{equation*}
		Let us next check that $\operatorname{LEq}(F,G)^\otimes\rightarrow \cat{Fin}_*$ is a cocartesian fibration. For simplicity, we'll only describe locally cocartesian lifts of the unique active morphism $f_2\colon \langle 2\rangle\rightarrow \langle 1\rangle$; it will be obvious how to perform the construction in general, as will be the fact that the locally cocartesian lifts compose, so that we obtain a cocartesian fibration by the dual of \cite[Proposition~\chref{2.4.2.8}]{HTT}. So suppose we're given $((x_1,\varphi_1),(x_2,\varphi_2))\in \operatorname{LEq}(F,G)^2$, where $\varphi_1\colon F(x_1)\rightarrow G(x_1)$ and $\varphi_2\colon F(x_2)\rightarrow G(x_2)$. Let $\varphi$ denote the composite
		\begin{equation*}
			\varphi\colon F(x_1\otimes_\Cc x_2)\simeq F(x_1)\otimes_\Dd F(x_2)\xrightarrow{\varphi_1\otimes \varphi_2}G(x_1)\otimes_\Dd G(x_2)\longrightarrow G(x_1\otimes_\Cc x_2)\,,
		\end{equation*}
		where we use strict and lax symmetric monoidality of $F$ and $G$, respectively. Now let $\mu\colon (x_1,x_2)\rightarrow x_1\otimes_\Cc x_2$ be a locally cocartesian lift of $f_2$ along $\Cc^\otimes\rightarrow \cat{Fin}_*$. Moreover, let $\mu_F\simeq F^\otimes(\mu)\colon (F(x_1),F(x_2))\rightarrow F(x_1)\otimes_\Dd F(x_2)$ and $\mu_G\colon (G(x_1),G(x_2))\rightarrow G(x_1)\otimes_\Dd G(x_2)$ be locally cocartesian lifts of $f_2$ along $\Dd^\otimes\rightarrow \cat{Fin}_*$. We have $\varphi\circ \mu_F\simeq \mu_G\circ(\varphi_1,\varphi_2)$ by construction of $\varphi$, and so we obtain a morphism $((x_1,\varphi_1),(x_2,\varphi_2))\rightarrow (x_1\otimes_\Cc x_2,\varphi)$ in $\operatorname{LEq}(F,G)^\otimes$. Using the formula for mapping anima in lax equalisers from \cite[Proposition~{\chref{2.1.5}[II.1.5]}(ii)]{NikolausScholze} and the general criterion from the dual of \cite[Proposition~\chref{2.4.4.3}]{HTT}, it's straightforward to verify that this morphism is indeed a locally cocartesian lift of $f_2$, as desired.
		
		Therefore, $\operatorname{LEq}(F,G)^\otimes\rightarrow \cat{Fin}_*$ is indeed a cocartesian fibration. From the description of cocartesian lifts above, it's clear that $\operatorname{LEq}(F,G)^\otimes\rightarrow \Cc^\otimes$ preserves cocartesian lifts, hence $\operatorname{LEq}(F,G)\rightarrow \Cc$ is indeed symmetric monoidal. This finishes the proof sketch of \cref{enum:LEqCocartesian}.
		
		For \cref{enum:LEqPresentable}, we must check that $\operatorname{LEq}(F,G)$ is presentable and that the tensor product preserves colimits in either variable. Both assertions follow from \cite[Proposition~{\chref{2.1.5}[II.1.5]}(iv)--(v)]{NikolausScholze}.
	\end{proof}
	
	Let us now commence with the proof of \cref{prop:NaiveCyclonic}. The main ingredient is a formula that allows to compute genuine fixed points for finite cyclic groups in terms of homotopy fixed points, geometric fixed points, and the Tate construction.
	
	\begin{lem}\label{lem:GenuineFromGeometric}
		Let $X$ be a cyclonic spectrum and let $m\in\IN$. If the geometric fixed points $X^{\Phi C_d}$ are bounded below for all divisors $d\mid m$, then the following canonical $(S^1/C_m)$-equivariant map is an equivalence:
		\begin{equation*}
			X^{C_m}\overset{\simeq}{\longrightarrow}\eq\Biggl(\prod_{d\mid m}(X^{\Phi C_d})^{\h C_{m/d}}\overset{\operatorname{can}}{\underset{\phi}{\doublemorphism}}\prod_{p\vphantom{|}}\prod_{pd\mid m}\bigl((X^{\Phi C_d})^{\t C_p}\bigr)^{\h C_{m/pd}}\Biggr)\,.
		\end{equation*}
		Here the second product is taken over all primes~$p$. The two maps $\operatorname{can}$ and $\phi$ in the equaliser are given as follows:
		\begin{align*}
			(X^{\Phi C_d})^{\h C_{m/d}}\simeq \bigl((X^{\Phi C_d})^{\h C_p}\bigr)^{\h C_{m/pd}}&\longrightarrow \bigl((X^{\Phi C_d})^{\t C_p}\bigr)^{\h C_{m/pd}}\,,\\
			(X^{\Phi C_{pd}})^{\h C_{m/pd}}\simeq \bigl((X^{\Phi C_d})^{\Phi C_p}\bigr)^{\h C_{m/pd}}&\longrightarrow\bigl((X^{\Phi C_d})^{\t C_p}\bigr)^{\h C_{m/pd}}\,,
		\end{align*}
		using the natural transformations $(-)^{\h C_p}\Rightarrow (-)^{\t C_p}$ and $(-)^{\Phi C_p}\Rightarrow (-)^{\t C_p}$, respectively.
	\end{lem}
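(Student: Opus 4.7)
Proceed by strong induction on the number of prime factors of $m$ (counted with multiplicity). For $m=1$ the equaliser collapses to the single term $(X^{\Phi C_1})^{hC_1}\simeq X\simeq X^{C_1}$, with no parallel maps to compare.

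For the inductive step, pick a prime $p\mid m$ and write $m=pk$. Using \cref{lem:FixedPointsCompose}\cref{enum:FixedPointsCompose}, identify $X^{C_m}\simeq (X^{C_k})^{C_p}$, where $C_p\cong C_m/C_k\subseteq S^1/C_k$ acts residually on $X^{C_k}$. By \cref{lem:GenuineGeometricBoundedBelow} and the hypothesis on $X$, the genuine $(S^1/C_k)$-spectrum $X^{C_k}$ has bounded-below geometric fixed points over every subgroup of $S^1/C_k$; so the Nikolaus--Scholze isotropy separation square for bounded-below $C_p$-spectra yields
\begin{equation*}
X^{C_m}\simeq (X^{C_k})^{hC_p}\times_{(X^{C_k})^{tC_p}}(X^{C_k})^{\Phi C_p}\,.
\end{equation*}

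The three corners are to be rewritten using the inductive description of $X^{C_k}$. For $(X^{C_k})^{hC_p}$ and $(X^{C_k})^{tC_p}$: commute $(-)^{hC_p}$ (resp.\ $(-)^{tC_p}$) past the finite equaliser for $X^{C_k}$ (both functors being exact between stable $\infty$-categories), and use the composition identity $((-)^{hC_{k/d}})^{hC_p}\simeq (-)^{hC_{m/d}}$ together with its Tate analogue on bounded-below objects; this produces the contributions to the target equaliser indexed by divisors $d\mid k$. For $(X^{C_k})^{\Phi C_p}$: the pieces of the inductive equaliser have the form $(X^{\Phi C_d})^{hC_{k/d}}$ and its $(-)^{tC_q}$-variants, which are Borel-complete as $C_p$-spectra by construction; on Borel-complete $C_p$-spectra $(-)^{\Phi C_p}$ agrees with $(-)^{tC_p}$, while on the underlying cyclonic $X^{\Phi C_d}$ itself we use \cref{lem:GenuineGeometricCommute}\cref{enum:FixedPointsCompose} to write $(X^{\Phi C_d})^{\Phi C_p}\simeq X^{\Phi C_{pd}}$, thereby reindexing the contributions by divisors $d=pd'\mid m$ with $p\mid d$.

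Assembling through the Tate pullback, the three corners fit together into precisely the equaliser over \emph{all} divisors of $m$: the ``can'' maps are supplied by the $hC_p$-corner, the ``$\phi$'' maps by the $\Phi C_p$-corner, and the $(S^1/C_m)$-equivariance is automatic from functoriality of all constructions involved. The main technical obstacle is the $\Phi C_p$-corner, where one must carefully separate the Borel-complete intermediate terms (on which $(-)^{\Phi C_p}$ reduces to $(-)^{tC_p}$, providing the canonical edge maps) from the genuine geometric fixed points $X^{\Phi C_d}$ inside (which feed the Frobenius structure maps via the composition law for $\Phi$); verifying that the resulting combinatorics match the target equaliser's parallel maps exactly is the essential content of the argument.
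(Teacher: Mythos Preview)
There is a genuine gap in the treatment of the $\Phi C_p$-corner. Your inductive hypothesis describes $X^{C_k}$ only as a Borel $(S^1/C_k)$-spectrum: the equaliser $E_k$ on the right-hand side is built from homotopy fixed points and Tate constructions, so it carries only a Borel structure, and the canonical map $X^{C_k}\to E_k$ is merely an $(S^1/C_k)$-equivariant equivalence of underlying spectra. But $X^{C_k}$ itself is \emph{not} Borel-complete as a genuine $C_p$-spectrum in general, so this map is not a genuine $C_p$-equivalence, and you cannot read off $(X^{C_k})^{\Phi C_p}$ from $E_k^{\Phi C_p}\simeq E_k^{\t C_p}$. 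Your remark that the individual terms of $E_k$ are Borel-complete is correct but does not help: it tells you what $E_k^{\Phi C_p}$ is, not what $(X^{C_k})^{\Phi C_p}$ is. The attempted reindexing via $(X^{\Phi C_d})^{\Phi C_p}\simeq X^{\Phi C_{pd}}$ is a statement about geometric fixed points of the \emph{constituents}, not about the geometric fixed points of the equaliser as a whole, and does not bridge this gap.

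The paper circumvents exactly this difficulty by a different route. It first treats the prime-power case by citing \cite[Corollary~{\chref{2.4.7}[II.4.7]}]{NikolausScholze} directly. For general $m$, it reduces via a finite \v{C}ech limit to the case where all prime factors of $m$ except one, say $p$, act invertibly on $X$. Writing $m=p^\alpha m_p$ with $(p,m_p)=1$, all Tate terms $(-)^{\t C_\ell}$ for $\ell\neq p$ vanish, so the inductive formula for $X^{C_{m_p}}$ collapses to a \emph{finite product}, and each $(-)^{\h C_{m_p/d_p}}$ becomes a \emph{finite limit} (since $\B C_{m_p/d_p}$ has a finite cell structure once $m_p$ is inverted). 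This finiteness is precisely what allows one to upgrade the formula for $X^{C_{m_p}}$ to a genuine $C_{p^\alpha}$-equivariant equivalence (arguing as in \cref{lem:GenuineGeometricCommute}\cref{enum:GenuineGeometricCommuteCoprime}), after which the prime-power case applied to each factor yields the claim.
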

	\begin{proof}
		We use induction on~$m$. If $m=p^\alpha$ is a prime power, the assertion is \cite[Corollary~{\chref{2.4.7}[II.4.7]}]{NikolausScholze}. Now let $m$ be arbitrary. We may assume that all but one prime factors of $m$ act invertibly on $X$, because an arbitrary $X$ can be written as a finite \v Cech limit of such objects (also the assumption that all $X^{\Phi C_d}$ are bounded below is preserved under any localisation). Write $m=p^\alpha m_p$, where $p$ is the not necessarily invertible prime and $m_p$ is coprime to $p$. Using the inductive hypothesis and the fact that the Tate construction $(-)^{\t C_\ell}$ vanishes on $\IS[1/\ell]$-modules, we find
		\begin{equation*}
			X^{C_{m_p}}\simeq \prod_{d_p\mid m_p}\bigl(X^{\Phi C_{d_p}}\bigr)^{\h C_{m_p/d_p}}\,.
		\end{equation*}
		Also observe that all homotopy fixed points $(-)^{\h C_{m_p/d_p}}$ in this formula can be computed as finite limits, as $\B C_{m_p/d_p}$ has a finite cell structure once $m_p$ is invertible. An argument as in \cref{lem:GenuineGeometricCommute}\cref{enum:GenuineGeometricCommuteCoprime} then allows us to deduce that the formula above is also true as genuine $C_{p^\alpha}$-equivariant spectra and that the homotopy fixed points  $(-)^{\h C_{m_p/d_p}}$ commute with the geometric fixed points $(-)^{\Phi C_{p^i}}$. With these observations, the formula for $X^{C_m}\simeq (X^{C_{m_p}})^{C_{p^\alpha}}$ becomes precisely the desired equaliser.
	\end{proof}
	
	With a similar argument, one can show the following technical lemma.
	
	\begin{lem}\label{lem:NaiveCyclonicBoundedBelow}
		Let $Y=((Y_m)_m,(\phi_{p,m})_{p,m})$ be a naive cyclotomic spectrum. Then $Y$ is bounded below if and only if for all $m\in\IN$ the following equaliser is bounded below:
		\begin{equation*}
			\eq\Biggl(\prod_{d\mid m}Y_d^{\h C_{m/d}}\overset{\operatorname{can}}{\underset{\phi}{\doublemorphism}}\prod_{p\vphantom{|}}\prod_{pd\mid m}\bigl(Y_d^{\t C_p}\bigr)^{\h C_{m/pd}}\Biggr)\,.
		\end{equation*}
	\end{lem}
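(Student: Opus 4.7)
The plan is to imitate the inductive structure used in the proof of \cref{lem:GenuineFromGeometric}. The case $m=1$ is immediate, since the equaliser reduces to $Y_1$. For the inductive step, I would first reduce to the prime power case by the localisation trick from that earlier proof: writing $m = p^\alpha m_p$ with $m_p$ coprime to $p$, we invert the primes dividing $m_p$ on $Y$. This kills the Tate terms at those primes and collapses the corresponding $(-)^{hC_{m_p/d}}$-factors into finite limits. Since bounded-belowness of the $Y_d$ and bounded-belowness of the equaliser are both preserved under these localisations and under finite limits, the reduction is valid in both directions.

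For $m = p^n$, I would proceed by induction on $n$. The equaliser $E_{p^n}$ admits an iterative pullback description parallel to the Nikolaus--Scholze formula for $X^{C_{p^n}}$: inductively, $E_{p^n}$ sits in a pullback of the form $E_{p^{n-1}}^{(h)} \times_{E_{p^{n-1}}^{(t)}} Y_{p^n}$, where $E_{p^{n-1}}^{(h)}$ and $E_{p^{n-1}}^{(t)}$ denote the parallel equalisers obtained from the data $(Y_d)_{d \mid p^{n-1}}$ by postcomposing each $Y_d$-entry with $(-)^{hC_p}$ or $(-)^{tC_p}$ respectively, and where the right-hand map uses $\phi_{p,p^{n-1}}\colon Y_{p^n} \to Y_{p^{n-1}}^{tC_p}$. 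The fibre of the projection $E_{p^n} \to Y_{p^n}$ then equals the fibre of $E_{p^{n-1}}^{(h)} \to E_{p^{n-1}}^{(t)}$; a careful analysis parallel to the inductive step in \cite[Corollary~{\chref{2.4.7}[II.4.7]}]{NikolausScholze} shows that this fibre is a finite iterated extension of homotopy-orbit spectra of the form $(Y_d)_{hC_{p^k}}$ for $d \mid p^{n-1}$. Since homotopy orbits are colimits and thus preserve bounded-belowness, the fibre is bounded below provided each $Y_d$ with $d < p^n$ is.

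With this structural result in hand, both implications follow simultaneously by strong induction on $m$ via the fibre sequence $F_m \to E_m \to Y_m$: in the forward direction, bounded-belowness of all $Y_d$ gives bounded-belowness of both $F_m$ and $Y_m$, hence of $E_m$; in the backward direction, strong induction provides bounded-belowness of $Y_d$ for $d < m$, whence $F_m$ is bounded below, and combining this with bounded-belowness of $E_m$ forces $Y_m$ to be bounded below. The main obstacle is the identification of $F_m$ as a bounded-below spectrum. A naïve application of the octahedral axiom to the iterative pullback produces fibre sequences involving \emph{unbounded-below} pieces such as $(Y_{d,hC_p})^{hC_{p^k}}$; the key point is that in the correct iterative decomposition these unbounded contributions cancel against analogous pieces arising on the other leg of the pullback, leaving only bounded-below homotopy-orbit terms. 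This cancellation is precisely the mechanism underlying \cref{lem:GenuineGeometricBoundedBelow} in the genuine setting, and the careful bookkeeping required to make it explicit in the naïve cyclonic setting is the main technical content of the argument.
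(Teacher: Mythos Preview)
Your reduction to the $p$-typical case via localisation is the same as the paper's. The main divergence is in how you handle the prime-power case, and here the paper's argument is considerably more direct than your recursive pullback strategy; in particular, it completely sidesteps the ``cancellation of unbounded-below pieces'' that you identify as the chief difficulty.

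After reducing to $m=p^\alpha m_p$ with only~$p$ possibly non-invertible, the paper does not recurse on~$\alpha$. Instead it views the simplified equaliser as the limit of a staircase diagram and applies the Tate fixed point lemma \cite[Lemma~{\chref{2.4.1}[II.4.1]}]{NikolausScholze} directly to each step: writing $d=p^id_p$ with $d_p\mid m_p$,
\[
\fib\Bigl(Y_d^{\h C_{m/d}}\longrightarrow \bigl(Y_d^{\t C_p}\bigr)^{\h C_{m/pd}}\Bigr)\simeq \bigl((Y_d)_{\h C_{p^{\alpha-i}}}\bigr)^{\h C_{m_p/d_p}}\,.
\]
The inner homotopy orbits preserve bounded-belowness, and the outer $(-)^{\h C_{m_p/d_p}}$ is a finite limit because $m_p$ is invertible. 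Peeling off one staircase step at a time, starting from the top factor $Y_m$, exhibits the equaliser as a finite iterated extension of $Y_m$ by these bounded-below fibres. No unbounded intermediate object ever appears, so no cancellation bookkeeping is needed.

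Two further remarks. First, your recursive formula $E_{p^n}\simeq E_{p^{n-1}}^{(h)}\times_{E_{p^{n-1}}^{(t)}}Y_{p^n}$ does not look right as stated: the natural recursion is $E_{p^n}\simeq (E_{p^{n-1}})^{\h C_p}\times_{Y_{p^{n-1}}^{\t C_p}}Y_{p^n}$, with a single Tate object in the base rather than a ``parallel Tate equaliser''. Second, the paper only proves the ``only if'' direction here, explicitly deferring the converse to \cref{prop:NaiveCyclonic}. Your strong-induction argument for the ``if'' direction is a genuine addition; once the fibre of $E_m\to Y_m$ is correctly identified as a finite extension of homotopy-orbit spectra of the $Y_d$ with $d<m$ (which the staircase argument delivers cleanly), your argument for that direction goes through.
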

	\begin{proof}
		We only prove the \enquote{only if} part, the \enquote{if} will follow from \cref{prop:NaiveCyclonic} (and won't be used in the proof). So let $Y$ be bounded below. We may once again assume that all but one prime factors of~$m$ act invertibly on $Y$, since the property of being bounded below is preserved under finite \v Cech limits. So write $m=p^\alpha m_p$, where~$p$ is the not necessarily invertible prime and $m_p$ is coprime to~$p$. Since the Tate constructions $(-)^{\t C_\ell}$ vanish for all primes $p\neq \ell$, the equaliser simplifies to
		\begin{equation*}
			\eq\Biggl(\prod_{d\mid m}Y_d^{\h C_{m/d}}\overset{\operatorname{can}}{\underset{\phi_p}{\doublemorphism}}\prod_{pd\mid m}\bigl(Y_d^{\t C_p}\bigr)^{\h C_{m/pd}}\Biggr)
		\end{equation*}
		Let $pd\mid m$ and write $d=p^id_p$, where $i\leqslant \alpha-1$ and $d_p$ is coprime to~$p$. Using the Tate fixed point lemma \cite[Lemma~{\chref{2.4.1}[II.4.1]}]{NikolausScholze}, we find
		\begin{equation*}
			\fib\left(Y_{d}^{\h C_{m/d}}\rightarrow \bigl(Y_{d}^{\t C_p}\bigr)^{\h C_{m/pd}}\right)\simeq \bigl((Y_{d})_{\h C_{p^{\alpha-\smash{i}}}}\bigr)^{\h C_{m/p^\alpha d_p}}\,.
		\end{equation*}
		Since $(-)_{\h C_{p^{\alpha-i}}}$ preserves bounded below objects and $(-)^{\h C_{m/p^\alpha d_p}}$ can be written as a finite limit in our situation, we deduce that the fibre is bounded below. An easy induction shows that the equaliser in question must be bounded below as well.
	\end{proof}
	\begin{proof}[Proof of \cref{prop:NaiveCyclonic}]
		Let us first show that $(-)^{\Phi C}\colon \Cyclonic_+\rightarrow \Cyclonic_+^\mathrm{naiv}$ is fully faithful. For any $m\in\IN$, we have
		\begin{equation*}
			\IS_{S^1}[S^1/C_m]^{\Phi C_d}\simeq \begin{cases*}
				\IS[S^1/C_{m/d}] & if $d\mid m$\\
				0 & else
			\end{cases*}\,.
		\end{equation*}
		By unravelling the general formula for mapping anima/spectra in lax equalisers \cite[Proposition~{\chref{2.1.5}[II.1.5]}(ii)]{NikolausScholze}, we find that $\Hom_{\Cyclonic^\mathrm{naiv}}(\IS_{S^1}[S^1/C_m]^{\Phi C},X^{\Phi C})$ is given by the equaliser from \cref{lem:GenuineFromGeometric} for all cyclonic spectra $X$. If $X$ is bounded below, it follows that
		\begin{equation*}
			\Hom_{\Cyclonic^\mathrm{naiv}}\bigl(\IS_{S^1}[S^1/C_m]^{\Phi C},X^{\Phi C}\bigr)\simeq X^{C_m}\simeq \Hom_{\Cyclonic}\bigl(\IS_{S^1}[S^1/C_m],X\bigr)\,,
		\end{equation*}
		as desired. Since $\Cyclonic$ is generated under colimits by shifts of $\IS_{S^1}[S^1/C_m]$ for all $m\in\IN$, we deduce that $(-)^{\Phi C}\colon \Cyclonic_+\rightarrow \Cyclonic_+^\mathrm{naiv}$ is indeed fully faithful.
		
		Using \cite[Proposition~{\chref{2.1.5}[II.1.5]}(iv)--(v)]{NikolausScholze}, we see that $(-)^{\Phi C}\colon \Cyclonic\rightarrow \Cyclonic^\mathrm{naiv}$ is a colimit-preserving functor between presentable $\infty$-categories and so it admits a right adjoint $R\colon \Cyclonic^\mathrm{naiv}\rightarrow \Cyclonic$. We note that $R$ restricts to a functor $R\colon \Cyclonic_+^\mathrm{naiv}\rightarrow \Cyclonic_+$. Indeed, an analogous computation as above shows that
		\begin{equation*}
			R(Y)^{C_m}\simeq \Hom_{\Cyclonic^\mathrm{naiv}}\bigl(\IS_{S^1}[S^1/C_m]^{\Phi C},Y\bigr)\simeq \eq\Biggl(\prod_{d\mid m}Y_d^{\h C_{m/d}}\overset{\operatorname{can}}{\underset{\phi}{\doublemorphism}}\prod_{p\vphantom{|}}\prod_{pd\mid m}\bigl(Y_d^{\t C_p}\bigr)^{\h C_{m/pd}}\Biggr)
		\end{equation*}
		for all $Y\in \Cyclonic^\mathrm{naiv}$. Thus, if $Y$ is bounded below, \cref{lem:NaiveCyclonicBoundedBelow} shows that $R(Y)$ will be bounded below as well.
		
		The same calculation shows that $R$ is conservative. Indeed, if $Y\rightarrow Y'$ is a morphism of naive cyclonic spectra such that $R(Y)\rightarrow R(Y')$ is an equivalence, then the induced morphisms on the equalisers from \cref{lem:NaiveCyclonicBoundedBelow} are equivalences for all $m\in\IN$. Arguing inductively, this implies that $Y_m\rightarrow Y'_m$ must be an equivalence for all $m\in\IN$ and so $Y\rightarrow Y'$ is indeed an equivalence as well.
		
		In general, if the left adjoint in any adjunction is fully faithful and the right adjoint is conservative, the adjunction is a pair of inverse equivalences. This finishes the proof.
	\end{proof}
	
	\begin{rem}
		Ayala--Mazel-Gee--Rozenblyum derive another \enquote{naive} description of cyclonic spectra in \cite[Corollary~\chref{0.4}]{NaiveGenuine}. In contrast to \cref{prop:NaiveCyclonic}, which is only valid in the bounded below case, their result covers all cyclonic spectra. This comes at a cost of additional coherence data. The moral reason why, in the bounded below case, we can get away with only the maps $X^{ \Phi C_{pm}}\rightarrow (X^{\Phi C_m})^{\t C_p}$, with no coherence data to be specified, is the following: For $X$ bounded below, the composition maps for the \emph{proper} Tate construction are equivalences
		\begin{equation*}
			X^{\tau C_{mn}}\overset{\simeq}{\longrightarrow} (X^{\tau C_m})^{\tau C_n}\,,
		\end{equation*}
		and unless $m$ and $n$ are powers of the same prime, both sides vanish. This determines all coherence data uniquely. We expect that by formalising this observation, one can deduce \cref{prop:NaiveCyclonic} from \cite[Corollary~\chref{0.4}]{NaiveGenuine}, but we have not attempted to do so.
	\end{rem}
	\begin{numpar}[Cyclonic vs.\ cyclotomic spectra.]\label{par:CyclonicVsCyclotomic}
		Let $\cat{CyctSp}$ denote the $\infty$-category of cyclotomic spectra and let $\cat{CyctSp}^\mathrm{naiv}$ denote its naive variant introduced by Nikolaus--Scholze \cite[Definition~{\chref{2.1.6}[II.1.6]}(i)]{NikolausScholze}. We have a symmetric monoidal functor
		\begin{equation*}
			\cat{CyctSp}^\mathrm{naiv}\longrightarrow \Cyclonic^\mathrm{naiv}
		\end{equation*}
		sending a cyclotomic spectrum $X$ to the constant family $((X)_{m},(\phi_{p,m})_{p,m})$ in which each $\phi_{p,m}$ is given by the cyclotomic Frobenius $X\rightarrow X^{\t C_p}$. This functor is not fully faithful (this will become useful in \cref{par:CyclonicBase} below).
		
		One can also construct a functor $\cat{CyctSp}\rightarrow \Cyclonic$ on the non-naive $\infty$-categories (see \cite[\S{\chref[subsection]{2.5}}]{NaiveGenuine} for example) which agrees with the functor above on bounded below objects.
	\end{numpar}
	
	\subsection{Genuine equivariant \texorpdfstring{$\ku$}{ku}}\label{subsec:Genuineku}
	
	In this subsection we'll equip $\ku$ with the structure of a cyclonic spectrum and compute its genuine and geometric fixed points $\ku^{C_m}$ and $\ku^{\Phi C_m}$ for all $m$.
	
	\begin{numpar}[Cyclonic $\ku$.]\label{par:Cyclonicku}
		Recall that Schwede \cite[Construction~\chref{6.3.9}]{SchwedeGlobal} constructs a model $\ku_{\mathrm{gl}}$ of $\ku$ as an \emph{ultracommutative global%
			\footnote{\enquote{Global} in the sense of global homotopy theory, not in the sense of \cref{subsec:qdeRhamkuGlobal}. Very roughly, it means to have compatible trivial actions by all compact Lie groups. \enquote{Ultracommutative} refers to the fact that Schwede's model admits a strictly commutative multiplication on the point-set level.}
			ring spectrum}. Throwing away most of the structure, this yields an $\IE_\infty$-algebra $\ku_{S^1}\in \CAlg(\Sp_{S^1})$ with underlying non-equivariant $\IE_\infty$-algebra $\ku$. We still have a Bott map $\beta\colon \Sigma^2\IS_{S^1}\rightarrow \ku_{S^1}$ (in fact, $\beta$ already exists for $\ku_\mathrm{gl}$) and we define $\KU_{S^1}\coloneqq \ku_{S^1}[\beta^{-1}]$. In the following we'll often abusingly drop the index and just write $\ku$ or $\KU$ for the genuine $S^1$-equivariant versions. We also note that by restriction, $\ku$ and $\KU$ define $\IE_\infty$-algebras in cyclonic spectra.
	\end{numpar}

	\begin{numpar}[Genuine fixed points of $\ku$.]\label{par:kuGenuineFixedPoints}
		Let $q$ denote the standard representation of $S^1$ on $\IC$ via rotations, so that the complex representation rings of $S^1$ and $C_m$ are given by $\operatorname{RU}(S^1)\cong \IZ[q^{\pm 1}]$ and $\operatorname{RU}(C_m)\cong \IZ[q]/(q^m-1)$. Via the canonical map $\operatorname{RU}(S^1)\rightarrow \pi_0(\ku^{S^1})$, we can regard $q$ as a class in $\pi_0(\ku^{S^1})$, compatible with \cref{rem:kutCpNotation}. It's a well-known fact that $q$ is a \emph{strict} element, that is, it is detected by an $\IE_\infty$-algebra map $\IS[q]\rightarrow \ku^{S^1}$. See \cref{cor:qStrictElement} for a proof.
		
		For the finite groups $C_m$ the analogous maps $\operatorname{RU}(C_m)\rightarrow \pi_0(\ku^{C_m})$ are isomorphisms \cite[Theorem~\chref{6.3.33}]{SchwedeGlobal} and so, by equivariant Bott periodicity,
		\begin{equation*}
			\pi_*(\ku^{C_m})\cong \IZ[\beta,q]/(q^m-1)\quad\text{and}\quad \pi_*(\KU^{C_m})\cong \IZ[\beta^{\pm 1},q]/(q^m-1)\,.
		\end{equation*}
		In particular, $\ku^{C_m}\simeq \tau_{\geqslant 0}(\KU^{C_m})$. Using the homotopy fixed point spectral sequence, we can also compute the homotopy fixed points of the residual $(S^1/C_m)$-action:
		\begin{equation*}
			\pi_*\bigl((\ku^{C_m})^{\h (S^1/C_m)}\bigr)\cong \IZ[\beta,q]\llbracket t_m\rrbracket/\bigl(\beta t_m-(q^m-1)\bigr)\,,
		\end{equation*}
		where $\abs{t_m}=-2$. The canonical map $(\ku^{C_m})^{\h (S^1/C_m)}\rightarrow \ku^{\h S^1}$ sends $t_m\mapsto [m]_q t$. In particular, on $\pi_0$ this map recovers the $(q-1)$-completion $\IZ[q]_{(q^m-1)}^\complete\rightarrow \IZ\qpower$, and $t_m=[m]_{\ku}(t)$ agrees with the $m$-series of the formal group law of $\ku$.
	\end{numpar}
	
	\begin{numpar}[Inflation maps for $\ku$.]\label{par:kuInflations}
		Consider the inflation maps from \cref{par:Inflations} in the special case where $\varphi$ is the $n$\textsuperscript{th} power map $(-)^n\colon S^1\rightarrow S^1$ for some $n\geqslant 1$. We have $\varphi^*\ku_{S^1}\simeq \ku_{S^1}$, since the genuine $S^1$-equivariant structure comes from a global spectrum $\ku_\mathrm{gl}$, where all actions are trivial (compare \cite[\S{\chref[section]{4.1}}]{SchwedeGlobal}). Since $(-)^n$ maps the subgroups $C_{mn}$ to $C_m$, we get inflations
		\begin{equation*}
			\inf_n\colon \ku^{C_m}\longrightarrow \ku^{C_{mn}}\quad\text{and}\quad\inf_n\colon \ku^{\Phi C_m}\longrightarrow \ku^{\Phi C_{mn}}\,.
		\end{equation*} 
		These are maps of $\IE_\infty$-algebras in $\Sp_{S^1}$ for the residual genuine $S^1\simeq S^1/C_m$-equivariant structure on the left-hand sides and the residual $S^1\simeq S^1/C_{mn}$-equivariant structure on the right-hand sides. A straightforward check shows $\inf_n(q)=q^n$ and $\inf_n(\beta)=\beta$ (compare \cref{par:EquivariantBott}).
	\end{numpar}
	\begin{cor}\label{cor:kuCm}
		For all $m$ and $n$, the inflation map induces an $S^1$-equivariant equivalence of $\IE_\infty$-algebras
		\begin{equation*}
			\inf_n\colon \ku^{C_m}\otimes_{\IS[q],\psi^n}\IS[q]\longrightarrow \ku^{C_{mn}}\,,
		\end{equation*}
		where $\psi^n\colon \IS[q]\rightarrow\IS[q]$ is given by $\psi^n(q)\coloneqq q^n$.
	\end{cor}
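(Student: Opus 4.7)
The plan is to construct the map first and then verify it is an equivalence by computing both sides on homotopy groups.

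To construct the comparison map, I would combine~\cref{par:kuInflations} with the strict-element structure from~\cref{par:kuGenuineFixedPoints}. By~\cref{par:kuInflations} the inflation $\inf_n\colon\ku^{C_m}\to\ku^{C_{mn}}$ is an $S^1$-equivariant $\IE_\infty$-algebra map with $\inf_n(q)=q^n$ and $\inf_n(\beta)=\beta$. The strict element structure provides compatible $S^1$-equivariant $\IE_\infty$-algebra maps $\IS[q]\to\ku^{C_m}$ and $\IS[q]\to\ku^{C_{mn}}$ (with $\IS[q]$ carrying the trivial action), and these fit into a commutative square
\begin{equation*}
\begin{tikzcd}
\IS[q]\rar["\psi^n"]\dar & \IS[q]\dar\\
\ku^{C_m}\rar["\inf_n"] & \ku^{C_{mn}}
\end{tikzcd}
\end{equation*}
in $S^1$-equivariant $\IE_\infty$-algebras, inducing the desired comparison map $\ku^{C_m}\otimes_{\IS[q],\psi^n}\IS[q]\to\ku^{C_{mn}}$.

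To check it is an equivalence, I would forget the $S^1$-action and compute $\pi_*$. The crucial observation is that $\IZ[q]$ viewed as a module over itself via $\psi^n$ is free of rank $n$ with basis $\{1,q,\dots,q^{n-1}\}$, hence flat. Consequently the K\"unneth spectral sequence for the base change collapses and yields
\begin{equation*}
\pi_*\bigl(\ku^{C_m}\otimes_{\IS[q],\psi^n}\IS[q]\bigr)\cong \IZ[\beta,q_1]/(q_1^m-1)\otimes_{\IZ[q],\psi^n}\IZ[q_2]\,,
\end{equation*}
where I write $q_1,q_2$ to distinguish the two copies of $q$. The tensor product relation forces $q_1=q_2^n$, so the relation $q_1^m=1$ becomes $q_2^{mn}=1$, and the ring simplifies to $\IZ[\beta,q_2]/(q_2^{mn}-1)$, which is precisely $\pi_*(\ku^{C_{mn}})$ by~\cref{par:kuGenuineFixedPoints}.

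Finally I would trace through the construction to check that the comparison map induces this canonical isomorphism. The map sends $\beta\in\pi_2$ to $\beta$ (via $\inf_n(\beta)=\beta$ on the left factor) and sends $q_2\in\pi_0(\IS[q])$ to the $q$-class of $\ku^{C_{mn}}$, which is exactly the identification above. The only real subtlety is the K\"unneth argument in the middle step, but the flatness of $\psi^n\colon\IZ[q]\to\IZ[q]$ makes this routine; everything else is essentially formal.
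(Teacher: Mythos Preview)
Your proof is correct and follows the same approach as the paper: the paper's one-line proof simply says to check on homotopy groups using \cref{par:kuGenuineFixedPoints} and \cref{par:kuInflations}, and you have spelled out exactly those details (construction of the map via the strict element and inflation, then the flatness/K\"unneth argument to compute $\pi_*$). The only minor remark is that the freeness of $\IS[q]$ over itself via $\psi^n$ holds already at the level of spectra, so no spectral sequence is strictly needed---the base change is literally a finite direct sum---but your version is equally valid.
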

	\begin{proof}
		This can be checked on homotopy groups, where it follows from \cref{par:kuGenuineFixedPoints} and~\cref{par:kuInflations}.
	\end{proof}
	\begin{rem}
		The notation $\psi^n\colon \IS[q]\rightarrow \IS[q]$ is chosen to be compatible with the Adams operations on the $\Lambda$-ring $\IZ[q]$. One can also construct equivariant Adams operations on $\ku$ (see \cref{par:EquivariantAdamsOperations}), but these \emph{do not} coincide with $\inf_n$.
	\end{rem}
	
	\begin{numpar}[Geometric fixed points of $\ku$.]
		To prove our Habiro descent result, it will be crucial to know the geometric fixed points $\ku^{\Phi C_m}$ as well, at least after inverting~$m$ and after $p$-completion for any prime $p\mid m$. This will be our goal for the rest of this subsection. Our strategy will be to compute the geometric fixed points inductively using \cref{lem:GenuineFromGeometric}. To apply said lemma, observe that we already know that each $\ku^{\Phi C_m}$ is bounded below thanks to \cref{lem:GenuineGeometricBoundedBelow}.
		
		For $\KU$, the geometric fixed points can essentially already be found in the literature (even though the author could only find the precise result in the case where $m$ is a prime power): We have an equivalence of $S^1$-equivariant $\IE_\infty$-ring spectra
		\begin{equation*}
			\KU^{C_m}\Bigl[\bigl\{(q^d-1)^{-1}\bigr\}_{d\mid m,\, d\neq m}\Bigr]\overset{\simeq}{\longrightarrow}\KU^{\Phi C_m}\,.
		\end{equation*}
		One way to prove this is via the corresponding statement for equivariant $\MU$ \cite[Proposition~\href{https://arxiv.org/pdf/math/9910024\#[{"num"\%3A88\%2C"gen"\%3A0}\%2C{"name"\%3A"FitH"}\%2C317]}{4.6}]{SinhaEquivariantMU} and the equivariant Conner--Floyd theorem \cite{EquivariantConnerFloyd}. The result can also be deduced from \cref{prop:kuPhiCm} below.
	\end{numpar}

	\begin{lem}\label{lem:kuPhiCmLocalisation}
		The canonical map $\ku[1/m]^{C_m}\rightarrow \ku[1/m]^{\Phi C_m}$ induces an equivalence of $S^1$-equivariant $\IE_\infty$-ring spectra
		\begin{equation*}
			\bigl(\ku\bigl[\localise{m}\bigr]^{C_m}\bigr)_{\Phi_m(q)}^\complete\overset{\simeq}{\longrightarrow}\ku\bigl[\localise{m}\bigr]^{\Phi C_m}\,.
		\end{equation*}
		In particular, $\pi_*(\ku[1/m]^{\Phi C_m})\cong \IZ[1/m,\beta,q]/\Phi_m(q)$.
	\end{lem}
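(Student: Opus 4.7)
My plan is to proceed by induction on $m$, with \cref{lem:GenuineFromGeometric} as the main tool. The base case $m=1$ is trivial: both sides equal $\ku$, and $q \in \pi_0(\ku^{C_1}) = \IZ$ maps to $1$, so $\Phi_1(q)=0$ and the completion is vacuous.

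For the inductive step, I would first observe that every prime dividing $m$ is invertible in $\ku\bigl[\localise{m}\bigr]$, so every Tate construction $(-)^{\t C_p}$ with $p \mid m$ vanishes on $\ku\bigl[\localise{m}\bigr]$-modules. The equaliser in \cref{lem:GenuineFromGeometric} therefore degenerates to an $S^1$-equivariant $\IE_\infty$-equivalence
\begin{equation*}
	\ku\bigl[\localise{m}\bigr]^{C_m} \overset{\simeq}{\longrightarrow} \prod_{d \mid m}\bigl(\ku\bigl[\localise{m}\bigr]^{\Phi C_d}\bigr)^{\h C_{m/d}}\,.
\end{equation*}
For proper divisors $d < m$, the inductive hypothesis (transported from $\IZ\bigl[\localise{d}\bigr]$ to $\IZ\bigl[\localise{m}\bigr]$ via base change, since inverting $m$ commutes with geometric fixed points) gives $\pi_*(\ku\bigl[\localise{m}\bigr]^{\Phi C_d}) \cong \IZ\bigl[\localise{m},\beta,q\bigr]/\Phi_d(q)$. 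The residual $S^1/C_d$-action on $\ku\bigl[\localise{m}\bigr]^{\Phi C_d}$ fixes $q$ and $\beta$ on $\pi_*$, since these classes originate from the global-equivariant structure of $\ku$ discussed in~\cref{par:kuGenuineFixedPoints}--\cref{par:kuInflations}. Since the order $m/d$ of $C_{m/d}$ is a unit in $\IZ\bigl[\localise{m}\bigr]$, the group cohomology $H^*(\B C_{m/d}, \IZ\bigl[\localise{m}\bigr])$ is concentrated in degree~$0$; the homotopy fixed point spectral sequence collapses, so the $d$-th factor in the product has $\pi_* \cong \IZ\bigl[\localise{m},\beta,q\bigr]/\Phi_d(q)$ as well.

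On the other hand, by~\cref{par:kuGenuineFixedPoints}, $\pi_*(\ku\bigl[\localise{m}\bigr]^{C_m}) \cong \IZ\bigl[\localise{m},\beta,q\bigr]/(q^m-1)$ splits via Chinese Remainder as $\prod_{d \mid m}\IZ\bigl[\localise{m},\beta,q\bigr]/\Phi_d(q)$, because the pairwise resultants of distinct cyclotomic polynomials $\Phi_d,\Phi_{d'}$ for $d,d' \mid m$ are powers of primes dividing $m$ and hence units in $\IZ\bigl[\localise{m}\bigr]$. Comparing the two product decompositions, and using that the orthogonal-idempotent decomposition of a commutative ring is uniquely determined by its $\pi_0$-factors, the remaining $d=m$ factor $(\ku\bigl[\localise{m}\bigr]^{\Phi C_m})^{\h C_1} \simeq \ku\bigl[\localise{m}\bigr]^{\Phi C_m}$ must carry $\pi_* \cong \IZ\bigl[\localise{m},\beta,q\bigr]/\Phi_m(q)$. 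Finally, $\Phi_m(q)$ acts as zero on the $d=m$ factor but as a unit on every $d<m$ factor, since $\Phi_m(\zeta_d) \in \IZ\bigl[\localise{m},\zeta_d\bigr]$ has norm a power of primes dividing $m/d$; hence $\Phi_m(q)$-completion of the product kills all $d<m$ factors and preserves the $d=m$ factor, giving the desired $\IE_\infty$-equivalence. The step requiring the most care is the canonical matching of factors across the two product decompositions, which I would reduce via the uniqueness of CRT splittings to the inductive computation of $\pi_*$ for the $d<m$ factors.
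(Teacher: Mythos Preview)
Your proof is correct and follows essentially the same approach as the paper: induct on $m$, apply \cref{lem:GenuineFromGeometric}, observe that all Tate constructions vanish once $m$ is inverted so the equaliser collapses to a product, and then read off the $d=m$ factor. The paper compresses the identification of the factors into a one-line citation of \cref{cor:kuCm}, whereas you spell out the Chinese Remainder matching and the collapse of the $(-)^{\h C_{m/d}}$ via the homotopy fixed point spectral sequence; both routes arrive at the same place.
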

	\begin{proof}
		Since we already know that $\ku^{\Phi C_d}$ is bounded below for all $d\mid m$, we can apply the formula from \cref{lem:GenuineFromGeometric} to $\ku[1/m]$. Because we've inverted~$m$, all Tate constructions will vanish, and the formula becomes an equivalence
		\begin{equation*}
			\ku\bigl[\localise{m}\bigr]^{C_m}\simeq \prod_{d\mid m}\ku\bigl[\localise{m}\bigr]^{\Phi C_d}\,.
		\end{equation*}
		The claim then follows via induction on $m$ and \cref{cor:kuCm}.
	\end{proof}

	\begin{lem}\label{lem:kuPhiCmpCompletion}
		Let $m=p^\alpha m_p$, where $p$ is a prime and $m_p$ is coprime to $p$.
		The inflation map induces an $S^1$-equivariant equivalence of $\IE_\infty$-ring spectra
		\begin{equation*}
			\inf_{m_p}\colon \Bigl(\bigl(\ku^{\Phi C_{p^\alpha}}\bigr)_p^\complete\otimes_{\IS[q],\psi^{m_p}}\IS[q]\Bigr)_{\Phi_m(q)}^\complete\overset{\simeq}{\longrightarrow} \bigl(\ku^{\Phi C_m}\bigr)_p^\complete\,.
		\end{equation*}
	\end{lem}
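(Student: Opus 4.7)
The plan is to realise $\ku^{\Phi C_m}$ as a piece of the $p$-completed genuine $C_{m_p}$-fixed points of $\ku^{\Phi C_{p^\alpha}}$, using \cref{lem:GenuineFromGeometric} to split things off after the relevant Tate constructions vanish. Since $\gcd(p^\alpha,m_p)=1$, the subgroup $C_m\subseteq S^1$ decomposes as $C_{p^\alpha}\times C_{m_p}$, so \cref{lem:FixedPointsCompose} yields an $S^1$-equivariant equivalence $\ku^{\Phi C_m}\simeq (\ku^{\Phi C_{p^\alpha}})^{\Phi C_{m_p}}$. Set $K\coloneqq\ku^{\Phi C_{p^\alpha}}$, viewed as a cyclonic $\IE_\infty$-algebra via its residual $S^1/C_{p^\alpha}\cong S^1$-action. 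I would then apply \cref{lem:GenuineFromGeometric} to $K_p^\complete$ with the integer $m_p$: since $K_p^\complete$ is $\IZ_p$-linear and every prime divisor $\ell$ of $m_p$ is coprime to $p$, all Tate constructions $(-)^{\t C_\ell}$ vanish on $K_p^\complete$-modules, and the equaliser collapses to a product
\begin{equation*}
	(K_p^\complete)^{C_{m_p}}\overset{\simeq}{\longrightarrow}\prod_{d\mid m_p}\bigl((K_p^\complete)^{\Phi C_d}\bigr)^{\h C_{m_p/d}}\,.
\end{equation*}
By \cref{lem:FixedPointsCompose} (applied to $C_{p^\alpha d}\simeq C_{p^\alpha}\times C_d$) and the fact that geometric fixed points commute with $p$-completion on bounded below objects (noting $\ku^{\Phi C_{p^\alpha d}}$ is bounded below by \cref{lem:GenuineGeometricBoundedBelow}), each $(K_p^\complete)^{\Phi C_d}$ identifies with $(\ku^{\Phi C_{p^\alpha d}})_p^\complete$; in particular the factor at $d=m_p$ (where $\h C_{m_p/d}=\h C_1$ is trivial) is exactly $(\ku^{\Phi C_m})_p^\complete$.

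Next I would recognise the left-hand side as the base change $(K\otimes_{\IS[q],\psi^{m_p}}\IS[q])_p^\complete$. By \cref{lem:GenuineGeometricCommute}\cref{enum:GenuineGeometricCommuteCoprime}, $K^{C_{m_p}}\simeq(\ku^{C_{m_p}})^{\Phi C_{p^\alpha}}$; applying the symmetric monoidal colimit-preserving functor $(-)^{\Phi C_{p^\alpha}}$ to the equivalence $\ku^{C_{m_p}}\simeq\ku\otimes_{\IS[q],\psi^{m_p}}\IS[q]$ of \cref{cor:kuCm}---and observing that $\IS[q]$ is unchanged by $(-)^{\Phi C_{p^\alpha}}$ since it carries the trivial $C_{p^\alpha}$-action---produces the desired identification. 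Tracing generators, the ``new'' $q$ coming from the right-hand factor $\IS[q]$ is the standard representation of the residual $S^1/C_m$, and an inspection at the level of $\pi_0$ identifies the projection onto the $d=m_p$ factor of the product with $\inf_{m_p}$.

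Finally, I would argue that $\Phi_m(q)$-completion collapses the product to its $d=m_p$ piece. On that piece $q$ acts as a primitive $m$-th root of unity, so $\Phi_m(q)=0$ and the completion is trivial. For $d\mid m_p$ with $d<m_p$, the ring $\pi_0((\ku^{\Phi C_{p^\alpha d}})_p^\complete)\cong\IZ_p[q]/\Phi_{p^\alpha d}(q)$ is a local $\IZ_p$-algebra with residue field $\IF_p[q]/\Phi_d(q)$, thanks to the standard mod-$p$ congruence $\Phi_{p^\alpha d}(q)\equiv\Phi_d(q)^{\varphi(p^\alpha)}$; the same congruence gives $\Phi_m(q)\equiv\Phi_{m_p}(q)^{\varphi(p^\alpha)}\pmod p$, and since $d<m_p$ are both coprime to $p$ the polynomials $\Phi_d$ and $\Phi_{m_p}$ are coprime in $\IF_p[q]$, making $\Phi_m(q)$ a unit in the local ring. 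The auxiliary $\h C_{m_p/d}$-factor does not spoil this: because $m_p/d$ is invertible $p$-adically, the homotopy fixed point spectral sequence collapses to give $\pi_*(Y^{\h C_{m_p/d}})\cong\pi_*(Y)^{C_{m_p/d}}$, still containing $\Phi_m(q)$ as a unit. Hence the $\Phi_m(q)$-completion kills all these factors, yielding the claimed equivalence. The main obstacle I anticipate is verifying that the projection onto the $d=m_p$ component truly agrees with $\inf_{m_p}$ as a map of $S^1$-equivariant $\IE_\infty$-rings; this requires careful bookkeeping of residual actions through the equivalences of \cref{lem:FixedPointsCompose}, \cref{lem:GenuineGeometricCommute}, and \cref{cor:kuCm}, but should be essentially formal once all fixed-point functors are treated with their full $\IE_\infty$-refinements.
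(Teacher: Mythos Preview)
Your approach is essentially the same as the paper's: both identify $((\ku^{\Phi C_{p^\alpha}})_p^\complete)^{C_{m_p}}$ with the base change via \cref{cor:kuCm}, use the vanishing of $(-)^{\t C_\ell}$ for $\ell\mid m_p$ to obtain a product decomposition indexed by $d\mid m_p$, and then observe that $\Phi_m(q)$-completion projects onto the factor $d=m_p$. The paper is slightly more explicit at one point where you lean on \cref{cor:kuCm} as a \emph{genuine} equivalence: rather than applying $(-)^{\Phi C_{p^\alpha}}$ directly, it reduces modulo~$p$ and then checks on all $C_{p^i}$-fixed points (invoking \cref{cor:kuCm} for each $i\leqslant\alpha$) that $\inf_{m_p}$ is an equivalence of genuine $C_{p^\alpha}$-spectra before passing to geometric fixed points---this is exactly the verification your final paragraph flags as the main obstacle.
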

	\begin{proof}
		Note that $m_p$ is invertible on $(\ku^{\Phi C_{p^\alpha}})_p^\complete$. The same argument as in the proof of \cref{lem:kuPhiCmLocalisation} shows that the canonical map
		\begin{equation*}
			\bigl((\ku^{\Phi C_{p^\alpha}})_p^\complete\bigr)^{C_{m_p}}\simeq \bigl((\ku^{\Phi C_{p^\alpha}})^{C_{m_p}}\bigr)_p^\complete\longrightarrow \bigl((\ku^{\Phi C_{p^\alpha}})^{\Phi C_{m_p}}\bigr)_p^\complete\simeq \bigl(\ku^{\Phi C_m}\bigr)_p^\complete
		\end{equation*}
		exhibits the target as the $(p,\Phi_{m}(q))$-completion of the source. It remains to show that inflation induces an equivalence $(\ku^{\Phi C_{p^\alpha}})_p^\complete\otimes_{\IS[q],\psi^{m_p}}\IS[q]\simeq ((\ku^{\Phi C_{p^\alpha}})^{C_{m_p}})_p^\complete$. As both sides are $p$-complete, this can be checked modulo~$p$. Moreover, note that with geometric fixed points replaced by genuine fixed points, this would follow from \cref{cor:kuCm}. In fact, applying \cref{cor:kuCm} to $\ku^{C_{p^i}}$ for all $i\leqslant \alpha$, we deduce that
		\begin{equation*}
			\inf_{m_p}\colon \ku/p\otimes_{\IS[q],\psi^{m_p}}\IS[q]\overset{\simeq}{\longrightarrow}(\ku/p)^{C_{m_p}}
		\end{equation*}
		is an equivalence of genuine $C_{p^\alpha}$-equivariant spectra, as it induces equivalences on genuine fixed points for all subgroups (see \cref{par:GenuineFixedPoints}). Then it must induce equivalences on geometric fixed points as well, which proves what we want.
	\end{proof}
	
	
	\begin{lem}\label{lem:kuPhiCp}
		For all primes~$p$ and all $\alpha\geqslant 1$, the following assertions are true.
		\begin{alphanumerate}
			\item The canonical map $\ku^{\Phi C_{p^\alpha}}\rightarrow (\ku^{\Phi C_{p^{\alpha-\smash{1}}}})^{\t C_p}$ induces an $S^1$-equivariant equivalence of $\IE_\infty$-ring spectra\label{enum:kuPhipalpha}
			\begin{equation*}
				\bigl(\ku^{\Phi C_{p^\alpha}}\bigr)_p^\complete\overset{\simeq}{\longrightarrow} \tau_{\geqslant 0}\bigl((\ku^{\Phi C_{p^{\alpha-1}}})^{\t C_p}\bigr)\,.
			\end{equation*}
			\item On homotopy groups, we have $\pi_*((\ku^{\Phi C_{p^\alpha}})_p^\complete)\cong \IZ_p[u_{p^\alpha},q]/\Phi_{p^\alpha}(q)$ where $\abs{u_{p^\alpha}}=2$, and\label{enum:kuPhipalphaHomotopy}
			\begin{equation*}
				\pi_*\Bigl(\bigl((\ku^{\Phi C_{p^\alpha}})_p^\complete\bigr)^{\h (S^1/C_{p^\alpha})}\Bigr)\cong \IZ_p[u_{p^\alpha},q]\llbracket t_{p^\alpha}\rrbracket/\bigl(u_{p^\alpha}t_{p^\alpha}-\Phi_{p^\alpha}(q)\bigr)\,.
			\end{equation*}
			With notation as in \cref{par:kuGenuineFixedPoints}, the canonical map $(\ku^{C_{p^\alpha}})^{\h (S^1/C_{p^\alpha})}\rightarrow ((\ku^{\Phi C_{p^\alpha}})_p^\complete)^{\h (S^1/C_{p^\alpha})}$ sends $q\mapsto q$, $t_{p^\alpha}\mapsto t_{p^\alpha}$, and $\beta\mapsto (q^{p^{\alpha-1}}-1)u_{p^\alpha}$.
			\item The inflation map induces an equivalence of $S^1$-equivariant $\IE_\infty$-ring spectra\label{enum:kuPhipalphaInflation}
			\begin{equation*}
				\inf_{p^{\alpha-1}}\colon \left(\ku^{\Phi C_p}\otimes_{\IS[q],\psi^{p^{\alpha-\smash{1}}}}\IS[q]\right)_p^\complete\overset{\simeq}{\longrightarrow} \bigl(\ku^{\Phi C_{p^\alpha}}\bigr)_p^\complete\,.
			\end{equation*}
		\end{alphanumerate}
	\end{lem}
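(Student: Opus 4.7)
The proof proceeds by induction on $\alpha$, with parts~(a), (b), (c) handled simultaneously. The structural input throughout is the Nikolaus--Scholze pullback square
\begin{equation*}
Y^{C_p}\simeq Y^{\Phi C_p}\times_{Y^{\t C_p}}Y^{\h C_p}
\end{equation*}
applied to the genuine $C_p$-spectrum $Y=\ku^{\Phi C_{p^{\alpha-1}}}$ (with $Y=\ku$ in the base case), together with the composition $\ku^{\Phi C_{p^\alpha}}\simeq (\ku^{\Phi C_{p^{\alpha-1}}})^{\Phi C_p}$ from \cref{lem:FixedPointsCompose}. From this pullback square one extracts the cofibre sequence $Y_{\h C_p}\to Y^{C_p}\to Y^{\Phi C_p}$ (see \cref{lem:EPGeometrixFixedPoints}), which inductively ensures that $\ku^{\Phi C_{p^\alpha}}$ is connective, since $Y_{\h C_p}$ and $Y^{C_p}$ are connective by the inductive hypothesis and \cref{lem:FixedPointsCompose}.

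For the base case $\alpha=1$, I combine the pullback square (which is preserved by $p$-completion) with the known ring $\pi_*(\ku^{C_p})\cong \IZ[\beta,q]/(q^p-1)$, the standard computation of $\pi_*((\ku^{\h C_p})_p^\complete)\cong \IZ_p[\beta]\llbracket t\rrbracket/[p]_F(t)$ (from $\ku^*(\B C_p)$ via complex orientability), and $\pi_*(\ku^{\t C_p})$ from the Tate spectral sequence, subject to $q-1=\beta t$. A Mayer--Vietoris argument extracts $\pi_*((\ku^{\Phi C_p})_p^\complete)\cong \IZ_p[u_p,q]/\Phi_p(q)$, with $u_p$ forced by $\beta=(q-1)u_p$: after landing in the $\Phi_p(q)=0$ component, the factorisation $q^p-1=(q-1)\Phi_p(q)$ makes $(q-1)$ a non-zero-divisor (in fact a uniformiser of $\IZ_p[\zeta_p]$), and matching with the image of $t^{-1}$ in the Tate ring identifies $u_p=\beta/(q-1)$ as an honest degree~$2$ class. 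Connectivity of $(\ku^{\Phi C_p})_p^\complete$ then forces the canonical map to $\ku^{\t C_p}$ through $\tau_{\geqslant 0}(\ku^{\t C_p})$, and the induced map on $\pi_*$ is an isomorphism by direct comparison, establishing~(a) and~(b); the formula for $\pi_*((\ku^{\Phi C_p})_p^\complete)^{\h(S^1/C_p)}$ in~(b) then drops out of the homotopy fixed-point spectral sequence, which collapses by evenness.

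For the inductive step I apply the same strategy to $Y=(\ku^{\Phi C_{p^{\alpha-1}}})_p^\complete$, regarded as a genuine $C_p$-spectrum via $C_p\hookrightarrow S^1/C_{p^{\alpha-1}}$: the inductive description of $\pi_*(Y)$ controls $Y^{\t C_p}$ and $Y^{\h C_p}$, and the pullback square delivers~(a) for $\alpha$, with~(b) following from the homotopy fixed-point spectral sequence. For~(c), both sides are bounded below $p$-complete $\IE_\infty$-rings, and the algebraic identity $\Phi_p(x^{p^{\alpha-1}})=\Phi_{p^\alpha}(x)$ combined with~(b) for $\alpha=1$ and for~$\alpha$ shows the source has $\pi_*\cong \IZ_p[u_p,q']/\Phi_{p^\alpha}(q')$ with $q'$ satisfying $q'^{p^{\alpha-1}}=q_{\mathrm{old}}$. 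Since the inflation sends $\beta\mapsto\beta$ and $q_{\mathrm{old}}\mapsto q^{p^{\alpha-1}}$, and since $u_{p^\alpha}$ is uniquely determined by $\beta=(q^{p^{\alpha-1}}-1)u_{p^\alpha}$ in the integral domain $\IZ_p[\zeta_{p^\alpha}]$, the induced map on $\pi_*$ is an isomorphism.

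The principal technical obstacle lies in the inductive step for~(a): one must verify that $p$-completion interacts appropriately with $(-)^{\t C_p}$ applied to $\ku^{\Phi C_{p^{\alpha-1}}}$, so that the pullback-square calculation can be performed on $Y=(\ku^{\Phi C_{p^{\alpha-1}}})_p^\complete$ rather than on $\ku^{\Phi C_{p^{\alpha-1}}}$ itself. This relies on boundedness below (\cref{lem:GenuineGeometricBoundedBelow} combined with the inductive hypothesis) together with connectivity after $p$-completion, which makes the Tate construction well-behaved. A secondary delicate point is propagating the relation $\beta\mapsto (q^{p^{\alpha-1}}-1)u_{p^\alpha}$ through the induction, which requires tracking how the Bott element for the residual $S^1/C_{p^{\alpha-1}}$-action on~$Y$ compares to~$\beta$ of the ambient~$\ku$; this bookkeeping is what ultimately pins down $u_{p^\alpha}$ as $t_{p^\alpha}^{-1}$ in the Tate picture and validates the stated formula.
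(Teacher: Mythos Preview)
Your base case $\alpha=1$ matches the paper's: both feed the known $\pi_*(\ku^{C_p})\cong\IZ[\beta,q]/(q^p-1)$ into the Nikolaus--Scholze pullback square to solve for $\ku^{\Phi C_p}$.

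The inductive step, however, has a gap. Applying the pullback square to $Y=\ku^{\Phi C_{p^{\alpha-1}}}$ gives
\begin{equation*}
\begin{tikzcd}
Y^{C_p}\rar\dar\drar[pullback] & Y^{\Phi C_p}\simeq \ku^{\Phi C_{p^\alpha}}\dar\\
Y^{\h C_p}\rar & Y^{\t C_p}
\end{tikzcd}
\end{equation*}
and while the inductive hypothesis~(b) does control $Y^{\h C_p}$ and $Y^{\t C_p}$, the square has \emph{two} unknown corners: you also need $Y^{C_p}=(\ku^{\Phi C_{p^{\alpha-1}}})^{C_p}$, the genuine $C_p$-fixed points for the residual action on the geometric $C_{p^{\alpha-1}}$-fixed points. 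You give no computation of this object; it is not covered by \cref{lem:GenuineGeometricCommute}\cref{enum:GenuineGeometricCommuteCoprime} (since $p$ and $p^{\alpha-1}$ are not coprime), nor is $Y$ Borel as a $C_p$-spectrum. Your Mayer--Vietoris extraction in the base case worked precisely because $\pi_*(\ku^{C_p})$ was supplied externally by representation theory; no analogous input is available for $Y^{C_p}$ here.

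The paper sidesteps this by using a \emph{different} pullback square, the iterated one from \cite[Corollary~{\chref{2.4.7}[II.4.7]}]{NikolausScholze}:
\begin{equation*}
\begin{tikzcd}
\ku^{C_{p^\alpha}}\rar\dar\drar[pullback] & \ku^{\Phi C_{p^\alpha}}\dar\\
(\ku^{C_{p^{\alpha-1}}})^{\h C_p}\rar & (\ku^{\Phi C_{p^{\alpha-1}}})^{\t C_p}
\end{tikzcd}
\end{equation*}
The left column now involves the \emph{genuine} fixed points $\ku^{C_{p^\alpha}}$ and $\ku^{C_{p^{\alpha-1}}}$, which are known from \cref{par:kuGenuineFixedPoints}. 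Via \cref{cor:kuCm} the paper identifies three corners as base changes of the $\alpha=1$ case along $\psi^{p^{\alpha-1}}\colon\IZ[q]\to\IZ[q]$, and reads off the fourth. This external input is what your argument is missing.
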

	\begin{proof}
		We show all three assertions at once using induction on $\alpha$. In general, using \cref{lem:GenuineFromGeometric}, or more directly the iterated pullback diagram from \cite[Corollary~{\chref{2.4.7}[II.4.7]}]{NikolausScholze}, we obtain a pullback square
		\begin{equation*}
			\begin{tikzcd}
				\ku^{C_{p^\alpha}}\dar\rar\drar[pullback] & \ku^{\Phi C_{p^\alpha}}\dar\\
				\bigl(\ku^{C_{p^{\alpha-1}}}\bigr)^{\h C_p}\rar & \bigl(\ku^{\Phi C_{p^{\alpha-1}}}\bigr)^{\t C_p}
			\end{tikzcd}
		\end{equation*}
		For $\alpha=1$, we see that $\ku^{C_{p}}\rightarrow \ku^{\h C_p}$ induces an equivalence $(\ku^{C_{p}})_p^\complete\simeq \tau_{\geqslant 0}(\ku^{\h C_p})_p^\complete$, and $(\ku^{\h C_p})_p^\complete\rightarrow \ku^{\t C_p}$ is an equivalence in homotopical degrees $\leqslant -1$. From the pullback square we deduce that $(\ku^{\Phi C_p})_p^\complete\simeq \tau_{\geqslant 0}(\ku^{\t C_p})$, proving \cref{enum:kuPhipalpha}. Assertion~\cref{enum:kuPhipalphaHomotopy} for $\alpha=1$ is then a standard calculation; see \cite[Proposition~\chref{3.3.1}]{DevalapurkarRaksitTHH} for example. Assertion~\cref{enum:kuPhipalphaInflation} is tautological for $\alpha=1$. For the inductive step, let $\alpha\geqslant 2$. We claim that
		\begin{align*}
			\pi_*\bigl(\ku^{C_{p^\alpha}}\bigr)&\cong \pi_*\bigl(\ku^{C_p}\bigr)\otimes_{\IZ[q],\psi^{p^{\alpha-\smash{1}}}}\IZ[q]\,,\\
			\pi_*\bigl((\ku^{C_{p^{\alpha-1}}})^{\h C_p}\bigr)&\cong \pi_*\bigl(\ku^{\h C_p}\bigr)\otimes_{\IZ[q],\psi^{p^{\alpha-\smash{1}}}}\IZ[q]\,,\\
			\pi_*\bigl((\ku^{\Phi C_{p^{\alpha-1}}})^{\t C_p}\bigr)&\cong \pi_*\bigl(\ku^{\t C_p}\bigr)\otimes_{\IZ[q],\psi^{p^{\alpha-\smash{1}}}}\IZ[q]  
		\end{align*}
		Indeed, the first two isomorphism follow from \cref{cor:kuCm} and the third one from \cref{enum:kuPhipalphaHomotopy} for $\ku^{\Phi C_{p^{\alpha-1}}}$, which we already know by induction. Then \cref{enum:kuPhipalphaHomotopy} and \cref{enum:kuPhipalphaInflation} follow immediately from the pullback square above. Moreover, we see that the vertical map $\ku^{C_{p^\alpha}}\rightarrow (\ku^{C_{p^{\smash{\alpha-1}\vphantom{\alpha}}}})^{\h C_p}$ induces an equivalence $(\ku^{C_{p^\alpha}})_p^\complete\simeq \tau_{\geqslant 0}((\ku^{C_{p^{\smash{\alpha-1}\vphantom{\alpha}}}})^{\h C_p})_p^\complete$, and that after $p$-completion the horizontal map $((\ku^{C_{p^{\smash{\alpha-1}\vphantom{\alpha}}}})^{\h C_p})_p^\complete\rightarrow (\ku^{\Phi C_{p^{\smash{\alpha-1}\vphantom{\alpha}}}})^{\t C_p}$ is an equivalence in homotopical degrees $\leqslant -1$. As in the case $\alpha=1$, this implies \cref{enum:kuPhipalpha}.
	\end{proof}
	\begin{rem}\label{rem:kutCpCyclotomic}
		Let $p>2$, so that $\THH(\IZ_p[\zeta_p]/\IS_p\qpower)_p^\complete\simeq \tau_{\geqslant 0}(\ku^{\t C_p})$ holds as $S^1$-equivariant $\IE_\infty$-ring spectra by \cref{thm:kutCp}. As a consequence of \cref{lem:kuPhiCp}\cref{enum:kuPhipalpha}, we get an equivalence
		\begin{equation*}
			\THH\bigl(\IZ_p[\zeta_p]/\IS_p\qpower\bigr)_p^\complete\simeq \bigl(\ku^{\Phi C_p}\bigr)_p^\complete
		\end{equation*}
		of $S^1$-equivariant $\IE_\infty$-ring spectra.
		
		But we can say even more. Devalapurkar shows in \cite[Theorem~\chref{6.4.1}]{DevalapurkarSpherochromatism} that the equivalence $\THH(\IZ_p[\zeta_p]/\IS_p\qpower)_p^\complete\simeq \tau_{\geqslant 0}(\ku^{\t C_p})$ holds as \emph{cyclotomic} $\IE_\infty$-ring spectra, where $\tau_{\geqslant 0}(\ku^{\t C_p})$ is equipped with the cyclotomic structure induced from the trivial cyclotomic structure on $\ku$ (see \cite[Construction~\chref{1.1.3}]{DevalapurkarRaksitTHH}). Since the inflation maps for $\ku$ are similarly induced via the trivial $S^1$-action on the global ultracommutative ring spectrum $\ku_\mathrm{gl}$, we see that the cyclotomic Frobenius
		\begin{equation*}
			\phi_p\colon \tau_{\geqslant 0}\bigl(\ku^{\t C_p}\bigr)\longrightarrow \bigl(\tau_{\geqslant 0}(\ku^{\t C_p})\bigr)^{\t C_p}
		\end{equation*}
		agrees, up to passing to the connective cover in the target, with $\inf_p\colon (\ku^{\Phi C_p})_p^\complete\rightarrow (\ku^{\Phi C_{p^2}})_p^\complete$, as maps of $S^1$-equivariant $\IE_\infty$-ring spectra. Therefore we obtain a commutative diagram
		\begin{equation*}
			\begin{tikzcd}[column sep=large]
				\bigl(\ku^{\Phi C_p}\bigr)_p^\complete\otimes_{\IS[q],\psi^p}\IS[q]\rar["\inf_p"]\dar["\simeq"'] & \bigl(\ku^{\Phi C_{p^2}}\bigr)_p^\complete\dar\\
				\THH\bigl(\IZ_p[\zeta_p]/\IS_p\qpower\bigr)_p^\complete\otimes_{\IS[q],\psi^p}\IS[q]\rar["\phi_{p/\IS[q]}"] & \THH\bigl(\IZ_p[\zeta_p]/\IS_p\qpower\bigr)^{\t C_p}
			\end{tikzcd}
		\end{equation*}
		of $S^1$-equivariant $\IE_\infty$-ring spectra.
	\end{rem}
	
	For our purposes, the description of $\ku^{\Phi C_m}$ that we get from \crefrange{lem:kuPhiCmLocalisation}{lem:kuPhiCp} would be enough, but for the sake of completeness, let us deduce a complete computation of $\pi_*(\ku^{\Phi C_m})$.
	
	\begin{prop}\label{prop:kuPhiCm}
		Let $m\in \IN$. For all divisors $d\mid m$ let $[d]_{\ku}(t)=\beta^{-1}(q^d-1)$ denote the $d$-series of the formal group law of $\ku$. Then
		\begin{equation*}
			\pi_*\bigl(\ku^{\Phi C_m}\bigr)\cong \IZ[\beta,t]/[m]_{\ku}(t)\Bigl[\bigl\{[d]_{\ku}(t)^{-1}\bigr\}_{d\mid m,d\neq m}\Bigr]^{\geqslant 0}\,,
		\end{equation*}
		where $(-)^{\geqslant 0}$ on the right-hand side denotes the restriction to non-negative graded degrees.
	\end{prop}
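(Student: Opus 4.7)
The plan is to assemble $\pi_*(\ku^{\Phi C_m})$ from the $(1/m)$-inverted and $p$-completed pieces via the arithmetic fracture square with $N=m$, combining \cref{lem:kuPhiCmLocalisation}, \cref{lem:kuPhiCmpCompletion}, and \cref{lem:kuPhiCp}. This is legitimate since, by \cref{lem:GenuineGeometricBoundedBelow} applied to the cyclonic spectrum $\ku$ (noting that each $\ku^{C_d}$ is connective by \cref{par:kuGenuineFixedPoints}), every $\ku^{\Phi C_d}$ is bounded below, so the fracture square holds on $\ku^{\Phi C_m}$ and descends to its homotopy groups.

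I would compute the corners explicitly. On the $(1/m)$-inverted side, \cref{lem:kuPhiCmLocalisation} gives $\pi_*(\ku[1/m]^{\Phi C_m})\cong \IZ[1/m,\beta,q]/\Phi_m(q)$, in particular concentrated in even degrees. For each prime $p\mid m$, writing $m=p^\alpha m_p$ with $\gcd(p,m_p)=1$, \cref{lem:kuPhiCmpCompletion} identifies $(\ku^{\Phi C_m})_p^\complete$ with the $\Phi_m(q)$-completion of $(\ku^{\Phi C_{p^\alpha}})_p^\complete\otimes_{\IS[q],\psi^{m_p}}\IS[q]$, and then \cref{lem:kuPhiCp}\cref{enum:kuPhipalphaHomotopy} yields
\begin{equation*}
    \pi_*\bigl((\ku^{\Phi C_m})_p^\complete\bigr)\cong \bigl(\IZ_p[u_{p^\alpha},q]/\Phi_{p^\alpha}(q^{m_p})\bigr)_{\Phi_m(q)}^\complete\,.
\end{equation*}
Using the factorisation $\Phi_{p^\alpha}(q^{m_p})=\prod_{e\mid m_p}\Phi_{p^\alpha e}(q)$ together with the observation that for $e\neq m_p$ the polynomial $\Phi_{p^\alpha e}(q)$ becomes a unit after $(p,\Phi_m(q))$-completion (its reduction modulo $p$ is coprime to the reduction of $\Phi_m(q)$), this simplifies to $\IZ_p[u_{p^\alpha},q]/\Phi_m(q)$, which is again even. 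Evenness of $\ku^{\Phi C_m}$ itself then follows by applying the fracture square.

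Taking the pullback of even $\pi_{2n}$-groups assembles the answer into $\IZ[\beta,q]/(q^m-1)\bigl[\{(q^d-1)^{-1}\}_{d\mid m, d\neq m}\bigr]$ restricted to non-negative graded degrees, which is precisely the claimed ring once we identify $q=1+\beta t$. The main obstacle I anticipate is reconciling the local generators $u_{p^\alpha}$ with a coherent global expression in $\beta$ and $q$: this compatibility is forced by the relation $u_{p^\alpha}t_{p^\alpha}=\Phi_{p^\alpha}(q)$ from \cref{lem:kuPhiCp}\cref{enum:kuPhipalphaHomotopy} together with the global lift $t_{p^\alpha}=[p^\alpha]_\ku(t)=\beta^{-1}(q^{p^\alpha}-1)$, and by the inflation identities $\inf_n(\beta)=\beta$ and $\inf_n(q)=q^n$ of \cref{par:kuInflations}. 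Once these bookkeeping compatibilities are pinned down, the fracture pullback collapses to the claimed formula purely formally.
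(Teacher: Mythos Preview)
Your approach is the same as the paper's: both arguments run the arithmetic fracture square for $\ku^{\Phi C_m}$ with $N=m$ and feed in \cref{lem:kuPhiCmLocalisation}, \cref{lem:kuPhiCmpCompletion}, and \cref{lem:kuPhiCp} to identify the corners, then read off the answer. Your identification of the $p$-completed corner as $\IZ_p[u_{p^\alpha},q]/\Phi_m(q)$ via the factorisation $\Phi_{p^\alpha}(q^{m_p})=\prod_{e\mid m_p}\Phi_{p^\alpha e}(q)$ is correct and is exactly the kind of computation the paper leaves implicit.

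There is one genuine gap. You assert that the fracture square ``descends to its homotopy groups'' because $\ku^{\Phi C_m}$ is bounded below, and then freely take the pullback of $\pi_{2n}$-groups. Bounded-belowness is irrelevant here: a pullback of spectra induces a pullback on $\pi_*$ only when the connecting homomorphism vanishes, and since the three known corners are even, what you actually need is that
\begin{equation*}
\pi_{2n}\bigl(\ku[1/m]^{\Phi C_m}\bigr)\oplus\prod_{p\mid m}\pi_{2n}\bigl((\ku^{\Phi C_m})_p^\complete\bigr)\longrightarrow \prod_{p\mid m}\pi_{2n}\bigl((\ku^{\Phi C_m})_p^\complete[1/p]\bigr)
\end{equation*}
is surjective for every $n$. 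The paper singles this out as the one thing to check (``one readily checks that the right vertical and bottom horizontal maps are jointly surjective on $\pi_*$''), and only after that does it conclude a pullback on $\pi_*$. This surjectivity is not hard---it boils down to the fact that $\zeta_m^{m/p}-1$ becomes a unit in $\IZ[1/m,\zeta_m]$, so the image of $\beta$ already spans---but you should state and verify it rather than appeal to bounded-belowness. Once joint surjectivity is in place, your evenness claim and the degreewise pullback description both follow, and the bookkeeping you outline for matching $u_{p^\alpha}$ against the global generator is exactly what the paper means by ``straightforward to construct a map \ldots\ and check it after localising~$m$ and after $p$-completion''.
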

	\begin{proof}[Proof sketch]
		We use the arithmetic fracture square (see \cref{par:Notation}) for $\ku^{\Phi C_m}$:
		\begin{equation*}
			\begin{tikzcd}
				\ku^{\Phi C_m}\rar\dar\drar[pullback] & \prod_{p\mid m}\bigl(\ku^{\Phi C_m}\bigr)_p^\complete\dar\\
				\ku\bigl[\localise{m}\bigr]^{\Phi C_m}\rar & \prod_{p\mid m}\bigl(\ku^{\Phi C_m}\bigr)_p^\complete\bigl[\localise{p}\bigr]
			\end{tikzcd}
		\end{equation*}
		Using \crefrange{lem:kuPhiCmLocalisation}{lem:kuPhiCp}, one readily checks that the right vertical and bottom horizontal maps are jointly surjective on $\pi_*$. Therefore, we also get a pullback on $\pi_*$. It is then straightforward to construct a map $\IZ[\beta,t]/[m]_{\ku}(t)\bigl[\{[d]_{\ku}(t)^{-1}\}_{d\mid m,d\neq m}\bigr]_{\geqslant 0}\rightarrow \pi_*(\ku^{\Phi C_m})$. Whether this map is an equivalence can be checked after localising~$m$ and after $p$-completion for all $p\mid m$, which is again straightforward via \crefrange{lem:kuPhiCmLocalisation}{lem:kuPhiCp}.
	\end{proof}
	
	\subsection{Cyclonic even filtrations and Habiro descent of \texorpdfstring{$q$}{q}-Hodge complexes}\label{subsec:GenuineHabiroDescent}
	
	Let $A$ and $R$ be rings that satisfy the assumptions from \cref{par:NewAssumptions} and assume that $2\in R^\times$ (so that the addendum~\cref{enum:AssumptionOnR2} is automatically satisfied as well). In this subsection, we'll finally explain how to obtain $\qHhodge_{R/A}$ from a cyclonic structure on $\THH(\KU_R/\KU_A)$.

	To this end, let us first discuss how to equip $\THH(\ku_R/\ku_A)\simeq \THH(\IS_R/\IS_A)\otimes\ku$ with a suitable cyclonic structure. At first, one would expect that the cyclonic structure on $\THH(\IS_R/\IS_A)$ coming from its cyclotomic structure via \cref{par:CyclonicVsCyclotomic} would do the job. But it doesn't! For example, the constructions in \cite[\S{\chref[section]{3}}]{qWittHabiro} are all $A[q]$-linear. By contrast, the canonical map $\THH(\IS_R/\IS_A)^{\Phi C_p}\rightarrow \THH(\IS_R/\IS_A)^{\t C_p}$, which by definition agrees with the cyclotomic Frobenius, is not $\IS_A$-linear; instead, it is semilinear over the Tate-valued Frobenius $\phi_{\t C_p}\colon \IS_A\rightarrow \IS_A^{\t C_p}$. It is thus unclear how one would construct an $A[q]$-linear structure on the associated graded of some even filtration on $\THH(\ku_R/\ku_A)^{C_p}$.
	
	\begin{numpar}[Cyclonic structure on $\THH(\ku_R/\ku_A)$.]\label{par:CyclonicBase}
		To fix this, we need to modify the cyclonic structure on $\THH(\IS_R/\IS_A)$. This requires yet another assumption on $A$.
		\begin{alphanumerate}\itshape 
			\item[A_2] Let $\IS_A^{\mathrm{cyct}}$ and $\IS_A^\mathrm{triv}$ denote the cyclonic structures on $\IS_A$ given by the cyclotomic structure from \cref{par:NewAssumptions}\cref{enum:AssumptionsOnAglobal} and the trivial cyclotomic structure, respectively. Then we must assume that there exists a map\label{enum:AssumptionOnA2}
			\begin{equation*}
				\IS_A^\mathrm{cyct}\longrightarrow \IS_A^\mathrm{triv}
			\end{equation*}
			of $\IE_\infty$-algebras in $\Cyclonic$%
			\footnote{Beware that there may be more maps as cyclonic $\IE_\infty$-algebras than as cyclotomic $\IE_\infty$-algebras.}
			whose underlying map of $S^1$-equivariant $\IE_\infty$-algebras is the identity on $\IS_A$, equipped with the trivial action.
		\end{alphanumerate}
		Now let $\THH(\IS_R/\IS_A)^\mathrm{cyct}$ denotes the cyclonic structure on $\THH(\IS_R/\IS_A)$ coming from the usual cyclotomic structure. Assuming~\cref{enum:AssumptionOnA2}, we can instead consider the following cyclonic structure:
		\begin{equation*}
			\THH(\IS_R/\IS_A)^\mathrm{cyct}\otimes_{\IS_A^\mathrm{cyct}}\IS_A^\mathrm{triv}\,.
		\end{equation*}
		We'll then regard $\THH(\ku_R/\ku_A)\simeq \THH(\IS_R/\IS_A)\otimes\ku$ as a cyclonic spectrum in the apparent way, using the above cyclonic structure on $\THH(\IS_R/\IS_A)$ as well as the cyclonic structure on $\ku$ from \cref{par:Cyclonicku}. As we'll see, this has the desired properties.
		
		Let us unravel Assumption~\cref{enum:AssumptionOnA2}. Since both $\IS_A^{\mathrm{cyct}}$ and $\IS_A^\mathrm{triv}$ are cyclotomic spectra, we have $(\IS_A^\mathrm{cyct})^{\Phi C_m}\simeq \IS_A$ and $(\IS_A^\mathrm{triv})^{\Phi C_m}\simeq \IS_A$ for all $m$, identifying the residual $(S^1/C_m)$-action with the trivial $S^1$-action on $\IS_A$. In particular, after taking $(-)^{\Phi C_m}$, a map $\IS_A^\mathrm{cyct}\rightarrow \IS_A^\mathrm{triv}$ induces $S^1$-equivariant $\IE_\infty$-maps $\psi^m\colon \IS_A\rightarrow \IS_A$ that fit into commutative diagrams
		\begin{equation*}
			\begin{tikzcd}[column sep=large]
				\IS_A\rar["\psi^{pm}"]\dar["\phi_p"'] & \IS_A\dar\\
				\IS_A^{\t C_p}\rar["(\psi^m)^{\t C_p}"] & \IS_A^{\t C_p}
			\end{tikzcd}
		\end{equation*}
		for all $m\in\IN$ and all primes~$p$. It follows inductively that $\psi^m\colon \IS_A\rightarrow \IS_A$ must be a lift of the $\Lambda$-ring Adams operation $\psi^m\colon A\rightarrow A$.
	\end{numpar}
	
	\begin{lem}\label{lem:A2FrobeniusLifts}
		The data of $S^1$-equivariant $\IE_\infty$-maps $\psi^m\colon \IS_A\rightarrow \IS_A$ together with commutative diagrams as above uniquely determines a map $\IS_A^\mathrm{cyct}\rightarrow \IS_A^\mathrm{triv}$ of cyclonic $\IE_\infty$-algebras.
	\end{lem}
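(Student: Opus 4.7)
The plan is to apply \cref{prop:NaiveCyclonic} to reduce the statement to a tautological description of mapping anima in the lax equaliser defining $\Cyclonic^\mathrm{naiv}$. First I would observe that both $\IS_A^\mathrm{cyct}$ and $\IS_A^\mathrm{triv}$ lie in the full sub-$\infty$-category $\Cyclonic_+$ of bounded below cyclonic spectra: both are obtained from cyclotomic structures on the connective spectrum $\IS_A$, so via \cref{par:CyclonicVsCyclotomic} their associated naive cyclonic spectra have constant value $\IS_A$ in each cyclonic degree, and in particular all geometric fixed points are connective. Since \cref{prop:NaiveCyclonic} is a symmetric monoidal equivalence $(-)^{\Phi C}\colon \Cyclonic_+\overset{\simeq}{\rightarrow}\Cyclonic^\mathrm{naiv}_+$, it induces an equivalence on $\IE_\infty$-algebras, so we obtain an equivalence of mapping anima
\[
\Map_{\CAlg(\Cyclonic)}\bigl(\IS_A^\mathrm{cyct},\IS_A^\mathrm{triv}\bigr)\overset{\simeq}{\longrightarrow}\Map_{\CAlg(\Cyclonic^\mathrm{naiv})}\bigl((\IS_A^\mathrm{cyct})^{\Phi C},(\IS_A^\mathrm{triv})^{\Phi C}\bigr)\,.
\]

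Next I would unwind the two sides of the right-hand mapping anima. For any cyclotomic $\IE_\infty$-spectrum $X$, the functor $\cat{CyctSp}^\mathrm{naiv}\to \Cyclonic^\mathrm{naiv}$ from \cref{par:CyclonicVsCyclotomic} sends $X$ to the constant family $(X)_{m\in\IN}$ with all structure maps $\phi_{p,m}$ equal to the cyclotomic Frobenius $\phi_p\colon X\to X^{\t C_p}$. Applied to our two objects, this yields the constant diagram $(\IS_A)_{m\in\IN}$ with trivial $S^1/C_m$-actions, together with structure maps $\phi_p^\mathrm{cyct}$ and $\phi_p^\mathrm{triv}$, respectively (the latter being the canonical Tate-valued Frobenius on $\IS_A$ discussed in \cref{par:CyclonicBase}).

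Finally I would apply the explicit description of mapping anima in a lax equaliser from \cite[Proposition~{\chref{2.1.5}[II.1.5]}(ii)]{NikolausScholze}, taken inside $\CAlg$ using that $\CAlg$ preserves limits and hence commutes with the lax equaliser construction of \cref{lem:LEqSymmetricMonoidal}. This identifies the right-hand mapping anima with the homotopy equaliser of the two maps
\[
\prod_{m\in\IN}\Map_{\CAlg(\Sp^{\B(S^1/C_m)})}(\IS_A,\IS_A)\rightrightarrows \prod_{p\vphantom{\IN}}\prod_{m\in\IN}\Map_{\CAlg(\Sp^{\B(S^1/C_{pm})})}\bigl(\IS_A,\IS_A^{\t C_p}\bigr)
\]
given by $(\psi^m)_m\mapsto (\phi_p^\mathrm{triv}\circ\psi^{pm})_{p,m}$ and $(\psi^m)_m\mapsto ((\psi^m)^{\t C_p}\circ\phi_p^\mathrm{cyct})_{p,m}$. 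This is precisely the datum of the $S^1$-equivariant $\IE_\infty$-maps $\psi^m$ and the commuting squares displayed in \cref{par:CyclonicBase}, which concludes the proof. I don't anticipate any serious obstacle here: the argument is essentially formal once \cref{prop:NaiveCyclonic} and the lax equaliser description are in place, and the only mild subtlety is to confirm that taking $\CAlg$ commutes with the lax equaliser forming $\Cyclonic^\mathrm{naiv}$, which is immediate from the limit-preservation of $\CAlg(-)$.
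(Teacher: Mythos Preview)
Your proposal is correct and follows essentially the same approach as the paper: reduce via \cref{prop:NaiveCyclonic} to naive cyclonic spectra, then use that $\CAlg(\Cyclonic^\mathrm{naiv})$ is itself a lax equaliser of the categories $\CAlg(\Sp^{\B(S^1/C_m)})$. The paper's proof is terser---it just states the equivalence $\CAlg(\Cyclonic^\mathrm{naiv})\simeq \operatorname{LEq}(\dotsc)$ as ``clear from the construction in \cref{lem:LEqSymmetricMonoidal}'' and stops there---whereas you additionally spell out the resulting mapping-anima equaliser, but this is only an unwinding of definitions. One minor remark: your justification ``$\CAlg$ preserves limits and hence commutes with the lax equaliser'' is fine since the lax equaliser is indeed an honest pullback by \cite[Proposition~{\chref{2.1.5}[II.1.5]}(i)]{NikolausScholze}, though the paper's route through the explicit operadic description in \cref{lem:LEqSymmetricMonoidal} is perhaps more direct.
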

	\begin{proof}
		By \cref{prop:NaiveCyclonic} we may equivalently construct $\IS_A^\mathrm{cyct}\rightarrow\IS_A^\mathrm{triv}$ as a map of $\IE_\infty$-algebras in naive cyclonic spectra. It's clear from the construction in \cref{lem:LEqSymmetricMonoidal} that
		\begin{equation*}
			\CAlg\left(\Cyclonic^\mathrm{naiv}\right)\simeq \operatorname{LEq}\Biggl(\begin{tikzcd}[cramped,column sep=huge]
				\prod_{m\in\IN}\CAlg\bigl(\Sp^{\B(S^1/C_m)}\bigr)\rar[shift left=0.2em,"\operatorname{can}"]\rar[shift right=0.2em,"((-)^{\t C_p})_{p,m}"'] & \prod_{p\vphantom{\IN}}\prod_{m\in\IN}\CAlg\bigl(\Sp^{\B(S^1/C_{pm})}\bigr)
			\end{tikzcd}\biggr)\,,
		\end{equation*}
		and so the given data indeed uniquely determines such a map.
	\end{proof}
	
	We'll verify in \cref{subsec:CyclotomicBases} that in all examples we can construct, Assumption~\cref{enum:AssumptionOnA2} is satisfied as well. This concludes our discussion of the cyclonic structure on $\THH(\ku_R/\ku_A)$. For convenience, let us also introduce the following notation.
	\begin{defi}\label{def:ThisShouldBeTR}
		For all $m\in\IN$, the \emph{$m$\textsuperscript{th} topological cyclonic homology of $\ku_R$ over $\ku_A$} is the spectrum
		\begin{equation*}
			\TCn{m}(\ku_R/\ku_A)\coloneqq \left(\THH(\ku_R/\ku_A)^{C_m}\right)^{\h (S^1/C_m)}\,.
		\end{equation*}
	\end{defi}

	
	\begin{numpar}[Cyclonic even filtrations in general.]\label{par:CyclonicEvenFiltration}
		Let $T$ be a cyclonic $\IE_1$-algebra and let $M$ be a cyclonic left $T$-module. Suppose that $T$ and $M$ are bounded below and that for all $m\in\IN$ the geometric fixed points $T^{\Phi C_m}$ are complex orientable (but we don't require any genuine equivariant or cyclonic complex orientation). In this situation, we expect that the correct filtration to put on $M^{\Phi C_m}$ is simply the non-equivariant even perfect filtration $\fil_{\ev}^\star M^{\Phi C_m}\coloneqq \fil_{\Pev/T^{\Phi C_m}}^\star M^{\Phi C_m}$ of $M^{\Phi C_m}$ as a left module over $T^{\Phi C_m}$. Moreover, the genuine $C_m$-fixed points should be equipped with the filtration
		\begin{equation*}
			\fil_{\ev/T,C_m}^\star M^{C_m}\coloneqq \eq\Biggl(\prod_{d\mid m}\bigl(\fil_{\ev}^\star M^{\Phi C_d}\bigr)^{\h C_{m/d,\ev}}\overset{\operatorname{can}}{\underset{\phi}{\doublemorphism}}\prod_{p\vphantom{|}}\prod_{pd\mid m}\bigl((\fil_{\ev}^\star M^{\Phi C_d})^{\t C_{p,\ev}}\bigr)^{\h C_{m/pd,\ev}}\Biggr)\,.
		\end{equation*}
		Here $(-)^{\h C_{m/d,\ev}}$, $(-)^{t C_{p,\ev}}$, and $(-)^{\h C_{m/pd,\ev}}$ refer to the filtered fixed points and Tate construction defined \cite[\S{\chref[subsection]{2.3}}]{CyclotomicSynthetic}.%
		\footnote{This needs the residual $S^1$-actions, so as stated the formula above only applies in the cyclonic setting but not in the genuine $C_m$-equivariant setting.}
		The map $\operatorname{can}$ in the equaliser is induced by the natural transformation $(-)^{\h C_{p,\ev}}\Rightarrow (-)^{\t C_{p,\ev}}$ and the map $\phi$ is induced by the canonical maps 
		\begin{equation*}
			\fil_{\Pev/T^{\Phi C_{pd}}}^\star \bigl((M^{\Phi C_d})^{\t C_p}\bigr)\longrightarrow\fil_{\Pev/(T^{\Phi C_{d}})^{\t C_p}}^\star \bigl((M^{\Phi C_d})^{\t C_p}\bigr)\longrightarrow \bigl(\fil_{\Pev/T^{\Phi C_d}}^\star M^{\Phi C_d}\bigr)^{\t C_{p,\ev}}
		\end{equation*}
		using the construction from~\cref{par:EvenFiltrationTate}. To apply this construction, we need the additional assumption that $(M^{\Phi C_m})^{\h C_p}$ is homologically even over $(T^{\Phi C_m})^{\h C_p}$; this is certainly satisfied in the case $M=T$ that is relevant for us.
		
		A genuine equivariant version of the even filtration is currently in the works; for example, the author has been informed of (independent) work in progress by Jeremy Hahn and Lucas Piessevaux. We have little doubt that in the foreseeable future, an intrinsically defined genuine equivariant even filtration will be available and we expect that for $M$ as above (maybe subject to some extra assumptions), the true even filtration will agree with our formula.
	\end{numpar}
	\begin{numpar}[Cyclonic even filtrations on $\THH(\ku_R/\ku_A)$.]\label{par:CyclonicEvenFiltrationTHH}
		Put $R^{(m)}\coloneqq R\lotimes_{A,\psi^m}A$. Note that $R^{(m)}$ is static, since the Adams operation $\psi^m$ is flat in any perfectly covered $\Lambda$-ring. Moreover, $R^{(m)}$ satisfies the assumptions from \cref{par:NewAssumptions}\cref{enum:AssumptionsOnRglobal}; in particular, it admits a spherical lift given by $\IS_{R^{(m)}}\coloneqq \IS_R\otimes_{\IS_A,\psi^m}\IS_A$, where $\psi^m\colon \IS_A\rightarrow \IS_A$ is the lift of the $\Lambda$-ring Adams operation from \cref{par:CyclonicBase}. We may thus define $\fil_{\ev}^\star\THH(\ku_{R^{(m)}}/\ku_A)$ via \cref{par:GlobalEvenFiltration}. Via base change along the inflation $\inf_m\colon \ku\rightarrow \ku^{\Phi C_m}$, we may then equip the geometric fixed points $\THH(\ku_R/\ku_A)^{\Phi C_m}$ with the filtration
		\begin{equation*}
			\fil_{\ev}^\star \THH(\ku_R/\ku_A)^{\Phi C_m}\coloneqq \fil_{\ev}^\star \THH\left(\ku_{R^{(m)}}/\ku_A\right)\otimes_{\ku_{\ev}}\ku^{\Phi C_m}_{\ev}\,,
		\end{equation*}
		where $\ku^{\Phi C_m}_{\ev}\coloneqq \tau_{\geqslant 2\star}(\ku^{\Phi C_m})$ denotes the double-speed Whitehead filtration. We'll check in \cref{lem:GeometricEvenFiltration} below that this agrees with the usual perfect even filtration on $\THH(\ku_R/\ku_A)^{\Phi C_m}$, as long as the latter is defined. Next, we construct the filtration on genuine fixed points
		\begin{equation*}
			\fil_{\ev,C_m}^\star\THH(\ku_R/\ku_A)^{C_m}
		\end{equation*}
		via the formula in \cref{par:CyclonicEvenFiltration}. Finally, using the notation introduced in \cref{def:ThisShouldBeTR}, we define
		\begin{equation*}
			\fil_{\ev,S^1}^\star\TCn{m}(\ku_R/\ku_A)\coloneqq \left(\fil_{\ev,C_m}^\star\THH(\ku_R/\ku_A)^{C_m}\right)^{\h (\IT/C_m)_{\ev}}\,,
		\end{equation*}
		where $(-)^{\h (\IT/C_m)_{\ev}}$ denotes fixed points in the sense of \cite[\S{\chref[subsection]{2.3}}]{CyclotomicSynthetic} with respect to the even filtration on $\IS[S^1/C_m]$.
	\end{numpar}
	Here are two sanity checks:
	
	\begin{lem}\label{lem:GeometricEvenFiltration}
		Suppose we chose condition~\cref{par:AssumptionsOnR}\cref{enum:E2Lift} for all primes~$p$, so that $\ku_R$ is an $\IE_2$-algebra in $\ku_A$-modules. Then $\fil_{\ev}^\star\THH(\ku_R/\ku_A)^{\Phi C_m}$ agrees with Pstr\k{a}gowski's perfect even filtration on the $\IE_1$-ring $\THH(\ku_R/\ku_A)^{\Phi C_m}$.
	\end{lem}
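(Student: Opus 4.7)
The plan is to exhibit both filtrations as limits of the same even cosimplicial resolution and to conclude by the base-change argument of \cref{cor:EvenFiltrationBaseChange} applied to the map $\ku \to \ku^{\Phi C_m}$.

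The first step would be to identify the underlying $\IE_1$-algebras. With the cyclonic structure on $\THH(\ku_R/\ku_A)$ from \cref{par:CyclonicBase}, I expect a natural equivalence of $\IE_1$-algebras
\[ \THH(\ku_R/\ku_A)^{\Phi C_m} \simeq \THH(\ku_{R^{(m)}}/\ku_A) \otimes_\ku \ku^{\Phi C_m}. \]
Indeed, the geometric Frobenius identifies $\THH(\IS_R/\IS_A)^{\Phi C_m}$ (with the usual cyclotomic structure) with $\THH(\IS_R/\IS_A)$ equipped with an $\IS_A$-structure twisted by an iterated Frobenius; the base change along $\IS_A^\mathrm{cyct}\to\IS_A^\mathrm{triv}$ reinterprets this twist through the Adams lift $\psi^m$ from \cref{par:CyclonicBase} and produces $\THH(\IS_{R^{(m)}}/\IS_A)$ as an $\IS_A$-linear $\IE_1$-algebra. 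Tensoring with $\ku$ and using the symmetric monoidality of $(-)^{\Phi C_m}$ together with $\THH(\ku_R/\ku_A)\simeq\THH(\IS_R/\IS_A)\otimes\ku$ gives the claim.

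Next I would choose a polynomial ring $P = \IZ[x_i \mid i \in I]$ with a surjection $P \twoheadrightarrow R^{(m)}$ and form the profinitely completed \v{C}ech nerve $\IS \to \IS_{P^\bullet}$ as in the proof of \cref{lem:GlobalEvenFiltration}. By the argument of \cref{prop:EvenResolution} (and its global refinement used in \cref{lem:GlobalEvenFiltration}), both $\fil_{\ev}^\star \THH(\ku_{R^{(m)}}/\ku_A)$ and the Pstr\k{a}gowski perfect even filtration of $\THH(\ku_{R^{(m)}}/\ku_A)$ should be computed by the cosimplicial resolution
\[ \limit_{\IDelta}\tau_{\geqslant 2\star}\THH(\ku_{R^{(m)}}/\ku_A\otimes\IS_{P^\bullet}), \]
using that each cosimplicial term is even (by the HKR spectral sequence of \cref{lem:THHSpectralSequence} and the $p$-quasi-lci assumption on $R^{(m)}$).

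Finally I would apply $-\otimes_{\ku_{\ev}}\ku^{\Phi C_m}_{\ev}$ termwise to this cosimplicial resolution. Since $\pi_*(\ku^{\Phi C_m})$ is concentrated in even degrees and $p$-torsion free for every prime $p$ (visible from the $p$-complete description of \cref{lem:kuPhiCp} together with the global formula of \cref{prop:kuPhiCm}), the proof of \cref{cor:EvenFiltrationBaseChange} would go through at each cosimplicial degree, yielding a termwise equivalence with $\tau_{\geqslant 2\star}(\THH(\ku_{R^{(m)}}/\ku_A\otimes\IS_{P^\bullet})\otimes_\ku\ku^{\Phi C_m})$. After passing to the limit and invoking the identification of Step 1, the result computes Pstr\k{a}gowski's perfect even filtration on $\THH(\ku_R/\ku_A)^{\Phi C_m}$. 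The hard part will be the clean identification in Step 1, where one must carefully track how the Frobenius twist from the cyclotomic structure interacts with the Adams twist introduced by the base change $\IS_A^\mathrm{cyct}\to\IS_A^\mathrm{triv}$; the most transparent approach probably goes via the naive cyclonic description of \cref{prop:NaiveCyclonic}, where this interaction can be read off directly from the defining diagrams of \cref{par:CyclonicBase}.
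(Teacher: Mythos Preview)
Your proposal is correct and follows essentially the same route as the paper: identify $\THH(\ku_R/\ku_A)^{\Phi C_m}\simeq \THH(\ku_{R^{(m)}}/\ku_A)\otimes_{\ku}\ku^{\Phi C_m}$, compute both filtrations via the cosimplicial resolution of \cref{prop:EvenResolution}, and invoke the base-change argument of \cref{cor:EvenFiltrationBaseChange} using that $\ku^{\Phi C_m}$ is even with $p$-torsion free homotopy groups (\cref{prop:kuPhiCm}). The paper phrases the comparison slightly differently---checking on associated gradeds after noting both filtrations are exhaustive on the same object---but this is the same argument in a different order. One remark: you flag Step~1 as ``the hard part'', but in the paper's setup this identification is essentially immediate from the construction in \cref{par:CyclonicBase}, since $(-)^{\Phi C_m}$ is symmetric monoidal and the tensor factor $\IS_A^{\mathrm{triv}}$ makes the Adams twist $\psi^m$ appear by definition; no delicate tracking through \cref{prop:NaiveCyclonic} is needed.
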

	\begin{proof}[Proof sketch]
		We know from \cref{lem:GlobalEvenFiltration} that $\fil_{\ev}\THH(\ku_{R^{(m)}}/\ku_A)$ agrees with Pstr\k{a}gowski's perfect even filtration. It will thus be enough to show that the canonical base change map
		\begin{equation*}
			\fil_{\Pev}^\star \THH\left(\ku_{R^{(m)}}/\ku_A\right)\otimes_{\ku_{\ev}}\ku^{\Phi C_m}_{\ev}\longrightarrow \fil_{\Pev}^\star \THH(\ku_R/\ku_A)^{\Phi C_m}
		\end{equation*}
		is an equivalence. It's enough to check this on associated gradeds as both sides are exhaustive filtrations on $\THH(\ku_{R^{(m)}}/\ku_A)\otimes_\ku\ku^{\Phi C_m}\simeq \THH(\ku_R/\ku_A)^{\Phi C_m}$. Now on associated gradeds (and in fact, one the nose) both sides can be computed by a cosimplicial resolution as in \cref{prop:EvenResolution}, because $\THH(\IS_P)\rightarrow \IS_P$ is faithfully even flat. We can then use a similar argument as in \cref{cor:EvenFiltrationBaseChange} to show the desired base change equivalence. Here we use that $\ku^{\Phi C_m}$ is even with $p$-torsion free homotopy groups for all primes~$p$ by \cref{prop:kuPhiCm}.
	\end{proof}
	\begin{lem}\label{lem:GenuineEvenFiltrationTHHkuCompleteExhaustive}
		For all $m\in\IN$,
		\begin{equation*}
			\fil_{\ev,C_m}^\star\THH(\ku_R/\ku_A)^{C_m}\quad\text{and}\quad\fil_{\ev,S^1}^\star\TCn{m}(\ku_R/\ku_A)
		\end{equation*}
		are complete exhaustive filtrations on $\THH(\ku_R/\ku_A)^{C_m}$ and $\TCn{m}(\ku_R/\ku_A)$, respectively.
	\end{lem}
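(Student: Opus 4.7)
The plan is to deduce both assertions from completeness and exhaustiveness of the filtered geometric fixed points $\fil_{\ev}^\star\THH(\ku_R/\ku_A)^{\Phi C_d}$ for each divisor $d\mid m$, propagating these properties through the finite equaliser, filtered homotopy fixed points, and filtered Tate constructions involved in the definitions. The first step is to verify that for each $d\mid m$, the filtration $\fil_{\ev}^\star\THH(\ku_R/\ku_A)^{\Phi C_d}$ is exhaustive, complete, and has filtered pieces whose connectivity grows as $\star\to\infty$. By the construction in \cref{par:CyclonicEvenFiltrationTHH}, this filtration is the base change of $\fil_{\ev}^\star\THH(\ku_{R^{(d)}}/\ku_A)$ along $\ku_{\ev}\to\ku^{\Phi C_d}_{\ev}=\tau_{\geqslant 2\star}(\ku^{\Phi C_d})$. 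The source is exhaustive and complete by the global analogue of \cref{cor:FilEvExhaustiveComplete}, while $\ku^{\Phi C_d}$ is bounded below by \cref{lem:GenuineGeometricBoundedBelow}, so its double-speed Whitehead filtration is exhaustive, complete, and increasingly connective. A bifiltration argument analogous to \cref{cor:Bifiltration} then shows that the base change inherits all three properties.

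Next, I would propagate these properties through the building blocks $(\fil_{\ev}^\star\THH(\ku_R/\ku_A)^{\Phi C_d})^{\h C_{m/d,\ev}}$ and $((\fil_{\ev}^\star\THH(\ku_R/\ku_A)^{\Phi C_d})^{\t C_{p,\ev}})^{\h C_{m/pd,\ev}}$ appearing in the equaliser formula of \cref{par:CyclonicEvenFiltration}. Completeness is automatic since both constructions are iterated limits in filtered spectra. Exhaustiveness follows from the increasing connectivity of the input: for a finite cyclic homotopy fixed point the relevant spectral sequence converges termwise, and the filtered Tate construction is then the cofibre of an exhaustive norm map. Passing to the equaliser defining $\fil_{\ev,C_m}^\star\THH(\ku_R/\ku_A)^{C_m}$ involves only a finite limit, as $m$ has finitely many divisors and only finitely many primes $p$ satisfy $pd\mid m$. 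Completeness is therefore preserved trivially, while exhaustiveness is preserved since finite limits commute with filtered colimits in a stable $\infty$-category. That the resulting colimit indeed recovers $\THH(\ku_R/\ku_A)^{C_m}$ is the content of \cref{lem:GenuineFromGeometric}, whose hypothesis is satisfied as each $\THH(\ku_R/\ku_A)^{\Phi C_d}$ is bounded below.

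Finally, applying $(-)^{\h(\IT/C_m)_{\ev}}$ yields $\fil_{\ev,S^1}^\star\TCn{m}(\ku_R/\ku_A)$. Completeness is again preserved, since filtered fixed points are a limit. For exhaustiveness, one needs convergence of the residual $(S^1/C_m)$-homotopy fixed point spectral sequence associated to the filtration on $\THH(\ku_R/\ku_A)^{C_m}$. This is the main technical point, because $B(S^1/C_m)\simeq \IC\IP^\infty$ has cells in every even dimension, so one needs a quantitative connectivity estimate on the filtered pieces after the equaliser. This estimate must be tracked through the finite cyclic Tate constructions of the second step, but can be established via a careful bifiltration argument along the lines of \cref{cor:Bifiltration}\cref{enum:BifiltrationGraded}, using that the HKR filtration on $\HH(R/A)$ increases in connectivity and that this property is preserved under base change to $\pi_{2s}(\ku^{\Phi C_d})$ for each geometric fixed point weight. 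Once this connectivity bound is in hand, the filtered colimit commutes with $(-)^{\h(\IT/C_m)_{\ev}}$ in each homotopical degree and the result follows.
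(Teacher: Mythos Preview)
Your approach is essentially the same as the paper's: both propagate completeness and exhaustiveness from the geometric fixed point filtrations through the equaliser and filtered fixed points via connectivity estimates. The paper simply packages the propagation steps into citations of \cite[Lemma~2.75(iv)]{CyclotomicSynthetic} and \cref{cor:Bifiltration}, while you spell out the mechanism in more detail.

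There is, however, one genuine gap. You write that completeness of the building blocks $((\fil_{\ev}^\star\THH(\ku_R/\ku_A)^{\Phi C_d})^{\t C_{p,\ev}})^{\h C_{m/pd,\ev}}$ is ``automatic since both constructions are iterated limits in filtered spectra''. This is not correct for the inner factor: the filtered Tate construction $(-)^{\t C_{p,\ev}}$ is the cofibre of the norm from $(-)_{\h C_{p,\ev}}\simeq \IS_{\ev}\otimes_{C_{p,\ev}}-$, which is a \emph{colimit}, not a limit. Completeness is therefore not automatic here, and the paper explicitly singles this out as ``the only non-obvious thing to check'', referring to the argument in \cref{par:kuComparisonII} (where one commutes the filtered orbits past a cosimplicial limit using the increasing connectivity coming from the bifiltration of \cref{cor:Bifiltration}). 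The connectivity tracking you already set up for exhaustiveness is precisely what is needed to handle this point as well; you just need to invoke it for completeness of the Tate term rather than declaring it automatic.
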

	\begin{proof}[Proof sketch]
		For completeness, apply \cite[Lemma~\chref{2.75}(iv)]{CyclotomicSynthetic} to each of the constituents of the equaliser from \cref{par:CyclonicEvenFiltration}. The only non-obvious thing to check is that $(\fil_{\ev}^\star\THH(\ku_R/\ku_A)^{\Phi C_d})^{\t C_{p,ev}}$ is complete, which follows from an argument as in~\cref{par:kuComparisonII}. For exhaustiveness, apply \cite[Lemma~\chref{2.75}(iv)]{CyclotomicSynthetic} to each of the constituents in the equaliser from \cref{par:CyclonicEvenFiltration}. To see that this lemma applies, one can use \cref{cor:Bifiltration}.
	\end{proof}
	
	We can now formulate the main result of \cref{sec:Genuine}.
	
	\begin{numpar}[The twisted $q$-Hodge filtration.]\label{par:ReturnOfTwistedqHodge}
		The $q$-Hodge filtration $\fil_{\qHodge}^\star\qdeRham_{R/A}$ from \cref{thm:qdeRhamkuGlobal} can be plugged into the construction from \cite[\chref{3.38}]{qWitt} to obtain the \emph{twisted $q$-Hodge filtration}
		\begin{equation*}
			\fil_{\qHhodge_m}^\star\qdeRham_{R/A}^{(m)}\,.
		\end{equation*}
		We will relate this to $\gr_{\ev,S^1}^*\TCn{m}(\ku_R/\ku_A)$. To this end, we must first explain how the latter acquires a filtered structure.
		
		Observe that $\fil_{\ev,S^1}^\star\TCn{m}(\ku/\ku)\simeq\tau_{\geqslant 2\star}((\ku^{C_m})^{\h (S^1/C_m)})$. This computation is not completely trivial, but it can be done in the same way as \cref{thm:qdeRhamkuGenuine} below.%
		\footnote{In fact, it is almost a special case of that theorem, except that $2\notin\IZ^\times$. Even so, to formulate the theorem properly, we need this special case first.}
		As a consequence, we see that
		\begin{equation*}
			\Sigma^{-2*}\gr_{\ev,S^1}^*\TCn{m}(\ku_R/\ku_A)
		\end{equation*}
		is a module over the graded ring $\IZ[\beta,q]\llbracket t_m\rrbracket/(\beta t_m-(q^m-1))\cong \pi_{2*}((\ku^{C_m})^{\h (S^1/C_m)})$ (see \cref{par:kuGenuineFixedPoints}). Regarding $t_m$ as the filtration parameter, this graded ring can be identified with the $(q^m-1)$-adic filtration $(q^m-1)^\star\IZ[q]_{(q^m-1)}^\complete$.
	\end{numpar}

	\begin{thm}\label{thm:qdeRhamkuGenuine}
		Let $m\in\IN$. Suppose $A$ and $R$ satisfy the assumptions from~\cref{par:NewAssumptions} along with the addenda $2\in R^\times$ and~\cref{par:CyclonicBase}\cref{enum:AssumptionOnA2}. Then there exists a canonical equivalence of filtered $\IZ[\beta,q]\llbracket t_m\rrbracket/(\beta t_m-(q^m-1))$-modules
		\begin{equation*}
			\fil_{\qHhodge_m}^\star \qhatdeRham_{R/A}^{(m)}\overset{\simeq}{\longrightarrow}\Sigma^{-2*}\gr_{\ev,S^1}^*\TC^{-(m)}(\ku_R/\ku_A)\,,
		\end{equation*}
		where the left-hand side denotes the completion of the twisted $q$-Hodge filtration $\fil_{\qHhodge_m}^\star\qdeRham_{R/A}^{(m)}$ from \cref{par:ReturnOfTwistedqHodge} and the right-hand side is defined in \cref{par:CyclonicEvenFiltrationTHH}.
	\end{thm}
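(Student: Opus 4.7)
The plan is to analyze the equaliser formula from~\cref{par:CyclonicEvenFiltration} (specialised in~\cref{par:CyclonicEvenFiltrationTHH}) piece by piece, identify each constituent via \cref{thm:qdeRhamkuGlobal} applied to Frobenius twists $R^{(d)}$, and match the resulting assembly to the descent construction of the twisted $q$-Hodge filtration from \cite{qWittHabiro}. Throughout, we work prime-by-prime via adelic gluing as in \cref{subsec:qdeRhamkuGlobal}, which reduces us, for any fixed prime $p$, to analysing the $p$-completed equaliser; the rational part is handled separately and glued in at the end.

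First, I would identify the \enquote{diagonal terms} of the equaliser. By \cref{lem:GeometricEvenFiltration} and the construction in \cref{par:CyclonicEvenFiltrationTHH}, one has
\begin{equation*}
\fil_{\ev}^\star\THH(\ku_R/\ku_A)^{\Phi C_d}\simeq \fil_{\ev}^\star\THH\bigl(\ku_{R^{(d)}}/\ku_A\bigr)\otimes_{\ku_{\ev}}\ku_{\ev}^{\Phi C_d}\,,
\end{equation*}
and \cref{thm:qdeRhamkuGlobal} applied to $R^{(d)}$ identifies $\Sigma^{-2\star}\gr_{\ev,\h S^1}^\star\TC^-(\ku_{R^{(d)}}/\ku_A)$ with $\fil_{\qHodge}^\star\qhatdeRham_{R^{(d)}/A}$. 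Combining this with the computation of $\pi_\ast(\ku^{\Phi C_d})$ from \cref{prop:kuPhiCm} and the filtered base-change formula for $(-)^{\h(\IT/C_m)_{\ev}}\simeq (-)^{\h C_{m/d,\ev}}$ on top (arguing as in \cref{cor:EvenFiltrationTC-BaseChange,cor:TC-toTPbaseChange}) should yield a description of $\prod_{d\mid m}\gr_{\ev,S^1}^\ast((\fil_{\ev}^\star\THH(\ku_R/\ku_A)^{\Phi C_d})^{\h C_{m/d,\ev}})^{\h(\IT/C_m)_\ev}$ as a product of appropriately completed and $(q^{d}-1)^{-1}$-localised copies of $\qhatdeRham_{R^{(d)}/A}$, indexed by $d\mid m$.

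Second, I would analyse the Tate factors $((\THH(\ku_R/\ku_A)^{\Phi C_d})^{\t C_{p,\ev}})^{\h C_{m/pd,\ev}}$ using \cref{rem:kutCpCyclotomic}: Devalapurkar's equivalence identifies $(\ku^{\Phi C_{pd}})_p^\complete$ with the Nygaard-completed $q$-de Rham side, so the canonical map $(-)^{\Phi C_p}\Rightarrow (-)^{\t C_p}$ on the $\ku$-side matches the inflation $\inf_p$ twisted by $\psi^d$, which on the $q$-de Rham side corresponds to the usual $(q-1)\mapsto [p]_q$ localisation. This should turn both the $\operatorname{can}$-map and the $\phi$-map in the equaliser into the two structure maps of the descent diagram used in \cite[\chref{3.38}]{qWitt} to define $\fil_{\qHhodge_m}^\star\qdeRham_{R/A}^{(m)}$; here it is crucial that we use the modified cyclonic structure from \cref{par:CyclonicBase}, whose existence is exactly what introduces the $\psi^d$-twists defining $R^{(d)}$ and restores $A[q]$-linearity of the resulting diagram.

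Finally, assembling the equaliser and taking $(-)^{\h(\IT/C_m)_\ev}$, I would check that the result agrees with the companion paper's construction of $\qHhodge_{R/A}$ together with its twisted $q$-Hodge filtration, using exhaustiveness and completeness (\cref{lem:GenuineEvenFiltrationTHHkuCompleteExhaustive}) to reduce the comparison to associated gradeds and then to the modulo-$\beta$ calculation, which is controlled by the HKR filtration on $\HC^-(R^{(d)}/A)$ and its Tate analogues. The main obstacle will be the second step: matching, on the nose, the diagonal maps in the equaliser from \cref{par:CyclonicEvenFiltration} with the structure maps of the descent diagram from \cite[\chref{3.38}]{qWitt}. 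This requires tracking how Devalapurkar's $S^1$-equivariant $\IE_\infty$-identification $\THH(\IZ_p[\zeta_p]/\IS_p\qpower)_p^\complete\simeq \tau_{\geqslant 0}(\ku^{\t C_p})$ interacts with the Adams-operation twists $\psi^d$ arising from \cref{par:CyclonicBase}, and verifying the compatibility of the resulting Frobenius squares across all pairs $(p,d)$ with $pd\mid m$. Once this matching is established, the theorem follows by the same final completion/exhaustiveness argument as in the proof of \cref{thm:qdeRhamkuGlobal}.
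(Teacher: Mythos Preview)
Your overall architecture---adelic gluing, then analysing the equaliser from \cref{par:CyclonicEvenFiltration} and matching with \cite[\chref{3.38}]{qWitt}---is the paper's strategy as well. But there is a real gap in your second step.

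The paper does not match the full equaliser for $m$ directly. After the fracture square from \cref{par:EvenFiltrationFractureSquares} separates off the $m$-inverted piece (where all Tate terms vanish and the equaliser collapses to a product, as you anticipate via \cref{lem:qdeRhamkuGeometricBaseChange}), the $p$-complete piece for each $p\mid m$ is reduced, via \cref{lem:qdeRhamkuGeometricpAdicBaseChange}, to the pure prime-power case $m=p^\alpha$. The heart of the argument is then \cref{lem:qdeRhamkuGenuinepAdic}, which proceeds by \emph{induction on $\alpha$}: the equaliser for $C_{p^\alpha}$-fixed points unwinds to a pullback square relating the $C_{p^{\alpha-1}}$-case to a single Nygaard-filtered factor (identified in \cref{lem:qdeRhamkuGeometricpAdic}), and this pullback is matched with the \emph{inductive} definition of $\fil_{\qHhodge_{p^\alpha}}^\star$ from \cite{qWitt}. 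Your plan to identify ``both the $\operatorname{can}$-map and the $\phi$-map'' in one go does not engage with this inductive structure; since $\fil_{\qHhodge_{p^\alpha}}^\star$ is itself presented as an $\alpha$-fold iterated pullback rather than a single equaliser, the matching does not go through without it.

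A second, more technical point you miss: the geometric-fixed-point factors are graded over \emph{different} rings---$\IZ_p[u_{p^\alpha},q]\llbracket t_{p^\alpha}\rrbracket/(u_{p^\alpha}t_{p^\alpha}-\Phi_{p^\alpha}(q))$ by \cref{lem:kuPhiCp}\cref{enum:kuPhipalphaHomotopy}, not $\IZ[\beta]\llbracket t\rrbracket$. Aligning the filtration parameters (passing from $t_{p^{\alpha-1}}$ to $t_{p^\alpha}$) introduces a ``rescaling by $\Phi_{p^\alpha}(q)$'' in the sense of \cite[\chref{3.32}]{qWitt}, which already makes the bottom row of the inductive pullback complete; the paper has to argue separately that one still obtains the completion of the correct pullback. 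Your reduction-to-associated-gradeds strategy would have to confront this rescaling explicitly.
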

	
	To show \cref{thm:qdeRhamkuGenuine}, we'll decompose $\Sigma^{-2*}\gr_{\ev,S^1}^*\TC^{-(m)}(\ku_R/\ku_A)$ into a fracture square and match it up with the construction from~\cite[\chref{3.38}]{qWitt}.
	
	\begin{numpar}[Fracture squares for even filtrations.]\label{par:EvenFiltrationFractureSquares}
		Let $N$ be a positive integer. We construct an even filtration
		\begin{equation*}
			\fil_{\ev}^\star\THH\bigl(\ku_R\bigl[\localise{N}\bigr]/\ku_A\bigl[\localise{N}\bigr]\bigr)
		\end{equation*}
		as in \cref{par:GlobalEvenFiltration}, except that we replace every occurence of $\ku$ by a $\ku[1/N]$. Moreover, for any prime~$p$ we let
		\begin{equation*}
			\fil_{\ev}^\star\THH_\solid\bigl(\ku_{\smash{\widehat{R}}_p}/\ku_{\smash{\widehat{A}}_p}\bigr)\,,\quad\text{and}\quad \fil_{\ev}^\star\THH_\solid\bigl(\ku_{\smash{\widehat{R}}_p}\bigl[\localise{p}\bigr]/\ku_{\smash{\widehat{A}}_p}\bigl[\localise{p}\bigr]\bigr)
		\end{equation*}
		be the even filtrations given by applying \cref{par:EvenFiltration} for $k=\ku$ and $k=\ku[1/p]$, respectively. By construction, we then have a pullback square
		\begin{equation*}
			\begin{tikzcd}
				\fil_{\ev}^\star\THH(\ku_R/\ku_A)\rar\dar\drar[pullback] & \prod_{p\mid N} \fil_{\ev}^\star\THH_\solid\bigl(\ku_{\smash{\widehat{R}}_p}/\ku_{\smash{\widehat{A}}_p}\bigr)\dar\\
				\fil_{\ev}^\star\THH\bigl(\ku_R\bigl[\localise{N}\bigr]/\ku_A\bigl[\localise{N}\bigr]\bigr)\rar & \prod_{p\mid N}\fil_{\ev}^\star\THH_\solid\bigl(\ku_{\smash{\widehat{R}}_p}\bigl[\localise{p}\bigr]/\ku_{\smash{\widehat{A}}_p}\bigl[\localise{p}\bigr]\bigr)
			\end{tikzcd}
		\end{equation*}
		A similar fracture square exists for the geometric $C_m$-fixed points. To this end, replace $R$ by $R^{(m)}$ in the above construction and apply the base change $-\otimes_{\ku_{\ev}}\ku_{\ev}^{\Phi C_m}$ to obtain
		\begin{gather*}
			\fil_{\ev}^\star\THH\bigl(\ku_R\bigl[\localise{N}\bigr]/\ku_A\bigl[\localise{N}\bigr]\bigr)^{\Phi C_m}\,,\\
			\fil_{\ev}^\star\THH_\solid\bigl(\ku_{\smash{\widehat{R}}_p}/\ku_{\smash{\widehat{A}}_p}\bigr)^{\Phi C_m}\quad\text{and}\quad \fil_{\ev}^\star\THH_\solid\bigl(\ku_{\smash{\widehat{R}}_p}\bigl[\localise{p}\bigr]/\ku_{\smash{\widehat{A}}_p}\bigl[\localise{p}\bigr]\bigr)^{\Phi C_m}\,.
		\end{gather*}
		These fit into a pullback square
		\begin{equation*}
			\begin{tikzcd}
				\fil_{\ev}^\star\THH(\ku_R/\ku_A)^{\Phi C_m}\rar\dar\drar[pullback] & \prod_{p\mid N} \fil_{\ev}^\star\THH_\solid\bigl(\ku_{\smash{\widehat{R}}_p}/\ku_{\smash{\widehat{A}}_p}\bigr)^{\Phi C_m}\dar\\
				\fil_{\ev}^\star\THH\bigl(\ku_R\bigl[\localise{N}\bigr]/\ku_A\bigl[\localise{N}\bigr]\bigr)^{\Phi C_m}\rar & \prod_{p\mid N}\fil_{\ev}^\star\THH_\solid\bigl(\ku_{\smash{\widehat{R}}_p}\bigl[\localise{p}\bigr]/\ku_{\smash{\widehat{A}}_p}\bigl[\localise{p}\bigr]\bigr)^{\Phi C_m}
			\end{tikzcd}
		\end{equation*}
		We also note that if we define the \emph{$\infty$-category of cyclonic solid condensed spectra} as the Lurie tensor product $\Cyclonic\otimes\Sp_\solid$, then $\THH_\solid(\ku_{\smash{\widehat{R}}_p}/\ku_{\smash{\widehat{A}}_p})$ and $\THH_\solid(\ku_{\smash{\widehat{R}}_p}[1/p]/\ku_{\smash{\widehat{A}}_p}[1/p])$ can be equipped with cyclonic solid condensed structures as in \cref{par:CyclonicBase} and so the expressions $\THH_\solid(\ku_{\smash{\widehat{R}}_p}/\ku_{\smash{\widehat{A}}_p})^{\Phi C_m}$ and $\THH_\solid(\ku_{\smash{\widehat{R}}_p}[1/p]/\ku_{\smash{\widehat{A}}_p}[1/p])^{\Phi C_m}$ make sense. Finally, the constructions from \cref{par:CyclonicEvenFiltrationTHH} can also be applied in this setting, and so we obtain
		\begin{gather*}
			\fil_{\ev,S^1}^\star\TCn{m}\bigl(\ku_R\bigl[\localise{N}\bigr]/\ku_A\bigl[\localise{N}\bigr]\bigr)\,,\\
			\fil_{\ev,S^1}^\star\TCn{m}_\solid\bigl(\ku_{\smash{\widehat{R}}_p}/\ku_{\smash{\widehat{A}}_p}\bigr)\quad\text{and}\quad \fil_{\ev,S^1}^\star\TCn{m}_\solid\bigl(\ku_{\smash{\widehat{R}}_p}\bigl[\localise{p}\bigr]/\ku_{\smash{\widehat{A}}_p}\bigl[\localise{p}\bigr]\bigr)\,,
		\end{gather*}
		which fit into a pullback square
		\begin{equation*}
			\begin{tikzcd}
				\fil_{\ev,S^1}^\star\TCn{m}(\ku_R/\ku_A)\rar\dar\drar[pullback] & \prod_{p\mid N} \fil_{\ev,S^1}^\star\TCn{m}_\solid\bigl(\ku_{\smash{\widehat{R}}_p}/\ku_{\smash{\widehat{A}}_p}\bigr)\dar\\
				\fil_{\ev,S^1}^\star\TCn{m}\bigl(\ku_R\bigl[\localise{N}\bigr]/\ku_A\bigl[\localise{N}\bigr]\bigr)\rar & \prod_{p\mid N}\fil_{\ev,S^1}^\star\TCn{m}_\solid\bigl(\ku_{\smash{\widehat{R}}_p}\bigl[\localise{p}\bigr]/\ku_{\smash{\widehat{A}}_p}\bigl[\localise{p}\bigr]\bigr)
			\end{tikzcd}
		\end{equation*}
	\end{numpar}

	
	We will now analyse this pullback. Let us begin with the part where $N$ is invertible.
	
	\begin{lem}\label{lem:qdeRhamkuGeometricBaseChange}
		Suppose $N$ is divisible by $m$. Then the inflation map $\inf_m\colon \ku\rightarrow \ku^{\Phi C_m}$ induces a filtered $S^1$-equivariant \embrace{or more precisely, $\IT_{\ev}$-module} equivalence
		\begin{equation*}
			\Bigl(\fil_{\ev}^\star \THH\bigl(\ku_R^{(m)}\bigl[\localise{N}\bigr]/\ku_A\bigl[\localise{N}\bigr]\bigr)\otimes_{\IS[q],\psi^m}\IS[q]\Bigr)_{\Phi_m(q)}^\complete\overset{\simeq}{\longrightarrow}\fil_{\ev}^\star\THH\bigl(\ku_R\bigl[\localise{N}\bigr]/\ku_A\bigl[\localise{N}\bigr]\bigr)^{\Phi C_m}\,.
		\end{equation*}
	\end{lem}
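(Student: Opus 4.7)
The plan is to reduce the statement to a clean computation of the filtered ring $\ku[1/N]^{\Phi C_m}_{\ev}$ and then invoke the base-change construction from \cref{par:CyclonicEvenFiltrationTHH}. By the definition given in \cref{par:CyclonicEvenFiltrationTHH}, the right-hand side of the lemma equals
\begin{equation*}
\fil_{\ev}^\star \THH\bigl(\ku_{R^{(m)}}\bigl[\localise{N}\bigr]/\ku_A\bigl[\localise{N}\bigr]\bigr)\otimes_{\ku[1/N]_{\ev}}\ku\bigl[\localise{N}\bigr]^{\Phi C_m}_{\ev}\,,
\end{equation*}
where $\ku[1/N]^{\Phi C_m}_{\ev}=\tau_{\geqslant 2\star}\ku[1/N]^{\Phi C_m}$. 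So it suffices to produce an $S^1$-equivariant equivalence of filtered $\IE_\infty$-$\ku[1/N]_{\ev}$-algebras
\begin{equation*}
\bigl(\ku\bigl[\localise{N}\bigr]_{\ev}\otimes_{\IS[q],\psi^m}\IS[q]\bigr)_{\Phi_m(q)}^\complete\overset{\simeq}{\longrightarrow}\ku\bigl[\localise{N}\bigr]^{\Phi C_m}_{\ev}\,,
\end{equation*}
where $\IS[q]$ sits in filtration degree~$0$ and the $(q-1)$-completion is performed in filtered spectra. The rest of the lemma follows by tensoring this equivalence against $\fil_{\ev}^\star\THH(\ku_{R^{(m)}}[1/N]/\ku_A[1/N])$ over $\ku[1/N]_{\ev}$ and commuting the base change and completion past the tensor product, which is legitimate because $\IS[q]$ and $\Phi_m(q)$ both live in filtration degree~$0$.

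Next I would establish the underlying $S^1$-equivariant $\IE_\infty$-equivalence. Since $m\mid N$, the integer $m$ is invertible in $\ku[1/N]$, so \cref{lem:kuPhiCmLocalisation} provides an $S^1$-equivariant $\IE_\infty$-equivalence $(\ku[1/N]^{C_m})_{\Phi_m(q)}^\complete\simeq \ku[1/N]^{\Phi C_m}$. Combining this with \cref{cor:kuCm} applied to the trivial subgroup and $n=m$, which yields $\ku[1/N]\otimes_{\IS[q],\psi^m}\IS[q]\simeq \ku[1/N]^{C_m}$ as $S^1$-equivariant $\IE_\infty$-rings (with $S^1$ acting via $S^1/C_m$ on the target), we obtain the desired equivalence before passing to filtrations.

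Finally I would upgrade to a filtered equivalence. By \cref{par:kuGenuineFixedPoints} and \cref{lem:kuPhiCmLocalisation}, both $\ku[1/N]^{C_m}$ and $\ku[1/N]^{\Phi C_m}$ are even, with $p$-torsion free homotopy groups $\IZ[1/N,\beta,q]/(q^m-1)$ and $\IZ[1/N,\beta,q]/\Phi_m(q)$, respectively. Hence their perfect even filtrations agree with the double-speed Whitehead filtrations. Base change along $\psi^m\colon\IS[q]\to\IS[q]$ in filtration degree~$0$ preserves the double-speed Whitehead filtration, because $\psi^m$ is a flat map of static rings, so $\ku[1/N]_{\ev}\otimes_{\IS[q],\psi^m}\IS[q]\simeq \tau_{\geqslant 2\star}\ku[1/N]^{C_m}$. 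Similarly, because $\Phi_m(q)$ lies in $\pi_0$ and acts regularly on an even $p$-torsion free graded ring, $\Phi_m(q)$-completion commutes with $\tau_{\geqslant 2\star}$, giving $(\tau_{\geqslant 2\star}\ku[1/N]^{C_m})_{\Phi_m(q)}^\complete\simeq \tau_{\geqslant 2\star}\ku[1/N]^{\Phi C_m}$. The main obstacle is purely bookkeeping: one must track that the $S^1$-action on the left-hand side (via the residual $S^1\simeq S^1/C_m$-action on $\ku^{C_m}$, transported through the inflation in \cref{cor:kuCm}) matches the residual $S^1$-action on $\ku^{\Phi C_m}$ after completion, and that all identifications are $\IE_\infty$-monoidal; but both are formal consequences of naturality of inflation and of the Tate construction $(-)^{\t C_p}$.
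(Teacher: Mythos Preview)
Your proposal is correct and follows essentially the same approach as the paper: both reduce to the definition of $\fil_{\ev}^\star\THH(\ku_R/\ku_A)^{\Phi C_m}$ as a base change along $\ku_{\ev}\to\ku^{\Phi C_m}_{\ev}$ and then invoke \cref{lem:kuPhiCmLocalisation} (you combine it with \cref{cor:kuCm} for the trivial subgroup). The one place where the paper is crisper is your step ``commuting the base change and completion past the tensor product'': the paper observes that, since $m\mid N$, one has a product decomposition $\IS[1/N,q]/(q^m-1)\simeq\prod_{d\mid m}\IS[1/N,q]/\Phi_d(q)$, so $\Phi_m(q)$-completion is just projection onto an idempotent summand and therefore commutes with everything---your justification ``because $\IS[q]$ and $\Phi_m(q)$ live in filtration degree~$0$'' is not quite the right reason, but the conclusion is fine.
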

	\begin{proof}
		Observe that the $\Phi_m(q)$-adic completion is just the projection to the $m$\textsuperscript{th} factor in the decomposition
		\begin{equation*}
			\IS\bigl[\localise{N},q\bigr]/(q^m-1)\simeq \prod_{d\mid m}\IS\bigl[\localise{N},q\bigr]/\Phi_d(q)\,.
		\end{equation*}
		The claim then follows from \cref{lem:kuPhiCmLocalisation} and the definition of $\fil_{\ev}^\star \THH(\ku_R[1/N]/\ku_A[1/N])$ and $\fil_{\ev}^\star\THH_\solid(\ku_{\smash{\widehat{R}}_p}[1/p]/\ku_{\smash{\widehat{A}}_p}[1/p]\bigr)$ as base changes along $-\otimes_{\ku_{\ev}}\ku_{\ev}^{\Phi C_m}$.
	\end{proof}

	Let us now analyse the $p$-adic part.
	
	\begin{lem}\label{lem:qdeRhamkuGeometricpAdicLocalisation}
		For all primes~$p$, the inflation map $\inf_m\colon \ku\rightarrow\ku^{\Phi C_m}$ induces a filtered $S^1$-equivariant \embrace{or more precisely, $\IT_{\ev}$-module} equivalence
		\begin{equation*}
			\Bigl(\fil_{\ev}^\star \THH_\solid\bigl(\ku_{\smash{\widehat{R}_p^{(m)}}}\bigl[\localise{p}\bigr]/\ku_{\smash{\widehat{A}}_p}\bigl[\localise{p}\bigr]\bigr)\otimes_{\IS[q],\psi^m}\IS[q]\Bigr)_{\Phi_m(q)}^\complete\overset{\simeq}{\longrightarrow}\fil_{\ev}^\star\THH_\solid\bigl(\ku_{\smash{\widehat{R}}_p}\bigl[\localise{p}\bigr]/\ku_{\smash{\widehat{A}}_p}\bigl[\localise{p}\bigr]\bigr)^{\Phi C_m}\,.
		\end{equation*}
	\end{lem}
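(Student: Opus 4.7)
The plan is to mimic the proof of \cref{lem:qdeRhamkuGeometricBaseChange}, replacing the key input \cref{lem:kuPhiCmLocalisation} by its $p$-adic analogues from \crefrange{lem:kuPhiCmpCompletion}{lem:kuPhiCp}. As in that proof, the right-hand side of the asserted equivalence is, by the constructions in \cref{par:CyclonicEvenFiltrationTHH} and \cref{par:EvenFiltrationFractureSquares}, the base change of $\fil_{\ev}^\star\THH_\solid(\ku_{\smash{\widehat{R}_p^{(m)}}}[1/p]/\ku_{\smash{\widehat{A}}_p}[1/p])$ along the $p$-completed and $p$-inverted inflation $\inf_m\colon\ku\to\ku^{\Phi C_m}$. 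So the lemma reduces to producing a canonical $S^1$-equivariant $\IE_\infty$-equivalence
\begin{equation*}
	\bigl(\ku_p^\complete\bigl[\localise{p}\bigr]\otimes_{\IS[q],\psi^m}\IS[q]\bigr)_{\Phi_m(q)}^\complete\overset{\simeq}{\longrightarrow}(\ku^{\Phi C_m})_p^\complete\bigl[\localise{p}\bigr]
\end{equation*}
through which $\inf_m$ factors. Since $\IS[q]\rightarrow\ku$ sends $q\mapsto 1$, the pushout on the left unravels as $\ku_p^\complete[1/p][q]/(q^m-1)$, and $\Phi_m(q)$-completion projects onto the direct summand $\ku_p^\complete[1/p][\zeta_m]$ in the decomposition $\ku_p^\complete[1/p][q]/(q^m-1)\simeq\prod_{d\mid m}\ku_p^\complete[1/p][\zeta_d]$.

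To identify $(\ku^{\Phi C_m})_p^\complete[1/p]$ with this summand, I would distinguish two cases. If $p\nmid m$, then $m$ is invertible in $\IZ_p[1/p]$, so \cref{lem:kuPhiCmLocalisation} applies directly with $\ku[1/m]$ replaced by $\ku_p^\complete[1/p]$ and yields the identification on the nose. If $p\mid m$, write $m=p^\alpha m_p$ with $\gcd(p,m_p)=1$. Combining \cref{lem:kuPhiCp}\cref{enum:kuPhipalphaInflation} with \cref{lem:kuPhiCmpCompletion} gives $(\ku^{\Phi C_m})_p^\complete\simeq((\ku^{\Phi C_p})_p^\complete\otimes_{\IS[q],\psi^{m/p}}\IS[q])_{\Phi_m(q)}^\complete$. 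After inverting $p$, the relation $\beta=(q-1)u_p$ from \cref{lem:kuPhiCp}\cref{enum:kuPhipalphaHomotopy} makes $(q-1)$ a unit modulo $\Phi_p(q)$ (as $q-1$ generates the maximal ideal above $p$ in $\IZ_p[\zeta_p]$, which becomes invertible after inverting $p$), so $(\ku^{\Phi C_p})_p^\complete[1/p]$ is generated over $\ku_p^\complete[1/p]$ by a primitive $p$-th root of unity, i.e.\ $(\ku^{\Phi C_p})_p^\complete[1/p]\simeq(\ku_p^\complete[1/p]\otimes_{\IS[q],\psi^p}\IS[q])_{\Phi_p(q)}^\complete$. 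Assembling these simplifications and using $\psi^p\circ\psi^{m/p}=\psi^m$ yields the desired ring-level equivalence.

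Finally, the upgrade from the ring level to the filtered $\IT_{\ev}$-linear level is formal. By \cref{prop:kuPhiCm}, the homotopy groups of $(\ku^{\Phi C_m})_p^\complete[1/p]$ are concentrated in even degrees, so $\ku^{\Phi C_m}_{\ev}[1/p]$ is the double-speed Whitehead filtration, and base change of the even filtration along the ring-level equivalence is compatible by an argument analogous to the proof of \cref{lem:GeometricEvenFiltration} (which itself invokes the even base change result underlying \cref{cor:EvenFiltrationBaseChange}). The main obstacle is careful bookkeeping of the various $\psi^m$-twists and the order of the completions, localisations, and fixed points involved; this is manageable once one notes that $\Phi_m(q)$-completion acts as projection onto a direct summand and hence commutes with all the relevant filtered and equivariant operations.
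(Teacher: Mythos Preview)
Your argument is correct but takes an unnecessary detour. The paper's proof is a single line: ``Analogous to \cref{lem:qdeRhamkuGeometricBaseChange}.'' The reason is that after $p$-completing and then inverting~$p$, you are working over~$\IQ_p$, where \emph{every} positive integer---in particular~$m$---is invertible. Hence the decomposition
\[
\IS_p\bigl[\localise{p},q\bigr]/(q^m-1)\simeq\prod_{d\mid m}\IS_p\bigl[\localise{p},q\bigr]/\Phi_d(q)
\]
holds just as in the proof of \cref{lem:qdeRhamkuGeometricBaseChange}, and the input \cref{lem:kuPhiCmLocalisation} applies uniformly, with no need to distinguish whether $p\mid m$ or not.

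Your case $p\nmid m$ is exactly this. But in the case $p\mid m$ you instead route through \cref{lem:kuPhiCmpCompletion} and \cref{lem:kuPhiCp} to compute $(\ku^{\Phi C_m})_p^\complete$ before inverting~$p$; this works, and your manipulation of $u_p$ and $(q-1)$ is correct, but it is reinventing a special case of \cref{lem:kuPhiCmLocalisation} by hand. The simpler observation that $m\in\IQ_p^\times$ makes the whole case split unnecessary and collapses your argument to the one-line proof the paper gives.
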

	\begin{proof}
		Analogous to \cref{lem:qdeRhamkuGeometricBaseChange}.
	\end{proof}
	
	\begin{lem}\label{lem:qdeRhamkuGeometricpAdicBaseChange}
		Write $m=p^\alpha m_p$, where $p$ is a prime and $m_p$ is coprime to $p$. Then the inflation map $\inf_{m_p}\colon \ku^{\Phi C_{p^\alpha}}\rightarrow \ku^{\Phi C_m}$ induces a filtered $S^1$-equivariant \embrace{or more precisely, $\IT_{\ev}$-module} equivalence
		\begin{equation*}
			\Bigl(\fil_{\ev}^\star \THH_\solid\bigl(\ku_{\smash{\widehat{R}_p^{(m_p)}}}/\ku_{\smash{\widehat{A}}_p}\bigr)^{\Phi C_{p^\alpha}}\otimes_{\IS[q],\psi^{m_p}}\IS[q]\Bigr)_{\Phi_m(q)}^\complete\overset{\simeq}{\longrightarrow}\fil_{\ev}^\star \THH_\solid\bigl(\ku_{\smash{\widehat{R}}_p}/\ku_{\smash{\widehat{A}}_p}\bigr)^{\Phi C_m}\,.
		\end{equation*}
	\end{lem}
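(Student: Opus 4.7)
The plan is to reduce the statement to the corresponding identity for $\ku^{\Phi C_m}$ itself, namely \cref{lem:kuPhiCmpCompletion}, via the base change description of the cyclonic even filtration from \cref{par:CyclonicEvenFiltrationTHH} and the compatibility of Adams operations. First, observe that since $\psi^m = \psi^{p^\alpha}\circ\psi^{m_p}$ on the $\Lambda$-ring $\widehat{A}_p$ (and since both Adams operations are flat, so no derived issues arise), we have a canonical identification $\widehat{R}_p^{(m)}\simeq (\widehat{R}_p^{(m_p)})^{(p^\alpha)}$, and hence
\begin{equation*}
    \fil_{\ev}^\star\THH_\solid\bigl(\ku_{\smash{(\widehat{R}_p^{(m_p)})^{(p^\alpha)}}}/\ku_{\smash{\widehat{A}}_p}\bigr)\simeq \fil_{\ev}^\star\THH_\solid\bigl(\ku_{\smash{\widehat{R}_p^{(m)}}}/\ku_{\smash{\widehat{A}}_p}\bigr)\,.
\end{equation*}

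Next, unwinding \cref{par:CyclonicEvenFiltrationTHH} in the $p$-completed solid setting, the left-hand side of the claimed equivalence is
\begin{equation*}
    \Bigl(\fil_{\ev}^\star \THH_\solid\bigl(\ku_{\smash{\widehat{R}_p^{(m)}}}/\ku_{\smash{\widehat{A}}_p}\bigr)\soltimes_{\ku_{\ev}}\ku_{\ev}^{\Phi C_{p^\alpha}}\soltimes_{\IS[q],\psi^{m_p}}\IS[q]\Bigr)_{\Phi_m(q)}^\complete\,,
\end{equation*}
while the right-hand side is $\fil_{\ev}^\star \THH_\solid(\ku_{\smash{\widehat{R}_p^{(m)}}}/\ku_{\smash{\widehat{A}}_p})\soltimes_{\ku_{\ev}}\ku_{\ev}^{\Phi C_m}$. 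Hence the whole statement reduces to showing the purely coefficient-level base change equivalence
\begin{equation*}
    \bigl(\ku_{\ev}^{\Phi C_{p^\alpha}}\soltimes_{\IS[q],\psi^{m_p}}\IS[q]\bigr)_{\Phi_m(q)}^\complete\overset{\simeq}{\longrightarrow}\ku_{\ev}^{\Phi C_m}
\end{equation*}
in the category of filtered $\IT_{\ev}$-modules.

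To establish this last equivalence, we combine \cref{lem:kuPhiCmpCompletion} with the observation that the even filtrations on $\ku^{\Phi C_{p^\alpha}}$ and $\ku^{\Phi C_m}$ in question are just the double-speed Whitehead filtrations: by \cref{prop:kuPhiCm} (or the special cases \cref{lem:kuPhiCp} and \cref{lem:kuPhiCmpCompletion}), both $(\ku^{\Phi C_{p^\alpha}})_p^\complete$ and $(\ku^{\Phi C_m})_p^\complete$ have homotopy groups concentrated in even degrees, so $\ku_{\ev}^{\Phi C_{p^\alpha}}\simeq \tau_{\geqslant 2\star}\ku^{\Phi C_{p^\alpha}}$ and $\ku_{\ev}^{\Phi C_m}\simeq \tau_{\geqslant 2\star}\ku^{\Phi C_m}$. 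Since $\psi^{m_p}\colon \IS[q]\to\IS[q]$ is flat on $\pi_0$ and the base change is performed along an even map, passing to filtrations is compatible with tensoring and completion, and the statement reduces to the underlying (unfiltered) $p$-complete $S^1$-equivariant $\IE_\infty$-equivalence established in \cref{lem:kuPhiCmpCompletion}. The main technical point, and likely the only subtlety, is the handling of the $\Phi_m(q)$-completion, which picks out the $\Phi_m$-isotypic summand of $\IS[q]/(q^m-1)\simeq \prod_{d\mid m}\IS[q]/\Phi_d(q)$ precisely as in the proof of \cref{lem:qdeRhamkuGeometricpAdicLocalisation}; this is handled by exactly the same argument, so no new input is required.
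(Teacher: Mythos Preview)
Your approach is essentially the same as the paper's: both reduce to \cref{lem:kuPhiCmpCompletion} via the base-change definition of the cyclonic even filtration in \cref{par:CyclonicEvenFiltrationTHH}, and both treat the $\Phi_m(q)$-completion as projection onto a direct factor. The paper's proof is terser (``follows from the constructions and \cref{lem:kuPhiCmpCompletion}''), whereas you spell out the identification $\widehat{R}_p^{(m)}\simeq (\widehat{R}_p^{(m_p)})^{(p^\alpha)}$ and the reduction to the coefficient level explicitly, which is helpful.

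One small imprecision: the product decomposition you invoke, $\IS[q]/(q^m-1)\simeq \prod_{d\mid m}\IS[q]/\Phi_d(q)$, is only valid after inverting~$m$; in the $p$-complete setting the cyclotomic factors with $p$-power part are not coprime modulo~$p$. The paper instead uses
\[
\bigl(\IS[q]/(q^m-1)\bigr)_p^\complete\simeq \prod_{d_p\mid m_p}\bigl(\IS_p[q]/(q^m-1)\bigr)_{(p,\Phi_{d_p}(q))}^\complete\,,
\]
together with the observation that $\Phi_m(q)$-completion agrees with $\Phi_{m_p}(q)$-completion since everything is already $p$-complete. Your reference to \cref{lem:qdeRhamkuGeometricpAdicLocalisation} (which concerns the $[1/p]$-localisation) should accordingly be replaced by this $p$-adic variant; once that is done, your argument goes through unchanged.
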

	\begin{proof}
		As in the proof of \cref{lem:qdeRhamkuGeometricBaseChange}, observe that the $\Phi_m(q)$-adic completion, which agrees with $\Phi_{m_p}(q)$-adic completion as everything is already $p$-complete, is just a projection to the $m_p^{\text{th}}$ factor in the product decomposition
		\begin{equation*}
			\bigl(\IS[q]/(q^m-1)\bigr)_p^\complete\simeq \prod_{d_p\mid m_p}\bigl(\IS_p[q]/(q^m-1)\bigr)_{(p,\Phi_{d_p}(q))}^\complete\,.
		\end{equation*}
		The claim then follows from the constructions and \cref{lem:kuPhiCmpCompletion}.
	\end{proof}
	
	\begin{lem}\label{lem:qdeRhamkuGeometricpAdic}
		In the case $m=p^\alpha$, where $p>2$ is a prime and $\alpha\geqslant 1$, we have a canonical equivalence of filtered $\IZ_p[u_{p^\alpha},q]\llbracket t_{p^\alpha}\rrbracket/(u_{p^\alpha} t_{p^\alpha}-\Phi_{p^\alpha}(q))$-modules
		\begin{equation*}
			\fil_{\Nn}^\star\bigl(\qdeRham_{R/A}^{(p^\alpha)}\bigr)_{(p,\Nn)}^\complete \overset{\simeq}{\longrightarrow} \Sigma^{-2*}\gr^\star\left( \bigl(\fil_{\ev}^*\THH_\solid\bigl(\ku_{\smash{\widehat{R}}_p}/\ku_{\smash{\widehat{A}}_p}\bigr)^{\Phi C_{p^\alpha}}\bigr)^{\h(\IT/C_{p^\alpha})_{\ev}}\right)\,.
		\end{equation*}
	\end{lem}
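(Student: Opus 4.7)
The plan is to identify the right-hand side with the $S^1$-equivariant even filtration on a $\THH$-construction over $\IS_{\smash{\widehat{A}}_p}\qpower$, and then apply the BMS2-style comparison from \cref{cor:BMSvsPstragowski} together with \cref{prop:qdeRhamTC-} to obtain the Nygaard filtration on the Frobenius-twisted $q$-de Rham complex.

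The first step is to identify the geometric fixed points at play as a suitable $\THH$. Combining \cref{rem:kutCpCyclotomic} with the inflation equivalence of \cref{lem:kuPhiCp}\cref{enum:kuPhipalphaInflation}, we obtain an $S^1$-equivariant $\IE_\infty$-equivalence
\begin{equation*}
	\bigl(\ku^{\Phi C_{p^\alpha}}\bigr)_p^\complete\simeq \THH\bigl(\IZ_p[\zeta_{p^\alpha}]/\IS_p\qpower\bigr)_p^\complete\,,
\end{equation*}
where $\IZ_p[\zeta_{p^\alpha}]$ is regarded as an $\IS_p\qpower$-algebra via $q\mapsto \zeta_{p^\alpha}$; under this identification the inflation $\inf_{p^{\alpha-1}}\colon \ku^{\Phi C_p}\rightarrow \ku^{\Phi C_{p^\alpha}}$ corresponds to $\zeta_p\mapsto \zeta_{p^\alpha}^{p^{\alpha-1}}$, and by the second half of \cref{rem:kutCpCyclotomic} it is compatible with the iterated relative cyclotomic Frobenius of $\THH(-/\IS\qpower)$.

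Next I would use this to rewrite the even filtration from \cref{par:CyclonicEvenFiltrationTHH}. By construction, $\fil_{\ev}^\star\THH_\solid(\ku_{\smash{\widehat{R}}_p}/\ku_{\smash{\widehat{A}}_p})^{\Phi C_{p^\alpha}}$ is obtained from $\fil_{\ev}^\star\THH_\solid(\ku_{\smash{\widehat{R}_p^{(p^\alpha)}}}/\ku_{\smash{\widehat{A}}_p})$ by base change along $\ku_{\ev}\rightarrow \ku_{\ev}^{\Phi C_{p^\alpha}}$, where the Frobenius twist $\widehat{R}_p^{(p^\alpha)}$ is built from the chosen lift of the Adams operation $\psi^{p^\alpha}$ on $\IS_{\smash{\widehat{A}}_p}$. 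Applying the first step to the $\ku^{\Phi C_{p^\alpha}}$-factor and unwinding the construction of $\psi_R$ from \cref{par:kuComparisonI} (iterated $\alpha$ times, and replacing the residue field $\IF_p$ by the relatively semiperfect cover used in case~\cref{par:AssumptionsOnR}\cref{enum:E1Lift} or using the $\IE_2$-lift of case~\cref{par:AssumptionsOnR}\cref{enum:E2Lift}), I expect to produce a natural $S^1$-equivariant, $(p,\Phi_{p^\alpha}(q))$-completed equivalence
\begin{equation*}
	\fil_{\ev}^\star\THH_\solid\bigl(\ku_{\smash{\widehat{R}}_p}/\ku_{\smash{\widehat{A}}_p}\bigr)^{\Phi C_{p^\alpha}}\overset{\simeq}{\longrightarrow}\fil_{\ev}^\star\THH_\solid\bigl(\widehat{R}_p^{(p^\alpha)}[\zeta_{p^\alpha}]/\IS_{\smash{\widehat{A}}_p}\qpower\bigr)\bigl[\localise{u}\bigr]_{(p,\Phi_{p^\alpha}(q))}^\complete\,,
\end{equation*}
with the right-hand side carrying the Bhatt--Morrow--Scholze-style even filtration. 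Both sides are exhaustive and complete by \cref{cor:FilEvExhaustiveComplete}, so that the comparison can be checked on associated gradeds by descent along the cosimplicial resolutions of \cref{subsec:EvenResolution,subsec:BaseChange}.

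Finally, applying $(-)^{\h (\IT/C_{p^\alpha})_{\ev}}$ to this equivalence and invoking \cref{cor:BMSvsPstragowski} combined with \cref{prop:qdeRhamTC-} identifies the associated graded of the right-hand side with the Nygaard-filtered $q$-de Rham complex $\fil_{\Nn}^\star\qdeRham_{\widehat{R}_p^{(p^\alpha)}[\zeta_{p^\alpha}]/\IS_{\smash{\widehat{A}}_p}\qpower}$, evaluated at the prism $(\IS_{\smash{\widehat{A}}_p}\qpower,\Phi_{p^\alpha}(q))$. After completing at $(p,\Nn)$, the universal property of prismatic cohomology identifies the latter with $\fil_{\Nn}^\star(\qdeRham_{R/A}^{(p^\alpha)})_{(p,\Nn)}^\complete$, yielding the claim. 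The main obstacle is step two: one must verify that the residual $S^1/C_{p^\alpha}$-action on $(\ku^{\Phi C_{p^\alpha}})_p^\complete$ matches the usual $S^1$-action on $\THH(\IZ_p[\zeta_{p^\alpha}]/\IS_p\qpower)$ through the rescaling $t_{p^\alpha}\leftrightarrow [p^\alpha]_{\ku}(t)$ from \cref{par:kuGenuineFixedPoints,lem:kuPhiCp}, and that the twisted cyclonic structure of \cref{par:CyclonicBase} is responsible for matching the Tate-valued Frobenius $\phi_{\t C_p}$ with the $\Lambda$-ring Adams operation $\psi^{p^\alpha}$ entering the Frobenius twist on $R$.
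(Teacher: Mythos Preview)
Your approach matches the paper's: identify $(\ku^{\Phi C_{p^\alpha}})_p^\complete\simeq\THH(\IZ_p[\zeta_{p^\alpha}]/\IS_p\qpower)_p^\complete$ via \cref{rem:kutCpCyclotomic} and \cref{lem:kuPhiCp}\cref{enum:kuPhipalphaInflation}, tensor with $\THH_\solid(\IS_{\smash{\widehat{R}_p^{(p^\alpha)}}}/\IS_{\smash{\widehat{A}}_p})$ to get $\THH_\solid(\widehat{R}_p^{(p^\alpha)}[\zeta_{p^\alpha}]/\IS_{\smash{\widehat{A}}_p}\qpower)$, and then invoke \cref{prop:qdeRhamTC-}. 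The paper spells out $\alpha=1$ and defers general $\alpha$ to the same inflation argument.

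There is, however, one genuine slip: you should \emph{not} invert $u$. The equivalence of \cref{rem:kutCpCyclotomic} is $(\ku^{\Phi C_p})_p^\complete\simeq\THH(\IZ_p[\zeta_p]/\IS_p\qpower)_p^\complete$ on the nose---both sides have $\pi_*\cong\IZ_p[\zeta_p][u]$ with $\abs{u}=2$, and $u$ is not invertible in either. The localisation $[1/u]$ in \cref{par:kuComparisonI} appears only because the target there is the full Tate construction $\ku^{\t C_p}$, whereas the geometric fixed points are its connective cover $\tau_{\geqslant 0}(\ku^{\t C_p})$. If you do invert $u$, the second half of \cref{prop:qdeRhamTC-} hands you $\Prism_{S/A\qpower}[u^{\pm 1}]$, the \emph{constant} filtration on prismatic cohomology, not the Nygaard filtration you want. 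So drop the $[1/u]$ and the appeal to \cref{cor:BMSvsPstragowski} (which is specifically the $[1/u]$-localised comparison), and use the first half of \cref{prop:qdeRhamTC-} directly; the identification of the solid and Hahn--Raksit--Wilson even filtrations still goes through by the same cosimplicial-resolution argument, exactly as the paper does.
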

	\begin{proof}
		We'll explain the case $\alpha=1$; the general case will follow from an analogous argument using \cref{lem:kuPhiCp}\cref{enum:kuPhipalphaInflation}. Let $\widehat{R}_p^{(p)}$, $\IS_{\smash{\widehat{R}_p^{(p)}}}$, and $\ku_{\smash{\widehat{R}_p^{(p)}}}$ denote the $p$-completions of $R^{(p)}$, $\IS_{R^{(p)}}$, and $\ku_{R^{(p)}}$, respectively. By \cref{rem:kutCpCyclotomic}, $(\ku^{\Phi C_p})_p^\complete \simeq \THH_\solid(\IZ_p[\zeta_p]/\IS_p\qpower)$, and so we get $S^1$-equivariant equivalences
		\begin{equation*}
			\THH_\solid\bigl(\ku_{\smash{\widehat{R}}_p}/\ku_{\smash{\widehat{A}}_p}\bigr)^{\Phi C_p}\simeq \THH_\solid \bigl(\IS_{\smash{\widehat{R}_p^{(p)}}}/\IS_{\smash{\widehat{A}}_p}\bigr)\soltimes\ku^{\Phi C_p}\simeq \THH_\solid\bigl(\widehat{R}_p^{(p)}[\zeta_p]/\IS_{\smash{\widehat{A}}_p}\qpower\bigr)\,.
		\end{equation*}
		This also induces an equivalence of $S^1$-equivariant even filtrations
		\begin{equation*}
			\left(\fil_{\ev}^\star \THH_\solid\bigl(\ku_{\smash{\widehat{R}}_p}/\ku_{\smash{\widehat{A}}_p}\bigr)^{\Phi C_p}\right)^{\h (\IT/C_p)_{\ev}}\simeq \fil_{\HRWev,\h S^1}^\star \TC^-\bigl(\widehat{R}_p^{(p)}[\zeta_p]/\IS_{\smash{\widehat{A}}_p}\qpower\bigr)_p^\complete\,.
		\end{equation*}
		Indeed, depending on whether we are in case~\cref{par:AssumptionsOnR}\cref{enum:E1Lift} or~\cref{enum:E2Lift}, the given resolution $\widehat{R}_p\rightarrow \widehat{R}_{p,\infty}^\bullet$ or the resolution from \cref{prop:EvenResolution} will also compute the Hahn--Raksit--Wilson even filtration. By \cref{prop:qdeRhamTC-} and \cref{par:qdeRhamViaTC-}, the associated graded
		\begin{equation*}
			\Sigma^{-2*}\gr_{\HRWev,\h S^1}^*\TC^-\bigl(\widehat{R}_p^{(p)}[\zeta_p]/\IS_{\smash{\widehat{A}}_p}\qpower\bigr)_p^\complete\simeq \fil_\Nn^\star \bigl(\qdeRham_{R/A}^{(p)}\bigr)_{(p,\Nn)}^\complete
		\end{equation*}
		is the completion of the Nygaard filtration on $\bigl(\qdeRham_{R/A}^{(p)}\bigr)_p^\complete$, as desired.
	\end{proof}
	
	
	\begin{lem}\label{lem:qdeRhamkuGenuinepAdic}
		In the case $m=p^\alpha$, where $p>2$ is a prime and $\alpha\geqslant 1$, we have a canonical equivalence of filtered $\IZ_p[\beta,q]\llbracket t_{p^\alpha}\rrbracket/(\beta t_{p^\alpha}-(q^{p^\alpha}-1))$-modules
		\begin{equation*}
			\fil_{\qHhodge_{p^\alpha}}^\star\bigl(\qhatdeRham_{R/A}^{(p^\alpha)}\bigr)_p^\complete \overset{\simeq}{\longrightarrow} \Sigma^{-2*}\gr_{\ev,S^1}^* \TCn{p^\alpha}_\solid\bigl(\ku_{\smash{\widehat{R}}_p}/\ku_{\smash{\widehat{A}}_p}\bigr)\,.
		\end{equation*}
	\end{lem}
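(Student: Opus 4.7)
First I would unwind $\fil_{\ev,S^1}^\star \TCn{p^\alpha}_\solid\bigl(\ku_{\smash{\widehat{R}}_p}/\ku_{\smash{\widehat{A}}_p}\bigr)$ via the equaliser formula from \cref{par:CyclonicEvenFiltration}. Since we are working $p$-completely, the Tate constructions $(-)^{\t C_{\ell,\ev}}$ vanish for all primes $\ell\neq p$, so only divisors of the form $p^i$ contribute to the equaliser. Absorbing the residual $C_{p^{\alpha-i}}$-fixed points into the outer $(\IT/C_{p^\alpha})_{\ev}$-fixed points via \cref{lem:FixedPointsCompose}, the filtration becomes the equaliser
\begin{equation*}
\eq\Biggl(\prod_{i=0}^{\alpha}\bigl(\fil_{\ev}^\star T^{\Phi C_{p^i}}\bigr)^{\h(\IT/C_{p^i})_{\ev}}\overset{\operatorname{can}}{\underset{\phi}{\doublemorphism}} \prod_{i=0}^{\alpha-1}\bigl((\fil_{\ev}^\star T^{\Phi C_{p^i}})^{\t C_{p,\ev}}\bigr)^{\h(\IT/C_{p^{i+1}})_{\ev}}\Biggr)\,,
\end{equation*}
where I abbreviate $T\coloneqq \THH_\solid(\ku_{\smash{\widehat{R}}_p}/\ku_{\smash{\widehat{A}}_p})$.

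Next I would pass to $\Sigma^{-2*}\gr^\star$, which commutes with finite equalisers and products. By \cref{lem:qdeRhamkuGeometricpAdic} applied with $p^i$ in place of $p^\alpha$, each of the $\alpha+1$ terms of the main product is identified with the completed Nygaard filtration $\fil_{\Nn}^\star\bigl(\qdeRham_{R/A}^{(p^i)}\bigr)_{(p,\Nn)}^\complete$. The $\alpha$ terms of the second product differ from the corresponding $\h C_{p,\ev}$-terms by a $t$-periodisation; on the Nygaard side, this corresponds to inversion of the Nygaard parameter. The $\operatorname{can}$-branch of the equaliser is then, by construction, identified with the canonical map from the Nygaard filtration to its Nygaard-inverted version.

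The map $\phi$ in the equaliser is the more subtle one: it arises from the natural transformation $(-)^{\Phi C_p}\Rightarrow (-)^{\t C_p}$ applied to $T^{\Phi C_{p^i}}$. By \cref{rem:kutCpCyclotomic}, the inflation $\inf_p\colon (\ku^{\Phi C_{p^i}})_p^\complete\to (\ku^{\Phi C_{p^{i+1}}})_p^\complete$ is $S^1$-equivariantly identified (up to connective covers) with the cyclotomic Frobenius on $\THH(\IZ_p[\zeta_p]/\IS_p\qpower)_p^\complete$, so that on associated gradeds $\phi$ becomes the divided crystalline Frobenius on the Nygaard filtration of $\qdeRham_{R/A}^{(p^i)}$. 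This matches, term-by-term and transition-by-transition, the explicit equaliser description of the completed twisted $q$-Hodge filtration $\fil_{\qHhodge_{p^\alpha}}^\star\bigl(\qhatdeRham_{R/A}^{(p^\alpha)}\bigr)_p^\complete$ from \cite[\chref{3.38}]{qWitt}, yielding the desired equivalence.

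The main obstacle I expect is the bookkeeping required to match the tower of Frobenius twists together with the $\IZ_p[\beta,q]\llbracket t_{p^\alpha}\rrbracket/(\beta t_{p^\alpha}-(q^{p^\alpha}-1))$-module structure on both sides. Everything reduces in principle to \cref{rem:kutCpCyclotomic} applied iteratively via $\inf_{p^\alpha}=\inf_p\circ\dotsb\circ\inf_p$, but disentangling the interaction of the ($\h$, $\Phi$, $\t$)-constructions with the $\IT_{\ev}$-module structure, the Bott element $\beta$, and the generators $u_{p^i}$ of \cref{lem:kuPhiCp}\cref{enum:kuPhipalphaHomotopy} against the Nygaard parameters $t_{p^i}$ and the Hodge twists $q^{p^i}-1$ on the $q$-de Rham side will require some care.
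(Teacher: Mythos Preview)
Your unwinding of the equaliser and the identification of the individual factors via \cref{lem:qdeRhamkuGeometricpAdic} are correct and match the paper's ingredients. The paper, however, proceeds by induction on $\alpha$ rather than attacking the full equaliser at once: it peels off the top layer as a pullback square
\[
\begin{tikzcd}
\fil_{\ev,S^1}^\star\TCn{p^\alpha}_\solid\rar\dar\drar[pullback] & \bigl(\fil_{\ev}^\star T^{\Phi C_{p^\alpha}}\bigr)^{\h(\IT/C_{p^\alpha})_{\ev}}\dar\\
\fil_{\ev,S^1}^\star\TCn{p^{\alpha-1}}_\solid\rar & \bigl((\fil_{\ev}^\star T^{\Phi C_{p^{\alpha-1}}})^{\t C_{p,\ev}}\bigr)^{\h(\IT/C_{p^\alpha})_{\ev}}
\end{tikzcd}
\]
and then matches this against the \emph{defining} pullback of $\fil_{\qHhodge_{p^\alpha}}^\star$ in \cite{qWittHabiro}, which is likewise built inductively rather than as a single equaliser. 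Your appeal to \cite[\chref{3.38}]{qWitt} as an ``equaliser description'' is slightly off: that construction handles the general $m$ via arithmetic fracture, while the $p^\alpha$ component is assembled one Frobenius twist at a time.

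More importantly, there is a concrete subtlety that your final paragraph gestures at but does not resolve. The filtered module structure on $\Sigma^{-2*}\gr^*$ of the $i$\textsuperscript{th} factor is over $\IZ_p[u_{p^i},q]\llbracket t_{p^i}\rrbracket/(u_{p^i}t_{p^i}-\Phi_{p^i}(q))$, whereas the target filtration is over $\IZ_p[\beta,q]\llbracket t_{p^\alpha}\rrbracket/(\beta t_{p^\alpha}-(q^{p^\alpha}-1))$. Changing the filtration parameter from $t_{p^{\alpha-1}}$ to $t_{p^\alpha}=\Phi_{p^\alpha}(q)\,t_{p^{\alpha-1}}$ amounts to \emph{rescaling} the filtration by $\Phi_{p^\alpha}(q)$ in the sense of \cite[\chref{3.32}]{qWitt}. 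The paper observes that after this rescaling the filtrations in the bottom row of the pullback become \emph{already complete}, so the pullback no longer manifestly computes the completion of the inductively defined $\fil_{\qHhodge_{p^\alpha}}^\star$. A separate argument is needed: the paper checks that certain auxiliary squares (relating the uncompleted and completed $q$-Hodge/Nygaard filtrations) remain pullbacks after rescaling, which is what salvages the identification. This step is not visible in your sketch, and without it the ``term-by-term'' matching does not by itself pin down the correct completed filtered object.
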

	\begin{proof}
		We use induction on $\alpha$. Unravelling the equaliser from \cref{par:CyclonicEvenFiltration} in the case $m=p^\alpha$ provides us with a pullback diagram
		\begin{equation*}
			\begin{tikzcd}
				\fil_{\ev,S^1}^\star\TCn{p^\alpha}_\solid\bigl(\ku_{\smash{\widehat{R}}_p}/\ku_{\smash{\widehat{A}}_p}\bigr)\rar\dar\drar[pullback] & \left(\fil_{\ev}^\star \THH_\solid\bigl(\ku_{\smash{\widehat{R}}_p}/\ku_{\smash{\widehat{A}}_p}\bigr)^{\Phi C_{p^\alpha}}\right)^{\h (\IT/C_{p^\alpha})_{\ev}}\dar\\
				\fil_{\ev,S^1}^\star\TCn{p^{\alpha-1}}_\solid\bigl(\ku_{\smash{\widehat{R}}_p}/\ku_{\smash{\widehat{A}}_p}\bigr)\rar & \left(\bigl(\fil_{\ev}^\star\THH_\solid\bigl(\ku_{\smash{\widehat{R}}_p}/\ku_{\smash{\widehat{A}}_p}\bigr)^{\Phi C_{p^{\alpha-1}}}\bigr)^{\t C_{p,\ev}}\right)^{\h(\IT/C_{p^\alpha})_{\ev}}
			\end{tikzcd}
		\end{equation*}
		Let us first consider the case $\alpha=1$. In this case the bottom left corner of the diagram above is just $\fil_{\ev,\h S^1}^\star\TC_\solid^-(\ku_{\smash{\widehat{R}}_p}/\ku_{\smash{\widehat{A}}_p})$, whose associated graded is $\fil_{\qHodge}^*(\qhatdeRham_{R/A})_p^\complete$ by \cref{thm:qdeRhamkupComplete}. The argument in \cref{par:kuComparisonII} shows that the bottom right corner can be identified with $\fil_{\ev,\t S^1}^\star\TP_\solid(\ku_{\smash{\widehat{R}}_p}/\ku_{\smash{\widehat{A}}_p})$, whose associated graded is $(\qhatdeRham_{R/A})_p^\complete$ in every degree. The associated graded of the top right corner has been computed in \cref{lem:qdeRhamkuGeometricpAdic}. We conclude that the associated graded of the pullback diagram above will be of the form
		\begin{equation*}
			\begin{tikzcd}
				\Sigma^{-2*}\gr_{\ev,S^1}^*\TCn{p}_\solid\bigl(\ku_{\smash{\widehat{R}}_p}/\ku_{\smash{\widehat{A}}_p}\bigr)\rar\dar\drar[pullback] & \fil_\Nn^\star \bigl(\qdeRham_{R/A}^{(p)}\bigr)_{(p,\Nn)}^\complete\dar["\phi_{p/A[q]}"]\\
				\fil_{\qHodge}^\star\bigl(\qhatdeRham_{R/A}\bigr)_p^\complete\rar & \bigl(\qhatdeRham_{R/A}\bigr)_p^\complete
			\end{tikzcd}
		\end{equation*}
		By \cref{par:qdeRhamFrobenii} and the construction of the comparison map in \cref{par:kuComparisonI}--\cref{par:kuComparisonII}, we see that the right vertical map is indeed the relative Frobenius $\phi_{p/A[q]}$ on $q$-de Rham cohomology.
		
		The filtered structure on $\fil_\Nn^\star (\qdeRham_{R/A}^{(p)})_{(p,\Nn)}^\complete$ comes from the structure as a graded module over $\IZ_p[u_p,q]\llbracket t\rrbracket/(u_pt_p-\Phi_p(q))$, whereas the filtered structure on $\fil_{\qHodge}^*(\qdeRham_{R/A})_p^\complete$ and the constant filtration on $(\qdeRham_{R/A})_p^\complete$ are presented as graded $\IZ[\beta]\llbracket t\rrbracket$-modules. Changing the filtration parameter from $t$ to $t_p=\Phi_p(q)t$ has the effect of \enquote{rescaling} filtrations by $\Phi_p(q)$ as in \cite[\chref{3.32}]{qWitt}. The resulting diagram almost looks like the completion of the defining pullback of $\fil_{\qHhodge_p}^\star(\qdeRham_{R/A}^{(p)})_p^\complete$, except for the following subtlety: The rescaled filtrations
		\begin{equation*}
			\Phi_p(q)^\star \fil_{\qHodge}^\star(\qdeRham_{R/A})_p^\complete\quad\text{and}\quad\Phi_p(q)^\star(\qdeRham_{R/A})_p^\complete
		\end{equation*}
		are already complete, so  $\Phi_p(q)^\star\fil_{\qHodge}^\star(\qhatdeRham_{R/A})_p^\complete$ and $\Phi_p(q)^\star(\qhatdeRham_{R/A})_p^\complete$ are \emph{not} the completions of these filtrations. To see that the pullback above still yields the completion of $\fil_{\qHhodge_p}^\star(\qdeRham_{R/A}^{(p)})_p^\complete$, just observe that the pullback
		\begin{equation*}
			\begin{tikzcd}
				\fil_{\qHodge}^\star\left(\qdeRham_{R/A}\right)_p^\complete\dar\rar\drar[pullback] & \left(\qdeRham_{R/A}\right)_p^\complete\dar\\
				\fil_{\qHodge}^\star\bigl(\qhatdeRham_{R/A}\bigr)_p^\complete\rar & \bigl(\qhatdeRham_{R/A}\bigr)_p^\complete
			\end{tikzcd}
		\end{equation*}
		stays a pullback after rescaling everything by $\Phi_p(q)$. This is clear since rescaling preserves all limits. This concludes the proof in the case $\alpha=1$.
		
		Now let $\alpha\geqslant 2$. Using a similar argument as in~\cref{par:kuComparisonII}, we see that the associated graded of $((\fil_{\ev}^\star\THH_\solid(\ku_{\smash{\widehat{R}}_p}/\ku_{\smash{\widehat{A}}_p})^{\Phi C_{p^{\alpha-1}}})^{\t C_{p,\ev}})^{\h(\IT/C_{p^\alpha})_{\ev}}$ is given by $\bigl(\qdeRham_{R/A}^{(p^{\alpha-1})}\bigr)_{(p,\Nn)}^\complete$ in every degree. Thus, the associated graded of the pullback diagram from the beginning of the proof will take the form
		\begin{equation*}
			\begin{tikzcd}
				\Sigma^{-2*}\gr_{\ev,S^1}^*\TCn{p^\alpha}_\solid\bigl(\ku_{\smash{\widehat{R}}_p}/\ku_{\smash{\widehat{A}}_p}\bigr)\rar\dar\drar[pullback] & \fil_\Nn^\star \bigl(\qdeRham_{R/A}^{(p^\alpha)}\bigr)_{(p,\Nn)}^\complete\dar["\phi_{p/A[q]}"]\\
				\fil_{\qHhodge_{p^{\alpha-1}}}^\star\bigl(\qhatdeRham_{R/A}^{(p^{\alpha-1})}\bigr)_p^\complete\rar & \bigl(\qdeRham_{R/A}^{(p^{\alpha-1})}\bigr)_{(p,\Nn)}^\complete
			\end{tikzcd}
		\end{equation*}
		Again, changing the filtration parameter from $t_{p^{\alpha-1}}$ to $t_{p^\alpha}$ introduces a \enquote{rescaling} by $\Phi_{p^\alpha}(q)$ in the bottom row. The resulting diagram looks almost like the completion of the defining pullback of $\fil_{\qHhodge_{p^\alpha}}^\star \bigl(\qdeRham_{R/A}^{(p^\alpha)}\bigr)_p^\complete$, except that again the rescaled filtrations are already complete. To fix this and to finish the proof, it will be enough to check that the diagram
		\begin{equation*}
			\begin{tikzcd}
				\fil_{\qHhodge_{p^{\alpha-1}}}^\star \bigl(\qdeRham_{R/A}^{(p^{\alpha-1})}\bigr)_p^\complete\dar\rar\drar[pullback] & \fil_\Nn^\star\bigl(\qdeRham_{R/A}^{(p^{\alpha-1})}\bigr)_p^\complete\rar\dar\drar[pullback] & \bigl(\qdeRham_{R/A}^{(p^{\alpha-1})}\bigr)_{p}^\complete\dar\\
				\fil_{\qHhodge_{p^{\alpha-1}}}^\star \bigl(\qhatdeRham_{R/A}^{(p^{\alpha-1})}\bigr)_p^\complete\rar  & \fil_\Nn^\star\bigl(\qdeRham_{R/A}^{(p^{\alpha-1})}\bigr)_{(p,\Nn)}^\complete \rar & \bigl(\qdeRham_{R/A}^{(p^{\alpha-1})}\bigr)_{(p,\Nn)}^\complete
			\end{tikzcd}
		\end{equation*}
		consists of two pullback squares (so that we still get a pullback after rescaling the outer rectangle by $\Phi_{p^\alpha}(q)$). Now the right square is a pullback since every filtration is the pullback of its completion. To see that the left square is a pullback, we observe that in the definition of $\fil_{\qHhodge_{p^{\smash{\alpha-1}}}}^\star \bigl(\qdeRham_{R/A}^{(p^{\alpha-1})}\bigr)_p^\complete$ the only occuring non-complete filtration is $\fil_\Nn^\star\bigl(\qdeRham_{R/A}^{(p^{\alpha-1})}\bigr)_p^\complete$, as the other two filtrations are rescaled by $\Phi_{p^{\alpha-1}}(q)$ and thus automatically complete.
	\end{proof}
	\begin{proof}[Proof sketch of \cref{thm:qdeRhamkuGenuine}]
		We analyse the factors of the last fracture square from \cref{par:EvenFiltrationFractureSquares} in the case where~$N$ is divisible by~$m$ and check that they match up with those from \cite[\chref{3.38}]{qWitt}.
		\begin{alphanumerate}
			\item Once we invert~$N$, all filtered Tate constructions $(-)^{\t C_{p,\ev}}$ for $p\mid m$ will vanish, using that the non-filtered Tate construction $(-)^{\t C_p}$ vanishes on $\IS[1/p]$-modules plus an argument as in \cref{par:kuComparisonII}. So the equaliser from \cref{par:CyclonicEvenFiltration} will just be a product. Together with \cref{lem:qdeRhamkuGeometricBaseChange}, we conclude that $\fil_{\ev,S^1}^\star \TCn{m}(\ku_R[1/N]/\ku_A[1/N])$ is the product\label{enum:EvenFiltrationFractureSquareA}
			\begin{equation*}
				\prod_{d\mid m}
				\Bigl(\fil_{\ev}^\star \TC^-\bigl(\ku_R\bigl[\localise{N}\bigr]/\ku_A\bigl[\localise{N}\bigr]\bigr)\otimes_{\IS[q],\psi^d}\IS[q]\Bigr)_{\Phi_d(q)}^\complete
			\end{equation*}
			and therefore $\Sigma^{-2*}\gr_{\ev,S^1}^* \TCn{m}(\ku_R[1/N]/\ku_A[1/N])$ is the completion of the filtered $\IZ[\beta,q]\llbracket t_m\rrbracket/(\beta t_m-(q^m-1))$-module
			\begin{equation*}
				\prod_{d\mid m}\Bigl(\fil_{\qHodge}^\star \qdeRham_{R/A}\lotimes_{A[q],\psi^d}A\bigl[\localise{N},q\bigr]\Bigr)_{\Phi_d(q)}^\complete\,.
			\end{equation*}
			\item A similar analysis as in \cref{enum:EvenFiltrationFractureSquareA} shows that $\Sigma^{-2*}\gr_{\ev,S^1}^* \TCn{m}(\ku_{\smash{\widehat{R}}_p}[1/p]/\ku_{\smash{\widehat{A}}_p}[1/p])$ is the completion of the filtered $\IZ[\beta,q]\llbracket t_m\rrbracket/(\beta t_m-(q^m-1))$-module\label{enum:EvenFiltrationFractureSquareB}
			\begin{equation*}
				\prod_{d\mid m}\Bigl(\fil_{\qHodge}^\star \qdeRham_{R/A}\lotimes_{A[q],\psi^d}A[q]\Bigr)_p^\complete\bigl[\localise{p}\bigr]_{\Phi_d(q)}^\complete\,.
			\end{equation*}
			\item After $p$-completion for any $p\mid N$, we observe as in \cref{enum:EvenFiltrationFractureSquareA} that all filtered Tate constructions $(-)^{\t C_{\ell,ev}}$ vanish for $\ell\neq p$. Simplifying the equaliser accordingly and using \cref{lem:qdeRhamkuGeometricpAdicBaseChange}, we find that $\fil_{\ev,S^1}^\star\TCn{m}(\ku_{\smash{\widehat{R}}_p}/\ku_{\smash{\widehat{A}}_p})$ is given by the product\label{enum:EvenFiltrationFractureSquareC}
			\begin{equation*}
				\prod_{d_p\mid m_p}\left(\fil_{\ev,S^1}^\star\TCn{p^\alpha}\bigl(\ku_{\smash{\widehat{R}_p^{(d_p)}}}/\ku_{\smash{\widehat{A}}_p}\bigr)\otimes_{\IS[q],\psi^{d_p}}\IS[q]\right)_{\Phi_{d_p}(q)}^\complete\,,
			\end{equation*}
			where we put $m=p^\alpha m_p$ with $m_p$ coprime to~$p$. Using \cref{lem:qdeRhamkuGeometricpAdic}, we deduce that the sheared associated graded $\Sigma^{-2*}\gr_{\ev,S^1}^*\TCn{m}(\ku_{\smash{\widehat{R}}_p}/\ku_{\smash{\widehat{A}}_p})$ is the completion of the filtered $\IZ[\beta,q]\llbracket t_m\rrbracket/(\beta t_m-(q^m-1))$-module
			\begin{equation*}
				\prod_{d_p\mid m_p}\left(\fil_{\qHhodge_{p^\alpha}}^\star \bigl(\qdeRham_{R/A}^{(p^\alpha)}\bigr)_p^\complete\lotimes_{A[q],\psi^{d_p}}A[q]\right)_{(p,\Phi_{d_p}(q))}^\complete
			\end{equation*}
		\end{alphanumerate}
		Evidently, \cref{enum:EvenFiltrationFractureSquareA}--\cref{enum:EvenFiltrationFractureSquareC} above match up with  \cite[\chref{3.38}({\chref[Item]{30}[$a$]})--({\chref[Item]{32}[$c$]})]{qWitt}. It's straightforward to check (using \cref{lem:psiRrational}) that also the maps between them match up. This proves what we want.
	\end{proof}
	As a consequence we obtain a \enquote{$\TR$-style} description of derived $q$-de Rham--Witt complexes. The question whether such a description exists was first raised by Johannes Anschütz in the author's Master's thesis defense.
	
	\begin{cor}
		The associated graded of the even filtration $\fil_{\ev,C_m}^\star\THH(\ku_R/\ku_A)^{C_m}$ is given by
		\begin{equation*}
			\Sigma^{-2*}\gr_{\ev,C_m}^*\THH(\ku_R/\ku_A)^{C_m}\simeq \qIW_m\deRham_{R/A}^*\,.
		\end{equation*}
	\end{cor}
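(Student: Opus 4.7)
The plan is to deduce this from \cref{thm:qdeRhamkuGenuine} by understanding how the associated gradeds of the two filtrations
\begin{equation*}
\fil_{\ev,S^1}^\star\TCn{m}(\ku_R/\ku_A)\simeq \bigl(\fil_{\ev,C_m}^\star\THH(\ku_R/\ku_A)^{C_m}\bigr)^{\h(\IT/C_m)_{\ev}}
\end{equation*}
are related. The first step will be to analyse the operation $(-)^{\h(\IT/C_m)_{\ev}}$ on associated gradeds. Arguing as in the proof of \cref{cor:TC-TPEvenResolution} (combined with \cite[Proposition~\chref{2.54}]{CyclotomicSynthetic}), the associated graded of $(\IT/C_m)_{\ev}$ is the graded polynomial algebra $\pi_{2*}((\ku^{C_m})^{\h(S^1/C_m)})$ on the filtration parameter $t_m$ in graded degree $1$ and homotopical degree $-2$. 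Consequently, the canonical map
\begin{equation*}
\Sigma^{-2*}\gr_{\ev,C_m}^*\THH(\ku_R/\ku_A)^{C_m}\otimes_{\pi_{2*}\ku^{C_m}}\pi_{2*}\bigl((\ku^{C_m})^{\h(S^1/C_m)}\bigr)\longrightarrow \Sigma^{-2*}\gr_{\ev,S^1}^*\TCn{m}(\ku_R/\ku_A)
\end{equation*}
exhibits the right-hand side as the $t_m$-adic completion of the left-hand side.

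The second step will be to apply the base change $-\otimes_{\pi_{2*}((\ku^{C_m})^{\h(S^1/C_m)})}\pi_{2*}(\ku^{C_m})$, i.e.\ \emph{killing $t_m$}, to both sides of the equivalence from \cref{thm:qdeRhamkuGenuine}. On the right-hand side this recovers $\Sigma^{-2*}\gr_{\ev,C_m}^*\THH(\ku_R/\ku_A)^{C_m}$ by the first step (note that this base change kills the $t_m$-completion as well, so no further completion is needed). On the left-hand side, killing $t_m$ in $\fil_{\qHhodge_m}^\star \qhatdeRham_{R/A}^{(m)}$ gives the associated graded $\gr^*\fil_{\qHhodge_m}^\star\qdeRham_{R/A}^{(m)}$ of the twisted $q$-Hodge filtration, which by definition in \cite{qWittHabiro} is the $q$-de Rham--Witt complex $\qIW_m\deRham_{R/A}^*$.

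The main obstacle is carefully tracking the identification of the $t_m$-variable on both sides and verifying that the passage to associated gradeds behaves well with completion. Concretely, one must ensure that the $t_m$-completion appearing from the homotopy fixed points construction is precisely matched by the pullback-to-completion in the definition of $\fil_{\qHhodge_m}^\star$, so that the two $\gr^*$-operations agree without introducing spurious completions or torsion.
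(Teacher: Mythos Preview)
Your approach is essentially the same as the paper's: deduce the result from \cref{thm:qdeRhamkuGenuine} by reducing modulo $t_m$ and then invoke the identification of $\gr_{\qHhodge_m}^*\qdeRham_{R/A}^{(m)}$ with $\qIW_m\deRham_{R/A}^*$ from \cite{qWittHabiro}. The paper's proof is a one-line citation of \cref{thm:qdeRhamkuGenuine} together with \cite[Proposition~\chref{3.49}]{qWittHabiro}; your writeup simply unpacks the passage from the $S^1$-filtration to the $C_m$-filtration (i.e.\ the mod-$t_m$ step) more explicitly. One small correction: the identification of the associated graded of the twisted $q$-Hodge filtration with the $q$-de Rham--Witt complex is not literally a definition in \cite{qWittHabiro} but the content of Proposition~3.49 there.
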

	\begin{proof}[Proof sketch]
		This follows from \cref{thm:qdeRhamkuGenuine} and \cite[Proposition~\chref{3.49}]{qWittHabiro}.
	\end{proof}

	Finally, let us explain how to recover the Habiro-Hodge complex $\qHhodge_{R/A}$.
	
	\begin{numpar}[Cyclonic even filtrations on $\THH(\KU_R/\KU_A)$.]\label{par:CyclonicEvenFiltrationTHHKU}
		Put $\KU_A\coloneqq \KU\otimes\IS_A$ and $\KU_R\coloneqq \KU\otimes\IS_R$. We equip $\KU$ with its cyclonic structure from \cref{par:Cyclonicku} and
		\begin{equation*}
			\THH(\KU_R/\KU_A)\simeq \THH(\ku_R/\ku_A)\otimes_{\ku}\KU
		\end{equation*}
		with the base change of the cyclonic structure from \cref{par:CyclonicBase}. We also let
		\begin{equation*}
			\fil_{\ev,C_m}^\star \THH(\KU_R/\KU_A)^{C_m}\coloneqq \fil_{\ev,C_m}^\star \THH(\ku_R/\ku_A)^{C_m}\otimes_{\ku_{\ev}^{C_m}}\KU_{\ev}^{C_m}\,,
		\end{equation*}
		where $\ku_{\ev}^{C_m}\coloneqq \tau_{\geqslant 2\star}(\ku^{C_m})$ and $\KU_{\ev}^{C_m}\coloneqq \tau_{\geqslant 2\star}(\KU^{C_m})$. Observe that $-\otimes_{\ku_{\ev}^{C_m}}\KU_{\ev}^{C_m}$ can be regarded as a localisation at the element $\beta$ sitting in homotopical degree~$2$ and filtration degree~$1$. Finally, we construct
		\begin{equation*}
			\fil_{\ev,S^1}^\star\TCn{m}(\KU_R/\KU_A)\coloneqq \left(\fil_{\ev,C_m}^\star\THH(\KU_R/\KU_A)^{C_m}\right)^{\h (\IT/C_{m})_{\ev}}\,.
		\end{equation*}
	\end{numpar}
	\begin{rem}
		If we believe that our construction of $\fil_{\ev,C_m}^\star \THH(\ku_R/\ku_A)^{C_m}$ is the \enquote{correct} filtration to put on $\THH(\ku_R/\ku_A)^{C_m}$ (see the discussion in~\cref{par:CyclonicEvenFiltration}), then the construction from~\cref{par:CyclonicEvenFiltrationTHHKU} provides the correct even filtration for $\THH(\KU_R/\KU_A)^{C_m}$, since taking even filtrations should commute with filtered colimits. 
	\end{rem}
	\begin{lem}
		For all $m\in\IN$, the filtered objects
		\begin{equation*}
			\fil_{\ev,C_m}^\star\THH(\KU_R/\KU_A)^{C_m}\quad\text{and}\quad\fil_{\ev,S^1}^\star\TCn{m}(\KU_R/\KU_A)
		\end{equation*}
		are complete and exhaustive filtrations on $\THH(\KU_R/\KU_A)^{C_m}$ and $\TCn{m}(\KU_R/\KU_A)$, respectively.
	\end{lem}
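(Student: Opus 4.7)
The plan is to reduce both assertions to the analogous statement for $\ku$, namely \cref{lem:GenuineEvenFiltrationTHHkuCompleteExhaustive}, by carefully analysing how the base change $-\otimes_{\ku_{\ev}^{C_m}}\KU_{\ev}^{C_m}$ interacts with the defining equaliser.

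For exhaustiveness of $\fil_{\ev,C_m}^\star\THH(\KU_R/\KU_A)^{C_m}$, one first notes that base change preserves colimits, so using exhaustiveness in the $\ku$ case gives
\begin{equation*}
\colim_{\star\to-\infty}\fil_{\ev,C_m}^\star\THH(\KU_R/\KU_A)^{C_m}\simeq \THH(\ku_R/\ku_A)^{C_m}\otimes_{\ku^{C_m}}\KU^{C_m}\,.
\end{equation*}
The right-hand side equals $\THH(\KU_R/\KU_A)^{C_m}$ because $\KU$ is the sequential $\beta$-inversion of $\ku$ and the functor $(-)^{C_m}$ preserves filtered colimits in $\Cyclonic$, being corepresented by the compact object $\IS_{S^1}[S^1/C_m]$ (see \cref{lem:SpGCompactGenerators}).

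For completeness, I would show that the equaliser formula from \cref{par:CyclonicEvenFiltration} continues to hold after base change, i.e.\ that $\fil_{\ev,C_m}^\star\THH(\KU_R/\KU_A)^{C_m}$ admits the analogous equaliser description with each $\fil_{\ev}^\star\THH(\ku_R/\ku_A)^{\Phi C_d}$ replaced by its Bott localisation $\fil_{\ev}^\star\THH(\KU_R/\KU_A)^{\Phi C_d}\coloneqq \fil_{\ev}^\star\THH(\ku_R/\ku_A)^{\Phi C_d}\otimes_{\ku_{\ev}^{\Phi C_d}}\KU_{\ev}^{\Phi C_d}$. This reduces to checking that inversion of $\beta$ commutes with the finite limit defining the equaliser and with both $(-)^{\h C_{m/d,\ev}}$ and $(-)^{\t C_{p,\ev}}$; this is true because $\beta$ lies in the trivial $S^1$-weight and has positive filtration degree, so it commutes past the $\IT_{\ev}$-linear constructions involved. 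With the equaliser description in place, completeness of the whole reduces to completeness of each constituent, which follows from \cite[Lemma~\chref{2.75}(iv)]{CyclotomicSynthetic} applied to $(-)^{\h C_{m/d,\ev}}$ and a variant of the argument in~\cref{par:kuComparisonII} for the filtered Tate terms, together with completeness of $\fil_{\ev}^\star\THH(\KU_R/\KU_A)^{\Phi C_d}$, which in turn follows from the base change formulation since $\KU_{\ev}^{\Phi C_d}\simeq \tau_{\geqslant 2\star}\KU^{\Phi C_d}$ is complete by increasing connectivity.

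For $\fil_{\ev,S^1}^\star\TCn{m}(\KU_R/\KU_A)$, completeness is automatic because $(-)^{\h(\IT/C_m)_{\ev}}$ is a limit and therefore preserves complete filtrations. Exhaustiveness follows by the same bifiltration trick as in the proof of \cref{cor:TC-TPEvenResolution}: one sets up a secondary filtration (analogous to the one in \cref{cor:Bifiltration}) on $\fil_{\ev,C_m}^\star\THH(\KU_R/\KU_A)^{C_m}$ whose graded pieces increase in connectivity, so that $(-)^{\h(\IT/C_m)_{\ev}}\simeq \IS_{\ev}\soltimes_{\IT_{\ev}}-$ (in the Tate direction) or the limit $(-)^{\h(\IT/C_m)_{\ev}}$ can be commuted past the colimit defining exhaustiveness.

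The main obstacle is justifying the commutation of base change along $\ku_{\ev}^{C_m}\to\KU_{\ev}^{C_m}$ with the filtered Tate construction $(-)^{\t C_{p,\ev}}$; after inverting $\beta$, the geometric fixed points $\KU^{\Phi C_d}$ are no longer connective, so one cannot appeal directly to \cite[Lemma~\chref{2.75}]{CyclotomicSynthetic}. The way around is to perform the Tate construction on the $\ku$-level (where everything is bounded below) before inverting $\beta$, using that $\beta$ is a global element of trivial $S^1$-weight to commute it past $(-)^{\t C_{p,\ev}}$; this is the same strategy that underpins the comparison map of~\cref{par:kuComparisonII}.
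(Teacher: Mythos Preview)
Your approach is more elaborate than the paper's and contains a specific misconception that leads you to work harder than necessary. The paper's proof is essentially one observation: since $\beta$ sits in homotopical degree~$2$ and filtration degree~$1$, inverting it preserves the connectivity hypothesis of \cite[Lemma~\chref{2.75}(iv)]{CyclotomicSynthetic} (roughly, that $\fil^n$ lies in $\Sp_{\geqslant 2n-c}$), so the argument of \cref{lem:GenuineEvenFiltrationTHHkuCompleteExhaustive} carries over unchanged. Concretely, $\beta$-inversion commutes with the finite equaliser by exactness; each constituent $(\fil_{\ev}^\star\THH(\ku_R/\ku_A)^{\Phi C_d})^{\h C_{m/d,\ev}}$ already satisfies the connectivity bound by the $\ku$ argument, and $\beta$-inversion preserves that bound, which directly forces completeness. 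No commutation of $\beta$-inversion past $(-)^{\h C_{m/d,\ev}}$ is ever needed.

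Your final paragraph reveals the misconception. You write that after inverting $\beta$ the geometric fixed points $\KU^{\Phi C_d}$ are no longer connective, so one cannot appeal directly to \cite[Lemma~\chref{2.75}]{CyclotomicSynthetic}. But the hypothesis of that lemma is a \emph{filtration-wise} connectivity bound, not bounded-belowness of the underlying spectrum. Because $\beta$ has bidegree $(2,1)$, the filtered localisation sends $\fil^n$ to $\colimit_k \Sigma^{-2k}\fil^{n+k}$, and a bound of the form $\fil^{n+k}\in\Sp_{\geqslant 2(n+k)-c}$ survives intact. So the lemma \emph{does} apply directly, and your proposed workaround is unnecessary.

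Separately, your justification for commuting $\beta$-inversion with $(-)^{\h C_{m/d,\ev}}$ is not sufficient: saying that $\beta$ lies in trivial $S^1$-weight only makes multiplication by $\beta$ a $\IT_{\ev}$-module map, which does not by itself force the sequential colimit to commute with the limit computing homotopy fixed points. If you tried to prove this commutation honestly, you would end up using exactly the connectivity bound above, at which point the paper's shortcut is faster. Your exhaustiveness arguments are fine.
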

	\begin{proof}[Proof sketch]
		Observe that inverting the element $\beta$ in homotopical degree~$2$ and filtration degree~$1$ preserves the assumptions of \cite[Lemma~\chref{2.75}(iv)]{CyclotomicSynthetic}. We can thus use the same argument as in \cref{lem:GenuineEvenFiltrationTHHkuCompleteExhaustive}.
	\end{proof}
	\begin{rem}\label{rem:CyclonicFixedPointsTransitionMaps}
		In the general setup of \cref{par:CyclonicEvenFiltration}, we have canonical maps
		\begin{equation*}
			\fil_{\ev/T,C_m}^\star M^{C_m}\longrightarrow \bigl(\fil_{\ev/T,C_n}^\star M^{C_n}\bigr)^{\h C_{m/n,\ev}}
		\end{equation*}
		whenever $n\mid m$. Indeed, upon applying $(-)^{\h C_{m/n,\ev}}$, the equaliser diagram for $\fil_{\ev/T,C_n}^\star M^{C_n}$ becomes a subdiagram of that for $\fil_{\ev/T,C_m}^\star M^{C_m}$. As a consequence, we get canonical maps
		\begin{equation*}
			\fil_{\ev,S^1}^\star\TCn{m}(\KU_R/\KU_A)\longrightarrow \fil_{\ev,S^1}^\star\TCn{n}(\KU_R/\KU_A)\,.
		\end{equation*}
		and similarly for $\ku$. It's possible to construct these maps coherently, that is, assemble them into functor $\IN\rightarrow \cat{SynSp}$. Since we're only interested in the limit, the individual maps will suffice, as we can always restrict to the sequential subposet $\{n!\}_{n\geqslant 1}\subseteq \IN$.
	\end{rem}
	\begin{thm}\label{thm:qdeRhamKUGenuine}
		Let $m\in\IN$. Suppose $A$ and $R$ satisfy the assumptions from~\cref{par:NewAssumptions} along with the addenda $2\in R^\times$ and~\cref{par:CyclonicBase}\cref{enum:AssumptionOnA2}. Then there exists a canonical $\IZ[\beta^{\pm 1}]$-linear equivalence
		\begin{equation*}
			\qHhodge_{R/A}[\beta^{\pm 1}]\overset{\simeq}{\longrightarrow}\Sigma^{-2*}\gr^*\Bigl(\limit_{m\in\IN}\fil_{\ev,S^1}^\star\TCn{m}(\KU_R/\KU_A)\Bigr)\,.
		\end{equation*}
	\end{thm}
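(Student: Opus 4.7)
The plan is to deduce the statement from \cref{thm:qdeRhamkuGenuine} by passing from $\ku$ to $\KU$, taking the limit over $m\in\IN$ \embrace{ordered by divisibility}, and comparing the result with the Habiro descent theorem \cite[Theorem~\chref{3.11}]{qWittHabiro}.

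First, for each $m\in\IN$ I would start from the canonical equivalence
\begin{equation*}
\fil_{\qHhodge_m}^\star \qhatdeRham_{R/A}^{(m)}\simeq \Sigma^{-2*}\gr_{\ev,S^1}^*\TCn{m}(\ku_R/\ku_A)
\end{equation*}
supplied by \cref{thm:qdeRhamkuGenuine}, and then invert the Bott element $\beta$. By the construction in \cref{par:CyclonicEvenFiltrationTHHKU}, the cyclonic even filtration on $\TCn{m}(\KU_R/\KU_A)$ is the $\beta$-localisation of the cyclonic even filtration on $\TCn{m}(\ku_R/\ku_A)$, with $\beta$ sitting in homotopical degree~$2$ and filtration degree~$1$; since localisation at an element is exact and commutes with $\gr^*$, this preserves the equivalence. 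On the $q$-Hodge side, $\beta$-localisation corresponds to inverting the filtration parameter $t_m$, which by the formula in \cref{par:ReturnOfTwistedqHodge} is the standard passage from the twisted $q$-Hodge \emph{filtration} to the twisted $q$-Hodge \emph{complex} via $(q^m-1)$-completed colimit along the Bott element; see the analogous description of $\qHodge_{R/A}$ in the introduction. This yields an $m$-indexed family of equivalences between $\fil_{\qHhodge_m}^\star \qdeRham_{R/A}^{(m)}[\beta^{\pm 1}]$ and $\Sigma^{-2*}\gr_{\ev,S^1}^*\TCn{m}(\KU_R/\KU_A)$.

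Second, I would pass to the limit over $m\in\IN$. The transition maps on the homotopical side are those described in \cref{rem:CyclonicFixedPointsTransitionMaps}, which are induced by the inclusion of equaliser subdiagrams in the formula from \cref{par:CyclonicEvenFiltration}; they can be restricted to the cofinal sequence $\{n!\}_{n\geqslant 1}\subseteq \IN$ to turn the limit into a sequential one. Since the cyclonic even filtrations $\fil_{\ev,S^1}^\star\TCn{m}(\KU_R/\KU_A)$ are complete \embrace{by the $\KU$-analogue of \cref{lem:GenuineEvenFiltrationTHHkuCompleteExhaustive}}, the associated graded of the limit filtration receives a canonical map from the limit of the associated gradeds, and the completeness together with an easy $\lim^1$-vanishing argument shows that this map is an equivalence. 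On the $q$-Hodge side, the limit over $m$ of the twisted $q$-Hodge complexes $\qHhodge_m_{R/A}$ is, by the construction in \cite[\chref{3.5}]{qWittHabiro}--\cite[Theorem~\chref{3.11}]{qWittHabiro}, precisely the Habiro--Hodge complex $\qHhodge_{R/A}$, after inverting $\beta$ as above.

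The hard part will be checking that the transition maps on the two sides correspond. On the $\TCn{m}$-side they are encoded by the canonical maps $\operatorname{can}$ and the Frobenius maps $\phi$ appearing in the equaliser formula \cref{lem:GenuineFromGeometric}; on the twisted $q$-Hodge side they come from the combinatorics of divisors $n\mid m$ in the Habiro descent diagram of the companion paper. To verify the match, one traces through the identifications made in the proof of \cref{thm:qdeRhamkuGenuine} and in \crefrange{lem:qdeRhamkuGeometricBaseChange}{lem:qdeRhamkuGenuinepAdic}: the maps $\operatorname{can}$ reduce \embrace{after inverting appropriate primes} to the localisations inverting cyclotomic polynomials, while the maps $\phi$ reduce, via \cref{rem:kutCpCyclotomic} and \cref{par:CyclonicBase}, to the relative $q$-de Rham Frobenii $\phi_{p/A[q]}$ used in the definition of the twisted $q$-Hodge filtrations. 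Once the naturality of the $m$-wise equivalences in the divisibility relation is established, the desired global equivalence follows formally by passing to the limit, finishing the proof.
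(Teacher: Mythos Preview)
Your overall strategy is right---reduce to \cref{thm:qdeRhamkuGenuine}, invert $\beta$, match transition maps, take the limit---but the first step contains a genuine gap.

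You assert that $\fil_{\ev,S^1}^\star\TCn{m}(\KU_R/\KU_A)$ is the $\beta$-localisation of $\fil_{\ev,S^1}^\star\TCn{m}(\ku_R/\ku_A)$. This is not what \cref{par:CyclonicEvenFiltrationTHHKU} says: there, the $\beta$-localisation is applied to $\fil_{\ev,C_m}^\star\THH(-)^{C_m}$ \emph{before} taking $(-)^{\h(\IT/C_m)_{\ev}}$, and the latter is a limit that does not commute with the $\beta$-localisation colimit. The paper's proof handles this by first establishing
\[
\bigl((\fil_{\ev,S^1}^\star\TCn{m}(\ku_R/\ku_A))[\beta^{-1}]\bigr)_{t_m}^\complete\;\simeq\;\fil_{\ev,S^1}^\star\TCn{m}(\KU_R/\KU_A),
\]
using the CW filtration on $\ku[S^1/C_m]_{\ev}$ to see that both sides are $t_m$-complete and then checking modulo $t_m$. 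This $t_m$-completion step is exactly what you are missing.

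Relatedly, your claim that ``$\beta$-localisation corresponds to inverting the filtration parameter $t_m$'' is confused: $\beta$ sits in filtration degree $+1$ and $t_m$ in degree $-1$, with $\beta t_m=q^m-1$. What actually happens is that inverting $\beta$ makes the associated graded periodic, and then the $t_m$-completion---which becomes $(q^m-1)$-completion once $\beta$ is a unit---is what produces
\[
\gr_{\ev,S^1}^0\TCn{m}(\KU_R/\KU_A)\simeq\qhatdeRham_{R/A}^{(m)}\Bigl[\tfrac{\fil_{\qHhodge_m}^i}{(q^m-1)^i}\Bigr]_{(q^m-1)}^\complete\simeq\qHhodge_{R/A,m}
\]
rather than its uncompleted precursor. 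You are aware that a $(q^m-1)$-completion should appear, but you have not identified where it comes from on the homotopy-theoretic side.

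A minor point: your $\lim^1$-vanishing argument is unnecessary, since $\gr^n$ is a cofibre and hence commutes with all limits in a stable $\infty$-category.
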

	\begin{proof}
		Let us first verify that
		\begin{equation*}
			\Bigl(\bigl(\fil_{\ev,S^1}^\star\TCn{m}(\ku_R/\ku_A)\bigr)[\beta^{-1}]\Bigr)_{t_m}^\complete\overset{\simeq}{\longrightarrow} \fil_{\ev,S^1}^\star\TCn{m}(\KU_R/\KU_A)\,,
		\end{equation*}
		where $\beta$ sits in homotopical degree~$2$ and filtration degree~$1$, whereas $t_m$ sits in homotopical degree~$-2$ and filtration degree~$-1$ of $\tau_{\geqslant 2\star}\bigl((\ku^{C_m})^{\h (S^1/C_m)}\bigr)$. Indeed, we can identify the $t_m$-adic filtration on $(-)^{\h(\IT/C_m)_{\ev}}$ with the filtration coming from the \emph{CW filtration} on $\ku[S^1/C_m]_{\ev}$ in the sense of \cite[Construction~\chref{2.52}]{CyclotomicSynthetic}. This shows that both sides above are $t_m$-complete, so the map exists, and after reduction modulo~$t_m$ we recover the defining equivalence $\fil_{\ev,C_m}^\star\THH(\ku_R/\ku_A)^{C_m}[\beta^{-1}]\simeq \fil_{\ev,C_m}^\star\THH(\KU_R/\KU_A)^{C_m}$, so also the map above is an equivalence.
		
		As a consequence of this observation and \cref{thm:qdeRhamkuGenuine}, we obtain that the filtration $\fil_{\ev,S^1}^\star\TCn{m}(\KU_R/\KU_A)$ is periodic and each graded piece is equivalent to
		\begin{equation*}
			\gr_{\ev,S^1}^0\TCn{m}(\KU_R/\KU_A)\simeq \qhatdeRham_{R/A}^{(m)}\left[\frac{\fil_{\qHhodge_m}^i}{(q^m-1)^i}\ \middle|\ i\geqslant 1\right]_{(q^m-1)}^\complete\,,
		\end{equation*}
		where we use the notation from \cite[Construction~\chref{3.42}]{qWittHabiro}. Also observe that since we complete at $(q^m-1)$ anyway, it doesn't matter whether we use $\qhatdeRham_{R/A}^{(m)}$ or $\qdeRham_{R/A}^{(m)}$ in this formula, so the right-hand side agrees with $\qHhodge_{R/A,m}$.
		
		By tracing through the constructions it's straightforward to check that for any $n\mid m$ the map $\Sigma^{-2*}\gr_{\ev,S^1}^*\TCn{m}(\ku_R/\ku_A)\rightarrow \Sigma^{-2*}\gr_{\ev,S^1}^*\TCn{n}(\ku_R/\ku_A)$ from \cref{rem:CyclonicFixedPointsTransitionMaps} is the completion of the transition map
		\begin{equation*}
			\fil_{\qHhodge_m}^\star\qdeRham_{R/A}^{(m)}\longrightarrow \fil_{\qHhodge_n}^\star\qdeRham_{R/A}^{(n)}
		\end{equation*}
		from \cite[Construction~\chref{3.41}]{qWittHabiro}. Therefore,
		\begin{equation*}
			\Sigma^{-2*}\gr^*\Bigl(\limit_{m\in\IN}\fil_{\ev,S^1}^\star\TCn{m}(\KU_R/\KU_A)\Bigr)\simeq \limit_{m\in\IN}\qHhodge_{R/A,m}[\beta^{\pm 1}]\simeq \qHhodge_{R/A}[\beta^{\pm 1}]\,,
		\end{equation*}
		as desired.
	\end{proof}

	\newpage 
	
	\section{Examples}\label{sec:Examples}
	
	\subsection{Examples of spherical lifts}\label{subsec:CyclotomicBases}
	
	The assumptions of our main results---\cref{thm:qdeRhamkuGlobal,thm:qdeRhamkuGenuine,thm:qdeRhamKUGenuine}---seem quite restrictive at first. In this subsection we'll show that there are nevertheless many nontrivial examples to which the theorems apply. We'll start with examples of $\Lambda$-rings $A$ that satisfy the assumptions from~\cref{par:NewAssumptions}\cref{enum:AssumptionsOnAglobal}.
	
	\begin{exm}\label{exm:PolynomialLambdaRings}
		If $A=\IZ[x_i\ |\ i\in I]$ is a polynomial ring equipped with the \emph{toric $\Lambda$-structure} in which $\psi^m(x_i)=x_i^m$ for all $m$, then the assumptions from~\cref{par:AssumptionsOnA} are satisfied. Indeed, we can choose $\IS_A\simeq \IS[x_i\ |\ i\in I]$ to be flat spherical polynomial ring. As explained in \cite[Proposition~\chref{11.3}]{BMS2}, this is a cyclotomic basis and for every prime~$p$ the Tate-valued Frobenius satisfies $\phi_{\t C_p}(x_i)=x_i^p=\psi^p(x_i)$.
	\end{exm}
	\begin{exm}\label{exm:PerfectLambdaRings}
		If $A$ is a perfect $\Lambda$-ring, then the assumptions from~\cref{par:AssumptionsOnA} are also satisfied: For every prime~$p$, the spherical Witt vector ring $\IS_{\W(A/p)}$ from \cite[Example~\chref{5.2.7}]{LurieEllipticII} yields a $p$-complete lift of $A$. These can be glued with $A\otimes\IQ$ in a canonical way to yield $\IS_A$. To construct the structure of a cyclotomic base and check \cref{par:AssumptionsOnA}\cref{enum:CyclotomicLift} for all primes~$p$, we must equip the Tate-valued Frobenius
		\begin{equation*}
			\phi_{\t C_p}\colon \IS_{A}\longrightarrow \IS_{A}^{\t C_p} 
		\end{equation*}
		with an $S^1$-equivariant structure, where $\IS_{A}$ receives the trivial action and $\IS_{A}^{\t C_p}$ the residual $S^1/C_p\simeq S^1$-action. Equivalently, we must factor $\phi_{\t C_p}$ through an $\IE_\infty$-map
		\begin{equation*}
			\IS_{A}\longrightarrow \bigl(\IS_{A}^{\t C_p}\bigr)^{\h (S^1/C_p)}\simeq \bigl(\IS_{A}^{\t S^1}\bigr)_p^\complete\,.
		\end{equation*}
		By the universal property of spherical Witt vectors, for all $m\in\IN$ and all primes~$p$ the Adams operation $\psi^m\colon A\rightarrow A$ lifts to an $\IE_\infty$-map $\psi^m\colon \IS_{\W(A/p)}\rightarrow \IS_{\W(A/p)}$. These can be glued with the rationalisation to obtain an $\IE_\infty$-map $\psi^m\colon \IS_A\rightarrow \IS_A$. From the trivial $S^1$-action we also obtain a map $\IS_{A}\rightarrow \IS_{A}^{\h S^1}$ that splits the usual limit projection. The desired factorisation of $\phi_{\t C_p}$ is then given by
		\begin{equation*}
			\IS_{A}\overset{\psi^p}{\longrightarrow}\IS_{A}\longrightarrow \IS_{A}^{\h S^1}\longrightarrow \bigl(\IS_{A}^{\t S^1}\bigr)_p^\complete\longrightarrow \IS_{A}^{\t C_p} \,.
		\end{equation*}
		To see that the composition is really $\phi_{\t C_p}$, we use the universal property of spherical Witt vectors again: It's enough to check that the map on $\pi_0(-)/p$ is the Frobenius on $A/p$, which is clear from the construction.
	\end{exm}
	\begin{exm}
		We can also combine \cref{exm:PolynomialLambdaRings,exm:PerfectLambdaRings} and consider $A$ to be a polynomial ring over a perfect $\Lambda$-ring, or even a localisation of such a ring, as long as it still carries a $\Lambda$-structure.
	\end{exm}
	
	The examples where $A$ is a polynomial ring (over a perfect $\Lambda$-ring) are the most relevant for us, since they are expected to show up in the connection with the work of Garoufalidis--Scholze--Wheeler--Zagier (\cite{HabiroRingOfNumberField}, but the relative case was only discussed in \cite{HabiroRingLecture}). Nevertheless, there are examples that are not of this form, such as the following.
	\begin{exm}\label{exm:Chebyshev}
		Recall that the polynomial ring $\IZ[y]$ admits one more $\Lambda$-structure besides the toric one (\cite{ClauwensLambda}; see also \cite{ManinRealF1}). This other $\Lambda$-structure is called the \emph{Chebyshev $\Lambda$-structure}, since $\psi^m(y)$ is given by the Chebyshev polynomial $T_m(y)$. If $\IZ[x^{\pm 1}]$ is equipped with the toric $\Lambda$-structure, then the Chebyshev $\Lambda$-structure on $\IZ[y]$ can be identified with the fixed points of the $C_2$-action on $\IZ[x^{\pm 1}]$ that sends $x\mapsto x^{-1}$. Under this identification we have $y=x+x^{-1}$.
		
		We'll show that $A=\IZ[\frac12,y]$ still satisfies \cref{par:NewAssumptions}\cref{enum:AssumptionsOnAglobal}. Indeed, as soon as $2$ is invertible, the homotopy fixed points $\IS[\frac12,y]\coloneqq \IS[\frac12,x^{\pm 1}]^{\h C_2}$ define the desired $\IE_\infty$-lift. To verify that \cref{par:AssumptionsOnA}\cref{enum:CyclotomicLift} is satisfied for all primes~$p$, there's nothing to do for $p=2$, as then $\IS[\frac12,y]^{\t C_2}\simeq 0$. For $p\neq 2$, $(-)^{\t C_p}$ and $(-)^{\h C_2}$ commute (see \cite[Lemma~\chref{9.3}]{KrauseNikolausTHHLectures} for example) and so \cref{par:AssumptionsOnA}\cref{enum:CyclotomicLift} follows from the corresponding assertions for $\IS[\frac12,x^{\pm 1}]$ by applying $(-)^{\h C_2}$. The same argument shows that the addendum from~\cref{par:CyclonicBase}\cref{enum:AssumptionOnA2} is satisfied as well.
	\end{exm}
	
	\begin{rem}
		Recall that a cyclotomic spectrum $X$ has \emph{Frobenius lifts} in the sense of \cite[Definition~\chref{8.2}]{KrauseNikolausTHHLectures} if for each prime~$p$ the cyclotomic Frobenius $\phi_p\colon X\rightarrow X^{\t C_p}$ factors $S^1$-equivariantly through a map $\psi_p\colon X\rightarrow X^{\h C_p}$ such that the $\psi_p$ commute for different primes.
		
		In each of \crefrange{exm:PolynomialLambdaRings}{exm:Chebyshev} it's clear that $\IS_A$ admits Frobenius lifts as a cyclotomic $\IE_\infty$-algebra. Using \cref{lem:A2FrobeniusLifts}, this implies that Assumption~\cref{par:CyclonicBase}\cref{enum:AssumptionOnA2} is satisfied. Indeed, since the $S^1$-action is trivial, we may equivalently regard $\psi_p\colon \IS_A\rightarrow \IS_A^{\h C_p}$ as an $S^1$-equivariant $\IE_\infty$-algebra map $\psi^p\colon \IS_A\rightarrow \IS_A$. The commutativity datum simply provides homotopies $\psi^p\circ\psi^\ell\simeq \psi^\ell\circ \psi^p$ for all $p\neq \ell$. Inductively defining $\psi^1\coloneqq \id$, $\psi^{pm}\coloneqq \psi^m\circ \psi^p$, we obtain the necessary commutative diagrams
		\begin{equation*}
			\begin{tikzcd}[column sep=large]
				\IS_A\rar["\psi^{pm}"]\dar["\phi_p"'] & \IS_A\dar\\
				\IS_A^{\t C_p}\rar["(\psi^m)^{\t C_p}"] & \IS_A^{\t C_p}
			\end{tikzcd}
		\end{equation*}
		and thus the desired map $\IS_A^\mathrm{cyct}\rightarrow \IS_A^\mathrm{triv}$.
	\end{rem}
	
	\begin{numpar}[Non-example.]
		In the case where $A=\IZ\{x\}_\Lambda$ is a free $\Lambda$-ring, it's not known whether a spherical lift $\IS_A$ as in \cref{par:AssumptionsOnA} exist.%
		\footnote{In fact, it is a conjecture of Thomas Nikolaus that such a spherical lift \emph{doesn't} exist.}
	\end{numpar}
	
	Let us now give several examples of $A$-algebras $R$ that satisfy the assumptions of \cref{par:NewAssumptions}\cref{enum:AssumptionsOnRglobal}.
	
	\begin{exm}\label{exm:LiftsCoordinateCase}
		Suppose that $S$ is a smooth $A$-algebra equipped with an étale map $\square\colon A[x_1,\dotsc,x_n]\rightarrow S$. By \cite[Theorem~\chref{7.5.4.3}]{HA}, $\square$ lifts uniquely to an étale map $\IS_A[x_1,\dotsc,x_n]\rightarrow \IS_{S,\square}$ of $\IE_\infty$-ring spectra. Then $R=S$ satisfies the assumptions of \cref{par:NewAssumptions}\cref{enum:AssumptionsOnRglobal}, choosing \cref{par:AssumptionsOnR}\cref{enum:E2Lift} for every prime~$p$. We'll continue to study this example in \cref{subsec:Raksit} below.
	\end{exm}
	\begin{exm}\label{exm:LiftsCoordinateCaseII}
		In the setting from \cref{exm:LiftsCoordinateCase}, suppose that $(y_1,\dotsc,y_r)$ is a regular sequence in $S$. By Burklund's theorem about multiplicative structures on quotients \cite[Theorem~\chref{1.5}]{BurklundMooreSpectra}, the spectrum 
		\begin{equation*}
			\IS_R\coloneqq \IS_{S,\square}/\bigl(y_1^{\alpha_1},\dotsc,y_r^{\alpha_r}\bigr)\simeq \IS_{S,\square}/y_1^{\alpha_1}\otimes_{\IS_{S,\square}}\dotsb\otimes_{\IS_{S,\square}}\IS_{S,\square}/y_r^{\alpha_r}
		\end{equation*}
		admits an $\IE_2$-structure in $\IS_A$-modules (even in $\IS_{S,\square}$-modules) if all $\alpha_i$ are even and $\geqslant 6$. If $2$ is invertible in $S$, it's already enough to have all $\alpha_i\geqslant 3$, with no evenness assumption. In either case, we see that $R=S/(y_1^{\alpha_1},\dotsc,y_r^{\alpha_r})$ satisfies the assumptions of \cref{par:NewAssumptions}\cref{enum:AssumptionsOnRglobal}, choosing \cref{par:AssumptionsOnR}\cref{enum:E2Lift} for every prime~$p$.
		
		If we only assume that all $\alpha_i$ are even and $\geqslant 4$, or $2$ is invertible in $S$ and all $\alpha_i\geqslant 2$, then $\IS_R$ still admits an $\IE_1$-structure in $\IS_{S,\square}$-modules. Provided that $R$ is $p$-torsion free, condition~\cref{par:AssumptionsOnR}\cref{enum:E1Lift} is satisfied for every prime~$p$. Indeed, if we put
		\begin{equation*}
			\widehat{R}_{p,\infty}\coloneqq \left(\widehat{A}_p\bigl\langle x_1^{1/p^\infty},\dotsc,x_n^{1/p^\infty}\bigr\rangle\otimes_{\widehat{A}_p\langle x_1,\dotsc,x_n\rangle}\widehat{R}_p\right)_p^\complete\,.
		\end{equation*}
		then the $p$-completed \v Cech nerve of $\widehat{R}_p\rightarrow \widehat{R}_{p,\infty}$ admits a spherical $\IE_1$-lift, given by the $p$-completed base change along $\IS_A[x_1,\dotsc,x_n]\rightarrow \IS_R$ of the \v Cech nerve of the $\IE_\infty$-algebra map $\IS_A[x_1,\dotsc,x_n]\rightarrow \IS_A[x_1^{1/p^\infty},\dotsc,x_n^{1/p^\infty}]$.
	\end{exm}
	
	\begin{exm}
		The easiest way for \cref{par:AssumptionsOnR}\cref{enum:E1Lift} to be satisfied is the case where $R/p$ is already relatively semiperfect over $A$, so that we can take the trivial descent diagram for the identity on $\widehat{R}_p$. Then the only condition is for $\widehat{R}_p$ to admit an $\IE_1$-lift $\IS_{\smash{\widehat{R}}_p}$ in $\IS_A$-modules.
		
		Thanks to Burklund's result again, it's easy to write down rings for which this is satisfied for all primes~$p$. Here's one possible construction: Let $B$ be a relatively perfect $\Lambda$-$A$-algebra such that $A\rightarrow B$ is quasi-lci.%
		\footnote{For every prime~$p$, the relatively perfect map of $\delta$-rings $\widehat{A}_p\rightarrow \widehat{B}_p$ will automatically be $p$-quasi-lci, so $A\rightarrow B$ being quasi-lci is a rational condition.}
		For example, we could take $B=A[x^{1/n}\ |\ n\geqslant 1]$ with the toric $\Lambda$-structure or $B=A\otimes_\IZ\IZ\{x\}_{\Lambda,\mathrm{perf}}$, the free $\Lambda$-$A$-algebra on a perfect generator. Let $B'$ be an étale $B$-algebra and let $(y_1,\dotsc,y_r)$ be a regular sequence in $B'$. Then $R\cong B'/(y_1^{\alpha_1},\dotsc,y_r^{\alpha_r})$ satisfies \cref{par:AssumptionsOnR}\cref{enum:E1Lift} if all $\alpha_i$ are even and $\geqslant 4$. If $2$ is invertible in~$R$, it's already enough to have all $\alpha_i\geqslant 2$ with no evenness assumption.
		
		Indeed, since each $p$-completions $\widehat{B}'_p$ is all $p$-completely formally étale over $A$, it lifts uniquely to a $p$-complete connective $\IE_\infty$-$\IS_A$-algebra $\IS_{\smash{\widehat{B}}'_p}$. Our assumptions on the $\alpha_i$ ensure that \cite[Theorem~\chref{1.5}]{BurklundMooreSpectra} applies, so that
		\begin{equation*}
			\IS_{\smash{\widehat{R}}_p}\coloneqq\IS_{\smash{\widehat{B}}'_p}/\bigl(y_1^{\alpha_1},\dotsc,y_r^{\alpha_r}\bigr)
		\end{equation*}
		admits an $\IE_1$-structure in $\IS_A$-modules (even in $\IS_{\smash{\widehat{R}}_p}$-modules), as desired.
	\end{exm}
	
	\subsection{The case of a framed smooth algebra}\label{subsec:Raksit}
	
	In the situation of \cref{exm:LiftsCoordinateCase}, the $q$-deformation of the Hodge filtration that we see has a very nice explicit description. This result is due to Arpon Raksit; in fact, his result is what motivated our investigation. To formulate the result, recall that in the situation at hand, the (underived) $q$-de Rham complex $\qOmega_{S/A}$ can be represented by an explicit complex
	\begin{equation*}
		\qOmega_{S/A, \square}^*=\Big(S\qpower\xrightarrow{\q\nabla}\Omega_{S/A}^1\qpower\xrightarrow{\q\nabla}\dotsb\xrightarrow{\q\nabla}\Omega_{S/A}^n\qpower\Big)\,.
	\end{equation*}
	\begin{thm}[Raksit, unpublished]\label{thm:Raksit}
		Let $(S,\square)$ be a framed smooth $A$-algebra as in \cref{exm:LiftsCoordinateCase} and put $\ku_{S,\square}\coloneqq \ku\otimes\IS_{S,\square}$. For all integers $i$ we let $\fil_{\qHodge, \square}^i\qOmega_{S/A, \square}^*$ denote the subcomplex
		\begin{equation*}
			\Bigl((q-1)^iS\qpower\rightarrow (q-1)^{i-1}\Omega_{S/A}^1\qpower\rightarrow\dotsb\rightarrow \Omega_{S/A}^i\qpower\rightarrow\dotsb\rightarrow \Omega_{S/A}^n\qpower\Bigr)\,.
		\end{equation*}
		of the coordinate-dependent $q$-de Rham complex $\qOmega_{S/A, \square}^*$ \embrace{which we regard as sitting in homotopical degrees $[-n,0]$}. Then
		\begin{equation*}
			\Sigma^{-2*}\gr_{\ev}^i\TC^-(\ku_{S,\square}/\ku_A)\simeq \fil_{\qHodge, \square}^\star\qOmega_{S/A, \square}^*\,.
		\end{equation*}
	\end{thm}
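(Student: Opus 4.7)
The plan is to deduce the theorem from \cref{thm:qdeRhamkuGlobal} applied with $R=S$ and the étale spherical lift $\IS_{S,\square}$ from \cref{exm:LiftsCoordinateCase}, which puts $(S,\square)$ into the setting of \cref{par:NewAssumptions} with condition \cref{par:AssumptionsOnR}\cref{enum:E2Lift} at every prime. That theorem identifies $\Sigma^{-2*}\gr^*_{\ev,\h S^1}\TC^-(\ku_{S,\square}/\ku_A)$ with the completion of the abstract $q$-Hodge filtration $\fil^\star_{\qHodge}\qdeRham_{S/A}$ defined by the pullback of \cref{par:GlobalqHodgeFiltration}. Since $S$ is smooth and flat over $A$, the derived $q$-de Rham complex is static and computed on the nose by the Koszul-style complex $\qOmega^*_{S/A,\square}$, so the remaining task is to identify the explicit subcomplexes $\fil^\star_{\qHodge,\square}\qOmega^*_{S/A,\square}$ with $\fil^\star_{\qHodge}\qhatdeRham_{S/A}$.

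The first step is to construct the comparison map. Using the pullback description of $\fil^\star_{\qHodge}\qdeRham_{S/A}$, it suffices to lift the tautological inclusion $\fil^\star_{\qHodge,\square}\qOmega^*_{S/A,\square}\hookrightarrow\qOmega^*_{S/A,\square}$ to a map of graded $(q-1)^\star\IZ\qpower$-modules into $\Sigma^{-2*}\gr^*_{\ev,\h S^1}\TC^-(\ku_{S,\square}/\ku_A)$. Such a lift should exist because, by the identification from \cref{par:kuGenuineFixedPoints}, the generator $\beta$ of $\ku^{\h S^1}$ corresponds to $(q-1)$ in filtration degree $1$, and the explicit subcomplex $\fil^i_{\qHodge,\square}$ is by construction the subcomplex whose entry in cohomological degree $-j$ is divisible by $(q-1)^{\max(i-j,0)}$, which matches the divisibility forced by the $\beta$-action on the associated graded.

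The second step, which is the main body of work, is to verify this comparison map is an equivalence. Both filtrations are complete---the abstract one by construction, the explicit one because $\fil^i_{\qHodge,\square}/\fil^{i+1}_{\qHodge,\square}$ stabilizes once $i>n$---so it suffices to check on associated gradeds. I would do this via the arithmetic fracture square: modulo $\beta$, \cref{thm:qdeRhamkuGlobal} identifies the abstract side with the ordinary Hodge filtration on $\deRham_{S/A}\simeq\Omega^*_{S/A}$, which manifestly matches the mod-$(q-1)$ reduction of $\fil^\star_{\qHodge,\square}\qOmega^*_{S/A,\square}$; and rationally, both sides recover the combined Hodge and $(q-1)$-adic filtration on $\Omega^*_{S/A}[1/p]\qpower$ under the standard coordinate isomorphism $\qOmega^*_{S/A,\square}[1/p]_{(q-1)}^\complete\simeq\Omega^*_{S/A}[1/p]\qpower$. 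Gluing these two checks via the arithmetic fracture square yields the equivalence on the associated graded, and hence the theorem.

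The main obstacle will be the first step: although the divisibility estimates match on the nose, producing a coherent $\IZ[\beta]\tpower$-linear map requires carefully matching the $\ku^{\h S^1}$-module structure on $\Sigma^{-2*}\gr^*\TC^-$ with the $(q-1)$-action on the explicit subcomplex, which is not entirely formal. A more robust alternative, which may be cleaner, is to first reduce by étale base change (using \cref{cor:EvenFiltrationTC-BaseChange}, along with the fact that both sides are manifestly compatible with étale base change in $(S,\square)$) to the case $S=A[x_1,\ldots,x_n]$ of a polynomial ring with the identity framing, and then compute both sides directly using the even cosimplicial resolution of \cref{prop:EvenResolution}, which in this case reduces essentially to a direct Koszul computation of $\THH_\solid(\ku_A[x_1,\ldots,x_n]/\ku_A)$.
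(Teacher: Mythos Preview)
Your main plan has a circularity you correctly flag but do not close. The abstract filtration $\fil_{\qHodge}^\star\qdeRham_{S/A}$ is \emph{defined} in \cref{par:GlobalqHodgeFiltration} by pulling back $\Sigma^{-2*}\gr_{\ev,\h S^1}^*\TC^-(\ku_{S,\square}/\ku_A)$ along $\psi_S^0$, so mapping the explicit $\fil_{\qHodge,\square}^\star$ into it already presupposes knowledge of the target you are trying to compute. Your mod-$\beta$-and-rational fracture check would verify that a given map is an equivalence, but it does not produce one. One might try to bypass this by arguing that the conditions in \cref{thm:qdeRhamkuGlobal} (reducing to $\fil_\Hodge^\star$ mod $\beta$, compatibly with the inclusions into $\deRham_{S/A}$) pin down the $q$-Hodge filtration uniquely as a subfiltration of $\qOmega_{S/A,\square}^*$ in the smooth case, by induction on filtration degree; this is plausible but is real work you have not done. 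Your fallback also has a gap: \cref{cor:EvenFiltrationTC-BaseChange} is about base change along $k\to l$, not \'etale base change in $R$.

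The paper takes a different, computational route that supplies exactly the missing comparison. It computes the even filtration via the cosimplicial resolution $\fil_{\ev,\h S^1}^\star\TC^-(\ku_{S,\square}/\ku_A)\simeq\limit_\IDelta\tau_{\geqslant 2\star}\TC^-(\ku_{S,\square}/\ku_{P^\bullet})$, where $P^\bullet$ is the \v Cech nerve of $A\to A[x_1,\dotsc,x_n]$. In each cosimplicial degree, $S$ over $P^\bullet$ is in the quasi-regular quotient regime, so \cref{thm:qdeRhamkuRegularQuotient} applies and, combined with an explicit identification of the relevant $q$-PD envelope (\cref{lem:qPDrank1}), gives $\pi_{2*}\TC^-(\ku_{S,\square}/\ku_{P^\bullet})\cong\fil_{\qHodge}^\star\q\widehat{D}^\bullet$ for an explicit cosimplicial filtered ring generated by $(q-1)^d\widetilde{\gamma}_q^d(x_i^{(r)}-x_i^{(s)})$ (\cref{lem:ExplicitqHodgeFiltration}). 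The final step identifies $\limit_\IDelta\fil_{\qHodge}^\star\q\widehat{D}^\bullet$ with $\fil_{\qHodge,\square}^\star\qOmega_{S/A,\square}^*$ via a filtered bicomplex argument modelled on \cite[Theorem~\chref{16.22}]{Prismatic}, using the Poincar\'e lemma for the completed Hodge-filtered de Rham complex.
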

	While Raksit's original proof uses geometric arguments, we'll give a more algebraic proof of \cref{thm:Raksit}. We first need a general fact about $q$-divided powers.
	\begin{lem}\label{lem:qPDrank1}
		Fix a prime~$p$. Consider $\IZ_p[x,y,q]$, equipped with the toric $\delta$-structure, and let
		\begin{equation*}
			\q D\coloneqq \IZ_p[x,y,q]\left\{\frac{\phi(x-y)}{[p]_q}\right\}_{(p,q-1)}^\complete\,.
		\end{equation*}
		Then $\q D$ is the $(p,q-1)$-completion of the subalgebra of $\IQ_p[x,y]\qpower$ generated by $\IZ_p[x,y,q]$ as well as elements $(q-1)^d\widetilde{\gamma}_q^d(x-y)$ for all $d\geqslant 1$, where we put
		\begin{equation*}
			\widetilde{\gamma}_q^d(x-y)\coloneqq \frac{(x-y)(x-qy)\dotsm (x-q^{d-1}y)}{(q;q)_d}\,.
		\end{equation*}
	\end{lem}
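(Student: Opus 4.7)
The plan is to construct mutually inverse maps between $\q D$ and the ring $\q D'$ described on the right-hand side of the statement. The central technical input is the polynomial identity
\begin{equation*}
	(x-y)(x-qy)(x-q^2y)\cdots(x-q^{p-1}y)\equiv \phi(x-y)=x^p-y^p\pmod{[p]_q}
\end{equation*}
in $\IZ_p[x,y,q]$: the difference of the two sides, viewed as a polynomial in $q$ over $\IZ[x,y]$, vanishes at every primitive $p$-th root of unity and is therefore divisible by the cyclotomic polynomial $\Phi_p(q)=[p]_q$.

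For the map $\q D\to\q D'$, this identity yields $\phi(x-y)/[p]_q=(-1)^p[p-1]_q!\cdot(q-1)^p\widetilde{\gamma}_q^p(x-y)+R$ for some $R\in\IZ_p[x,y,q]$, using that $[p-1]_q!$ reduces to $(p-1)!$ modulo $(q-1)$ and is therefore a unit in $\IZ_p[x,y,q]_{(p,q-1)}^\complete$; in particular, $\phi(x-y)/[p]_q\in\q D'$. By the universal property of $\q D$ it then remains to show that $\q D'$ is a $\delta$-subring of $\IQ_p[x,y]\qpower$. Since the ambient ring is $p$-torsion free, it suffices to verify that the Frobenius $\phi$ (defined by $x\mapsto x^p$, $y\mapsto y^p$, $q\mapsto q^p$) preserves the displayed generators and that $(\phi(a)-a^p)/p$ lies in $\q D'$ for each generator $a$. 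Both are concrete calculations made tractable by the factorization $1-q^{jp}=(1-q^j)\cdot[p]_{q^j}$ together with the fact that $[p]_{q^j}/[p]_q$ is a unit in $\IZ_p\qpower$, which allows one to rewrite $\phi((q-1)^d\widetilde{\gamma}_q^d(x-y))$ as an explicit combination of the displayed generators and of $\phi(x-y)/[p]_q$.

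For the reverse map I show, by strong induction on $d$, that each $(q-1)^d\widetilde{\gamma}_q^d(x-y)$ lies in $\q D$. For $d<p$ the denominator $[d]_q!$ is a unit in $\IZ_p[x,y,q]_{(p,q-1)}^\complete$, so the element already lies in $\IZ_p[x,y,q]\subset\q D$. For $d=p$ the displayed identity rearranges to
\begin{equation*}
	(q-1)^p\widetilde{\gamma}_q^p(x-y)=(-1)^p[p-1]_q!^{-1}\bigl(\phi(x-y)/[p]_q+R\bigr)\in\q D.
\end{equation*}
For larger $d$ I use the recursion $[d+1]_q\widetilde{\gamma}_q^{d+1}(x-y)=(x-q^dy)\widetilde{\gamma}_q^d(x-y)$, handling every factor of $[p]_q$ appearing in $[d+1]_q!$ by peeling off a full block of $p$ consecutive terms in the numerator and invoking the key identity; nested factors $[p]_{q^{p^i}}$ appearing for $d\geq p^{i+1}$ are dealt with by applying the iterated Frobenius $\phi^i$ to $\phi(x-y)/[p]_q$, which is available because $\q D$ is a $\delta$-ring. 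Both compositions $\q D\to\q D'\to\q D$ and $\q D'\to\q D\to\q D'$ restrict to the identity on $\IZ_p[x,y,q]$, hence agree with the identity by continuity and density after $(p,q-1)$-completion.

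The main obstacle will be the inductive step for $d\geq p^2$: the nested factorization $[p^k]_q=[p]_q\cdot[p]_{q^p}\cdots[p]_{q^{p^{k-1}}}$ appearing in $[d]_q!$ forces the argument to use the full $\delta$-structure of $\q D$ (not merely the single element $\phi(x-y)/[p]_q$), so one must carefully package the data of all iterated Frobenius applications $\phi^i(x-y)/\phi^{i-1}([p]_q)\in \q D$ in order to execute the induction cleanly.
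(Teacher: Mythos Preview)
Your approach is genuinely different from the paper's, and while the overall strategy is reasonable, there are two places where you underestimate the difficulty.

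First, showing directly that $\q D'$ is a $\delta$-subring of $\IQ_p[x,y]\qpower$ is harder than you suggest. Applying $\phi$ to $(q-1)^d\widetilde{\gamma}_q^d(x-y)$ yields $(q^p-1)^d\widetilde{\gamma}_{q^p}^d(x^p-y^p)$, and expressing this in terms of your displayed generators together with $\phi(x-y)/[p]_q$ is essentially as hard as the full lemma: the numerator factors $(x^p-q^{pj}y^p)$ do not split into products of $(x-q^m y)$ over $\IZ_p$, so your factorisation $1-q^{jp}=(1-q^j)[p]_{q^j}$ controls the denominator but not the numerator. The paper sidesteps this entirely: it only proves the containment $\q D'\subseteq\q D$ and then checks $\q D'=\q D$ after reducing modulo $(q-1)$, where both become the ordinary PD-envelope of $(x-y)\subseteq\IZ_p[x,y]$.

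Second, you correctly identify the inductive step for $d\geqslant p^2$ as the main obstacle, and your idea of using the iterated Frobenii $\phi^i(\phi(x-y)/[p]_q)\in\q D$ to handle the nested factors $[p]_{q^{p^i}}$ is exactly right---the paper uses the same divisibility, namely that $x^{p^\alpha}-y^{p^\alpha}$ is divisible by $\Phi_{p^\alpha}(q)$ in $\q D$. But rather than carrying out the block-by-block combinatorics you propose, the paper packages everything into a $\Lambda$-ring argument: it extends the toric $\Lambda$-structure to $\q D$, invokes Pridham's identity $\lambda^d\bigl(\tfrac{x-y}{q-1}\bigr)=\widetilde{\gamma}_q^d(x-y)$ in the localisation $\q D[(q;q)_d^{-1}]$, and then uses the universal relation $\psi_t=-t\tfrac{\d}{\d t}\log\lambda_{-t}$ to pass from knowing $(q-1)\psi^d\bigl(\tfrac{x-y}{q-1}\bigr)\in\q D$ for all $d$ to knowing $(q-1)^d\lambda^d\bigl(\tfrac{x-y}{q-1}\bigr)\in\q D[1/p]$ for all $d$. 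One finishes by observing that $\q D[p^{-1}]\cap\q D[(q;q)_d^{-1}]=\q D$ since $(p,(q;q)_d)$ is a regular sequence. This route avoids your combinatorial induction altogether; the price is appealing to the $\Lambda$-formalism and Pridham's formula, but the payoff is that no explicit bookkeeping of nested blocks is needed.
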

	\begin{proof}
		It will be enough to show that $\q D$ contains $(q-1)^d\widetilde{\gamma}_q^d(x-y)$ for all $d\geqslant 1$, as then the fact that these are generators as well as the claimed description of $\q D$ can be checked modulo~$(q-1)$.
		
		First observe that $(p,q-1)$ is a regular sequence in $\q D$. Indeed, $\q D/(q-1)$, where the quotient is taken in the derived sense as usual, is the PD-envelope of $(x-y)\subseteq \IZ_p[x,y]$, which is a $p$-torsion free ring. It follows that $(p,(q;q)_d)$ is a regular sequence for all $d\geqslant 1$. Indeed, up to factors that are invertible in $\q D$, the Pochhammer symbol is a product of factors of the form $(1-q^{p^\alpha})$, and $(1-q^{p^\alpha})\equiv (1-q)^{p^\alpha}\mod p$. In particular, each $(q;q)_d$ is a non-zerodivisor in $\q D$.
		
		If we equip $\IZ_p[x,y,q]$ with the toric $\Lambda$-structure, then the Adams operations $\psi^\ell$ for $\ell\neq p$ are $\delta$-ring maps. Using the universal property it is then straightforward to check that the $\psi^\ell$ extend to $\q D$, hence $\q D$ carries a $\Lambda$-$\IZ_p[x,y,q]$-structure extending the given $\delta$-structure. This $\Lambda$-structure extends then uniquely to the localisation $\q D[(q;q)_d^{-1}\ |\ d\geqslant 1]$. In the localisation, we have
		\begin{equation*}
			\lambda^d\left(\frac{x-y}{q-1}\right)=\widetilde{\gamma}_q^d(x-y)\,;
		\end{equation*}
		see \cite[Lemma~\chref{1.3}]{Pridham}. So we must show $(q-1)^d\lambda^d\bigl(\frac{x-y}{q-1}\bigr)\in \q D$. To this end, first observe that $(q-1)\psi^d\bigl(\frac{x-y}{q-1}\bigr)\in \q D$ for all $d\geqslant 1$. Indeed, it's enough to check this if $d=p^\alpha$ is a power of $p$. So we must check that $x^{p^\alpha}-y^{p^{\alpha}}$ is divisible by $[p^\alpha]_q$ in $\q D$. Since $\q D$ is $(p,q-1)$-completely flat over $\IZ_p\qpower$ by \cite[Lemma~\chref{16.10}]{Prismatic} and thus flat on the nose over $\IZ[q]$, it will be enough to check that $x^{p^\alpha}-y^{p^\alpha}$ is divisible by each cyclotomic polynomial in the factorisation $[p^\alpha]_q=\Phi_p(q)\Phi_{p^2}(q)\dotsm \Phi_{p^\alpha}(q)$. Since $x^{p^i}-y^{p^i}$ divides $x^{p^\alpha}-y^{p^\alpha}$ for $i\leqslant \alpha$, it suffices to show that $x^{p^\alpha}-y^{p^{\alpha}}$ is divisible by $\Phi_{p^\alpha}(q)$, which follows by applying $\phi^{\alpha-1}$ to $\phi(x-y)/[p]_q$.
		
		Now let us put $\lambda_t(-)\coloneqq \sum_{d\geqslant 0}\lambda^d(-)t^n$ and $\psi_t(-)\coloneqq \sum_{d\geqslant 1}\psi^d(-)t^d$, where $t$ is a formal variable. Our observation above shows that $\psi_{(q-1)t}\bigl(\frac{x-y}{q-1}\bigr)$ has coefficients in $\q D$. From the general $\Lambda$-ring formula $\psi_t=-t\frac{\d}{\d t}\log\lambda_{-t}$ we deduce that $\lambda_{(q-1)t}\bigl(\frac{x-y}{q-1}\bigr)$ has coefficients in $\q D[p^{-1}]$. Since $(p,(q;q)_d)$ is a regular sequence in $\q D$, the we get
		\begin{equation*}
			\q D\bigl[p^{-1}\bigr]\cap \q D\bigl[(q;q)_d^{-1}\bigr]=\q D\,,
		\end{equation*}
		where the intersection is taken in $\q D[p^{-1},(q;q)_d^{-1}]$ (and on the level of sets---nothing derived is happening). This shows $(q-1)^d\lambda^d\bigl(\frac{x-y}{q-1}\bigr)\in \q D$, as desired.
	\end{proof}
	
	\begin{numpar}[A cosimplicial resolution.]
		To show \cref{thm:Raksit}, we'll compute the even filtration via an explicit resolution. To this end, let us fix the following notation:
		\begin{alphanumerate}
			\item Let $P\coloneqq A[x_1,\dotsc,x_n]$ and $\IS_P\coloneqq \IS_A[x_1,\dotsc,x_n]$. Let $A\rightarrow P^\bullet$ and $\IS_A\rightarrow \IS_{P^\bullet}$ denote the \v Cech nerves of $A\rightarrow P$ and $\IS_A\rightarrow \IS_P$ and put $\ku_{P^\bullet}\coloneqq \ku\otimes\IS_{P^\bullet}$.
			\item Let $x_i^{(r)}\in P^\bullet\cong P^{\otimes_A(\bullet+1)}$ denote the element $1\otimes\dotsb\otimes 1\otimes x_i\otimes 1\otimes \dotsb\otimes 1$ coming from the $r$\textsuperscript{th} tensor factor for any $1\leqslant r\leqslant \bullet+1$.
			\item Let $\q D^\bullet$ denote the $(q-1)$-completion of the sub-algebra of $(S^{\otimes_A(\bullet+1)}\otimes_\IZ\IQ)\qpower$ generated by $S^{\otimes_A(\bullet+1)}\qpower$ as well as the elements $(q-1)^d\widetilde{\gamma}_q^d\bigl(x_i^{(r)}-x_i^{(s)}\bigr)$ for all integers $d\geqslant 1$, all tensor factors $1\leqslant r,s\leqslant \bullet+1$, and all indices $1\leqslant i\leqslant n$.
			\item Let $\fil_{\qHodge}^\star\q D^\bullet$ be the descending filtration of ideals generated by $(q-1)$ in filtration degree~$1$ and the elements $(q-1)^d\widetilde{\gamma}_q^d\bigl(x_i^{(r)}-x_i^{(s)}\bigr)$ in filtration degree~$d$, and let $\fil_{\qHodge}^\star\q \widehat{D}^\bullet$ denote the completion of this filtration.
		\end{alphanumerate}
	\end{numpar}

	\begin{lem}\label{lem:ExplicitqHodgeFiltration}
		With notation as above, there exists a canonical isomorphism of graded $\IZ[\beta]\llbracket t\rrbracket\cong (q-1)^\star\IZ\qpower$-modules
		\begin{equation*}
			\pi_{2*}\TC^-(\ku_{S,\square}/\ku_{P^\bullet})\cong \fil_{\qHodge}^\star\q\widehat{D}^\bullet\,.
		\end{equation*}
	\end{lem}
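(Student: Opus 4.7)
The plan is to combine evenness of $\THH(\ku_{S,\square}/\ku_{P^\bullet})$ with the main comparison theorem \cref{thm:qdeRhamkuGlobal}, and then identify the resulting $q$-de Rham complex $\qhatdeRham_{S/P^\bullet}$ with the explicit $q$-PD-envelope $\q\widehat{D}^\bullet$ by means of \cref{lem:qPDrank1}.

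First, I would verify that $\THH(\ku_{S,\square}/\ku_{P^\bullet})$ is even in every cosimplicial degree. The multiplication map $P^\bullet\twoheadrightarrow P$ is a quasi-regular quotient by the regular sequence $(x_i^{(r)}-x_i^{(r+1)})$ for $1\leqslant i\leqslant n$ and $1\leqslant r\leqslant \bullet$, and $P\to S$ is étale. Étale base change for $\HH$ together with the standard Koszul/divided-power computation for quasi-regular quotients give
\begin{equation*}
\pi_*\HH(S/P^\bullet) \cong \Gamma_S^*\bigl(\sigma(x_i^{(r)}-x_i^{(r+1)})\bigr),
\end{equation*}
a divided-power algebra over $S$ on generators in homotopical degree~$2$. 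The HKR-type spectral sequence of \cref{lem:THHSpectralSequence} with $k=\ku$ then collapses for degree reasons, yielding evenness of $\THH(\ku_{S,\square}/\ku_{P^\bullet})$.

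Second, I would apply \cref{thm:qdeRhamkuGlobal} with $P^\bullet$ as the base ring. The toric $\Lambda$-structure on $P^\bullet$ (compare \cref{exm:PolynomialLambdaRings}) supplies the required hypotheses on the base, and $S$ is quasi-lci over $P^\bullet$ with a manifest $\IE_2$-lift in each cosimplicial degree given by $\IS_{S,\square}$ viewed as an $\IS_{P^\bullet}$-algebra. The theorem furnishes an equivalence
\begin{equation*}
\fil_{\qHodge}^\star\qhatdeRham_{S/P^\bullet} \simeq \Sigma^{-2*}\gr_{\ev,\h S^1}^*\TC^-(\ku_{S,\square}/\ku_{P^\bullet}),
\end{equation*}
and by evenness of $\THH$ the associated graded on the right-hand side equals $\pi_{2*}\TC^-(\ku_{S,\square}/\ku_{P^\bullet})$.

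Finally, I would identify $\qhatdeRham_{S/P^\bullet}$ with $\q\widehat{D}^\bullet$ together with their filtrations. By étale base change for $\qdeRham$ along $P\to S$, this reduces to identifying $\qdeRham_{P/P^\bullet}$ with the $(q-1)$-completion of the $q$-PD-envelope of the ideal $(x_i^{(r)}-x_i^{(r+1)})\subseteq P^\bullet\qpower$. \cref{lem:qPDrank1} then gives a $\lambda$-ring characterisation of this $q$-PD-envelope, exhibiting it as generated over $P^\bullet\qpower$ by the elements $(q-1)^d\widetilde{\gamma}_q^d(x_i^{(r)}-x_i^{(s)})$ with the claimed filtration, and this matches $\q\widehat{D}^\bullet$ by construction. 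The main obstacle is precisely this last identification: justifying the $q$-PD-envelope description of $\qdeRham_{P/P^\bullet}$ (and its compatibility with étale base change and Hodge completion), and extending the rank-one statement of \cref{lem:qPDrank1} to the multivariable setting so that the $q$-Hodge filtration is generated exactly—not merely contained—in each degree by the stated $q$-divided-power monomials in the $(x_i^{(r)}-x_i^{(s)})$.
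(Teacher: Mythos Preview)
Your overall strategy agrees with the paper's: apply \cref{thm:qdeRhamkuGlobal} with base $P^\bullet$ to get $\pi_{2*}\TC^-(\ku_{S,\square}/\ku_{P^\bullet})\cong\fil_{\qHodge}^\star\qhatdeRham_{S/P^\bullet}$, then match the latter with $\fil_{\qHodge}^\star\q\widehat{D}^\bullet$. Your first two steps are fine and coincide with what the paper does (the evenness you verify is exactly why the associated graded is $\pi_{2*}$).

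The gap is in the step you flag yourself, and it is a real one, not merely a matter of bookkeeping. Both the $q$-PD-envelope description of $\qdeRham$ from \cite[Lemma~16.10]{Prismatic} and \cref{lem:qPDrank1} are $p$-complete statements for a fixed prime~$p$; there is no global ``$q$-PD-envelope'' identification available. The paper resolves this by taking the arithmetic fracture square of $\fil_{\qHodge}^\star\qhatdeRham_{S/P^\bullet}$ (completion at each $p$ versus rationalisation), first checking that all four corners are static in every filtration degree by reducing modulo $(q-1)$. The $p$-complete corner is then the $q$-PD-envelope of the regular ideal $\ker((S^{\otimes_A(\bullet+1)})_p^\complete\twoheadrightarrow\widehat{S}_p)$, where \cref{lem:qPDrank1} shows the elements $(q-1)^d\widetilde\gamma_q^d(x_i^{(r)}-x_i^{(s)})$ live. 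The crucial tool you are missing is \cref{thm:qdeRhamkuRegularQuotient}: it identifies the $p$-complete $q$-Hodge filtration as the preimage of the combined Hodge-and-$(q-1)$-adic filtration under rationalisation, which is what places those elements in filtration degree~$d$. The rational corner is the $(J_\IQ^\bullet,q-1)$-adic completion, handled directly. One then constructs the filtered map from $\q\widehat{D}^\bullet$ into the pullback and checks it is an isomorphism by reducing modulo~$\beta$, where both sides become the Hodge-filtered PD-envelope. Your proposed direct global identification cannot be made to work without passing through this decomposition, and without \cref{thm:qdeRhamkuRegularQuotient} you have no handle on the abstractly defined $q$-Hodge filtration.
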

	\begin{proof}
		We know from \cref{thm:qdeRhamkuGlobal} that $\pi_{2\star}\TC^-(\ku_{S,\square}/\ku_{P^\bullet})$ is the completion of a filtration $\fil_{\qHodge}^\star\qdeRham_{S/P^\bullet}$. Consider the arithmetic fracture square for the completed filtration:
		\begin{equation*}
			\begin{tikzcd}
				\fil_{\qHodge}^\star\qhatdeRham_{S/P^\bullet}\rar\dar\drar[pullback] & \prod_p\fil_{\qHodge}^\star \bigl(\qhatdeRham_{S/P^\bullet}\bigr)_p^\complete\dar\\
				\fil_{(\Hodge,q-1)}^\star\bigl(\deRham_{S/P^\bullet}\otimes_\IZ\IQ\bigr)_{\Hodge}^\complete\qpower\rar & \fil_{(\Hodge,q-1)}^\star\biggl(\prod_p	\left(\deRham_{S/P^\bullet}\right)_p^\complete\otimes_\IZ\IQ\biggr)_{\Hodge}^\complete\qpower
			\end{tikzcd}
		\end{equation*}
		Observe that all corners of this pullback square are static in every filtration degree. Indeed, this can easily be checked modulo~$(q-1)$. More precisely, if we identify the $(q-1)$-adic filtration $(q-1)^\star \IZ\qpower$ with the graded ring $\IZ[\beta]\llbracket t\rrbracket$ as in \cref{par:GlobalqHodgeFiltration}, then everything is $\beta$-complete; modulo~$\beta$, we're then reduced to checking that $\fil_{\Hodge}^\star\hatdeRham_{S/P^\bullet}$ as well as its $p$-completions and its Hodge-completed rationalisation are static, which is standard.
		
		We conclude that this diagram is also a pullback of filtered abelian groups, which will make it easy to construct a map $\fil_{\qHodge}^\star\q\widehat{D}^\bullet\rightarrow \fil_{\qHodge}^\star\qhatdeRham_{S/P^\bullet}$. To this end, let us now analyse the factors of the pullback. Let us start with the $p$-completed $q$-de Rham complex $(\qdeRham_{S/P^\bullet})_p^\complete$. Since $\delta$-structures extend uniquely along $p$-completely étale maps, the toric $\delta$-$A$-algebra structure on $\widehat{P}_p^\bullet$ extends uniquely to a $\delta$-$A$-algebra structure on $(S^{\otimes_A(\bullet+1)})_p^\complete$. Then $(\qdeRham_{S/P^\bullet})_p^\complete$ is the $q$-PD-envelope in the sense of \cite[Lemma~\chref{16.10}]{Prismatic} of the $(p,q-1)$-completely regular ideal
		\begin{equation*}
			\widehat{J}_p^\bullet\coloneqq \ker\Bigl(\bigl(S^{\otimes_A(\bullet+1)}\bigr)_p^\complete\twoheadrightarrow \widehat{S}_p\Bigr)
		\end{equation*}
		Using \cref{lem:qPDrank1} we see that $(\qdeRham_{S/P^\bullet})_p^\complete$ contains all the elements $(q-1)^d\widetilde{\gamma}_q^d\bigl(x_i^{(r)}-x_i^{(s)}\bigr)$. By \cref{thm:qdeRhamkuRegularQuotient}, for any fixed $d$, these elements are contained in $\fil_{\qHodge}^d (\qdeRham_{S/P^\bullet})_p^\complete$.
		
		The rational factor is similar: Since $P\rightarrow S$ is étale, the Hodge-completed de Rham complex satisfies $\hatdeRham_{S/P^\bullet}\simeq \hatdeRham_{S/S^{\smash{\otimes_A(\bullet+1)}}}$, and so $(\deRham_{S/P^\bullet}\otimes_\IZ\IQ)_{\Hodge}^\complete\qpower$ is the $(J_\IQ^\bullet,q-1)$-adic completion of $(S^{\otimes_A(\bullet+1)}\otimes_\IZ\IQ)\qpower$, where
		\begin{equation*}
			J_\IQ^\bullet\coloneqq \ker\Bigl(\bigl(S^{\otimes_A(\bullet+1)}\otimes_\IZ\IQ\bigr)\twoheadrightarrow (S\otimes_\IZ\IQ)\Bigr)\,.
		\end{equation*}
		Since $x_i^{(r)}-x_i^{(s)}$ is an element of $J_{\IQ}^\bullet$, it's also clear that $\widetilde{\gamma}_q^d\bigl(x_i^{(r)}-x_i^{(s)}\bigr)$ is contained in $\fil_{(\Hodge,q-1)}^\star(\deRham_{S/P^\bullet}\otimes_\IZ\IQ)_{\Hodge}^\complete\qpower$. Using the pullback above we get a filtered map
		\begin{equation*}
			\fil_{\qHodge}^\star\q\widehat{D}^\bullet\longrightarrow \fil_{\qHodge}^\star\qhatdeRham_{S/P^\bullet}\,.
		\end{equation*}
		Reducing modulo~$(q-1)$, or more precisely modulo~$\beta$, we see that this map is an isomorphism, which finishes the proof.
	\end{proof}
	
	
	%
	%
	
	\begin{proof}[Proof of \cref{thm:Raksit}]
		The even filtration in question can be computed via the cosimplicial resolution
		\begin{equation*}
			\fil_{\ev}^\star\TC^-(\ku_{S,\square}/\ku_A)\simeq \limit_{\IDelta}\tau_{\geqslant 2\star}\TC^-(\ku_{S,\square}/\ku_{P^\bullet})\,.
		\end{equation*}
		Using \cref{lem:ExplicitqHodgeFiltration}, it remains to show that the totalisation of the cosimplicial filtered ring $\fil_{\qHodge}^\star\q\widehat{D}^\bullet$ is quasi-isomorphic to the filtered complex $\fil_{\qHodge,\square}^\star\qOmega_{S/A, \square}^*$. We'll show this using a similar argument as in the proof of \cite[Theorem~\chref{16.22}]{Prismatic}.
		
		To this end, first observe that the $q$-divided powers from \cref{lem:qPDrank1} interact with the $q$-derivatives as follows:
		\begin{equation*}
			\q\partial_x\bigl(\widetilde{\gamma}_q^d(x-y)\bigr)=\widetilde{\gamma}_{q}^{d-1}(x-y)\quad\text{and}\quad \q\partial_y\bigl(\widetilde{\gamma}_q^d(x-y)\bigr)=-\widetilde{\gamma}_q^{d-1}(x-qy)\,.
		\end{equation*}
		It follows that the $q$-derivatives extend to $\q\widehat{D}^\bullet$. We can then consider the filtered cosimplicial filtered complex $\fil^\star\q M^{\bullet,*}$ given by
		\begin{equation*}
			\Bigl(\fil_{\qHodge}^\star\q\widehat{D}^\bullet\xrightarrow{\q\nabla}\fil_{\qHodge}^{\star-1}\q\widehat{D}^\bullet\otimes_{P^\bullet}\Omega_{P^\bullet/A}^1\xrightarrow{\q\nabla}\fil_{\qHodge}^{\star-2}\q\widehat{D}^\bullet\otimes_{P^\bullet}\Omega_{P^\bullet/A}^2\xrightarrow{\q\nabla}\dotsb\Bigr)\,.
		\end{equation*}
		Then each column $\fil^\star\q M^{i,*}$ is quasi-isomorphic to $\fil^\star\q M^{0,*}$; indeed, this can be checked modulo~$(q-1)$, and then it follows from the Poincaré lemma for the completed Hodge-filtered de Rham complex. On the other hand the rows $\fil^\star\q M^{\bullet,j}$ for $j>0$ are acyclic; this can be seen e.g.\ by \cite[Tag~\chref{07L7}]{Stacks} applied to the cosimplicial filtered ring $\fil_{\qHodge}^\star \q\widehat{D}^\bullet$. It follows formally that the $0$\textsuperscript{th} column $\fil^\star \q M^{0,*}$ is quasi-isomorphic to the totalisation of the $0$\textsuperscript{th} row $\fil^\star\q M^{\bullet,0}$, which is exactly what we wanted to show.
	\end{proof}
	\begin{rem}\label{rem:RaksitqOmegaDAlg}
		As a consequence of \cref{thm:Raksit}, the filtered complex $\fil_{\qHodge, \square}^i\qOmega_{S/A, \square}^*$ can be promoted to a filtered $\IE_\infty$-algebra over the filtered ring $(q-1)^\star A\qpower$. In fact, we even get the structure of a filtered derived commutative algebra in the sense of \cite[Definition~\chref{4.3.4}]{RaksitFilteredCircle}.
	\end{rem}
	
	\subsection{The Habiro ring of a number field, homotopically}\label{subsec:HabiroRingOfNumberField}
	
	As a final example, let us give a homotopical description of the Habiro ring of a number field from \cite[Definition~\chref{1.1}]{HabiroRingOfNumberField}.
	
	\begin{cor}\label{cor:HabiroRingOfNumberFieldKU}
		Let $F$ be a number field and let $\Delta$ be divisible by $6$ and by the discriminant of~$F$. Let $\IS_{\Oo_F[1/\Delta]}$ denote the unique lift of $\Oo_F[1/\Delta]$ to an étale extension of $\IS$. Then
		\begin{equation*}
			\Hh_{\Oo_F[1/\Delta]}\cong \pi_0\Bigl(\limit_{m\in\IN}\bigl(\THH(\KU\otimes\IS_{\Oo_F[1/\Delta]}/\KU)^{C_m}\bigr)^{\h (S^1/C_m)}\Bigr)\,.
		\end{equation*}
	\end{cor}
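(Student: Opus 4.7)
The plan is to apply \cref{thm:qdeRhamKUGenuine} with base $A=\IZ$, equipped with the trivial $\Lambda$-ring structure (so $\psi^m=\id$ for all $m$), and with the algebra $R=\Oo_F[1/\Delta]$. Most of the work consists of verifying the hypotheses; passing from the graded equivalence to the $\pi_0$ statement will be essentially immediate once the ``static'' nature of $\qHhodge_{R/\IZ}$ is recognized.

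First, I would check that $A=\IZ$ satisfies the assumptions from~\cref{par:NewAssumptions}\cref{enum:AssumptionsOnAglobal}: the spherical lift is just $\IS_A=\IS$ with its standard cyclotomic structure, and the Tate-valued Frobenius $\phi_{\t C_p}\colon \IS_p\to \IS_p^{\t C_p}$ agrees with $\psi^p=\id$ on $\pi_0$, so \cref{par:AssumptionsOnA}\cref{enum:CyclotomicLift} holds. Moreover, $\IS$ admits Frobenius lifts $\psi^p=\id$ as a cyclotomic $\IE_\infty$-algebra, so by \cref{lem:A2FrobeniusLifts} the addendum~\cref{par:CyclonicBase}\cref{enum:AssumptionOnA2} is satisfied as well. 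Next, since $\Delta$ is divisible by the discriminant of $F$, the ring $\Oo_F[1/\Delta]$ is étale over $\IZ[1/\Delta]$. Because $\IZ\to \IZ[1/\Delta]$ has vanishing cotangent complex and $\Oo_F[1/\Delta]\to\Oo_F[1/\Delta]$ (as a map of $\IZ[1/\Delta]$-algebras) is formally étale, we get $\L_{R/\IZ}\simeq 0$, so $R$ is quasi-lci over $\IZ$. For each prime $p\nmid \Delta$, the completion $\widehat R_p$ lifts uniquely to an étale, hence $\IE_\infty$, $\IS_p$-algebra by \cite[Theorem~\chref{7.5.4.3}]{HA}, giving the $\IE_2$-lift required by~\cref{par:AssumptionsOnR}\cref{enum:E2Lift}; for $p\mid \Delta$ the completion is zero, trivially satisfying the condition. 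These lifts glue with the rationalization to yield $\IS_R=\IS_{\Oo_F[1/\Delta]}$, which is the unique étale extension of $\IS[1/\Delta]$. Finally, $2\in R^\times$ since $6\mid\Delta$.

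With the hypotheses verified, \cref{thm:qdeRhamKUGenuine} yields a canonical $\IZ[\beta^{\pm 1}]$-linear equivalence
\begin{equation*}
    \qHhodge_{R/\IZ}[\beta^{\pm 1}]\overset{\simeq}{\longrightarrow}\Sigma^{-2*}\gr^*\Bigl(\limit_{m\in\IN}\fil_{\ev,S^1}^\star\TCn{m}(\KU\otimes\IS_{R}/\KU)\Bigr)\,.
\end{equation*}
The key input now is that $\qHhodge_{R/\IZ}$ is static. Indeed, since $\L_{R/\IZ}\simeq 0$ we have $\deRham_{R/\IZ}\simeq R$ with trivial Hodge filtration in positive degrees. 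From the rational and $p$-complete descriptions of the $q$-Hodge filtration (\cref{thm:qdeRhamkupComplete}, \cref{thm:qdeRhamkuGlobal}) and the construction of the $q$-Hodge complex from~\cite[\chref{3.5}]{qWittHabiro}, it follows that $\qHhodge_{R/\IZ}$ is concentrated in $\pi_0$ and recovers the Habiro ring $\Hh_{\Oo_F[1/\Delta]}$ of \cite[Definition~\chref{1.1}]{HabiroRingOfNumberField}. (I expect this identification to be already spelled out in \cite{qWittHabiro}; if not, it would be a routine consequence of the relevant structural results there.)

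Granting that $\qHhodge_{R/\IZ}$ is static, the associated graded of the filtration on $\limit_{m\in\IN}\fil_{\ev,S^1}^\star\TCn{m}(\KU\otimes\IS_R/\KU)$ is of the form $\gr^i\simeq \Sigma^{2i}\qHhodge_{R/\IZ}$, so $\pi_k\gr^i$ vanishes unless $k=2i$, and in that case equals $\Hh_{\Oo_F[1/\Delta]}$. Since the filtration is complete by (the cyclonic analogue of) \cref{lem:GenuineEvenFiltrationTHHkuCompleteExhaustive}, the resulting spectral sequence computing $\pi_0$ of the limit has only one contributing entry, namely $\pi_0\gr^0=\Hh_{\Oo_F[1/\Delta]}$, and no room for nontrivial differentials on $\pi_0$. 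Therefore
\begin{equation*}
    \pi_0\Bigl(\limit_{m\in\IN}\TCn{m}(\KU\otimes\IS_{R}/\KU)\Bigr)\cong \Hh_{\Oo_F[1/\Delta]}\,,
\end{equation*}
as desired. The main obstacle is the third paragraph: pinning down the identification $\pi_0\qHhodge_{R/\IZ}\cong \Hh_{\Oo_F[1/\Delta]}$ rigorously. The remaining verifications of hypotheses are routine, and the spectral-sequence argument in the last paragraph is standard once the staticness is in hand.
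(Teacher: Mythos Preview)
Your proposal is correct and follows essentially the same route as the paper. The paper's proof is terser: it cites \cite[Corollary~\chref{3.12}]{qWittHabiro} directly for the identification $\qHhodge_{\Oo_F[1/\Delta]/\IZ}\simeq \Hh_{\Oo_F[1/\Delta]}$ (confirming your expectation that this is spelled out in the companion paper), and then phrases the final step as ``the limit filtration must be the double-speed Whitehead filtration $\tau_{\geqslant 2\star}$'' rather than via a spectral sequence, but this is the same argument packaged differently. Your explicit verification of the hypotheses for $A=\IZ$ and $R=\Oo_F[1/\Delta]$ is left implicit in the paper.
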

	\begin{proof}
		By \cite[Corollary~\chref{3.12}]{qWittHabiro}, $\qHhodge_{\Oo_F[1/\Delta]/\IZ}\simeq \Hh_{\Oo_F[1/\Delta]}$. In particular, the Habiro--Hodge complex must be static. By \cref{thm:qdeRhamKUGenuine}, $\limit_{m\in\IN}\fil_{\ev,S^1}^\star\TCn{m}(\KU\otimes\IS_{\Oo_F[1/\Delta]}/\KU)$ must be the double-speed Whitehead filtration $\tau_{\geqslant 2\star}$ and the result follows.
	\end{proof}

	\newpage
	\appendix
	\renewcommand{\thetheorem}{\thesection.\arabic{theorem}}
	\renewcommand{\SectionPrefix}{Appendix~}

	\section{The \texorpdfstring{$q$}{q}-de Rham complex via \texorpdfstring{$\TC^-$}{TC-}}\label{appendix:BMS2}
	
	In \cite[\S{\chref[section]{11}}]{BMS2} and \cite[\S{\chref[section]{15.2}}]{Prismatic}, it is explained how prismatic cohomology relative to a Breuil--Kisin prism $(\W(k)\llbracket z\rrbracket,E(z))$ can be understood in terms of $\TC^-(-/\IS[z])_p^\complete$. In this subsection, we'll show how the $p$-complete $q$-de Rham complex can be understood in a completely analogous way.
	
	For this to work, we assume that $A$ satisfies the conditions from \cref{par:AssumptionsOnA}, that is, $A$ is a $p$-complete and $p$-completely covered $\delta$-ring with a flat spherical lift $\IS_A$ which admits the structure of a $p$-cyclotomic base.
	
	

	\begin{lem}\label{lem:S_Ainfty}
		The $p$-completed colimit-perfection $A_\infty$ of $A$ admits a unique lift to a $p$-complete connective $\IE_\infty$-ring spectrum $\IS_{A_\infty}$ and $A\rightarrow A_\infty$ can be lifted to an $\IE_\infty$-map $\IS_A\rightarrow \IS_{A_\infty}$.
	\end{lem}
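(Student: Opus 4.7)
The plan has three main steps. First I would observe that $A_\infty$ is a perfect $\delta$-ring: by construction the Frobenius on $\colimit_\phi A$ coincides with the transition map of the colimit and is therefore invertible, and this property survives $p$-completion. Consequently $A_\infty\cong \W(A_\infty/p)$, where $A_\infty/p\cong (A/p)^{\mathrm{perf}}$ is a perfect $\IF_p$-algebra.

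Second, for the existence and uniqueness of $\IS_{A_\infty}$ I would invoke Lurie's spherical Witt vectors construction \cite[Example~\chref{5.2.7}]{LurieEllipticII}: for every perfect $\IF_p$-algebra $R$ there is an essentially unique $p$-complete connective $\IE_\infty$-ring spectrum $\IS_W(R)$ with $\pi_0(\IS_W(R))\cong \W(R)$. Applying this with $R=A_\infty/p$ produces the desired lift $\IS_{A_\infty}\coloneqq \IS_W(A_\infty/p)$.

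Third, to construct the $\IE_\infty$-map $\IS_A\to \IS_{A_\infty}$, I would use the universal property exhibiting $\IS_W$ as right adjoint to the \enquote{perfection} functor $T\mapsto (\pi_0(T)/p)^{\mathrm{perf}}$: for every $p$-complete connective $\IE_\infty$-ring $T$ and every perfect $\IF_p$-algebra $R$, one has a natural equivalence
\begin{equation*}
	\Hom_{\CAlg(\Sp)}\bigl(T,\IS_W(R)\bigr)\simeq\Hom\bigl((\pi_0(T)/p)^{\mathrm{perf}},R\bigr)
\end{equation*}
taken in perfect $\IF_p$-algebras on the right-hand side. Specialising to $T=\IS_A$ and $R=A_\infty/p$, so that $(\pi_0(\IS_A)/p)^{\mathrm{perf}}=(A/p)^{\mathrm{perf}}=A_\infty/p$, the identity on $A_\infty/p$ on the right corresponds to the sought-after map $\IS_A\to \IS_{A_\infty}$, which by construction lifts $A\to A_\infty=\W(A_\infty/p)$ on $\pi_0$.

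The main obstacle is justifying this universal property, which is not stated verbatim in Lurie's reference. It should follow from the obstruction-theoretic input underlying his construction: the relevant topological cotangent complex of $\IS_W(R)$ over $\IS_p$ vanishes $p$-adically because $\W(R)$ is relatively perfect over $\IZ_p$, so the mapping space $\Hom(T,\IS_W(R))$ is controlled purely by its $\pi_0$-shadow $\Hom_{\mathrm{CRing}}(\pi_0(T),\W(R))$; ring maps $\pi_0(T)\to \W(R)$ are in turn controlled by maps $(\pi_0(T)/p)^{\mathrm{perf}}\to R$ of perfect $\IF_p$-algebras by the defining property of Witt vectors.
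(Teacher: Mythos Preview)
Your first two steps---identifying $A_\infty$ as a perfect $\delta$-ring and invoking spherical Witt vectors for the unique lift---are correct and match the paper.

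The gap is in step three. The universal property you state,
\begin{equation*}
\Hom_{\CAlg(\Sp)}\bigl(T,\IS_W(R)\bigr)\simeq\Hom\bigl((\pi_0(T)/p)^{\mathrm{perf}},R\bigr),
\end{equation*}
is false. Take $T=\IZ_p$ (the Eilenberg--MacLane spectrum) and $R=\IF_p$, so that $\IS_W(R)=\IS_p$. The right-hand side is a point, but the left-hand side is empty, since an $\IE_\infty$-map $\IZ_p\to\IS_p$ would exhibit $\IS_p$ as a $\IZ_p$-algebra, which it is not. Your sketched justification misapplies the obstruction theory: the $p$-adic vanishing of $\L_{\IS_W(R)/\IS_p}$ says $\IS_W(R)$ is formally \'etale over $\IS_p$, and this controls maps \emph{out of} $\IS_W(R)$, not into it. The correct universal property reads $\Hom(\IS_W(R),B)\simeq\Hom(R,\pi_0(B)/p)$ for $p$-complete connective $B$, which goes the wrong way for your purposes.

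The paper instead exploits the $p$-cyclotomic base structure assumed on $\IS_A$. First one checks, via a two-term $\IZ_p$-module resolution of $A$ and the Segal conjecture, that the canonical map $\IS_A\to\IS_A^{\t C_p}$ is an equivalence. Composing its inverse with the Tate-valued Frobenius $\phi_{\t C_p}\colon\IS_A\to\IS_A^{\t C_p}$ (part of the standing hypotheses) yields an $\IE_\infty$-self-map of $\IS_A$ lifting $\phi$ on $\pi_0$. The $p$-completed colimit along its iterates is then a $p$-complete connective $\IE_\infty$-lift of $A_\infty$, hence agrees with $\IS_{A_\infty}$ by the uniqueness already established, and the structure map into the colimit is the desired $\IS_A\to\IS_{A_\infty}$.
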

	\begin{proof}
		Since $A_\infty$ is a perfect $\delta$-ring, the lift $\IS_{A_\infty}$ exists uniquely; it is given by the spherical Witt vectors $\IS_{\W(A_\infty^\flat)}$ from Example~\cite[\chref{5.2.7}]{LurieEllipticII}.
		
		To construct the map $\IS_A\rightarrow \IS_{A_\infty}$, first observe that the canonical map $\IS_A\rightarrow\IS_A^{\t C_p}$ is an equivalence. Indeed, we can choose a two-term resolution $0\rightarrow \bigoplus_I\IZ_p\rightarrow\bigoplus_J\IZ_p\rightarrow A\rightarrow 0$ and lift it to a cofibre sequence $\bigoplus_I\IS_p\rightarrow \bigoplus_J\IS_p\rightarrow\IS_A$ of spectra. By the Segal conjecture, $(\bigoplus_I\IS_p)^{\t C_p}\simeq (\bigoplus_I\IS_p)_p^\complete$ and likewise for $J$, so the same will be true for $\IS_A$. We can then form the sequential colimit
		\begin{equation*}
			\colimit\left(\IS_A\xrightarrow{\phi_{\t C_p}}\IS_A^{\t C_p}\simeq \IS_A\xrightarrow{\phi_{\t C_p}}\dotsb \right)_p^\complete\,.
		\end{equation*}
		By our assumptions on $A$, the Tate-valued Frobenius $\phi_{\t C_p}$ agrees with $\phi$ on $\pi_0$, and so this colimit is a $p$-complete connective $\IE_\infty$-lift of $A_\infty$. By uniqueness, it must agree with $\IS_{A_\infty}$, and so we get our desired map $\IS_A\rightarrow\IS_{A_\infty}$.
	\end{proof}
	
	\begin{lem}\label{lem:TC-Zpzeta}
		There are generators $u$ and $v$ in $\pi_2$ and $\pi_{-2}$ of $\TC^-(\IZ_p[\zeta_p]/\IS_p\qpower)_p^\complete$ such that
		\begin{equation*}
			\pi_*\TC^-\bigl(\IZ_p[\zeta_p]/\IS_p\qpower\bigr)_p^\complete\simeq \IZ_p\qpower[u,v]/\bigl(uv-[p]_q\bigr)\,.
		\end{equation*}
	\end{lem}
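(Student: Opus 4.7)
The plan is to compute the homotopy groups on both sides and match. The starting point is the $S^1$-equivariant equivalence
\[
\THH\bigl(\IZ_p[\zeta_p]/\IS_p\qpower\bigr)_p^\complete\overset{\simeq}{\longrightarrow}\tau_{\geqslant 0}\bigl(\ku^{\t C_p}\bigr)\simeq \bigl(\ku^{\Phi C_p}\bigr)_p^\complete
\]
which holds as $S^1$-equivariant $\IE_\infty$-ring spectra for $p>2$ by \cref{thm:kutCp} and as $S^1$-equivariant $\IE_1$-ring spectra for all primes by \cref{thm:kutCpE1} (the second equivalence is \cref{lem:kuPhiCp}\cref{enum:kuPhipalpha}). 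Applying the lax symmetric monoidal functor $(-)^{\h(S^1/C_p)}$ to both sides yields an equivalence of $\IE_1$-ring spectra
\[
\TC^-\bigl(\IZ_p[\zeta_p]/\IS_p\qpower\bigr)_p^\complete\overset{\simeq}{\longrightarrow} \bigl((\ku^{\Phi C_p})_p^\complete\bigr)^{\h(S^1/C_p)}\,,
\]
so the problem reduces to identifying the homotopy groups of the right-hand side.

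The second step is to invoke the explicit computation in \cref{lem:kuPhiCp}\cref{enum:kuPhipalphaHomotopy}, which in the case $\alpha=1$ gives
\[
\pi_*\Bigl(\bigl((\ku^{\Phi C_p})_p^\complete\bigr)^{\h(S^1/C_p)}\Bigr)\cong \IZ_p[u_p,q]\llbracket t_p\rrbracket/\bigl(u_pt_p-\Phi_p(q)\bigr)\,,
\]
with $\abs{u_p}=2$ and $\abs{t_p}=-2$. Setting $u\coloneqq u_p\in\pi_2$ and $v\coloneqq t_p\in\pi_{-2}$, and recalling that $\Phi_p(q)=[p]_q$ for $p$ prime, this already gives the relation $uv=[p]_q$ in the right homotopical degrees. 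So the remaining task is a purely algebraic reconciliation with the claimed presentation $\IZ_p\qpower[u,v]/(uv-[p]_q)$.

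The final step is this reconciliation. A degree $2n$ element in the displayed presentation is a formal series $\sum_{m\geqslant 0}a_m u_p^{n+m}t_p^m$ (for $n\geqslant 0$) or $\sum_{m\geqslant 0}a_m u_p^m t_p^{|n|+m}$ (for $n\leqslant 0$) with $a_m\in\IZ_p[q]$; applying the relation $u_pt_p=[p]_q$ collapses these to $u_p^n\cdot\sum a_m[p]_q^m$ and $t_p^{|n|}\cdot\sum a_m[p]_q^m$, respectively. Hence each $\pi_{2n}$ is a free module of rank one over the $[p]_q$-adic completion $\IZ_p[q]^\complete_{[p]_q}$. Since $\Phi_p(q)\equiv (q-1)^{p-1}\pmod p$, the ideals $(p,[p]_q)$ and $(p,q-1)$ define the same topology on $\IZ_p[q]$, so $\IZ_p[q]^\complete_{[p]_q}\cong \IZ_p\qpower$; this identifies the whole graded ring with $\IZ_p\qpower[u,v]/(uv-[p]_q)$.

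No step is genuinely hard: the input equivalence from \cref{thm:kutCp}/\cref{thm:kutCpE1} and the computation in \cref{lem:kuPhiCp}\cref{enum:kuPhipalphaHomotopy} do all the real work. The only point requiring a small argument is the last one—observing that the $t_p$-adic completion inside $\llbracket t_p\rrbracket$ becomes, on $\pi_0$, the $[p]_q$-adic completion of $\IZ_p[q]$, which after $p$-completion coincides with $\IZ_p\qpower$.
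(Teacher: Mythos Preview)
Your argument is correct, but it takes a genuinely different route from the paper's. The paper proves this lemma directly in the spirit of \cite[Proposition~\chref{11.10}]{BMS2}: one base-changes along $\IS\qpower\rightarrow \IS\llbracket q^{1/p^\infty}-1\rrbracket$ to a perfectoid situation, where $\THH$ becomes even with known homotopy groups, and reads off the answer from the homotopy fixed point spectral sequence. This keeps \cref{appendix:BMS2} self-contained and logically prior to the main body.

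Your approach instead imports the deep equivalence $\THH(\IZ_p[\zeta_p]/\IS_p\qpower)_p^\complete\simeq\tau_{\geqslant 0}(\ku^{\t C_p})$ from \cref{thm:kutCp}/\cref{thm:kutCpE1} together with the computation of $(\ku^{\Phi C_p})_p^\complete$ in \cref{lem:kuPhiCp}. This is logically non-circular (neither of those results uses \cref{lem:TC-Zpzeta}), and once granted, the identification is immediate. The trade-off is that you are invoking the hardest input of the entire paper to prove what is meant to be a foundational calculation; the paper's route is more elementary and makes clear that this lemma is really a variant of a standard BMS2 computation, independent of Devalapurkar's theorem. Your final algebraic step---that the $[p]_q$-adic completion of $\IZ_p[q]$ coincides with $\IZ_p\qpower$ because each $\IZ_p[q]/[p]_q^n$ is already $p$-complete and $(p,[p]_q)=(p,(q-1)^{p-1})$ cuts out the same topology as $(p,q-1)$---is fine.
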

	\begin{proof}
		This can be shown in the same way as \cite[Proposition~\chref{11.10}]{BMS2}, using base change along $\IS\qpower\rightarrow \IS\llbracket q^{1/p^\infty}-1\rrbracket$.
	\end{proof}
	\begin{prop}\label{prop:qdeRhamTC-}
		Let $S$ be a $p$-complete $p$-quasi-lci $A[\zeta_p]$-algebra of bounded $p^\infty$-torsion. Then there is an equivalence of graded $\IE_\infty$-$\IZ_p\qpower[u,v]/(uv-[p]_q)$-algebras
		\begin{equation*}
			\Sigma^{-2*}\gr_{\HRWev,\h S^1}^*\TC^-\bigl(S/\IS_A\qpower\bigr)_p^\complete\simeq  \fil_\Nn^*\widehat{\Prism}_{S/A\qpower}^{(p)}\,,
		\end{equation*}
		where $\gr_{\HRWev,\h S^1}^*$ denotes the associated graded of the $p$-complete $S^1$-equivariant Hahn--Raksit--Wilson even filtration and $(-)^{(p)}$ \embrace{instead of $(-)^{(1)}$} denotes the Frobenius twist of prismatic cohomology. Moreover, after inverting $u$, we get an equivalence of graded $\IE_\infty$-$\IZ_p[u^{\pm 1}]\qpower$-algebras
		\begin{equation*}
			\Sigma^{-2*}\gr_{\HRWev,\h S^1}^*\left(\TC^-\bigl(S/\IS_A\qpower\bigr)\bigl[\localise{u}\bigr]_{(p,q-1)}^\complete\right)\simeq \Prism_{S/A\qpower}[u^{\pm 1}]\,,
		\end{equation*}
		where now $\gr_{\HRWev}^*$ refers to the $p$-complete $S^1$-equivariant Hahn--Raksit--Wilson even filtration on $\THH(S/\IS_A\qpower)[1/u]_p^\complete$.
	\end{prop}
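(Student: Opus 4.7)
The plan is to follow the strategy of \cite[\S{\chref[section]{11}}]{BMS2} closely, adapted to the $q$-de Rham prism $(\IZ_p\qpower,[p]_q)$ instead of the Breuil--Kisin prism. First I would establish quasi-syntomic descent for both sides of the claimed equivalence. For the left-hand side this follows from the construction of the Hahn--Raksit--Wilson even filtration and the fact that $\THH(-/\IS_A\qpower)$ satisfies $p$-completely quasi-syntomic descent (using the same arguments as for $\THH(-)$ in \cite{BMS2}); for the right-hand side, quasi-syntomic descent for Nygaard-filtered prismatic cohomology is a theorem of \cite{BMS2,Prismatic}. This reduces the problem to the case where $S/p$ is relatively semiperfect over~$A$.

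In this relatively semiperfect case, $\L_{S/A\qpower}$ is $p$-completely concentrated in homological degree~$1$ and $p$-completely flat over $S$. The spectral sequence of \cref{lem:THHSpectralSequence} with $k=\IZ$, together with the HKR description $\pi_{2n}\HH(S/A\qpower)_p^\complete\cong \Gamma^n\bigl(\L_{S/A\qpower}[-1]\bigr)$ and $\pi_{2n+1}=0$, shows that $\THH(S/\IS_A\qpower)_p^\complete$ is itself concentrated in even degrees. Consequently the Hahn--Raksit--Wilson even filtration agrees with the double-speed Whitehead filtration, and the homotopy fixed point spectral sequence computing $\TC^-=\THH^{\h S^1}$ degenerates. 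Via the base change from \cref{lem:TC-Zpzeta} along $\IZ_p[\zeta_p]\rightarrow S$, the graded pieces $\Sigma^{-2*}\gr_{\HRWev,\h S^1}^*\TC^-(S/\IS_A\qpower)_p^\complete$ acquire the structure of a $\IZ_p\qpower[u,v]/(uv-[p]_q)$-module in which the grading parameter corresponds to powers of~$v$.

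To identify this graded module with the Frobenius-twisted Nygaard-filtered prismatic cohomology $\fil_\Nn^*\widehat{\Prism}_{S/A\qpower}^{(p)}$, I would exploit the universal property of prismatic cohomology as a relative $q$-PD-envelope. The comparison map in \cref{par:kuComparisonI} is built from the relative cyclotomic Frobenius $\phi_{p/\IS_A}$, which accounts precisely for the Frobenius twist $(-)^{(p)}$. The hard part will be showing that the $v$-adic filtration on the left coincides with the Nygaard filtration on the right: I expect this to follow from matching the explicit descriptions of both sides in degree zero, namely that $\pi_0\TC^-(S/\IS_A\qpower)_p^\complete\cong \widehat{\Prism}_{S/A\qpower}^{(p)}$ is the $q$-PD-envelope of $A\qpower\twoheadrightarrow S$ by a standard argument using $A\to A_\infty$ (see \cref{lem:S_Ainfty}) and base change from \cref{lem:TC-Zpzeta}, together with the observation that the $v$-adic filtration has the correct graded pieces, which can be verified by passing to $A_\infty$ and using the explicit formula of \cite[Theorem~\chref{16.22}]{Prismatic}.

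For the second equivalence, observe that inverting~$u$ makes $v=[p]_q/u$ equal to $[p]_q$ up to the unit~$u$, so that after $(p,q-1)$-completion the Nygaard filtration becomes trivial and the graded pieces of the resulting even filtration recover $\Prism_{S/A\qpower}[u^{\pm 1}]$ in every degree. Concretely, this follows from the first equivalence by inverting $u$ and using $\widehat{\Prism}_{S/A\qpower}^{(p)}[1/[p]_q]\simeq \Prism_{S/A\qpower}[1/[p]_q]$ (the crystalline-to-prismatic comparison in its Frobenius-inverted form), so that each graded piece becomes $\Prism_{S/A\qpower}[u^{\pm 1}]$ as a $\IZ_p[u^{\pm 1}]\qpower$-module. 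The $\IE_\infty$-structure is inherited throughout from the $\IE_\infty$-structure on $\TC^-$.
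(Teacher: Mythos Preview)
Your overall strategy matches the paper's: establish the identification for a class of rings where everything is even (so the filtration is the double-speed Whitehead filtration), deduce the general case by quasi-syntomic descent, and then match the descent filtration with the HRW even filtration. The identification with prismatic cohomology in the even case and the passage from the first equivalence to the second are also broadly in line with the paper, though two of your citations are off: \cref{par:kuComparisonI} concerns the comparison with $\THH(\ku_R/\ku_A)$, not with prismatic cohomology, and the second equivalence is not a ``crystalline-to-prismatic'' comparison but rather the statement that the relative Frobenius $\phi_{/A\qpower}$ becomes an isomorphism after adjoining $[p]_q^{-i}\fil_\Nn^i$ for all $i\geqslant 0$ (cf.\ \cite[Theorem~\chref{15.2}(2)]{Prismatic}).

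The genuine gap is your evenness argument. \Cref{lem:THHSpectralSequence} does \emph{not} give a spectral sequence from $\HH(S/A\qpower)$ to $\THH(S/\IS_A\qpower)$: with $k=\IZ$ that lemma collapses to the tautology $\HH(R/A)\Rightarrow\HH(R/A)$. It compares $\THH(k_R/k_A)$ with $\HH(R/A)$ by filtering via the Postnikov tower of~$k$; it says nothing about the passage from the spherical base $\IS_A\qpower$ to its $\IZ$-linear shadow $A\qpower$. Relative semiperfectness of $S/p$ over~$A$ alone does not obviously force $\THH(S/\IS_A\qpower)_p^\complete$ to be even. The paper instead restricts to \emph{large}~$S$ (admitting a surjection from $A\langle x_i^{1/p^\infty}\ |\ i\in I\rangle$) and, crucially, checks evenness only after faithfully flat base change to $\IS_{A_\infty}\llbracket q^{1/p^\infty}-1\rrbracket$: over this perfectoid-type base one has $\THH(\IS_{A_\infty}\llbracket q^{1/p^\infty}-1\rrbracket)_p^\complete\simeq \IS_{A_\infty}\llbracket q^{1/p^\infty}-1\rrbracket$ (analogue of \cite[Proposition~\chref{11.7}]{BMS2}), so relative $\THH$ reduces to absolute $\THH$ of the quasiregular semiperfectoid ring $S_\infty$, and evenness follows from \cite[Theorem~\chref{7.2}]{BMS2}. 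Without this reduction step your evenness claim, and hence the descent argument built on it, does not go through.
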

	\begin{proof}[Proof sketch]
		First observe that $S_\infty\coloneqq (S\lotimes_{A\qpower}A_\infty\llbracket q^{1/p^\infty}-1\rrbracket)_p^\complete$ will be static and of bounded $p^\infty$-torsion, as $\phi\colon A\rightarrow A$ is $p$-completely flat. Moreover, $S_\infty$ will be $p$-quasi-lci over $A_\infty\llbracket q^{1/p^\infty}-1\rrbracket$, hence over $\IZ_p$, as the cotangent complex $\L_{A_\infty\llbracket q^{1/p^\infty}-1\rrbracket/\IZ_p}$ vanishes after $p$-completion. Thus $S_\infty$ is $p$-quasi-syntomic.
		
		If $S$ is \emph{large} in the sense that there exists a surjection $A\langle x_i^{1/p^\infty}\ \big|\ i\in I\rangle\twoheadrightarrow S$, then $\TC^-(S/\IS_A\qpower)_p^\complete$ will be even. Indeed, evenness can be checked after base change along $\IS_A\qpower\rightarrow \IS_{A_\infty}\llbracket q^{1/p^\infty}-1\rrbracket$. By an analogous argument as in \cite[Proposition~\chref{11.7}]{BMS2},
		\begin{equation*}
			\THH\bigl(\IS_{A_\infty}\llbracket q^{1/p^\infty}-1\rrbracket\bigr)\rightarrow\IS_{A_\infty}\llbracket q^{1/p^\infty}-1\rrbracket
		\end{equation*}
		is an equivalence after $p$-completion. This reduces the assertion to $\TC^-(S_\infty)_p^\complete$ being even, which is shown in \cite[Theorem~\chref{7.2}]{BMS2}.
		
		Via quasi-syntomic descent from the large case, we can now construct a filtration on $\TC^-(S/\IS_A\qpower)_p^\complete$. Arguing as in \cite[\S{\chref[section]{11.2}}]{BMS2} and \cite[\S{\chref[section]{15.2}}]{Prismatic}, we find that the associated graded of this filtration yields the completion of the Nygaard filtration on the Frobenius-twisted prismatic cohomology relative to the $q$-de Rham prism $(A\qpower,[p]_q)$. To see that the filtration agrees with the $p$-complete $S^1$-equivariant Hahn--Raksit--Wilson even filtration, we argue as in the proof of \cite[Theorem~\chref{5.0.3}]{EvenFiltration}. Choose a surjection from a polynomial ring $\IZ[x_i\ |\ i\in I] \twoheadrightarrow S$. Both filtrations satisfy descent along the $p$-completely eff map $\THH(\IS[x_i\ |\ i\in I])\rightarrow \THH(\IS[x_i^{1/p^\infty}\ |\ i\in I])$. By descent, it will then be enough to check that the filtrations agree when $S$ is large, which is clear by evenness.
		
		After inverting~$u$, the argument is analogous: As in \cite[\S{\chref[section]{11.3}}]{BMS2}, we use quasi-syntomic descent again to construct a filtration
		\begin{equation*}
			\fil_{\BMSev}^\star\left(\TC^-\bigl(S/\IS_A\qpower\bigr)\bigl[\localise{u}\bigr]_{(p,q-1)}^\complete\right)
		\end{equation*}
		and check via descent along $\THH(\IS[x_i\ |\ i\in I])\rightarrow \THH(\IS[x_i^{1/p^\infty}\ |\ i\in I])$ that this filtration is really the Hahn--Raksit--Wilson even filtration. To see $\gr_{\HRWev,\h S^1}^*\simeq \Prism_{S/A\qpower}[u^{\pm 1}]$, observe that inverting the degree~$2$ class~$u$ amounts to adjoining $[p]_q^{-i} \fil_\Nn^i$ for all $i\geqslant 0$ in the sense of \cite[Construction~\chref{3.42}]{qWittHabiro}. We must then show that the relative Frobenius induces an equivalence
		\begin{equation*}
			\phi_{/A\qpower}\colon \widehat{\Prism}_{S/A\qpower}^{(p)}\left[\frac{\fil_\Nn^i}{[p]_q^i}\ \middle|\ i\geqslant 0\right]_{(p,q-1)}^\complete\overset{\simeq}{\longrightarrow} \Prism_{S/A\qpower}\,.
		\end{equation*}
		This is a general fact about the Nygaard filtration on prismatic cohomology; it follows, for example, from \cite[Theorem~\chref{15.2}(2)]{Prismatic} via quasi-syntomic descent. See also \cite[Lemma~\chref{3.44}]{qWittHabiro}.
	\end{proof}
	\begin{numpar}[Frobenii.]\label{par:qdeRhamFrobenii}
		The same argument as in \cite[Proposition~\chref{11.10}]{BMS2} shows that the $p$-cyclotomic Frobenius
		\begin{equation*}
			\phi_p^{\h S^1}\colon\TC^-\bigl(\IZ_p[\zeta_p]/\IS\qpower\bigr)_p^\complete\longrightarrow\TP\bigl(\IZ_p[\zeta_p]/\IS\qpower\bigr)_p^\complete
		\end{equation*}
		inverts the generator~$u$ in degree~$2$. Moreover, the $p$-cyclotomic Frobenius on $\THH(-/\IS_{A,p}\qpower)$ is semilinear with respect to the Tate-valued Frobenius $\phi_{\t C_p}\colon \IS_{A,p}[q]\rightarrow \IS_{A,p}[q]$, which on $\pi_0$ is given by $\phi\colon A\rightarrow A$ and $q\mapsto q^p$. It follows that the $p$-cyclotomic Frobenius induces a map
		\begin{equation*}
			\left(\TC^-\bigl(S/\IS_A\qpower\bigr)\bigl[\localise{u}\bigr]\otimes_{\IS_A[q],\phi_{\t C_p}}\IS_A[q]\right)_{(p,q-1)}^\complete\longrightarrow\TP\bigl(S/\IS_A\qpower\bigr)_p^\complete\,.
		\end{equation*}
		On $\gr_{\HRWev}^0$, this map agrees with the Nygaard completion $\Prism_{S/A\qpower}^{(p)}\rightarrow\widehat{\Prism}_{S/A\qpower}^{(p)}$, as the proof of \cref{prop:qdeRhamTC-} shows. The relative Frobenius on prismatic cohomology,
		\begin{equation*}
			\phi_{/A\qpower}\colon \widehat{\Prism}_{S/A\qpower}^{(p)}\longrightarrow \Prism_{S/A\qpower}\,,
		\end{equation*}
		can then be identified as the composition of $\gr_{\HRWev}^0\TP\simeq \gr_{\HRWev}^0\TC^-$ with
		\begin{equation*}
			\gr_{\HRWev}^0\TC^-\bigl(S/\IS_A\qpower\bigr)_p^\complete\longrightarrow \gr_{\HRWev}^0\left(\TC^-\bigl(S/\IS_A\qpower\bigr)\bigl[\localise{u}\bigr]_{(p,q-1)}^\complete\right)\,.
		\end{equation*}
	\end{numpar}
	\begin{numpar}[Recovering $q$-de Rham cohomology.]\label{par:qdeRhamViaTC-}
		Let $R$ be a $p$-torsion free $p$-quasi-lci $A$-algebra and let $R^{(p)}\coloneqq (R\lotimes_{A,\phi}A)_p^\complete$. Then \cite[Theorem~\chref{16.18}]{Prismatic} shows
		\begin{equation*}
			\qdeRham_{R/A}\simeq \Prism_{R^{(p)}[\zeta_p]/A\qpower}\,.
		\end{equation*}
		Therefore \cref{prop:qdeRhamTC-} and \cref{par:qdeRhamFrobenii} contain $q$-de Rham cohomology (which is implicitly $p$-completed by our convention in \cref{par:Notation}) as a special case.
	\end{numpar}
	
	\begin{numpar}[The Adams action.]\label{par:AdamsAction}
		In \cite[\S{\chref[subsection]{3.8}}]{BhattLurieI}, Bhatt--Lurie describe an action of $\IZ_p^\times$ on the $q$-de Rham prism $(A\qpower,[p]_q)$, where $u\in\IZ_p^\times$ acts by sending $q\mapsto q^u$. Here $q^u$ denotes the convergent power series
		\begin{equation*}
			q^u\coloneqq \sum_{n\geqslant 0}\binom{u}{n}(q-1)^n\,.
		\end{equation*}
		By functoriality of prismatic cohomology, the action on the prism induces an action of $\IZ_p^\times$ on $\qdeRham_{R/A}$, which is precisely the action predicted in \cite[Conjecture~\chref{6.2}]{Toulouse}.
		
		Under the identification
		\begin{equation*}
			\qdeRham_{R/A}\simeq \gr_{\HRWev}^0\left(\TC^-\bigl(R^{(p)}[\zeta_p]/\IS_A\qpower\bigr)\bigl[\localise{u}\bigr]_{(p,q-1)}^\complete\right)\,,
		\end{equation*}
		this action comes from an action of $\IZ_p^\times$ on $\IS_A\qpower$. Indeed, following \cite[Notation~\chref{3.3.3}]{DevalapurkarRaksitTHH}, we can write
		\begin{equation*}
			\IS_A\qpower\simeq \limit_{\alpha\geqslant 0}\IS_A[q]/\bigl(q^{p^\alpha}-1\bigr)\simeq \limit_{\alpha\geqslant 0}\IS_A[\IZ/p^\alpha]
		\end{equation*}
		and then let $\IZ_p^\times$ act on $\IZ/p^\alpha$ via multiplication (this is another way of making precise what $q^u$ is supposed to mean). To see that this induces the same action on $(\qdeRham_{R/A})_p^\complete$ as above, we can use quasi-syntomic descent as in the proof of \cref{prop:qdeRhamTC-} to reduce to an even situation, where the claim is straightforward to verify.
		
		We call this action the \emph{Adams action}, since it turns out to agree with the action of $\IZ_p^\times$ on $\ku_p^\complete$ via Adams operations (see \cref{subsec:qdeRhamkupComplete}).
	\end{numpar}
	
	\newpage

	\section{Even \texorpdfstring{$\IE_2$}{E2}-cell structures on flat polynomial rings}\label{appendix:E2}
	
	In this appendix we show the following technical result.
	
	\begin{lem}\label{lem:E2CellStructure}
		Let $\IS[x_i\ |\ i\in I]$ be the flat graded polynomial ring on generators $x_i$ in graded degree~$1$ and homotopical degree~$0$. As a graded $\IE_2$-ring, $\IS[x_i\ |\ i\in I]$ admits a cell decomposition with all cells in even homotopical degree.
	\end{lem}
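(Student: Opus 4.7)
The plan is to reduce to the single-variable case and then exhibit an explicit even $\IE_2$-cell decomposition of $\IS[x]$. For the reduction, I would use that $\IS[x_i \mid i \in I]$ can be written as a filtered colimit over finite subsets $I' \subseteq I$, and for finite $I'$ as the $\IE_\infty$-coproduct $\bigotimes_{i \in I'} \IS[x_i]$; since the forgetful functor $\Alg_{\IE_\infty}\rightarrow\Alg_{\IE_2}$ preserves colimits, this is a presentation in $\Alg_{\IE_2}$ as well. An $\IE_2$-cell of degree $m$ tensored with one of degree $n$ contributes cells of degree $m+n$, so evenness is preserved under tensor product, and filtered colimits of even-cell objects remain so.

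For the single-variable case, I would build $\IS[x]$ as an iterated cell attachment starting from the free $\IE_2$-algebra $R_0 \coloneqq \IS\{x\}_{\IE_2}$ on a class $x$ in graded degree $1$, homotopical degree $0$. The canonical map $R_0 \rightarrow \IS[x]$ (adjoint to $x\mapsto x$) is an equivalence in graded degrees $0$ and $1$, and the failure of evenness of $R_0$ is concentrated in positive graded degrees. The first obstruction is the Browder bracket $[x,x]$, sitting in graded degree $2$ and homotopical degree $1$; I kill it by attaching an $\IE_2$-cell of the form $R_0\{D^2\}_{\IE_2}/R_0\{S^1\}_{\IE_2}$ in graded degree $2$, an even cell. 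Proceeding inductively, I attach cells in the next higher graded degree to kill the lowest nontrivial homotopy class in the fiber, obtaining a sequence $R_0\rightarrow R_1\rightarrow\dotsb$ whose colimit I would show agrees with $\IS[x]$.

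The crucial verification is that all attached cells can be chosen to sit in even homotopical degree. This amounts to the claim that, in each graded degree $n \geq 2$, the fiber of $R_i\rightarrow \IS[x]$ has its lowest nonzero homotopy groups in odd homotopical degrees, for every prime~$p$ and rationally. This I would deduce from Cohen's classical computation of the mod-$p$ homology of free $\IE_2$-algebras: the extra generators beyond the polynomial algebra on $x$ are iterated applications of Browder brackets (which shift homological degree by $+1$) and Dyer--Lashof operations $\beta^\epsilon Q^s$ on $x$ in degree $0$, all of which produce classes in positive odd degrees. After attaching an even cell to kill such a class, the newly introduced relations feed back through the $\IE_2$-operations to create obstructions in odd degrees again, so the induction goes through.

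The main obstacle is controlling the induction across all primes and all graded degrees simultaneously: the cell we attach at one stage generates a free $\IE_2$-sub-algebra of new classes, and one has to verify that the $\IE_2$-operations applied to these classes also land in odd homotopical degrees of the fiber. I would handle this either by working prime-locally and using Cohen's explicit basis, or by arguing structurally: the homotopy groups of the free $\IE_2$-algebra on any class in homotopical degree $0$ and strictly positive graded degree are concentrated in non-negative homotopical degrees with the \emph{non-polynomial} part concentrated in strictly positive homotopical degrees, and moreover the latter is uniformly supported in odd degrees mod torsion—this is the key technical lemma I would isolate and prove separately.
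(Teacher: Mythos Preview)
Your reduction step contains a genuine error: the forgetful functor $\Alg_{\IE_\infty}\rightarrow\Alg_{\IE_2}$ does \emph{not} preserve colimits. It is a right adjoint (its left adjoint being the $\IE_\infty$-envelope of an $\IE_2$-algebra) and so preserves limits and sifted colimits, but not coproducts. Concretely, the $\IE_\infty$-coproduct is the tensor product, whereas the $\IE_2$-coproduct is not; so your presentation of $\IS[x_1,\dotsc,x_n]$ as $\bigotimes_i\IS[x_i]$ is not a colimit diagram in $\Alg_{\IE_2}$, and there is no mechanism by which an $\IE_2$-cell structure on each factor induces one on the tensor product. Your claim that ``an $\IE_2$-cell of degree~$m$ tensored with one of degree~$n$ contributes cells of degree~$m+n$'' has no meaning in this setting: cell attachments are pushouts along free $\IE_2$-algebras, and those do not interact well with the smash product of the underlying spectra. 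The filtered-colimit part of your reduction (over finite $I'\subseteq I$) is fine, but you are still left with the finite-variable case, for which your proposed reduction to one variable fails.

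The paper avoids this issue entirely by arguing via $\IE_2$-Koszul duality directly for arbitrary~$I$. One computes the Koszul dual $D\coloneqq\ID^{(2)}(\IS[x_i\mid i\in I])$ from the double bar construction and finds $D\simeq\bigoplus_{n\geqslant 0}\Sigma^{-2n}\prod_{I_n}\IS(-n)$ where $I_n=\Sym^nI$ (this is where working in solid condensed spectra pays off, since one needs $\Hhom(\prod_{I_n}\IS,\IS)\simeq\bigoplus_{I_n}\IS$ to close the loop). The graded-degree truncations $D_{\geqslant -n}$ form a tower of square-zero extensions; under Koszul duality each such extension becomes a pushout along a map $\operatorname{Free}_{\IE_2}(\Sigma^{2n+1}\bigoplus_{I_n}\IS(n))\rightarrow\IS$, i.e.\ an even $\IE_2$-cell attachment. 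Taking the colimit and using $\ID^{(2)}(D)\simeq\IS[x_i\mid i\in I]$ finishes the argument. This bypasses both your reduction and the delicate inductive control of Dyer--Lashof classes you sketched for one variable; the evenness comes for free from the degree count on the bar construction rather than from a case analysis of $\IE_2$-homology.
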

	\begin{rem}
		For polynomial rings in one variable this is shown in \cite[Proposition~\chref{3.11}]{AdjoiningRootsToE2}. We believe the argument given there can be adapted to several variables as well. The authors of that paper also remark that an alternative proof of the one-variable case is given in the second (but not in the final) arXiv version of \cite{BPRedshift}; we'll follow the proof given therein.
	\end{rem}
	\begin{proof}[Proof of \cref{lem:E2CellStructure}]
		To avoid issues with double duals of infinite direct sums, we work in the $\infty$-category of graded solid condensed spectra $\Gr(\Sp_\solid)$. Usual graded spectra embed fully faithfully as the full sub-$\infty$-category of graded discrete solid condensed spectra. We let 
		\begin{equation*}
			\ID^{(2)}\coloneqq \Hhom_{\Gr(\Sp_\solid)}\bigl(\operatorname{Bar}^{(2)}(-),\IS\bigr)\colon \Alg_{\IE_2}\bigl(\Gr(\Sp_\solid)\bigr)\longrightarrow \Alg_{\IE_2}\bigl(\Gr(\Sp_\solid)\bigr)
		\end{equation*}
		denote the $\IE_2$-Koszul duality functor.
		
		Let us first compute $D\coloneqq \ID^{(2)}(\IS[x_i\ |\ i\in I])$. A standard computation shows that the double Bar construction $\operatorname{Bar}^{(2)}(\IS[x_i])$ is given by $\bigoplus_{n\geqslant 0}\Sigma^{2n}\IS(n)$ as a graded spectrum. Thus, if $I_n\coloneqq \Sym^nI$ denotes the $n$\textsuperscript{th} symmetric power of $I$ as a set, then
		\begin{equation*}
			D\simeq \bigoplus_{n\geqslant 0}\Sigma^{-2n}\prod_{I_n}\IS(-n)\,.
		\end{equation*}
		If $D_{\geqslant -n}$ denotes the restriction of $D$ to graded degrees $\geqslant-n$, then $D$ is the limit of the tower of square-zero extensions $\dotsb\rightarrow D_{\geqslant -2}\rightarrow D_{\geqslant -1}\rightarrow D_{\geqslant 0}$. For all $n\geqslant 1$, the square-zero extension $D_{\geqslant -n}\rightarrow D_{\geqslant -(n-1)}$ is determined by a pullback diagram
		\begin{equation*}
			\begin{tikzcd}
				D_{\geqslant -n}\rar\dar\drar[pullback] & \IS\dar\\
				D_{\geqslant -(n-1)}\rar & \IS\oplus\Sigma^{-2n+1}\prod_{I_n}\IS(-n)
			\end{tikzcd}
		\end{equation*}
		After applying the Koszul duality functor, this becomes a pushout diagram
		\begin{equation*}
			\begin{tikzcd}
				\operatorname{Free}_{\IE_2}\biggl(\Sigma^{2n+1}\bigoplus_{I_n}\IS(n)\biggr)\rar\dar\drar[pushout] & \ID^{(2)}\bigl(D_{\geqslant -(n-1)}\bigr)\dar\\
				\IS\rar & \ID^{(2)}\bigl(D_{\geqslant -n}\bigr)
			\end{tikzcd}
		\end{equation*}
		Here we use $\Hom_{\Sp_\solid}(\prod_{I_n}\IS,\IS)\simeq \bigoplus_{I_n}\IS$; this is the advantage of working in solid condensed spectra. Taking the colimit, we see that $\ID^{(2)}(D)$ has an $\IE_2$-cell decomposition with cells in even homotopical degrees. Once again using that we're working in the solid condensed world, we find $\ID^{(2)}(D)\simeq \IS[x_i\ |\ i\in I]$ and so we're done.
	\end{proof}

	\newpage
	
	\section{On the equivariant Snaith theorem}\label{appendix:EquivariantSnaith}
	
	For abelian compact Lie groups, Spitzweck and \O{}stv\ae{}r \cite{EquivariantSnaith} show a genuine equivariant form of Snaith's theorem. However, the equivalence they construct is only one of homotopy ring spectra. In this short appendix, we explain how to make their equivalence $\IE_\infty$-algebras. We'll restrict to $S^1$ for simplicity, but the argument would work for any abelian compact Lie group.
	
	\begin{con}
		In \cite[(\chref{2.3.20})]{SchwedeGlobal} Schwede introduces an orthogonal space $\IP^\IC$ that sends an inner product space~$V$ to the infinite projective space $\IP(\Sym^*_\IC V_\IC)$. We can construct a morphism of orthogonal spaces
		\begin{equation*}
			c\colon \IP^\IC\longrightarrow \Omega^\bullet\ku_{\mathrm{gl}}
		\end{equation*}
		using a similar construction as in \cite[Construction~\chref{6.3.24}]{SchwedeGlobal}: Namely, for any inner product space $V$, the required map $c(V)\colon \IP(\Sym^*_\IC V_\IC)\rightarrow \operatorname{Map}_*(S^V,\ku_{\mathrm{gl}}(V))$ is adjoint to the tautological map $\IP(\Sym^*_\IC V_\IC)\wedge S^V\rightarrow \ku_{\mathrm{gl}}(V)$ that sends $(L,v)\mapsto [L;v]$ for any line $L\subseteq \Sym_\IC^*V_\IC$ and any point $v\in S^V$.
		
		Schwede equips $\IP^\IC$ with an ultracommutative monoid structure by sending a pair of lines $(L_1\subseteq \Sym_\IC^*V_\IC,L_2\subseteq \Sym_\IC^*W_\IC)$ to $L_1\otimes_\IC L_2\subseteq \Sym_\IC^*V_\IC\otimes_\IC\Sym_\IC^*W_\IC\cong \Sym_\IC^*(V\oplus W)_\IC$. It's clear from the construction that $c$ is multiplicative. Thus, by adjunction, it induces a map of ultracommutative global ring spectra
		\begin{equation*}
			\IS_{\mathrm{gl}}[\IP^\IC]\longrightarrow \ku_{\mathrm{gl}}\,.
		\end{equation*}
	\end{con}
	Before we continue, let us deduce that the element $q\in \pi_0(\ku^{S^1})$ is \emph{strict}.
	
	\begin{cor}\label{cor:qStrictElement}
		Let $q\in \pi_0(\ku^{S^1})$ be the image of the standard representation of $S^1$ under $\operatorname{RU}(S^1)\rightarrow \pi_0(\ku^{S^1})$. Then $q$ is detected by an $\IE_\infty$-algebra map
		\begin{equation*}
			\IS_{S^1}[q]\longrightarrow \ku_{S^1}
		\end{equation*}
		in $\Sp_{S^1}$. In particular, $q$ is a strict element in $(\ku^{C_m})^{\h (S^1/C_m)}$ for all~$m$.
	\end{cor}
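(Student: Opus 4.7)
The plan is to leverage the ultracommutative multiplicativity of $c\colon \IS_{\mathrm{gl}}[\IP^\IC] \to \ku_{\mathrm{gl}}$ constructed just above. First, I would apply the forgetful functor from ultracommutative global ring spectra to $\IE_\infty$-algebras in $\Sp_{S^1}$, obtaining an $\IE_\infty$-algebra map
\begin{equation*}
\IS_{S^1}[\IP^\IC_{S^1}] \longrightarrow \ku_{S^1}\,,
\end{equation*}
where $\IP^\IC_{S^1}$ denotes the genuine $S^1$-anima underlying the orthogonal space $\IP^\IC$.

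Next, I would exhibit an $S^1$-fixed point of $\IP^\IC_{S^1}$ that should represent the class~$q$. For $V = \IC$ with the standard rotation action of $S^1$, the complexification decomposes as $V_\IC \cong \IC_1 \oplus \IC_{-1}$ into weight-$\pm 1$ eigenspaces, so the weight-$1$ summand $\IC_1 \subseteq V_\IC \subseteq \Sym_\IC^* V_\IC$ defines a canonical $S^1$-equivariant line and hence an $S^1$-equivariant map $\ast \to \IP^\IC_{S^1}$ from the anima with trivial action. By the universal property of $\IS_{S^1}[q]$ as the free $\IE_\infty$-$\IS_{S^1}$-algebra on a class in $\pi_0$, this produces an $\IE_\infty$-map $\IS_{S^1}[q] \to \IS_{S^1}[\IP^\IC_{S^1}]$; composing with the map from the previous step yields the desired $\IE_\infty$-map $\IS_{S^1}[q] \to \ku_{S^1}$.

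The only nontrivial point---and the main obstacle---is to verify that the resulting class in $\pi_0(\ku^{S^1})$ agrees with $q$ under the identification $\operatorname{RU}(S^1) \cong \pi_0(\ku^{S^1})$ of \cite[Theorem~\chref{6.3.33}]{SchwedeGlobal}. By the definition of $c(V)$ as the adjoint of $(L, v) \mapsto [L; v]$, the chosen line $\IC_1$ is sent to the class represented by the standard $1$-dimensional complex $S^1$-representation; tracing this through Schwede's identification yields precisely $q \in \operatorname{RU}(S^1)$. I expect this unravelling of definitions to be routine but bookkeeping-heavy.

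For the \enquote{in particular} clause, one applies the lax symmetric monoidal functors $(-)^{C_m}$ and $(-)^{\h (S^1/C_m)}$ in succession to the constructed $\IE_\infty$-map, and precomposes with the canonical $\IE_\infty$-map $\IS[q] \to \bigl(\IS_{S^1}[q]^{C_m}\bigr)^{\h (S^1/C_m)}$ arising from the trivial $S^1$-action on the free commutative monoid, to obtain an $\IE_\infty$-map $\IS[q] \to (\ku^{C_m})^{\h (S^1/C_m)}$, which still detects $q$ by naturality.
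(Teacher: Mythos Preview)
Your overall strategy---restrict the global map $\IS_{\mathrm{gl}}[\IP^\IC]\to\ku_{\mathrm{gl}}$ to $\Sp_{S^1}$ and then produce a map out of $\IS_{S^1}[q]$---matches the paper's. But the step where you invoke ``the universal property of $\IS_{S^1}[q]$ as the free $\IE_\infty$-$\IS_{S^1}$-algebra on a class in $\pi_0$'' is wrong: that object is $\bigoplus_{n\geqslant 0}(\IS_{S^1})_{\h\Sigma_n}$, not the flat polynomial ring $\IS_{S^1}[q]=\IS_{S^1}[\IN]$. A single $S^1$-fixed point of $\IP^\IC_{S^1}$ only buys you an $\IE_\infty$-map out of the former, and there is no reason for such a map to factor through $\IS_{S^1}[q]$.

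What you actually need is an $S^1$-equivariant $\IE_\infty$-\emph{monoid} map $\IN\to\IP(\Sym_\IC^*\Uu)$, since $\IS_{S^1}[-]$ carries monoid maps to $\IE_\infty$-algebra maps. The paper does exactly this: choosing a copy of the standard representation $q\subseteq\Uu$ and passing to tensor powers gives a $\IC$-algebra map $\IC\oplus q\oplus q^2\oplus\dotsb\to\Sym_\IC^*\Uu$ and hence an $S^1$-equivariant strict monoid map $\{1,q,q^2,\dotsc\}\to\IP(\Sym_\IC^*\Uu)$. Your single chosen line could be made to work the same way---because $\IP^\IC$ is ultracommutative, its tensor powers assemble into a strict monoid map from $\IN$---but you have to say this rather than appeal to a nonexistent free-algebra property. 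Once the map is constructed correctly, your identification of the image with $q$ and the ``in particular'' clause are fine.
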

	\begin{proof}
		By \cite[Proposition~\chref{4.1.8}]{SchwedeGlobal} (plus a simple argument to get rid of the telescope), the restriction of $\IS_\mathrm{gl}[\IP^\IC]$ to a genuine $S^1$-equivariant ring spectrum is given by $\IS_{S^1}[\IP^\IC]$, where $\Uu$ is any complete complex $S^1$-universe, that is, a direct sum of countably many copies of each irreducible complex $S^1$-representation. Choosing any copy of the standard representation $q$ inside $\Uu$, we get a $\IC$-algebra map $\IC\oplus q\oplus q^2\oplus\dotsb\rightarrow \Sym^*\Uu$, which induces an $S^1$-equivariant monoid map $\{1,q,q^2,\dotsc\}\simeq \IP(\IC)\sqcup \IP(q)\sqcup\IP(q^2)\sqcup\dotsb\rightarrow \IP(\Sym^*\Uu)$ and thus the desired map of $\IE_\infty$-algebras in $\Sp_{S^1}$
		\begin{equation*}
			\IS_{S^1}[q]\longrightarrow \IS_{S^1}[\IP^\IC]\longrightarrow \ku_{S^1}\,.\qedhere
		\end{equation*}
	\end{proof}

	\begin{numpar}[The Bott element.]\label{par:EquivariantBott}
		Let $\Uu$ be a complete complex $S^1$-universe as in the proof above. Let $\epsilon$ denote any copy of the trivial representation inside $\Uu$. The inclusion $\IC\oplus \epsilon\subseteq \Sym_\IC^*\Uu$, where $\IC$ denotes the unit component of the symmetric algebra, defines a map of genuine $S^1$-equivariant spectra $\xi\colon \IS_{S^1}[\IP(\IC\oplus\epsilon)]\rightarrow \IS_{S^1}[\IP(\Sym_\IC^*\Uu)]$. When we restrict to $\IS_{S^1}\simeq \IS_{S^1}[\IP(\IC)]$ in source and target, $\xi$ is canonically the identity, and so we can construct the \emph{Bott map} as the factorisation
		\begin{equation*}
			\begin{tikzcd}
				\IS_{S^1}\oplus \Sigma^2\IS_{S^1}\dar["\simeq"']\rar & \Sigma^2\IS_{S^1}\dar[dashed,"\beta"]\\
				\IS_{S^1}\bigl[\IP(\IC\oplus \epsilon)\bigr]\rar["1-\xi"] & \IS_{S^1}\bigl[\IP(\Sym_\IC^*\Uu)\bigr]
			\end{tikzcd}
		\end{equation*}
		It's clear from the construction that the $\IE_\infty$-map $\IS_{S^1}[\IP(\Sym_\IC^*\Uu)]\rightarrow \ku_{S^1}$, that was constructed in the proof of \cref{cor:qStrictElement}, sends $\beta\mapsto \beta$.
		
		We also note that if $\epsilon'$ is another copy of the trivial representation inside $\Uu$, then the map $\IS_{S^1}[\IP(\epsilon\oplus\epsilon')]\rightarrow \IS_{S^1}[\IP(\Sym_\IC^*\Uu)]$ is homotopic to $\beta$. Indeed, already the inclusions of $\IP(\IC\oplus\epsilon)$ and $\IP(\epsilon\oplus\epsilon')$ into $\IP(\IC\oplus \epsilon\oplus\epsilon')$ are $S^1$-equivariantly homotopic. It follows that $\beta$ already factors through the map $\IS_{S^1}[\IP(\Uu)]\rightarrow \IS_{S^1}[\IP(\Sym_\IC^*\Uu)]$ induced by $\Uu\cong \Sym_\IC^1\Uu\subseteq\Sym_\IC^*\Uu$. Finally, recall that Spitzweck and \O{}stv\ae{}r construct a homotopy ring spectrum structure on $\IS_{S^1}[\IP(\Uu)]$, so that we can consider the localisation $\IS_{S^1}[\IP(\Uu)][\beta^{-1}]$.
	\end{numpar}
	\begin{lem}\label{lem:EquivariantSnaith}
		The induced map of $\IE_\infty$-algebras in $\Sp_{S^1}$
		\begin{equation*}
			\IS_{S^1}\bigl[\IP(\Sym_\IC^*\Uu)\bigr][\beta^{-1}]\overset{\simeq}{\longrightarrow}\KU_{S^1}
		\end{equation*}
		is an equivalence. Moreover, its precomposition with $\IS_{S^1}[\IP(\Uu)][\beta^{-1}]\rightarrow \IS_{S^1}[\IP(\Sym_\IC^*\Uu)][\beta^{-1}]$ is the equivalence constructed in \cite{EquivariantSnaith}.
	\end{lem}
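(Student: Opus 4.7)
The plan is to establish a commutative triangle with the Spitzweck--Østvær equivalence as one side and reduce the claim to showing that the inclusion $\IP(\Uu) \hookrightarrow \IP(\Sym_\IC^*\Uu)$ induces an equivalence after $\beta$-localisation. First I would handle the ``moreover'' part by construction unraveling: by definition, the composite
\begin{equation*}
	\IS_{S^1}\bigl[\IP(\Uu)\bigr] \longrightarrow \IS_{S^1}\bigl[\IP(\Sym_\IC^*\Uu)\bigr] \longrightarrow \ku_{S^1}
\end{equation*}
is adjoint to the orthogonal space map that, on an inner product space $V$, sends a line $L \subseteq V_\IC \cong \Sym_\IC^1 V_\IC$ together with $v \in S^V$ to the class $[L;v] \in \ku_{\mathrm{gl}}(V)$. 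This is precisely the tautological formula defining Spitzweck--Østvær's comparison map, so upon inverting $\beta$ the composite agrees with their equivalence of homotopy ring spectra with $\KU_{S^1}$. By the two-out-of-three property it then remains to show that the first arrow $f\colon \IS_{S^1}[\IP(\Uu)][\beta^{-1}] \to \IS_{S^1}[\IP(\Sym_\IC^*\Uu)][\beta^{-1}]$ is an equivalence of underlying genuine $S^1$-equivariant spectra.

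To prove this, I would invoke joint conservativity of the geometric fixed point functors $\{(-)^{\Phi H}\}_{H \subseteq S^1}$ across all closed subgroups (the $S^1$-analogue of \cref{lem:GenuineGeometricJointlyConservative}). Since $(-)^{\Phi H}$ is symmetric monoidal, it commutes with $\beta$-inversion, and since $(\Sigma_+^\infty X)^{\Phi H} \simeq \Sigma_+^\infty X^H$ for $H$-spaces $X$, the problem reduces to showing that $\IP(\Uu)^H \hookrightarrow \IP(\Sym_\IC^*\Uu)^H$ becomes an equivalence in $(S^1/H)$-equivariant spectra after $\beta$-localisation. Here one uses the decomposition $\IP(V)^H = \coprod_\chi \IP(V_\chi)$ into components indexed by characters $\chi$ of $H$, with $V_\chi \subseteq V$ the $\chi$-isotypic subspace.

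The crucial observation is that both $\Uu$ and $\Sym_\IC^*\Uu$ are complete complex $S^1$-universes---the latter because $\Sym_\IC^*$ of a complete universe remains complete, containing every irreducible representation in countably infinite multiplicity. Consequently, each isotypic component on either side is a complete complex $(S^1/H)$-universe, and applying Spitzweck--Østvær's theorem independently to $\IP(\Uu^H)$ and $\IP((\Sym_\IC^*\Uu)^H)$ yields two equivalences with $\KU_{S^1/H}$ after $\beta$-inversion. Naturality of the tautological line-bundle formula then identifies $f^{\Phi H}$ with the identity up to these equivalences, forcing $f^{\Phi H}$ to be an equivalence. The main obstacle I foresee is executing this last naturality step cleanly, and in particular handling the case $H = S^1$ on the same footing as the finite cyclic subgroups: there the quotient $S^1/S^1$ is trivial and one is reduced to the classical Snaith theorem applied componentwise, the key input being that any inclusion of infinite-dimensional complex subspaces of a fixed ambient $\IC^\infty$ induces a homotopy equivalence on projective spaces.
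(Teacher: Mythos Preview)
Your argument is correct, but it takes a detour where the paper proceeds directly. You correctly identify the key fact that both $\Uu$ and $\Sym_\IC^*\Uu$ are complete complex $S^1$-universes, but then you do not exploit it fully: an inclusion of complete $S^1$-universes already induces an $S^1$-equivariant weak equivalence $\IP(\Uu)\simeq\IP(\Sym_\IC^*\Uu)$ on the space level, and this is precisely the fixed-point computation $\IP(V)^H=\coprod_\chi\IP(V_\chi)$ you wrote down together with the fact that each $\IP(V_\chi)$ is a copy of $\IC\IP^\infty$. The paper simply records this and concludes that $\IS_{S^1}[\IP(\Uu)]\rightarrow\IS_{S^1}[\IP(\Sym_\IC^*\Uu)]$ is an equivalence \emph{before} inverting $\beta$, reducing the lemma entirely to its ``moreover'' clause. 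Your route---passing to geometric fixed points, then re-invoking Spitzweck--{\O}stv{\ae}r on each level after $\beta$-localisation, then arguing naturality---rebuilds this space-level equivalence by a longer path; the worry you flag about $H=S^1$ and about executing the naturality step disappears once you observe the equivalence holds unstably.

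For the identification with Spitzweck--{\O}stv{\ae}r's map, the paper argues via a commutative square of global ring spectra involving $\IS_\mathrm{gl}[\Gr_1^\IC]\rightarrow\IS_\mathrm{gl}[\Gr^\IC]\rightarrow\ku_\mathrm{gl}$ and Schwede's construction, then restricts to $S^1$; your direct unraveling of the tautological formula $[L;v]$ is essentially the same verification from a slightly different angle.
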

	\begin{proof}
		Since $\IS_{S^1}[\IP(\Uu)]\rightarrow \IS_{S^1}[\IP(\Sym_\IC^*\Uu)]$ is an equivalence as both $\Uu$ and $\Sym_\IC^*\Uu$ are complete complex $S^1$-universes, it will be enough to show the second statement.
		
		To this end, let $\Gr^\IC$ be the orthogonal space from \cite[Example~\chref{2.3.16}]{SchwedeGlobal} that sends an inner product space $V$ to $\coprod_{i\geqslant 0}\Gr_i^\IC(V_\IC)$, where $\Gr_i^\IC$ denotes the Grassmannian of $i$-dimensional complex subspaces. Let $\Gr_1^\IC\rightarrow \Gr^\IC$ be the component where $i=1$. Using \cite[Proposition~\chref{4.1.8}]{SchwedeGlobal} (plus a simple argument to get rid of the telescope), we see that $\IS_{S^1}[\IP(\Uu)]$ is the restriction of the global spectrum $\IS_\mathrm{gl}[\Gr_1^\IC]$ to a genuine $S^1$-equivariant spectrum. By unravelling the proof of \cref{cor:qStrictElement}, we immediately see that the diagram
		\begin{equation*}
			\begin{tikzcd}
				\IS_\mathrm{gl}[\Gr_1^\IC]\rar\dar & \IS_\mathrm{gl}[\IP^\IC]\dar\\
				\IS_\mathrm{gl}[\Gr^\IC]\rar & \ku_\mathrm{gl}
			\end{tikzcd}
		\end{equation*}
		commutes, where the bottom map is the adjoint of \cite[Construction~\chref{6.3.24}]{SchwedeGlobal}. By another straightforward unravelling, the composition $\IS_\mathrm{gl}[\Gr_1^\IC]\rightarrow\IS_\mathrm{gl}[\Gr^\IC]\rightarrow\ku_\mathrm{gl}$ restricts to the map $\IS_{S^1}[\IP(\Uu)]\rightarrow \ku_{S^1}$ constructed in \cite{EquivariantSnaith}.
	\end{proof}
	\begin{numpar}[Equivariant Adams operations]\label{par:EquivariantAdamsOperations}
		Let $\rho_n$ denote the $n$\textsuperscript{th} power map $(-)^n\colon S^1\rightarrow S^1$. Writing the monoid operation multiplicatively, we also consider the monoid endomorphism $(-)^n\colon \IP(\Sym_\IC^*\Uu)\rightarrow\IP(\Sym_\IC^*\Uu)$. This is equivariant over $\rho_n$ and therefore induces an endomorphism
		\begin{equation*}
			\psi^n\colon \rho_n^*\IS_{S^1}\bigl[\IP(\Sym_\IC^*\Uu)\bigr]\longrightarrow\IS_{S^1}\bigl[\IP(\Sym_\IC^*\Uu)\bigr]
		\end{equation*}
		of $\IE_\infty$-algebras in $S^1$-equivariant spectra. Clearly $\psi^n(q)=q^n$. Moreover, $\psi^n(\beta)=n\beta$ holds $S^1$-equivariantly. Indeed, to see this, let $\Uu_{\mathrm{triv}}\subseteq \Uu$ be the direct summand consisting of all copies of the trivial $S^1$-representation. Then the usual non-equivariant argument can be applied to $\IS_{S^1}[\IP(\Sym_\IC^*\Uu_{\mathrm{triv}})]$. Inverting $\beta$ and passing to connected covers, we obtain maps
		\begin{equation*}
			\psi^n\colon \KU_{S^1}\longrightarrow \KU_{S^1}\bigl[\localise{n}\bigr]\quad\text{and}\quad \psi^n\colon \ku_{S^1}\longrightarrow \ku_{S^1}\bigl[\localise{n}\bigr]
		\end{equation*}
		of $\IE_\infty$-algebras in $\Sp_{S^1}$. Here we also use $\rho_n^*\ku_{S^1}\simeq \ku_{S^1}$ and $\rho_n^*\ku_{S^1}\simeq \ku_{S^1}$, since we've modelled $\ku$ by an ultracommutative global ring spectrum $\ku_\mathrm{gl}$, where everything acts trivially.
	\end{numpar}

	\newpage 
	\renewcommand{\SectionPrefix}{}
	
	\renewcommand{\bibfont}{\small}
	\printbibliography
\end{document}